\numberwithin{equation}{chapter}
\newcommand{\U}{\operatorname{U}}
\newcommand{\Aut}{\operatorname{Aut}}
\newcommand{\im}{\operatorname{Im}}
\newcommand{\cA}{\mathcal{A}}
\newcommand{\cB}{\mathcal{B}}
\newcommand{\cC}{\mathcal{C}}
\newcommand{\cE}{\mathcal{E}}
\newcommand{\cG}{\mathcal{G}}
\newcommand{\cH}{\mathcal{H}}
\newcommand{\cI}{\mathcal{I}}
\newcommand{\cK}{\mathcal{K}}
\newcommand{\cL}{\mathcal{L}}
\newcommand{\cO}{\mathcal{O}}
\newcommand{\cQ}{\mathcal{Q}}
\newcommand{\cT}{\mathcal{T}}
\newcommand{\cU}{\mathcal{U}}
\newcommand{\cV}{\mathcal{V}}
\newcommand{\cW}{\mathcal{W}}
\newcommand{\bH}{\mathbb{H}}
\newcommand{\bN}{\mathbb{N}}
\newcommand{\bR}{\mathbb{R}}
\newcommand{\bZ}{\mathbb{Z}}
\newcommand{\R}{\mathbb{R}}
\newcommand{\C}{\mathbb{C}}
\newcommand{\qand}{\quad \textrm{and} \quad}
\newcommand\subsetsim{\mathrel{%
\ooalign{\raise0.2ex\hbox{$\subset$}\cr\hidewidth\raise-0.8ex\hbox{\scalebox{0.9}{$\sim$}}\hidewidth\cr}}}
\newcommand{\lsg}{large-scale geodesic }
\newcommand{\set}[1]{\left\{#1\right\}}
\newcommand{\setcon}[2]{\left\{#1\ \left|\ #2\right.\right\}}
\newcommand{\into}{\hookrightarrow}
\newcommand{\asdim}{\operatorname{asdim}}
\newcommand{\qker}{\operatorname{qker}}
\newcommand{\gp}[3]{(#1\mid#2)_{#3}}
\newcommand{\diam}{\operatorname{diam}}
\newcommand{\mesh}{\operatorname{mesh}}
\newcommand{\mult}{\operatorname{mult}}
\newcommand{\VR}{\operatorname{VR}}
\newcommand{\id}{\operatorname{Id}}
\newcommand{\FP}{\text{FP}}
\renewcommand{\phi}{\varphi}
\DeclareMathOperator{\dist}{dist}
\DeclareMathOperator{\proj}{proj}
\DeclareMathOperator{\ind}{ind}
\DeclareMathOperator{\Comm}{Comm}
\newcommand{\Q}{\mathbb Q}
\newcommand{\Z}{\mathbb Z}
\theoremstyle{plain}
\newtheorem{theorem}{Theorem}[chapter]
\newtheorem{corollary}[theorem]{Corollary}
\newtheorem{proposition}[theorem]{Proposition}
\newtheorem{lemma}[theorem]{Lemma}
\theoremstyle{definition}
\newtheorem{definition}[theorem]{Definition}
\newtheorem{notation}[theorem]{Notation}
\newtheorem{construction}[theorem]{Construction}
\newtheorem{remark}[theorem]{Remark}
\newtheorem{example}[theorem]{Example}
\patchcmd{\section}{-.5em}{.5em}{}{}
\patchcmd{\subsubsection}{-.5em}{.5em}{}{}
\let\@wraptoccontribs\wraptoccontribs
\begin{document}
\title{Foundations of geometric approximate group theory}

\author{Matthew Cordes}
\address{Mathematics Department, Heriot-Watt University, Scotland}
\email{m.cordes@hw.ac.uk}
\thanks{}

\author{Tobias Hartnick}
\address{Institut f\"ur Algebra und Geometrie, KIT, Germany}
\curraddr{}
\email{tobias.hartnick@kit.edu}
\thanks{}

\author{Vera Toni\'c}
\address{Faculty of Mathematics, University of Rijeka, Croatia}
\curraddr{}
\email{vera.tonic@math.uniri.hr}
\thanks{}

\author{(with an appendix by Simon Machado)}
\address{Mathematics Department, ETH Z\"urich, Switzerland}
\curraddr{}
\email{smachado@ethz.ch}
\thanks{}

\keywords{Approximate group, geometric group theory, limit set, asymptotic dimension, Morse boundary}

\subjclass[2020]{Primary: 20N99; Secondary: 20F65, 20F67, 20F69}

\date{\today}

\begin{abstract}
We develop the foundations of a geometric theory of countably-infinite approximate groups, extending work of Bj\"orklund and the second-named author. Our theory is based on the notion of a quasi-isometric quasi-action (qiqac) of an approximate group on a metric space.

More specifically, we introduce a geometric notion of finite generation for approximate group and prove that every geometrically finitely-generated approximate group admits a geometric qiqac on a proper geodesic metric space. We then show that all such spaces are quasi-isometric, hence can be used to associate a canonical QI type with every geometrically finitely-generated approximate group. This in turn allows us to define geometric invariants of approximate groups using QI invariants of metric spaces. Among the invariants we consider are asymptotic dimension, finiteness properties, numbers of ends and growth type. For geometrically finitely-generated approximate groups of polynomial growth we derive a version of Gromov's polynomial growth theorem, based on work of Hrushovski and Breuillard--Green--Tao.

A particular focus is on qiqacs on hyperbolic spaces. Our strongest results are obtained for approximate groups which admit a geometric qiqac on a proper geodesic hyperbolic space. For such ``hyperbolic approximate groups'' we establish a number of fundamental properties in analogy with the case of hyperbolic groups. For example, we show that their asymptotic dimension is one larger than the topological dimension of their Gromov boundary and that -- under some mild assumption of being ``non-elementary'' -- they have exponential growth and act minimally on their Gromov boundary. We also show that every non-elementary hyperbolic approximate group of asymptotic dimension $1$ is quasi-isometric to a finitely-generated group (in fact, a finitely-generated non-abelian free group). The case of higher asymptotic dimension remains open.
 
We also study proper qiqacs of approximate group on hyperbolic spaces, which are not necessarily cobounded. Here one focus is on convex cocompact qiqacs, for which we provide several geometric characterizations \`a la Swenson. In particular we show that all limit points of a convex cocompact qiqac are conical, whereas in general the conical limit set contains additional information. Finally, using the theory of Morse boundaries, we extend some of our results concerning qiqacs on hyperbolic spaces to qiqacs on proper geodesic metric spaces with non-trivial Morse boundary.

Throughout this book we emphasize different ways in which definitions and results from geometric group theory can be extended to approximate groups. In cases where there is more than one way to extend a given definition, the relations between different possible generalizations are carefully explored.
\end{abstract}

\maketitle

\tableofcontents

\chapter{Introduction}

The goal of this book is to extend a number of foundational results in geometric group theory from finitely-generated groups to ``finitely-generated'' approximate groups, and thus to carry out a program that was suggested by Michael Bj\"orklund and the second-named author in \cite[Chapter 3]{BH}. 
The introduction begins with
an explanation of the origins of the program, followed by some difficulties in implementation,
and finishes with a survey of a few sample results from the new theory.

\section{A historical perspective}

\begin{remark}[From algebra to quasi-algebra]
From the syntactic point of view, as developed in the early 20th century, algebra is concerned with ``sets with operations'' (e.g.\ group, rings, fields). These operations transform a number of elements of the underlying set to another element of this set. To relate different sets with operations of the same type, one then studies maps between the underlying sets which preserve the given operations.

It was already pointed out by Ulam in the 1940s, and later popularized in his book \cite{Ulam}, that this setting may be too rigid to encompass many phenomena of interest, both within mathematics and in mathematical applications. Instead of studying functions which satisfy strict functional equations such as ``$f(g+h)=f(g) + f(h)$'', Ulam suggested to investigate functions which only satisfy such functional equations ``approximately''. This idea has since manifested itself in many different areas of mathematics, from stability problems in functional analysis to the study of group quasimorphisms and bounded cohomology -- see \cite{UlamStability, FujKap, HartnickSchweitzer} for some recent examples. 

An even more radical idea is to consider structures, i.e. sets with operations, in which the operations themselves do not produce elements of the underlying set, but rather elements of a larger set which are in some sense ``close'' to the original set. It was this latter step of ``quasifying not only the morphisms, but also the objects'', which, in the early 21st century, led to the creation of a new kind of \emph{quasi-algebra}. Approximate groups are among the main protagonists of this new world, and can be defined as follows (see Definition \ref{DefTao}).
\end{remark}
\begin{definition} Let $G$ be a group and $k \in \mathbb N$. A subset $\Lambda \subset G$ containing the identity is called a \emph{$k$-approximate subgroup} of $G$ if it is symmetric (i.e. $x \in \Lambda \Longrightarrow x^{-1}\in \Lambda$)  and if there is a finite set $F \subset G$ of cardinality $k$ such that for all $x,y \in \Lambda$ there exist $z \in \Lambda$ and $f \in F$ with $xy = zf$. 

In this case we denote by $\Lambda^\infty$ the subgroup of $G$ generated by $\Lambda$ and call the pair $(\Lambda, \Lambda^\infty)$ a \emph{$k$-approximate group}.
\end{definition}

It is interesting to compare the genesis and history of approximate groups to the genesis and history of groups:

\begin{remark}[Some milestones in the history of group theory]\label{GroupHistory}
While the modern definition of a group is less than 150 years old, specific classes of groups have been studied since antiquity. Even after the definition of abstract groups, the focus of the theory has always been on specific classes of examples, often arising from other mathematical areas. Some important historical milestones in the development of modern group theory can be listed as follows\footnote{We admit that these milestones are chosen with a certain bias towards concepts relevant to approximate groups and geometric group theory.}:
\begin{itemize}
\item \emph{Finite groups} and \emph{permutation groups} became more and more important in number theory and algebra during the 18th and 19th century, due to work of Gau\ss, Abel, Galois, Kummer, Dedekind and many others.
\item \emph{Infinite abelian groups}, in particular \emph{lattices} in Euclidean spaces (of low dimensions), have been studied due to their connection with crystals; an early highlight was the theory of Bravais lattices towards the middle of the 19th century.
\item \emph{Lie groups} and more generally \emph{locally compact groups} became the basis for the modern formulation of geometry, due to Riemann, Lie, Klein, Poincar\'e and others.
\item \emph{Linear groups}, in particular \emph{discrete subgroups of Lie groups} such as Kle\-inian and Fuchsian groups, were among the first countably-infinite non-abelian groups that were studied systematically.
\item General, i.e., potentially non-linear \emph{finitely-generated groups} were studied in the first half of the 20th century by Dehn, von Neumann, Tarski, Freudenthal and others, but
became dominant in group theory only during the second half of the 20th century, following pioneering work of Gromov, Rips and others: this was the ``geometric group theory revolution''.
\item More recently, the geometric group theory revolution has also attacked \emph{compactly-generated locally compact groups}; the book \cite{CdlH} has popularized this second wave of the revolution enormously.
\end{itemize}
\end{remark}
\begin{remark}[Some milestones in the history of approximate group theory]
Just as in the group case, the theory of approximate groups is much older than the actual definition, which was coined by Terry Tao and first appeared in print in 2008 \cite{Tao}. As in the group case, approximate groups arose in at least three different areas of mathematics simultaneously:
\begin{itemize}
\item \emph{Finite approximate groups} play a central role in modern additive combinatorics; implicitly they already appear in the pioneering works of Pl\"unnecke \cite{Plunnecke}, Fre\u{\i}man \cite{Freiman}, Erd\"os and Szemer\'{e}di \cite{ErdosSzemeredi},  Ruzsa \cite{Ruzsa} and others. More recent works by Bourgain and Gamburd \cite{BourgainGamburd}, respectively Helfgott \cite{Helfgott}, relate finite approximate groups to superstrong approximation, respectively expansion in finite simple groups. The general structure theory of finite approximate groups due to Breuillard, Green and Tao \cite{BGT} is one of the cornerstones of the modern theory.
\item Independently, approximate groups occurred in the theory of \emph{mathematical quasi-crystals}. Ori\-ginally developed by Meyer \cite{Meyer1972} with a view towards applications in number theory and harmonic analysis, they rose to prominence after the discovery of materials with quasi-crystalline structures in the 1980s (see \cite{BaakeG-13} for a bibliography with hundreds of references). In the language of the present book, mathematical quasi-crystals are (translates of) \emph{uniform approximate lattices} in abelian locally compact groups.
\item The connection between approximate groups and \emph{locally compact groups} goes back even further. Compact symmetric identity neighborhoods in locally compact groups and Lie groups play a central role in connection with the solution to Hilbert's fifth problem \cite{MontgomeryZippin1952, Gleason1952}. These neighborhoods are approximate subgroups, and their study has greatly influenced the modern theory of approximate groups (see \cite{TaoHilbert5}). More recently, a structure theory of \emph{locally compact approximate groups} has been developed by Carolino \cite{Carolino}, and \emph{algebraic approximate groups} have been classified by Bj\"orklund, Hartnick and Stulemeijer \cite{BHS}. 
\item One of the main techniques in the study approximate groups which has no counterpart in group theory is the theory of \emph{good (quasi-)models} due to Hrushovski \cite{Hrushovski}. This theory connects general approximate groups to compact symmetric identity neighborhoods in locally compact groups and played a crucial role in the structure theory of Breuillard, Green and Tao. It was recently used by Hrushovski to provide a parametrization of approximate subgroups of a given group up to commensurability \cite{Hrushovski_Arithmeticity}.
\item \emph{Discrete approximate subgroups of Lie groups} appear implicitly already in work of Chifan and Ioana on relative Property (T) \cite{ChifanIoana}. The theory of \emph{model sets} in Lie groups, and more general \emph{locally compact second countable} groups, due to Bj\"orklund, Hartnick and Pogorzelski \cite{BHP1}, was a major motivation for the introduction of
\emph{approximate lattices} by Bj\"orklund and Hartnick \cite{BH}, which started a whole new line of research \cite{BHS, BH2, BH3}. \emph{Arithmetic model sets} have recently received much attention due to arithmeticity theorems of Machado \cite{Machado_Arithmeticity} and Hrushovski \cite{Hrushovski_Arithmeticity}.
\end{itemize}
Comparing this list to the list in Remark \ref{GroupHistory}, what is still missing is a \emph{general geometric theory of finitely-generated approximate groups}. The first steps towards such a theory were taken in Chapter 3 of \cite{BH}, but the focus there was mostly on the connection between isometric actions of approximate groups and approximate lattices. It was then suggested that a more general version of ``geometric approximate group theory'' should be developed. The present book is meant as a first approximation of what such a theory might look like.
\end{remark}

\section{A mantra for the reader}

Given that this book is primarily concerned with ``finitely-generated'' approximate groups, it is important to point out that even defining what a ``finitely-generated'' approximate group is, is a non-trivial task: Proposition \ref{FinGenGroup} below lists eight different characterizations of finitely-generated groups, all of which can also be formulated for approximate groups, some in several ways. In this wider context, the conditions will no longer be equivalent, and in fact there does not seem to be a single ``correct'' definition of a finitely-generated approximate group -- depending on which results one would like to prove, one or the other condition may be preferable. 

This situation is somewhat prototypical for this whole book: All of our definitions and even the majority of our theorems and their proofs specialize to well-known classical definitions, theorems and proofs in the group case. At a first glance it might thus seem that geometric group theory generalizes in a straight-forward way to approximate groups. There are, however, at least three reasons why the generalizations presented here are more involved than meets the eye.

Firstly, there are of course many classical results in geometric group theory which \emph{do not} generalize to approximate groups; the fact that these are not mentioned in this book leads to a kind of ``survivor bias'' that the reader should keep in mind. We certainly do not claim that every result in geometric group theory has a counterpart for approximate groups.

Secondly, many basic definitions in the group case admit more than one reasonable generalization to the approximate group case. It was thus one of the main tasks in writing this book to find \emph{one possible} combination of definitions that lead to a non-trivial theory. While we do believe that we succeeded in this task, as the following roughly 200 pages indicate, we do not claim that we have found the \emph{only} possible combination of definitions that works. In fact, it is quite likely that some of our definitions will have to be modified in light of new examples and phenomena that we are not yet aware of, and some of our definitions have already been modified several times during the writing of this book. To give a concrete example, the definition of a ``quasikernel'' of a quasimorphism has been modified several times, and while all of the different notions generalize the kernel of a group homomorphism, most of them did not fit into our current framework. Only time will tell whether the current definition will be the final one. Nevertheless, we find it important to describe one tentative framework for geometric approximate group theory in detail, so that this framework can serve as a basis for further investigations.

Finally, one has to keep in mind that certain basic group theoretic arguments are not available to us. A shocking example is the fact that the intersection of two approximate subgroups need not be an approximate subgroup (see Example \ref{IntersectionFail}). In many of these situations there is a work-around -- e.g.\ the intersection of the \emph{squares} of two approximate subgroups is an approximate subgroup, but these workarounds may interact with each other in unpleasant ways. Therefore, for most of our proofs, we did not start out from a known group-theoretic proof and then work by adding a number of workarounds. Rather, we would often try to find group-theoretic arguments which generalize well to approximate groups and then try to reprove classical results using only (or mostly) these arguments. In other cases we were able to find the ``correct'' group-theoretical proof, which ``generalizes easily'' to approximate groups -- usually after first considering different proofs, which do not generalize. 

For all of these reasons, the step from geometric group theory to geometric approximate group theory is larger than a first glance at our results, and even their proofs, may suggest. Throughout this book, we will choose the old words of Garret Birkhoff as our mantra:

\begin{flushright}
\textit{``In carrying out the program […], it has been found essential to proceed with great care. For at every step we have a variety of definitions to choose from, and it is only after a detailed examination that we can judge of their relative fruitfulness.''} -- \cite{Birkhoff}
\end{flushright}

\section{Three approaches to geometric group theory}

What is geometric approximate group theory? Following our, i.e.\ Birkhoff's mantra we should look at different formulations of geometric group theory and then decide which of these formulations can be adapted most ``fruitfully'' to approximate groups.

What all formulations of geometric group theory have in common is that, with every group from a certain class of groups (e.g.\ finitely-generated, countable or locally compact second countable groups), one associates a class of metric spaces. Any invariant of this class of metric spaces is then considered to be a ``geometric invariant'' of the group.

\begin{remark}[Geometric group theory via Cayley graphs]
Most elementary textbooks favor the concrete approach of starting with a \emph{finitely-generated} group $G$. With every finite generating set $S$ of $G$, one can then associate the corresponding \emph{Cayley graph} (or ``Dehn Gruppenbild'') $\mathrm{Cay}(G,S)$, which is a locally finite connected graph and hence can be considered as a proper geodesic metric space.\footnote{See Appendix \ref{AppendixLSG} for our terminology concerning metric spaces and their large-scale geometry.} Cayley graphs with respect to different finite generating sets are quasi-isometric to each other (in fact, even bi-Lipschitz), and hence quasi-isometry invariants of their Cayley graphs yield geometric invariants of finitely-generated groups.
\end{remark}
\begin{remark}[Geometric group theory via geometric actions]\label{GGT2}
While the Cayley graph approach to geometric group theory is the most elementary one, historically geometric group theory did not arise from the study of group actions on graphs, but rather from the actions of Fuchsian and Kleinian groups on the corresponding hyperbolic spaces. To include such actions into the theory, one observes that both the action of a finitely-generated group on its Cayley graph and, say, the action of a surface group on the hyperbolic plane are \emph{proper} and \emph{cobounded}; such actions are sometimes called \emph{geometric}. It is a fundamental result of geometric group theory, discovered by Schwarz and re-discovered by Milnor, that all proper geodesic metric spaces on which a given group $G$ acts geometrically are quasi-isometric to each other. Quasi-isometry invariants of such spaces thus yield geometric invariants of the acting group, and this provides a second approach to geometric group theory.

It turns out a posteriori that the class of groups which admit geometric actions on proper geodesic metric spaces coincides with the class of finitely-generated groups, hence the scope of this second approach is not larger than the scope of the first approach. However, from the approximate group point of view this has the advantage that it does not require the notion of a generating set, which is problematic for approximate groups.
\end{remark}

\begin{remark}[The pseudo-metric approach to geometric group theory]
There is actually a third approach to geometric group theory, which is less well-known, but works in larger generality. This approach is based on the observation that if $G$ is a countable group and $d$, $d'$ are any two proper left-invariant pseudo-metrics on $G$ (cf.\ Definition \ref{DefPseudoMetric} and Section \ref{SecGroupMetrics}), then $(G,d)$ and $(G,d')$ are coarsely equivalent (see Proposition \ref{CoarseGroupClassCountable}), hence any coarse invariant of $(G,d)$ is a geometric invariant of $G$. Note that if $G$ acts geometrically on a metric space $(X, d_X)$ and $o \in X$ is an arbitrary basepoint, then we can define a proper left-invariant pseudo-metric $d$ on $G$ by
\[
d(g,h) \coloneqq  d_X(g.o, h.o).
\]
Then $(G,d)$ is coarsely equivalent to $(X,d_X)$ via the orbit map $g \mapsto g.o$, and hence the coarse invariants of $(G,d)$ coincide with the coarse invariants of $(X,d_X)$. Thus, even in this third approach, geometric actions of $G$ can be used to compute geometric invariants of $G$, as in the second approach. 

The main disadvantage of this approach is that for different proper left-invariant metrics $d$, $d'$ the spaces $(G,d)$ and $(G,d')$ will only be coarsely equivalent, rather than quasi-isometric, hence the geometric invariants which can be defined for general countable groups by this approach are rather limited.

This problem can be remedied using the basic observation, first recorded by Gromov in \cite{Gromov} as a ``trivial lemma'', that every coarse equivalence between \emph{geodesic} metric spaces, or even \emph{large-scale geodesic} metric spaces in the sense of Definition \ref{DefLargeScaleGeodesic}, is automatically a quasi-isometry. Using this observation, we may thus proceed as follows: 

Denote by $[G]_c$ the class of all metric spaces which are coarsely equivalent to $(G,d)$ for some (hence any) proper left-invariant metric on $G$ and define a subclass $[G]\subset [G]_c$ by
\[
[G] \coloneqq  \{X \in [G]_c \mid X \text{ is large-scale geodesic}\}.
\]
Then all representatives of $[G]$ are mutually quasi-isometric by Gromov's trivial lemma, and hence quasi-isometry invariants of such representatives yield geometric invariants of $G$, provided $[G] \neq \emptyset$.

Somewhat surprisingly, it turns out that the condition $[G] \neq \emptyset$ is also equivalent to $G$ being finitely-generated, and in this case every Cayley graph of $G$ and every proper geodesic metric space $X$ on which $G$ acts geometrically is contained in the class $[G]$. This shows that the third approach yields the same geometric invariants for finitely-generated groups as the other two approaches. However, as we will see, it is much better adapted to possible generalizations to approximate groups.
\end{remark}

\section{Fundamental notions of geometric approximate group theory}
We now explain how the pseudo-metric approach to geometric group theory carries over to approximate groups.\footnote{We will see later that the other two approaches can be made to work, too, but this requires additional effort.}

Throughout this section let $G$ be a group and let $\Lambda \subset G$ be a countable approximate subgroup. We denote by $\Lambda^\infty < G$ the (countable) subgroup generated by $\Lambda$ so that $(\Lambda, \Lambda^\infty)$ is an approximate group. If $H$ is any countable subgroup of $G$ containing $\Lambda^\infty$, for example $\Lambda^\infty$ itself, and $d_H$ is a proper, left-invariant pseudo-metric on $H$, then we refer to the restriction of $d_H$ to $\Lambda$ as an \emph{admissible metric} on $\Lambda$. We then have the following basic observation (see Lemma \ref{ExternalQIType}):
\begin{lemma}\label{CoarseLemmaIntro} If $d$, $d'$ are any two admissible metrics on $\Lambda$, then $(\Lambda, d)$ and $(\Lambda, d')$ are coarsely equivalent.
\end{lemma}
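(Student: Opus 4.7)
The plan is to reduce the lemma to a single classical fact: if $d_1$ and $d_2$ are any two proper left-invariant pseudo-metrics on a fixed countable group $H$, then the identity map $(H, d_1) \to (H, d_2)$ is a coarse equivalence. Everything else is bookkeeping.

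I would first establish this classical fact. By left-invariance, a pseudo-metric $d_i$ is determined by the norm $\abs{h}_i := d_i(e, h)$, and bornologousness of the identity reduces to the statement that
\[
S(R) := \sup\setcon{\abs{h}_2}{\abs{h}_1 \le R}
\]
is finite for every $R > 0$. Since $H$ is countable, properness of $d_1$ forces the ball $\{h \in H : \abs{h}_1 \le R\}$ to be \emph{finite}, so the supremum is taken over a finite set and is automatically finite. Swapping the roles of $d_1$ and $d_2$ gives bornologousness in the other direction, hence the identity is a coarse equivalence.

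Next, given two admissible metrics $d, d'$ on $\Lambda$ — with $d$ the restriction of a proper left-invariant pseudo-metric $d_H$ on some countable $H \supseteq \Lambda^\infty$ and $d'$ the restriction of $d_{H'}$ on some countable $H' \supseteq \Lambda^\infty$ — I would compare both to a fixed reference. Choose any proper left-invariant pseudo-metric $d_0$ on the countable group $\Lambda^\infty$ itself (it is standard that such a metric exists on any countable group). Then $d_H|_{\Lambda^\infty}$ is again proper (balls in $H$ are finite, hence so are their intersections with $\Lambda^\infty$) and left-invariant on $\Lambda^\infty$, so by the classical fact the identity $(\Lambda^\infty, d_H|_{\Lambda^\infty}) \to (\Lambda^\infty, d_0)$ is a coarse equivalence. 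Restricting this coarse equivalence to the subset $\Lambda \subset \Lambda^\infty$ yields a coarse equivalence $(\Lambda, d) \to (\Lambda, d_0|_\Lambda)$. The same argument applied to $d'$ gives a coarse equivalence $(\Lambda, d') \to (\Lambda, d_0|_\Lambda)$, and composing with its inverse finishes the proof.

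The only real subtlety is that the two admissible pseudo-metrics may a priori come from different ambient supergroups $H$ and $H'$, so one cannot directly invoke the one-group statement. Routing both through a single reference pseudo-metric on the intrinsic group $\Lambda^\infty$ circumvents this, and the step that does the work — namely, properness of a left-invariant pseudo-metric on a countable group makes bounded sets finite — is exactly what makes the reduction succeed.
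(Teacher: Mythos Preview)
Your proof is correct and follows essentially the same route as the paper: the paper invokes the classical fact (its Corollary~\ref{CoarseGroupClassCountable}) that any two proper left-invariant pseudo-metrics on a countable group are coarsely equivalent via the identity, then restricts to $\Lambda$. Your explicit reduction through a reference metric on $\Lambda^\infty$ to handle different ambient supergroups $H, H'$ is exactly the content of the paper's remark on ``Independence of the ambient group'' following Definition~\ref{def: canonical coarse class}, so the approaches coincide.
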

We now fix an admissible metric $d_o$ on $\Lambda$ and define (see Definition \ref{def: canonical coarse class}):
\begin{definition}
The \emph{coarse class} of the approximate group $(\Lambda, \Lambda^\infty)$ is 
\[
[\Lambda]_c \coloneqq  \{X \text{ metric space }\mid X \text{ is coarsely-equivalent to }(\Lambda, d_o)\}.
\]
\end{definition}
\begin{remark}[Invariance properties of the coarse class]\label{InvarianceCoarseClassIntro} 
By Lemma \ref{CoarseLemmaIntro}, the coarse class $[\Lambda]_c$ is independent of the choice of $d_o$. In fact, it depends only on the isomorphism class of $(\Lambda, \Lambda^\infty)$ in a suitable sense.

Defining isomorphisms of approximate groups is actually a non-trivial task. The most naive notion is that of ``global isomorphisms'', i.e.\ isomorphisms of the ambient groups preserving the approximate subgroups, and $[\Lambda]_c$ is certainly invariant under such global isomorphisms. However, there are also much weaker notions of isomorphism for approximate groups, and $[\Lambda]_c$ is invariant under most of these. In fact, anticipating the terminology of Chapter \ref{ChapAlgebra}, we are going to show that $[\Lambda]_c$ is invariant under all $2$-local isomorphisms, in particular under all Freiman $3$-isomorphisms (see Corollary \ref{CoarseType2Local}). It is also invariant under a suitably defined notion of commensurability (see Definition \ref{def:syndetic, commensurable} and Lemma \ref{CommCoarseEquiv}).
\end{remark}
 If now $\cI$ is any coarse invariant of metric spaces, then we can define the corresponding geometric invariant of $\Lambda$ (see Remark \ref{CoarseInvariants}), by
\begin{equation}\label{InvariantIntro}
\cI(\Lambda) \coloneqq  \cI(X), \text{ where }X \in [\Lambda]_c \text{ is arbitrary}.    
\end{equation}
\begin{example}[Asymptotic dimension of approximate groups]
A numerical coarse invariant of metric spaces is Gromov's asymptotic dimension (\cite{Gromov}, see Definition \ref{def: asdim} below). Using \eqref{InvariantIntro} we define the \emph{asymptotic dimension} $\asdim \Lambda$ of an approximate group $(\Lambda, \Lambda^\infty)$ 
(see Definition \ref{def: asdim lambda}). By Remark \ref{InvarianceCoarseClassIntro}, asymptotic dimension is invariant under $2$-local isomorphisms and commensurability.
\end{example}
As in the group case, most of the finer invariants of geometric group theory require us to consider geodesic representatives of $[\Lambda]_c$. For technical reasons, it is more convenient to actually work with \emph{large-scale geodesic} representatives, i.e. representatives which are quasi-isometric to a geodesic metric space. This leads to the following definition (see Construction \ref{IntQI})
\begin{definition}\label{DefGFG} The approximate group $(\Lambda, \Lambda^\infty)$ is called \emph{geometrically fi\-ni\-te\-ly-generated} if
\[
[\Lambda]_{\mathrm{int}} \coloneqq  \{X \in [\Lambda]_c \mid X \text{ large-scale geodesic}\} \neq \emptyset.
\]
In this case, $[\Lambda]_{\mathrm{int}}$ is called the \emph{internal QI type} of $(\Lambda, \Lambda^\infty)$. 
\end{definition}
If $\Lambda = \Lambda^\infty$ happens to be a group, then it is geometrically finitely-generated if and only if it is finitely-generated as a group (cf.\ Proposition \ref{FinGenGroup}). This explains the second half of the term
``geometrically finitely-generated''. The adjective ``geometrically'' refers to the fact that, as we will see below, there are several different notions of finite generation for approximate groups, and among these the notion from Definition \ref{DefGFG} is the most geometric one. Note that the internal QI type inherits all the invariance properties of the coarse class, i.e.\ it is invariant under $2$-local isomorphisms and commensurability.


\begin{remark}[Apogees and generalized Cayley graphs]
Assume that $(\Lambda, \Lambda^\infty)$ is geometrically finitely-generated.

From Gromov's trivial lemma it follows that any two representatives of $[\Lambda]_{\mathrm{int}}$ are quasi-isometric to each other. Any quasi-isometry invariant of geodesic metric spaces can thus be used to define a geometric invariant of $(\Lambda, \Lambda^\infty)$ by applying it to a representative from $[\Lambda]_{\mathrm{int}}$. Therefore it is important to find good representatives for practical computations.

It can be shown that $[\Lambda]_{\mathrm{int}}$ can be represented by a proper geodesic metric space, and we call any such space an \emph{apogee} for $(\Lambda, \Lambda^\infty)$ (see Definition \ref{def: apogee}). In fact, every geometrically finitely-generated group admits an apogee which is a locally finite graph. Such graphs are then the natural generalizations of Cayley graphs in our context. 

Describing the class of metric spaces which arise as apogees of approximate groups is an interesting problem. We will show that any apogee must be quasi-cobounded (cf.\ Definition \ref{def: quasi cobounded}) and of coarse bounded geometry (cf.\ Definition \ref{Weinberger}), and actually many of our results concerning approximate groups hold for quasi-cobounded proper geodesic metric spaces of coarse bounded geometry.
\end{remark}
\begin{remark}[Internal distortion]
A new phenomenon in geometric approximate group theory, which has no counterpart in geometric group theory, is internal distortion. 
Let us say that an approximate group $(\Lambda, \Lambda^\infty)$ is \emph{algebraically finitely-generated} if $\Lambda^\infty$ is finitely-generated as a group. In this case, every finite-generating set $S$ for $\Lambda^\infty$ defines a word metric $d_S$ on $\Lambda^\infty$, and we refer to $d_S|_{\Lambda \times \Lambda}$ as an \emph{external metric} on $\Lambda$. Any two external metrics are quasi-isometric, and hence we can define the \emph{external QI type} of $\Lambda$ (see Definition \ref{def: Lambda ext}), as
\[
[\Lambda]_{\mathrm{ext}} = \{X \in [\Lambda]_c \mid X \text{ is quasi-isometric to }(\Lambda, d_S|_{\Lambda \times \Lambda})\}.
\]
Note that this notion depends in a very explicit way on $\Lambda^\infty$. In particular,
it is not a commensurability invariant (see Example \ref{Stark1}) and we cannot replace $\Lambda^\infty$ by a larger group in the definition of an external metric. 
If $\Lambda = \Lambda^\infty$ is a countable group, then it is geometrically finitely-generated if and only if it is (algebraically) finitely-generated, and in this case $[\Lambda]_{\mathrm{int}} = [\Lambda]_{\mathrm{ext}}$. However, while every geometrically finitely-generated approximate group is algebraically finitely-generated (see Corollary \ref{GFGAFG}), the converse does not hold (see Example \ref{AFGnotGFG}). Moreover, even if $(\Lambda, \Lambda^\infty)$ is geometrically, and hence algebraically finitely-generated, it may happen that 
$[\Lambda]_{\mathrm{int}} \neq [\Lambda]_{\mathrm{ext}}$ (see Example \ref{ExDistorted}). We then say that $(\Lambda, \Lambda^\infty)$ is \emph{distorted}; otherwise it is called \emph{undistorted} (see Definition \ref{def: undistorted}). 

It turns out that many interesting examples of approximate groups, in particular, quasikernels of many naturally occurring quasimorphisms, are  distorted. Hence it is important to develop the basic theory without the assumption that the approximate group in question is undistorted. This has drastic consequences for the theory. To begin with, if $\cI$ is a quasi-isometry invariant of metric spaces, then with every geometrically finitely-generated approximate group we can associate two corresponding invariants: The internal version $\cI_{\mathrm{int}}(\Lambda)$ is defined as $\mathcal I(X)$, where $X$ is any representative of $[\Lambda]_{\mathrm{int}}$, and the external version  $\cI_{\mathrm{ext}}(\Lambda)$ is defined as $\mathcal I(X)$, where $X$ is any representative of $[\Lambda]_{\mathrm{ext}}$. For example, whereas a finitely-generated group can be of polynomial, intermediate or exponential growth, in the approximate setting we have to distinguish carefully between internal and external growth. 

It is easy to give examples of approximate groups which have polynomial internal growth and exponential external growth (see Example \ref{PolGrowthFINec}). However, the difference between internal and external \emph{polynomial} growth disappears if one is willing to pass to a commensurable approximate group -- we do not know whether this is the case for other growth types. In fact, we have the following more precise result which
extends Gromov's polynomial growth theorem to approximate groups; see Definition \ref{def:syndetic, commensurable} for the notion of a syndetic\footnote{A syndetic subgroup of a group is just a finite index subgroup. Since there is no notion of index for approximate subgroups, we avoid this terminology in the context of approximate groups.} approximate subgroup.

\begin{theorem}[Polynomial growth theorem]\label{PGTIntro}
For a geometrically finitely-generated approximate group  $(\Lambda, \Lambda^\infty)$ the following statements are equivalent:
\begin{enumerate}[(i)]
\item  $(\Lambda, \Lambda^\infty)$ has internal polynomial growth.
\item There exists a syndetic approximate subgroup $\Lambda' \subset \Lambda^{16}$ such that $(\Lambda', (\Lambda')^\infty)$ has external polynomial growth.
\item  There exists a syndetic approximate subgroup $\Lambda' \subset \Lambda^{16}$ such that the group $(\Lambda')^\infty$ has polynomial growth.
\item There exists a syndetic approximate subgroup $\Lambda' \subset \Lambda^{16}$  such that $(\Lambda')^\infty$ is nilpotent.
\end{enumerate}
In particular, up to passing to a commensurable approximate group, internal polynomial growth, external polynomial growth and polynomial growth of the ambient group are all equivalent for a given approximate group.
\end{theorem}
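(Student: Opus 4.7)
The plan is to follow Hrushovski's approach to Gromov's polynomial growth theorem, but with the Breuillard--Green--Tao structure theorem for finite approximate groups replacing Gromov's use of Montgomery--Zippin at the heart of the argument. The cycle decomposes as (iv) $\Rightarrow$ (iii) $\Rightarrow$ (ii) $\Rightarrow$ (i), which are the easy implications, together with the substantive direction (i) $\Rightarrow$ (iv).

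For the easy implications: (iv) $\Rightarrow$ (iii) is Wolf--Bass--Guivarc'h applied to the finitely-generated (by Corollary \ref{GFGAFG}) nilpotent group $(\Lambda')^\infty$. The step (iii) $\Rightarrow$ (ii) is immediate since $|\Lambda' \cap B_{d_{S'}}(e, R)| \leq |B_{d_{S'}}(e, R)|$ for any word metric $d_{S'}$ on $(\Lambda')^\infty$. For (ii) $\Rightarrow$ (i), observe that both the external metric $d_{S'}|_{\Lambda' \times \Lambda'}$ and any internal admissible metric on $\Lambda'$ are admissible, hence coarsely equivalent by Lemma \ref{CoarseLemmaIntro}; so external polynomial growth of $\Lambda'$ is equivalent to internal polynomial growth of $\Lambda'$, which in turn transfers to internal polynomial growth of $\Lambda$ by commensurability of $\Lambda$ with $\Lambda^{16} \supset \Lambda'$.

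The substantive direction (i) $\Rightarrow$ (iv) proceeds as follows. First, from internal polynomial growth of $\Lambda$ I would extract a sequence of scales $r_n \to \infty$ along which the finite sets $A_n := B_{d_o}(e, r_n) \cap \Lambda$ have bounded doubling $|A_n \cdot A_n| \leq K \cdot |A_n|$, by a standard pigeonhole argument using that at least one scale doubles at most polynomially. Plunnecke--Ruzsa then replaces the $A_n$ by genuine finite $K'$-approximate subgroups of a controlled power of $\Lambda$ with $K'$ independent of $n$. Take a nonprincipal ultraproduct $\bfA = \prod_n A_n / \cU$ to obtain a nonstandard approximate subgroup; Hrushovski's structure theorem then produces a good model $\pi : \langle \bfA \rangle \to L$ into a connected Lie group $L$, and the uniform polynomial growth hypothesis transfers to polynomial growth of $L$. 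By Guivarc'h, $L$ is therefore nilpotent-by-compact, and after passing to a finite-index approximate subgroup we may reduce to the case where $L$ is nilpotent. First-order transfer along $\cU$ then yields, for $\cU$-almost every $n$, a finite-index sub-approximate-group $\Lambda'_n \subset A_n^{s}$ whose group hull $(\Lambda'_n)^\infty$ is nilpotent, where $s$ is an absolute constant.

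The main obstacle is controlling the exponent. A priori, the above construction only places $\Lambda'$ inside some $\Lambda^C$ for a constant $C$ depending on $K'$ and on the steps of the ultralimit construction. Bringing $C$ down to the claimed bound $16$ requires careful bookkeeping at each stage where two approximate subgroups are intersected, systematically invoking the ``intersection of squares of approximate subgroups is an approximate subgroup'' work-around highlighted in the introduction. Moreover, one must ensure that $\Lambda'$ has finite index in the full $\Lambda^{16}$, not merely in one of the balls $A_n$; this is handled by choosing $n$ large enough that $A_n$ exhausts $\Lambda^{16}$ up to finitely many $\Lambda'$-translates, using that $\Lambda^{16}$ is locally finite under any admissible metric $d_o$. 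This exponent bookkeeping, rather than the deep input from Hrushovski and Breuillard--Green--Tao, will be the most technical part of the argument.
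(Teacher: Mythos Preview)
Your overall architecture is right and matches the paper's: the easy cycle (iv)$\Rightarrow$(iii)$\Rightarrow$(ii)$\Rightarrow$(i) followed by the hard step (i)$\Rightarrow$(iv) via a pigeonhole-plus-BGT argument. But there is one genuine gap and one point of divergence worth noting.

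The gap is in your pigeonhole step. You write that from internal polynomial growth one extracts radii $r_n$ with $|A_n \cdot A_n| \leq K|A_n|$ ``by a standard pigeonhole argument using that at least one scale doubles at most polynomially''. The standard argument shows that $|B(e,2r_n)| \leq K|B(e,r_n)|$ along a sequence of scales; the passage to $|A_n \cdot A_n| \leq K'|A_n|$ requires knowing that $A_n \cdot A_n$ is contained in some ball $B(e, Cr_n + K)$ in the \emph{internal} metric. For an \emph{external} (i.e.\ left-invariant) metric this is trivial since $B_r \cdot B_r \subset B_{2r}$, but the internal metric on $\Lambda$ is \emph{not} the restriction of a left-invariant metric on $\Lambda^\infty$, so this inclusion is a priori false. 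The paper devotes real effort to this point (Corollary~\ref{TriplingInternalBalls}, ``small tripling of internal balls''), which rests on the fact that left-multiplication by elements of $\Lambda$ is a \emph{uniform} quasi-isometry for the internal metric---itself a consequence of the left-regular qiqac construction (Proposition~\ref{UQAExistence}). Without this, your argument is implicitly about external balls, i.e.\ you are assuming external polynomial growth, which is precisely what you are trying to deduce. A related slip: in your (ii)$\Rightarrow$(i) you invoke coarse equivalence of internal and external metrics to transfer polynomial growth, but growth is only a QI-invariant, not a coarse invariant; the paper uses instead the one-sided inequality $\gr_{\mathrm{int}} \preceq \gr_{\mathrm{ext}}$ (Proposition~\ref{GrowthInequalities}), which is enough.

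On the deep input, your route (ultraproduct, Hrushovski's good-model theorem, polynomial growth of the Lie model, Guivarc'h) is a legitimate alternative, but the paper takes a shorter path: it quotes \cite[Thm.~1.6]{BGT} as a black box (for any sequence of finite $k$-approximate groups $\Lambda_n$ there exist $\Lambda_n' \subset \Lambda_n^4$ of bounded index generating virtually nilpotent groups), then builds the desired $\Lambda' \subset \Lambda^8$ directly as $\{\lambda : \lambda \in (\Lambda_m')^2 \text{ for $\omega$-many } m\}$ for a non-principal ultrafilter $\omega$, and proves finite index via a counting argument (Lemma~\ref{BGTLemma2}). The exponent $16$ then falls out mechanically: $8$ from $(\Lambda_m')^2 \subset \Lambda_m^8$, doubled once more when intersecting with a nilpotent finite-index subgroup to upgrade ``virtually nilpotent'' to ``nilpotent''. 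This is less conceptual than your sketch but makes the bookkeeping you flag as ``the most technical part'' essentially a non-issue.
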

The fact that in the statement of Theorem \ref{PGTIntro} we have to replace $\Lambda$ by (a syndetic subset of) a finite power should not be surprising, since we have seen that even the most basic operations for approximate groups, like taking an intersection, require us to pass from $\Lambda$ to $\Lambda^2$. Since we have to do several (but not too many) such elementary operations in our proof, the exponent is larger than $2$ (but not much larger). Our specific proof yields an exponent of $16$, but we have made no effort to optimize this exponent, hence we do not believe that the number $16$ carries any specific significance.

The proof of Theorem \ref{PGTIntro} follows Hrushovski's proof of Gromov's polynomial growth theorem, but uses the more precise results of Breuillard--Green--Tao \cite{BGT} instead of the results from  \cite{Hrushovski}.
It provides an interesting connection between geometrically finitely-generated approximate groups of polynomial growth and the extremely developed theory of finite approximate groups. Namely, the first step in the proof of Theorem \ref{PGTIntro} is the observation that if $(\Lambda, \Lambda^\infty)$ has internal polynomial growth, then there exists an exhaustion of $\Lambda^\infty$ by finite $k$-approximate subgroups for some fixed $k \in \bN$. The rest of the argument is then exclusively concerned with the structure theory of finite $k$-approximate groups. 

In general, it seems to be the case that the structure theory of approximate groups of polynomial growth is closely related to the structure theory of finite approximate groups, whereas the finite theory has only very limited impact on the theory of approximate groups of exponential growth. For this reason, the current book mostly focuses on the class of finitely-generated approximate groups of exponential growth. As we will see, this class contains, in particular, all non-elementary hyperbolic approximate groups.

\section{From isometric actions to quasi-isometric quasi-actions}
The geometric theory of undistorted approximate groups mirrors closely the geometric theory of finitely-generated groups. Examples of such approximate groups are given by uniform model sets, or more generally uniform approximate lattices. However, for distorted approximate groups this is not at all the case.
\end{remark}
\begin{remark}[Isometric actions of approximate groups]
In view of Remark \ref{GGT2}, one could say that geometric group theory is chiefly concerned with certain isometric actions on proper geodesic metric spaces. It turns out, however, that this point of view generalizes poorly to approximate groups. Let us elaborate on this point:

There is no problem in defining an isometric action of an approximate group $(\Lambda, \Lambda^\infty)$ on a metric space $X$ -- it is just an isometric action of $\Lambda^\infty$ on $X$. There is also no problem in defining what it means for such an action to be proper, cobounded or geometric: properness means that the map $\Lambda \times X \to X \times X$, $(\lambda, x) \mapsto (x, \lambda.x)$ is proper and coboundedness means that the ``quasi-orbit'' $\{\lambda.x \mid \lambda \in \Lambda, x\in X\}$ is relatively dense in $X$ for some (hence any) $x \in X$.

However, while every finitely-generated group admits a geometric isometric action on a proper geodesic space, e.g.\ on a Cayley graph, this is not the case for approximate groups. In fact, internal distortion turns out to be an obstruction (see Corollary \ref{cor:undistort}).
\end{remark}

\begin{proposition}\label{DistortionObstruction}
Every geometrically finitely-generated approximate group which admits a geometric isometric action on a proper geodesic metric space is undistorted.
\end{proposition}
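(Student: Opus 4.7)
The plan is to exhibit the space $X$ itself as a common representative of $[\Lambda]_{\mathrm{int}}$ and $[\Lambda]_{\mathrm{ext}}$, from which undistortion of $(\Lambda, \Lambda^\infty)$ follows at once.

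First I would invoke the classical Schwarz--Milnor lemma for the geometric action of $\Lambda^\infty$ on $X$. This gives that $\Lambda^\infty$ is finitely-generated as a group, and that for any finite generating set $S$ of $\Lambda^\infty$ and any basepoint $o \in X$ the orbit map $(\Lambda^\infty, d_S) \to X$, $g \mapsto g.o$, is a quasi-isometry. Restricting this quasi-isometric embedding to $\Lambda$ yields a quasi-isometric embedding $(\Lambda, d_S|_{\Lambda \times \Lambda}) \to X$; by the coboundedness hypothesis on the approximate group action, the set $\{\lambda.o \mid \lambda \in \Lambda\}$ is already relatively dense in $X$, so the restriction is in fact a quasi-isometry. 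In particular $X \in [\Lambda]_{\mathrm{ext}}$.

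Second, I would pull the metric of $X$ back along the $\Lambda^\infty$-orbit map to define $d(g,h) := d_X(g.o, h.o)$ on $\Lambda^\infty$. Left-invariance is automatic since the action is isometric. The key point is to check that properness of the approximate group action, read on the pair $(o, \lambda.o) \in X \times X$, forces the set $\{\lambda \in \Lambda \mid d_X(o, \lambda.o) \le R\}$ to be finite for every $R > 0$, so that $d$ is a proper left-invariant pseudo-metric on $\Lambda^\infty$ whose restriction to $\Lambda$ is by definition an admissible metric. The orbit map $(\Lambda, d|_{\Lambda \times \Lambda}) \to X$ is then an isometric embedding with relatively dense image, hence an isometric coarse equivalence, showing that $X$ lies in the coarse class $[\Lambda]_c$. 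Because $X$ is geodesic and therefore large-scale geodesic, this places $X$ in $[\Lambda]_{\mathrm{int}}$.

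To conclude, I would observe that $X$ lies simultaneously in $[\Lambda]_{\mathrm{ext}}$ and $[\Lambda]_{\mathrm{int}}$. Since $[\Lambda]_{\mathrm{ext}}$ is by definition the set of members of $[\Lambda]_c$ that are quasi-isometric to any (hence every) external metric, while Gromov's trivial lemma forces any two members of $[\Lambda]_{\mathrm{int}}$ to be mutually quasi-isometric, both classes coincide with the quasi-isometry class of $X$ inside $[\Lambda]_c$, and so $[\Lambda]_{\mathrm{int}} = [\Lambda]_{\mathrm{ext}}$; this is exactly undistortion. The genuinely new ingredient (as opposed to the classical Schwarz--Milnor portion) is the careful translation between the approximate-group versions of properness and coboundedness and the metric statements about the orbit map on $\Lambda$; this is where I expect the bookkeeping to be most delicate, but no real difficulty should arise beyond unwinding the definitions given in the preceding remark.
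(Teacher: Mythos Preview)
Your proposal has a genuine gap in Step~1. You write ``invoke the classical Schwarz--Milnor lemma for the geometric action of $\Lambda^\infty$ on $X$,'' but the hypothesis only gives that the action of $(\Lambda,\Lambda^\infty)$ is geometric, i.e.\ that $\rho_1\colon\Lambda\to\mathrm{Is}(X)$ is proper and cobounded. This does \emph{not} imply that the action of the full group $\Lambda^\infty$ is proper. A concrete counterexample: take $\Lambda$ to be a uniform model set in $\mathbb{R}$ arising from the lattice $\mathbb{Z}[\sqrt{2}]\subset\mathbb{R}\times\mathbb{R}$; then $\Lambda^\infty=\mathbb{Z}[\sqrt{2}]$ is dense in $\mathbb{R}$, so its translation action on $\mathbb{R}$ is not proper, and the classical Schwarz--Milnor lemma simply does not apply. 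The same issue reappears in Step~2: your pulled-back pseudo-metric $d(g,h)=d_X(g.o,h.o)$ on $\Lambda^\infty$ is left-invariant but will not be proper on $\Lambda^\infty$ in such examples, so it is not left-admissible and its restriction to $\Lambda$ is not an admissible metric in the paper's sense.

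The paper's argument (Corollary~\ref{cor:undistort}, via Theorem~\ref{ThmMS}) circumvents this by developing an approximate-group version of Milnor--Schwarz that never uses properness on $\Lambda^\infty$. The key trick is that properness on $\Lambda$ propagates to each fixed power $\Lambda^k$ (Lemma~\ref{GeometricSurvives2}), which is enough to make the generating set $F\subset\Lambda^2$ finite and to control the lower bound of the orbit map directly on $\Lambda$, without ever needing the orbit map on $\Lambda^\infty$ to be a quasi-isometry. Your intuition that $X$ should represent both QI types is correct, but establishing $X\in[\Lambda]_{\mathrm{ext}}$ genuinely requires this approximate Milnor--Schwarz argument rather than the classical one.
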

For undistorted approximate groups, the usual Milnor--Schwarz lemma works (see Theorem \ref{ThmMS}):
\begin{proposition} If an undistorted approximate group $(\Lambda, \Lambda^\infty)$ acts geometrically and isometrically on a proper geodesic metric space $X$, then $X$ is an apogee for $(\Lambda, \Lambda^\infty)$.
\end{proposition}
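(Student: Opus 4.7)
The plan is to run a Milnor--Schwarz style argument adapted to the approximate setting. Since $(\Lambda, \Lambda^\infty)$ is undistorted it is in particular geometrically, and hence algebraically, finitely-generated (Corollary \ref{GFGAFG}), so I fix a finite symmetric generating set $S$ of $\Lambda^\infty$ and consider the external metric $d := d_S|_{\Lambda \times \Lambda}$. By the definition of \emph{undistorted} we have $(\Lambda, d) \in [\Lambda]_{\mathrm{ext}} = [\Lambda]_{\mathrm{int}}$, so it will suffice to exhibit a quasi-isometry $(\Lambda, d) \to X$: then $X$ is coarsely equivalent to $(\Lambda, d)$ and hence belongs to $[\Lambda]_c$, and since $X$ is proper geodesic (hence connected and large-scale geodesic) it will qualify as an apogee.

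To build this quasi-isometry I fix a basepoint $o \in X$ and study the orbit map $\phi: \Lambda \to X$, $\lambda \mapsto \lambda.o$. The upper Lipschitz estimate $d_X(\lambda.o,\mu.o) \le K \cdot d(\lambda,\mu)$ with $K := \max_{s \in S} d_X(o,s.o)$ follows from the triangle inequality along a minimal $S$-word for $\lambda^{-1}\mu$, and coarse surjectivity of $\phi$ is immediate from coboundedness of the quasi-orbit $\Lambda.o$.

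The lower bound is where the argument genuinely diverges from the group case and is the main obstacle. Using coboundedness I pick $R > 0$ with $X = \bigcup_{\lambda \in \Lambda} B(\lambda.o, R)$. Given $\lambda,\mu \in \Lambda$ I subdivide a geodesic from $\lambda.o$ to $\mu.o$ into $N \le d_X(\lambda.o,\mu.o)+1$ unit-spaced points and shadow them by a sequence $\lambda = \lambda_0, \lambda_1, \ldots, \lambda_N = \mu$ in $\Lambda$ with $d_X(\lambda_{i-1}.o, \lambda_i.o) \le 2R+1$. In the group case the consecutive increments $\lambda_{i-1}^{-1}\lambda_i$ would lie in a finite subset of the group of uniformly bounded $S$-word-length; here, however, $\lambda_{i-1}^{-1}\lambda_i$ lies only in $\Lambda^2$ rather than $\Lambda$, so an extra step is needed.

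The key claim is therefore that the set $T := \{\nu \in \Lambda^2 : d_X(o, \nu.o) \le 2R+1\}$ is finite. I would deduce this from properness of the approximate group action combined with the approximate-subgroup axiom $\Lambda^2 \subseteq \Lambda F$ for some finite $F \subset \Lambda^\infty$: letting $C := \max_{f \in F} d_X(o, f^{-1}.o)$, any $\nu = \lambda' f \in T$ satisfies $d_X(o, \lambda'.o) \le 2R + 1 + C$, and properness forces the set of such $\lambda' \in \Lambda$ to be finite, so $T$ is finite. Setting $L := \max_{\nu \in T} d_S(e, \nu)$ I obtain $d(\lambda,\mu) \le N L \le L \cdot d_X(\lambda.o,\mu.o) + L$, which together with the upper bound and coarse surjectivity shows $\phi$ is a quasi-isometry. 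The bookkeeping around $\Lambda$ versus $\Lambda^2$ (and the use of the approximate-subgroup axiom to reduce finiteness statements from $\Lambda^2$ back to $\Lambda$) is the principal place where the argument is not a verbatim copy of its classical counterpart.
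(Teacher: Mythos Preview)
Your proof is correct and follows essentially the same Milnor--Schwarz strategy as the paper's Theorem~\ref{ThmMS}, of which this proposition is a direct corollary (via part~(vi) there, combined with $[\Lambda]_{\mathrm{ext}} = [\Lambda]_{\mathrm{int}}$). Your handling of the $\Lambda$ vs.\ $\Lambda^2$ bookkeeping --- reducing properness on $\Lambda^2$ back to properness on $\Lambda$ via $\Lambda^2 \subset \Lambda F$ --- is exactly the mechanism the paper isolates as Lemma~\ref{GeometricSurvives2}.

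There is one mild difference in how the lower bound is obtained. You derive the affine lower control \emph{directly} by subdividing a geodesic in $X$ and shadowing by $\Lambda$-orbit points, which is the classical Milnor--Schwarz move (and also appears in the paper's proof of Theorem~\ref{ThmMS}(i), though there it is used to produce a finite generating set rather than the QI estimate itself). The paper's proof of Theorem~\ref{ThmMS} instead first establishes a general (not necessarily affine) lower control using properness abstractly (part~(iii)), and then upgrades it to an affine one in part~(v) by invoking Gromov's trivial lemma (Lemma~\ref{GromovTrivial}), since the target is large-scale geodesic. Your route is more elementary and self-contained for the geodesic isometric case; the paper's route is set up to handle the more general qiqac setting uniformly. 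Both are valid here. (Minor point: your constant $C$ should be $\max_{f \in F} d_X(o, f.o)$ rather than $d_X(o, f^{-1}.o)$, though this is immaterial since one may take $F$ symmetric.)
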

However, in order to include distorted approximate groups into the theory, one has to leave the realm of isometric actions and consider the larger class of ``quasi-isometric quasi-actions'', or qiqacs, for short.
\begin{remark}[Quasi-isometric quasi-actions]
If $G$ is a group and $X$ is a metric space, then a qiqac of $G$ on $X$ is a map $G \times X \to X$, $(g,x) \mapsto g.x$ such that the maps $(x \mapsto g.x)_{g \in G}$ are quasi-isometries with uniform quasi-isometry constants and such that there exists a constant $C$ satisfying $d(g.(h.x), gh.x) \leq C$ for all $g , h \in G$ and $x \in X$.

Such qiqacs arise naturally in geometric group theory (see e.g.\ \cite{MSW}): If $\Gamma$ is a finitely-genera\-ted group and $X \in [\Gamma]$, then there may or may not exist a geometric action of $\Gamma$ on $X$ -- this is the famous \emph{QI rigidity problem}. But there always exists a geometric, i.e. proper and cobounded qiqac of $\Gamma$ on $X$. 

The reason that qiqacs do not feature more prominently in geometric group theory, outside of QI rigidity, is that every geometric qiqac is quasi-conjugate to an actual geometric isometric action, albeit on a different space. As Proposition \ref{DistortionObstruction} shows, this is no longer the case for approximate groups, and as a consequence, qiqacs play a central role in geometric approximate group theory.

Because of the various QI constants involved, the precise definition of a qiqac of an approximate group is somewhat involved -- see Definition \ref{def: qiqac} below. Our definition is certainly not the only possible definition of a qiqac of an approximate group -- certain uniformity conditions could be strengthened or weakened in the definition. However, as the following result shows, our definition leads to a reasonable theory (cf. Theorem \ref{MSMain}):
\end{remark}
\begin{theorem}[Approximate Milnor--Schwarz lemma]\label{MSIntro}
For every approximate group $(\Lambda, \Lambda^\infty)$ the following implications hold:
\begin{enumerate}[(i)]
\item If $(\Lambda, \Lambda^\infty)$ admits a geometric qiqac on a coarsely connected proper metric space $X$, then it is \emph{algebraically} finitely-generated.
\item Conversely, if $(\Lambda, \Lambda^\infty)$ is \emph{geometrically} finitely-generated, then it admits a geometric qiqac on a proper geodesic metric space $X$, and any such space $X$ is an apogee for $(\Lambda, \Lambda^\infty)$.
\end{enumerate}
\end{theorem}
It follows from (i) and (ii) that every geometrically finitely-generated approximate group (in fact, every syndetic approximate subgroup of a geometrically finitely-generated approximate group) is algebraically finitely-generated. 

\section{Hyperbolic approximate groups}
The results of the previous section indicate that geometric approximate group theory should be concerned with the study of quasi-isometric quasi-actions (qiqacs) of approximate groups on ``nice'' metric spaces. The precise techniques will necessarily depend on the class of metric spaces under consideration.

\begin{remark}[Hyperbolic qiqacs]
Historically, one of the early successes of geometric group theory was the theory of hyperbolic groups, i.e. groups acting geometrically on proper geodesic hyperbolic spaces \cite{Gromov, GhysdelaHarpe}. This theory has been extended in many different directions, notably to include finitely-generated groups with only very weak hyperbolicity properties (see \cite{Osin_AcylHyp} for an example). 

Therefore the remainder of this introduction will focus on qiqacs of approximate groups on hyperbolic spaces and the comparison of these results to similar results in the group case. We hasten to point out that a substantial part of this book is concerned with qiqacs on non-hyperbolic proper geodesic metric spaces. This, however, requires additional technical machinery, such as the use of Morse boundaries instead of Gromov boundaries, hence we will not mention the more general results in the introduction.
\end{remark}
We will be interested in approximate groups which admit proper qiqacs on proper geodesic hyperbolic spaces. We first consider the case of geometric actions (cf.\ Definition \ref{DefHypAG}):
\begin{definition}
A geometrically finitely-generated approximate group is cal\-led \emph{hyperbolic} if it admits a geometric qiqac on a proper geodesic hyperbolic space.
\end{definition}
We observe that if $X$ is an apogee for a hyperbolic approximate group, then there is a canonical qiqac  of $(\Lambda, \Lambda^\infty)$ on $X$, and this extends to an action of $(\Lambda, \Lambda^\infty)$ on the Gromov boundary $\partial X$ of $X$ by homeomorphism called the \emph{boundary action}. We then say that $(\Lambda, \Lambda^\infty)$ is \emph{non-elementary} if every syndetic approximate subgroup $\Lambda' \subset \Lambda$ acts without fixpoints (see Definition \ref{DefNonEl}).
\begin{example}
Examples of hyperbolic approximate groups include uniform model sets in isometry groups of symmetric spaces of rank one (i.e.\ real, complex and quaternionic hyperbolic spaces and the octonion plane) and uniform model sets in automorphism groups of trees, as well as quasi-convex approximate subgroups of hyperbolic groups (see Example \ref{ExampleHAG1} and Example \ref{ExampleHAG2}). Unfortunately, all of these examples happen to be quasi-isometric to hyperbolic groups. In fact, it is currently an open problem whether a hyperbolic approximate group exists which is not quasi-isometric to a hyperbolic group. 

There are several reasons for this lack of examples: Firstly, many of our constructions work best in hyperbolic spaces $X$ with large (in particular, non-discrete) isometry groups, but these groups often admit uniform lattices. These uniform lattices are then quasi-isometric to any approximate group which admits a uniform qiqac on $X$. Secondly, we are able to prove several strong rigidity results, which, for example, force every quasi-convex approximate subgroup of a hyperbolic group to be quasi-isometric to the ambient group (see Corollary \ref{Rigidity0}) and every non-elementary hyperbolic approximate group of asymptotic dimension $1$ to be quasi-isometric to a finitely-generated free group (see Theorem \ref{Asdim1Case}).
\end{example}
The (still ongoing) quest for hyperbolic approximate groups which are not quasi-isometric to hyperbolic groups has motivated us to take a closer look at geometric properties of hyperbolic approximate groups and to compare them to geometric properties of hyperbolic groups. It turns out that many geometric properties of hyperbolic groups extend to hyperbolic approximate groups, which makes it hard to distinguish the two classes geometrically.
The following theorem summarizes some of our results in this regard (see Corollary \ref{HyperbolicApogees}, Theorem \ref{AsdimMainConvenient}, Proposition \ref{BoundaryMinimal}, Proposition \ref{BoundaryNonel} and Theorem \ref{ExponentialGrowth} respectively).
\begin{theorem}[Hyperbolic approximate groups] Let $(\Lambda, \Lambda^\infty)$ be a hyperbolic approximate group and let $X$ be an apogee for $(\Lambda, \Lambda^\infty)$. Then:
\begin{enumerate}[(i)]
    \item $X$ is a visual proper geodesic hyperbolic space which is quasi-isometric to a closed convex subset of some real hyperbolic space $\bH^n$. 
    \item $\partial X$ is doubling and locally quasi-self-similar with respect to any visual metric. 
    \item $\asdim X = \dim \partial X+1$. 
\end{enumerate}
If $(\Lambda, \Lambda^\infty)$ is non-elementary, then moreover the following results hold:
\begin{enumerate}[(i)]
     \setcounter{enumi}{3}
     \item The boundary action of $(\Lambda, \Lambda^\infty)$ on $\partial X$ is minimal. 
    \item $\partial X$ is infinite and homeomorphic to a perfect compact subset of a sphere. 
    \item $X$ has exponential growth. 
\end{enumerate}
\end{theorem}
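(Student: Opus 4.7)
The plan is to invoke the Generalized Milnor-Schwarz lemma to transfer everything to the apogee $X$ equipped with its canonical geometric qiqac by $(\Lambda,\Lambda^\infty)$; since all seven claims are QI invariants of $X$ (together with the induced boundary qiqac on $\partial X$), it suffices to establish them for such an $X$. For the first three parts I would proceed by standard hyperbolic geometry augmented with features of the qiqac. Coboundedness of the qiqac yields that $X$ is visual in a routine way: every point lies within bounded distance of a quasi-orbit, and an orbit point can be pushed to $\partial X$ along any quasi-geodesic ray. Doubling of $\partial X$ follows from the coarse bounded geometry of apogees, already established earlier in the book, combined with the classical equivalence between bounded geometry in a visual hyperbolic space and doubling of its boundary. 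Local quasi-self-similarity then comes from the induced quasi-action of $\Lambda$ on $\partial X$ by uniformly quasi-Möbius homeomorphisms, whose coboundedness on $X$ translates into local self-similarity of $\partial X$. With (i) and (ii) in hand, the Bonk-Schramm embedding theorem gives the quasi-isometric embedding of $X$ into some $\bH^n$ as a closed convex subset, and the Buyalo-Schroeder/Buyalo-Lebedeva theory of asymptotic dimension for visual hyperbolic spaces with doubling boundary delivers $\asdim X = \dim \partial X + 1$, which is (iii).

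For the non-elementary statements (iv)-(vi), the main engine will be a ping-pong argument. Absence of a global fixpoint on $\partial X$, combined with the North-South-dynamics classification of ``loxodromic'' behaviour (adapted to quasi-isometric quasi-actions), should allow us to extract, possibly after replacing $\Lambda$ by a uniformly bounded power $\Lambda^N$, two elements whose induced homeomorphisms of $\partial X$ have disjoint attracting/repelling pairs. A ping-pong argument then produces a quasi-free sub-semigroup inside $\Lambda^N$ and simultaneously yields: (a) the orbit of every boundary point is infinite, so $\partial X$ has no isolated points and is therefore a perfect compact subset of the sphere appearing in (i), giving (iv); (b) any two open sets in $\partial X$ can be moved onto each other by a composition of ping-pong generators, giving minimality and hence (v); and (c) the exponential growth of the free sub-semigroup transfers through the geometric qiqac to exponential growth of $X$, giving (vi).

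For (vii), the hypothesis $\asdim X = 1$ combined with (iii) forces $\dim \partial X = 0$, and then (iv) promotes $\partial X$ to a compact, perfect, zero-dimensional metrizable space, i.e.\ a Cantor set. Any proper geodesic hyperbolic space of coarse bounded geometry whose Gromov boundary is a Cantor set is quasi-isometric to a locally finite simplicial tree (via a Dunwoody-tracks argument on the apogee or, equivalently, an ends-of-balls construction). The cobounded qiqac and coarse bounded geometry force the tree to have uniformly bounded vertex degrees, and the free sub-semigroup from the ping-pong above guarantees valence at least $3$ somewhere (hence, by coboundedness, up to QI everywhere). A locally finite tree with valence bounded between $3$ and some finite $D$ is quasi-isometric to the Cayley graph of a finitely-generated non-abelian free group, completing (vii).

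The principal obstacle, and the technical heart of the argument, is the ping-pong step on which (iv)-(vii) rest. Classical proofs for hyperbolic groups produce two independent loxodromic elements from non-elementarity, but here the only objects we get to compose freely live in the approximate group $\Lambda$ (or one of its bounded powers), not in the ambient group $\Lambda^\infty$. Making sure that the elements produced by the dynamical arguments can be pulled back into $\Lambda^N$, and that the qiqac version of North-South dynamics is strong enough to drive a genuine ping-pong (with errors absorbed into the uniform qiqac constants), is precisely the place where the robust qiqac formalism developed earlier in the book is indispensable; everything else in the proof is either a transfer via Milnor-Schwarz or an appeal to known results on visual hyperbolic spaces with doubling boundary.
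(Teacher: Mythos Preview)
Your proposal has a genuine gap at its ``technical heart'': the ping-pong argument on which you rest (iv)--(vi) is precisely what the paper avoids, and the introduction says explicitly that ``no such result is true in our general setting.'' The obstruction you correctly identify---that the elements produced must lie in some bounded power $\Lambda^N$---is not overcome in the paper and is not known to be surmountable. Iterating an element to get North--South dynamics takes you into arbitrarily high powers of $\Lambda$, and there is no mechanism for pulling back into a fixed $\Lambda^N$.

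The paper's route is entirely different. For (v), minimality is proved by a \emph{weak hull} argument (Proposition~\ref{BoundaryMinimal}): if $\Lambda$ has no boundary fixpoint, then any orbit closure $L'\subset L$ has at least two points, so $\mathfrak{H}(L')\neq\emptyset$; quasi-invariance of the weak hull under $\rho(\Lambda)$ together with coboundedness of $\rho$ forces $\mathfrak{H}(L')$ to be at bounded Hausdorff distance from $\mathfrak{H}(L)$, whence $L'=L$. Part (iv) then follows from (v). For (vi) the paper splits into cases: when $\asdim\Lambda\geq 2$, one bounds the critical exponent below by the Hausdorff dimension of the conical limit set, which equals $\dim\partial\Lambda\geq 1$ by (iii); when $\asdim\Lambda=1$, one first proves (vii) and then reads off exponential growth from the free group. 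Thus (vii) is logically prior to (vi), not a consequence of the same ping-pong as in your sketch. Also, for (ii) the paper proves local quasi-self-similarity first (directly from quasi-coboundedness via Gromov-product estimates) and then \emph{derives} doubling from it by a snowflake trick; your ``classical equivalence'' shortcut for doubling is not the argument used.
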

All of these results are classical for hyperbolic groups: (i) is the celebrated Bonk--Schramm embedding theorem \cite{Bonk-Schramm}, (iii) was conjectured by Gromov in \cite{Gromov} and established (after a lot of work by many people) by Buyalo and Lebedeva \cite{BuyaloLebedeva}. The proof of (vi) in the group case is based on the ping-pong lemma, which ensures that a non-elementary hyperbolic group contains a free semigroup; no such result is true in our general setting. Instead, if $\asdim \Lambda > 1$ then (vi) can be proved by bounding the exponential growth rate of $\Lambda$ from below by the dimension of $\partial X$, which is strictly positive by (iii). The case $\asdim \Lambda = 1$ then has to be dealt with separately (see Theorem \ref{Asdim1Case}), using an embedding theorem of Buyalo and Schroeder \cite{BuyaloSchroeder}.

\section{Convex-cocompact quasi-actions and conical limit points}

The study of hyperbolic approximate groups amounts to the study of quasi-isometric quasi-actions on hyperbolic spaces, which are both proper and cocompact. If one weakens the notions of cocompactness, then one is naturally lead to the notion of convex cocompact quasi-actions and the related notion of conical limit points.

\begin{remark}[Limit sets of proper qiqacs]
If one considers a proper qiqac of an approximate group $(\Lambda, \Lambda^\infty)$ on a proper geodesic hyperbolic space $X$ which is not cobounded, then there is still a boundary action of $(\Lambda, \Lambda^\infty)$ by homeomorphisms on the Gromov boundary $\partial X$ of $X$. In this situation, a $\Lambda^\infty$-invariant subset of the boundary is given by the \emph{limit set}
\[
\cL(\Lambda) \coloneqq  \overline{\{\lambda.o \mid \lambda \in \Lambda\}} \cap \partial X,
\]
where $o \in X$ is an arbitrary basepoint (see Definition \ref{DefGromovLimitSet}). Elements of $\cL(\Lambda)$ are called \emph{limit points} of the qiqac. Note that, by definition, every limit point $\xi$ is the limit of a sequence of the form $\lambda_n.o$ with $\lambda_n \in \Lambda$. However, it is not clear whether this sequence can be chosen to be uniformly close to a geodesic representing $\xi$. If this is the case, then we call $\xi$ a \emph{conical limit point} of the qiqac (see Definition \ref{def:conical limit point}). The subset $\cL^{\mathrm{con}}(\Lambda) \subset \cL(\Lambda)$ of conical limit points often contains additional information compared to $\cL(\Lambda)$.
\end{remark}
A striking illustration of the difference between limit points and conical limit points can be given as follows: Consider a free group $F_r$ of rank $r \geq 3$. With every cyclically reduced non-self-overlapping word $u$ of length $\geq 2$ in $F_r$ one can associate an approximate subgroup $\Lambda_u$ of $F_r$, the quasikernel of the associated cyclic counting quasimorphism. This approximate subgroup then admits an isometric action on the Cayley tree $T_{2r}$ of $F_r$, and we have (see Theorem \ref{QuasikernelsConical}):
\begin{theorem}[Reconstruction from conical limit points]
Let $u,v \in F_r$ be two cyclically reduced non-self-overlapping words of length $\geq 2$. Then $\cL(\Lambda_u) = \cL(\Lambda_v)$, whereas we have $\cL^{\mathrm{con}}(\Lambda_u) = \cL^{\mathrm{con}}(\Lambda_v)$ if and only if $u = v^{\pm 1}$.
\end{theorem}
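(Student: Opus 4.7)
The strategy is to translate conicality into a quantitative statement about the Brooks quasimorphism $\phi_u$ along infinite reduced words, and then to distinguish $\Lambda_u$ and $\Lambda_v$ by exhibiting a periodic boundary point $w^\infty$ on which the two net cyclic counts behave differently. Write $\phi_u(g) = c_u(g) - c_{u^{-1}}(g)$ for the signed count of non-overlapping occurrences of $u$ in the reduced spelling of $g$, so that $\Lambda_u = \{g \in F_r : |\phi_u(g)| \le D_u\}$ for an appropriate quasi-kernel constant $D_u$. A boundary point $\xi \in \partial T_{2r}$ is an infinite reduced word whose length-$n$ prefix we denote $\xi_n$. Since $T_{2r}$ is a tree, $\lambda_n$ converges to $\xi$ conically (stays within bounded distance of the ray $[o,\xi)$) iff $\lambda_n = \xi_{k_n} \cdot e_n$ with $k_n \to \infty$ and $|e_n|$ uniformly bounded, and appending such a bounded suffix perturbs $\phi_u$ by only a bounded amount. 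This yields the key characterization
\[
\xi \in \cL^{\mathrm{con}}(\Lambda_u) \quad\Longleftrightarrow\quad \liminf_{n \to \infty} |\phi_u(\xi_n)| < \infty.
\]

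The first claim, $\cL(\Lambda_u) = \partial T_{2r}$, is proved by direct construction: given $\xi$, set $k_n = \phi_u(\xi_n)$ and define $\lambda_n := \xi_n \cdot b_n \cdot (u^{-\operatorname{sgn} k_n} \cdot b_n')^{|k_n|}$, where $b_n, b_n'$ are single-letter buffers from the $2r \ge 6$ available letters chosen to keep $\lambda_n$ reduced and to prevent the buffers from producing spurious $u^{\pm 1}$-occurrences. Then $|\phi_u(\lambda_n)| = O(1)$ and $\xi_n$ is a prefix of $\lambda_n$, so $\lambda_n \in \Lambda_u$ and $\lambda_n \to \xi$ in $\partial T_{2r}$. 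Thus $\cL(\Lambda_u) = \partial T_{2r}$ independently of $u$, giving the first claim.

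For the second claim, the direction $u = v^{\pm 1} \Rightarrow \cL^{\mathrm{con}}(\Lambda_u) = \cL^{\mathrm{con}}(\Lambda_v)$ is immediate from $\phi_{u^{-1}} = -\phi_u$. For the converse, assume $u \neq v^{\pm 1}$ and, for any cyclically reduced $w \in F_r$, define the net cyclic count $\delta_u(w) = c^\circ_u(w) - c^\circ_{u^{-1}}(w)$. The estimate $\phi_u(w^n) = n\,\delta_u(w) + O(1)$ combined with the characterization above gives $w^\infty \in \cL^{\mathrm{con}}(\Lambda_u)$ iff $\delta_u(w) = 0$, so it suffices to produce a cyclically reduced $w$ with $\delta_u(w) = 0 \ne \delta_v(w)$ or vice versa. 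If $\delta_v(u) = 0$ one takes $w = u$ (using $\delta_u(u) = 1$, which follows from $u$ being cyclically reduced and non-self-overlapping); symmetrically, if $\delta_u(v) = 0$ one takes $w = v$.

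The main obstacle is the remaining case where both $\delta_u(v)$ and $\delta_v(u)$ are nonzero. Here I would build $w$ as a block concatenation $w = v^m \cdot s \cdot u^{-k} \cdot s'$ with positive integers $m, k$ solving $m\,\delta_u(v) = k$ to force $\delta_u(w) = 0$, and with separator words $s, s'$ constructed from the extra generators provided by $r \ge 3$ so as to keep $w$ cyclically reduced and to prevent the junctions from contributing further cyclic occurrences of $u^{\pm 1}$ or $v^{\pm 1}$. A short calculation then yields $\delta_v(w) = m - k\,\delta_v(u)$, which is nonzero unless $\delta_u(v)\,\delta_v(u) = 1$. The degenerate sub-case $\delta_u(v) = \delta_v(u) = 1$ (forcing $u$ and $v$ to lie cyclically inside each other with multiplicity one) requires a separate construction: if $|u| < |v|$ one uses a short word $w = u \cdot c$ containing $u$ but too short to contain $v$; if $|u| = |v|$, so $u, v$ are cyclic conjugates, one uses an asymmetric word such as $w = abab\,a^{-1}b^{-1}$ in the illustrative case $u = ab, v = ba$, where a direct count yields $\delta_u(w) = 2$ and $\delta_v(w) = 0$. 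The technical heart of the proof is precisely the pattern-avoidance bookkeeping ensuring that the separators never introduce spurious cyclic occurrences of the four patterns $u^{\pm 1}, v^{\pm 1}$, and this is where the hypothesis $r \ge 3$ enters in an essential way.
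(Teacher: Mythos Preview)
Your characterization of conical limit points via $\liminf_n |\phi_u(\xi_n)| < \infty$ is correct and matches the paper's Proposition~\ref{ConicalQker}; your proof of $\cL(\Lambda_u) = \partial T_{2r}$ by explicit compensation is also valid, though the paper derives it more quickly from minimality of the boundary action of a non-elementary hyperbolic group (Example~\ref{FullLimitSet}).

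For the hard direction of part (ii), your route is genuinely different from the paper's. The paper does \emph{not} try to exhibit a single periodic point $w^\infty$ distinguishing the two conical limit sets. Instead it argues as follows: from $\cL^{\mathrm{con}}(\Lambda_u) = \cL^{\mathrm{con}}(\Lambda_v)$ it first recovers the zero set $N_u = \{w : \phi^{\mathrm{cyc}}_u(w) \neq 0\}$ (your $\delta_u(w) \neq 0$), then---and this is the crucial extra step---it recovers the \emph{sign relation} $\sigma_u$ on $N_u$ (whether two elements have the same sign under $\phi^{\mathrm{cyc}}_u$), using the blocker technology of Lemma~\ref{Quasi2} to build periodic words $(x'wy'z)^\infty$ that detect sign agreement. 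From $N_u = N_v$ and $\sigma_u = \sigma_v$ one gets $\mathrm{Pos}(\phi^{\mathrm{cyc}}_u) = \mathrm{Pos}(\pm\phi^{\mathrm{cyc}}_v)$, and the Ben~Simon--Hartnick reconstruction theorem (Corollary~\ref{Reconstruction}) yields that $\phi^{\mathrm{cyc}}_u$ and $\phi^{\mathrm{cyc}}_v$ are linearly dependent; finally the Hartnick--Talambutsa result forces $u = v^{\pm 1}$.

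Your approach is more elementary in spirit---it would bypass both the reconstruction theorem and the linear-independence black box---but the case analysis is not complete as written. The main construction $w = v^m s u^{-k} s'$ works (modulo the blocker bookkeeping you acknowledge) precisely when $\delta_u(v)\delta_v(u) \neq 1$, but the degenerate case is only treated by example. When $|u| = |v|$ and $\delta_u(v) = \delta_v(u) = 1$, the words $u$ and $v$ are cyclic conjugates, and a uniform construction of a distinguishing $w$ for \emph{all} such pairs (not just $u=ab$, $v=ba$) is needed; similarly, when $|u| < |v|$, your proposed $w = u \cdot c$ requires verifying $\delta_u(w) \neq 0$, which is not automatic for all choices of the buffer $c$ once cyclic (rather than linear) occurrences are counted. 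Completing these cases would amount to proving from scratch that the kernel of $\phi^{\mathrm{cyc}}_u$ determines $u$ up to inversion---essentially a special case of the Hartnick--Talambutsa independence result that the paper invokes as a black box. So your strategy can likely be pushed through, but the work you have deferred to ``separator bookkeeping'' and ``a separate construction'' is the actual content of the argument.
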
 %
On the other hand, there are also interesting qiqacs for which every limit point is conical. 
\begin{remark}[Weak hulls and convex cocompact qiqacs]
Let $X$ be a proper geodesic hyperbolic space. Given a closed subset $Z \subset \partial X$ of the Gromov boundary, we define its \emph{weak hull} $\mathfrak{H}(Z)$ as the set of all geodesics which connect points in $Z$ (see Definition \ref{def:weak hull}). 

In particular, given a proper qiqac of an approximate group $(\Lambda, \Lambda^\infty)$ with unbounded orbits, we can consider the weak hull $\mathfrak{H}(\cL(\Lambda))$ of the limit set. This weak hull is not quite invariant under $\Lambda^\infty$, but it is ``quasi-invariant'' under $\Lambda$. This is enough to define a ``restricted qiqac'' of $\Lambda$ on $\mathfrak{H}(\cL(\Lambda))$, and we say that the qiqac is \emph{convex cocompact} if this restricted qiqac is cobounded.
\end{remark}
We then have the following characterization (see Corollary \ref{thm:quasi-convex equiv to cocompact}); in the group case, this reduces to a classical result of Swenson \cite{Swenson}.
\begin{theorem}[Convex cocompact qiqacs] A proper qiqac of an infinite approximate group $(\Lambda, \Lambda^\infty)$ on a proper geodesic hyperbolic space $X$ is convex cocompact if and only if it has quasi-convex quasi-orbits. In this case, all limit points of the qiqac are conical.
\end{theorem}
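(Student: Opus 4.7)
The plan is to adapt Swenson's classical argument to the qiqac setting, with the key geometric input being the standard fact that in a proper geodesic $\delta$-hyperbolic space, a closed quasi-convex subset sits at bounded Hausdorff distance from the weak hull of its own limit set in the Gromov boundary.

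I would fix a basepoint $o \in X$ and work with the quasi-orbit $O := \{\lambda.o \mid \lambda \in \Lambda\}$, whose accumulation set in $\partial X$ is exactly $\cL(\Lambda)$. For the direction ($\Leftarrow$), quasi-convexity of $O$ combined with the hyperbolic-geometry fact above places $O$ and $\mathfrak{H}(\cL(\Lambda))$ at bounded Hausdorff distance, which translates directly into coboundedness of the restricted qiqac on the weak hull, i.e., convex cocompactness. For the direction ($\Rightarrow$), I would use that $\mathfrak{H}(\cL(\Lambda))$ is always quasi-convex (the union of geodesics between points of a fixed compactum in $\partial X$ is quasi-convex with constant depending only on $\delta$); convex cocompactness then puts $O$ at bounded Hausdorff distance from this quasi-convex set, so $O$ is itself quasi-convex.

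For the claim about conical limit points, assume the qiqac is convex cocompact. Properness of the qiqac together with $\Lambda$ being infinite gives that $O$ is unbounded, and since $O$ is coarsely dense in $\mathfrak{H}(\cL(\Lambda))$ the weak hull must also be unbounded, forcing $|\cL(\Lambda)| \geq 2$. Given $\xi \in \cL(\Lambda)$, pick a second limit point $\eta \neq \xi$ and a bi-infinite geodesic $\gamma \colon \bR \to X$ with endpoints $\eta, \xi$; by definition $\gamma \subset \mathfrak{H}(\cL(\Lambda))$, so coboundedness of the restricted qiqac supplies a uniform $R > 0$ and, for every $n$, an element $\lambda_n \in \Lambda$ with $d(\lambda_n.o, \gamma(n)) \leq R$. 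The sequence $(\lambda_n.o)$ then converges to $\xi$ while remaining within distance $R$ of the geodesic $\gamma$, which is the defining property of a conical limit point.

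The main obstacle I expect is the delicate constant-tracking when transferring group-action arguments to the qiqac setting: the weak hull is only $\Lambda$-quasi-invariant (not $\Lambda^\infty$-invariant), so the restricted qiqac on $\mathfrak{H}(\cL(\Lambda))$ involves uniformly bounded perturbations of each $\lambda$-map, and one must verify that these perturbations are compatible with the coboundedness estimates and the ``ambient'' Hausdorff comparisons above. In practice this should be handled by the preparatory lemmas on weak hulls and restricted qiqacs developed earlier in the chapter, so the present statement is essentially a corollary of that preparation combined with the hyperbolic-geometry input; the novel work compared to the group case lies in setting up these preparatory tools rather than in the proof of the corollary itself.
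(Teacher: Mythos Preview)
Your proposal is correct and follows essentially the same strategy as the paper. The paper derives the hyperbolic statement (Corollary~\ref{thm:quasi-convex equiv to cocompact}) as a special case of a more general result for Morse boundaries (Theorem~\ref{thm:stab equiv to cocompact}), replacing ``quasi-convex'' by ``stable'' throughout; in the hyperbolic case these notions coincide, and the paper's apogee-based argument for $(\Leftarrow)$ amounts precisely to reproving your black-boxed ``standard fact'' that a quasi-convex subset lies at bounded distance from the weak hull of its limit set. Your direct hyperbolic approach is therefore a legitimate shortcut. One minor point: your claim of bounded \emph{Hausdorff} distance between $O$ and $\mathfrak{H}(\cL(\Lambda))$ is slightly stronger than needed (and can fail in degenerate cases where $|\cL(\Lambda)|\leq 1$); for (CC3) you only need the one-sided inclusion $\mathfrak{H}(\cL(\Lambda)) \subset N_D(O)$, which is exactly what the paper establishes and what your quasi-convexity argument actually gives.
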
 
This theorem readily implies that quasi-convex approximate subgroups of hyperbolic groups are syndetic in, hence quasi-isometric to the ambient group (see Corollary \ref{Rigidity0}), which makes it hard to use hyperbolicity of the ambient group to establish hyperbolicity of an approximate group.
\newpage
    \section*{Acknowledgements}

Just like its authors, this book has been growing slowly but steadily over the last seven years. The authors would thus like to express their gratitude to all the people and institutions which have supported the project over this long period of time. In particular, the authors are indebted to the following people:
\begin{itemize}
    \item To Michael Bj\"orklund, without whom this whole theory would not exist.
    \item To Simon Machado for numerous explanations concerning the work of Breuillard--Green--Tao, and in particular for contributing Appendix \ref{AppendixBGT}.
    \item To Nina Lebedeva for providing a list of corrections for
 the  proof of \cite[Thm.\ 1.1]{BuyaloLebedeva}, on which the proof of Theorem \ref{l-dim-leq-dim} is based.
 \item To Alexey Talambutsa for help with the proof of Theorem \ref{BlockersExist}.
 \item To Stefan Witzel, who suggested the definition of finiteness properties for countable approximate groups used in this book; the second half of Section 3.2 paraphrases joint work with him \cite{HaWi}.
 \item To Gabi Ben Simon, Laura Bonn, Emmanuel Breuillard, Sergei Buyalo, James Farre, Francesco Fournier-Facio, Elia Fioravanti, Slava Grigorchuk, Anton Hase, Udi Hrushovski, Nir Lazarovich, Waltraud Lederle, Leonard Rubin, Michah Sageev, Viktor Schroeder, Alessandro Sisto and Emily Stark and the anonymous referees for various comments, corrections and suggestions.
 \item To the members of the Geometry and Topology Seminar at the Technion for providing a stimulating and collaborative atmosphere over the years; this book can be seen as a continuation of the discussions which arose from various talks in this seminar.
\end{itemize}
Moreover, the authors would like to thank the following institutions for providing them with excellent working conditions during various stages of their collaboration: the Technion Faculty of Mathematics, Haifa, the Faculty of Mathematics at the University of Rijeka, the Institut f\"ur Algebra und Geometrie at the Karlsruher Institut f\"ur Technologie, the Mathematische Forschungsinstitut Oberwolfach and the University of Sulaymaniyah.

M.C.\ was supported in part by a Zuckerman STEM Leadership Fellowship, an ETH Postdoctoral Fellowship co-funded by a Marie Curie Actions for People COFUND Program, and an SNSF Ambizione Fellowship. T.H. and V.T. were supported by the Erasmus+ KA107 program through the 2016-2018 inter-institutional agreement between the Technion and the University of Rijeka. T.H. was moreover supported through the program ``Research in Pairs'' by the Mathematische Forschungsinstitut Oberwolfach in 2020. 

Finally, the authors would like to dedicate this book to Daniel and Sara and to the memory of Edward Cordes, Ivan Ivan\v si\' c and Ljudmila Toni\' c.\\

\begin{flushright}
M.\ Cordes, T.\ Hartnick, V.\ Toni\'c 
\end{flushright}

\chapter{Basic constructions of approximate groups}\label{ChapAlgebra}

In this chapter we introduce the notion of an approximate (sub)group and some slight variants thereof. We first give some general background concerning subsets and filtrations of groups and then focus on constructions of approximate subgroups. Generalizing the basic fact that intersections of subgroups and images and preimages of subgroups under group homomorphisms are again subgroups, we show that \emph{(local) direct images} (Proposition \ref{ImagesExist} and Corollary \ref{QuasiImages}), \emph{thick intersections} (Lemma \ref{Intersection}), \emph{(local) partial kernels} (Lemma \ref{Lem: partial kernel ag}), \emph{thick preimages} (Lemma \ref{ThickPreim} and Proposition \ref{TopHru}) and \emph{(partial) quasikernels} (Example \ref{PartialQuasikernel}) are approximate subgroups. We also give some constructions of approximate subgroups which have no counterpart in group theory. These include compact symmetric \emph{identity neighborhoods} (Example \ref{FundCommClass}) \emph{powers} of approximate subgroups (Proposition \ref{PropShuffling}), approximate subgroups from \emph{syndetic coarse filtrations} (Proposition \ref{PropCoarseFiltrationMain}), and \emph{model sets} (Example \ref{CuPModel}). As usual in geometric group theory, we want to consider approximate subgroups up to \emph{commensurability}, and all of our constructions can be carried out on the level of commensurability classes. In fact, by Theorem \ref{HruBaby} (due to Hrushovski), our constructions are sufficiently general to produce \emph{all} commensurability classes of approximate subgroups of a given group. 

\section{Subsets and filtrations of groups}
Before we begin our discussion of approximate groups, we discuss some general algebra of subsets of groups and describe a general framework of filtered groups. We will see later that approximate groups fall into this framework. Throughout this section, let $\Gamma$ be a group and let $A,B,C \subset \Gamma$ be subsets.

\begin{notation}[Subsets of groups]
The \emph{product set} of $A$ and $B$ and the \emph{inverse set} of $A$ are respectively defined as
\[
AB \coloneqq  \{ab\mid a \in A, b \in B\} \qand A^{-1} \coloneqq  \{a^{-1} \mid a \in A\}.
\]
We say that $A$ is \emph{symmetric} if $A = A^{-1}$. Given $k \geq 2$ we denote by
$A^k \coloneqq  AA^{k-1}$ the \emph{$k$th power} of $A$ and by $A^{-k} \coloneqq (A^{-1})^k = (A^k)^{-1}$ its inverse set. We also set $A^{0} = \{e\}$ and
say that $A$ is \emph{unital} if $e \in A$; we write $A^+ \coloneqq A \cup \{e\}$ for the \emph{unitalization} of $A$.
\end{notation}
\begin{remark}\label{Rem: properties AB}  We will use basic identities and inclusions between product sets and inverse sets without further comments. For example, we have $A^kA^l = A^{k+l}$ for all $k,l \in \bN_0$ (but \emph{not} for all $k,l \in \Z$), and if $A$ is unital, then $A \subset A^2 \subset A^3 \subset \dots$. Moreover, $(AB)^{-1} = B^{-1}A^{-1}$, $(A \cup B)C = AC \cup BC$ and $(A \cap B)C  \subset AC \cap BC$, and similarly
\[
(A\cap B)^k \subset A^k \cap B^k \qand A^k \cup B^k \subset (A \cup B \cup \{e\})^k \quad \text{for all } k \in \Z.
\]
We can combine these to show e.g.\ that if $C$ is symmetric, then
\[
(C^k A)\cap (C^l B) \subset C^l ((C^{l+k}A)\cap B) \quad \text{for all }l,k \in \bN_0.
\]
\end{remark}
We mention two less obvious inclusions:
\begin{lemma}[Rusza's Covering Lemma]\label{Rusza}
If $F \subset A$ is a maximal subset such that the sets $fB$ are pairwise disjoint as $f$ ranges over $F$, then
$A \subset FBB^{-1}$.
\end{lemma}
\begin{proof} Let $x \in A$. We then have $xB \cap fB \neq \emptyset$ for some $f\in F$, since otherwise we could enlarge $F$ by $x$, contradicting maximality. This implies that $x \in fBB^{-1} \subset FBB^{-1}$, and since $x \in A$ was arbitrary, the lemma follows.
\end{proof}
\begin{lemma}\label{LemmaAdditiveComb} If $F \subset \Gamma$ is finite, then there exists $F' \subset \Gamma$ with $|F'| \leq |F|$ such that $A \cap FB \subset F'(A^{-1}A \cap B^{-1}B)$. 
\end{lemma}
\begin{proof} Let $F_0 \coloneqq  \{f \in F \mid A \cap fB \neq \emptyset\}$; for every $f \in F_0$ we pick an element $x_f \in A \cap fB$ and set $F' \coloneqq  \{x_f \mid f \in F_0\}$. Then for $f \in F_0$ and $x \in A \cap fB$, 
\[
x = x_f (x_f^{-1}x) \in  F'(A^{-1}A \cap (fB)^{-1}fB)= F'(A^{-1}A \cap B^{-1}B),
\]
and hence $A \cap fB \subset F'(A^{-1}A \cap B^{-1}B)$. We deduce that
\[
A \cap FB  = A \cap F_0B  = \bigcup_{f \in F_0} A \cap fB \subset F'(A^{-1}A \cap B^{-1}B).
\]
Since $|F'| \leq |F_0| \leq |F|$ this proves the lemma.
\end{proof}
\begin{corollary}\label{lemmasymmtrick} If $A \subset FB$ for some finite $F \subset \Gamma$, then
\begin{equation}\label{SymmTrick}
A \subset F'(A^{-1}A \cap B^{-1}B) \text{ for some }F' \subset \Gamma \text{ with }|F'| \leq |F|.
\end{equation}
\end{corollary}
\begin{definition} \label{def:syndetic, commensurable}
We say that $A$ is \emph{left-syndetic}\index{left-syndetic} (respectively \emph{right-syndetic}\index{right-syndetic}) in $B$ if there exists a finite subset $F \subset \Gamma$ such that $A \subset B \subset AF$ (respectively $A \subset B \subset FA$). We say that $A$ and $B$ are \emph{left-commensurable}\index{left-commensurable} (respectively \emph{right-commensurable}\index{right-commensurable}) if there exist finite subsets $F_1, F_2 \subset \Gamma$ such that $A \subset BF_1$ and $B \subset A F_2$ (respectively $A \subset F_1B$ and $B \subset F_2A$). If $\Gamma$ is a topological group, then we modify the definitions above by requiring the sets $F$, $F_1$, $F_2$ in question be compact (rather than finite).
\end{definition}
Note that left-/right-commensurability are precisely the equivalence relations generated by left-/right-syndeticity.  
\begin{definition} The \emph{left-commensurator}\index{left-commensurator} of a subset $A \subset \Gamma$ is defined as 
\[
\mathrm{LComm}_\Gamma(A) = \{g \in \Gamma \mid \exists\, F \subset \Gamma \text{ finite}: \; gAg^{-1} \subset AF\}.
\]
\end{definition}
Dually, one can define the \emph{right-commensurator} $\mathrm{RComm}_\Gamma(A)$ so that
\begin{equation}\label{LeftRightCommInverse}
\mathrm{RComm}_\Gamma(A) = \mathrm{LComm}_\Gamma(A^{-1})^{-1}.
\end{equation}
Note that $\mathrm{LComm}_\Gamma(A)$ is a unital semigroup and that
\begin{equation}\label{LCommAlt}
g \in \mathrm{LComm}_\Gamma(A) \iff \exists \, F' \subset \Gamma \text{ finite}: \; gA \subset AgF'.
\end{equation}
From \eqref{LCommAlt} and \eqref{LeftRightCommInverse} one deduces immediately:
\begin{lemma}[Shuffling of finite sets]\label{LeftRightComm} If $B \subset \mathrm{LComm}_\Gamma(A)$ (respectively  $B \subset \mathrm{RComm}_\Gamma(A)$) is finite, then there exists a finite subset $B^* \subset \Gamma$  such that
\[
BA \subset AB^* \quad (\text{respectively } AB \subset B^*A).
\]
\end{lemma}
\begin{lemma}\label{CommDepOnClass} If $A$ and $B$ are left-commensurable, then \[\mathrm{LComm}_\Gamma(A)= \mathrm{LComm}_\Gamma(B).\]
\end{lemma}
\begin{proof} Let $F \subset \Gamma$ be finite with $B \subset AF$ and $A \subset BF$. If $gA \subset AgF'$ for some $F' \subset \Gamma$, then $gB \subset gAF \subset AgF'F \subset Bg (g^{-1}FgF'F)$, hence the lemma follows from \eqref{LCommAlt}.
\end{proof}
\begin{remark}[Commensurability of symmetric subsets] Every non-amenable group contains a left-syndetic set which is not right-syndetic (\cite{Paulsen}), but for symmetric sets the notions coincide, and we say $A$ is \emph{syndetic} in $B$. Similarly, two symmetric subsets $A$ and $B$ are left-commensurable if and only if they are right-commensurable; in this case we write $A\sim_{\mathrm{co}} B$ and say that $A$ and $B$ are \emph{commensurable}\index{commensurable}. We then denote by $[A]_{\mathrm co}$ the \emph{commensurability class}\index{commensurability class} of $A$.

If $A$ is symmetric, then $\mathrm{RComm}_\Gamma(A) = \mathrm{LComm}_\Gamma(A)^{-1}$ by \eqref{LeftRightCommInverse} and hence \[
\mathrm{Comm}_\Gamma(A) \coloneqq \mathrm{LComm}_\Gamma(A) \cap  \mathrm{RComm}_\Gamma(A)
\]
is a group, called the \emph{commensurator}\index{commensurator} of $A$ in $\Gamma$. One checks that, for $g \in \Gamma$,
\[
g \in \mathrm{Comm}_\Gamma(A)  \iff gAg^{-1} \sim_{\mathrm co} A.
\]
\end{remark}

\begin{remark}[Independence of the ambient group]\label{DefinedOverSubgroup} Suppose that $A$ and $B$ are left-commensurable and contained in a subgroup $\Gamma_0 < \Gamma$. We can then find a finite set $F_1$ such that $A \subset BF_1$ and such that $A \cap Bf_1 \neq \emptyset$ for all $f_1 \in F_1$. For any $f_1 \in F_1$ we then have $f_1 \in B^{-1}A \subset \Gamma_0$, and thus $F_1 \subset \Gamma_0$. By a symmetric argument we find $F_2 \subset \Gamma_0$ such that $B \subset AF_2$, and hence $A$ and $B$ are also left-commensurable as subsets of $\Gamma_0$. The same remark applies to right-commensurability.

In particular, every (left-/right-)commensurability class of subsets of $\Gamma_0$ extends to a unique (left-/right-)commensurability class of subsets of $\Gamma$. The latter class is then said to be \emph{supported on} $\Gamma_0$.
\end{remark}

\begin{definition} An ascending sequence $\{e\} \subset \Gamma_1 \subset \Gamma_2 \subset \Gamma_3 \subset \dots$ of subsets of $\Gamma$ is called a \emph{coarse filtration} if there exists a function $\rho: \bN \times \bN \to \bN$ such that
$\Gamma_k\Gamma_l \subset \Gamma_{\rho(k,l)}$.
It is called a \emph{filtration} if we can choose $\rho(k,l)= k+l$. The pair  $(\Gamma, (\Gamma_k)_{k \in \mathbb N})$ is then called a \emph{(coarsely) filtered} group\index{filtered group}\index{coarsely filtered group}, the group $\Gamma$ is called the \emph{ambient group}\index{ambient group} and $\Gamma_k$ is called the \emph{$k$th filtration step}\index{$k$th filtration step}.
\end{definition}
\begin{example}
 If $\Gamma$ is a group and $\Lambda \subset \Gamma$ is a unital subset, then we obtain a filtered group $(\Gamma, (\Gamma_k)_{k \in \mathbb N})$ by setting $\Gamma_k \coloneqq  \Lambda^k$. This filtered group is called the \emph{filtered group associated with the pair $(\Lambda, \Gamma)$}.
\end{example}
\begin{example}[Filtered commensurator]\label{FilteredCommensurator1}
Let $G$ be a group and $H < G$ be a subgroup. Given $g \in G$ we write 
\[ {}^gH \coloneqq gH g^{-1}\qand H(g) \coloneqq {}^gH \cap H\]
and define the \emph{complexity} of $g$ as  $c(g) \coloneqq \max\{[H: H(g)], [H:H(g^{-1})]\}$. Then the commensurator $\Gamma \coloneqq \Comm_G(H)$ consists precisely of the elements of finite complexity, and we denote by $\Gamma_n = \Comm_G^{(n)}(H)$ the elements of complexity at most $n$.

To see that this defines a coarse filtration on $\Gamma$, let $g_1 \in \Gamma_{d_1}$ and $g_2 \in \Gamma_{d_2}$ and observe that $[H:H(g_1^{-1}) \cap H(g_2)] \leq d_1d_2$.
For $\Lambda \coloneqq H(g_1g_2) \cap {}^{g_1}H$ this implies 
\begin{eqnarray*}
[{}^{g_1}H: \Lambda] &=& [{}^{g_1}H: {}^{g_1g_2}H \cap  H \cap {}^{g_1}H] \quad = \quad  [H: {}^{g_2}H \cap  {}^{g_1^{-1}}\!\!H \cap {}H]\\  & = & [H: H(g_1^{-1}) \cap H(g_2)] \quad \leq \quad d_1d_2.
\end{eqnarray*}
Since $\Lambda =  {}^{g_1g_2}H \cap  H \cap {}^{g_1}H =  {}^{g_1g_2}H \cap H(g_1) \subset H(g_1)$ we have
\begin{eqnarray*}
[H:\Lambda]  &=& [H:H(g_1)][H(g_1) : \Lambda]  \quad \leq \quad d_1 [H(g_1):\Lambda]\\ &\leq&  d_1 [{}^{g_1}H:H(g_1)] [H(g_1):\Lambda]
\quad=\quad d_1 [{}^{g_1}H: \Lambda] \quad \leq \quad d_1^2d_2.
\end{eqnarray*}
Since $[H: \Lambda] = [H:H(g_1g_2)][H(g_1g_2):\Lambda]$ we deduce that
\[
[H: H(g_1g_2)] \quad = \quad \frac{[H:\Lambda]}{[H(g_1g_2):\Lambda]} \quad \leq \quad [H: \Lambda]  \quad \leq \quad d_1^2d_2.
\]
Reversing roles yields $[H:H(g_2^{-1}g_1^{-1})] \leq d_1d_2^2$ and hence $(\Gamma, (\Gamma_n)_{n \in \mathbb N})$ is a coarsely filtered group (called the \emph{filtered commensurator} of $H$ in $G$) with
\[
\Gamma_{d_1}\Gamma_{d_2} \subset \Gamma_{d_1d_2\max\{d_1, d_2\}} \quad (d_1, d_2 \in \mathbb N).
\]
\end{example}
Similar coarse filtrations also exist on abstract commensurators, almost automorphism groups of trees, quasi-isometry groups of metric spaces and many other groups of geometric origin, where there is a natural notion of complexity similar to the one used in Example \ref{FilteredCommensurator1}.
\begin{remark}[Syndetic filtrations]
We say that a (coarse) filtration $(\Gamma_n)_{n \in \mathbb N}$ of $\Gamma$ is \emph{left-syndetic}\index{filtration!syndetic}\index{coarse filtration!syndetic} if $\Gamma_n$ is left-syndetic 
in $\Gamma_{n+1}$ for all $n \in \bN$. Equivalently, $\Gamma_1$ is left-syndetic in each $\Gamma_n$. If all $\Gamma_n$ are symmetric, we simply speak of a \emph{syndetic} coarse filtration. Right-syndetic filtrations are defined dually. 
\end{remark}
We conclude this section by discussing various notions of morphism between subsets of groups and filtered groups.
\begin{remark}[Maps between subsets of groups]
Let $A'$ be a subset of a group $\Gamma'$ and let $f: A \to A'$ be a map. If $A$ is symmetric (respectively unital), then we say that
$f$ is \emph{symmetric}\index{symmetric map} (respectively \emph{unital}\index{unital map}) if $f(a^{-1}) = f(a)^{-1}$ for all $a \in A$ (respectively $f(e) = e$). We say that $f$ is a \emph{partial homomorphism}\index{partial homomorphism} if for all $a_1, a_2 \in A$ with $a_1a_2 \in A$ we have $f(a_1a_2) = f(a_1)f(a_2)$. If $A$ is unital (and symmetric), this implies that $f$ is unital (and symmetric).

If $(\Gamma, (\Gamma_n)_{n \in \bN})$ and $(\Gamma', (\Gamma'_n)_{n \in \mathbb N})$ are coarsely filtered groups then we refer to a group homomorphism $\rho: \Gamma \to \Gamma'$ as a \emph{coarsely filtered morphism}\index{coarsely filtered morphism}\index{coarsely filtered morphism} if there exists a function $\tau: \bN \to \bN$ such that $\rho(\Gamma_n) \subset \Gamma'_{\tau(n)}$; it is called a \emph{filtered morphism}\index{filtered morphism} if $\tau$ can be chosen as the identity. 

The filtered groups $ (\rho(\Gamma), (\rho(\Gamma_n))_{n \in \mathbb N})$ and $(\ker(\rho), (\ker(\rho) \cap \Gamma_n)_{n \in \mathbb N})$ are then called its \emph{image}\index{filtered morphism!image} and \emph{kernel}\index{filtered morphism!kernel} respectively.
The restriction $\rho|_{\Gamma_n}\coloneqq\rho_n: \Gamma_n \to \Gamma'_{\tau(n)}$ is then a partial homomorphism, referred to as the  \emph{$n$-component\index{coarsely filtered morphism!component of, filtered morphism, component of}} of $\rho$. Similarly, if $N \in \mathbb N$, then a partial homomorphism $\rho: \Gamma_N \to \Gamma'$ is called an  \emph{$N$-local coarsely filtered morphism}\index{local coarsely filtered morphism} if there exists $\tau: \{1, \dots, N\} \to \mathbb N$ such that $\rho(\Gamma_n) \subset \Gamma'_{\tau(n)}$. If $\tau$ can be chosen to be the inclusion, then $\rho$ is called an \emph{$N$-local filtered morphism}. For $n \leq N$ the restriction of $\rho$ to $\Gamma_n$ is called the $n$-component of $\rho$, and we define the \emph{partial images}\index{filtered morphism!partial image} $\rho_n(\Gamma_n)$ and \emph{partial kernels} $\{\gamma \in \Gamma_n \mid \rho_n(\gamma) = e\}$.
\end{remark}
\begin{remark}
The following categories will feature prominently in the sequel:
\begin{enumerate}[(i)]
\item The category of unital subsets of groups and partial homomorphisms and its full subcategory of symmetric unital subsets.
\item The category of coarsely filtered groups and coarsely filtered morphisms and its subcategories of filtered groups and filtered morphisms, respectively filtered groups and $N$-local filtered morphisms.
\end{enumerate}
\end{remark}

\section{Commensurability classes of approximate subgroups}
Before we recall the definition of an approximate subgroup, due to Tao \cite{Tao}, we introduce two closely related asymmetric versions. Throughout this section, $\Gamma$ denotes a group.
\begin{definition}\label{DefQG}
Let $K \in \bN$.  A subset $\Lambda \subset \Gamma$ is called a $K$-\emph{left-quasi-subgroup}\index{left-quasi-subgroup} of $\Gamma$ if it satisfies the following conditions:
\begin{enumerate}[({QG}1)]
\item There exists a subset $F_1 \subset \Gamma$ such that $|F_1| \leq K$ and $\Lambda^{-1} \subset \Lambda F_1$.
\item There exists a subset $F_2 \subset  \Gamma$ such that $|F_2| \leq K$ and $\Lambda^2 \subset \Lambda F_2$. 
\end{enumerate}
It is called a $K$-\emph{right-quasi-subgroup}\index{right-quasi-subgroup} of $\Gamma$ if it satisfies the dual conditions:
\begin{enumerate}[({QG}1{${}^{\rm op}$})]
\item There exists a subset $F_1 \subset \Gamma$ such that $|F_1| \leq K$ and $\Lambda^{-1} \subset F_1\Lambda$.
\item There exists a subset $F_2 \subset  \Gamma$ such that $|F_2| \leq K$ and $\Lambda^2 \subset F_2\Lambda$. 
\end{enumerate}
\end{definition}
\begin{remark}\label{LGSAx} Let us comment on the axioms and their implications:
\begin{enumerate}[(i)]
\item In the theory of \emph{finite} quasi-subgroups it is essential to keep track of the constant $K$; in the infinite case this is usually less essential, and we will often not mention $K$ and simply speak of \emph{left-/right-quasi-subgroups}.
\item If $\Lambda \subset \Gamma$ is a left-quasi-subgroup, then $\Lambda^{-1}$ is a right-quasi-subgroup and vice versa. We thus focus on left-quasi-subgroups in the sequel.
\item If $\Lambda$ is a left-quasi-subgroup of $\Gamma$, then $\Lambda^+ \coloneqq  \Lambda\cup \{e\}$ is a \emph{unital} left-quasi-subgroup. We will thus restrict attention to unital left-quasi-subgroups. Given a unital left-quasi-subgroup $\Lambda$ we denote by $\Lambda^\infty \coloneqq  \bigcup_{k=1}^\infty \Lambda^k$ the \emph{enveloping semigroup}\index{enveloping semigroup} of $\Lambda$ and refer to $(\Lambda, \Lambda^\infty)$ as a \emph{left-quasi-group}.
\item  A subset $\Lambda \subset \Gamma$ is a left-quasi-subgroup if and only if there exist finite subsets $F_1 \subset \Lambda^{-2}$ and $F_2 \subset \Lambda^{-1}\Lambda^2$ such that $\Lambda^{-1} \subset \Lambda F_1$ and $\Lambda^{2} \subset \Lambda F_2$. This implies the existence of a finite subset $F_2' \subset \Lambda^{-1}\Lambda\Lambda^{-1}$ such that $\Lambda \Lambda^{-1} \subset \Lambda F_2'$.
\item If $d$ is a left-invariant metric on $\Gamma$ and $\Lambda \subset \Gamma$ is a left-quasi-subgroup, then there exists a constant $C \geq 0$ such that for all $x, y \in \Lambda$ we have
\begin{equation}\label{LQSGMetric}
d(x^{-1}, \Lambda) \leq C \quad \text{and} \quad d(xy, \Lambda) \leq C.
\end{equation}
In this sense, left-quasi-subgroups are subsets of groups on which multiplication and inversion are defined up to a bounded error. They thus seem to be the natural objects for a generalization of geometric group theory. We will see later that stronger symmetry assumptions are sometimes needed in applications; this will lead us to the notion of an approximate subgroup.
\end{enumerate}
\end{remark}
The methods of geometric group theory do not allow us to distinguish commensurable subgroups of a given group. Similarly we will see that the methods of geometric approximate group theory only allow us to distinguish (left-)quasi-subgroups up to (left-)commensurability. Given a left-quasi-subgroup $\Lambda$ we denote by $[\Lambda]_{\mathrm{lco}}$ its left-commensurability class.
\begin{lemma}\label{ShufflingLemma} Let $\Lambda \subset \Gamma$ and let $k > l \geq 1$.
\begin{enumerate}[(i)]
\item If $\Lambda \subset \Gamma$ satisfies (QG2), then $\Lambda^k \subset \Lambda^{k-l} F_2^l$. In particular, $(\Lambda^k)_{k \in \mathbb N}$ is a left-syndetic filtration of $\Gamma$ and $[\Lambda]_{\mathrm{lco}} = [\Lambda^k]_{\mathrm{lco}}$ for all $k \geq 1$.
\item If $\Lambda$ is a unital left-quasi-subgroup of $\Gamma$, then $\Lambda^\infty \subset \mathrm{LComm}_\Gamma(\Lambda)$. 
\end{enumerate}
\end{lemma}
\begin{proof} (i) For every $k \geq 2$, (QG2) implies $\Lambda^k = (\Lambda^{k-2})\Lambda^2 \subset \Lambda^{k-2}\Lambda F_2 = \Lambda^{k-1} F_2$, hence
the statement follows by induction.
\item (ii) If $g \in \Lambda^\infty$, then $g \in \Lambda^k$ for some $k \in \mathbb N$. Then by (i) we have $g\Lambda \subset \Lambda^{k+1} \subset \Lambda g (g^{-1} F_2^k)$, hence $g \in \mathrm{LComm}_\Gamma(\Lambda)$ by \eqref{LCommAlt}.
\end{proof}
\begin{corollary}\label{qsgpowers}
Let $\Lambda \subset \Gamma$ be a unital left-quasi-subgroup and assume that $\Lambda^{-1} \subset \mathrm{LComm}_\Gamma(\Lambda)$. Then $\Lambda^k$ is a unital left-quasi-subgroup for all $k \in \bN$. In particular, this holds if $\Lambda^{-1} \subset \Lambda^\infty$.
\end{corollary}
\begin{proof}  Choose $F_1 \subset \Lambda^{-2}$ such that  $\Lambda^{-1} \subset \Lambda F_1$.  Then $(\Lambda^k)^{-1} = (\Lambda^{-1})^k \subset (\Lambda F_1)^{k}$, and $F_1 \subset \Lambda^{-2} \subset \mathrm{LComm}(\Lambda)$ since the latter is a semigroup containing $\Lambda^{-1}$. Property (QG1) of $\Lambda^k$ then follows by applying Lemma \ref{LeftRightComm} repeatedly, and (QG2) follows from Lemma \ref{ShufflingLemma}.(i).
\end{proof}
Passing to powers will be essential for us in the sequel, hence we need to assume that $\Lambda^{-1} \subset \mathrm{LComm}_\Gamma(\Lambda)$ in most of what follows. We then say that $\Lambda$ is \emph{quasi-symmetric}\index{left-quasi-subgroup!quasi-symmetric}.
\begin{proposition}\label{Symmetrization}
If $\Lambda \subset \Gamma$ is a unital quasi-symmetric left-quasi-subgroup, then $\Lambda' \coloneqq \Lambda\Lambda^{-1}$ is a unital symmetric left-quasi-subgroup with $[\Lambda']_{lco} = [\Lambda]_{lco}$.
\end{proposition}
\begin{proof} $\Lambda'$ is unital and symmetric, hence satisfies (QG1). Let $F_1, F_2$ be as in Remark \ref{LGSAx}.(iv); then
\[
\Lambda \subset \Lambda\Lambda^{-1} \subset \Lambda \Lambda F_1 \subset \Lambda F_2F_1,
\]
hence $[\Lambda']_{lco} = [\Lambda]_{lco}$. Moreover, since $\Lambda$ and $\Lambda^{-1}$ are contained in the semigroup $\mathrm{LComm}_\Gamma(\Lambda)$, so are $F_1 \subset \Lambda^{-2}$ and $F_2 \subset \Lambda^{-1}\Lambda^2$. We deduce with Lemma \ref{LeftRightComm} that there exists a finite set $F$ such that
\[
(\Lambda')^2 = (\Lambda\Lambda^{-1})^2 \subset \Lambda \Lambda F_1 \Lambda \Lambda F_1 \subset \Lambda F_2F_1\Lambda F_2F_1 \subset \Lambda \Lambda F \subset \Lambda F_2F \subset \Lambda' F_2F.\qedhere
\]
\end{proof}
This motivates the study of unital \emph{symmetric} left-quasi-subgroups, which can be characterized as follows:
\begin{definition}[Tao]\label{DefTao} Let $K \in \bN$. A subset $\Lambda \subset \Gamma$ is called a \emph{$K$-approximate subgroup} if the following hold:
\begin{enumerate}[({AG}1)]
\item $\Lambda = \Lambda^{-1}$ and $e \in \Lambda$.
\item There exists $F_\Lambda \subset  \Gamma$ such that $|F_\Lambda| \leq K$ and $\Lambda^2\subset \Lambda  F_\Lambda$.
\end{enumerate}
\end{definition}
As before, we will usually drop $K$ from the name. If $\Lambda$ is an approximate subgroup we can find a finite set $F_\Lambda$ such that
\begin{equation}\label{FLambda}
\Lambda^2 \subset F_\Lambda \Lambda \cap \Lambda F_\Lambda \quad \text{and} \quad F_\Lambda \subset \Lambda^3.
\end{equation}
We will usually reserve the letter $F_\Lambda$ to denote such a set. 
\begin{remark}[Symmetrization]
Since a $K$-approximate subgroup is a unital symmetric $K$-left-quasi-subgroup (or equivalently, a  unital symmetric $K$-right-quasi-subgroup), Proposition \ref{Symmetrization} yields a bijection
\begin{align*}
& \{[\Lambda]_{lco} \mid \Lambda \subset \Gamma \text{ unital quasi-symmetric left-quasi-subgroup}\}\\
& \longrightarrow \{[\Lambda']_{\mathrm{co}} \mid \Lambda' \subset \Gamma \text{ approximate subgroup}\},
\end{align*}
which maps the class of $\Lambda$ to the class of $\Lambda \Lambda^{-1}$.
\end{remark}
First examples of approximate subgroups arise from syndetic subsets:
\begin{proposition}\label{WrongFiniteIndex}
If $\Xi \subset \Gamma$ is an approximate subgroup and $\Lambda \subset \Xi$ is unital, symmetric and syndetic, then $\Lambda$ is an approximate subgroup of $\Gamma$. In particular, every unital, symmetric, syndetic subset of $\Gamma$ is an approximate subgroup.
\end{proposition}
\begin{proof}  Let $F_1, F_2$ be finite sets such that $\Xi \subset \Lambda F_1$ and $\Xi^2 \subset \Xi F_2$, and let $F\coloneqq  F_1F_2$. Then $\Lambda^2 \subset \Xi^2 \subset \Xi F_2 \subset \Lambda F_1 F_2 = \Lambda F$.
\end{proof}
However, these do not define interesting commensurability classes. The following class of examples is more interesting.
\begin{example}[Fundamental commensurability class]\label{FundCommClass} Let $G$ be a locally compact group; then all relatively compact symmetric\footnote{Non-symmetric relatively compact identity neighborhoods are still unital quasi-symmetric left-quasi-subgroups, which illustrates that this wider notion is often more natural.} identity neighborhoods of $G$ are approximate subgroups of $G$. Moreover, they all define the same commensurability class $\chi(G)$ of approximate subgroups of $G$, called the \emph{fundamental commensurability class}\index{fundamental commensurability class} of $G$. 
\end{example}
By a recent breakthrough of Hrushovski \cite{Hrushovski_Arithmeticity}, all commensurability classes of approximate subgroups arise from fundamental commensurability classes of connected Lie groups by means of various constructions discussed below. This explains the fundamental importance of Example \ref{FundCommClass}. To discuss further examples, we record the symmetric cases of Lemma \ref{ShufflingLemma} and Corollary \ref{qsgpowers} for ease of reference:
\begin{proposition}\label{PropShuffling} Let $\Lambda \subset \Gamma$ be an approximate subgroup.
\begin{enumerate}[(i)]
\item If $F_\Lambda$ is as in \eqref{FLambda}, then 
for all $k>l \geq 1$ we have $\Lambda^k \subset \Lambda^{k-l} F_\Lambda^l \cap F_\Lambda^l \Lambda^{k-l}$.
\item $\Lambda^k$ is an approximate subgroup of $\Comm_\Gamma(\Lambda)$ for every $k \in \mathbb N$.
\item $(\Lambda^k)$ is a syndetic filtration of $\Comm_\Gamma(\Lambda)$, in particular the approximate subgroups $\Lambda^k$ are mutually commensurable.
\end{enumerate}
\end{proposition}
 Proposition \ref{PropShuffling} connects approximate subgroups to syndetic coarse filtrations.
\begin{remark}[Approximate subgroups and coarse filtrations]\label{FiltrationExamples}
By Proposition \ref{PropShuffling} every approximate subgroup gives rise to a syndetic filtration of its commensurator. Conversely, let $(\Gamma_n)_{n \in \mathbb N}$ be a left-syndetic coarse filtration and let $\Lambda$ be a \emph{symmetric} set with $\Gamma_k \subset \Lambda \subset \Gamma_n$ for some $n \geq k \geq 1$. Then there exists a finite $F \subset \Gamma$ with
\[
\Lambda^2 \subset \Gamma_{\rho(n,n)} \subset \Gamma_k F \subset \Lambda F,
\]
hence $\Lambda$ is an approximate subgroup of $\Gamma$; moreover, the commensurability class of $\Lambda$ depends only on the given coarse filtration. 

In particular, commensurability classes of approximate subgroups can be identified with equivalence classes of syndetic coarse filtrations, where syndetic coarse filtrations $(\Gamma_k^{(1)})$ and $(\Gamma_k^{(2)})$ are considered equivalent if for all $k \in \mathbb N$ and $i,j \in \{1,2\}$ there exists $k' \in \mathbb N$ such that $\Gamma^{(i)}_k \subset \Gamma^{(j)}_{k'}$.

In applications it is often more convenient to work with a  left-syndetic coarse filtration $(\Gamma_n)_{n \in \mathbb N}$ which is only \emph{quasi-symmetric}\index{coarse filtration!quasi-symmetric} in the sense that there exist $k,l \in \mathbb N$ such that $e \in \Gamma_k^{-1} \subset \Gamma_l$. In this case $\Lambda \coloneqq \Gamma_k\Gamma_k^{-1}$ is symmetric and satisfies $\Gamma_k \subset \Lambda \subset \Gamma_{\rho(k,l)}$. To summarize:
\end{remark}
\begin{proposition}\label{PropCoarseFiltrationMain} The following are equivalent for a subset $\Lambda \subset \Gamma$.
\begin{enumerate}[(i)]
\item $\Lambda$ is left-commensurable to an approximate subgroup of $\Gamma$.
\item $\Lambda$ is left-commensurable to a unital quasi-symmetric left-quasi-subgroup of $\Gamma$.
\item $\Lambda$ is left-commensurable to some (hence any) filtration step in a quasi-symmetric left-syndetic coarse filtration of $\Gamma$.
\item $\Lambda$ is left-commensurable to some (hence any) filtration step in a syndetic coarse filtration of $\Gamma$
\end{enumerate}
Moreover, there are canonical bijections between commensurability classes of approximate subgroups, left-commensurability classes of unital quasi-symmetric left-quasi-subgroups and equivalence classes of quasi-symmetric left-syndetic coarse filtrations (equivalently, syndetic coarse filtrations) of $\Gamma$.
\end{proposition}
We have already mentioned that many groups of geometric origin admit natural coarse filtrations by symmetric subsets. If these are syndetic, then they thus provide examples of approximate subgroups. Let us give a concrete example:
\begin{example}[A syndetic filtration]\label{AAutMain} Let $(\cT_d, o)$ denote the $d$-regular rooted tree\index{$d$-regular rooted tree} as in Definition \ref{regularrootedtree}. We denote by $\mathrm{Aut}(\cT_d, o)$ the group of rooted automorphisms of $\cT_d$ and by $\mathrm{AAut}(\cT_d, o)$ the group of rooted almost automorphisms of $\cT_d$. We refer the reader to Appendix \ref{trees} for background on these groups. 

By Lemma \ref{AAutFiltration} the group $\mathrm{AAut}(\cT_d, o)$  admits a natural symmetric filtration given by
 \begin{equation}\label{AAutm}
 \mathrm{AAut}^m(\mathcal{T}_{d}, o) \coloneqq \{ [\varphi] \in \mathrm{AAut}(\mathcal{T}_{d}, o) \mid \mathrm{depth} [\varphi] \leq m\},
 \end{equation}
where $\mathrm{Aut}(\cT_d, o) = \mathrm{AAut}^0(\mathcal{T}_{d}, o)$. It follows from work of Le Boudec \cite{AdrienLeBoudec} that if $V_d \subset \mathrm{AAut}(\cT_d, o)$ denotes the Higman--Thompson group\footnote{For the definition see \cite[Sec.\ 2 and 3]{AdrienLeBoudec}, where $\cT_d$ is denoted $\cT_{d,d}$} and ${V}^{m}_d \coloneqq V_{d} \cap \mathrm{AAut}^{m}(\mathcal{T}_{d})$, then for all $m \in \bN_0$ we have
 \begin{equation}\label{SyndeticExample}
\mathrm{AAut}^{m+1}(\mathcal{T}_{d}) \subset \mathrm{AAut}^{m}(\mathcal{T}_{d}) V_d^{m+1} \qand |V_d^{m+1}|< \infty.
\end{equation}
Indeed, by Remark \ref{LeBoudecConvenient} we have $|V_d^{m+1}|< \infty$ and $\mathrm{AAut}^{m+1}(\mathcal{T}_{d}, o) \subset W V_d^{m+1}$, where $W <  \mathrm{Aut}(\cT_d, o) = \mathrm{AAut}^0(\mathcal{T}_{d}, o)$ denotes the pointwise stablizer of the first level of $\cT_d$. This shows that the filtration \eqref{AAutm} is syndetic, hence the subsets $\mathrm{AAut}^m(\mathcal{T}_{d}, o)$ are mutually commensurable approximate subgroups of $\mathrm{AAut}(\cT_d, o)$.
\end{example}

We close this section by discussing commensurators of approximate subgroups; the following is contained in \cite[Remark 4.4 and Lemma 5.1]{Hrushovski_Arithmeticity}: 
\begin{proposition}[Commensurator]\label{CommAG} Let $\Lambda \subset \Gamma$ be an approximate subgroup.
\begin{enumerate}[(i)]
\item If $F_0 \subset  \mathrm{Comm}_\Gamma(\Lambda)$ is finite, then $\Xi_{F_0} \coloneqq F_0\Lambda \cup \Lambda \cup \Lambda F_0^{-1}$ is an approximate subgroup of $\Gamma$ which is commensurable to $\Lambda$. If $F_0$ is unital, then the same holds for $F_0\Lambda F_0^{-1}$.
\item  $\mathrm{Comm}_\Gamma(\Lambda)$ equals the union of all approximate subgroups $\Xi$ with $\Xi\sim_{\mathrm{co}} \Lambda$.
\end{enumerate}
\end{proposition}
\begin{proof} (i) Assume without loss of generality that $F_0$ is unital and let $\Xi = \Xi_{F_0}$ or $\Xi = F_0 \Lambda F_0^{-1}$. Then
$\Xi$ satisfies (AG1) by construction and by Lemma \ref{LeftRightComm} there exist finite sets $F, F'$ with
\[
\Lambda \subset \Xi \subset \Xi^2 \subset F_0\Lambda F_0^{-1} F_0 \Lambda F_0^{-1} \subset \Lambda^2 F \subset \Lambda F' \subset \Xi F'.
\]
(ii) If $g \in  \mathrm{Comm}_\Gamma(\Lambda)$, then $g \in \Xi_{\{g\}}$, where $\Xi_{\{g\}}$ is as in (i). Conversely assume that $\Xi \sim_{co} \Lambda$ is an approximate subgroup. Then by Proposition \ref{PropShuffling} and Lemma \ref{CommDepOnClass} we have $\Xi \subset \mathrm{Comm}_\Gamma(\Xi) = \mathrm{Comm}_\Gamma(\Lambda)$.
\end{proof}
 Note that if $\Lambda$ and $\Xi$ are commensurable approximate subgroups, then (AG2) and Corollary \ref{lemmasymmtrick} yield
\begin{equation}\label{CommElements}
[\Lambda]_{\mathrm{co}} = [\Lambda^k]_{\mathrm{co}} = [\Xi]_{\mathrm{co}} = [\Xi^k]_{\mathrm{co}} = [\Lambda^k \cap \Xi^l]_{\mathrm{co}} \text{ for all }k,l \geq 2.
\end{equation}
\section{Categories of approximate groups}
We introduce the following parlance:
\begin{definition}\label{DefApGr} If $\Gamma$ is a group and $\Lambda \subset \Gamma$ is an approximate subgroup, then the pair $(\Lambda, \Lambda^\infty)$ is called an  \emph{approximate group}\index{approximate group} and the associated filtered group $(\Lambda^\infty, (\Lambda^k)_{k \in \bN})$ is called a \emph{filtered approximate group}\index{approximate group!filtered}. If $(\Xi, \Xi^\infty)$ and $(\Lambda, \Lambda^\infty)$ are approximate groups, then $(\Xi, \Xi^\infty)$ is called an \emph{approximate subgroup}\index{approximate group!approximate subgroup of} of $(\Lambda, \Lambda^\infty)$  provided $\Xi \subset \Lambda$ and $\Xi^\infty < \Lambda^\infty$.
\end{definition}
\begin{remark}
We will consider groups as approximate groups by identifying $\Gamma$ with $(\Gamma, \Gamma)$. Then $(\Xi, \Xi^\infty)$ is an approximate subgroup of $(\Gamma, \Gamma)$ in the sense of Definition \ref{DefApGr} if and only if $\Xi$ is an approximate subgroup of $\Gamma$ in the sense of Definition \ref{DefTao}. We say that an approximate group $(\Lambda, \Lambda^\infty)$ is \emph{finite}\index{approximate group!finite} (\emph{countable}\index{approximate group!countable}) if $\Lambda$ is finite (countable). Note that the former does not imply that $\Lambda^\infty$ is finite. Similarly we say that an approximate subgroup $(\Xi, \Xi^\infty)$ of an approximate group $(\Lambda, \Lambda^\infty)$ is \emph{syndetic} if $\Xi$ is a syndetic subset of $\Lambda$\index{approximate group!syndetic approximate subgroup of} (which does not imply that $\Xi^\infty \subset \Lambda^\infty$ is syndetic). If $\Lambda$ is syndetic in $\Lambda^\infty$, then we refer to $(\Lambda, \Lambda^\infty)$ as an \emph{almost group}\index{almost group}. 
\end{remark}
We discuss several notions of morphisms between approximate groups, starting from the most obvious generalizations of a (partial) group homomorphism:
\begin{definition}\label{def: global morphism} A \emph{global morphism}\index{global morphism}\index{approximate group!morphism of}\index{morphism} between approximate groups is a filtered morphism between the associated filtered groups. Given $N \in \mathbb N$, an \emph{$N$-local morphism}\footnote{If $\rho_N: \Xi^N \to \Lambda^N$ is an $N$-local morphism in this sense, then $\rho_1 = \rho_N|_{\Xi}$ is a  \emph{Freiman $N$-homomorphism}\index{Freiman $k$-homomorphism} in the sense of \cite[Def. 1.4]{Breuillard}. Conversely, one can check that if $\rho: \Xi \to \Lambda$ is a Freiman $m$-homomorphism for some $m \geq \frac 3 2 k$, then there is a unique $k$-local morphism $\rho_k: \Xi^k \to \Lambda^k$ such that $\rho_k(x_1 \cdots x_k) \coloneqq  \rho(x_1) \cdots \rho(x_k)$. Up to a change of constant the two concepts are thus equivalent.}\index{approximate group!$k$-local morphism of} between approximate groups is an $N$-local filtered morphism between the associated filtered groups.
\end{definition}
More explicitly, a global morphism $\rho: (\Xi, \Xi^\infty) \to (\Lambda, \Lambda^\infty)$ between approximate groups is given by a group homomorphism $\rho: \Xi^\infty \to \Lambda^\infty$ which restricts to a partial homomorphism $\rho_1: \Xi \to \Lambda$. Similarly, an $N$-local morphism between approximate groups is given by a partial homomorphism $\rho_N: \Xi^N \to \Lambda^N$ which restricts to a partial homomorphism $\rho_1: \Xi \to \Lambda$. We often write it as $\rho_N: (\Xi, \Xi^N) \to (\Lambda, \Lambda^N)$.
Approximate groups and global or $N$-local morphisms form a category; in particular we can talk about isomorphic (or locally isomorphic) approximate groups.
\begin{remark}[Kernels and Images]\label{KernelsAndImages} Let $\rho: (\Xi, \Xi^\infty) \to (\Lambda, \Lambda^\infty)$ be a global morphism of approximate groups. We define the \emph{$k$th partial kernel}\index{morphism!partial kernel} and the \emph{$k$th partial image}\index{morphism!partial image} as $\ker_k(\rho) \coloneqq  \ker(\rho) \cap \Xi^k$, respectively $\im_k(\rho) \coloneqq  \rho(\Xi^k) = \rho(\Xi)^k$. By definition these are the partial kernels and images of the induced map between the associated filtered groups. Partial kernels and images can also be defined for local morphisms in the obvious way. In Example \ref{ExPI} we will show that partial images are pairwise commensurable approximate subgroups. In Corollary \ref{PartialKernelGlobal} and Example \ref{NotPKG} we will show that the partial kernels $\ker_k(\rho)$ are pairwise commensurable approximate subgroups for $k \geq 2$, whereas $\ker_1(\rho)$ need not be an approximate subgroup.
\end{remark}
It turns out that, as far as approximate groups are concerned, the notion of global or local morphism is not as natural as may seem at first sight. To explain the problem, let $\Gamma$ and $H$ be groups and let $\rho: \Gamma \to H$ be a function. Then $\rho$ is a group  homomorphism if and only if its graph $\mathrm{gr}(\rho) = \{(g, \rho(g)) \mid g \in \Gamma\}$ is a subgroup of $\Gamma \times H$. In the context of approximate groups it seems more natural to consider maps whose graph $\mathrm{gr}(\rho)$ is an \emph{approximate subgroup} of $\Gamma \times H$. We now work towards a characterization of such functions. 
\begin{remark}[Left-defect set]\label{Drho} We define the \emph{left-defect set}\index{left-defect set} of $\rho: \Gamma \to H$ as 
\[
D(\rho) = \{\rho(y)^{-1}\rho(x)^{-1}\rho(xy) \mid x,y \in \Gamma\}.
\]
Since $\rho(xy) = \rho(x)\rho(y)\rho(y)^{-1}\rho(x)^{-1}\rho(xy)$, this $D(\rho)$ is the smallest subset $D \subset H$ such that
\begin{equation}\label{Drho1}
\rho(xy) \in \rho(x)\rho(y)D \qand \rho(x)\rho(y) \in \rho(xy)D^{-1} \quad \text{ for all }x,y \in \Gamma.
\end{equation}
If $D \subset H$ is \emph{any} set satisfying \eqref{Drho1} (for example, any set containing $D(\rho)$), then $\{\rho(x)\rho(x^{-1}), \rho(x^{-1})\rho(x)\} \subset \rho(e)D^{-1}$ and hence
\begin{equation}\label{Drho2}
\rho(x^{-1}) \in \rho(x)^{-1}\rho(e)D^{-1} \qand \rho(x)^{-1} \in \rho(x^{-1}) D \rho(e)^{-1} \quad \text{ for all }x \in \Gamma.
\end{equation}
Moreover, if $A, B \subset H$ and $x \in \rho^{-1}(A)$ and $y \in \rho^{-1}(B)$, then
$\rho(xy) \in \rho(x)\rho(y)D \subset ABD$, and hence
\begin{equation}\label{EvenWeirder}
\rho^{-1}(A) \rho^{-1}(B) \subset \rho^{-1}(ABD) \quad \text{ for all }A,B \subset H.
\end{equation}
Combining \eqref{Drho1} and \eqref{Drho2} we also deduce that
\begin{equation}\label{Drho3}
\rho(xy^{-1})\in \rho(x)\rho(y)^{-1}D' \quad \text{ for all }x,y \in \Gamma,
\end{equation}
where $D' \coloneqq \rho(e)D^{-1}D$. If $D'$ is any set satisfying \eqref{Drho3}, then the same argument as in the proof of \eqref{EvenWeirder} shows that
\begin{equation}\label{ReallyWeird}
\rho^{-1}(A) \rho^{-1}(B)^{-1} \subset \rho^{-1}(AB^{-1}D') \quad \text{ for all }A,B \subset H.
\end{equation}
\end{remark}
The following is an asymmetric version of \cite[Lemma 2.1.18]{Machado23}:
\begin{lemma}\label{MacAsym} Let $\rho: \Gamma \to H$ be a function. Then the graph $\mathrm{gr}(\rho)$ is a left-quasi-subgroup of $\Gamma \to H$ if and only if $D(\rho)$ is finite. 
\end{lemma}
\begin{proof} Assume first that $D(\rho)$ is finite and let $x, y \in \Gamma$. By \eqref{Drho2} we have
\[
(x, \rho(x))^{-1} \in (x^{-1}, \rho(x^{-1}))(\{e\} \times D(\rho)\rho(e)^{-1}),
\]
and by \eqref{Drho1} we have
\[
(x,\rho(x))(y, \rho(y)) = (xy, \rho(x)\rho(y)) \in (xy, \rho(xy))(\{e\} \times D(\rho)^{-1}),
\]
hence $\mathrm{gr}(\rho)$ is a left-quasi-subgroup.

Conversely assume that $\mathrm{gr}(\rho)$ is a left-quasi-subgroup and let $F=F_\Gamma \times F_H$ $\subset \Gamma \times H$ be finite and symmetric with $\mathrm{gr}(\rho)^2 \subset \mathrm{gr}(\rho)F$.
For all $x,y \in \Gamma$ we then find 
$z \in \Gamma$ and $u \in F_\Gamma$, $v \in F_H$ with $(x,\rho(x))(y, \rho(y)) = (z,\rho(z))(u,v)$. The first coordinate yields $z=xyu^{-1}$ and the second coordinate yields
\begin{equation}\label{MacAsym1}
\rho(x)\rho(y) \in \rho(xy u^{-1})F_H.
\end{equation}
Now $F^* \coloneqq (\rho(F_\Gamma)F_H)^{-1}$ is finite, and choosing $z = x^{-1} = y$ in \eqref{MacAsym1} yields
\begin{equation}\label{MacAsym2}
\rho(z^{-1})\rho(z) \in (F^*)^{-1} \implies \rho(z^{-1})^{-1} \in \rho(z)F^* \quad \text{for all }z \in \Gamma.
\end{equation}
Now let $u \in F_\Gamma$ as in \eqref{MacAsym1} and observe that
$ 
(u^{-1}, \rho(y^{-1}x^{-1})\rho(xyu^{-1})) =$\\
$ \left(y^{-1}x^{-1}, \rho(y^{-1}x^{-1})\right)\left(xyu^{-1}, \rho(xyu^{-1})\right) \in (F_\Gamma \times H) \cap \mathrm{gr}(\rho)^2.
$
By definition of $F$ we have
\[
(F_\Gamma \times H) \cap \mathrm{gr}(\rho)^2 \subset (F_\Gamma \times H) \cap \mathrm{gr}(\rho)F \subset  ((F_\Gamma^2 \times H) \cap \mathrm{gr}(\rho))F,
 \]
 and since the projection of $(F_\Gamma^2 \times H) \cap \mathrm{gr}(\rho)$ to $\Gamma$ is injective with finite image, we deduce that $(F_\Gamma \times H) \cap \mathrm{gr}(\rho)^2$ is contained in a finite set, which we may assume to be symmetric and of the form $F_\Gamma' \times F_H'$. This then implies that
 \begin{equation}\label{MacAsym3}
  \rho(y^{-1}x^{-1})\rho(xyu^{-1}) \in F_H' \implies \rho(xyu^{-1}) \subset \rho(y^{-1}x^{-1})^{-1}F_H'.
 \end{equation}
Applying \eqref{MacAsym1}, \eqref{MacAsym3} and  \eqref{MacAsym2} (in this order) then yields
 \[
 \rho(x)\rho(y) \in   \rho(xy u^{-1})F_H \subset \rho(y^{-1}x^{-1})^{-1}F_H'F_H \subset \rho(xy) F^*F_H'F_H,
 \]
 and hence $D(\rho) \subset (F^*F_H'F_H)^{-1}$ is finite. 
\end{proof}
\begin{definition}\label{DefQM} A map $\rho: \Gamma \to H$ is called a \emph{quasimorphism}\index{quasimorphism} if $D(\rho)$ is finite.
\end{definition}
\begin{remark}[Left vs.\ right] A priori what we call a quasimorphism in Definition \ref{DefQM} should be called a \emph{left-quasimorphism}\index{left-quasimorphism}, and one should define a \emph{right-quasimorphism}\index{right-quasimorphism} by demanding that the \emph{right-defect set}\index{right-defect set}
\begin{equation}\label{RDefectSet}
D^*(f) \coloneqq  \{f(x)f(y)f(xy)^{-1} \mid x, y \in G\}
\end{equation}
be finite. However, it was proved by Heuer in \cite[Prop.\ 2.3]{Heuer1} that the two notions coincide. This has the surprising consequence that a graph of a function between two groups is a left-quasi-subgroup if and only if it is a right-quasi-subgroup. 
\end{remark}
We record the following symmetric version of Lemma \ref{MacAsym} (\cite[Lemma 2.1.18]{Machado23}):
\begin{corollary} The graph of $\rho: \Gamma \to H$ is an approximate subgroup of $\Gamma \times H$ if and only if $\rho$ is a symmetric unital quasimorphism.
\end{corollary}
This motivates us to consider the following classes of morphisms:
\begin{definition} Let $(\Xi, \Xi^\infty)$ and $(\Lambda, \Lambda^\infty)$ be approximate groups.
\begin{enumerate}[(i)]
\item A map of pairs $\rho: (\Xi, \Xi^\infty) \to (\Lambda, \Lambda^\infty)$ is called a \emph{global quasimorphism}\index{global quasimorphism} (or simply a \emph{quasimorphism})\index{quasimorphism} if $\rho: \Xi^\infty \to \Lambda^\infty$ is a quasimorphism in the sense of Definition \ref{DefQM}.
\item A map of pairs $\rho_N: (\Xi, \Xi^N) \to (\Lambda, \Lambda^\infty)$ is called an \emph{$N$-local quasimorphism}\index{quasimorphism!local} (or simply an \emph{$N$-quasimorph\-ism}) if the (left-)$N$-defect set
\begin{equation}\label{DefectSet}
D_N(\rho_N) \coloneqq  \{\rho_N(y)^{-1}\rho_N(x)^{-1}\rho_N(xy) \mid x, y, xy \in \Xi^N\}
\end{equation}
is finite.
\end{enumerate}
\end{definition}
For the moment we do not insist that quasimorphisms be symmetric, although some applications may require this.
\begin{remark}[Quasimorphisms and filtrations]\label{Quasifilt}
If $\rho: (\Xi, \Xi^\infty) \to (\Lambda, \Lambda^\infty)$ is a quasimorphism between approximate groups, then $D(\rho) \subset \Lambda^\infty$ is finite, hence there exists $M \in \mathbb N$ such that $D(\rho) \subset \Lambda^M$. We then have 
\[
\rho(\Xi^k) \subset \rho(\Xi)^k D(\rho)^{k-1} \subset \Lambda^{k+(k-1)M} \subset (\Lambda^{2M})^k,
\]
hence $\rho$ induces a filtered morphism $(\Xi, \Xi^\infty)\to (\Lambda^{2M}, \Lambda^\infty)$, and thus a coarsely filtered morphism between $(\Xi, \Xi^\infty)$ and $(\Lambda, \Lambda^\infty)$.
\end{remark}
We conclude this section by discussing two generalizations of the notion of a quasimorphism which we will need in the sequel.
\begin{remark}[Extension to locally compact targets]\label{TopQM}
If $H$ is a locally compact group, rather than an abstract (discrete) group, then it is natural to call a map $\rho: \Gamma \to H$ a \emph{quasimorphism} provided $D(\rho)$ is relatively compact. For distinction, we sometimes say that $\rho$ is a \emph{topological quasimorphism}\index{quasimorphism!topological}. Remark \ref{Drho} applies mutatis mutandis to the topological case. In particular, there exists a relatively compact subset $K'\subset H$ such that
\begin{equation}\label{HruWeirdK}
\rho(xy^{-1}) \subset \rho(x)\rho(y)^{-1}K' \quad \text{ for all }x,y \in \Gamma.
\end{equation}
If $K$ is such a set and contains $D(\rho)$, then for all $A, B \subset H$ we have
\begin{equation}\label{WeirdStuff}
\rho^{-1}(A) \rho^{-1}(B) \subset \rho^{-1}(ABK) \qand \rho^{-1}(A) \rho^{-1}(B)^{-1} \subset \rho^{-1}(AB^{-1}K).
\end{equation}
\end{remark}
\begin{example}[Real-valued quasimorphisms]\label{RQM}
A real-valued function $\rho: \Gamma \to \R$ is a (topological) quasimorphism if and only if $D(\rho)$ is contained in a compact symmetric interval. The smallest such interval is then given by $[-d(\rho), d(\rho)]$, where
\begin{equation}\label{DefectNew}
d(\rho)\coloneqq \sup|\rho(gh)-\rho(g) - \rho(h)|
\end{equation}
denotes the \emph{defect}\index{defect} of $\rho$. See Section \ref{SecQuasimorphisms} for details about real-valued (topological) quasimorphisms.
\end{example}
\begin{remark}[Topological quasimorphisms with non-commutative targets] 
It was po\-in\-ted out by Thurston long ago that ``constructing non-trivial quasimorphisms with non-commutative targets is difficult''. Recently, a far reaching structure theory for general quasimorphisms with countable non-commutative target was developed by Fujiwara and Kapovich \cite{FujKap}, which confirms this statement. It is remarkable that, on the other hand, there are plenty of non-trivial topological quasimorphisms into non-discrete Lie groups, see
\cite{Brandenbursky}.
\end{remark}
The following is adapted from the proof of \cite[Prop.\ 5.12]{Hrushovski}
\begin{lemma}\label{HruMagic} Let $\rho: \Gamma \to H$ be a topological quasimorphism and let $K' \subset H$ be as in \eqref{HruWeirdK}. Then for all $A, B, F_1 \subset H$ with $A \subset BF_1$ there exists $F_2 \subset \Gamma$ with $|F_2| \leq |F_1|$ such that
\[
\rho^{-1}(A) \subset \rho^{-1}(BB^{-1}K')F_2.
\]
\end{lemma}
\begin{proof} Let $M \subset H$ and $x \in H$. We claim that
\begin{equation}\label{HruLemma1}
\exists\, x' = x'(M,x) \in \Gamma\ \  \text{ such that }\ \  \rho^{-1}(Mx) \subset \rho^{-1}(MM^{-1}K')x'.
\end{equation}
If $\rho^{-1}(Mx) = \emptyset$, then there is nothing to show. Otherwise pick $z, x' \in \rho^{-1}(Mx)$ and observe that
\[
\rho(z(x')^{-1}) \in \rho(z)\rho(x')^{-1}K' \subset Mx(Mx)^{-1}K' \subset MM^{-1}K'.
\]
This implies that $z(x')^{-1} \in \rho^{-1}(MM^{-1}K')$. Since $z \in \rho^{-1}(Mx)$ was arbitrary this shows \eqref{HruLemma1}.
Now, if we set $F_2 \coloneqq \{x'=x'(B, x) \mid x \in F_1\}$, then $|F_2| \leq |F_1|$ and
\[
\rho^{-1}(A) \subset \bigcup_{x \in F_1} \rho^{-1}(Bx) \subset \bigcup_{x' \in F_2} \rho^{-1}(BB^{-1}K')x' = \rho^{-1}(BB^{-1}K')F_2.\qedhere
\]
\end{proof}
Historically, quasimorphisms originate from work of Ulam on stability of linear functional equations in Banach spaces \cite{Ulam}. To describe Ulam's characterization of quasimorphisms we recall from Proposition \ref{ExistLeftAdmissible} that if $H$ is a countable group, then it admits a left-invariant \emph{proper} metric $d$.
\begin{proposition} If $\Gamma$ is a group and $H$ is a countable group with proper left-invariant metric $d$, then a function $f: \Gamma \to H$ is a quasimorphism if and only if there exists $C \geq 0$ such that 
\begin{equation}\label{dCQuasiHom}
d(f(xy), f(x)f(y)) \leq C \quad \text{for all }x,y \in \Gamma.
\end{equation}
\end{proposition}
\begin{proof} Since \eqref{dCQuasiHom} is equivalent to $d(f(y)^{-1}f(x)^{-1}f(xy), e) \leq C$ this follows from the fact that, by properness of $d$, a subset of $H$ is finite if and only if it is contained in a ball around $e$.
\end{proof}
This motivates the following definition:
\begin{definition} If $\Gamma, H$ are groups and $d$ is a metric on $H$, then a function $f: \Gamma \to H$ satisfying \eqref{dCQuasiHom} is called \emph{$(d, C)$-quasi-homomorphism}. It is called a \emph{$d$-quasi-homomorphism} if it is a $(d,C)$-quasi-homomorphism\index{quasi-homomorphism} for some $C \geq 0$. 
\end{definition}
For proper left-invariant metrics on locally compact groups we recover the notion of a topological quasimorphism. The definition of a quasi-homomorphism is particularly useful when dealing with non-locally-compact targets such as additive groups of Banach spaces, which are the classical context of Ulam stability (cf.\ \cite{Ulam}, \cite{UlamStability}).

\section{Direct and inverse images}
The fact that direct and inverse images of groups under group homomorphisms are again groups is of fundamental importance in group theory.
In this section we study images and pre-images of (thickened) approximate subgroups under various types of morphisms. Unless otherwise mentioned $\Gamma$ and $H$ denote discrete groups. As far as direct images are concerned, everything is straightforward:
\begin{proposition}[Direct images]\label{ImagesExist} Let $\rho: \Gamma \to H$ be a quasimorphism. If $\Xi \subset \Gamma$ is a left-quasi-subgroup, then $\Lambda \coloneqq  \rho(\Xi) \subset H$ is a left-quasi-subgroup. In particular, if $\Xi$ is an approximate subgroup and $\rho$ is symmetric, then $\Lambda$ is an approximate subgroup.
\end{proposition}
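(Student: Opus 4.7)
The plan is to verify conditions (QG1) and (QG2) of Definition \ref{DefQG} for $\Lambda = f(\Xi)$ directly, exploiting the finiteness of the defect set $D(f) = \{f(y)^{-1}f(x)^{-1}f(xy) \mid x,y \in G\}$. The key identity, used throughout, is that for all $x,y \in G$ there exists $d \in D(f)$ with $f(xy) = f(x)f(y)d$, i.e.\ $f(xy) \in f(x)f(y)D(f)$ and $f(x)f(y) \in f(xy)D(f)^{-1}$. Applied in one direction this lets us split $f$ of a product; applied in the other it lets us push a finite set inside of $f$ past the product and out to the right as a finite set multiplying $\Lambda$ on the right. This "push" operation is exactly what is needed to turn (QG2) for $\Xi$ into (QG2) for $\Lambda$, and similarly for (QG1).

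Concretely, for (QG2): for $x,y \in \Xi$ the identity gives $\Lambda^2 \subset f(\Xi^2)D(f)^{-1}$. By (QG2) for $\Xi$ we have $\Xi^2 \subset \Xi F_2$ for some finite $F_2 \subset G$, hence $f(\Xi^2) \subset f(\Xi F_2)$. For each $\xi \in \Xi$ and $g \in F_2$, another application of the identity gives $f(\xi g) \in f(\xi)f(g)D(f) \subset \Lambda\cdot f(F_2)D(f)$. Combining, $\Lambda^2 \subset \Lambda\cdot f(F_2)D(f)D(f)^{-1}$, which is $\Lambda$ times a finite set. For (QG1): specializing the identity to $y=x^{-1}$ yields $f(x)^{-1} \in f(x^{-1})D(f)\{f(e)^{-1}\}$, so $\Lambda^{-1} \subset f(\Xi^{-1})\cdot D(f)\{f(e)^{-1}\}$. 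By (QG1) for $\Xi$, $\Xi^{-1} \subset \Xi F_1$, and the same "push" argument as above gives $f(\Xi F_1) \subset \Lambda\cdot f(F_1)D(f)$. Combining, $\Lambda^{-1} \subset \Lambda\cdot f(F_1)D(f)^2\{f(e)^{-1}\}$, again $\Lambda$ times a finite set.

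For the ``in particular'' claim, assume $\Xi$ is an approximate subgroup and $f$ is symmetric. Then $f(e) = e$ (as noted in the paper's definition of symmetric map on a unital set), so $e = f(e) \in \Lambda$; and since $\Xi = \Xi^{-1}$ we get $\Lambda^{-1} = \{f(x)^{-1}\} = \{f(x^{-1})\} = f(\Xi^{-1}) = f(\Xi) = \Lambda$, so $\Lambda$ is symmetric and unital. Together with (QG2), this gives (AG1) and (AG2), hence $\Lambda$ is an approximate subgroup by Definition \ref{DefTao}. There is no serious obstacle here; the only thing to watch is the bookkeeping of the various finite sets ($F_1$, $F_2$, $D(f)$, $D(f)^{-1}$, $f(F_i)$, $\{f(e)^{-1}\}$) and to make sure the applications of the defect identity go in the correct direction at each step.
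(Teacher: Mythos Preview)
Your proof is correct and essentially identical to the paper's: you verify (QG1) and (QG2) via the same chain of inclusions, using the defect identity $f(xy) \in f(x)f(y)D(f)$ to split products and to push the finite sets $f(F_1), f(F_2), D(f)^{\pm 1}, f(e)^{-1}$ to the right of $\Lambda$, arriving at exactly the same containments $\Lambda^{-1} \subset \Lambda f(F_1)D(f)^2 f(e)^{-1}$ and $\Lambda^2 \subset \Lambda f(F_2)D(f)D(f)^{-1}$ as the paper. The treatment of the ``in particular'' clause is also the same.
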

\begin{proof} Let $g \in \Xi$. Using \eqref{Drho2} and (QG1) we have
\[
\rho(g)^{-1} \in \rho(g^{-1})D(\rho)\rho(e)^{-1} \subset \rho(\Xi^{-1})D(\rho)\rho(e)^{-1} \subset \rho(\Xi F_1)D(\rho)\rho(e)^{-1}.
\] 
By \eqref{Drho1} we have 
\[
\rho(\Xi F_1) \subset \rho(\Xi) \rho(F_1)D(\rho) = \Lambda \rho(F_1)D(\rho),
\]
and thus $\Lambda^{-1} \subset \Lambda \rho(F_1)D(\rho)^2\rho(e)^{-1}$, which establishes (QG1) for $\Lambda$. As for (QG2) a similar computation shows
\[
\Lambda^2 = \rho(\Xi)^2 \subset \rho(\Xi^2)(D(\rho))^{-1} \subset \rho(\Xi F_2)D(\rho)^{-1} \subset \Lambda \rho(F_2)D(\rho)D(\rho)^{-1}.
\]
This proves the first statement, and the second statement then follows from the fact that the image of a symmetric unital set under a symmetric map is symmetric and unital.
\end{proof}
If $\rho$ is symmetric, then the proof simplifies further since we can choose $F_1 = \{e\}$, and $F_2$ to be contained in $\Xi^3$. A closer inspection of the proof then yields the following result:
\begin{corollary}[Local direct images]\label{QuasiImages} Let $\rho: (\Xi, \Xi^\infty) \to (\Lambda, \Lambda^\infty)$ be a symmetric $4$-local quasimorphism of approximate groups. Then $\rho(\Xi)$ is an approximate subgroup of $\Lambda^\infty$. 
\end{corollary}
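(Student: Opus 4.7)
The plan is to revisit the proof of Proposition \ref{ImagesExist} and verify that, under the extra assumption that $\Xi$ is an approximate subgroup and $\rho$ is symmetric, all values of $\rho$ needed in the argument are taken on elements of $\Xi^4$, so that being a $4$-local quasimorphism is enough.

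First I would dispose of the symmetry and unitality of $\Lambda := \rho(\Xi)$. Since $\Xi = \Xi^{-1}$ and $\rho$ is symmetric, we have $\Lambda^{-1} = \rho(\Xi)^{-1} = \rho(\Xi^{-1}) = \rho(\Xi) = \Lambda$, so axiom (AG1) is automatic; in particular the condition (QG1) requires no work (one can take $F_1 = \{e\}$). The identity lies in $\Lambda$ by the observation quoted just before Definition \ref{def: global morphism} that a symmetric map on a unital set sends $e$ to $e$.

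The substantive step is (QG2), i.e.\ producing a finite set $F^* \subset \Lambda^\infty$ with $\Lambda^2 \subset \Lambda F^*$. By Remark \ref{Rem: FLambda}(iii) applied to $\Xi$, there is a finite $F_\Xi \subset \Xi^3$ with $\Xi^2 \subset \Xi F_\Xi$. Given $x,y \in \Xi$, write $xy = z f_2$ with $z \in \Xi$ and $f_2 \in F_\Xi \subset \Xi^3$; note that $xy \in \Xi^2 \subset \Xi^4$ and $zf_2 \in \Xi\cdot\Xi^3 \subset \Xi^4$, so $\rho$ is defined on both factorisations. Applying the defect bound $D := D_4(\rho)$ twice yields
\[
\rho(x)\rho(y) \in \rho(xy)\,D^{-1} \;=\; \rho(zf_2)\,D^{-1} \;\subset\; \rho(z)\,\rho(f_2)\,D\,D^{-1}.
\]
Since $z \in \Xi$ we have $\rho(z) \in \Lambda$, while $\rho(f_2) \in \rho(F_\Xi)$ ranges over a finite set. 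Setting $F^* := \rho(F_\Xi)\,D\,D^{-1}$, which is finite and contained in $\Lambda^\infty$ (because $F_\Xi \subset \Xi^3$ and $D \subset \rho(\Xi^4)\rho(\Xi^4)\rho(\Xi^4)$), we obtain $\Lambda^2 \subset \Lambda F^*$, establishing (AG2).

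The only delicate point, and the reason the bound $4$ rather than $3$ appears, is bookkeeping: in the factorisation $xy = zf_2$ the product $zf_2$ lies in $\Xi^4$ (because $f_2$ is itself a triple product from $\Xi$), and both $\rho(xy)$ and $\rho(zf_2)$ must be simultaneously accessible to a single local quasimorphism in order to invoke its defect set $D$. Once one accepts this — together with the inverse and unit-preservation coming for free from symmetry — the remaining computation is a direct transcription of the proof of Proposition \ref{ImagesExist}, with the constants absorbed into $F^*$; I would not dwell on optimising them, in line with the discussion in Remark \ref{Freiman}.
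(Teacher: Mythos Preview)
Your proof is correct and is exactly the inspection the paper's one-line remark asks for: you trace through the (QG2) computation of Proposition~\ref{ImagesExist} with $F_2 = F_\Xi \subset \Xi^3$ and verify that every application of the defect relation involves only elements of $\Xi^4$. One small imprecision in your final explanatory paragraph: the reason the bound is $4$ is not that ``$\rho(xy)$ and $\rho(zf_2)$ must be simultaneously accessible'' (they are the same element, since $xy = zf_2 \in \Xi^2$), but rather that $f_2$ itself lies in $\Xi^3$, so evaluating $\rho(f_2)$ and invoking the defect relation for the pair $(z,f_2)$ requires the domain to contain $\Xi^3$; the paper's stated bound of $4$ matches the na\"ive containment $\Xi F_\Xi \subset \Xi^4$ in the original proof rather than the sharper observation that $zf_2 \in \Xi^2$.
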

\begin{example}[Partial images]\label{ExPI} If  $\rho: (\Xi, \Xi^\infty) \to (\Lambda, \Lambda^\infty)$ is a global or $4$-local morphism, then its partial images are approximate subgroups of $\Lambda^\infty$.
\end{example}
\begin{remark}[Direct image of commensurability class] If $\rho: \Gamma \to H$ is a quasimorphism of groups and $\omega$ is a commensurability class of approximate subgroups of $\Gamma$, then $\rho(\Lambda)$ is commensurable to $\rho(\Lambda')$ for all $\Lambda, \Lambda' \in \omega$. Indeed, if $\Lambda \subset \Lambda' F$ for a finite set $F$, then $\rho(\Lambda) \subset \rho(\Lambda') \rho(F)D(\rho)$. We thus define the \emph{direct image}\index{commensurability class!direct image} of $\omega$ under $\rho$ as $\rho(\omega)\coloneqq [\rho(\Lambda)]_{\mathrm{co}}$.
\end{remark}
The situation for pre-images is more complicated. A particular simple case concerns the inclusion $\rho: \Gamma \hookrightarrow H$ of a group $\Gamma$ into some ambient group $H$. If $\Lambda \subset H$ is an approximate subgroup, then $\rho^{-1}(\Lambda) = \Lambda \cap \Gamma$, and this intersection need not be an approximate subgroup of $\Gamma$, as the following example shows:
\begin{example}[Bad intersections]\label{IntersectionFail} Let $M \subset 2\Z$ be an arbitrary symmetric subset containing $0$, which is \emph{not} an approximate subgroup (for example, the set of all even squares and their negatives). Then $\Lambda \coloneqq  (2\Z+1) \cup M$ is an approximate subgroup of $\Z$, since it is even syndetic in $\Z$, and $\Gamma \coloneqq  2\Z$ is a subgroup of $\Z$. Nevertheless the intersection $\Lambda \cap \Gamma = M$ is not an approximate subgroup of $\Z$. This shows that intersections of approximate subgroups can be completely arbitrary symmetric unital sets.
\end{example}
It is clear from this example that nothing can be said about general intersections of approximate subgroups. However, as is often the case in the theory of approximate groups, this problem can be fixed by passing to suitable powers:
\begin{lemma}[Thick intersections]\label{Intersection} Let $\Lambda, \Xi \subset \Gamma$ be approximate subgroups and $k,l \geq 2$. Then $\Lambda^k \cap \Xi^l$ is an approximate subgroup of $\Gamma$ and $[\Lambda^k \cap \Xi^l]_{\mathrm{co}} = [\Lambda^2 \cap \Xi^2]_{\mathrm{co}}$ depends only on the commensurability classes of $\Lambda$ and $\Xi$.

\end{lemma}
\begin{proof} By symmetry we may assume that $l \leq k$. 
Let $F\subset \Gamma$ be finite and such that $\Lambda^2\subset F\Lambda$, $\Xi^2\subset F\Xi$.
Then, by Remark \ref{Rem: properties AB} and Proposition \ref{PropShuffling},
\[
(\Lambda^k \cap \Xi^l)^2 \subset \Lambda^{2k}\cap \Xi^{2l} \ \subset \ \left( F^{2k-1}\Lambda \right)\cap \left(F^{2l-1}\Xi\right) \ \subset \  F^{2l-1}\left( (F^{2k+2l-2}\Lambda) \cap \Xi\right).
\]
By Lemma \ref{LemmaAdditiveComb} applied to $\Xi \cap F^{2k+2l-2}\Lambda$
we find a set $F'$ with $|F'|\leq  |F^{2k+2l-2}|$ such that
\[
(\Lambda^k \cap \Xi^l)^2 \subset F^{2l-1}F'(\Lambda^2 \cap \Xi^2) \subset F^{2l-1}F' (\Lambda^k \cap \Xi^l),
\]
which shows that $\Lambda^k \cap \Xi^l$ satisfies (AG2). Since it also satisfies (AG1), it is an approximate subgroup.

\item Now let  $\Lambda_0$ and $\Xi_0$ be approximate subgroups commensurable to $\Lambda$ and $\Xi$ respectively. Choose
 $F \subset \Gamma$ symmetric such that $\Lambda_0^k \subset F \Lambda$ and $\Xi_0^l \subset F\Xi$. Then
 \[
 \Lambda_0^k \cap \Xi_0^l \subset F\Lambda \cap F \Xi \subset F(F^2 \Lambda \cap \Xi),
 \]
 and by Lemma \ref{LemmaAdditiveComb} applied to $\Xi\cap F^2\Lambda$ we find a set $F'$ with $|F'| \leq |F^2|$ such that 
 \[
  \Lambda_0^2 \cap \Xi_0^2 \subset  \Lambda_0^k \cap \Xi_0^l \subset F(F^2 \Lambda \cap \Xi) \subset FF' (\Lambda^2 \cap \Xi^2).
 \]
Reversing the roles of $\Lambda$ and $\Lambda_0$ and $\Xi$ and $\Xi_0$ respectively we deduce that
\[
 [\Lambda_0^2 \cap \Xi_0^2]_{\mathrm{co}} =  [\Lambda_0^k \cap \Xi_0^l]_{\mathrm{co}}  =  [\Lambda^2 \cap \Xi^2]_{\mathrm{co}} =  [\Lambda^k \cap \Xi^l]_{\mathrm{co}}. \qedhere
\]
\end{proof}
\begin{definition}
If $[\Lambda]_{\mathrm{co}}$ and $[\Xi]_{\mathrm{co}}$ are commensurability classes of approximate subgroups of $\Gamma$, we denote by $[\Lambda]_{\mathrm{co}} \cap [\Xi]_{\mathrm{co}} \coloneqq [\Lambda^2 \cap \Xi^2]_{\mathrm{co}}$ their \emph{intersection class}. 
\end{definition}
We can now give a first general construction of countable approximate groups which are not almost groups.
\begin{example} Let $G$ be a non-compact connected Lie group and let $\Gamma<G$ be a finitely-generated dense subgroup. 
Fix a finite generating set $S$ of $\Gamma$ and a compact symmetric identity neighborhood $W$ of $G$ with dense interior $U$ containing $S$. By Lemma \ref{Intersection} the intersection $\Lambda = \Gamma \cap W^2$ is an approximate subgroup of $\Gamma$ which is dense in $W$ and in particular infinite. Since $S \subset W$ we have $\Gamma = \Lambda^\infty$. Now the countable approximate group $(\Lambda, \Lambda^\infty)$ is not an almost group, for otherwise we could find a finite subset $F \subset \Gamma$ such that $\Gamma \subset \Lambda F$, and since $\Gamma W$ contains a dense open subgroup of $G$ we would then have
$G = \Gamma W \subset \Lambda F W \subset W^2 FW$, contradicting the assumption that $G$ is non-compact.
\end{example}
We will later refine this construction by taking pre-images of compact identity neighborhoods under more general (quasi-)morphisms. Here is another immediate consequence of  Lemma \ref{Intersection}:
\begin{corollary}[Higher partial kernels]\label{PartialKernelGlobal} If $\rho: (\Xi, \Xi^\infty) \to (\Lambda, \Lambda^\infty)$ is a global morphism, then the partial kernels $\ker_k(\rho) =   \ker(\rho) \cap \Xi^k$ are mutually commensurable approximate subgroups of $\Xi^\infty$ for all $k \geq 2$.
\end{corollary}
\begin{example}[Failure of first partial kernel]\label{NotPKG}
Let $M \subset 2\Z$ be as in Example \ref{IntersectionFail} and let $\Lambda \coloneqq  (2\Z+1) \cup M$. Then the canonical projection $\Z \to \Z/2\Z$ induces a global morphism
$
\rho: (\Lambda, \Z) \to (\Z/2\Z, \Z/2\Z) 
$ of approximate groups and we have $\ker_1(\rho) = \Lambda \cap 2\Z = M$, which is not an approximate subgroup of $\Z$.
\end{example}
With some additional effort we can extend Corollary \ref{PartialKernelGlobal}.
\begin{lemma}[Local kernels] \label{Lem: partial kernel ag}
Let $\rho_k: (\Xi, \Xi^{k}) \to (\Lambda, \Lambda^{k})$ be a $k$-local morphism of approximate groups for some $k \geq 10$. Then $
\ker_2(\rho_{k}),\ \dots , \ \ker_{\lfloor \frac{k+2}{6} \rfloor}(\rho_{k})$ are approximate subgroups, and if $\Xi$ is countable, then these are mutually commensurable.
\end{lemma}
The countability assumption on $\Xi$ is probably not necessary, but it helps us to deal with the overwhelming bookkeeping. The proof contains no new ideas, but is notationally rather heavy.
\begin{small}
\begin{proof} To keep the notation at bay we first prove that $\ker_2(\rho_{10})$ is an approximate subgroup of $\Xi^{\infty}$; we abbreviate $\rho \coloneqq \rho_{10}$ and note that $\ker_2(\rho) = \Xi^2 \cap \rho^{-1}(e)$ is symmetric and unital. It remains to  find a finite subset $ F^\ast$ of $\Xi^\infty$ such that $(\ker_2(\rho))^2\subset  F^\ast \ker_2(\rho)$. We will construct this set as in the proof of Lemma \ref{Intersection}.  

By Proposition \ref{PropShuffling} there is a finite set  $F\subset\Xi^3$  such that $\Xi^k\subset F^{k-1}\Xi$ for all $k \geq 2$, and we claim that
\begin{equation}\label{ker22}
\ker_2(\rho)^2 \subset  F^{3}\Xi  \cap \rho^{-1}(e).
\end{equation}
Indeed, if $a,b \in \ker_2(\rho)$, then $ab \in \Xi^4$ and hence $a, b, ab \in \Xi^{10}$. Since $\rho$ is a $10$-local morphism, this implies $\rho(ab)=\rho(a)\rho(b)=e$, hence $ab \in \Xi^4 \cap \rho^{-1}(e) \subset F^3 \Xi \cap \rho^{-1}(e)$.

\item We now consider the finite set $T \coloneqq  \{t \in F^{3} \mid t\Xi \cap \rho^{-1}(e) \neq \emptyset\}$ and fix $t \in T$. Observe that $t\Xi\subset F^3\Xi\subset \Xi^{10}$. We claim that for every $z_t \in t\Xi \cap \rho^{-1}(e)$ we have
\begin{equation}\label{eqn: partial ker}
t\Xi \cap \rho^{-1}(e) \subset z_t  \ker_2(\rho).
\end{equation}
Indeed, given $z \in t\Xi \cap \rho^{-1}(e)$, we have $z_t^{-1}z \in\Xi^2$. Since $\rho$ is a $10$-local morphism and $z_t^{-1}$, $z$, $z_t^{-1}z \in \Xi^{10}$, we have
\[
\rho (z_t^{-1} z)=\rho(z_t^{-1})\rho(z)=e \ \Rightarrow \  z_t^{-1}z \in \rho^{-1}(e).
\]
Therefore
$z_t^{-1}z\in \Xi^2 \cap \rho^{-1}(e) = \ker_2(\rho)$, hence $z \in z_t\ker_2(\rho)$, which proves \eqref{eqn: partial ker}.

Combining \eqref{ker22} and \eqref{eqn: partial ker} we deduce that that
\[
\ker_2(\rho)^2 \subset F^{3}\Xi  \cap \rho^{-1}(e) 
 = \bigcup_{t \in T} \left(t\Xi \cap \rho^{-1}(e)\right) \subset  \bigcup_{t \in T} z_t \ker_2(\rho) =  \left( \bigcup_{t \in T} z_t \right) \ker_2(\rho).
\]
Since $T$ is finite, this shows that $\ker_2(\rho)$ is an approximate subgroup of $\Xi^\infty$.

Essentially the same proof (with increased notation) shows that if $k\geq 2$ and if  $\rho_{(6k-2)}: (\Xi, \Xi^{6k-2}) \to (\Lambda, \Lambda^{6k-2})$ is a $(6k-2)$-local morphism, then the partial kernels $\ker_2(\rho_{(6k-2)}), \dots, \ker_k(\rho_{(6k-2)})$ are approximate subgroups of $\Xi^\infty$. Indeed, if we abbreviate $\rho\coloneqq\rho_{j}$ and choose $j \leq k$, then $\ker_j(\rho)^2 \subset  F^{2k-1}\Xi  \cap \rho^{-1}(e)$, and if we define $T \coloneqq  \{t \in F^{2k-1} \mid t\Xi \cap \rho^{-1}(e) \neq \emptyset\}$, then for all $t \in T$ we find 
$t\Xi \cap \rho^{-1}(e) \subset z_t  \ker_j(\rho)$. This then shows that
\[
\ker_j(\rho)^2 \subset F^{2k-1}\Xi  \cap \rho^{-1}(e) 
 = \bigcup_{t \in T} \left(t\Xi \cap \rho^{-1}(e)\right) \subset  \bigcup_{t \in T} z_t \ker_j(\rho) =  \left( \bigcup_{t \in T} z_t \right) \ker_j(\rho),
\]
hence $\ker_j(\rho)$ is an approximate subgroup.

As for the commensurability statement, we will prove the stronger claim that if $\rho_{k+1}: (\Xi, \Xi^{k+1}) \to (\Lambda, \Lambda^{k+1})$ is a $(k+1)$-local morphism for some $k \geq 2$, then $\ker_2(\rho_{k+1})$ is left-syndetic in $\ker_j(\rho_{k+1})$, for all $2 \leq j \leq k$. We assume without loss of generality that $\rho_1$ is surjective; since $\Xi^{\infty}$ is countable, it admits a proper left-invariant metric (see Lemma \ref{ExistLeftAdmissible}), and we fix such a metric $d$ once and for all. 

We are going to show that there exists an $N \in \bR$ such that for all 
$x \in \ker_k(\rho_{k+1})$ and $y \in \ker_2(\rho_{k+1})$ we have $d(x,y) < N$. This will then imply $d(y^{-1}x, e) < N$ by left-invariance, and hence $\ker_k(\rho_{k+1}) \subset \ker_2(\rho_{k+1})B(e, N)$, where $B(e, N)$ denotes the ball of radius $N$ around $e$. Since $d$ is proper this will prove the lemma.

Choose again a finite set $F \subset \Xi^{k+1}$ such that $\Xi^k \subset \Xi F$ and set $C_1 \coloneqq  \max_{f \in F}d(f,e)$. 
Given $\xi \in \ker_k(\rho_{k+1})=\Xi^k\cap \rho_{k+1}^{-1}(e)$, we choose $\xi_o \in \Xi$ and $f \in F$ such that $\xi = \xi_o f$. Then $d(\xi, \xi_o) = d(\xi_of, \xi_o) = d(f, e) \leq C_1$. 
Moreover, since $\rho_{k+1}$ is a $(k+1)$-local morphism, $\xi_o$, $f$, $\xi \in \Xi^{k+1}$, and
$\rho_{k+1}(\xi) = e$, we have $e = \rho_{k+1}(\xi) = \rho_{k+1}(\xi_o)\rho_{k+1}(f)$, and hence
\[
\rho_1(\xi_o)  = \rho_{k+1}(\xi_o)= \rho_{k+1}(f)^{-1} \in (\rho_{k+1}(F))^{-1} \cap \Lambda.
\]
Thus if we define $F' \coloneqq  (\rho_{k+1}(F))^{-1}$, then $\xi_o \in \rho_1^{-1}(F' \cap \Lambda)$ and hence
\begin{equation}\label{CoarseKernel1}
\ker_k(\rho_{k+1}) \subset N_{C_1}(\rho_1^{-1}(F' \cap \Lambda)),
\end{equation}
where $N_{C_1}$ denotes the $C_1$-neighborhood with respect to $d$. Since $F$ is finite, so is $F' \cap \Lambda$, say $F' \cap \Lambda  = \{\lambda_1, \dots, \lambda_N\}$. Since $\rho_1$ is surjective, we find $\xi_1, \dots, \xi_N \in \Xi$ with $\rho_1(\xi_i)=\lambda_i$ for all $i=1, \dots, N$. Now let $\xi_i' \in \rho_1^{-1}(\lambda_i)$ and set $\xi_i'' \coloneqq  \xi_i'\xi_i^{-1}$. Since $\xi_i', \xi_i^{-1}\in \Xi$, we have $\xi_i'' \in \Xi^2$, in particular, $\xi_i, \xi_i', \xi_i'' \in \Xi^{k+1}$. We deduce that $\rho_{k+1}(\xi_i'')=\rho_{k+1}(\xi_i')\rho_{k+1}(\xi_i^{-1})=\lambda_i\lambda_i^{-1}=e$, and hence 
$\xi_i'' \in \Xi^2 \cap \rho_{k+1}^{-1}(e) = \ker_2(\rho_{k+1})$. On the other hand, if we set $C_2 \coloneqq  \max_{i=1, \dots, N} d(\xi_i, e)$, then $d(\xi_i', \xi_i'') = d(\xi_i', \xi_i'\xi_i^{-1}) = d(e, \xi_i^{-1}) \leq C_2$. This shows that
\begin{equation}\label{CoarseKernel2}
\rho_1^{-1}(F' \cap \Lambda)= \bigcup_{i=1}^N \rho_1^{-1}(\lambda_i) \subset N_{C_2} (\ker_2(\rho_{k+1}) ),
\end{equation}
and combining \eqref{CoarseKernel1} and \eqref{CoarseKernel2} we obtain
$\ker_k(\rho_{k+1}) \subset N_{C_1+C_2}(\ker_2(\rho_{k+1}) )$, which finishes the proof.
\end{proof}
\end{small}
The following notion will become important in our study of quasi-isometry types of finitely-generated approximate groups.
\begin{definition}\label{DefFiniteKernel} Let $\rho: (\Xi, \Xi^\infty) \to (\Lambda, \Lambda^\infty)$ be a global morphism of approximate groups. We say that $\rho$ has \emph{finite kernels}\index{finite kernels} if for every $k \in \mathbb N$ the partial kernel $\ker_k(\rho)$ is finite. We then say that $(\Xi, \Xi^\infty)$ is a \emph{finite extension}\index{finite extension} of $(\rho(\Xi), \rho(\Xi)^\infty)$.
\end{definition}
Note that this does \emph{not} mean that the group homomorphism $\rho: \Xi^\infty \to \Lambda^\infty$ has finite kernel: if $\Gamma_1, \Gamma_2$ are countably infinite groups and $S$ is a finite generating set of $\Gamma_2$, then the projection $p: (\Gamma_1 \times S, \Gamma_1 \times \Gamma_2) \to \Gamma_1$ has finite kernel, since $\ker_k(p) = \{e\} \times S^k$ is finite, for all $k\in \bN$, but the underlying group homomorphism has infinite kernel. As a special case of Corollary \ref{PartialKernelGlobal} we may record:
\begin{corollary}\label{FiniteKernels} A global morphism $\rho: (\Xi, \Xi^\infty) \to (\Lambda, \Lambda^\infty)$ of countable approximate groups has finite kernels if and only if the second partial kernel $\ker_2(\rho)$ is finite.
\end{corollary}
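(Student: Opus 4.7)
The forward implication is immediate from the definition: if every partial kernel $\ker_k(\rho)$ is finite then in particular $\ker_2(\rho)$ is finite. For the converse, assume $\ker_2(\rho)$ is finite and fix an arbitrary $k \in \mathbb N$; the goal is to show $\ker_k(\rho)$ is finite.

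The case $k = 1$ is a triviality that needs to be disposed of separately, since Lemma \ref{KernelComm} does not cover it. Here I would simply observe that $\Xi \subset \Xi^2$ and hence $\ker_1(\rho) = \ker(\rho_\infty) \cap \Xi \subset \ker(\rho_\infty) \cap \Xi^2 = \ker_2(\rho)$, so finiteness of $\ker_2(\rho)$ forces finiteness of $\ker_1(\rho)$.

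For $k \geq 2$ the plan is to appeal directly to Lemma \ref{KernelComm}. Since $\rho$ is a global morphism of countable approximate groups, its $(k+1)$-component $\rho_{k+1}: (\Xi, \Xi^{k+1}) \to (\Lambda, \Lambda^{k+1})$ is in particular a $(k+1)$-local morphism, so that lemma applies and yields that $\ker_2(\rho_{k+1}) = \ker_2(\rho)$ is left-syndetic in $\ker_j(\rho_{k+1}) = \ker_j(\rho)$ for all $2 \leq j \leq k$. In particular there exists a finite subset $F \subset \Xi^\infty$ such that
\[
\ker_k(\rho) \subset \ker_2(\rho) \cdot F.
\]
Since both $\ker_2(\rho)$ and $F$ are finite, $\ker_k(\rho)$ is finite as a subset of a product of two finite sets.

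Since all of the serious work has already been absorbed into Lemma \ref{KernelComm} (itself building on the ``shuffling'' properties in Proposition \ref{PropShuffling} and the existence of a proper left-invariant metric on countable groups), there is no genuine obstacle to surmount here; the corollary is essentially a bookkeeping consequence of that lemma together with the filtration inclusion $\Xi \subset \Xi^2$.
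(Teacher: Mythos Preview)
Your proof is correct and is precisely the argument the paper intends: the corollary is stated with a bare \qed immediately after the observation that Lemma~\ref{KernelComm} makes $\ker_2(\rho)$ left-syndetic in every $\ker_k(\rho)$ for $k\geq 2$, and your write-up simply unpacks that. Your explicit handling of the $k=1$ case via the inclusion $\ker_1(\rho)\subset\ker_2(\rho)$ is a small but welcome addition that the paper leaves implicit.
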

\begin{definition} If $\Gamma_0 < \Gamma$ is a subgroup, then for every commensurability class $\omega$ of approximate subgroups of $\Gamma$ we define its \emph{restriction to $\Gamma_0$}\index{commensurability class!restriction} as $\omega \cap [\Gamma_0]_{\mathrm{co}}$.
\end{definition}
\begin{remark}[Restriction and support] 
By Remark \ref{DefinedOverSubgroup} every commensurability class of approximate subgroups in a subgroup $\Gamma_0 \subset \Gamma$ extends uniquely to a commensurability class in $\Gamma$, and as before we say that the latter class is \emph{supported on} $\Gamma_0$. With our new notation this happens if and only if $\omega \cap [\Gamma_0]_{\mathrm{co}} = \omega$. 
\end{remark}
The following was established by Hrushovski as a consequence of his classification theorem; no elementary proof seems to be known.
\begin{proposition}[{\cite[Prop.\ 5.10]{Hrushovski_Arithmeticity}}]\label{Minim} For any commensurability class $\omega$ of approximate subgroups of $\Gamma$ there exists a unique commensurability class $\langle \omega \rangle_{\min}$ with the following properties:
\begin{enumerate}[(i)]
\item Every $\Lambda \in \omega$ is supported on some $\Gamma_0 \in \langle \omega \rangle_{\min}$.
\item If $\Lambda \in \omega$ is supported on some subgroup $\Gamma_1 < \Gamma$, then $\Gamma_1$ contains some $\Gamma_0 \in \langle \omega \rangle_{\min}$.
\end{enumerate}
\end{proposition}
\begin{definition} $\langle \omega \rangle_{\min}$ is called the \emph{minimal commensurability class of supporting subgroups} of $\omega$.
We say that an approximate group $(\Lambda, \Lambda^\infty)$ is \emph{minimal} if $\Lambda^\infty \in \langle [\Lambda]_{\mathrm{co}} \rangle_{\mathrm{min}}$.\index{minimal approximate subgroup}
\end{definition}
\begin{corollary}\label{MinimalAGTrick} Let $(\Lambda, \Lambda^\infty)$ be an approximate group. Then:
\begin{enumerate}[(i)]
\item There exists a syndetic approximate subgroup $(\Lambda_o, \Lambda_o^\infty)$ of $(\Lambda^2, \Lambda^\infty)$ which is minimal.
\item  If  $(\Lambda, \Lambda^\infty)$ is minimal and $(\Xi, \Xi^\infty)$ is a syndetic approximate subgroup of $(\Lambda^k, \Lambda^\infty)$ for some $k \in \bN$, then $\Xi^\infty$ is syndetic in $\Lambda^\infty$.
\end{enumerate}
\end{corollary}
\begin{proof} (i) Let $\omega = [\Lambda]_{\mathrm{co}}$ and $H \in \langle \omega \rangle_{\min}$. If we set $\Lambda_o \coloneqq \Lambda^2 \cap H$, then $\Lambda_o \in \omega$ and
$\Lambda_o^\infty \subset H$, and thus $\Lambda_o^\infty \in \langle \omega \rangle_{\mathrm{min}}$ by minimality.
(ii) is immediate from the definition of $ \langle \omega \rangle_{\min}$ and the fact that $\Lambda \sim_{\mathrm{co}} \Lambda^k$ for all $k \in \bN$.
\end{proof}

We now turn to general pre-images. From now on let $\rho: \Gamma \to H$ be a quasimorphism. By Remark \ref{Drho} there then exists a finite symmetric subset $D \subset H$ such that
\begin{equation}\label{DSym}\rho(xy^{-1}) \in \rho(x)\rho(y)^{-1}D \qand \rho(xy) \in \rho(x)\rho(y)D \quad \text{ for all }x,y \in \Gamma.\end{equation}
We refer to any finite symmetric set $D$ satisfying \ref{DSym} as a \emph{symmetric defect set}. Since the intersection of symmetric defect sets is a symmetric defect set, there is a unique minimal such set $D_{\mathrm{min}}$. The key observation is as follows:
\begin{lemma}[Thick pre-images]\label{ThickPreim} Let $\rho: \Gamma \to H$ be a quasimorphism, $\Lambda \subset H$ an approximate subgroup and assume that there exists a symmetric defect set $D\subset H$ which satisfies $D \subset \mathrm{Comm}_H(\Lambda)$.
\begin{enumerate}[(i)]
\item For $j \geq 2$ we have $[\rho^{-1}(\Lambda^j D)]_{\mathrm{lco}} = [\rho^{-1}(\Lambda^j D) \rho^{-1}(\Lambda^j D)^{-1}]_{\mathrm{lco}}$, and these left-commensurability classes coincide for all $j \geq 2$ and all possible $D$.
\item If $\Xi \subset \Gamma$ is symmetric and satisfies $\rho^{-1}(\Lambda^2 D) \subset \Xi \subset  \rho^{-1}(\Lambda^j D) \rho^{-1}(\Lambda^j D)^{-1}$ for some $j \geq 2$, then $\Xi$ is an approximate subgroup. 
\item The sets $\rho^{-1}(\Lambda^j D) \rho^{-1}(\Lambda^j D)^{-1}$ are mutually commensurable approximate subgroups.
\end{enumerate}
\end{lemma}
\begin{proof} Fix $D$ and abbreviate $\Gamma_j \coloneqq  \rho^{-1}(\Lambda^j D)$.

(i) Let $j \geq 2$; using  \eqref{ReallyWeird} and symmetry of $\Lambda^j$ and $D$, we have
\[
\Gamma_j\Gamma_j^{-1} \subset \rho^{-1}(\Lambda^j DD \Lambda^j D).
\]
Since $\Lambda \sim_{co} \Lambda^j$ by Proposition \ref{PropShuffling}.(iii), we have $D \subset \mathrm{Comm}_H(\Lambda^j)$ by Lemma \ref{CommDepOnClass}, and thus Lemma \ref{LeftRightComm} yields a finite set $F_1\subset H$ such that $\Lambda^j DD \Lambda^j D \subset \Lambda^{2j}F_1$. By Proposition \ref{PropShuffling}.(i), the set $\Lambda^{2j}F_1$ is covered by finitely many right-translates of $\Lambda$, and hence Lemma \ref{HruMagic} yields a finite set $F_2$ such that $\Gamma_j\Gamma_j^{-1} \subset \rho^{-1}(\Lambda \Lambda^{-1} D)F_2 = \Gamma_2F_2$. This shows that $\Gamma_2 \subset \Gamma_j \subset \Gamma_j\Gamma_j^{-1} \subset \Gamma_2F_2$, hence all of these sets are left-commensurable for fixed $D$. To see independence of $D$, note that $\rho^{-1}(\Lambda^2 D_{\mathrm{min}}) \subset \Gamma_j$ and, by the argument above applied to $D_{\mathrm{min}}$, 
\[
\Gamma_j \Gamma_j^{-1} \subset \rho^{-1}(\Lambda^{2j}F_1) \subset \rho^{-1}(\Lambda\Lambda^{-1}D_{\mathrm{min}})F_2' = \rho^{-1}(\Lambda^2 D_{\mathrm{min}})F_2'
\]
for some finite set $F_2'\subset \Gamma$.

(ii) By assumption, $\Xi$ satisfies (AG1). Arguing as in (i) and using \eqref{EvenWeirder} we find finite sets $F_1 \subset H$ and $F_2 \subset \Gamma$ such that
\begin{align*}
\Xi^2 & \; \subset \; \rho^{-1}(\Lambda^j D D\Lambda^jD) \rho^{-1}(\Lambda^j D D\Lambda^jD) \; \subset \; \rho^{-1}(\Lambda^j D D\Lambda^jD\Lambda^j D D\Lambda^jDD)\\
& \; \subset\; \rho^{-1}(\Lambda^{4j}F_1) \; \subset \; \rho^{-1}(\Lambda \Lambda^{-1} D)F_2 \;= \; \Gamma_2F_2 \; \subset \; \Xi F_2.
\end{align*}
(iii) This follows from (i) and (ii).
\end{proof}
\begin{remark}[Inverse image of a commensurability class] 
If $\rho: \Gamma \to H$ is a quasimorphism and $\Lambda \subset H$ is an approximate subgroup, then we call $(\rho, \Lambda)$ an \emph{admissible pair} if there exists a symmetric defect set $D$ of $\rho$ such that $D \subset \mathrm{Comm}_H(\Lambda)$. In this case 
\[
\chi(\rho, \Lambda) \coloneqq [\rho^{-1}(\Lambda^2 D) \rho^{-1}(\Lambda^2D)^{-1}]_{\mathrm{co}} \subset [\rho^{-1}(\Lambda^2D)]_{\mathrm{lco}}
\]
is a commensurability class of approximate subgroups, which depends only on $\rho$ and $\Lambda$. We claim that it actually depends only on $\rho$ and the commensurability class of $\Lambda$. Indeed, if $\Lambda_1 \sim_{co}\Lambda$, then $\mathrm{Comm}_H(\Lambda_1) = \mathrm{Comm}_H(\Lambda)$ and we find a finite set $F$ with $\Lambda_1 \subset \Lambda F$. As in the proof of Lemma \ref{ThickPreim} we find finite sets $F_1 \subset H$ and $F_2 \subset \Gamma$ such that
\[
\rho^{-1}(\Lambda_1^2 D) \subset \rho^{-1}(\Lambda F \Lambda F D) \subset \rho^{-1}(\Lambda^2 F_1) \subset \rho(\Lambda^2D) F_2.
\]
A symmetric argument then shows that $\rho^{-1}(\Lambda^2D) \sim_{lco} \rho^{-1}(\Lambda_1^2D)$, and hence $\chi(\rho, \Lambda) = \chi(\rho, \Lambda_1)$. In the sequel we refer to $\chi(\rho, [\Lambda]_{\mathrm{co}}) \coloneqq \chi(\rho, \Lambda)$ as the \emph{inverse image}\index{commensurability class!inverse image} of $[\Lambda]_{\mathrm{co}}$ under $\rho$.
\end{remark}
We now investigate various special cases; we first consider homomorphisms.
\begin{example}[Inverse images under homomorphisms] If $\rho: \Gamma \to H$ is a homomorphism, then we can choose $D = \{e\}$ and hence
every approximate subgroup $\Lambda$ is compatible with $\rho$. Moreover, $\rho^{-1}(\Lambda^j)$ is symmetric for every $j \geq 2$, hence 
\[\chi(\rho, [\Lambda]_{\mathrm{co}}) = [\rho^{-1}(\Lambda^2)]_{\mathrm{co}} = [\rho^{-1}(\Lambda^3)]_{\mathrm{co}} = \dots\]
Example \ref{IntersectionFail} shows that this class is not represented by $\rho^{-1}(\Lambda)$ in general. However, if $\rho$ is surjective, then the square can be avoided by the following proposition.
\end{example}
\begin{proposition}[Surjective case]\label{SurjHom}\label{Preimage1} If $\rho: \Gamma \to H$ is an epimorphism and $\Lambda \subset H$ is an approximate subgroup, then $\Xi \coloneqq \rho^{-1}(\Lambda)$ is an approximate subgroup which represents $\chi(\rho, \Lambda)$. Moreover, if $\Lambda$ generates $H$, then $\Xi$ generates $\Gamma$.
\end{proposition}
\begin{proof} We fix a section $\sigma: H \to \Gamma$ of $\rho$ so that $\rho \circ \sigma=\id_H$ and set $F \coloneqq \sigma(F_\Lambda)$, where $F_\Lambda$ is as in \eqref{FLambda}. Given $\xi_1, \xi_2 \in \Xi$ we have $\rho(\xi_1\xi_2) \in \Lambda^2 \subset \Lambda F_\Lambda$, hence there exist $x \in \Lambda$ and $y \in F_\Lambda$ such that $\rho(\xi_1\xi_2) = xy$. Then
\[
\rho(\xi_1\xi_2 \sigma(y)^{-1}) = xy\rho(\sigma(y)^{-1}) = x \in \Lambda,
\]
hence $\xi_1\xi_2\sigma(y)^{-1} \in \Xi$. This shows that $\Xi^2 \subset \Xi F$. The first statement now follows from the inclusions  $\Xi \subset \rho^{-1}(\Lambda^2) \subset \Xi^2 \subset \Xi F$ and the fact that $\Xi$ is unital and symmetric.

For the second statement assume that $\Lambda$ generates $H$ and let $g \in \Gamma$. Then $\rho(g) = \lambda_1 \cdots \lambda_k$ for certain $\lambda_i \in \Lambda$, and if we set $\xi_i \coloneqq \sigma(\lambda_i)$, then $\xi_i \in \Xi$ and $\xi_0 \coloneqq g \xi_k^{-1}\cdots \xi_1^{-1} \in \ker(\rho) = \rho^{-1}(\{e\}) \subset \Xi$, hence  $g = \xi_0\xi_1 \cdots \xi_k \in \Xi^\infty$.
\end{proof}

\begin{example}[Fundamental commensurability class of a homomorphism]\label{FundHom} Let $\Gamma$ be a (discrete) group, $H$ be a locally compact group and $\rho: \Gamma \to H$ be a group homomorphism. We recall from Remark \ref{FundCommClass} that $\chi(H)$ denotes the fundamental commensurability class of $H$ and define the \emph{fundamental commensurability class} of $\rho$\index{fundamental commensurability class!of a homomorphism} by
\[
\chi(\rho) \coloneqq \chi(\rho, \chi(H)).
\]
If $W \subset H$ is a relatively compact symmetric identity neighborhood, then there exist compact symmetric identity neighborhoods $W_0$ and $W_1$ such that $W_0^2 \subset W \subset W_1^2$, and hence $\chi(\rho)$ is represented by $\rho^{-1}(W)$. Note that if $H$ is discrete, then we may choose $W = \{e\}$; thus for discrete targets $H$, $\chi(\rho)$ is just the kernel of $\rho$. To get interesting examples, one should consider a non-discrete target $H$.
\end{example}
\begin{definition}[Hrushovski]\label{DefLaminar} We say that a commensurability class $\omega$ of approximate subgroups of a group $\Gamma$ is \emph{laminar}\index{laminar}\index{approximate subgroup!laminar} if $\omega = \chi(\rho)$, for some homomorphism $\rho: \Gamma \to H$ into a locally compact group.
\end{definition}
We now return to the case where $H$ is discrete, but allow $\rho$ to be a non-trivial quasimorphism.
\begin{example}[Quasikernel]\label{qkerAbstract} If $\Lambda = \{e\}$, i.e. $\Lambda$ is trivial, then every quasimorphism $\rho: \Gamma \to H$ is compatible with $\Lambda$. If $D$ is any symmetric defect set of $\rho$, then
\[
\qker(\rho) \coloneqq \chi(\rho, \{e\}) = [\rho^{-1}(D)\rho^{-1}(D)^{-1}]_{\mathrm{co}}.
\]
If $\rho$ happens to be symmetric, then $\rho^{-1}(D)$ itself is already symmetric and hence 
\[
\qker(\rho)  = [\rho^{-1}(D)]_{\mathrm{co}}.
\]
We refer to any representative of $\qker(\rho)$ as a \emph{quasikernel}\index{quasikernel} of $\rho$. Technically, this is the pullback of the canonical commensurability class of $H$ under $\rho$. 
\end{example}
\begin{example}[Partial quasikernels]\label{PartialQuasikernel}
Let $\rho: (\Xi, \Xi^\infty) \to (\Lambda, \Lambda^\infty)$ be a global quasimorphism of approximate groups. We then define $\mathrm{pqker}(\rho)$ as the commensurability class of the restriction of $\qker(\rho)$ to $[\Xi]_{\mathrm{co}}$. Thus, with $D$ as in Example \ref{qkerAbstract}, we have
\[
\mathrm{pqker}(\rho) = [\Xi^2 \cap \rho^{-1}(D)]_{\mathrm{co}} =  [\Xi^3 \cap \rho^{-1}(D)]_{\mathrm{co}} = \dots
\]
Any representative of this class is called a \emph{partial quasikernel}\index{partial quasikernel}\index{quasikernel!partial} of $\rho$. This generalizes the notion of a partial kernel of a global morphism.
\end{example}
\begin{example}[Quasikernels of $\Z$-valued quasimorphisms]\label{QkerZ}
If $\rho: \Gamma \to \Z$ is a symmetric $\Z$-valued quasimorphism, then a symmetric defect set of $\rho$ is given by $[-d(\rho), d(\rho)]$ with $d(\rho)$ as in \eqref{DefectNew}.
We deduce that
\[
\qker(\rho, C) \coloneqq\{x \in \Gamma \mid |\rho(x)| \leq C\}
\]
is a quasikernel for all $C \geq d(\rho)$.
\end{example}

\begin{example}[Global quasimorphisms]\label{GlobQuaMo} Let $\rho: (\Xi, \Xi^\infty) \to (\Lambda, \Lambda^\infty)$ be a global quasimorphism of approximate groups with symmetric defect set $D$. Then by Proposition 2.27 $\Lambda$ and $\rho$ are compatible and hence $\chi(\rho, \Lambda)$ is defined.

Assume for simplicity that $\rho$ is symmetric so that $\Theta \coloneqq \rho^{-1}(D\Lambda^2 D)$ is symmetric. Since $D$ commensurates $\Lambda^2$, the set $D \Lambda^2 D$ is contained in finitely many right-translates of $\Lambda$, and hence $\Theta \subset \rho^{-1}(\Lambda^2 D) F$ for some finite set $F$ by Lemma \ref{HruMagic}. This shows that $\Theta$ is left-commensurable to $\rho^{-1}(\Lambda^2 D)$. In view of Lemma \ref{ThickPreim} this implies the first part of the following result. 
\end{example}
\begin{corollary} If $\rho: (\Xi, \Xi^\infty) \to (\Lambda, \Lambda^\infty)$ is a symmetric global quasimorphism of approximate groups with symmetric defect set $D$, then
$\rho^{-1}(D\Lambda^2 D)$ is an approximate subgroup of $\Xi^\infty$ and 
\begin{equation}\label{chirhoLambdasymm}
\chi(\rho, \Lambda) = [\rho^{-1}(D\Lambda^2 D)]_{\mathrm{co}}.
\end{equation}
If $\rho$ is surjective, then we can replace $\rho^{-1}(D\Lambda^2 D)$ by $\rho^{-1}(D\Lambda D)$.
\end{corollary}
\begin{proof} The statement about $D\Lambda^2 D$ is contained in Example \ref{GlobQuaMo}. Now assume that $\rho$ is surjective; we fix a section $\sigma: H \to \Gamma$ of $\rho$ and argue as in the proof of Proposition \ref{SurjHom}.

Thus let $\Theta \coloneqq \rho^{-1}(D\Lambda D)$ and $\Xi \coloneqq \rho^{-1}(D\Lambda^2 D)$ which are symmetric and unital. By \eqref{EvenWeirder} we have $\Theta^2 \subset \Xi^2 \subset \rho^{-1}(A)$, where $A \coloneqq D\Lambda^2 D D\Lambda^2 D D$. Since $D \subset \Lambda^{\infty} \subset \mathrm{Comm}_H(\Lambda)$, the set $A$ is covered by finitely many translates of $\Lambda^4$, hence of $\Lambda$, i.e.\ we find a finite set $F^*$ such that $\rho(\Theta^2) \subset \rho(\Xi^2) \subset \Lambda F^*$. Given $\xi_1, \xi_2 \in \Xi$ we thus find $x \in \Lambda$ and $y \in F_\Lambda$ such that $\rho(\xi_1\xi_2) = xy$.
Then 
\[\rho(\xi_1\xi_2 \sigma(y)^{-1}) \in \rho(\xi_1 \xi_2)\rho(\sigma(y))^{-1}D = xy y^{-1}D \subset \Lambda D,\]
and hence $\xi_1\xi_2\sigma(y)^{-1} \in \rho^{-1}(\Lambda D) \subset \Theta$. This shows that 
\[\Theta^2 \subset \Xi^2 \subset \Theta \widehat{F} \subset \Xi \widehat{F}, \quad \text{ where }\widehat{F} = \sigma(F^*).\]  This proves that $\Xi$ and $\Theta$ are commensurable approximate subgroups.
\end{proof}
\begin{example}[The abelian case] Assume that $H$ is abelian (written additively) and that $\rho: \Gamma \to H$ is a symmetric quasimorphism. Then
$\rho$ is compatible with every approximate subgroup $\Lambda \subset H$, and for any symmetric defect set $D$ we have
\[\chi(\rho, \Lambda) = [\rho^{-1}(\Lambda + \Lambda +D)]_{\mathrm{co}}.\]
If $\rho$ is surjective, then we can replace $\Lambda + \Lambda$ by $\Lambda$.
\end{example}
It is important to also consider inverse images of approximate subgroups under \emph{topological} quasimorphisms into locally compact groups in the sense of Remark \ref{TopQM}. We will not do this systematically here and instead confine ourselves to two results; the first concerns quasikernels of real-valued topological quasimorphisms and the second concerns inverse images of fundamental commensurability classes and generalizes Example \ref{FundHom}.

To state the first result, let $\rho: \Gamma \to \R$ be a symmetric (topological) quasimorphism. As in the integer-valued case we denote
\[
    \qker(\rho, C) \coloneqq  \{g \in \Gamma \mid |\rho(g)| \leq C\} \quad (C \geq 0).
\]
\begin{proposition}\label{QKReal} For all $C \geq d(\rho) + 2$ the subsets $\qker(\rho, C)$ are mutually commensurable approximate subgroups of $\Gamma$.
\end{proposition}
\begin{proof} By Remark \ref{quasiremark}.(vi) there exists an integer valued symmetric quasimorphism $\bar \rho$ such that $\|\rho- \bar \rho\|_\infty \leq 1/2$ and $d(\bar \rho) \leq d(\rho) + 3/2$. In particular, for all $C > 1/2$ we have
\[
\qker(\bar \rho, C-1/2) \subset \qker(\rho, C) \subset \qker(\bar \rho, C+1/2). 
\]
If $C \geq d(\rho) + 2$, then $C - 1/2 \geq d(\bar \rho)$, and hence $\qker(\rho, C)$ is a quasikernel for $\bar \rho$ by Example \ref{QkerZ}.
\end{proof}
\begin{notation}
As in the integral case we set $\qker(\rho) \coloneqq [\qker(\rho, d(\rho)+2)]_{\mathrm{co}}$ and refer to approximate subgroups in $\qker(\rho)$ as \emph{quasikernels}\index{quasikernel!of a real-valued quasimorphism} of $\rho$. 

By the same argument as in the proof of Proposition \ref{QKReal}, $\qker(\rho)$ does not change if we replace $\rho$ by a symmetric quasimorphism at bounded distance from $\rho$.
\end{notation}
We now turn to our second result concerning pre-images under topological quasimorphisms, {cf.\ \cite[Prop.\ 5.12]{Hrushovski_Arithmeticity}}.
\begin{proposition}\label{TopHru} Let $\rho: \Gamma \to H$ be a topological quasimorphism and let $K \subset H$ be symmetric and relatively compact satisfying \eqref{WeirdStuff}.
\begin{enumerate}[(i)]
\item If $\Lambda \subset \Gamma$ is symmetric and $\rho^{-1}(U_1K) \subset \Lambda \subset \rho^{-1}(U_2K)$ for relatively compact identity neighborhoods $U_1, U_2 \subset H$, then $\Lambda$ is an approximate subgroup.
\item The approximate subgroups in (i) define a common commensurability class $\chi(\rho)$, which is represented by $\rho^{-1}(UK) \rho^{-1}(UK)^{-1}$ for any relatively compact identity neighborhood $U$.
\end{enumerate}
\end{proposition}
\begin{proof} (i) Let $B \subset H$ be an identity neighborhood with $BB^{-1} \subset U_1$. Then $\Lambda^2 \subset \rho^{-1}(A)$, where $A \coloneqq U_2KU_2KK$. Since the closure of $A$ is compact, it is contained in $BF_1$ for some finite set $F_1$, hence by Lemma \ref{HruMagic} there exists a finite set $F_2$ with 
\[
\Lambda^2 \subset \rho^{-1}(BB^{-1}K)F_2 \subset \rho^{-1}(U_1K)F_2 \subset \Lambda F_2,
\]
hence $\Lambda$ is an approximate subgroup. 

(ii) Concerning the first statement, it is enough to show that if $U_1 \subset U_2$ are relatively compact identity neighborhoods, then $\rho^{-1}(U_1K) \subset \rho^{-1}(U_2K)$ is left-syndetic. For this let $V$ be an identity neighborhood with $VV^{-1} \subset U_1$. Since $A = U_2K$ is covered by finitely many translates of $V$, Lemma \ref{HruMagic} yields a finite set $F$ such that
\[
\rho^{-1}(U_2K) \subset \rho^{-1}(VV^{-1}K)F \subset \rho^{-1}(U_1K)F.
\]
For the second statement we observe that by \eqref{WeirdStuff} we have
\[
\rho^{-1}(UK) \subset \rho^{-1}(UK) \rho^{-1}(UK)^{-1} \subset \rho^{-1}(UKKU^{-1}K) = \rho^{-1}(VK),
\]
where $V\coloneqq UKKU^{-1}$ is an open identity neighborhood.
\end{proof}
\begin{definition} The commensurability class $\chi(\rho)$ is called the \emph{fundamental commensurability class}\index{fundamental commensurability class!of a quasimorphism} of the topological quasimorphism $\rho$, and any approximate subgroup in $\chi(\rho)$ is called \emph{quasi-laminar}\index{quasi-laminar}\index{approximate subgroup!quasi-laminar}.
\end{definition}
The following is a very weak version of \cite[Thm.\ 5.16]{Hrushovski_Arithmeticity}:
\begin{theorem}[Hrushovski]\label{HruBaby}
Let $\Lambda \subset \Gamma$ be an approximate subgroup. Then there exists a subgroup $\Gamma_0 < \Gamma$, a connected Lie group $H$ and a topological quasimorphism $\rho: \Gamma_0 \to H$ such that $\Lambda$ is supported on $\Gamma_0$ and $[\Lambda]_{\mathrm{co}} = \chi(\rho)$. In particular, every approximate subgroup is quasi-laminar.
\end{theorem}
The following result \cite[Prop.\ 5.29]{Hrushovski_Arithmeticity} shows that it is not possible to replace ``quasi-laminar'' by ``laminar'' in Theorem \ref{HruBaby}:
\begin{proposition}[Hrushovski]\label{NonLaminar} Let $f: \Gamma \to \R$ be a homogeneous real-valued quasimorphisms. Then the quasikernel of $f$ is not laminar.
\end{proposition}
Theorem \ref{HruBaby} is the starting point of a more precise classification of approximate subgroups (cf. \cite[Cor.\ 5.26]{Hrushovski_Arithmeticity}). While this provides, at least in principle, a parametrization of all commensurability classes of approximate subgroups for a given group, it brings very little geometric information about the resulting approximate groups.

\section{Good models, model sets and approximate lattices}
Proposition \ref{NonLaminar} shows that laminarity is a rather special property of commensurability classes of approximate subgroups. We can provide a characterization of this property using the following notion.
\begin{definition} Let $\Gamma$ be a group and $\Lambda \subset \Gamma$ be an approximate subgroup. A \emph{good model}\index{good model}  for $(\Lambda, \Gamma)$ is a pair $(H,\rho)$, where $H$ is a locally compact group and $\rho: \Gamma \to H$ is a group homomorphism such that 
\begin{enumerate}[(M1)]
\item $\rho(\Lambda)$ is relatively compact in $H$;
\item there exists an identity neighborhood $U$ in $H$ such that $\rho^{-1}(U) \subset \Lambda$.
\end{enumerate}
\end{definition}
The definition is in close analogy with the notion of a good model for an ultra approximate group as introduced by Breuillard, Green and Tao in their study of finite approximate groups \cite[Def. 3.5]{BGT}.  Note that if $(H, \rho)$ is a good model for $(\Lambda, \Gamma)$, then it is also a good model for $(\Lambda, \Lambda^\infty)$ and for $(\Lambda^k, \Lambda^\infty)$ for every $k \geq 1$.
\begin{theorem}[Hrushovski]\label{GoodModelTheorem}
For an approximate group $(\Lambda, \Lambda^\infty)$ the following are equivalent:
\begin{enumerate}[(i)]
\item $[\Lambda]_{\mathrm co}$ is laminar.
\item $(\Lambda^k, \Lambda^\infty)$ admits a good model $(H, \rho)$ for some $k \in \bN$.
\end{enumerate}
Moreover, in the situation of (ii) we have $\Lambda \in \chi(\rho)$.
\end{theorem}
\begin{proof} (i)$\implies$(ii) Let us explain how this follows from \cite{Hrushovski_Arithmeticity}.  Firstly, by Proposition \ref{PropShuffling} we have $\Lambda^\infty = \bigcup \Lambda^k  \subset \Comm_{\Lambda^\infty}(\Lambda)$. Then \cite[Lemma 5.4]{Hrushovski_Arithmeticity} says that there exists a homomorphism $\rho: \Lambda^\infty \to H$ such that $\Lambda \in \chi(\rho)$. There thus exists a compact symmetric identity neighborhood $W \subset H$ such that
\[
\rho^{-1}(W) \subset \Lambda F_1 \qand \Lambda \subset \rho^{-1}(W)F_2
\]
for some finite $F_1, F_2 \in \Lambda^\infty$. The latter implies that $\rho(\Lambda) \subset W \rho(F_2)$, which shows (M1). Since $F_1 \subset \Lambda^\infty$ is finite, it is contained in $\Lambda^{k-1}$ for some $k \in \bN$, and hence we can choose $U$ as the interior of $W$ in (M2).

(ii)$\implies$(i) Let $(H,\rho)$ be a good model for $(\Lambda^k, \Lambda^\infty)$, and let
$\Gamma \coloneqq \Lambda^\infty$ and $\Gamma_H \coloneqq \rho(\Gamma)$. We may replace $H$ by the closure of $\Gamma_H$ and thereby assume that $\Gamma_H$ is dense in $H$. Let $U \subset H$ be an open identity neighborhood with $\rho^{-1}(U) \subset \Lambda^k$ and set $W \coloneqq \overline{\rho(\Lambda)^{2k}}$. Then $\Lambda^{2k} \subset \rho^{-1}(W)$, and we claim that this subset is syndetic.

Indeed, since $\Gamma_H$ is dense we have $H = \Gamma_H U$. Since $W$ is compact we thus find a finite subset $F_1 \subset \Gamma$ such that
$W \subset \rho(F_1) U$. If $\gamma \in \rho^{-1}(W)$, then 
\[\rho(\gamma) \in W \cap \rho(\Gamma) \subset \rho(F_1)U \cap \rho(\Gamma) \subset \rho(F_1)(U \cap \rho(\Gamma)) \subset  \rho(F_1)\rho(\rho^{-1}(U)) \subset \rho(F_1)\rho(\Lambda^k).\] We thus find $f_1 \in F_1$ and $\lambda \in \Lambda^k$ with
\[
\rho(\gamma) = \rho(f_1)\rho(\lambda) \implies \rho(\lambda^{-1}f_1^{-1}\gamma) = e \implies \lambda^{-1}f_1^{-1}\gamma \in \rho^{-1}(U) \subset \Lambda^k.
\]
This shows that $\rho^{-1}(W) \subset F_1 \Lambda^{2k}$, and hence $\rho^{-1}(W)$ and $\Lambda^{2k}$ are commensurable. We deduce that $[\Lambda]_{\mathrm{co}} = [\Lambda^2]_{\mathrm{co}} = \chi(\rho)$, hence $[\Lambda]_{\mathrm co}$ is laminar. 
\end{proof}
There is a close relation between laminarity, good models and so-called model sets, which are certain Delone sets in locally compact groups (in the sense of Definition \ref{DefDelone}) with good discreteness properties. The following construction was introduced by Meyer \cite{Meyer1972} for abelian groups around 40 years before good models were introduced; the extension to the non-abelian case is fairly recent \cite{BH, BHP1}. 
\begin{example}[Cut-and-project construction]\label{CuPModel}
Let $G, H$ be locally compact groups, and denote by $\proj_G: G \times H \to G$ and $\proj_H: G \times H \to H$ the canonical projections. Given a subgroup $\Gamma<G \times H$ we denote its projections  by $\Gamma_G \coloneqq  \proj_G(\Gamma)$ and $\Gamma_H \coloneqq  \proj_H(\Gamma)$. Assume that the restriction $p: \Gamma \to \Gamma_G$ of $\proj_G$ is injective, and define
$\tau \coloneqq \proj_H \circ p^{-1}: \Gamma_G \to H$.
Then $\tau$ is a homomorphism, and thus for every relatively compact symmetric identity neighborhood $W$ in $H$ the subset
$\Lambda(\Gamma, W) \coloneqq  \tau^{-1}(W)$ is an approximate subgroup of $G$. Explicitly, we have
\[
\Lambda(\Gamma, W) = \proj_G(\Gamma \cap (G \times W)),
\]
i.e.\ $\Lambda(\Gamma, W)$ arises from $\Gamma$ by first cutting it with the ``strip'' $(G \times W)$ in $G \times H$, and then projecting down to $G$. Sets of the form $\Lambda(\Gamma, W)$ are thus sometimes referred to as \emph{cut-and-project sets}\index{cut-and-project set}; in the modern literature on aperiodic order the more commonly used term is \emph{model sets}\index{model set}. Note that according to our definition the lattice $\Gamma$ used to define a model set $\Lambda(\Gamma, W)$ does not have to be cocompact; if it is, then we say that $\Lambda(\Gamma, W)$ is a \emph{uniform model set}. One can show that every uniform model set is a Delone set in the ambient locally compact group (see \cite{BH}).

If $\Lambda = \Lambda(\Gamma, W)$ is a model set, then $\Lambda$ is an approximate subgroup of $G$ and $(\Lambda, G)$ has a good model, namely $(H, \tau)$. This is linguistically convenient, though an epistemological accident.  
\end{example}
Model sets play a central role in aperiodic order and, more specifically, in the theory of approximate lattices. In \cite{BH}, Bj\"orklund and the second named author introduced the notion of a uniform approximate lattice and discussed various tentative definitions of a non-uniform approximate lattice. After the appearance of \cite{Hrushovski_Arithmeticity} and \cite{MachadoDef}, the relations between these various definitions have been clarified. The following definition\footnote{The definition is stronger than the original definition from \cite{BH}, but weaker than the notion of a \emph{strong} approximate lattice.} has become widely accepted:
\begin{definition}\label{DefUniformApproximateLattice} An approximate subgroup $\Lambda$ of a locally compact second countable (lcsc) 
group $G$ is called an \emph{approximate lattice}\index{approximate lattice} if it is discrete and there exists a Borel subset $\mathcal F \subset G$ of finite (left-)Haar measure such that $G = \Gamma \mathcal F$. An approximate lattice is called \emph{uniform}\index{approximate lattice!uniform} if it is a Delone set in $G$, i.e.\ if $\mathcal F$ can be chosen to be compact.
\end{definition}
We will see later (see Theorem \ref{ThmIsometricActions} below) that uniform approximate lattices are closely related to geometric isometric actions of approximate groups. The following theorem summarizes our current knowledge concerning approximate lattices; here, an approximate lattice $\Lambda \subset G$ is called \emph{strong} if its orbit closure in the space of closed subsets of $G$ (with the Chabauty--Fell topology) admits a $G$-invariant probability measure $\mu$ with $\mu(\{\emptyset\}) = 0$, see \cite{BH}.
\begin{theorem}[Hrushovski, Machado, Bj\"orklund--Hartnick--Pogorzelski]\label{HuMo}
For an approximate lattice $\Lambda \subset G$ the following statements are equivalent:
\begin{enumerate}[(i)]
\item $\Lambda^6$ contains a model set as a syndetic subset.
\item $\Lambda$ is commensurable to a model set.
\item $\Lambda$ is commensurable to a strong approximate lattice.
\item $[\Lambda]_{\mathrm{co}}$ is laminar.
\end{enumerate}
Moreover, if $G$ is amenable or semisimple, then every approximate lattice in $G$ satisfies these conditions.
\end{theorem}
The implication (i)$\Rightarrow$(ii) is clear and (ii)$\Rightarrow$(iii) was established in \cite{BHP1}. The implications (iii)$\Rightarrow$(iv)$\Rightarrow$(i) were established by Machado \cite{MachadoDef} based on work of Hrushovski \cite{Hrushovski_Arithmeticity}. The fact that every approximate lattice in a semisimple group is strong is due to Hrushovski \cite{Hrushovski_Arithmeticity}, whereas the amenable case is due to Machado \cite{Machado_GoodModels}. In the abelian case the theorem reduces to a celebrated theorem of Meyer \cite{Meyer1972} which can be seen as the starting point of the modern theory of aperiodic order.

\chapter{Large-scale geometry of approximate groups}\label{ChapLargeScale}

As recalled in Appendix \ref{AppendixGGT}, every countable group $\Gamma$ gives rise to a canonical coarse equivalence class $[\Gamma]_c$ of metric spaces, and if $\Gamma$ is finitely-generated, then this coarse equivalence class can be refined into a canonical QI type $[\Gamma] \subset [\Gamma]_c$ of metric spaces. In this chapter we generalize these notions to countable approximate groups. 

The definition of a \emph{canonical coarse class of a countable approximate group} is in complete analogy with the group case. Just as the coarse class of a group is invariant under group isomorphisms and commensurability, the coarse class of an approximate group is invariant under $2$-local isomorphisms (Corollary \ref{CoarseType2Local}) and commensurability (Lemma \ref{CommCoarseEquiv}).

The canonical QI type of a finitely-generated group admits two different generalizations to the approximate setting, namely the \emph{external QI type} of an \emph{algebraically finitely-generated approximate group} and the \emph{internal QI type} of a \emph{geometrically finitely-generated approximate group}. We will see that the interplay between these two competing definitions is at the heart of geometric approximate group theory.

Since every countable approximate group gives rise to a canonical coarse class, we can use coarse invariants of metric spaces to study countable approximate groups. Similarly, we can use QI invariants to study countable approximate groups which are finitely-generated in a suitable sense. We discuss some examples of such invariants, most of which will reappear in later chapters.

We will freely use the language of (metric) coarse geometry; our terminology concerning various classes of coarse maps is summarized in Appendix \ref{AppendixLSG}. We will also assume some basic facts concerning geometric group theory, not just for finitely-generated groups, but more generally for countable and locally compact second countable (lcsc)\index{group!lcsc}\index{lcsc group} groups. See Appendix \ref{AppendixGGT} for background and notation.

\section{The coarse space associated with a countable approximate group}

Our first goal in this chapter is to associate with each countable approximate group $(\Lambda, \Lambda^\infty)$ a canonical coarse equivalence class $[\Lambda]_c$ of metric spaces, generalizing the construction in the group case summarized in Section \ref{SecCoarseClass} of Appendix \ref{AppendixGGT}. Our starting point is the following easy observation; see Definition \ref{DefLeftAdmissible} and Corollary \ref{AutomaticAdmissibleDiscrete} for the notion of a left-admissible metric on a countable group.
\begin{lemma}\label{ExternalQIType} Let $\Gamma$ be a countable group, $\Lambda \subset \Gamma$ be a subset and $d$ and $d'$ be left-admissible metrics on $\Gamma$. Then:
\begin{enumerate}[(i)]
\item The identity map $(\Lambda, d|_{\Lambda \times \Lambda}) \to (\Lambda, d'|_{\Lambda \times \Lambda})$ is a coarse equivalence.
\item If $d$ and $d'$ are moreover large-scale geodesic (e.g.\ word metrics with respect to finite symmetric generating sets), then the identity $(\Lambda, d|_{\Lambda \times \Lambda}) \to (\Lambda, d'|_{\Lambda \times \Lambda})$ is a quasi-isometry.
\end{enumerate}
\end{lemma}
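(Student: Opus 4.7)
The plan is to first prove both statements on the full group $\Gamma$ and then observe that everything restricts to the subset $\Lambda$. So the task reduces to showing that, for any two left-admissible metrics $d$ and $d'$ on a countable group $\Gamma$, the identity map $\id\colon (\Gamma, d) \to (\Gamma, d')$ is (i) a coarse equivalence, and (ii) a quasi-isometry when both metrics are large-scale geodesic.

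For (i), I would show that the identity is bornologous in both directions. Fix $R > 0$. By properness of $d$ on the countable group $\Gamma$, the closed ball $B_d(e, R)$ is finite, say $B_d(e, R) = \{g_1, \ldots, g_N\}$; set $S := \max_i d'(e, g_i)$. By left-invariance of both metrics, whenever $d(g, h) = d(e, g^{-1}h) \leq R$, we have $g^{-1}h \in B_d(e, R)$, and therefore $d'(g, h) = d'(e, g^{-1}h) \leq S$. Swapping the roles of $d$ and $d'$ gives the reverse bornologous bound. Since the identity is bijective, this makes it a coarse equivalence on $\Gamma$. Restricting to $\Lambda$, exactly the same constants $S(R)$ still work, so $\id\colon (\Lambda, d|_{\Lambda \times \Lambda}) \to (\Lambda, d'|_{\Lambda \times \Lambda})$ is bornologous in both directions and hence a coarse equivalence.

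For (ii), I would invoke Gromov's ``trivial lemma'' (cited in the introduction): a coarse equivalence between large-scale geodesic spaces is automatically a quasi-isometry. Applied to the coarse equivalence on $\Gamma$ produced in part (i), this yields constants $L \geq 1$ and $C \geq 0$ such that
\[
L^{-1} d(g, h) - C \;\leq\; d'(g, h) \;\leq\; L\, d(g, h) + C \qquad \text{for all } g, h \in \Gamma.
\]
This inequality is preserved under restriction to $\Lambda \times \Lambda$, and since the identity on $\Lambda$ is trivially surjective (hence cobounded), the restricted identity is a quasi-isometry as claimed.

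The conceptual point to keep in mind — and the only mild obstacle — is that the restricted metric $d|_{\Lambda \times \Lambda}$ need not be large-scale geodesic on $\Lambda$ even when $d$ is large-scale geodesic on $\Gamma$ (e.g.\ if $\Lambda$ sits inside $\Gamma$ with large gaps). Consequently the trivial lemma cannot be applied directly to $(\Lambda, d|_{\Lambda \times \Lambda})$; instead one must first establish the quasi-isometry inequality globally on $\Gamma$ and then restrict. Apart from this observation, the argument is entirely routine.
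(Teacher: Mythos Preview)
Your proof is correct and follows essentially the same route as the paper: establish the coarse equivalence (respectively quasi-isometry) on all of $\Gamma$ first, then restrict to $\Lambda$. The paper simply cites the appendix results (Corollary~\ref{CoarseGroupClassCountable} for the coarse equivalence and Corollary~\ref{GromovTrivialSymmetric} for Gromov's trivial lemma) rather than unpacking the bornologous argument inline, but your direct argument via properness and left-invariance is exactly what underlies those citations. Your observation that $d|_{\Lambda\times\Lambda}$ need not be large-scale geodesic, so that the trivial lemma must be applied on $\Gamma$ before restricting, is precisely the point the paper is implicitly relying on.
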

\begin{proof} The identity map $(\Gamma, d) \to (\Gamma, d')$ is a coarse equivalence by Corollary \ref{CoarseGroupClassCountable}, and if $d$ and $d'$ are large-scale geodesic, then it is a quasi-isometry by Corollary \ref{GromovTrivialSymmetric}. Restricting to $(\Lambda, d)$ we obtain a coarse embedding, respectively a quasi-isometric embedding. Since the image of $\Lambda$ is given by $\Lambda$, the result follows.
\end{proof}
By Lemma \ref{ExternalQIType}.(i) the following notion is well-defined, i.e.\ independent of the choice of a left-admis\-sible metric $d$:
\begin{definition} \label{def: canonical coarse class}
Let $(\Lambda, \Lambda^\infty)$ be a countable approximate group and let $d$ be a left-admissible metric on $\Lambda^\infty$. Then
\[
[\Lambda]_c \coloneqq  [(\Lambda, d|_{\Lambda \times \Lambda})]_c
\]
is called the \emph{canonical coarse class of $\Lambda$}\index{canonical coarse class}.
\end{definition}
\begin{remark}[Independence of the ambient group] Assume that $(\Lambda, \Lambda^\infty)$ is an approximate subgroup of a countable group $\Gamma$. If $d$ is a left-admissible metric on $\Gamma$, then $d|_{\Lambda^\infty \times \Lambda^\infty}$ is a left-admissible metric on $\Lambda^\infty$, and hence
\[
[\Lambda]_c \coloneqq  [(\Lambda, d|_{\Lambda \times \Lambda})]_c.
\]

Thus the canonical coarse class of $\Lambda$ is intrinsic in the sense that it is independent of the ambient group used to define it. 

Note that, since the restriction of a left-admissible metric is proper, the class $[\Lambda]_c$ admits a representative which is a proper metric space. Also note that for the definition of the canonical coarse class we do not use that $\Lambda$ is an approximate subgroup, so one could as well associate a canonical coarse class of (pseudo-)metric spaces with an arbitrary subset of a countable group. 
\end{remark}
The following three lemmas and their corollaries record basic properties of the coarse equivalence classes of approximate groups.
\begin{lemma}[Commensurability invariance]\label{CommCoarseEquiv}\label{LambdaKCoarse}  Let $\Gamma$ be a countable group and $\Lambda, \Lambda' \subset \Gamma$ be approximate subgroups. Then 
\[
[\Lambda]_{\mathrm{co}} = [\Lambda']_{\mathrm{co}} \implies [\Lambda]_c = [\Lambda']_c.
\]
In particular, $[\Lambda]_c = [\Lambda^k]_c$ for every $k \geq 1$.
\end{lemma}
\begin{proof} Let $d$ be a left-admissible metric on $\Gamma$ and let $F_1, F_2 \subset \Gamma$ be finite such that $\Lambda \subset \Lambda'F_1$ and $\Lambda' \subset \Lambda F_2$. Then every $\lambda \in \Lambda$ can be written as $\lambda = \lambda'f$ for some $\lambda' \in \Lambda'$ and $f \in F_1$ and hence $d(\lambda, \lambda') = d(f,e)$, which is uniformly bounded. This shows that $\Lambda$ is contained in a bounded $d$-neighborhood of $\Lambda'$, and by a symmetric argument $\Lambda'$ is contained in a bounded $d$-neighborhood of $\Lambda$. This implies that $(\Lambda, d|_{\Lambda \times \Lambda})$ and $(\Lambda', d|_{\Lambda' \times \Lambda'})$ are coarsely equivalent, and the second statement then follows from Proposition \ref{PropShuffling}.
\end{proof}
If $\omega = [\Lambda]_{\mathrm{co}}$ is a commensurability class of countable approximate groups, we may thus define its \emph{coarse class} as $[\omega]_c \coloneqq  [\Lambda]_c$.
\begin{example}[Coarse kernel of a local morphism]\label{CoarseKernel} Let $(\Xi, \Xi^\infty)$, $(\Lambda, \Lambda^\infty)$ be approximate groups with $(\Xi,\Xi^\infty)$ countable, and let $\rho_k: (\Xi, \Xi^k) \to (\Lambda, \Lambda^k)$ be a $k$-local morphism, for some $k \geq 10$. Then by Lemma \ref{Lem: partial kernel ag}
and Lemma \ref{CommCoarseEquiv} we have
\[
[\ker_2(\rho_k)]_c = [\ker_3(\rho_k)]_c = \dots = [\ker_{\lfloor \frac{k+2}{6} \rfloor}(\rho_{k})]_c.
\]
This common coarse equivalence class will be called the \emph{coarse kernel}\index{coarse kernel} of $\rho_k$ and denoted $[\ker(\rho_k)]_c$. The coarse kernel $[\ker(\rho)]_c$ of a global morphism $\rho $ is defined similarly.
\end{example}
\begin{example}[Coarse quasikernel] 
If $\rho: \Gamma \to H$ is a quasimorphism between countable groups (or a real-valued quasimorphism), then all quasikernels of $\rho$ are commensurable, hence define a common coarse class $[\qker]_c$ called the \emph{coarse quasikernel} of $\rho$.
\end{example}
\begin{lemma}[Coarse embeddings]\label{SubgroupCoarse} Let $(\Xi, \Xi^\infty)$ be an approximate subgroup of a countable approximate group $(\Lambda, \Lambda^\infty)$. Then the inclusion $\Xi\hookrightarrow \Lambda$ induces a coarse embedding $[\Xi]_c \to [\Lambda]_c$.
\end{lemma}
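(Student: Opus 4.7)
The plan is to exhibit an explicit isometric embedding representing the inclusion and conclude that it is a coarse embedding a fortiori. The only subtlety is making sure the metrics we use on $\Xi$ and $\Lambda$ come from a common source, so that the inclusion genuinely becomes isometric (not merely coarsely well-defined).

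First I would choose a left-admissible metric $d$ on the countable group $\Lambda^\infty$, which exists by Lemma \ref{ExistLeftAdmissible}. Since $\Xi \subset \Lambda$, we have $\Xi^\infty \subset \Lambda^\infty$, and the restriction $d_\Xi := d|_{\Xi^\infty \times \Xi^\infty}$ is again left-invariant and proper (properness is preserved under restriction to a subgroup because balls in $(\Xi^\infty, d_\Xi)$ are intersections of balls in $(\Lambda^\infty, d)$ with $\Xi^\infty$, hence finite). Thus $d_\Xi$ is a left-admissible metric on $\Xi^\infty$, and by Definition \ref{def: canonical coarse class} we can use it to compute the canonical coarse class of $\Xi$, namely
\[
[\Xi]_c = [(\Xi, d_\Xi|_{\Xi \times \Xi})]_c = [(\Xi, d|_{\Xi \times \Xi})]_c.
\]
Similarly $[\Lambda]_c = [(\Lambda, d|_{\Lambda \times \Lambda})]_c$.

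With these choices the inclusion $\iota \colon (\Xi, d|_{\Xi \times \Xi}) \hookrightarrow (\Lambda, d|_{\Lambda \times \Lambda})$ is literally an isometric embedding, because both metrics are restrictions of the single metric $d$. An isometric embedding is in particular a coarse embedding (one may take the identity function as both control functions), hence $\iota$ represents the required coarse embedding $[\Xi]_c \to [\Lambda]_c$.

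There is essentially no obstacle here; the only thing to verify carefully is that the definition of $[\Lambda]_c$ (and $[\Xi]_c$) does not depend on which left-admissible metric is chosen, which is already guaranteed by Lemma \ref{ExternalQIType}(i) together with the remark on independence of the ambient group following Definition \ref{def: canonical coarse class}. Once this invariance is invoked, the proof reduces to the observation that restricting a single metric to nested subsets yields isometric inclusions.
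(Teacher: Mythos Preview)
Your proof is correct and follows essentially the same approach as the paper: choose a left-admissible metric on $\Lambda^\infty$, observe that its restriction to $\Xi^\infty$ is again left-admissible, and conclude that the inclusion $(\Xi, d|_\Xi) \hookrightarrow (\Lambda, d|_\Lambda)$ is isometric, hence a coarse embedding. Your version is slightly more detailed (e.g.\ spelling out why properness survives restriction), but the argument is the same.
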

\begin{proof} Let $d$ be a left-admissible metric on $\Lambda^\infty$. Then the restriction of $d$ to $\Xi^\infty$ is a left-admissible metric, and hence the restrictions of $d$ to $\Lambda$ and $\Xi$ respectively are left-admissible metrics. It follows that $[\Lambda]_c$ and $[\Xi]_c$ are represented by $(\Lambda, d|_\Lambda)$ and $(\Xi, d|_\Xi)$ respectively, and  since the inclusion $(\Xi, d|_\Xi) \to (\Lambda, d|_\Lambda)$ is isometric, the lemma follows.
\end{proof}
Recall from Lemma \ref{characterisation-CL,CE} that a map $h: X \to Y$ between pseudo-metric spaces is \emph{coarsely Lipschitz} if and only if for every $t \geq0$ there exists $s\geq0$ such that if  $x, x' \in X$ satisfy $d_X(x, x')\leq t$, then $d_Y (h(x),h(x'))\leq s$. In the language of coarse geometry (see Remark \ref{coarse structure def}), this means that $h$ is bornologous with respect to the pseudo-metric coarse structures on $X$ and $Y$. In particular, this notion depends only on the coarse equivalence classes of $X$ and $Y$, respectively.

\begin{lemma}\label{Bornologous} Let $(\Xi, \Xi^\infty)$, $(\Lambda, \Lambda^\infty)$ be countable approximate groups and let $f_2: (\Xi, \Xi^2) \to (\Lambda, \Lambda^\infty)$ be a $2$-local quasimorphism. Then the restriction $f_1: \Xi \to \Lambda$ is coarsely Lipschitz with respect to the canonical coarse classes of $\Xi$ and $\Lambda$.
\end{lemma}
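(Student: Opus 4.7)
The plan is to exploit the fact that a left-admissible metric on a countable group has finite closed balls, combined with the finite defect provided by the $2$-quasimorphism hypothesis.

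First, I would fix left-admissible metrics $d_\Xi$ on $\Xi^\infty$ and $d_\Lambda$ on $\Lambda^\infty$, so that by definition $[\Xi]_c$ and $[\Lambda]_c$ are represented by $(\Xi, d_\Xi|_\Xi)$ and $(\Lambda, d_\Lambda|_\Lambda)$. By Lemma \ref{characterisation-CL,CE}, it then suffices to show: for every $t \geq 0$ there exists $s \geq 0$ such that $d_\Xi(x, x') \leq t$ implies $d_\Lambda(f_1(x), f_1(x')) \leq s$, for all $x, x' \in \Xi$.

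Given such $x, x'$, I would set $y := x^{-1}x'$. By symmetry of $\Xi$ we have $y \in \Xi^2$, and by left-invariance of $d_\Xi$ we have $d_\Xi(e, y) = d_\Xi(x, x') \leq t$. Since $xy = x' \in \Xi \subset \Xi^2$, the defect relation \eqref{DefectSet} produces an element $\delta \in D_2(f_2)$ with $f_1(x)^{-1}f_1(x') = f_2(y)\, \delta$. Applying left-invariance of $d_\Lambda$ and the triangle inequality then gives
\[
d_\Lambda(f_1(x), f_1(x')) \leq d_\Lambda(e, f_2(y)) + d_\Lambda(e, \delta).
\]
The second summand is uniformly bounded by $C := \max_{\delta \in D_2(f_2)} d_\Lambda(e, \delta) < \infty$, using that $D_2(f_2)$ is finite by hypothesis.

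The last step is where properness enters. Since $d_\Xi$ is a proper left-invariant metric on the countable set $\Xi^\infty$, every closed metric ball is finite. In particular, the set $T_t := \{y \in \Xi^2 \mid d_\Xi(e, y) \leq t\}$ is finite, so $M(t) := \max_{y \in T_t} d_\Lambda(e, f_2(y))$ is a finite quantity depending only on $t$ (and on the fixed data $d_\Xi, d_\Lambda, f_2$). Taking $s := M(t) + C$ completes the argument. There is no real obstacle here; the main conceptual point is that one does \emph{not} need any a priori bound on the individual values $f_2(y)$ as a function of $d_\Xi(e,y)$. Only \emph{finitely many} such values occur at distance $\leq t$, so the problem collapses to a finite maximum, while the $2$-quasimorphism defect contributes a single uniform additive constant $C$.
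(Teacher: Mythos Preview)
Your proof is correct and follows essentially the same approach as the paper's: use the defect relation to reduce $d_\Lambda(f_1(x),f_1(x'))$ to $d_\Lambda(e,f_2(x^{-1}x'))$ plus a uniform additive constant coming from the finite defect set, then invoke properness of $d_\Xi$ to bound the first term by a finite maximum over the ball of radius $t$ in $\Xi^2$. If anything, your choice to apply the defect identity with $(a,b)=(x,\,x^{-1}x')$ is slightly cleaner than the paper's choice $(a,b)=(g^{-1},h)$, since it lands directly on $f_1(x)^{-1}f_1(x')$ without needing to pass through $f_1(g^{-1})$.
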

\begin{proof} Fix left-admissible metrics $d$ on $\Xi^\infty$ and $d'$ on $\Lambda^\infty$ and denote by $\|\cdot \|$ and $\|\cdot \|'$ the corresponding norms. By Remark \ref{Quasifilt} there exists $N \in \bN$ such that $\rho(\Xi^2) \subset \Lambda^N$. Given $t,s > 0$ we define
\[
\bar B(t) \coloneqq  \{\xi \in \Xi^2 \mid \|\xi\| \leq t\} \quad \text{and} \quad \bar B'(s) \coloneqq  \{\lambda \in \Lambda^N \mid \|\lambda\|' \leq s\}.
\]
Since $f_2$ is a $2$-local quasimorphism, the local defect set $D \coloneqq D_2(f_2)$ is finite. For all $g,h \in \Xi$ we have $f_2(g^{-1}h)^{-1}f_1(g^{-1})f_1(h) \in D^{-1}$, and consequently
\[
d'(f_1(g^{-1})f_1(h), f_2(g^{-1}h ))   =  d'(f_2(g^{-1}h)^{-1}f_1(g^{-1})f_1(h) , e) \leq C,
\]
where $C\coloneqq  \max_{x \in D^{-1}} d'(x,e)$. Specializing to the case $h = g$ we obtain
\[ d'(f_1(g), f_1(g^{-1})^{-1}) \leq d'(f_1(g^{-1})f_1(g), f_2(g^{-1}g)) + d'(f_1(e), e)\leq 2C.
\]

Since $d$ is proper, the set $\bar B(t)$ and thus also $f_2(\bar B(t))$ is finite, for every $t >0$. We deduce that for every $t>0$ there exists $s > 0$ such that \begin{eqnarray*}
f_2(\bar B(t)) \subset \bar B'(s) &\Rightarrow& \forall \xi \in \Xi^2: \; \|\xi\| \leq t \; \Rightarrow \; \|f_2(\xi)\|' \leq s\\
&\Rightarrow& \forall g,h \in \Xi: \; \|g^{-1}h\| \leq t \; \Rightarrow\;   \|f_2(g^{-1}h)\|' \leq s\\
&\Rightarrow& \forall g,h \in \Xi: \; \|g^{-1}h\| \leq t \; \Rightarrow\; \|f_1(g^{-1})f_1(h)\|' \leq s + C\\
&\Rightarrow& \forall g,h \in \Xi: \; d(g,h) \leq t \; \Rightarrow \; d'(f_1(g^{-1})^{-1}, f_1(h)) \leq s +C.
\end{eqnarray*}
Thus, for all $g, h \in \Xi$ with $d(g,h)\leq t $ we have
\[
d'(f_1(g), f_1(h)) \leq  d'(f_1(g), f_1(g^{-1})^{-1}) +  d'(f_1(g^{-1})^{-1}, f_1(h)) \leq  s+3C. \qedhere
\]
\end{proof}
Combining this lemma with Lemma \ref{CoarseMorphisms} we deduce:

\begin{corollary}\label{CoarseType2Local} Let $(\Xi, \Xi^\infty)$, $(\Lambda, \Lambda^\infty)$ be countable approximate groups and let $f_2: (\Xi, \Xi^2) \to (\Lambda, \Lambda^2)$ be a $2$-local isomorphism. Then $[\Xi]_c = [\Lambda]_c$.
\end{corollary}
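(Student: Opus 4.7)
The plan is to reduce the claim to the combination of Lemma~\ref{Bornologous} (which shows that the $1$-component of a $2$-local quasimorphism is coarsely Lipschitz) with Lemma~\ref{CoarseMorphisms} (a coarsely Lipschitz map with a coarsely Lipschitz coarse inverse induces a coarse equivalence).

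First I would unpack the definition of a $2$-local (quasi-)isomorphism. By definition (formulated via the associated filtered groups in the spirit of Definition~\ref{DefFilteredMorphism}), a $2$-local (quasi-)isomorphism $f_2 \colon (\Xi, \Xi^2) \to (\Lambda, \Lambda^2)$ admits an inverse $g_2 \colon (\Lambda, \Lambda^2) \to (\Xi, \Xi^2)$ which is again a $2$-local (quasi-)morphism, such that $g_2 \circ f_2 = \id_{\Xi^2}$ and $f_2 \circ g_2 = \id_{\Lambda^2}$ on the nose in the isomorphism case, or up to a uniformly bounded defect in the quasi-isomorphism case. In particular, $f_2$ restricts to a bijection $f_1 \colon \Xi \to \Lambda$ in the isomorphism case, and to a map with a coarsely inverse $g_1 := g_2|_\Lambda$ in the quasi-isomorphism case.

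Next, I would apply Lemma~\ref{Bornologous} directly to both $f_2$ and $g_2$. This yields that $f_1 \colon \Xi \to \Lambda$ is coarsely Lipschitz with respect to the canonical coarse classes $[\Xi]_c$ and $[\Lambda]_c$, and dually that $g_1 \colon \Lambda \to \Xi$ is coarsely Lipschitz with respect to $[\Lambda]_c$ and $[\Xi]_c$. In the isomorphism case, $g_1 \circ f_1 = \id_\Xi$ and $f_1 \circ g_1 = \id_\Lambda$, so $f_1$ is literally a bijection which is coarsely Lipschitz with coarsely Lipschitz inverse; Lemma~\ref{CoarseMorphisms} then delivers the coarse equivalence, whence $[\Xi]_c = [\Lambda]_c$. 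In the quasi-isomorphism case, the composites $g_1 \circ f_1$ and $f_1 \circ g_1$ agree with the respective identities up to elements of the finite defect sets $D_2(f_2) \cup D_2(g_2)$, hence are at uniformly bounded distance from the identities in any left-admissible metric; this means $f_1$ and $g_1$ are coarse inverses of each other, and Lemma~\ref{CoarseMorphisms} again yields that $f_1$ witnesses a coarse equivalence $[\Xi]_c \cong [\Lambda]_c$.

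The only genuine subtlety is ensuring that ``coarse inverse up to bounded defect'' fits the hypotheses of Lemma~\ref{CoarseMorphisms}. This is a routine estimate: if $\xi \in \Xi$, then $g_1(f_1(\xi))\xi^{-1} \in D_2(g_2 \circ f_2)$, which is finite by hypothesis, and left-invariance of the chosen admissible metric on $\Xi^\infty$ converts this into a uniform bound $d(g_1(f_1(\xi)), \xi) \leq C$, and symmetrically on the $\Lambda$ side. Thus no new ideas are required beyond the two lemmas already established, and the corollary follows.
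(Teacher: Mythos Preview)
Your proposal is correct and follows essentially the same approach as the paper, which simply states that the corollary follows by combining Lemma~\ref{Bornologous} with Lemma~\ref{CoarseMorphisms}. You have added helpful detail about why $g_1 \circ f_1$ and $f_1 \circ g_1$ are close to the respective identities in the quasi-isomorphism case, though your notation ``$D_2(g_2 \circ f_2)$'' is slightly imprecise (that symbol denotes the multiplicative defect set, not the set measuring distance to the identity); the substance of the argument is unaffected.
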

We express this corollary by saying that the canonical coarse class of a countable approximate group depends only on its $2$-local isomorphism type\footnote{Since every $3$-Freiman isomorphism is a $2$-local isomorphism, the coarse class only depends on the $3$-Freiman isomorphism type in the same sense.}.

\section{Coarse invariants of countable approximate groups}
By a \emph{coarse invariant}\index{coarse invariant} of (proper) pseudo-metric spaces we mean an assignment $\cI$ which assigns to each (proper) pseudo-metric space $(X,d)$ some object $\cI(X,d)$ such that $\cI(X,d) = \cI(Y,d)$ whenever $(X,d)$ and $(Y,d)$ are coarsely equivalent. For example, every functor on the category of (metrizable) coarse spaces defines a coarse invariant, but there are also interesting numerical coarse invariants.
\begin{remark}[Coarse invariants of countable approximate groups]\label{CoarseInvariants}
If $\cI$ is a coarse invariant of proper pseudo-metric spaces and $(\Lambda, \Lambda^\infty)$ is a countable approximate group, then we define
\[
\cI(\Lambda) \coloneqq  \cI(X,d),
\]
where $(X,d)$ is any proper representative of $[\Lambda]_c$. Since $\cI$ is coarsely invariant, this is well-defined, and by Lemma \ref{CommCoarseEquiv} and Corollary \ref{CoarseType2Local} the assignment $\Lambda \mapsto \cI(\Lambda)$ is invariant under commensurability  and $2$-local isomorphism (in particular, under global isomorphism). We refer to such an invariant as a \emph{coarse invariant}\index{approximate group!coarse invariant of} of countable approximate groups.
\end{remark}
We now discuss some examples of coarse invariants of approximate groups.
\begin{definition}\label{def: asdim} Let $n \in \mathbb N_0$.
A metric space $(X,d)$ has \emph{asymptotic dimension}\index{asymptotic dimension}\index{$\asdim$} $\asdim X \leq n$ if for every $R >0$ there exist collections $\mathcal U^{(0)}, \dots, \mathcal U^{(n)}$ of subsets of $X$ with the following properties:
\begin{enumerate}[({ASD}1)]
\item $\bigcup_{i=0}^n \mathcal U^{(i)}$ is a cover of $X$;
\item For every $i=0, \dots, n$ the collection $\mathcal U^{(i)}$ is \emph{uniformly bounded}\index{uniformly bounded family}, i.e. there exists $D>0$ such that for all $U \in \mathcal U^{(i)}$ we have $\diam U \leq D$;
\item For every $i=0, \dots, n$ the collection $\mathcal U^{(i)}$ is \emph{$R$-disjoint}\index{$R$-disjoint}, i.e. for all $U,V \in \mathcal U^{(i)}$ we have either $U=V$ or $d(U, V) \geq R$.
\end{enumerate}
If $n\in \bN_0$ is the smallest number so that these properties are satisfied for every $R>0$, then
we say that $\asdim X = n$. If there is no such $n\in \bN_0$, we say that $\asdim X = \infty$.
\end{definition}
It evidently suffices for $(X,d)$ to have the above properties for all sufficiently large $R$. Our definition of asymptotic dimension is one of many equivalent definitions, see \cite{BellDran1}. It is sometimes referred to as the \emph{coloring definition}\index{asymptotic dimension!coloring definition of} of $\asdim$, because the union of collections $\mathcal U^{(0)}, \dots, \mathcal U^{(n)}$ can be regarded as one cover $\mathcal{U}$ of $X$, which is uniformly bounded, and can be separated into $(n+1)$ subfamilies $\mathcal{U}^{(i)}$, each of which has its own color $i$, and is $R$-disjoint. Coarse invariance of asymptotic dimension is established in the following lemma:
\begin{lemma}\label{AsdimInvariant}
\begin{enumerate}[(i)]
\item If $X$ is a subspace of $Y$, then $\asdim X\leq \asdim Y$.
\item If $X$ and $Y$ are coarsely equivalent, then  $\asdim X = \asdim Y$.
\item If $X$ embeds coarsely into $Y$, then $\asdim X \leq \asdim Y$.
\end{enumerate}
\end{lemma}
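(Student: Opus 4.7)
The three statements are tightly linked: part (i) is the special case of (iii) applied to the isometric inclusion $X\hookrightarrow Y$, and part (ii) follows from (iii) applied in both directions of a coarse equivalence. So the only substantive work lies in (iii), and I would prove (i) directly only for the record.

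For (i), given $R>0$ and $R$-disjoint uniformly bounded color classes $\mathcal V^{(0)},\dots,\mathcal V^{(n)}$ covering $Y$ (existing because $\asdim Y\leq n$), I would simply set $\mathcal U^{(i)}:=\{V\cap X\mid V\in\mathcal V^{(i)}\}$. Restriction to $X$ can only shrink diameters, so uniform boundedness persists; distances in $X$ agree with distances in $Y$, so the $R$-disjointness and the covering property survive verbatim. This gives $\asdim X\leq n$.

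For (iii), let $f:X\to Y$ be a coarse embedding, i.e.\ both coarsely proper and coarsely Lipschitz. The plan is to translate an asymptotic dimension certificate for $Y$ at an appropriately inflated scale into one for $X$ at scale $R$. Concretely, given $R>0$, I would use the coarsely Lipschitz property of $f$ (via Lemma~\ref{characterisation-CL,CE}) to produce $R''>0$ such that $d_X(x,x')<R$ implies $d_Y(f(x),f(x'))<R''$. Apply $\asdim Y\leq n$ at parameter $R''$ to obtain $R''$-disjoint uniformly bounded families $\mathcal V^{(0)},\dots,\mathcal V^{(n)}$ covering $Y$, with $\diam V\leq D$ for every $V$ in any of them. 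Set
\[
\mathcal U^{(i)}:=\{\,f^{-1}(V)\mid V\in\mathcal V^{(i)},\ f^{-1}(V)\neq\emptyset\,\}.
\]
These cover $X$ because the $\mathcal V^{(i)}$ cover $f(X)$. For $R$-disjointness: if $x\in f^{-1}(V)$ and $x'\in f^{-1}(V')$ with $V\neq V'$ in the same color, then $d_Y(f(x),f(x'))\geq R''$, and the contrapositive of the defining inequality for $R''$ forces $d_X(x,x')\geq R$. Uniform boundedness of each $\mathcal U^{(i)}$ is where coarse properness of $f$ enters: for $x,x'\in f^{-1}(V)$ we have $d_Y(f(x),f(x'))\leq D$, and coarse properness supplies a constant $C=C(D)$, independent of $V$, with $d_X(x,x')\leq C$. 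This yields $\asdim X\leq n$, proving (iii).

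For (ii), a coarse equivalence provides coarse embeddings in both directions, so (iii) gives $\asdim X\leq \asdim Y$ and $\asdim Y\leq \asdim X$.

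The only place where one must be a little careful is uniform boundedness of the pulled-back families in (iii); a merely coarsely Lipschitz map (without coarse properness) would not suffice, since preimages of bounded sets could be unbounded. Everything else is a mechanical unwinding of the definitions, and no case distinction on whether $\asdim Y$ is finite is needed (if $\asdim Y=\infty$ the inequalities are trivial).
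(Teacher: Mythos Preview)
Your argument is correct, but you misname the key ingredient in the uniform boundedness step of (iii). You write that ``coarse properness supplies a constant $C=C(D)$, independent of $V$'', but coarse properness (preimages of bounded sets are bounded) gives no uniform bound depending only on $\diam V$; what you actually need and use is coarse \emph{expansiveness}, i.e.\ the existence of a lower control $\Phi_-$, so that $\Phi_-(d_X(x,x'))\leq d_Y(f(x),f(x'))\leq D$ forces $d_X(x,x')\leq\sup\{t:\Phi_-(t)\leq D\}<\infty$. Since the paper defines a coarse embedding precisely as a map admitting both an upper and a lower control, the property is available---you have just mislabelled it. Your parenthetical ``i.e.\ both coarsely proper and coarsely Lipschitz'' is likewise not the paper's definition of coarse embedding.

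As for the overall strategy, you and the paper take mirror-image routes. The paper proves (ii) directly by \emph{pushing forward} a certificate along a coarse equivalence $f:X\to Y$ (setting $\mathcal U_Y^{(i)}:=\{N_K(f(U)):U\in\mathcal U_X^{(i)}\}$ and checking the constants transform as $D'=\Phi_+(D)+2K$, $R'=\Phi_-(R)-2K$), then deduces (iii) from (i) and (ii) by noting that a coarse embedding is a coarse equivalence onto its image. You instead prove (iii) directly by \emph{pulling back} a certificate, and deduce (ii) from (iii) applied both ways. Both are standard and equally short; your pullback argument has the mild advantage of not needing the image to be relatively dense, so it handles (iii) in one stroke without the detour through (ii).
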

\begin{proof} (i) Let $\asdim Y = n$. If $R > 0$, and $\mathcal U_Y^{(0)}, \dots, \mathcal U_Y^{(n)}$ satisfy (ASD1)--(ASD3) above, then the collections
\[
\mathcal U_X^{(i)} \coloneqq  \{U \cap X \mid U \in \mathcal U_Y^{(i)}\}, i=0,\cdots , n
\]
satisfy (ASD1)--(ASD3) for $X$, so $\asdim X\leq n$.

(ii) Assume that $\asdim X=n$, and that $f: X \to Y$ is a coarse equivalence whose image is $K$-relatively dense in $Y$. Given $\mathcal U_X^{(0)}, \dots, \mathcal U_X^{(n)}$ which satisfy (ASD1)--(ASD3) above for constants $D$ and $R$, one can check that 
\[
\mathcal U_Y^{(i)} \coloneqq  \{N_K(f(U)) \mid U \in \mathcal U_X^{(i)}\}, i=0,\cdots , n
\]
satisfiy (ASD1)--(ASD3) with constants $D' = \Phi_+(D)+2K$ and $R'  = {\Phi_-}(D) - 2K$, where $\Phi_-$ and $\Phi_+$ are lower and upper controls for $f$. This implies that $\asdim Y \leq n = \asdim X$, and the opposite inequality follows by a symmetric argument.

(iii) follows from (i) and (ii).
\end{proof}
\begin{definition}\label{def: asdim lambda} The \emph{asymptotic dimension}\index{approximate group!asdim of} of a countable approximate group $(\Lambda, \Lambda^\infty)$ is the coarse invariant defined as $\asdim \Lambda \coloneqq  \asdim X$ for some (hence any) $X \in [\Lambda]_c$.
\end{definition}
We will say more about asymptotic dimension of countable approximate groups in Chapter \ref{ChapAsdim} below. For later reference we record the following consequences of Lemmas \ref{LambdaKCoarse}, \ref{SubgroupCoarse} and \ref{AsdimInvariant}:
\begin{corollary}[Monotonicity of asymptotic dimension for countable approximate groups]\label{CorMonotonicity} Let $(\Lambda,\Lambda^\infty)$ be a countable approximate group, and let $(\Xi, \Xi^\infty)  \subset (\Lambda,\Lambda^\infty)$ be an approximate subgroup. Then the following hold:
\begin{enumerate}[(i)]
\item $\asdim\Lambda \leq \asdim \Lambda^\infty$.
\item $\asdim\Lambda^k = \asdim\Lambda$ for all $k \geq 1$.
\item $\asdim\Xi  \leq \asdim\Lambda$.
\end{enumerate}
\end{corollary}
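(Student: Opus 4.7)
The proof is essentially an application of the coarse-geometric lemmas just established (Lemmas \ref{LambdaKCoarse}, \ref{SubgroupCoarse} and \ref{AsdimInvariant}), so the plan is to dispatch each of the three assertions in turn by producing an appropriate coarse embedding or coarse equivalence and invoking coarse invariance of asymptotic dimension. No new technical ingredient is needed; the main point is simply to put together the book-keeping correctly.

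For (ii) the plan is the most direct: Lemma \ref{LambdaKCoarse} asserts that the inclusion $\Lambda \hookrightarrow \Lambda^k$ induces a coarse equivalence between $[\Lambda]_c$ and $[\Lambda^k]_c$. Applying Lemma \ref{AsdimInvariant}(ii) to any proper representatives then gives $\asdim \Lambda^k = \asdim \Lambda$. For (iii) I would note that by Lemma \ref{SubgroupCoarse}, the inclusion $\Xi \hookrightarrow \Lambda$ induces a coarse embedding $[\Xi]_c \to [\Lambda]_c$, and Lemma \ref{AsdimInvariant}(iii) immediately yields $\asdim \Xi \leq \asdim \Lambda$.

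For (i) I would view $\Lambda^\infty$ as the (countable) approximate group $(\Lambda^\infty, \Lambda^\infty)$ in the sense of Example \ref{TrivialExamples}.(1), whose canonical coarse class is represented by $(\Lambda^\infty, d)$ for any left-admissible metric $d$ on $\Lambda^\infty$. Then $(\Lambda, \Lambda^\infty)$ is an approximate subgroup of $(\Lambda^\infty, \Lambda^\infty)$ (since $\Lambda \subset \Lambda^\infty$ and the ambient groups coincide), so Lemma \ref{SubgroupCoarse} again gives a coarse embedding $[\Lambda]_c \to [\Lambda^\infty]_c$, and Lemma \ref{AsdimInvariant}(iii) yields the desired bound $\asdim \Lambda \leq \asdim \Lambda^\infty$.

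There is no genuine obstacle here. The only thing to keep an eye on is the mild type-checking step in (i), namely that the symbol $\asdim \Lambda^\infty$ on the right-hand side is being interpreted according to Definition \ref{def: asdim lambda} applied to the approximate group $(\Lambda^\infty, \Lambda^\infty)$, which in turn agrees with the classical asymptotic dimension of the countable group $\Lambda^\infty$ equipped with a left-admissible (equivalently, proper left-invariant) metric. Once this identification is made, each of the three inequalities is a one-line consequence of the preceding lemmas.
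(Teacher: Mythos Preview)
Your proof is correct and matches the paper's approach exactly: the paper records this corollary as an immediate consequence of Lemmas \ref{LambdaKCoarse}, \ref{SubgroupCoarse} and \ref{AsdimInvariant} (hence the \qed\ with no displayed proof), and you have spelled out precisely how each part follows from these lemmas, including the correct reading of $\asdim \Lambda^\infty$ via the approximate group $(\Lambda^\infty,\Lambda^\infty)$.
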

The numerical coarse invariant $\asdim$ is closely related to a more geometric coarse invariant which takes values in homeomorphism types of compact spaces:
\begin{example}[Higson corona] Let $(X,d)$ be a proper metric space; a bounded continuous function $f: X \to \C$ is called a \emph{Higson function}\index{Higson function} if for all $x \in X$ and $r>0$ we have 
\[
\lim_{x \to \infty} \sup\{|f(x) - f(y)|\mid d(x,y) \leq r\} = 0.
\]
The Higson functions form a unital $C^*$-subalgebra $C_h(X)$ of $C_b(X)$, hence its Gelfand spectrum $hX \coloneqq  \mathrm{spec}(C_h(X))$ defines a compactification of $X$, called the \emph{Higson compactification}\index{Higson compactification} and the boundary $\nu X \coloneqq  hX \setminus X$ is called the \emph{Higson corona}\index{Higson corona} of $X$. By \cite[Cor.\ 2.42]{Roe} the assignment $\cI$ which assigns to a proper metric space $X$ the homeomorphism type $[\nu X]$ of its Higson corona is a coarse invariant. We thus define the \emph{Higson corona}\index{approximate group!Higson corona of} of a countable approximate group $(\Lambda, \Lambda^\infty)$ as $\nu \Lambda \coloneqq  [\nu X]$, where $(X,d)$ is any proper representative of $[\Lambda]_c$. As a special case of \cite[Thm. 6.2]{Dra00} we see that if $\asdim \Lambda < \infty$, then \[\asdim \Lambda = \dim \nu \Lambda,\] where $\dim$ denotes topological (or covering) dimension. Thus for countable approximate groups of finite asymptotic dimension, the asymptotic dimension can be recovered from the Higson corona.
\end{example}

Here is a related coarse invariant:
\begin{example}[Space of coarse ends] With every metric space $(X,d)$ one can associate via Construction \ref{CoarseEnds} a topological space $\cE_\infty(X)$, whose elements are called the \emph{coarse ends}\index{coarse ends} of $X$. By Theorem \ref{CoarseEndsInvariant} the homeomorphism type of $\cE_\infty(X)$ is a coarse invariant of $X$. In particular, we may define $\cE_\infty(\Lambda)$ as the homeomorphism type of $\cE_\infty(X)$ where $X$ is some (hence any) representative of $[\Lambda]_c$.
\end{example}
It turns out that the possible homeomorphism types of spaces of coarse ends of approximate groups are severely restricted. To give a flavor of the results to come, we mention the following result; see Definition \ref{def: gfg} below for the notion of a geometrically finitely-generated approximate group.
\begin{corollary}\label{EndsApp} If $(\Lambda, \Lambda^\infty)$ is a geometrically finitely-generated approximate group and $\cE_\infty(\Lambda)$ contains at least $3$ points, then it is uncountable and homeomorphic to a Cantor space. In particular, a geometrically finitely-generated approximate group has either $0$, $1$, $2$ or uncountably many coarse ends.
\end{corollary}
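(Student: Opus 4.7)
Let $X$ be an apogee of $(\Lambda,\Lambda^\infty)$, which exists by the standing hypothesis of geometric finite generation; then $X$ is a proper geodesic metric space representing $[\Lambda]_c$, so by the coarse invariance of coarse ends (Theorem~\ref{CoarseEndsInvariant}, applied just as in the previous example) we have a canonical homeomorphism $\cE_\infty(\Lambda)\cong \cE_\infty(X)$. Moreover, the generalized Milnor--Schwarz lemma (stated in the introduction) furnishes a canonical geometric qiqac of $(\Lambda,\Lambda^\infty)$ on $X$. I will work entirely with $X$ and this qiqac.

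Since $X$ is proper geodesic (so locally compact, connected, and $\sigma$-compact) and second countable, $\cE_\infty(X)$ is a nonempty-or-empty compact Hausdorff totally disconnected second countable space, hence metrizable. By Brouwer's characterization of the Cantor space it is therefore enough to show that $\cE_\infty(X)$ is \emph{perfect} as soon as it contains at least three points; combined with compactness and total disconnectedness this forces it to be uncountable and homeomorphic to the Cantor set, and the final claim ``$0$, $1$, $2$, or uncountably many'' is immediate.

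My approach to perfectness is an adaptation of the classical Hopf--Freudenthal argument, with the qiqac playing the role of the group action. First, I would verify that the qiqac descends to a genuine action of $\Lambda^\infty$ on $\cE_\infty(X)$ by homeomorphisms: each quasi-isometry $q_\lambda\colon X\to X$ induces a homeomorphism $q_\lambda^{\ast}\colon\cE_\infty(X)\to\cE_\infty(X)$, and the uniformly bounded defect $d(q_\lambda\circ q_\mu(x),q_{\lambda\mu}(x))\le C$ is invisible at the level of coarse ends, so $(\lambda\mu)^{\ast}=\lambda^{\ast}\mu^{\ast}$. Second, assuming $|\cE_\infty(X)|\ge 3$, I would run a Hopf doubling: pick a compact $K\subset X$ whose complement has three unbounded components $E_1,E_2,E_3$ representing three distinct coarse ends; using coboundedness of the qiqac pick $\lambda\in\Lambda$ with $\lambda.o$ so deep in $E_2$ that the bounded set $\lambda.K$ lies entirely inside $E_2$ and is disjoint from $K$. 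Then $X\setminus(K\cup\lambda.K)$ has at least four unbounded components, and iterating produces arbitrarily many distinct ends.

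The main obstacle is to upgrade the resulting infinitude of ends to genuine \emph{perfectness} of $\cE_\infty(X)$. For this I would suppose $e\in\cE_\infty(X)$ is isolated, represented by an unbounded component $E$ of $X\setminus K$ containing no other end, and run the doubling construction \emph{inside} $E$: using coboundedness to find $\lambda\in\Lambda$ with $\lambda.o\in E$ very far from $K$, the set $\lambda.K$ is a bounded subset of $E$ at large distance from $K$, and $E\setminus\lambda.K$ necessarily splits into at least two unbounded components, each of which must represent an end different from $e$ but lying in the same neighborhood of $e$, contradicting isolation. The technical core is verifying that this splitting really occurs, i.e.\ that the quasi-isometry $q_\lambda$ can be forced, by choosing $\lambda.o$ deep enough in $E$, to separate $E$ into pieces which are themselves unbounded --- this ultimately reduces to the fact that $X$ is quasi-isometric to itself via $q_\lambda$ and that deep regions of $E$ inherit the same ``branching at infinity'' as the global space, a quasi-isometry-invariant property.
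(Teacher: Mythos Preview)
Your plan is sound in spirit but takes a much longer route than the paper. The paper's proof is essentially a one-line citation: one shows that any representative of $[\Lambda]_{\mathrm{int}}$ (e.g.\ an apogee) is quasi-cobounded (Proposition~\ref{ApogeeQuasiCobounded}), large-scale geodesic (by definition), and of coarse bounded geometry (Corollary~\ref{CoarseBoundedGeometry}), and then invokes the \'Alvarez--Candel criterion (Theorem~\ref{ACCriterion}), which asserts that for any quasi-cobounded large-scale geodesic space of coarse bounded geometry one has either $|\cE_\infty(X)|\le 2$ or $\cE_\infty(X)$ is a Cantor space. No qiqac, no boundary action, no Hopf doubling is carried out in the paper itself.

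What you are sketching is, in effect, an \emph{ab initio} proof of the \'Alvarez--Candel theorem in this particular setting. Your outline is morally correct---the \'Alvarez--Candel argument is indeed a coarse-geometric adaptation of the Hopf--Freudenthal doubling trick, using uniform quasi-isometries in place of a cocompact group of homeomorphisms---but the step you flag as the ``technical core'' is exactly where all the content lies. When $q_\lambda$ is only a quasi-isometry, $q_\lambda(K)$ is merely bounded (not compact), the relevant components are coarse-connected components rather than topological ones, and the assertion that at least two of the $\ge 3$ coarse components of $X\setminus q_\lambda(K)$ land inside $E$ requires a careful bookkeeping argument with neighbourhoods and coarse connectivity constants. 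None of this is wrong, but it is substantial work that you have not carried out, and it is precisely what \'Alvarez--Candel do. The paper avoids all of this by appealing to their result as a black box; if you want a self-contained argument you should either fill in those details or simply cite Theorem~\ref{ACCriterion} after checking its hypotheses.
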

\begin{proof} We will later see that if $(\Lambda, \Lambda^\infty)$ is geometrically finitely-generated in the sense of Definition \ref{def: gfg}, then $[\Lambda]_c$ admits a representative which is quasi-cobounded, large-scale geodesic and of coarse bounded geometry (see Proposition \ref{ApogeeQuasiCobounded} and Corollary \ref{CoarseBoundedGeometry}). The corollary then follows from Theorem \ref{ACCriterion}.
\end{proof}
Other coarse invariants can be defined using the Vietoris--Rips complex. The following presentation follows \cite{HaWi}.
\begin{construction}[Vietoris--Rips complex]
Let $(X,d)$ be a metric space. For $r \geq 0$, the \emph{Vietoris--Rips complex}\index{Vietoris--Rips complex} $\VR_r(X)$ is the following abstract simplicial complex: The vertex set is $\VR_r(X)^{(0)} \coloneqq X$ and if $x_0, \dots, x_n \in \VR_r(X)^{(0)}$ are vertices, then $(x_0,\ldots,x_n)$ is an $n$-simplex of $\VR_r(X)$ if  $d(x_i,x_j) \le r$ for all $i,j \in \{0, \dots, n\}$. We denote by $|\VR_r(X)|$ the geometric realization of $\VR_r(X)$.
\end{construction}
Given $s>r>0$ we denote by $\iota_{r}^s: |\VR_r(X)| \into |\VR_s(X)|$ the natural inclusion map, which is the identity on the set of vertices.
\begin{lemma}\label{VR1} Let $(X,d)$, $(Y,d)$ be metric spaces and let $\varphi, \psi: X \to Y$ be coarse Lipschitz maps which are at bounded distance. Then for every $p>0$ there exists $q = q(p)>0$ with the following properties:
\begin{enumerate}[(i)]
\item The maps $\varphi, \psi$ extend to simplicial maps $\varphi_*, \psi_*: |\VR_p(X)| \to |\VR_q(Y)|$.
\item The maps $\varphi_*$ and $\psi_*$ are homotopic.
\end{enumerate}
\end{lemma}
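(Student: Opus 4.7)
The plan is to extract the obvious quantitative bounds from the hypotheses, observe that the combined image of each $p$-bounded simplex of $X$ lies inside a single $q$-bounded simplex of $Y$, and then define the homotopy by the straight line inside those common simplices.

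First I would fix constants. Applying the characterisation of coarsely Lipschitz maps used in the proof of Lemma \ref{Bornologous}, there exist $s_\varphi, s_\psi \geq 0$ such that $d_X(x,x') \leq p$ implies $d_Y(\varphi(x),\varphi(x')) \leq s_\varphi$ and $d_Y(\psi(x),\psi(x')) \leq s_\psi$. Since $\varphi$ and $\psi$ are at bounded distance, there is $K \geq 0$ with $d_Y(\varphi(x),\psi(x)) \leq K$ for every $x \in X$. I would then set
\[
q := \max\{s_\varphi, s_\psi\} + K.
\]

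Assertion (i) is then essentially automatic: if $\sigma = \{x_0, \ldots, x_n\}$ is a simplex of $\VR_p(X)$, then $\{\varphi(x_i)\}_{i=0}^n$ and $\{\psi(x_i)\}_{i=0}^n$ each have diameter $\leq q$, and hence (after identifying coinciding vertices) span simplices of $\VR_q(Y)$. Extending affinely on each closed simplex of the geometric realisation yields the required simplicial maps $\varphi_*, \psi_* \colon |\VR_p(X)| \to |\VR_q(Y)|$.

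The content of the lemma lies in (ii), and the reason for adding $K$ in the definition of $q$ is precisely to accommodate it. By the triangle inequality,
\[
d_Y(\varphi(x_i),\psi(x_j)) \leq d_Y(\varphi(x_i),\varphi(x_j)) + d_Y(\varphi(x_j),\psi(x_j)) \leq s_\varphi + K \leq q,
\]
and symmetrically for $d_Y(\psi(x_i),\varphi(x_j))$. Hence the combined set $\{\varphi(x_i)\}_i \cup \{\psi(x_i)\}_i$ has diameter $\leq q$ in $Y$ and so spans a (possibly degenerate) simplex $\tau_\sigma$ of $\VR_q(Y)$. I would then define a homotopy by the straight-line formula
\[
H\!\left(\sum_i t_i x_i,\, s\right) := \sum_i t_i\bigl((1-s)\varphi(x_i) + s\,\psi(x_i)\bigr), \qquad s \in [0,1],
\]
which takes values in $|\tau_\sigma|$. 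Because the formula is affine in the barycentric coordinates, the definitions on simplices sharing a face agree automatically on that face, yielding a continuous global map $H \colon |\VR_p(X)| \times [0,1] \to |\VR_q(Y)|$ with $H(\cdot,0) = \varphi_*$ and $H(\cdot,1) = \psi_*$. The only genuine step is the triangle-inequality estimate forcing the two vertex sets into a common $q$-simplex; no real obstacle arises beyond bookkeeping.
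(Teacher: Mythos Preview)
Your proof is correct and follows essentially the same approach as the paper: both define $q$ as an upper bound on $p$-images plus the closeness constant, then use the triangle inequality $d_Y(\varphi(x_i),\psi(x_j)) \leq d_Y(\varphi(x_i),\varphi(x_j)) + d_Y(\varphi(x_j),\psi(x_j))$ to show that the combined vertex set $\{\varphi(x_i)\}\cup\{\psi(x_i)\}$ spans a single simplex of $\VR_q(Y)$. The only difference is cosmetic: the paper chooses a single common upper control $\Phi$ and sets $q(p)=\Phi(p)+C$, and after observing that $\varphi_*(\sigma)$ and $\psi_*(\sigma)$ lie in a common simplex simply writes ``The lemma follows,'' whereas you write out the straight-line homotopy explicitly.
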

\begin{proof} Let $\Phi$ be an upper control function for both $\varphi$ and $\psi$ and let $C \geq 0$ be such that $\varphi$ and $\psi$ are $C$-close. We then define $q(p) \coloneqq  \Phi(p)+C$.

\item (i) Let $(x_0,\ldots,x_n)$ be a simplex in $\VR_p(X)$. The for all $i,j \in \{0, \dots, n\}$ we have $d(x_i, x_j)\leq p$, and hence
\[
d(\varphi(x_i),\varphi(x_j)) \leq \Phi(p) \leq q(p).
\]
This shows that $\varphi_*(x_0,\ldots,x_n) \coloneqq  (\varphi(x_0), \dots, \varphi(x_n))$ is a simplex in $\VR_{q(p)}(Y)$, and hence we obtain the desired extension $\varphi_*$. The same argument also applies to $\psi$.

(ii) If $(x_0,\ldots,x_n)$ is a simplex in $\VR_p(X)$, then for all $i,j \in \{0, \dots, n\}$ we have
\[
d(\varphi(x_i),\psi(x_j)) \le d(\varphi(x_i),\varphi(x_j)) + d(\varphi(x_j),\psi(x_j)) \leq \Phi(p) +C = q(p),
\]
hence $(\varphi(x_0),\ldots,\varphi(x_n),\psi(x_0),\ldots,\psi(x_n))$ is a simplex in $\VR_{q(p)}(Y)$. This shows that if $\sigma$ is the realization of the simplex $(x_0, \dots, x_n)$, then
 $\varphi_*(\sigma)$ and $\psi_*(\sigma)$ are contained in a common simplex. The lemma follows.
\end{proof}

\begin{remark}[$K$-acyclicity]
In the sequel, by a \emph{homotopy functor}\index{homotopy functor} $K$ we mean a functor from the homotopy category of triangulizable topological spaces (i.e.\ spaces which arise as geometric realizations of abstract simplicial complexes) into some category $\cC$. We fix such a homotopy functor $K$ and assume that $\cC$ admits a final object $0$.

As in  Remark \ref{Coa0} we define a category $\mathbf{Coa}_0$ of coarse metric spaces as follows: Objects in $\mathbf{Coa}_0$ are non-empty pseudo-metric spaces and morphisms are closeness classes of coarse Lipschitz maps. By Lemma \ref{CoarseMorphisms} two objects in $\mathbf{Coa}_0$ are isomorphic if and only if they are coarsely equivalent.

We say that an object $X$ in $\mathbf{Coa}_0$ is \emph{$K$-acyclic}\index{$K$-acyclic} if the following holds: For every $r>0$ there exists $s>0$ such that the morphism
\[
K_r^s \coloneqq  K(\iota_r^s): K(|\VR_r(X)|) \to K(|\VR_s(X)|).
\]
induced by the inclusion map $\iota_{r}^s: |\VR_r(X)| \into |\VR_s(X)|$ factors through $0$.

It follows from Lemma \ref{VR1} that if $X$ and $Y$ are isomorphic in $\mathbf{Coa}_0$, then $X$ is $K$-acyclic if and only if $Y$ is. In fact, we have the following more general statement.
\end{remark}
\begin{lemma} Let $[f]: X \to Y$ be a morphism in $\mathbf{Coa}_0$ which is represented by a coarse Lipschitz map $f$. If $[f]$ admits a left-inverse $[g]$ and $Y$ is $K$-acyclic, then so is $X$.
\end{lemma}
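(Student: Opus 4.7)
The plan is to transfer the $K$-acyclicity of $Y$ to $X$ using the maps $f$ and $g$, converting their coarse data into simplicial maps via Lemma~\ref{VR1} and then chasing parameters carefully. Fix $r > 0$; the goal is to produce $s > 0$ such that $K_r^s$ factors through $0$.

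First, I would apply Lemma~\ref{VR1}(i) to the coarse Lipschitz map $f$ at scale $r$ to obtain a scale $q_1$ together with a simplicial extension $f_* : |\VR_r(X)| \to |\VR_{q_1}(Y)|$. By $K$-acyclicity of $Y$ applied to $q_1$, there exists $q_2 \geq q_1$ such that $K_{q_1}^{q_2}$ factors through $0$. Applying Lemma~\ref{VR1}(i) to $g$ at scale $q_2$ yields $q_3$ and an extension $g_* : |\VR_{q_2}(Y)| \to |\VR_{q_3}(X)|$. Finally, since $[g] \circ [f] = [\id_X]$, the coarse Lipschitz maps $g \circ f$ and $\id_X$ are at bounded distance, so Lemma~\ref{VR1} applied to them at scale $r$ produces a scale $q_4$ for which the simplicial extensions $(g \circ f)_*$ and $(\id_X)_* = \iota_r^{q_4}$ from $|\VR_r(X)|$ into $|\VR_{q_4}(X)|$ are homotopic. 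Set $s := \max(q_3, q_4)$.

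Now consider the composite simplicial map
\[
h := \iota_{q_3}^{s} \circ g_* \circ \iota_{q_1}^{q_2} \circ f_* : |\VR_r(X)| \to |\VR_s(X)|.
\]
On each vertex $x \in X$ this sends $x \mapsto g(f(x))$, exactly as does $\iota_{q_4}^{s} \circ (g \circ f)_*$; since a simplicial map from $|\VR_r(X)|$ into $|\VR_s(X)|$ is determined by its values on vertices, these two maps coincide. Post-composing the homotopy from Lemma~\ref{VR1}(ii) with $\iota_{q_4}^{s}$ therefore yields a homotopy $h \simeq \iota_r^{s}$. Applying the homotopy functor $K$ and using functoriality gives
\[
K_r^s \;=\; K(\iota_r^s) \;=\; K(h) \;=\; K(\iota_{q_3}^{s}) \circ K(g_*) \circ K_{q_1}^{q_2} \circ K(f_*),
\]
which factors through $0$ because $K_{q_1}^{q_2}$ does. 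Hence $X$ is $K$-acyclic.

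The only genuine obstacle is the bookkeeping: one must chain four applications of Lemma~\ref{VR1} with compatible scales and, crucially, observe that the composite $h$ equals $\iota_{q_4}^{s} \circ (g \circ f)_*$ \emph{on the nose} rather than merely up to homotopy, as this is what allows the $K$-trivial arrow $K_{q_1}^{q_2}$ to be spliced into the factorization of $K_r^s$. Beyond functoriality and homotopy-invariance of $K$, no further property of the functor is used, so the argument works for any homotopy functor into a category with a final object.
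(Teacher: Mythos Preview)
Your proof is correct and follows essentially the same approach as the paper: push $f$ and $g$ to simplicial maps via Lemma~\ref{VR1}, sandwich the trivial arrow $K_{q_1}^{q_2}$ between them, and identify the resulting composite with $\iota_r^s$ using the homotopy $(g\circ f)_* \simeq (\id_X)_*$. Your version is in fact a little more explicit than the paper's sketch---the paper handles the scale compatibility by the phrase ``after enlarging $t$ if necessary,'' whereas you spell out $s = \max(q_3,q_4)$ and justify the on-the-nose equality $h = \iota_{q_4}^s \circ (g\circ f)_*$ via the observation that a simplicial map is determined by its values on vertices.
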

\begin{proof} Let $p>0$ and using Lemma \ref{VR1}.(i) choose $r>0$ so large that $f$ induces a map $f_*: |\VR_p(X)| \to |\VR_r(Y)|$. By $K$-acyclicity of $Y$ we then find $s>0$ such that  $\iota_{r}^s: |\VR_r(Y)| \into |\VR_s(Y)|$ induces a morphism $K_r^s:K(|\VR_r(Y)|) \to K(|\VR_s(Y)|)$ factoring through $0$. Finally, choose $t>0$ such that $g$ induces a morphism $ |\VR_s(Y)| \to  |\VR_t(X)|$. Then $g \circ f$ induces the trivial morphism
\[
K((f\circ g)_*) = K(g_*) \circ K_r^s \circ K(f_*): K( |\VR_p(X)|)\to K( |\VR_t(X)|).
\]
Since $g \circ f \sim \id_X$, it follows from Lemma \ref{VR1}.(ii) (after enlarging $t$ if necessary), that $(f\circ g)_* \simeq (\id_X)_* = \iota_p^t$. The lemma follows.
\end{proof}

We may thus define:
\begin{definition} A countable approximate group $(\Lambda, \Lambda^\infty)$ is called \emph{$K$-acyclic}\index{approximate group!$K$-acyclic} with respect to a homotopy functor $K$ if some (hence any) representative of $[\Lambda]_c$ is $K$-acyclic.
\end{definition}
\begin{remark}[Basepoints] The following modification of $K$-acyclicity is sometimes useful. Assume that $K$ is a functor from the homotopy category of \emph{pointed} topological spaces. We then say that an object $X$ in $\mathbf{Coa}_0$ is $K$-acyclic, if the morphism
\[
K_r^s \coloneqq  K(\iota_r^s): K(|\VR_r(X)|, o) \to K(|\VR_s(X)|, o).
\]
induced by the inclusion map $\iota_{r}^s: |\VR_r(X)| \into |\VR_s(X)|$ factors through $0$ for every choice of basepoint $o \in X = |V_r{X}|^{(0)} =  |V_s{X}|^{(0)} $. $K$-acyclicity of approximate groups is then defined accordingly.
\end{remark}
The following invariants are studied in \cite{HaWi}:
\begin{example}[Finiteness properties of approximate groups]\label{FPn}
We say that a countable approximate group $(\Lambda, \Lambda^\infty)$ has the finiteness property $(F_n)$ if it is $\pi_k$-acyclic for all $k \in \{0, \dots, n-1\}$, where $\pi_k$ denotes the $k$th homotopy group (note that $k$ goes from $0$ to $n-1$, and not to $n$). It has finiteness property $(\FP_n)$ if it is $H_k$-acyclic for all $k \in \{0, \dots, n-1\}$, where $H_k$ denotes the $k$th simplicial (or singular) homology group. It is established in \cite{HaWi} that in the case where $\Lambda = \Lambda^\infty$ is a group, this definition is equivalent to the usual definition of finiteness properties of countable groups. 

By the Hurewicz theorem, $\pi_k$-acyclicity implies $H_k$-acyclicity for every $k$, hence Property $(F_n)$ implies Property $(\FP_n)$ for every $n$. Similarly, Properties $(F_2)$ and $(\FP_n)$ together imply Property $(F_n)$.
\end{example}

\section{External and internal QI type}
For a finitely-generated group $\Gamma$ the canonical coarse class $[\Gamma]_c$ contains a canonical QI type $[\Gamma]$ which is represented, for example, by any Cayley graph of $\Gamma$. As mentioned before, there is more than one way to define finitely-generated approximate groups\footnote{This corresponds to the fact that there are many different characterizations of finitely-generated groups, cf.\ Corollary \ref{FinGenGroup}} and there are several possible QI types for such approximate groups that one may want to consider.

The simplest and most algebraic definition of being finitely-generated for a countable approximate group $(\Lambda, \Lambda^\infty)$ is the one introduced in \cite{BH}; while it has many deficits from the geometric point of view, it still turns out to be useful.
\begin{definition} An approximate group $(\Lambda, \Lambda^\infty)$ is called \emph{algebraically finitely-generated}\index{algebraically finitely-generated approx.\ group} if $\Lambda^\infty$ is finitely-generated as a group.
\end{definition}
If $(\Lambda, \Lambda^\infty)$ is an algebraically finitely-generated approximate group, then $\Lambda^\infty$ admits a large-scale geodesic left-admissible metric $d$; for example, every word metric with respect to a finite generating set is large-scale geodesic and left-admissible. We then refer to the metric $d|_{\Lambda \times \Lambda}$ as an \emph{external metric}\index{metric!external} on $\Lambda$. If $d'$ is any other large-scale geodesic left-admissible metric on $\Lambda^\infty$, then by Corollary \ref{FinGenGroup} the identity map $(\Lambda^\infty, d) \to (\Lambda^\infty, d')$ is a quasi-isometry. It thus restricts to a quasi-isometry $(\Lambda, d|_{\Lambda \times \Lambda}) \to (\Lambda, d'|_{\Lambda \times \Lambda})$. To summarize:
\begin{proposition} If $(\Lambda, \Lambda^\infty)$ is an algebraically finitely-generated approximate group, then any two external metrics on $\Lambda$ define the same QI type.
\end{proposition}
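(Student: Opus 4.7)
The plan is to reduce the statement to the corresponding well-known fact for the ambient finitely-generated group $\Lambda^\infty$, and then observe that quasi-isometries of pseudo-metrics restrict to quasi-isometries of subspaces.

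More specifically, I would first fix two external metrics on $\Lambda$. By definition, each such metric is the restriction to $\Lambda \times \Lambda$ of some large-scale geodesic left-admissible metric on $\Lambda^\infty$. So let $d$ and $d'$ be two such metrics on $\Lambda^\infty$, giving external metrics $d|_{\Lambda\times\Lambda}$ and $d'|_{\Lambda \times \Lambda}$ on $\Lambda$. Since $\Lambda^\infty$ is a finitely-generated group, Corollary \ref{FinGenGroup} (the group-theoretic version of Gromov's trivial lemma for finitely-generated groups) implies that the identity map $(\Lambda^\infty, d) \to (\Lambda^\infty, d')$ is a quasi-isometry; equivalently, there exist constants $L \geq 1$ and $A \geq 0$ with
\[
L^{-1} d(g,h) - A \;\leq\; d'(g,h) \;\leq\; L\, d(g,h) + A \qquad (g,h \in \Lambda^\infty).
\]

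Restricting these inequalities to $g, h \in \Lambda$ shows that the identity map $(\Lambda, d|_{\Lambda \times \Lambda}) \to (\Lambda, d'|_{\Lambda \times \Lambda})$ is a quasi-isometric embedding. Since it is the identity on the underlying set, it is also surjective, hence a quasi-isometry. Therefore $(\Lambda, d|_{\Lambda \times \Lambda})$ and $(\Lambda, d'|_{\Lambda \times \Lambda})$ have the same QI type, which is the content of the proposition.

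There is no real obstacle here: the only content is invoking the already-established fact for groups and noting that restriction to a subset preserves quasi-isometric equivalence of metrics defined on the ambient set (in contrast to coarse equivalence, where restriction to a subset can in principle fail to be onto in the relevant sense, but here the restricted map is literally the identity on $\Lambda$, so surjectivity is automatic). The one minor point worth making explicit is that every finite-generating-set word metric on $\Lambda^\infty$ is indeed both large-scale geodesic and left-admissible, so such external metrics do exist whenever $(\Lambda, \Lambda^\infty)$ is algebraically finitely-generated; this justifies that the proposition is not vacuous.
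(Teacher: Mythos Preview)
Your proof is correct and matches the paper's argument essentially verbatim: the paper also invokes Corollary~\ref{FinGenGroup} to conclude that the identity $(\Lambda^\infty,d)\to(\Lambda^\infty,d')$ is a quasi-isometry, and then restricts to $\Lambda$ (this is the content of the paragraph immediately preceding the proposition, as well as Lemma~\ref{ExternalQIType}(ii)).
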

\begin{definition}\label{def: Lambda ext} If $(\Lambda, \Lambda^\infty)$ is algebraically finitely-generated and $d$ is an external metric on $\Lambda$, then the QI type of $(\Lambda, d)$ is denoted $[\Lambda]_{\rm ext}$ and referred to as the \emph{external QI type}\index{external QI type} of $(\Lambda, \Lambda^\infty)$.
\end{definition}
Note that if $d$ is an external metric on $\Lambda^k$ for some $k \geq 2$, then $d|_{\Lambda \times \Lambda}$ is an external metric on $\Lambda$, and since $\Lambda$ is syndetic in $\Lambda^k$ we thus have
\begin{equation}
[\Lambda]_{\mathrm{ext}} = [\Lambda^k]_{\mathrm{ext}} \quad \text{ for all }k \in \bN.
\end{equation}
Moreover we have $[\Lambda]_{\rm ext} \subset [\Lambda]_c$, but unlike $[\Lambda]_c$ the external QI type is \emph{not} invariant under commensurability:
\begin{example}[Failure of commensurability invariance]\label{Stark1} We consider\footnote{This is \cite[Example 3.26]{BH}, due to Emily Stark.} $\Lambda \coloneqq   \langle a \rangle \cup \{b,b^{-1}\}\subset \Lambda^\infty =  {\rm BS}(1,2) \coloneqq   \langle a, b \mid bab^{-1} = a^2 \rangle$. By definition, $\Lambda$ is symmetric, contains the identity and generates $\Lambda^\infty$. A short calculation involving the defining relation (and using that $(b^{-1}ab)^2 = a$) shows that 
$\Lambda^2 \subset \Lambda \{e, b, b^{-1}, b^{-1}a\}$, hence $(\Lambda, \Lambda^\infty)$ is an approximate group. We choose the finite generating set $S\coloneqq \{a^{\pm 1}, b^{\pm 1}\}$ and denote by $d_\Lambda$ the external metric on $\Lambda$ induced by the word metric $d_S$. Since $|a^{2^n}|_S \leq 2n+1$, the distance $d_\Lambda(a^k, a^l)$ is logarithmic in $|k-l|$. 

On the other hand, let $\Xi  \coloneqq  \langle a \rangle \subset \Lambda$. Then $\Xi$ is commensurable to $\Lambda$ and an external metric on $\Xi = \Xi^\infty$ is given by $d_\Xi(a^k, a^l) = |k-l|$ and hence $d_\Xi$ is not quasi-isometric to the restriction of $d_\Lambda$. Thus, 
\[[\Lambda]_{\mathrm{co}} = [\Xi]_{\mathrm{co}}, \quad \text{but} \quad [\Lambda]_{\mathrm{ext}} \neq [\Xi]_{\mathrm{ext}}.\]
\end{example}
One can fix the lack of commensurability invariance of the external QI type using the following consequence of Corollary \ref{MinimalAGTrick}.
\begin{lemma}\label{MinimalExtLemma} 
For $j \in \{1,2\}$ let $(\Lambda_j, \Lambda_j^\infty)$ be algebraically finitely-generated minimal approximate groups and $\Gamma$ be a group containing both $\Lambda_1$ and $\Lambda_2$.
\begin{enumerate}[(i)]
\item If $\Xi$ is syndetic in $\Lambda_1^k$ for some $k \in \bN$, then $\Xi$ is algebraically finitely-generated and $[\Xi]_{\mathrm{ext}} = [\Lambda_1]_{\mathrm{ext}}$.
\item If $\Lambda_1 \sim_{\mathrm{co}} \Lambda_2$, then $[\Lambda_1]_{\mathrm{ext}} = [\Lambda_2]_{\mathrm{ext}}$.
\end{enumerate}
\end{lemma}
\begin{proof} (i) By Corollary \ref{MinimalAGTrick}.(ii) the subgroup $\Xi^\infty \subset \Lambda_1^\infty$ has finite index, hence is finitely-generated. If $d$ is a  large-scale geodesic and left-admissible metric on $\Lambda_1^\infty$, then so is its restriction to $\Xi^\infty$, hence $d|_{\Xi \times \Xi}$ and $d|_{\Lambda_1^k \times \Lambda_1^k}$ are external metrics. For these metrics, the inclusion $\Xi \hookrightarrow \Lambda_1^k$ is isometric and coarsely onto, hence $[\Xi]_{\mathrm{ext}} = [\Lambda_1^k]_{\mathrm{ext}} = [\Lambda_1]_{\mathrm{ext}}$.

\item (ii) By \eqref{CommElements} the approximate subgroup $\Xi  \coloneqq  \Lambda_1^2 \cap \Lambda_2^2$ is syndetic in $\Lambda_1^2$ and $\Lambda_2^2$. Applying (i) twice thus yields $ [\Lambda_1]_{\mathrm{ext}} = [\Xi]_{\mathrm{ext}} = [\Lambda_2]_{\mathrm{ext}}$.
\end{proof}
\begin{construction}[Minimal external QI type]\label{MinextQI}
If $(\Lambda, \Lambda^\infty)$ is an algebraically finitely-generated approximate group, then by Corollary \ref{MinimalAGTrick} there exists a minimal syndetic approximate subgroup $(\Lambda_{\mathrm{min}}, \Lambda_{\mathrm{min}}^\infty)$ of $(\Lambda^2, \Lambda^\infty)$. By Lemma \ref{MinimalExtLemma}.(i) this approximate group is algebraically finitely-generated, and we define the \emph{minimal external QI type} of $\Lambda$ as
\[
[\Lambda]_{\mathrm{minext}}  \coloneqq  [\Lambda_{\mathrm{min}}]_{\mathrm{ext}}.
\]
By Lemma \ref{MinimalExtLemma}.(ii) this QI type depends only on $(\Lambda, \Lambda^\infty)$, and it is commen\-sur\-abi\-li\-ty-invariant by construction. Moreover, $[\Lambda]_{\mathrm{minext}} \subset [\Lambda_{\mathrm{min}}]_c = [\Lambda]_c$. 
\end{construction}
The problem with this construction is that there is no known algorithm to find a minimal syndetic approximate subgroup of a given approximate group. Indeed, the proof of Corollary \ref{MinimalAGTrick} rests on structure theory and is non-constructive. We thus turn to a different approach, which is based on finiteness properties (see Example \ref{FPn}) and the following observation:
\begin{theorem}[Large-scale geodesic representatives]\label{GFG} Let $(\Lambda, \Lambda^\infty)$ be a countable approximate group, let $\widehat{d}$ be a left-admissible metric on $\Lambda^\infty$ and let $d = \widehat{d}|_{\Lambda\times \Lambda}$. Then the following are equivalent:
\begin{enumerate}[(i)]
\item $(\Lambda, \Lambda^\infty)$ has finiteness property $(F_1)$.
\item There exists a proper metric $d'$ on $\Lambda$ which is coarsely equivalent to $d$ and such that $(\Lambda, d')$ is large-scale geodesic.
\item There exists a representative $X \in [\Lambda]_c$ which is large-scale geodesic.
\item Every representative $X \in [\Lambda]_c$ is coarsely connected.
\item $(\Lambda, d)$ is coarsely connected.
\end{enumerate}
\end{theorem}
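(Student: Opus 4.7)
The plan is to prove the cycle (ii)$\Rightarrow$(iii)$\Rightarrow$(iv)$\Rightarrow$(v)$\Rightarrow$(i)$\Rightarrow$(ii). The first three implications are essentially formal: for (ii)$\Rightarrow$(iii), take $X = (\Lambda, d')$ as a representative of $[\Lambda]_c$; for (iii)$\Rightarrow$(iv), observe that any large-scale geodesic space is coarsely connected (being quasi-isometric to a genuine geodesic space) and that coarse connectedness is preserved under coarse equivalences, so every representative of $[\Lambda]_c$ is coarsely connected; and (iv)$\Rightarrow$(v) is immediate since $(\Lambda, d) \in [\Lambda]_c$ by Definition \ref{def: canonical coarse class}.

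For the equivalence (v)$\Leftrightarrow$(i), I would unwind the pointed $\pi_0$-acyclicity definition from Example \ref{FPn}. If $(\Lambda, d)$ is $s_0$-connected, then for every $r \geq 0$ the realization $|\VR_{\max(r, s_0)}(\Lambda)|$ is path-connected, so the map $\pi_0(|\VR_r(\Lambda)|, \ast) \to \pi_0(|\VR_{\max(r, s_0)}(\Lambda)|, \ast)$ is trivial, yielding $(F_1)$. Conversely, specializing $(F_1)$ to $r = 0$ produces some $s$ with $|\VR_s(\Lambda)|$ path-connected, and this is exactly $s$-connectedness of $(\Lambda, d)$.

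The substance of the theorem lies in (v)$\Rightarrow$(ii): constructing a large-scale geodesic metric $d'$ on $\Lambda$ coarsely equivalent to $d$. The natural candidate is the graph metric $d_{G_s}$ of the ball-graph $G_s$ with vertex set $\Lambda$ and edges $\{x, y\}$ whenever $0 < \widehat{d}(x, y) \leq s$. Connectedness of $G_s$ follows from $s$-connectedness of $(\Lambda, d)$; properness follows from the inclusion $B_{d_{G_s}}(x, N) \subset B_d(x, sN) \cap \Lambda$, which is finite by properness of $\widehat{d}$; and $d_{G_s}$ is large-scale geodesic as a graph metric. The estimate $d \leq s \cdot d_{G_s}$ is immediate from the construction.

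The hard part will be showing that $(\Lambda, d) \to (\Lambda, d_{G_s})$ is bornologous, i.e., that $d_{G_s}(x, y)$ is controlled by $d(x, y)$. By left-invariance of $\widehat{d}$ and properness, the set $\{x^{-1} y : x, y \in \Lambda,\ d(x, y) \leq t\}$ is contained in the finite set $B_{\widehat{d}}(e, t) \cap \Lambda^\infty$, so it suffices to show that for each fixed $g \in \Lambda^\infty$ the quantity $N(g) := \sup\{d_{G_s}(x, xg) : x, xg \in \Lambda\}$ is finite. I would first extract a finite generating set $S := B_{\widehat{d}}(e, s) \cap \Lambda^2$ for $\Lambda^\infty$ by telescoping each $\lambda \in \Lambda$ along an $s$-chain $e = \lambda_0, \ldots, \lambda_n = \lambda$ as $\lambda = (\lambda_0^{-1} \lambda_1) \cdots (\lambda_{n-1}^{-1} \lambda_n)$; then decompose $g = s_1 \cdots s_N$ with $s_i \in S$, form the $\widehat{d}$-path $x,\, xs_1,\, \ldots,\, xs_1 \cdots s_N = xg$ of consecutive $\widehat{d}$-distances $\leq s$, and use the left-shuffling $\Lambda^{2k+1} \subset \Lambda F^{2k}$ from Proposition \ref{PropShuffling} to snap each intermediate point to a nearby $\mu_k \in \Lambda$. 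The most delicate step, where the main technical work will lie, is converting this snapped chain --- of bounded length $N$ but with consecutive $\widehat{d}$-distances only bounded by some $K = K(g)$, not necessarily by $s$ --- into an actual $s$-chain of bounded length. This will rest on the \emph{coarse bounded geometry} of $(\Lambda, d)$, namely that $|B_d(x, r) \cap \Lambda| \leq |B_{\widehat{d}}(e, r) \cap \Lambda^2|$ is uniformly bounded in $x$ by left-invariance and finite by properness, combined with an induction on the $\widehat{d}$-norm that propagates finiteness of $N(h)$ from $S$ to elements in progressively larger balls.
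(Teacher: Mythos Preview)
Your treatment of (i)$\Leftrightarrow$(v) and of the chain (ii)$\Rightarrow$(iii)$\Rightarrow$(iv)$\Rightarrow$(v) matches the paper. For (v)$\Rightarrow$(ii) the graph metric $d_{G_s}$ is the right object, but you make the bornologousness argument harder than necessary by insisting on the \emph{original} connectedness parameter $s$. Routing through a word decomposition $g=s_1\cdots s_N$ in a generating set $S\subset\Lambda^2$ of $\Lambda^\infty$ sends the intermediate points $xs_1\cdots s_k$ into $\Lambda^{2k+1}$, so the snap-to-$\Lambda$ error grows with $k$; the snapped chain then has step size bounded only by some $K$ depending on the word length of $g$. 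Your proposed fix---bounded geometry plus an induction on $\|g\|_{\widehat d}$---is not fleshed out, and I do not see how it closes: the snapped step sizes need not decrease along any such induction, and bounded geometry by itself does not convert a $K$-chain back to an $s$-chain of uniformly controlled length.

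The paper sidesteps this entirely by \emph{enlarging} the graph parameter. First, one only needs $g=x^{-1}y\in\Lambda^2$ (not all of $\Lambda^\infty$), so a single application of $\Lambda^2\subset\Lambda F$ writes $x^{-1}y=\widehat z\,f$ with $\widehat z\in\Lambda$ and $\widehat d(e,f)\le\delta:=\max_{f'\in F}\widehat d(e,f')$. Then $\widehat d(e,\widehat z)\le r+\delta$, so $\widehat z$ ranges over a finite subset of $\Lambda$, and $s$-connectedness of $\Lambda$ gives an $s$-chain $e=\widehat x_0,\dots,\widehat x_n=\widehat z$ in $\Lambda$ of length $n\le L(r)$ depending only on $r$. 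Translating by $x$ lands this chain in $x\Lambda\subset\Lambda^2$; snapping each $x\widehat x_i$ back to some $x_i\in\Lambda$ costs at most $\delta$ per endpoint, so $d(x_{i-1},x_i)\le s+2\delta$. Taking $C:=s+2\delta$ and using $d_{G_C}$ instead of $d_{G_s}$ yields $d_{G_C}(x,y)\le L(r)+1$ directly---no induction and no excursion into high powers of $\Lambda$.
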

\begin{proof} (i)$\iff$(v): By definition, $(\Lambda, \Lambda^\infty)$ has finiteness property $(F_1)$ iff for every $r >0$ there exists $s>0$ such that the map
\[
(\pi_0)_r^s: \pi_0(|\VR_{r}|) \to \pi_0(|\VR_s|)
\]
is trivial. This property only depends on the $1$-skeleton of the respective complexes, and it means that any two points $x,y \in \Lambda = |\VR_{r}|^{(0)}$ can be connected by a path in $|\VR_s|^{(1)}$. The latter means that there exist $x_0, \dots, x_n$ with $x=x_0$ and $x_n = y$ such that $(x_i, x_{i+1})$ is an edge in $|\VR_s|$, for $i \in \{0, \dots, n-1\}$ or equivalently $d(x_i, x_{i+1}) \leq s$. This, however, means precisely that $(\Lambda, d)$ is $s$-coarsely connected.

\item (ii)$\implies$(iii)$\implies$(iv)$\implies$(v): These follow from Lemma \ref{CharLSG} and the fact that every space which is quasi-isometric to $(\Lambda, d)$ is contained in $[\Lambda]_c$.

\item (v)$\implies$(ii) We fix $C_0 >0$ such that $(\Lambda, d)$ is coarsely $C_0$-connected. For every $C\geq C_0$ we then obtain a large-scale geodesic metric $d_C$ on $\Lambda$ by setting
\begin{eqnarray*}
d_C(x,y) \coloneqq  \min \{n \in \mathbb N_0 \mid \exists\, x_0, \dots, x_n \in \Lambda : \; x=x_0,\ y=x_n, & &\hspace{-5mm}d(x_{i-1}, x_{i}) \leq C,\\
& &\hspace{-5mm}\forall i \in \{1, \dots, n\}\}.
\end{eqnarray*}
We claim that this metric is coarsely equivalent to $d$ for all sufficiently large $C$. Since $d(x,y) \leq \frac 1 C d_C(x,y)$ it suffices to show that
\[
\forall\, r>0\  \exists\, t>0:\; d(x,y) \leq r \implies d_C(x,y) \leq t.
\]
For this we choose a finite symmetric set $F$ with $\Lambda^2 \subset \Lambda F$ and given $r >0$ we abbreviate 
\[
s\coloneqq \max\{ d(e,f) \mid f \in F\} \qand L(r) \coloneqq  \max \{d_{C_0}(e, \widehat{z}) \mid \widehat{z} \in \Lambda \cap B(e,r+s)\}.
\]
We choose $C$ so large that $C \geq C_0+2s$. Now let $r \geq 0$ and $x,y \in \Lambda$ with $d(x,y) \leq r$. Since $x^{-1}y \in \Lambda^2$, there exist $\widehat{z} \in \Lambda$ and $f \in F$ such that $x^{-1}y = \widehat{z} f$. By definition,
\[
d(e,\widehat{z}) \leq \widehat{d}(e,x^{-1}y) + \widehat{d}(\widehat{z}f, \widehat{z}) \leq r+s \implies d_{C_0}(e, \widehat{z}) \leq L(r).
\]
There thus exists an $n \leq L(r)$ and elements $\widehat{x}_0,\dots, \widehat{x}_n \in \Lambda$ such that
\[
\widehat{x}_0 = e, \quad \widehat{x}_n = \widehat{z} \qand d(\widehat{x}_{i-1}, \widehat{x}_i) \leq C_0, \text{ for all }i \in \{1, \dots, n\}.
\]
Since $\Lambda$ is $s$-relatively dense in $\Lambda^2$, we can find elements $x_1, \dots, x_n \in \Lambda$ such that
\[
x_0 = x \qand \widehat{d}(x_{i}, x\widehat{x}_i) \leq s,  \text{ for all }i \in \{1, \dots, n\}.
\]
We then have, for all $i \in \{1, \dots, n\}$,
\[
d(x_{i-1}, x_i) \leq  \widehat{d}(x_{i-1}, x\widehat{x}_{i-1})+ \widehat{d}((x\widehat{x}_{i-1},  x\widehat{x}_i) + \widehat{d}(x\widehat{x}_i, x_i) \leq s+C_0+s \leq C.
\]
If we set $x_{n+1} \coloneqq  y$, then we also have
\[
d(x_n, x_{n+1})=d(x_n,y) \leq \widehat{d}(x_n, x\widehat{x}_n) + \widehat{d}(x\widehat{x}_n, y) \leq s+\widehat{d}(yf,y) \leq 2s \leq C.
\]
This shows that $d_C(x,y) \leq n+1\leq L(r)+1$. Also note that $\widehat{d}$  being proper implies $d$ is proper, and therefore so is $d_C$,  which finishes the proof.
\end{proof}
\begin{definition}\label{def: gfg} A countable approximate group  $(\Lambda, \Lambda^\infty)$ is called \emph{geometrically finitely-generated}\index{geometrically finitely-generated approx.\ group} if it satisfies the equivalent conditions of Theorem \ref{GFG}.
\end{definition}
Note that, by definition, being geometrically finitely-generated is a coarse invariant among countable approximate groups.
\begin{construction}[Internal QI type]\label{IntQI}\label{CanoicalQITypeGood}
A basic result in geometric group theory, known as ``Gromov's trivial lemma'' (see Corollary \ref{GromovTrivialSymmetric}), ensures that two large-scale geodesic metric spaces are quasi-isometric if and only if they are coarsely equivalent. In particular, if $(\Lambda, \Lambda^\infty)$ is a geometrically finitely-generated approximate group, then by Theorem \ref{GFG} the set 
\begin{equation}\label{int}
[\Lambda]_{\mathrm{int}} \coloneqq  \{X \in [\Lambda]_c \mid X \text{ is large-scale geodesic}\}\end{equation}
is non-empty, hence defines a QI-type which is called the \emph{internal QI type}\index{internal QI type} of $(\Lambda, \Lambda^\infty)$ and refines the coarse class $[\Lambda]_c$. If $\Xi$ is another geometrically finitely-generated approximate group, then by definition we have
\[
[\Lambda]_{\mathrm{int}} = [\Xi]_{\mathrm{int}} \iff [\Lambda]_c = [\Xi]_c.
\]
In particular, among geometrically finitely-generated approximate groups, the internal QI type is invariant both under commensurability and $2$-local isomorphism by Lemma \ref{CommCoarseEquiv} and Corollary \ref{CoarseType2Local}. Therefore, if $(\Lambda, \Lambda^\infty)$ is geometrically finitely-generated, then $[\Lambda]_{\mathrm{int}} = [\Lambda^k]_{\mathrm{int}}$ for all $k \in \bN$.
\end{construction}
\begin{example}[Internal QI kernel] If $\rho_k: (\Xi, \Xi^k) \to (\Lambda, \Lambda^k)$ is a $k$-local morphism of approximate groups for some $k \geq 10$ and $\ker_2(\rho_k)$ is geometrically finitely-generated, then all partial kernels up to $\ker_{\lfloor \frac{k+2}{6} \rfloor}(\rho_{k})$ are geometrically finitely-generated and  
 $[\ker(\rho_k)]_{\mathrm{int}} \coloneqq [\ker_2(\rho_k)]_{\mathrm{int}} = [\ker_3(\rho_k)]_{\mathrm{int}} = \dots = [\ker_{\lfloor \frac{k+2}{6} \rfloor}(\rho_{k})]_{\mathrm{int}}$. We refer to this class as the \emph{internal QI kernel}\index{QI kernel} of $\rho_k$; we apply the same terminology to global morphisms. 
\end{example}
\begin{definition}\label{def: apogee} If $(\Lambda, \Lambda^\infty)$ is a geometrically finitely-generated approximate group, then a proper geodesic metric space $(X,d) \in [\Lambda]_{\mathrm{int}}$ is called an \emph{apogee}\index{apogee} for $(\Lambda, \Lambda^\infty)$. If moreover $X$ is a locally finite graph and $d$ is the canonical metric on $X$ (cf.\ Example \ref{CanonicalMetric}), then $(X,d)$ is called a \emph{generalized Cayley graph}\index{generalized Cayley graph} for $(\Lambda, \Lambda^\infty)$.
\end{definition}
Cayley graphs of finitely-generated groups with respect to finite generating sets are obvious examples of generalized Cayley graphs, hence the name. The following remark shows, that every geometrically finitely-generated approximate group admits a generalized Cayley graph (hence an apogee).
\begin{remark}[Representing the internal QI type]\label{RepresentingQIType}
It follows from Theorem \ref{GFG} that the internal QI type $[\Lambda]_{\mathrm{int}}$ of a geometrically finitely-generated approximate group $(\Lambda, \Lambda^\infty)$ can always be represented by a proper metric $d$ on $\Lambda$. We refer to any such metric as an \emph{internal metric}\index{metric!internal} on $\Lambda$. The proof of Theorem \ref{GFG} provides an explicit construction of such an internal metric on $\Lambda$ as a ``large-scale path metric''. By definition, any internal metric on $\Lambda$ is coarsely equivalent (but in general not quasi-isometric) to any external metric.

If $d$ is an internal metric on $\Lambda$, then $(\Lambda, d)$ is proper and large-scale geodesic, and by Lemma \ref{CharLSG} every proper large-scale geodesic metric space is quasi-isometric to a locally finite graph. This shows that every approximate group admits a generalized Cayley graph.
\end{remark}
It is an interesting question which metric spaces can appear as apogees of geometrically finitely-generated approximate groups. To discuss this, we introduce the following notion:
\begin{definition}\label{def: quasi cobounded} Let $(X,d)$ be a pseudo-metric space, let $K \geq 1$, $C \geq 0$ and $r>0$. We say that $X$ is $(K, C, r)$-\emph{quasi cobounded} if for all $x,y \in X$ there exists a $(K, C, C)$-quasi-isometry $f: X \to X$ such that $d(f(x), y)<r$. It is called \emph{quasi-cobounded}\index{quasi-cobounded!space} if it is $(K, C, r)$-quasi cobounded for some $(K, C,r)$ and \emph{cobounded}\index{cobounded space} if it is $(1,0,0)$-quasi-cobounded.
\end{definition}
\begin{remark} There are various equivalent formulations of quasi-coboundedness:
\begin{enumerate}[(i)]
\item A given space $(X,d)$ is quasi-cobounded for some constants $(K', C', r')$ if there exist $K \geq 1$, $C \geq 0$, $r>0$ and $o \in X$ with the following property:  for all $x \in X$ there exists a $(K, C, C)$-quasi-isometry $f: X \to X$ such that $d(f(x), o)<r$. Moreover, in this case the constants $(K', C', r')$ depend only on the constants $(K, C,r)$.
\item Assume that $(X,d)$ is $(K, C, r)$-quasi-cobounded and let $x,y \in X$. Pick a quasi-isometry $f: X \to X$ such that $d(f(x), y) < r$ and define
\[
\widetilde{f}: X \to X, \quad \widetilde{f}(z) \coloneqq  \left\{\begin{array}{ll} f(z), & \text{if }z \neq x\\ y, & \text{if }z = x.   \end{array}\right.
\]
This $\widetilde{f}$ is at distance at most $r$ from $f$ (hence a $(K, C+3r, 3r)$-quasi-isometry) with $\widetilde{f}(x) = y$. It follows that a space is quasi-cobounded if and only if there exist constants $K' \geq 1$ and $C' \geq 0$ such that for all $x,y \in X$ there exists a $(K', C', C')$-quasi-isometry with $f(x) = y$. Spaces with this property are called \emph{coarsely quasi-symmetric}\index{coarsely quasi-symmetric} spaces in \cite{AlvarezCandel}.
\end{enumerate}
\end{remark}
\begin{proposition}[Quasi-coboundedness of apogees]\label{ApogeeQuasiCobounded} If $(\Lambda, \Lambda^\infty)$ is a geometrically finitely-generated approximate group, then every $(X,d) \in [\Lambda]_{\mathrm{int}}$ is quasi-cobounded. In particular, every apogee for $(\Lambda, \Lambda^\infty)$ is a proper, geodesic, quasi-cobounded space.
\end{proposition}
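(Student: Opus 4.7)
The plan is to approximate, for any pair $x, y \in \Lambda$, left multiplication by $g := yx^{-1} \in \Lambda^2$ by a self-map of $\Lambda$, and to check that this self-map is a self-quasi-isometry of an apogee sending $x$ to $y$ with constants independent of the pair $(x,y)$.

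Fix a left-admissible metric $\hat d$ on $\Lambda^\infty$ and (by Remark \ref{RepresentingQIType}) an internal metric $d$ on $\Lambda$ representing $[\Lambda]_{\mathrm{int}}$; recall that $\hat d|_\Lambda$ and $d$ are coarsely equivalent and $(\Lambda, d)$ is proper and large-scale geodesic. By Proposition \ref{PropShuffling}(i), choose a finite set $F \subset \Lambda^\infty$ with $e \in F$ and $\Lambda^3 \subset \Lambda F$, and set $K := \max_{\phi \in F}\hat d(\phi, e)$. Given $x, y \in \Lambda$, set $g := yx^{-1}$; then for every $\lambda \in \Lambda$ the element $g\lambda$ lies in $\Lambda^3 \subset \Lambda F$, so I can pick $f(\lambda) \in \Lambda$ and $\phi_\lambda \in F$ with $g\lambda = f(\lambda)\phi_\lambda$. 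Making the choice $\phi_x = e$ arranges that $f(x) = y$, and by construction $\hat d(f(\lambda), g\lambda) \leq K$ for all $\lambda \in \Lambda$.

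Because left multiplication by $g$ is an $\hat d$-isometry, the triangle inequality yields
\[
|\hat d(f(\lambda), f(\lambda')) - \hat d(\lambda, \lambda')| \leq 2K \qquad \text{for all } \lambda, \lambda' \in \Lambda.
\]
Applying the same procedure to $g^{-1} \in \Lambda^2$ shows that every $\mu \in \Lambda$ lies $\hat d$-close to $f(\Lambda)$: if $g^{-1}\mu = \nu\psi$ with $\nu \in \Lambda$, $\psi \in F$, then $\hat d(f(\nu), \mu) \leq 2K$. Thus $f$ is a coarse equivalence of $(\Lambda, \hat d|_\Lambda)$ whose control functions depend only on $K$ and $F$ and in particular are uniform in $x, y$.

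Since $\hat d|_\Lambda$ and $d$ are coarsely equivalent via the identity map of $\Lambda$, the self-map $f$ is also a coarse equivalence of $(\Lambda, d)$, with control functions depending only on $K$, $F$ and the fixed coarse equivalence constants between the two metrics. Because $(\Lambda, d)$ is large-scale geodesic, Gromov's trivial lemma upgrades this coarse equivalence to a quasi-isometry with constants $(K', C', C')$ that again depend only on the LSG constants of $d$ and the uniform control functions above. Since $f(x) = y$ this exhibits $(\Lambda, d)$ as uniformly quasi-cobounded. Quasi-coboundedness is QI-invariant, so the whole class $[\Lambda]_{\mathrm{int}}$ is quasi-cobounded, and any apogee --- which is by definition proper and geodesic --- satisfies all three properties in the conclusion.

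The only delicate step is the final one: one must trace through Gromov's trivial lemma to confirm that the QI constants it produces depend only on the (fixed) LSG data of $(\Lambda, d)$ and on the (fixed) coarse-equivalence control between $d$ and $\hat d|_\Lambda$, and not on the particular map $f$ parametrised by $(x, y)$. Once this uniformity is established, the proof assembles cleanly.
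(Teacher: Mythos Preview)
Your proof is correct and follows essentially the same approach as the paper: approximate left-multiplication by elements of $\Lambda^2$ by a self-map of $\Lambda$ (the paper does this via a fixed quasi-inverse $p:(\Lambda^2,d')\to(\Lambda,d)$ of the inclusion, you do it explicitly via the finite set $F$), then invoke Gromov's trivial lemma to upgrade the resulting uniform family of coarse self-equivalences to a uniform family of quasi-isometries. The uniformity you flag as delicate is precisely the content of Proposition~\ref{GromovUniform}, which the paper states separately and which settles your concern.
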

\begin{proof} Let $d_\infty$ be a left-admissible metric on $\Lambda^\infty$. Since $\Lambda$ is an approximate group, it is of finite index in $\Lambda^2$, and hence both $\Lambda$ and $\Lambda^2$ are geometrically finitely-generated. Let $d$ and $d'$ be internal metrics on $\Lambda$ and $\Lambda^2$ respectively. The inclusion $\iota: \Lambda \to \Lambda^2$ is a coarse equivalence with respect to the restrictions of $d_\infty$, and hence also with respect to $d$ and $d'$. By Corollary \ref{GromovTrivialSymmetric} it is thus a quasi-isometry and we denote by $p: (\Lambda^2, d') \to (\Lambda, d)$ a quasi-inverse of $\iota$ with $p|_\Lambda = \mathrm{Id}$. Now for every $\lambda \in \Lambda$ the map $L(\lambda): (\Lambda, d) \to (\Lambda, d)$ given by $L(\lambda)(x) \coloneqq  p(\lambda x)$ is a quasi-isometry with $L(\lambda)(e) = \lambda$, and these quasi-isometries have uniform QI constants.
\end{proof}
Most of the results that we will establish about apogees of  geometrically finitely-generated approximate groups will actually be true for general proper geodesic quasi-cobounded spaces.

\section{Distortion}
In the previous section we have defined two notions of being finitely-generated for countable approximate groups, which are equivalent when restricted to countable groups by Corollary \ref{FinGenGroup}. The following example shows that they are not equivalent for general countable approximate groups.
\begin{example}[Algebraically finitely-generated does not imply geometrically finitely-generated]\label{AFGnotGFG} Let $\Gamma \coloneqq  F_2$ be the free group on two generators $a$ and $b$ and let $\rho \coloneqq  \phi_{ab}: \Gamma \to \Z$ be the $ab$-counting quasimorphism (see Example \ref{CountingQuasimorphism}). By Example \ref{QkerZ} the quasikernel
\[
\Lambda \coloneqq  \qker(\phi_{ab}) = \{w \in F_2\mid |\rho(ab)|\leq 1\}
\]
is an approximate subgroup of $F_2$, and since $a, b \in \Lambda$ we have $\Lambda^\infty = F_2$, hence $(\Lambda, \Lambda^\infty)$ is algebraically finitely-generated. One can show that $\Lambda$ considered as a subset of the Cayley tree of $F_2$ is not coarsely connected, so $(\Lambda, \Lambda^\infty)$ is not geometrically finitely-generated by Theorem \ref{GFG}. This is a special case of a very general phenomenon: While every approximate subgroup of a finitely-generated free group which contains a generating set is algebraically finitely-generated, such an approximate subgroup can only be geometrically finitely-generated if it is an almost group (see Corollary \ref{Rigidity0} below).
\end{example}
On the other hand we will see in Corollary \ref{GFGAFG} below that every geometrically finitely-generated approximate group is algebraically finitely-generated. However, even if $(\Lambda, \Lambda^\infty)$ is geometrically and (hence) algebraically finitely-generated, then $[\Lambda]_{\mathrm{int}}$ and $[\Lambda]_{\mathrm{ext}}$ need not coincide.
\begin{definition} \label{def: undistorted} An approximate group $(\Lambda, \Lambda^\infty)$ is called \emph{undistorted}\index{approximate group!undistorted}\index{undistorted approximate group} if it is both geometrically and algebraically finitely-generated and if  $[\Lambda]_{\mathrm{int}}=[\Lambda]_{\rm ext}$. It is called \emph{distorted}\index{approximate group!distorted}\index{distorted approximate group} if it is both geometrically and algebraically finitely-generated, but $[\Lambda]_{\mathrm{int}} \neq [\Lambda]_{\rm ext}$.
\end{definition}
\begin{example}[A distorted approximate group]\label{ExDistorted} 
Let $(\Lambda, \Lambda^\infty)$ and $\Xi \subset \Lambda$ as in Example \ref{Stark1}. Since $\Xi$ is a finitely-generated group we have $[\Xi]_{\mathrm{int}} = [\Xi]_{\mathrm{ext}}$; from Construction \ref{IntQI} and Example \ref{Stark1} we thus deduce that
\[
[\Lambda]_{\mathrm{int}} = [\Xi]_{\mathrm{int}} = [\Xi]_{\mathrm{ext}} \neq [\Lambda]_{\mathrm{ext}}.
\]
Thus $(\Lambda, \Lambda^\infty)$ is distorted.
\end{example}
\begin{remark}\label{InfoDistorted} Concerning (un)distorted approximate groups we observe:
\begin{enumerate}[(1)]
\item By definition, a geometrically and (hence) algebraically finitely-generated approximate group $(\Lambda, \Lambda^\infty)$ is undistorted if and only if every external metric is also an internal metric. Note that this condition (which amounts to equality $[\Lambda]_{\mathrm{int}}=[\Lambda]_{\rm ext}$ of subsets of $[\Lambda]_c$) is a priori stronger than just the existence of a quasi-isometry between a representative of $[\Lambda]_{\mathrm{int}}$ and a representative of $[\Lambda]_{\rm ext}$.
\item Let $(\Lambda, \Lambda^\infty)$ be an approximate group and $k \in \bN$. If $(\Lambda, \Lambda^\infty)$ is geometrically and (hence) algebraically finitely-generated, then so is $\Lambda^k$, and since the internal and external QI types of $\Lambda$ and $\Lambda^k$ coincide, we deduce that $(\Lambda, \Lambda^\infty)$ is undistorted if and only if $(\Lambda^k, \Lambda^\infty)$ is.
\item The approximate group $(\Lambda, \Lambda^\infty)$ from Example \ref{ExDistorted} is not minimal. We do not know whether a minimal approximate group can be distorted or, in other words, whether the minimal external QI type coincides with the internal QI type. This lack of knowledge is mostly due to the fact that computing minimal external QI types is hard.
\end{enumerate}
\end{remark}
We will later see that distortion is an obstruction to the existence of geometric isometric actions of approximate groups. We now give a general criterion to ensure that a given approximate group is undistorted; this criterion reformulates a result from \cite{BH}.
\begin{definition} Let $(X, d)$ be a proper metric space. A subset $A \subset X$ is called \emph{weakly $(R, K, C)$-quasi-convex}\index{weakly quasi-convex!subset} in $X$ provided that for all $x,y \in A$ there is  a $(K,C)$-quasi-geodesic segment between $x$ and $y$ contained in $N_R(A)$.
\end{definition}
By definition, weak quasi-convexity is invariant under quasi-isometries of pairs, hence we can define:
\begin{definition} An approximate group $(\Lambda, \Lambda^\infty)$ is called \emph{weakly quasi-convex}\index{weakly quasi-convex!group} if it is algebraically finitely-generated and $\Lambda$ is a weakly quasi-convex subset of $\Lambda^\infty$ with respect to some (hence any) word metric on $\Lambda^\infty$.
\end{definition}
Note that $(\Lambda, \Lambda^\infty)$ is weakly quasi-convex in $\Lambda^\infty$ if and only if there exists $k = k(S) \in \mathbb N$ such that for all $x, y \in \Lambda$ there exists a geodesic from $x$ to $y$ in the Cayley graph of $\Lambda^\infty$ with respect to $S$ whose vertices are contained in $\Lambda^k$. We then call $k$ a \emph{convexity parameter}\index{convexity parameter} of $(\Lambda, \Lambda^\infty)$ with respect to $S$. We can now state the following criterion, which is slightly adapted from \cite{BH}.
\begin{proposition} If $(\Lambda, \Lambda^\infty)$ is a weakly quasi-convex approximate group, then it is geometrically finitely-generated and undistorted.
\end{proposition}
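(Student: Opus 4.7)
The plan is to use the convexity parameter directly to promote the external word metric on $\Lambda$ to a large-scale geodesic metric; this will simultaneously establish geometric finite generation and identify the internal QI type with the external QI type.

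\textbf{Setup.} Fix a finite symmetric generating set $S$ for $\Lambda^\infty$, and let $d_S$ denote the associated word metric, so that $d_{\mathrm{ext}} := d_S|_{\Lambda \times \Lambda}$ is an external metric. Let $k$ be a convexity parameter of $(\Lambda, \Lambda^\infty)$ with respect to $S$. Using (AG2) pick a finite set $F \subset \Lambda^\infty$ with $\Lambda^2 \subset \Lambda F$, so that $\Lambda^k \subset \Lambda F^{k-1}$ by Proposition \ref{PropShuffling}. Set
\[
C_0 := \max_{f \in F^{k-1}} d_S(e,f), \qquad C := 2C_0 + 1.
\]
Then every element of $\Lambda^k$ lies within $d_S$-distance $C_0$ of some element of $\Lambda$.

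\textbf{Step 1: Coarse connectedness.} Given $x, y \in \Lambda$, weak quasi-convexity provides a Cayley-graph geodesic $x = x_0, x_1, \dots, x_n = y$ with $n = d_S(x,y)$ and all $x_i \in \Lambda^k$. For each $0 < i < n$ choose $y_i \in \Lambda$ with $d_S(x_i, y_i) \leq C_0$, and set $y_0 := x$, $y_n := y$. Then $d_S(y_{i-1}, y_i) \leq 2C_0 + 1 = C$ for all $i$. This is a $C$-chain in $\Lambda$ joining $x$ to $y$, so $(\Lambda, d_{\mathrm{ext}})$ is $C$-coarsely connected. By Theorem \ref{GFG}, $(\Lambda, \Lambda^\infty)$ is geometrically finitely-generated.

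\textbf{Step 2: Undistortion.} Let $d_C$ be the large-scale path metric on $\Lambda$ constructed in the proof of Theorem \ref{GFG} from the admissible metric $d_S$ and the constant $C$, i.e.\ $d_C(x,y)$ is the minimum length of a $C$-chain in $\Lambda$ joining $x$ to $y$. That proof shows $d_C$ is proper, large-scale geodesic, and coarsely equivalent to $d_{\mathrm{ext}}$, so $d_C$ represents $[\Lambda]_{\mathrm{int}}$. The chain built in Step 1 has length $n = d_{\mathrm{ext}}(x,y)$, which gives $d_C(x,y) \leq d_{\mathrm{ext}}(x,y)$. The triangle inequality applied along any $C$-chain yields the reverse estimate $d_{\mathrm{ext}}(x,y) \leq C \cdot d_C(x,y)$. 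Hence the identity $(\Lambda, d_{\mathrm{ext}}) \to (\Lambda, d_C)$ is a quasi-isometry, so $d_{\mathrm{ext}}$ also represents the internal QI type, i.e.\ $[\Lambda]_{\mathrm{int}} = [\Lambda]_{\mathrm{ext}}$.

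There is no serious obstacle here; the only point requiring care is that weak quasi-convexity produces chains whose vertices lie in $\Lambda^k$ rather than in $\Lambda$ itself, which is handled by the uniform $d_S$-closeness of $\Lambda^k$ to $\Lambda$ coming from the approximate-subgroup condition.
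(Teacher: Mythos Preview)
Your proof is correct and follows essentially the same idea as the paper's. Both arguments exploit the same core observation: the geodesic in $\Lambda^k$ supplied by weak quasi-convexity, together with the relative density of $\Lambda$ in $\Lambda^k$, yields a $C$-chain in $\Lambda$ of length equal to $d_{\mathrm{ext}}(x,y)$. The paper packages this slightly differently, showing directly that the inclusion of $(\Lambda, d_S|_{\Lambda\times\Lambda})$ into the connected component $\Gamma$ of $\Lambda$ in the full subgraph of $\mathrm{Cay}(\Lambda^\infty,S)$ on vertex set $\Lambda^k$ is an isometric embedding with relatively dense image---hence a quasi-isometry onto a geodesic space---but your explicit bi-Lipschitz comparison with $d_C$ amounts to the same computation.
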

\begin{proof} It suffices to show that $(\Lambda, d_S|_{\Lambda\times \Lambda})$ is quasi-isometric to a connected graph. For this let $k$ be a convexity parameter for $(\Lambda, \Lambda^\infty)$ with respect to $S$ and denote by $\Gamma_k$ the full subgraph of the Cayley graph of $\Lambda^\infty$ with respect to $S$ on the vertex set $\Lambda^k$, and by $\Gamma \subset \Gamma_k$ the connected component of $\Lambda$. Then the assumption implies that the inclusion $(\Lambda, d_S|_{\Lambda \times \Lambda}) \to \Gamma$ is a quasi-isometry, and $\Gamma$ is a connected graph, hence geodesic.
\end{proof}
Note that it was established in \cite[Prop. 3.27]{BH} that every uniform model set is weakly-quasi convex. Thus uniform model sets provide examples of undistorted approximate groups. We will later extend this result to arbitrary uniform approximate lattices.

\begin{remark}[Concerning terminology]
In the paper \cite{BH}, which introduced geometric approximate group theory, the authors referred to algebraically finitely-generated approximate groups simply as \emph{finitely-generated approximate} \emph{gro\-ups}, and to the external QI type $[\Lambda]_{\mathrm{ext}}$ as the \emph{canonical QI type}. They then wrote $[\Lambda]$ instead of $[\Lambda]_{\mathrm{ext}}$. However, the focus of the relevant parts of \cite{BH} was mostly on uniform approximate lattices, and we will see that uniform approximate lattices are always undistorted.

Since the writing of \cite{BH} it has become increasingly clear that there are many interesting examples of distorted approximate groups.
To deal with these examples, it is necessary to distinguish carefully between internal and external QI types.
\end{remark}

\section[Growth of approximate groups and the polynomial growth thm]{Growth of approximate groups and the polynomial growth theorem}

The purpose of this section is to define growth of approximate groups in analogy with growth of groups. We first recall a very general notion of growth for metric spaces of coarse bounded geometry in the sense of Block and Weinberger \cite{BlockWeinberger1992}. The following definition is taken from \cite{AlvarezCandel}; for the terminology see Definition \ref{Weinberger} and Remark \ref{GrowthEq}.
\begin{definition} Let $(X,d)$ be a pseudo-metric space of coarse bounded geometry with basepoint $o \in X$ and let $\Lambda \subset X$ be a quasi-lattice.
\begin{enumerate}[(i)]
\item The \emph{growth function}\index{growth function} of $\Lambda$ is defined as $\gamma_{\Lambda, o} (r) \coloneqq  |\Lambda \cap \overline{B}(o,r)|$.
\item The \emph{growth}\index{growth} of $X$, denoted $\mathrm{gr}(X)$, is the growth equivalence class of the function $\gamma_{\Lambda, o}$.
\end{enumerate}
\end{definition}
By Lemma \ref{GrowthQuasiLattices} the growth of $X$ is independent of the choice of $o$ and $\Lambda$.
\begin{proposition}[QI invariance of growth]\label{QIGrowth} If $X$ is of coarse bounded geometry and $Y$ is quasi-isometric to $X$, then $Y$ is of coarse bounded geometry and $\mathrm{gr}(X) = \mathrm{gr}(Y)$.
\end{proposition}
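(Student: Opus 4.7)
The plan is to fix a $(K,C)$-quasi-isometry $f\colon X \to Y$ with $C$-relatively dense image, choose a quasi-lattice $\Lambda_X \subset X$ witnessing the coarse bounded geometry of $X$, and transport it across $f$ to produce a quasi-lattice $\Lambda_Y \subset Y$. Once $\Lambda_Y$ is in hand, both the coarse bounded geometry of $Y$ and the growth equivalence will follow by counting arguments. The naive candidate $f(\Lambda_X)$ is relatively dense in $Y$ (relative density of $\Lambda_X$ in $X$ composed with the $C$-denseness of $f(X)$), but may fail to be uniformly discrete if $f$ is not injective, so I would instead let $\Lambda_Y$ be a maximal $1$-separated subset of $f(\Lambda_X)$. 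By maximality $\Lambda_Y$ remains relatively dense, and by construction each $y \in \Lambda_Y$ comes with a chosen preimage $\lambda(y) \in \Lambda_X$ with $f(\lambda(y)) = y$, producing an \emph{injection} $\lambda\colon \Lambda_Y \hookrightarrow \Lambda_X$.

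Next I would verify that $\Lambda_Y$ is a quasi-lattice in $Y$. Uniform discreteness is built in. For the uniform local finiteness of balls, given $y_0 \in Y$, fix a quasi-inverse $g\colon Y \to X$ of $f$ and note that for any $y \in \Lambda_Y \cap \overline B_Y(y_0,r)$ we have $d_Y(f(\lambda(y)), f(g(y_0))) \leq r + C'$, whence $d_X(\lambda(y), g(y_0)) \leq K(r + C' + C)$. Using the injectivity of $\lambda$ and the coarse bounded geometry of $X$ expressed through $\Lambda_X$, this gives a uniform bound on $|\Lambda_Y \cap \overline B_Y(y_0, r)|$. Hence $Y$ is of coarse bounded geometry.

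For the growth equivalence, by Lemma~\ref{GrowthQuasiLattices} I may compute $\mathrm{gr}(Y)$ using $\Lambda_Y$ and the basepoint $o' := f(o)$, where $o \in X$ is any basepoint for $\Lambda_X$. The same injection $\lambda$ yields, for $y \in \Lambda_Y \cap \overline B_Y(o',r)$, the estimate $d_X(\lambda(y), o) \leq K(r + C)$, so
\[
\gamma_{\Lambda_Y, o'}(r) \;\leq\; \gamma_{\Lambda_X, o}\bigl(K(r+C)\bigr).
\]
Conversely, for each $\lambda \in \Lambda_X \cap \overline B_X(o,r)$ I select (by maximality of $\Lambda_Y$) some $y(\lambda) \in \Lambda_Y$ with $d_Y(f(\lambda), y(\lambda)) \leq 1$, obtaining $d_Y(y(\lambda), o') \leq Kr + C + 1$. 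The map $\lambda \mapsto y(\lambda)$ is at most $N$-to-one for a uniform constant $N$, since preimages of a given $y$ lie in $f^{-1}(\overline B_Y(y,1)) \cap \Lambda_X$, contained in a ball of bounded radius in $X$ which meets $\Lambda_X$ in a uniformly bounded number of points. This yields $\gamma_{\Lambda_X,o}(r) \leq N\cdot \gamma_{\Lambda_Y,o'}(Kr + C + 1)$. The two inequalities together say exactly that $\gamma_{\Lambda_X,o}$ and $\gamma_{\Lambda_Y,o'}$ are growth equivalent, hence $\mathrm{gr}(X) = \mathrm{gr}(Y)$.

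The substantive point of the argument is handling the fact that quasi-isometries are neither injective nor surjective and need not send quasi-lattices to quasi-lattices on the nose; the remedy is the two-step trick of first thinning $f(\Lambda_X)$ to a maximal separated subset to restore uniform discreteness (providing an injection upstairs) and then invoking bounded geometry to bound the multiplicity of the reverse assignment. Everything else is a bookkeeping exercise with the $(K,C)$-QI inequalities.
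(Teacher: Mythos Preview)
Your argument is correct, but it does more work than necessary compared to the paper's proof. The paper's definition of a quasi-lattice (Definition~\ref{Weinberger}) requires only relative density and \emph{uniform local finiteness}, not uniform discreteness; consequently Lemma~\ref{Quasilattice} already shows that $f(\Lambda_X)$ itself is a quasi-lattice in $Y$, with no thinning required. The paper then proves only one inequality, $\gamma_{f(\Lambda_X),f(o)} \preceq \gamma_{\Lambda_X,o}$, by the same preimage trick you use (choose $x \in \Lambda_X$ with $f(x)=y$; this is automatically an injection from $f(\Lambda_X)\cap\overline{B}(f(o),r)$ into $\Lambda_X\cap\overline{B}(o,Kr+KC)$), and obtains the reverse inequality by symmetry, applying the same estimate to a quasi-inverse and invoking Lemma~\ref{GrowthQuasiLattices}. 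Your route---thinning to a maximal $1$-separated set to manufacture an explicit injection, then handling the reverse direction via a bounded-multiplicity assignment---is a standard workaround when one insists on Delone-type lattices, and it works, but here it is an unnecessary detour: the paper's framework was set up precisely so that the non-injectivity of $f$ causes no trouble.
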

\begin{proof} Let $f: X \to Y$ be a $(K, C,C')$-quasi-isometry and let $\Lambda_1 \subset X$ be a quasi-lattice, so that $\Lambda_2 \coloneqq  f(\Lambda_1)$ is a quasi-lattice in $Y$ by Lemma \ref{Quasilattice}. Let $x_0 \in X$ be a basepoint and $y_0 \coloneqq  f(x_0) \in Y$. Assume that $y \in \Lambda_2 \cap \overline{B}(y_0, r)$ and let $x \in \Lambda_1$ such that $y = f(x)$. We then have
\[
d(f(x), f(x_0)) \leq r \Rightarrow d(x, x_0) \leq  Kr+KC \Rightarrow \Lambda_2 \cap \overline{B}(y_0, r) \subset f(\Lambda_1 \cap B(x_0, Kr+Kc)).
\]
This shows that $\gamma_{\Lambda_2, y_0} \preceq \gamma_{\Lambda_1, x_0}$, and hence the proposition follows by symmetry.
\end{proof}
In view of this proposition we can thus talk unambiguously about a QI class of coarse bounded geometry (meaning that one, hence any of its representatives has coarse bounded geometry), and of its growth.
 In order to apply this to approximate groups we need the following observation:
\begin{lemma}\label{CoarseBG1} Let $(\Lambda, \Lambda^\infty)$ be a countable approximate group and let $d$ be the restriction of a left-admissible metric $\widehat{d}$ on $\Lambda^\infty$ to $\Lambda$. Then $(\Lambda, d)$ is of coarse bounded geometry and $\Lambda$ is a quasi-lattice in $(\Lambda, d)$.
\end{lemma}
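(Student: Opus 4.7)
The plan is to verify directly that $\Lambda$ itself is a quasi-lattice in $(\Lambda, d)$; since $(\Lambda,d)$ then admits a quasi-lattice, it automatically has coarse bounded geometry. Three conditions must be checked: relative density, uniform discreteness, and uniform local finiteness.

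First, $\Lambda$ is trivially relatively dense in $(\Lambda, d)$. Uniform discreteness I would read off from left-admissibility of $\widehat{d}$: properness forces each closed ball $\overline{B}(e, r)$ in $(\Lambda^\infty, \widehat{d})$ to be finite, which rules out any sequence of distinct elements $g_n \in \Lambda^\infty$ with $\widehat{d}(e, g_n) \to 0$. Hence there is $\varepsilon > 0$ with $\widehat{d}(e, g) \geq \varepsilon$ for every $g \neq e$, and by left-invariance any two distinct elements of $\Lambda$ sit at $\widehat{d}$-distance at least $\varepsilon$.

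The one step that genuinely uses the approximate subgroup structure is the uniform bound on ball cardinalities, which I expect to be the (mild) crux of the argument. For any $r > 0$ and any $\lambda \in \Lambda$, left-invariance gives
\[
\Lambda \cap \overline{B}(\lambda, r) = \lambda \cdot \bigl((\lambda^{-1}\Lambda) \cap \overline{B}(e, r)\bigr),
\]
and the symmetry axiom (AG1) yields $\lambda^{-1}\Lambda \subset \Lambda^{-1}\Lambda = \Lambda^2$. Therefore
\[
|\Lambda \cap \overline{B}(\lambda, r)| \;\leq\; |\Lambda^2 \cap \overline{B}(e, r)|,
\]
and the right-hand side is finite because $\widehat{d}$ is proper. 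The key point is that this bound is independent of $\lambda$, so local finiteness holds uniformly across $\Lambda$.

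There is no real obstacle here: once one observes that the approximate subgroup axiom $\Lambda^{-1}\Lambda \subset \Lambda^2$ converts the pointwise local finiteness guaranteed by properness of $\widehat{d}$ into uniform local finiteness along the family of translates $(\lambda^{-1}\Lambda)_{\lambda \in \Lambda}$, the rest is bookkeeping. Everything else (uniform discreteness and relative density) is either automatic from left-admissibility or tautological.
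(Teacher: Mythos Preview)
Your argument is correct and follows essentially the same route as the paper: translate a ball in $\Lambda$ by a fixed element to land in a power of $\Lambda$ intersected with a ball around a fixed point, then invoke properness of $\widehat d$. Two minor remarks: first, the paper's Definition~\ref{Weinberger} of quasi-lattice only requires relative density and uniform local finiteness, so your uniform discreteness paragraph is superfluous (though harmless); second, your translation by $\lambda^{-1}$ gives the slightly sharper bound $|\Lambda^2\cap\overline B(e,r)|$, whereas the paper translates by $yx^{-1}$ for a fixed $y\in\Lambda$ and lands in $\Lambda^3\cap\overline B(y,r)$ --- an inessential difference.
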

\begin{proof} Given $z \in \Lambda^\infty$ we denote by $\overline{B}(z,r)$ the closed ball of radius $r$ around $z$ with respect to $\widehat{d}$ in $\Lambda^\infty$. Fix $y \in \Lambda$ and set $\Phi_+(r) \coloneqq  |\Lambda^3\cap \overline{B}(y,r)|$. Then for every $x \in \Lambda$ we have
\[
|\Lambda \cap \overline{B}(x,r)| = |yx^{-1}(\Lambda \cap \overline{B}(x,r))| \leq |\Lambda^3 \cap \overline{B}(y,r)| = \Phi_+(r).
\]
We deduce that $\Lambda$ is a quasi-lattice in itself with respect to $d$, and hence $(\Lambda, d)$ is of coarse bounded geometry.
\end{proof}
Since quasi-lattices are preserved under coarse equivalences (see Lemma \ref{Quasilattice}) we deduce:
\begin{corollary}\label{CoarseBoundedGeometry} Let  $(\Lambda, \Lambda^\infty)$ be a countable approximate group.
\begin{enumerate}[(i)]
\item  If $(\Lambda, \Lambda^\infty)$ is algebraically finitely-generated, then every $X \in [\Lambda]_{\mathrm{ext}}$ is of coarse bounded geometry and if $d$ is any external metric on $\Lambda$, then $\mathrm{gr}(X)$ is represented by the function
\[
\gamma(r) \coloneqq  |\{x \in \Lambda \mid d(x,e) \leq r\}|.
\]
\item If $(\Lambda, \Lambda^\infty)$ is geometrically finitely-generated, then every $X \in [\Lambda]_{\mathrm{int}}$ is of coarse bounded geometry and if $d$ is any internal metric on $\Lambda$, then $\mathrm{gr}(X)$ is represented by the function
\[
\gamma(r) \coloneqq  |\{x \in \Lambda \mid d(x,e) \leq r\}|.
\]
\end{enumerate}
\end{corollary}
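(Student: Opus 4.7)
The proof is largely a matter of assembling pieces that are already on the table, so the plan is to organize them cleanly rather than to do anything new. The key inputs are Lemma \ref{CoarseBG1} (every countable approximate group $(\Lambda,\Lambda^\infty)$ equipped with the restriction of a left-admissible metric $\widehat d$ is itself a quasi-lattice of coarse bounded geometry), Proposition \ref{QIGrowth} (coarse bounded geometry and growth are QI invariants), and the definitions of $[\Lambda]_{\mathrm{ext}}$ and $[\Lambda]_{\mathrm{int}}$.

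For part (i), I would first observe that an external metric $d$ on $\Lambda$ is, by definition, the restriction to $\Lambda$ of a word metric $d_S$ on $\Lambda^\infty$ with respect to a finite symmetric generating set $S$; this word metric is in particular a left-admissible metric on $\Lambda^\infty$. Lemma \ref{CoarseBG1} then applies and shows that $(\Lambda,d)$ has coarse bounded geometry and that $\Lambda$ itself is a quasi-lattice in $(\Lambda,d)$. Any $X \in [\Lambda]_{\mathrm{ext}}$ is quasi-isometric to $(\Lambda,d)$, so by Proposition \ref{QIGrowth} it inherits coarse bounded geometry, and its growth class equals $\mathrm{gr}(\Lambda,d)$. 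Finally, since $\Lambda$ is a quasi-lattice in itself with basepoint $e$, the definition of growth gives $\mathrm{gr}(\Lambda,d) = [r \mapsto |\Lambda \cap \overline B(e,r)|]$, which is exactly the claimed formula.

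For part (ii), the argument is identical except that now one takes $d$ to be an internal metric on $\Lambda$. By Remark \ref{RepresentingQIType} such a metric exists and is a proper metric on $\Lambda$ whose QI class is $[\Lambda]_{\mathrm{int}}$; moreover, by the construction in the proof of Theorem \ref{GFG}, $d$ is coarsely equivalent to $\widehat d|_{\Lambda\times\Lambda}$ for any left-admissible $\widehat d$ on $\Lambda^\infty$. Hence the identity $(\Lambda,\widehat d|_{\Lambda\times\Lambda}) \to (\Lambda,d)$ is a coarse equivalence, and Lemma \ref{Quasilattice} transports the conclusion of Lemma \ref{CoarseBG1}: $(\Lambda,d)$ has coarse bounded geometry and $\Lambda$ is a quasi-lattice in $(\Lambda,d)$. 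Proposition \ref{QIGrowth} then propagates both properties to every $X \in [\Lambda]_{\mathrm{int}}$, and again the growth class is represented by $r \mapsto |\Lambda \cap \overline B(e,r)|$.

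The only step that requires any real care is verifying that the ``intrinsic'' counting function $r \mapsto |\{x \in \Lambda \mid d(x,e)\leq r\}|$ coincides with the abstract growth function $\gamma_{\Lambda,e}$ defined via the quasi-lattice $\Lambda \subset (\Lambda,d)$; but this is literally the definition once one knows that $\Lambda$ is a quasi-lattice in itself. No hidden obstacle arises because Remark \ref{GrowthEq} already guarantees that the growth class is independent of the choice of quasi-lattice and basepoint, so the most natural choice ($\Lambda$ itself, with basepoint $e$) produces a representative of $\mathrm{gr}(X)$. The main conceptual point is that in the external case one works directly with a left-admissible metric, while in the internal case one must first invoke coarse equivalence of the two natural candidates for a metric on $\Lambda$ before applying Lemma \ref{CoarseBG1}.
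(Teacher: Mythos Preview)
Your proof is correct and follows essentially the same approach as the paper, which dispatches the corollary in one line by citing Lemma~\ref{Quasilattice} (preservation of quasi-lattices under coarse equivalences) immediately after Lemma~\ref{CoarseBG1}. You have simply made explicit the steps the paper leaves implicit, including the appeal to Proposition~\ref{QIGrowth} for transporting growth along quasi-isometries. One minor correction: the independence of the growth class from the choice of quasi-lattice and basepoint is Lemma~\ref{GrowthQuasiLattices}, not Remark~\ref{GrowthEq}.
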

\begin{definition} Let  $(\Lambda, \Lambda^\infty)$ be a countable approximate group. If $(\Lambda, \Lambda^\infty)$ is algebraically (respectively geometrically) finitely-generated and $X \in [\Lambda]_{\mathrm{ext}}$ (respectively $X \in [\Lambda]_{\mathrm{int}}$), then $\mathrm{gr}(X)$ is called the \emph{external growth}\index{growth!external} (respectively \emph{internal growth}\index{growth!internal}) of $(\Lambda, \Lambda^\infty)$ and denoted $\mathrm{gr}_{\mathrm{ext}}(\Lambda)$ (respectively $\mathrm{gr}_{\mathrm{int}}(\Lambda)$).
\end{definition}
If $(\Lambda, \Lambda^\infty)$ is undistorted, then we simply write $\mathrm{gr}(\Lambda) \coloneqq  \mathrm{gr}_{\mathrm{int}}(\Lambda) = \mathrm{gr}_{\mathrm{ext}}(\Lambda)$ and refer to $\mathrm{gr}(\Lambda)$ as the growth of $\Lambda$. If $\Lambda$ is a countable discrete group then we recover the usual notion of growth of a group.

\begin{proposition}[Growth inequalities]\label{GrowthInequalities} For every geometrically and (hence) algebraically finitely-generated approximate group $(\Lambda, \Lambda^\infty)$ we have
\[
\mathrm{gr}_{\mathrm{int}}(\Lambda) \preceq \mathrm{gr}_{\mathrm{ext}}(\Lambda) \preceq \mathrm{gr}(\Lambda^\infty).
\]
\end{proposition}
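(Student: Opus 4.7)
The plan is to represent both sides of each inequality by explicit ball-counting functions and compare them directly, making a careful choice of internal metric for the first inequality so that the comparison is linear.

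First I would dispatch the easier second inequality $\mathrm{gr}_{\mathrm{ext}}(\Lambda) \preceq \mathrm{gr}(\Lambda^\infty)$. Fix a finite symmetric generating set $S$ for $\Lambda^\infty$ and let $d_S$ denote the corresponding word metric, which is both large-scale geodesic and left-admissible. By Corollary \ref{CoarseBoundedGeometry}(i), the external growth $\mathrm{gr}_{\mathrm{ext}}(\Lambda)$ is represented by $\gamma_{\mathrm{ext}}(r) := |\{x \in \Lambda \mid d_S(x,e) \leq r\}|$, and the same corollary applied to the approximate group $(\Lambda^\infty, \Lambda^\infty)$ shows that $\mathrm{gr}(\Lambda^\infty)$ is represented by $\gamma_\infty(r) := |\{x \in \Lambda^\infty \mid d_S(x,e) \leq r\}|$. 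Since $\Lambda \subseteq \Lambda^\infty$, the pointwise inequality $\gamma_{\mathrm{ext}}(r) \leq \gamma_\infty(r)$ is automatic, whence the desired growth inequality.

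For the first inequality $\mathrm{gr}_{\mathrm{int}}(\Lambda) \preceq \mathrm{gr}_{\mathrm{ext}}(\Lambda)$, I would invoke the explicit large-scale path metric construction appearing in the implication (v)$\Rightarrow$(ii) of Theorem \ref{GFG}. For sufficiently large $C>0$ this construction yields an internal metric on $\Lambda$ given by
\[
d_C(x,y) := \min\bigl\{n \in \bN_0 \mid \exists\, x_0, \dots, x_n \in \Lambda:\; x_0=x,\, x_n=y,\, d_S(x_{i-1},x_i) \leq C \text{ for all } i\bigr\}.
\]
Any chain $x_0, \dots, x_n$ realising $d_C(x,y) \leq n$ satisfies $d_S(x,y) \leq \sum_{i=1}^n d_S(x_{i-1},x_i) \leq Cn$ by the triangle inequality in $(\Lambda^\infty, d_S)$, so the pointwise bound $d_S|_{\Lambda \times \Lambda} \leq C \cdot d_C$ holds. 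In particular, $\{x \in \Lambda \mid d_C(x,e) \leq r\} \subseteq \{x \in \Lambda \mid d_S(x,e) \leq Cr\}$, yielding the linear ball-count comparison $\gamma_{\mathrm{int}}(r) \leq \gamma_{\mathrm{ext}}(Cr)$, where by Corollary \ref{CoarseBoundedGeometry}(ii) the function $\gamma_{\mathrm{int}}(r) := |\{x \in \Lambda \mid d_C(x,e) \leq r\}|$ is a representative of $\mathrm{gr}_{\mathrm{int}}(\Lambda)$. This immediately gives $\mathrm{gr}_{\mathrm{int}}(\Lambda) \preceq \mathrm{gr}_{\mathrm{ext}}(\Lambda)$.

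The only subtle point is the choice of internal metric in the first inequality: a generic internal metric is only coarsely equivalent to the external metric, so the associated upper control is merely some function and need not preserve the growth equivalence class. The specific large-scale path metric sidesteps this by producing a Lipschitz-type bound $d_S|_{\Lambda\times\Lambda} \leq C\cdot d_C$ with \emph{linear} comparison, which is precisely the kind of control compatible with growth equivalence. Since $\mathrm{gr}_{\mathrm{int}}(\Lambda)$ is a QI invariant by Proposition \ref{QIGrowth}, the choice of representative internal metric is harmless and we are free to use $d_C$ for the comparison.
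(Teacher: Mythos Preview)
Your proof is correct and follows essentially the same approach as the paper: both arguments use the explicit large-scale path metric $d_C$ from the proof of Theorem \ref{GFG} as the internal metric, observe the linear comparison $d_S|_{\Lambda\times\Lambda} \leq C\cdot d_C$, and deduce the ball-count inequality $\gamma_{\mathrm{int}}(r) \leq \gamma_{\mathrm{ext}}(Cr)$, while treating the second inequality as obvious from $\Lambda \subseteq \Lambda^\infty$. Your version is more carefully written; in particular you have the inequality $d_S \leq C\cdot d_C$ stated with the correct direction.
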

\begin{proof} Let $d$ be an external metric on $\Lambda$. As in the proof of Theorem \ref{GFG} we can then find $C>0$ and an internal metric of the form
\begin{eqnarray*}
d_C(x,y) \coloneqq  \inf \{n \mid \exists\, x = x_0, \dots, x_n = y \in \Lambda:\; d(x_{i-1}, x_{i})<C \text{ for all } i \in \{1, \dots, n\}\}.
\end{eqnarray*}
For all $x,y \in \Lambda$ we then have $d_C(x,y) \geq C \cdot d(x,y)$, and hence 
\[
|\{x \in \Lambda \mid d_C(x,e) \leq r\}| \leq |\{x \in \Lambda \mid d(x,e) \leq C r\}|.
\]
This implies the first inequality and the second inequality is obvious.
\end{proof}
\begin{definition} We say that a pseudo-metric space $(X,d)$ of coarse bounded geometry has
\begin{itemize}
\item \emph{exponential growth}\index{growth!exponential} if $0< \varliminf_{r \to \infty} {\log(\gamma(r))}/{r} < \infty$ for some $\gamma \in \mathrm{gr}(X)$;
\item \emph{subexponential growth}\index{growth!subexponential} if $\sup_{r>0} {\log(\gamma(r))}/{r} \leq 0$ for some $\gamma \in \mathrm{gr}(X)$;
\item \emph{polynomial growth}\index{growth!polynomial} if $\sup_{r>0} {\log (\gamma(r))}/{\log r} < \infty$ for some $\gamma \in \mathrm{gr}(X)$.
\end{itemize}
\end{definition}
It is easy to check that these properties actually do not depend on the choice of representative $\gamma$. Note in particular that if $X$ has polynomial growth, then there exists $d \in \bN$ such that for every $\gamma \in \mathrm{gr}(X)$ there exists $C>0$ such that for all $r \in \R$,
\[ 
\gamma(r) \leq C r^d.
\]
We say that an approximate group $(\Lambda, \Lambda^\infty)$ has internal (respectively external) exponential/subexponential/polynomial growth if $(\Lambda, d)$ has the corresponding growth type with respect to some (hence any) internal (respectively external) metric.
\begin{corollary}\label{GrowthImpl} In the situation of Proposition \ref{GrowthInequalities} we have:
\begin{enumerate}[(i)]
\item $\Lambda^\infty$ has pol.\ growth $\implies$ $\Lambda$ has ext.\ pol.\ growth $\implies$ $\Lambda$ has int.\ pol.\ growth.
\item $\Lambda$ has int.\ exp.\ growth $\implies$ $\Lambda$ has ext.\ exp.\ growth $\implies$ $\Lambda^\infty$ has exp.\ growth.
\end{enumerate}
\end{corollary}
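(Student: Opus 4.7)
The plan is to deduce Corollary \ref{GrowthImpl} directly from Proposition \ref{GrowthInequalities} together with the elementary observation that both polynomial growth and exponential growth are monotone with respect to the growth preorder $\preceq$. Recall that this preorder is (per Remark \ref{GrowthEq}) the one declaring $f \preceq g$ iff there exist constants $a,b > 0$ with $f(r) \leq a\, g(b r)$ for all sufficiently large $r$, and the three growth types (polynomial, sub-exponential, exponential) are genuine properties of an equivalence class.

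First I would record the two monotonicity facts: if $f \preceq g$ and $g(r) \leq C r^d$ for some $C, d$, then $f(r) \leq a C b^d r^d$ has polynomial growth; dually, if $f \preceq g$ and $\liminf_{r\to\infty} \log f(r)/r > 0$, then for $s=br$,
\[
\liminf_{s\to\infty} \frac{\log g(s)}{s} \;\geq\; \frac{1}{b}\liminf_{r\to\infty}\frac{\log f(r) - \log a}{r} \;>\; 0,
\]
so that $g$ has exponential growth as well. (Both arguments are immediate from the definitions and are independent of the chosen representatives.)

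Second, I would feed these two facts into the chain
\[
\mathrm{gr}_{\mathrm{int}}(\Lambda) \preceq \mathrm{gr}_{\mathrm{ext}}(\Lambda) \preceq \mathrm{gr}(\Lambda^\infty)
\]
from Proposition \ref{GrowthInequalities}. The polynomial-growth implication (i) follows by applying the first monotonicity fact to each link of the chain (with the larger function assumed polynomial), while the exponential-growth implication (ii) follows by applying the second monotonicity fact (with the smaller function assumed exponential).

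There is no real obstacle here: the entire content is already in Proposition \ref{GrowthInequalities}, and the corollary only requires observing that ``polynomial'' and ``exponential'' are well-defined on growth equivalence classes and propagate in the expected direction along $\preceq$. The only minor care needed is to make sure one uses the correct direction of implication for each growth regime -- polynomial growth is inherited by \emph{smaller} functions, exponential growth by \emph{larger} ones -- which is precisely why (i) reads from $\Lambda^\infty$ down to internal $\Lambda$, whereas (ii) reads in the opposite direction.
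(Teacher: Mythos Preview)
Your proposal is correct and matches the paper's approach: the corollary is stated with a \qed and no explicit proof, indicating it is immediate from Proposition~\ref{GrowthInequalities} together with the obvious monotonicity of the polynomial and exponential growth conditions under $\preceq$, exactly as you spell out.
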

Both inequalities in Proposition \ref{GrowthInequalities} can be strict, as can be seen from the following two examples.
\begin{example} Let $\Lambda^\infty \coloneqq  \Z^2$ and $\Lambda \coloneqq  \{(a,b) \in \Z^2 \mid |b| \leq 1\}$. Then $\Lambda$ has linear external growth in the sense that $\mathrm{gr}_{\mathrm{ext}}(\Lambda)$ is represented by $\gamma_{\mathrm{ext}}(r) \coloneqq  r$, whereas $\mathrm{gr}(\Lambda^\infty)$ is quadratic.
\end{example}
\begin{example}\label{PolGrowthFINec} If $(\Lambda, \Lambda^\infty)$ is as in Example \ref{ExDistorted}, then  $(\Lambda, \Lambda^\infty)$ has linear internal growth in the sense that $\mathrm{gr}_{\mathrm{int}}(\Lambda) = \mathrm{gr}_{\mathrm{int}}(\bZ)$ is represented by $\gamma_{\mathrm{int}}(r) \coloneqq  r$. In particular,  $(\Lambda, \Lambda^\infty)$ has internal polynomial growth. On the contrary, $\mathrm{gr}_{\mathrm{ext}}(\Lambda)$ is represented by $\gamma_{\mathrm{ext}} = 2^r$ and hence $(\Lambda, \Lambda^\infty)$ has external exponential growth. 
\end{example}
Again, this example is not minimal, and we do not know whether for minimal approximate groups the internal and external growth types coincide. If this was the case, then for every approximate group $(\Lambda, \Lambda^\infty)$ we could find $\Lambda' \subset \Lambda^\infty$ in the commensurability class of $\Lambda$ whose external growth coincides with the internal growth of $\Lambda$ (or, equivalently, $\Lambda'$). While we do not know how to prove the latter statement in full generality, we can at least say that if $(\Lambda, \Lambda^\infty)$ has internal polynomial growth, then as in Example \ref{PolGrowthFINec} we can always find a commensurable approximate subgroup $\Lambda' \subset \Lambda^\infty$ which has external polynomial growth (albeit possibly with a different growth rate). However, this fact is far from obvious and requires an approximate version of Gromov's celebrated polynomial growth theorem.

Recall that the classical version of Gromov's polynomial growth theorem \cite{GromovPolGrowth} states that a finitely-generated group has polynomial growth if and only if it is virtually nilpotent. Recently a new proof of Gromov's theorem was given by Hrushovski \cite{Hrushovski}; this proof is based on the fact that in every group of polynomial growth one can find an exhaustive sequence of balls which are finite $k$-approximate subgroups for some fixed constant $k$. It then analyzes ultralimits of such sequences of finite $k$-approximate subgroups using models in connected Lie groups. In fact, Hrushovski's method can also be used to derive various strengthenings of Gromov's theorem; 
Breuillard, Green and Tao have developed a quantitative refinement of this idea which yields for example a strong quantitative version of Gromov's theorem \cite[Cor.\ 11.2]{BGT}. The same machinery also applies to approximate groups and yields the following polynomial growth theorem:
\begin{theorem}[Polynomial growth theorem]\label{PGT} Let $(\Lambda_o, \Lambda_o^\infty)$ be an approximate group which is geometrically and (hence) algebraically finitely-generated. Then the following statements are equivalent:
\begin{enumerate}[(i)]
\item  $(\Lambda_o, \Lambda_o^\infty)$ has internal polynomial growth.
\item Some $\Lambda \subset \Lambda_o^\infty$ which is commensurable to $\Lambda_o$ has internal polynomial growth
\item There exists a finite index approximate subgroup $\Lambda \subset \Lambda_o^{16}$ such that  $(\Lambda, \Lambda^\infty)$ has internal polynomial growth.
\item There exists a finite index approximate subgroup $\Lambda \subset \Lambda_o^{16}$ such that $\Lambda^\infty$ is virtually nilpotent.
\item  There exists a finite index approximate subgroup $\Lambda \subset \Lambda_o^{16}$ such that $\Lambda^\infty$ has polynomial growth.
\item There exists a finite index approximate subgroup $\Lambda \subset \Lambda_o^{16}$ such that $(\Lambda, \Lambda^\infty)$ has external polynomial growth.
\end{enumerate}
\end{theorem}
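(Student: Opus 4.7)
The plan is to isolate the single deep implication, which relies on the structure theory of approximate groups of Hrushovski and Breuillard--Green--Tao (and is essentially the content of Appendix \ref{AppendixBGT}), from a surrounding chain of elementary implications that follow from machinery already available in the excerpt. The elementary equivalence (i)$\iff$(ii) is a direct consequence of the commensurability invariance of the internal QI type: by Lemma \ref{CommCoarseEquiv} commensurable approximate subgroups share a canonical coarse class, by Lemma \ref{CanoicalQITypeGood} they therefore share an internal QI type, and by Corollary \ref{CoarseBoundedGeometry} together with Proposition \ref{QIGrowth} internal growth is a QI invariant of the internal QI type. The implication (iii)$\Rightarrow$(ii) is trivial, since a finite index approximate subgroup of $\Lambda_o^{16}$ is in particular commensurable to $\Lambda_o^{16}$, and $\Lambda_o^{16}$ is commensurable to $\Lambda_o$ by Proposition \ref{PropShuffling}. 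The implication (iv)$\Rightarrow$(v) is Bass's theorem (polynomial growth of virtually nilpotent groups via the Bass--Guivarc'h formula). For (v)$\Rightarrow$(vi), if $\Lambda^\infty$ has polynomial growth with respect to some word metric $d_S$, then the external growth function of $\Lambda$ is bounded above by the ambient growth function, hence polynomial. Finally (vi)$\Rightarrow$(i) follows by combining Proposition \ref{GrowthInequalities} (which gives $\mathrm{gr}_{\mathrm{int}}(\Lambda)\preceq\mathrm{gr}_{\mathrm{ext}}(\Lambda)$) with the already-established (ii)$\Rightarrow$(i), noting that the $\Lambda$ furnished by (vi) is commensurable to $\Lambda_o$.

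\textbf{The deep step.} It remains to prove the forward direction (i)$\Rightarrow$(iii)$\Rightarrow$(iv). Starting from $(\Lambda_o,\Lambda_o^\infty)$ of internal polynomial growth, I would imitate Hrushovski's proof of Gromov's theorem as adapted to the approximate setting. Working with a fixed apogee and the associated internal metric, the growth function $\gamma(r):=|\{x\in\Lambda_o\mid d_{\mathrm{int}}(x,e)\leq r\}|$ is polynomial, so a standard pigeonhole argument on the sequence $\gamma(2r)/\gamma(r)$ produces a uniform constant $K$ and a sequence $r_n\to\infty$ such that the balls $B(r_n)$ satisfy a doubling inequality; Ruzsa covering then shows $B(r_n)$ is a $K$-approximate subgroup of $\Lambda_o^\infty$. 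After intersecting with a suitable small power of $\Lambda_o$ via Lemma \ref{Intersection}, and shuffling finite sets with Proposition \ref{PropShuffling}, one obtains a nested family of commensurable $K$-approximate subgroups of $\Lambda_o^\infty$ lying inside $\Lambda_o^{16}$; the exponent $16$ is dictated by the number of intersection/shuffling steps needed to land inside a fixed low power. Applying the Breuillard--Green--Tao structure theorem (in the form developed by Machado for discrete, possibly infinite approximate subgroups, cf.~Theorem \ref{GoodModel}) one extracts a commensurable approximate subgroup $\Lambda\subset\Lambda_o^{16}$ admitting a good model $\rho:\Lambda^\infty\to H$ into a locally compact group $H$ whose identity component is nilpotent Lie and whose component group is virtually nilpotent. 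Using property (M2) of the good model together with Corollary \ref{PreIm1} and the commensurability invariance of the internal QI type, one transfers nilpotence from $H$ back to $\Lambda^\infty$, yielding (iv) and in particular (iii).

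\textbf{Main obstacle.} The technical heart is the passage from metric polynomial growth of balls in an apogee to a \emph{multiplicative} doubling inequality $|B(r_n)^2|\leq K\cdot|B(r_n)|$ inside $\Lambda_o^\infty$. For undistorted approximate groups this is essentially the classical Hrushovski argument, because internal and external metrics agree. In the presence of internal distortion (cf.\ Example \ref{ExDistorted}), however, internal balls are not naturally products of generators and one has to control how far $B(r_n)^2$ sticks out in the external word metric of $\Lambda_o^\infty$; this is what forces one to work inside a bounded power such as $\Lambda_o^{16}$ rather than inside $\Lambda_o$ itself. Packaging this translation, and verifying that the resulting approximate subgroups satisfy the hypotheses of the good model theorem used by Machado, is the hardest ingredient and the one that genuinely requires the machinery collected in Appendix \ref{AppendixBGT}; the remaining bookkeeping (tracking commensurability classes through the chain $\Lambda_o\leftrightarrow\Lambda\leftrightarrow\Lambda^\infty$) is routine given the tools already developed in Chapters \ref{ChapAlgebra} and \ref{ChapLargeScale}.
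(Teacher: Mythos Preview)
Your chain of easy implications matches the paper's exactly, and you have correctly isolated (i)$\Rightarrow$(iv) as the only nontrivial step. However, there are two substantive divergences from the paper's argument, one of which is a genuine gap.

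\textbf{Wrong structural input.} You propose to apply ``the Breuillard--Green--Tao structure theorem (in the form developed by Machado \dots, cf.~Theorem \ref{GoodModel})'', extracting a good model into a locally compact group $H$ and reading nilpotence off $H$. But Theorem \ref{GoodModel} is Machado's characterization of uniform Meyer sets among uniform approximate lattices; it has nothing to say about nilpotence and does not apply to the finite approximate subgroups $B(r_n)$ you construct. The paper instead invokes Theorem \ref{BGTConvenient}, which is derived in Appendix \ref{AppendixBGT} from \cite[Thm.~1.6]{BGT} (the finite approximate group structure theorem): given an exhaustion of $\Lambda_o$ by finite $k$-approximate subgroups with uniform $k$, one obtains directly a finite-index $\Lambda'\subset\Lambda_o^{16}$ generating a nilpotent group. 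This is where the exponent $16$ actually comes from (via the ultralimit construction in Appendix \ref{AppendixBGT}), not from ``intersection/shuffling steps'' as you suggest.

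\textbf{The obstacle you flag is resolved in the paper, but differently.} You correctly identify the difficulty of passing from polynomial growth of internal balls to a multiplicative inequality $|B(r_n)^2|\leq K|B(r_n)|$ in the presence of distortion, and you leave this unresolved. The paper's solution is concrete: rather than doubling, it establishes the \emph{tripling} estimate $|B_r^3|\leq C'|B_{Cr+K}|$ for internal balls (Corollary \ref{TriplingInternalBalls}). The key point is that left-multiplication by $\lambda\in\Lambda^{j_1}$ is a $(C,K)$-quasi-isometric embedding $\Lambda^{j_2}\to\Lambda^{j_1+j_2}$ with respect to internal metrics (this is extracted from the proof of Proposition \ref{UQAExistence}); this yields $B_{r_1}^{(j_1)}B_{r_2}^{(j_2)}\subset B_{C(r_1+r_2)+K}^{(j_1+j_2)}$. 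A pigeonhole argument then produces radii $r_n\to\infty$ with $|B_{r_n}^3|\leq C|B_{r_n}|$, and Tao's quantitative tripling lemma (Lemma \ref{TaoTaoTao}) converts this into the statement that the symmetrizations of $B_{r_n}$ are $k$-approximate groups for a uniform $k$. This feeds directly into Theorem \ref{BGTConvenient}; no Ruzsa covering or good-model machinery is needed at this stage.
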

Thus, up to passing to a commensurable approximate group, internal polynomial growth, external polynomial growth and polynomial growth of the ambient group are all equivalent, and hence equivalent to virtual nilpotency of the ambient group. Example \ref{PolGrowthFINec} shows that this statement is not true without passing to a commensurable approximate group. It also shows that while $\Lambda$ and $\Lambda_o$ are commensurable, this need not be the case for $\Lambda^\infty$ and $\Lambda_o^\infty$. We will explain the proof of Theorem \ref{PGT} in Section \ref{SubsecPGT1} below, based on a reformulation of results from \cite{BGT} explained to us by Simon Machado and presented in Appendix \ref{AppendixBGT}.

\chapter{Geometric quasi-actions of approximate groups}

The main idea of geometric group theory is to study finitely-generated groups via their isometric actions on metric spaces. A particular role is played by actions which are \emph{geometric}, since these provide models for the canonical QI class of the group by the Milnor--Schwarz lemma. Isometric actions of approximate groups can be studied in complete analogy with the group case (see Section \ref{SecIsometric}). However, contrary to the group case, not every algebraically and geometrically finitely-generated approximate group admits a geometric isometric action on a proper geodesic metric space -- distortion turns out to be an important obstruction. To resolve this issue we introduce in Section \ref{SecQiqac} the notion of a quasi-isometric quasi-action (qiqac); we then establish in Sections \ref{SecGeomQiqac} and \ref{SecLeftRegular} that every geometrically finitely-generated approximate group admits a geometric qiqac on a proper geodesic metric space. For this reason, qiqacs take over the role of isometric actions in the geometric theory of approximate groups. In Section \ref{SecMilnorSchwarz} we establish the main result of this chapter, the Milnor--Schwarz lemma for approximate groups (see Theorem \ref{ThmMS} for its most general form). We then discuss various applications. Among other things, we show that geometrically finitely-generated approximate groups are algebraically finitely-generated (Corollary \ref{GFGAFG}) and that approximate groups acting geometrically by isometries on proper geodesic metric spaces must be undistorted (Corollary \ref{cor:undistort}), and we explain how to derive a version of Gromov's polynomial growth theorem for approximate groups from work of Hrushovski and Breuillard--Green--Tao (Section \ref{SubsecPGT1}).

\section{Isometric actions of approximate groups}\label{SecIsometric}
Throughout this section, $(\Lambda, \Lambda^\infty)$ denotes an approximate group and $(X,d)$ denotes a pseudo-metric space with basepoint $o \in X$. 
\begin{definition}\label{DefCobProp} Let $\cA$ be a set and $\rho: \cA \to {\rm Map}(X, X)$ be a map. 
\begin{enumerate}[(i)]
\item $\rho$ is called \emph{cobounded}\index{cobounded map}\index{map!cobounded} if for some $x \in X$ the set $\rho(\cA)(x) \coloneqq  \{\rho(a)(x) \mid a \in \cA\}$ is relatively dense in $X$.
\item $\rho$ is called \emph{proper}\index{proper!map}\index{map!proper} if the map $\cA \times X \to X \times X$, $(a,x)\mapsto (x, \rho(a)(x))$ is proper, where $\cA$ carries the discrete topology.
\item A subset $Y \subset X$ is called \emph{quasi-invariant}\index{quasi-invariant!subset} under $\rho(\cA)$ if there exists $D \geq 0$ such that for every $a \in \cA$ we have $d_{\mathrm{Haus}}(Y, \rho(a)(Y)) \leq D$, where $d_{\mathrm{Haus}}$ denotes Hausdorff distance.
\end{enumerate}
\end{definition}
\begin{definition} 
\begin{enumerate}[(i)]
\item An \emph{isometric action}\index{isometric action}\index{approximate group!isometric action of} of $(\Lambda, \Lambda^\infty)$ on $X$ is a global morphism $\rho: (\Lambda, \Lambda^\infty) \to {\rm Is}(X)$.
\item If $\rho: (\Lambda, \Lambda^\infty) \to {\rm Is}(X)$ is an isometric action, then the \emph{$\Lambda$-orbit}\index{approximate group action!orbit} of $o$ under $\rho$ is
\[
\rho(\Lambda)(o) \coloneqq  \{\rho(\lambda)(o)\mid \lambda \in \Lambda\} \subset X.
\]
\end{enumerate}
\end{definition}
\begin{proposition}\label{OrbitsQuasiinvariant} Let $\rho: (\Lambda, \Lambda^\infty) \to {\rm Is}(X)$ be an isometric action on a pseudo-metric space $(X,d)$.
\begin{enumerate}[(i)]
\item Every $\Lambda$-orbit is quasi-invariant under $\rho(\Lambda)$.
\item Any two $\Lambda$-orbits are at bounded distance.
\end{enumerate}
\end{proposition}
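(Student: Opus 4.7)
The plan is to exploit the approximate group condition $\Lambda^2 \subset \Lambda F$ to ``absorb'' multiplication by a single element $\mu \in \Lambda$ into a bounded error in the $X$-metric. For (ii) the result will be essentially tautological because $\rho(\lambda)$ is an isometry.

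For part (i), I would fix a finite symmetric set $F \subset \Lambda^\infty$ witnessing the approximate group property, e.g.\ the set $F_\Lambda$ provided by Remark \ref{Rem: FLambda} satisfying $\Lambda^2 \subset \Lambda F_\Lambda$. Then I set
\[
D := \max_{f \in F_\Lambda} d(\rho(f)(o), o),
\]
which is finite because $F_\Lambda$ is finite and each $\rho(f) \in \mathrm{Is}(X)$. The key point, used repeatedly, is that for any $g \in \Lambda^\infty$ and $f \in F_\Lambda$,
\[
d(\rho(gf)(o), \rho(g)(o)) = d(\rho(g)\rho(f)(o), \rho(g)(o)) = d(\rho(f)(o), o) \leq D,
\]
since $\rho(g)$ is an isometry. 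Now fix $\mu \in \Lambda$. Given $\lambda \in \Lambda$, the product $\mu\lambda \in \Lambda^2$ can be written as $\lambda' f$ with $\lambda' \in \Lambda$, $f \in F_\Lambda$, so $\rho(\mu)\rho(\lambda)(o) = \rho(\mu\lambda)(o)$ lies within $D$ of $\rho(\lambda')(o) \in \rho(\Lambda)(o)$. Conversely, using $\mu^{-1} \in \Lambda$, for any $\lambda \in \Lambda$ we have $\mu^{-1}\lambda \in \Lambda^2$, hence $\mu^{-1}\lambda = \lambda'' f$ for some $\lambda'' \in \Lambda$ and $f \in F_\Lambda$, giving $\lambda = \mu\lambda'' f$; applying the displayed bound with $g = \mu\lambda''$ shows $\rho(\lambda)(o)$ is within $D$ of $\rho(\mu)\rho(\lambda'')(o) \in \rho(\mu)(\rho(\Lambda)(o))$. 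Both inclusions together yield $d_{\mathrm{Haus}}(\rho(\Lambda)(o), \rho(\mu)(\rho(\Lambda)(o))) \leq D$, with $D$ independent of $\mu$.

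For part (ii), let $o, o' \in X$. Since each $\rho(\lambda)$ is an isometry,
\[
d(\rho(\lambda)(o), \rho(\lambda)(o')) = d(o, o') \qquad \text{for all } \lambda \in \Lambda.
\]
Hence every point of $\rho(\Lambda)(o)$ is within $d(o,o')$ of the correspondingly indexed point of $\rho(\Lambda)(o')$ and vice versa, so $d_{\mathrm{Haus}}(\rho(\Lambda)(o), \rho(\Lambda)(o')) \leq d(o, o')$.

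The only mildly subtle step is the ``pullback'' argument in (i) showing $\rho(\Lambda)(o) \subset N_D(\rho(\mu)(\rho(\Lambda)(o)))$, which requires using $\mu^{-1} \in \Lambda$ (not merely $\mu^{-1} \in \Lambda^\infty$) to keep the decomposition inside $\Lambda^2$; everything else is a direct consequence of the isometry property combined with $\Lambda^2 \subset \Lambda F_\Lambda$.
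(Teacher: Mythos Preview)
Your proof is correct and follows essentially the same approach as the paper: for (i) you use the approximate group relation $\Lambda^2 \subset \Lambda F_\Lambda$ together with the isometry property to bound displacements by $D = \max_{f \in F_\Lambda} d(\rho(f)(o), o)$, and for (ii) you use the isometry property directly. The paper's proof is terser (it uses $\Lambda^2 \subset F\Lambda$ and states the Hausdorff bound without spelling out both inclusions), but your more explicit verification of the reverse inclusion using $\mu^{-1} \in \Lambda$ is a welcome addition.
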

\begin{proof} (i) Let $o \in X$ and let $\cO \coloneqq  \rho(\Lambda)(o)$. Choose a finite set $F \subset \Lambda^\infty$ such that $\Lambda^2 \subset F\Lambda$ and set $R \coloneqq  \sup\{d(o, f.o) \mid f \in F\}$; then for every $\lambda \in \Lambda$ we have $d_{\mathrm{Haus}}(\cO, \rho(\lambda)(\cO)) \leq R$.

(ii) This follows from the fact that $d(\rho(\lambda)(x), \rho(\lambda)(y)) = d(x,y)$ for all $\lambda \in \Lambda$ and $x,y \in X$.
\end{proof}
We recall that an action of a discrete group on a proper metric space is geometric if and only if it is cobounded and proper (see the discussion in Remark \ref{ProperAct} concerning various equivalent definitions). By analogy we define:
\begin{definition} Let $(X,d)$ be a proper pseudo-metric space. An isometric action  $\rho:  (\Lambda, \Lambda^\infty) \to {\rm Is}(X)$ is called 
\begin{enumerate}
\item \emph{cobounded}\index{cobounded action}\index{action!group action!cobounded} if $\rho_1: \Lambda \to {\rm Is}(X) \subset {\rm Map}(X, X)$ is cobounded in the sense of Definition \ref{DefCobProp};
\item \emph{proper}\index{proper!action}\index{action!group action!proper} if $\rho_1: \Lambda \to {\rm Is}(X) \subset {\rm Map}(X, X)$ is proper in the sense of Definition \ref{DefCobProp};
\item \emph{geometric}\index{geometric action}\index{action!group action!geometric} if it is cobounded and proper. 
\end{enumerate}
\end{definition}
Every infinite countable approximate group admits an isometric action with infinite orbits, which is, however, usually not cobounded:
\begin{example}
If $(\Lambda, \Lambda^\infty)$ is an arbitrary countable approximate group and $d$ is a left-admissible metric on $\Lambda^\infty$, then $(\Lambda, \Lambda^\infty)$ acts isometrically on $(\Lambda^\infty, d)$ via $\rho(\lambda)(x) \coloneqq  \lambda x$. Thus every countable approximate group admits an isometric action. If $(\Lambda, \Lambda^\infty)$ is algebraically finitely-generated, then it even admits an isometric action on a locally finite graph (e.g.\ any Cayley graph of $\Lambda^\infty$). However, these actions are not very much of interest to us since the $\Lambda$-orbits are not cobounded unless $(\Lambda, \Lambda^\infty)$ is an almost group.\end{example}
The goal of the remainder of this section is to characterize those approximate groups which admit geometric isometric actions on large-scale geodesic proper metric spaces. The final result will be as follows (see Definition \ref{DefUniformApproximateLattice} and the discussion thereafter for the notion of a uniform approximate lattice):
\begin{theorem}[Approximate groups with geometric isometric actions]\label{ThmIsometricActions} The following are equivalent:
\begin{enumerate}[(i)]
\item $(\Lambda, \Lambda^\infty)$ admits a geometric isometric action on a large-scale geodesic proper metric space.
\item $(\Lambda, \Lambda^\infty)$ is a finite extension of a uniform approximate lattice in a compactly-generated lcsc group.
\end{enumerate}
In this case, $(\Lambda, \Lambda^\infty)$ is algebraically and geometrically finitely-generated and undistorted.
\end{theorem}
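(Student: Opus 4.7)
The plan is to treat the two directions separately and then derive the supplementary assertions about finite generation and undistortion. For (ii) $\Rightarrow$ (i) I will construct an isometric action directly from the ambient lcsc group. For (i) $\Rightarrow$ (ii) I will push the approximate group into the isometry group of $X$ and analyze the closure of its image.

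To prove (ii) $\Rightarrow$ (i), suppose $\rho:(\Lambda,\Lambda^\infty)\to(\Lambda_0,\Lambda_0^\infty)$ is a finite extension with $\Lambda_0$ a uniform approximate lattice in a compactly generated lcsc group $H$. First I would equip $H$ with a large-scale geodesic proper left-invariant metric $d_H$ --- for instance, the word metric with respect to a compact symmetric generating set, which is upgradable to a proper geodesic Cayley--Abels-type model on which $H$ acts geometrically. Left translation of $\Lambda_0\subset H$ on $(H,d_H)$ is isometric, cobounded by syndeticity of $\Lambda_0$, and proper by discreteness. Composing with $\rho_\infty$ yields an isometric action of $(\Lambda,\Lambda^\infty)$ on $(H,d_H)$ which remains cobounded, since $\rho(\Lambda)=\Lambda_0$, and proper, since the fibers of $\rho|_\Lambda$ have cardinality at most $|\ker_2(\rho)|<\infty$, so only finitely many $\lambda\in\Lambda$ can map into any prescribed finite subset of $\Lambda_0$.

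To prove (i) $\Rightarrow$ (ii), given a geometric isometric action $\rho:(\Lambda,\Lambda^\infty)\to\mathrm{Is}(X)$, the isometry group $G:=\mathrm{Is}(X)$ is lcsc in the compact--open topology because $X$ is proper. I would set $\Lambda_0:=\rho(\Lambda)$, which is an approximate subgroup of $G$ by Proposition \ref{ImagesExist}, and $H:=\overline{\Lambda_0^\infty}\leq G$, a closed subgroup and hence lcsc. Fix a basepoint $o\in X$. Discreteness of $\Lambda_0$ in $H$ is immediate from properness of the $\Lambda$-action at $o$, while syndeticity follows from coboundedness: for $h\in H$, choose $\lambda\in\Lambda$ with $d_X(\rho(\lambda).o,h.o)\leq R$, so $\rho(\lambda)^{-1}h\in K:=\{g\in H\mid d_X(g.o,o)\leq R\}$, which is compact by properness of the $H$-action on the proper space $X$. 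Hence $\Lambda_0$ is a uniform approximate lattice in $H$. The $H$-action on $X$ is both proper (inherited from $G$) and cobounded (since $\Lambda_0.o$ is relatively dense), so the Milnor--Schwarz lemma for lcsc groups (Appendix \ref{AppendixGGT}) yields compact generation of $H$. For finiteness of the partial kernels $\ker_k(\rho)$, I would use the inclusion $\Lambda^k\subset F_\Lambda^{k-1}\Lambda$ from Proposition \ref{PropShuffling} to propagate properness of the $\Lambda$-action to $\Lambda^k$; since any element of $\ker_k(\rho)$ fixes $o$ and only finitely many elements of $\Lambda^k$ do so, $\ker_k(\rho)$ is finite.

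For the ``in this case'' clause, I would analyze the orbit map $\phi:\Lambda\to X$, $\lambda\mapsto\rho(\lambda).o$. With respect to any left-admissible metric $\hat d$ on $\Lambda^\infty$, this map is coarsely Lipschitz (bounded $\hat d$-balls in $\Lambda^\infty$ are finite, hence map to bounded subsets of $X$), a coarse embedding (since $d_X(\phi(\lambda),\phi(\mu))\leq R$ forces $\lambda^{-1}\mu\in\Lambda^2$ to lie in a finite set, by the propagated properness), and has relatively dense image by coboundedness. Hence $X\in[\Lambda]_c$, and large-scale geodesicity of $X$ gives $X\in[\Lambda]_{\mathrm{int}}$, so $(\Lambda,\Lambda^\infty)$ is geometrically finitely-generated; algebraic finite generation then follows from Corollary \ref{GFGAFG}. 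Replacing $\hat d$ by a word metric $d_S$ with respect to a finite generating set $S$ of $\Lambda^\infty$, the identical argument yields $X\in[\Lambda]_{\mathrm{ext}}$, whence $[\Lambda]_{\mathrm{int}}=[\Lambda]_{\mathrm{ext}}$ and $(\Lambda,\Lambda^\infty)$ is undistorted. The hardest step will be the verification in direction (i) $\Rightarrow$ (ii) that the closed subgroup $H=\overline{\Lambda_0^\infty}$ genuinely inherits a geometric action on $X$, so that the lcsc Milnor--Schwarz lemma applies to furnish compact generation, together with the bookkeeping needed to propagate properness from $\Lambda$ to all $\Lambda^k$ uniformly.
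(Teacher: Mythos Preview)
Your plan for the equivalence (i)$\Leftrightarrow$(ii) is essentially the paper's, with one cosmetic difference: for (i)$\Rightarrow$(ii) you pass to $H=\overline{\rho(\Lambda)^\infty}$, while the paper works directly with $G=\mathrm{Is}(X)$. Since $\Lambda$ acts coboundedly on $X$, so does $G$, hence $G$ is already compactly generated by Proposition~\ref{MSTop}; the paper then shows $\rho(\Lambda)$ is a uniform approximate lattice in $G$ itself via Propositions~\ref{IsometricActionProperness} and~\ref{AppLatticeFromAction}. Your route through $H$ also works, but buys nothing and requires the extra verification you flag as ``hardest''.

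There is, however, a genuine gap in your undistortion argument. You show the orbit map $\phi:(\Lambda,\hat d|_\Lambda)\to X$ is a coarse equivalence, whence $X\in[\Lambda]_c$; since $X$ is large-scale geodesic, $X\in[\Lambda]_{\mathrm{int}}$ by the \emph{definition} of $[\Lambda]_{\mathrm{int}}$. You then write that ``the identical argument yields $X\in[\Lambda]_{\mathrm{ext}}$''. But $[\Lambda]_{\mathrm{ext}}$ is defined as the QI type of $(\Lambda,d_S|_\Lambda)$, so $X\in[\Lambda]_{\mathrm{ext}}$ requires a \emph{quasi-isometry} between $X$ and $(\Lambda,d_S|_\Lambda)$, not merely a coarse equivalence. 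Gromov's trivial lemma would upgrade the coarse equivalence to a QI only if you already knew $(\Lambda,d_S|_\Lambda)$ to be large-scale geodesic---which is precisely the undistortion statement you are trying to prove.

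The fix, as carried out in the paper, is to invoke the full Milnor--Schwarz lemma (Theorem~\ref{ThmMS}). Part~(iv) says that when the qiqac is an honest isometric action (so the constants $K_k,C_k,C_k'$ are bounded), the orbit map has an \emph{affine} upper control with respect to the word metric; part~(v) says that large-scale geodesicity of $X$ forces an affine lower control. Together these give that $\phi:(\Lambda,d_S|_\Lambda)\to X$ is a genuine quasi-isometry (part~(vi)), so $X$ represents $[\Lambda]_{\mathrm{ext}}$, and combined with $X\in[\Lambda]_{\mathrm{int}}$ you obtain $[\Lambda]_{\mathrm{int}}=[\Lambda]_{\mathrm{ext}}$. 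The isometric nature of the action is not incidental here---it is exactly what makes the upper control affine rather than merely coarse.
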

We now turn to the proof of the equivalence (i)$\iff$(ii) of Theorem \ref{ThmIsometricActions}; the final statement will be proved in Section \ref{SecMilnorSchwarz}. The implication (ii)$\Rightarrow$(i) in Theorem \ref{ThmIsometricActions} comes from the following simple observation:
\begin{proposition}\label{IsometricActionsTrivialDirection} Let $\Lambda$ be a uniform approximate lattice in a lcsc group $G$, let $\Lambda^\infty < G$ be its enveloping group and let $d$ be a left-admissible metric on $G$. Denote by $\lambda_G: G \to  {\rm Is}(G, d)$ the isometric action of $G$ on itself by left-multiplication. Then the isometric action of $(\Lambda, \Lambda^\infty)$ on $(G,d)$ given by 
\[
(\Lambda, \Lambda^\infty) \hookrightarrow G \xrightarrow{\lambda_G} {\rm Is}(G, d)
\]
is geometric.
\end{proposition}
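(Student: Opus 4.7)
The plan is to verify the two defining conditions of a geometric isometric action—coboundedness and properness—directly from the defining features of a uniform approximate lattice, namely discreteness and syndeticity (cf.\ Definition \ref{DefUniformApproximateLattice}). That the action is by isometries is automatic, since $d$ is left-invariant and $\lambda_G$ restricted to $\Lambda^\infty$ factors through ${\rm Is}(G,d)$; the content of the proposition is therefore purely geometric.

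For coboundedness, I would take the basepoint $o := e \in G$, so that the $\Lambda$-orbit equals $\Lambda$ itself. Since $\Lambda$ is syndetic in $G$, there exists a compact subset $K \subset G$ with $G = \Lambda K$. Because $d$ is left-admissible, $K$ has finite $d$-diameter, say $R := \sup_{k \in K} d(k,e) < \infty$. Given any $g \in G$, write $g = \lambda k$ with $\lambda \in \Lambda$ and $k \in K$; then $d(g, \lambda) = d(\lambda k, \lambda) = d(k,e) \leq R$ by left-invariance. Hence $\Lambda$ is $R$-relatively dense in $(G,d)$, which is exactly the coboundedness condition of Definition \ref{DefCobProp}(i).

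For properness, I would check the condition of Definition \ref{DefCobProp}(ii) directly: given a compact $C \subset G \times G$, set $K_1 := \pi_1(C)$, $K_2 := \pi_2(C)$, both compact. If $(\lambda, g) \in \Lambda \times G$ lies in the preimage, then $g \in K_1$ and $\lambda g \in K_2$, forcing $\lambda \in K_2 K_1^{-1}$. Now $K_2 K_1^{-1}$ is compact in $G$ as the image of $K_2 \times K_1$ under the continuous map $(a,b) \mapsto ab^{-1}$, and $\Lambda$ is discrete in $G$ (being a uniform approximate lattice), so $\Lambda \cap K_2 K_1^{-1}$ is a finite set $F$. Hence the preimage is contained in $F \times K_1$, which is compact in $\Lambda_{\text{disc}} \times G$; this proves properness.

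The argument is essentially routine; there is no single hard step. The only subtlety worth flagging is that the basepoint $e$ in $G$ lies in $\Lambda$ (as $\Lambda$ is unital), so $\Lambda.e = \Lambda$ and the orbit is literally the approximate subgroup, which is what lets the syndeticity hypothesis translate immediately into $d$-relative density; for an arbitrary basepoint one would get relative density of $\Lambda.g$, but this follows from the $e$ case by left-invariance. Both conditions use in an essential way that we are in a locally compact group with a \emph{left}-invariant metric, so that left-translations are isometries and compact sets have finite diameter.
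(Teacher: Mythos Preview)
Your proof is correct and follows the same approach as the paper's, which dispatches the result in two sentences: coboundedness because $\Lambda = \Lambda.e$ is relatively dense in $G$, and properness from the properness of the left-regular action of $G$ on itself together with discreteness of $\Lambda$. Your argument is simply a careful unpacking of these two observations.
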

\begin{proof} Since $\Lambda = \Lambda.e$ is relatively dense in $G$, the action is cobounded. Properness follows from properness of the left-regular action of $G$ on itself together with discreteness of $\Lambda$.
\end{proof}
\begin{proof}[Proof of Theorem \ref{ThmIsometricActions}.] (ii) $\Rightarrow$ (i) Assume that $\pi: (\Lambda, \Lambda^\infty) \to (\Xi, \Xi^\infty)$ is a finite extension and that $(\Xi, \Xi^\infty)$ is a uniform approximate lattice in a compactly-generated lcsc group $G$. Since $G$ is compactly generated, we can find a left-admissible metric $d$ on $G$ such that $(G, d)$ is (proper and) large-scale geodesic (Proposition \ref{CompactlyGeneratedConsequences}). We fix such a metric once and for all. We consider the composition
\[
\rho: (\Lambda, \Lambda^\infty) \xrightarrow{\pi} (\Xi, \Xi^\infty) \hookrightarrow G \xrightarrow{\lambda_G} {\rm Is}(G, d).
\]
Since $\pi$ is onto, we have $\rho(\Lambda).e = \Xi$, hence $\rho$ is cobounded. As for properness, consider
\[
A_R \coloneqq  \{\lambda \in \Lambda \mid \rho(\lambda) B_R(e) \cap B_R(e) \neq \emptyset \}.
\]
From the properness part of Proposition \ref{IsometricActionsTrivialDirection} we know that $F_R \coloneqq  \pi(A_R) \subset \Lambda$ is finite. Now assume that $\lambda_1, \lambda_2 \in A_R$ such that $\pi(\lambda_1) = \pi(\lambda_2)$. Then $\pi(\lambda_1^{-1}\lambda_2) = e$ and hence $\lambda_1^{-1}\lambda_2 \in \ker(\pi_2: \Lambda^2 \to \Xi^2)$, which is finite by assumption. This shows that $A_R$ fibers over the finite set $F_R$ with finite fibers, hence it is finite as well, showing properness of the action $\rho$. This finishes the proof.
\end{proof}
We now turn to the converse implication. Note that if an approximate group $(\Lambda, \Lambda^\infty)$ acts geometrically by isometries on a proper metric space $X$, then in particular $\Lambda^\infty$ and hence ${\rm Is}(X)$ acts coboundedly on $X$. For this reason we now restrict attention to metric spaces which admit a cobounded action of the isometry group. We will call such metric spaces \emph{cobounded}\index{metric space!cobounded}\index{cobounded} for short. We recall from Proposition \ref{MSTop} that the isometry group of a \lsg cobounded proper metric space is a compactly-generated lcsc group.
\begin{proposition}\label{IsometricActionProperness} Let $X$ be a \lsg cobounded proper metric space and let $\Lambda$ be an approximate subgroup of $G \coloneqq  {\rm Is}(X)$. Then the following are equivalent:
\begin{enumerate}[(i)]
\item The isometric action of $(\Lambda, \Lambda^\infty)$ on $X$ is geometric.
\item $\Lambda$ is relatively dense in $G$ and the map $p: \Lambda \times G \to G \times G$, $(\lambda, g) \mapsto (g, \lambda g)$ is proper. 
\end{enumerate}
\end{proposition}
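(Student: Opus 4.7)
The plan is to set up the equivalence by exploiting the fact that, by Proposition~\ref{MSTop}, $G = \mathrm{Is}(X)$ is a compactly-generated lcsc group that acts properly and coboundedly on $X$, so by the Milnor--Schwarz lemma for lcsc groups the orbit map $\pi_x \colon G \to X$, $g \mapsto g.x$, is a quasi-isometry for any base point $x \in X$. Fix a left-admissible metric $d_G$ on $G$ once and for all. Every occurrence of ``relatively dense in $G$'' will refer to this metric (and the class of such sets is independent of the choice by coarse equivalence of admissible metrics).

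For the direction (i) $\Rightarrow$ (ii), first I would translate coboundedness of the isometric action. By assumption $\Lambda.x$ is relatively dense in $X$ for some $x$, so applying a quasi-inverse of the orbit map $\pi_x$ turns this into relative density of $\Lambda$ in $(G,d_G)$ (up to a bounded perturbation). Next I would deduce from properness of the action that $\Lambda$ is closed and discrete in $G$: for a compact neighbourhood $K \subset X$ of $x$, the set $\{g \in G : g.K \cap K \neq \emptyset\}$ is relatively compact in $G$ by properness of the $G$-action, and its intersection with $\Lambda$ is finite by properness of the $\Lambda$-action, so $\Lambda$ is discrete. With $\Lambda$ closed discrete and relatively dense in $G$, the properness of the map $p$ follows by a direct unpacking: for compact $K_1, K_2 \subset G$ we have
\[
p^{-1}(K_1 \times K_2) = \bigsqcup_{\lambda \in \Lambda \cap K_2 K_1^{-1}} \{\lambda\} \times (K_1 \cap \lambda^{-1} K_2),
\]
a finite union of compact slices (finiteness of $\Lambda \cap K_2K_1^{-1}$ comes from discreteness of $\Lambda$ together with compactness of $K_2K_1^{-1}$).

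For the direction (ii) $\Rightarrow$ (i), coboundedness of the isometric action on $X$ is the easy half: relative density of $\Lambda$ in $G$ and coarse Lipschitz-ness of $\pi_x$ imply that $\Lambda.x = \pi_x(\Lambda)$ is relatively dense in $\pi_x(G) = G.x$, which in turn is relatively dense in $X$ by coboundedness of the $G$-action. The more delicate point is properness of the $\Lambda$-action on $X$. First I would extract closed discreteness of $\Lambda \subset G$ from properness of $p$: for any compact $L \subset G$ containing $e$, the fibre $p^{-1}(\{e\} \times L) = \{(\lambda, e) : \lambda \in \Lambda \cap L\}$ is compact in the space $\Lambda \times G$ where $\Lambda$ carries the discrete topology, forcing $\Lambda \cap L$ to be finite. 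Then, given a compact $K \subset X$, the set $L := \{g \in G : g.K \cap K \neq \emptyset\}$ is relatively compact by properness of the $G$-action on $X$, so $\Lambda \cap \overline{L}$ is finite, which gives properness of the $\Lambda$-action.

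The main subtlety (rather than obstacle) is to keep the bookkeeping straight between the two different ``properness'' statements: properness of the map $p$ (a statement in $G \times G$) and properness of the action map $\Lambda \times X \to X \times X$. Both encode the same closed-discreteness of $\Lambda$ combined with properness of the ambient $G$-action on $X$, and the bridge in both directions is the fact that $L = \{g \in G : g.K \cap K \neq \emptyset\}$ is relatively compact whenever $K \subset X$ is. Once this dictionary is in place, each implication reduces to an elementary unpacking of definitions, so no hard geometric input beyond Proposition~\ref{MSTop} and the Milnor--Schwarz lemma is required.
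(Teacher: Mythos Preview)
Your proof is correct in substance, but the route differs from the paper's in how the properness equivalence is handled, and there is one small imprecision worth flagging.

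\textbf{Comparison with the paper.} The paper's argument is more uniform: after observing (as you do) that the orbit map $\iota_o\colon (G,d_G)\to (X,d)$ is a quasi-isometry, it writes down the commuting square
\[
\begin{xy}\xymatrix{
 \Lambda \times G  \ar[d]_{{\rm id} \times \iota} \ar[rr]^{\mu}&& G \ar[d]^{\iota}\\
 \Lambda \times X  \ar[rr]_{\rho} && X
}\end{xy}
\]
and reads off \emph{both} equivalences (relative density $\Leftrightarrow$ coboundedness, and properness of $p$ $\Leftrightarrow$ properness of $p_X$) directly from the fact that $\iota$ is a quasi-isometry between proper spaces. In particular the paper never isolates ``$\Lambda$ is closed discrete in $G$'' as an intermediate step. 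Your approach instead passes through this discreteness fact in both directions and then unpacks properness of $p$ by hand via the slice decomposition. Both are fine; yours is more explicit about what properness of $p$ really encodes (your final paragraph identifies this correctly), while the paper's diagram argument is slicker but asks the reader to see why a quasi-isometry between proper spaces transports properness of the shear map.

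\textbf{Minor gap.} In your (i)$\Rightarrow$(ii) direction you write: ``for a compact neighbourhood $K\subset X$ of $x$, the set $\{g\in G: g.K\cap K\neq\emptyset\}$ is relatively compact\dots\ and its intersection with $\Lambda$ is finite\dots, so $\Lambda$ is discrete.'' This only shows $\Lambda$ meets \emph{one} fixed relatively compact neighbourhood of $e$ in a finite set; it does not yet give $\Lambda\cap L$ finite for \emph{every} compact $L\subset G$, which is what you use immediately afterwards. The fix is immediate: let $K$ range over all balls $B_R(x)$; then the sets $\{g:g.B_R(x)\cap B_R(x)\neq\emptyset\}$ exhaust $G$ as $R\to\infty$ (since the $G$-action is proper and cobounded), and each has finite intersection with $\Lambda$ by properness of the $\Lambda$-action. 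With that adjustment the argument goes through.
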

\begin{proof} Since $G$ is a compactly-generated lcsc group it admits a large-scale geodesic left-admissible metric $d_G$ (Proposition \ref{CompactlyGeneratedConsequences}). Fix a basepoint $o \in X$ and let $\iota = \iota_o: (G, d_G) \to (X, d)$ be the corresponding orbit map. By Proposition \ref{MSTop} the map $\iota$ is a quasi-isometry. Denote by  
\[
\mu: \Lambda \times G \to G, \   (\lambda, g) \mapsto \lambda g \quad \text{and} \quad \rho: \Lambda \times X \to X, \   (\lambda, x) \mapsto \lambda(x),
\]
the natural actions of $\Lambda$ on $G$ and $X$ respectively. Then we have a commuting diagram
\[
\begin{xy}\xymatrix{
 \Lambda \times G  \ar[d]_{{\rm id} \times \iota} \ar[rr]^{\mu}&& G \ar[d]^{\iota}\\
 \Lambda \times X  \ar[rr]_{\rho} && X.
}\end{xy}\]
Since $\iota$ is a quasi-isometry, it follows that $\Lambda$ is relatively dense in $G$ if and only if $\Lambda.o = \iota(\Lambda)$ is relatively dense in $X$, which means that the action is cobounded. Similarly, the map $p:\Lambda \times G \to G \times G$, $(\lambda, g) \mapsto (x, \lambda g)$ is proper if and only if the map $p_X: \Lambda \times X \to X \times X, (\lambda, x) \mapsto (x, \lambda.x)$ is proper. This establishes the desired equivalence. 
\end{proof}
We can reformulate condition (ii) of Proposition \ref{IsometricActionProperness}
using the following proposition:
\begin{proposition}\label{AppLatticeFromAction} Let $G$ be a compactly-generated lcsc group and $\Lambda \subset G$ be an approximate subgroup. Then the following are equivalent:
\begin{enumerate}[(i)]
\item $\Lambda$ is relatively dense in $G$ and the map $p: \Lambda \times G \to G \times G$, $(\lambda, g) \mapsto (g, \lambda g)$ is proper. 
\item $\Lambda$ is an approximate lattice in $G$.
\end{enumerate}
\end{proposition}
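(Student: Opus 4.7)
Since both (i) and (ii) include the condition that $\Lambda$ is relatively dense/syndetic in $G$ (a uniform approximate lattice being, by Definition \ref{DefUniformApproximateLattice}, a discrete and syndetic approximate subgroup), the task is to show that for a relatively dense approximate subgroup $\Lambda \subset G$, properness of the map $p:\Lambda \times G \to G\times G$, $(\lambda, g)\mapsto (g, \lambda g)$ is equivalent to discreteness of $\Lambda$ in $G$. The main observation is the direct computation
\[
p^{-1}(K \times K) \;=\; \{(\lambda, g) \in \Lambda \times G \mid g \in K,\; \lambda g \in K\} \;\subset\; (\Lambda \cap KK^{-1}) \times K,
\]
valid for any $K \subset G$. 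Since $p$ is continuous and $K\times K$ closed, the preimage $p^{-1}(K\times K)$ is always closed in $\Lambda\times G$; hence its compactness is equivalent to $\Lambda\cap KK^{-1}$ being finite, given that $\Lambda$ carries the discrete topology.

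For the implication (ii)$\Rightarrow$(i), I would argue as follows. Assume $\Lambda$ is discrete in $G$. Then for every compact set $C \subset G$ the intersection $\Lambda\cap C$ is finite (the standard fact that in a locally compact Hausdorff space, a discrete subspace is locally finite). Applied with $C = KK^{-1}$, which is compact since $G$ is a topological group and $K$ compact, this shows that the right-hand side of the inclusion above is a finite union of sets of the form $\{\lambda\}\times K$, hence compact; so $p^{-1}(K\times K)$ is a closed subset of a compact set, therefore compact. An arbitrary compact $L \subset G\times G$ satisfies $L \subset K\times K$ with $K = \mathrm{pr}_1(L)\cup \mathrm{pr}_2(L)$ compact, so $p$ is proper.

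For the converse (i)$\Rightarrow$(ii), I would show that properness of $p$ forces $\Lambda$ to be discrete. Given any compact $C \subset G$, set $K := C \cup \{e\}$, which is compact and satisfies $C \subset KK^{-1}$. By hypothesis $p^{-1}(K \times K)$ is compact; projecting onto the $\Lambda$-factor, which is continuous into the discrete space $\Lambda$, the image is a compact subset of $\Lambda$ and hence finite. Because every $\lambda \in \Lambda \cap C$ occurs as the first coordinate of $(\lambda, e)\in p^{-1}(K\times K)$, we conclude that $\Lambda\cap C$ is finite. As $C$ was arbitrary, $\Lambda$ is locally finite and hence discrete in $G$, completing the proof. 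No substantive obstacle arises; the only care needed is in choosing $K$ so that $KK^{-1}$ dominates a given compact set and in recording that $\Lambda\times G$ carries the product topology with $\Lambda$ discrete, so that compactness of $p^{-1}(K\times K)$ does reduce to finiteness of its $\Lambda$-projection.
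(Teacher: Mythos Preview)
Your argument is correct and takes a cleaner route than the paper's. The paper fixes a left-admissible metric and, for (ii)$\Rightarrow$(i), uses the containment $\{\lambda \in \Lambda \mid B_R(e)\cap \lambda B_R(e)\neq\emptyset\}\subset B_{2R}(e)\cap\Lambda$; for (i)$\Rightarrow$(ii) it argues by contradiction, assuming $\Lambda$ is not uniformly discrete, extracting an accumulation point $g$ of $\Lambda$ via the finite set $F_\Lambda$, and then exhibiting infinitely many $\lambda_n$ with $B_R(g)\lambda_n\cap B_R(g)\neq\emptyset$. Your single inclusion $p^{-1}(K\times K)\subset (\Lambda\cap KK^{-1})\times K$ handles both directions at once and avoids both the metric and the contradiction; the trick of choosing $K=C\cup\{e\}$ so that $(\lambda,e)\in p^{-1}(K\times K)$ for each $\lambda\in\Lambda\cap C$ is neat.

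One small correction: your parenthetical ``the standard fact that in a locally compact Hausdorff space, a discrete subspace is locally finite'' is false as stated (take $\{1/n\}$ in $[0,1]$). What is true, and what you actually need, is that a uniform approximate lattice is a Delone set (see the remark after Definition~\ref{DefUniformApproximateLattice} and Proposition~\ref{TopCharDelone}), hence uniformly discrete, hence locally finite. Alternatively, if one reads ``discrete'' in Definition~\ref{DefUniformApproximateLattice} as ``closed and discrete'', then local finiteness is immediate. Either way your conclusion stands; just replace the justification.
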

\begin{proof} (ii) $\Rightarrow$ (i):  If $\Lambda$ is an approximate lattice in $G$, then $\Lambda$ is relatively dense in $G$. Moreover, it is uniformly discrete and hence locally finite with respect to any left-admissible metric on $G$. In particular, if we fix such a metric, then for every $R > 0$ the intersection $B_{2R}(e) \cap \Lambda$ is finite. Since
\[
\{\lambda \in \Lambda\mid B_{R}(e) \cap \lambda B_R(e)\} \subset B_{2R}(e) \cap \Lambda,
\]
this implies that the map $p: \Lambda \times G \to G \times G$ is proper.

(i) $\Rightarrow$ (ii) Since $\Lambda$ is relatively dense by assumption, we only need to show that $\Lambda$ is uniformly discrete. Assume for contradiction that this is not the case. Then there exist elements $g_n, h_n\in G$ with $g_n \neq h_n$ such that $x_n \coloneqq  g_n^{-1}h_n$ converges to $e$. Note that $x_n \in \Lambda^2 \setminus \{e\}$ and choose a finite $F_\Lambda \subset \Lambda^3$ such that $\Lambda^2 = \Lambda F_\Lambda$. Passing to a subsequence we may assume that $x_n \in \Lambda\{f\}$ for some fixed $f \in F_\Lambda$. This means that there exist $\lambda_n \in \Lambda \setminus \{f^{-1}\}$ such that $\lambda_n \to f^{-1}$, i.e., $g \coloneqq  f^{-1}\in G$ is an accumulation point of $\Lambda$. Fix a left-admissible metric $d$ on $G$ and let $R_0 \coloneqq  d(g, g^2)$ and $R \coloneqq  2R_0$. Now for every $\epsilon \in (0, R_0]$ there exist infinitely many $n \in \mathbb N$ with $\lambda_n \in B_\epsilon(g)$. For such $n$ we then have $d(g\lambda_n, g^2) = d(\lambda_n, g) < \epsilon$. In particular,
\[ 
g \lambda_n \in B_{R}(g)\lambda_n \quad \text{and} \quad g \lambda_n \in B_\epsilon(g^2) \subset B_{R_0 + \epsilon}(g) \subset B_{R}(g),
\]
hence $B_R(g)\lambda_n \cap B_R(g) \neq \emptyset$. This shows that the map $(\lambda, g) \mapsto (g, g\lambda)$ is not proper. Applying inverses to both sides and using that $G$ and $\Lambda$ are symmetric it follows that $p$ is not proper, which is a contradiction.
\end{proof}
\begin{proof}[Proof of Theorem \ref{ThmIsometricActions}] (i)$\implies$(ii): Assume that $(\Lambda, \Lambda^\infty)$ admits a geometric isometric action on a large-scale geodesic proper metric space $(X,d)$, say $\rho: (\Lambda, \Lambda^\infty) \to {\rm Is}(X)$. Recall from Proposition \ref{CompactlyGeneratedConsequences} that $G \coloneqq  {\rm Is}(X)$ is a compactly-generated lcsc group and denote by $(\Xi, \Xi^\infty)$ the image of $\rho$. Then by the implication (i) $\Rightarrow$ (ii) of Proposition \ref{IsometricActionProperness} and the implication (i) $\Rightarrow$ (ii) of Proposition \ref{AppLatticeFromAction} we deduce that $\Xi$ is an approximate lattice in $G$, and thus it remains to show only that the extension $(\Lambda, \Lambda^\infty) \to (\Xi, \Xi^\infty)$ is finite. We first observe that since $\rho: (\Lambda, \Lambda^\infty) \to {\rm Is}(X)$ is a proper isometric action, then by Lemma \ref{GeometricSurvives2} the induced isometric action $\rho_k: (\Lambda^k, \Lambda^\infty) \to {\rm Is}(X)$ is also proper. Now if $k \in \mathbb N$ and $\lambda \in \ker_k(\rho)$, then for any basepoint $o \in X$ and every $R > 0$ we have
\[
\lambda B_R(o) \cap B_R(o) \neq \emptyset.
\]
From properness of $\rho_k$ we thus deduce that $\ker_k(\rho)$ is finite, hence $\rho$ has finite kernels. This finishes the proof of the implication (i)$\Rightarrow$(ii).
\end{proof}
Let us point out that Theorem \ref{ThmIsometricActions} and its proof seriously restrict the class of geodesic proper metric spaces on which an approximate group, which is not an almost group, can act:
\begin{corollary}
If $X$ is a large-scale geodesic proper metric space and $\rho: (\Lambda, \Lambda^\infty)\to \mathrm{Is}(X)$ is a geometric isometric action, then
the following are equivalent:
\begin{enumerate}[(i)]
\item  The kernel of $\rho: \Lambda^\infty \to \mathrm{Is}(X)$ is finite and $\rho(\Lambda^\infty) \subset \mathrm{Is}(X)$ is a lattice.
\item The kernel of $\rho: \Lambda^\infty \to \mathrm{Is}(X)$ is finite and $\rho(\Lambda^\infty) \subset \mathrm{Is}(X)$ is discrete.
\item $\Lambda^\infty$ acts properly on $X$.
\item $(\Lambda, \Lambda^\infty)$ is an almost group.
\end{enumerate}
\end{corollary}
\begin{proof}  (i)$\implies$(ii) Clear.

(ii)$\implies$(iii) Since $\mathrm{Is}(X)$ (as a locally compact group) acts properly on $X$, so does every discrete subgroup, and hence every finite extension of a discrete subgroup.

(iii)$\implies$(iv) We first observe that by Theorem \ref{ThmIsometricActions} $(\Lambda, \Lambda^\infty)$ is geometrically and algebraically finitely-generated. We fix a basepoint $o \in X$, a finite generating set $S$ and associated word metric $d_S$ for $\Lambda^\infty$ and denote by $d= d_S|_{\Lambda \times \Lambda}$ the corresponding internal metric. By (ii) and Lemma \ref{MSClassic} the orbit map $(\Lambda^\infty, d_S) \to X$, $g \mapsto g.o$ is a quasi-isometry. Since $\Lambda$ acts coboundedly on $X$, the restriction $(\Lambda, d) \to X$ is also a quasi-isometry. This implies that the inclusion $(\Lambda, d) \hookrightarrow (\Lambda^\infty, d_S)$ is a quasi-isometry, which means that $\Lambda$ is syndetic in $\Lambda^\infty$.

(iv)$\implies$(i) Write $\Lambda^\infty = \Lambda F$ for a finite subset $F \subset \Lambda^\infty$ and assume that $g \in \Lambda^\infty$ is in the kernel of $\rho$. Write $g = \lambda f$ with $\lambda \in \Lambda$ and $f \in F$. Then for any $o \in X$ we have $o = \rho(g).o = \rho(\lambda) \rho(f).o$, hence $\rho(\lambda^{-1}).o = \rho(f).o$. By properness, this forces $\lambda^{-1}$ to be contained in a finite subset of $\Lambda$, hence $g$ is contained in a finite subset of $\Lambda^\infty$. This establishes the first statement.

Since $\Lambda$ acts cocompactly on $X$, so does $G \coloneqq \mathrm{Is}(X)$, hence by Proposition \ref{MSTop} it is compactly-generated and the orbit maps $G \hookrightarrow X$ are quasi-isometries. Since $\Lambda$ acts geometrically on $X$, it follows that $\rho(\Lambda)$ is relatively dense in $G$ and that the map $\rho(\Lambda) \times G \to G \times G$, $(\lambda, g) \mapsto (g, \lambda g)$ is proper. By Proposition \ref{AppLatticeFromAction} $\rho(\Lambda)$ is then a uniform approximate lattice in $G$, i.e.\ uniformly discrete and syndetic. Since $\rho(\Lambda)$ is syndetic in $\rho(\Lambda^\infty)$, it follows that the latter is also discrete and syndetic, i.e.\ a uniform lattice.
\end{proof}
\begin{corollary}\label{DiscreteIsometry} Assume that $(\Lambda, \Lambda^\infty)$ is not an almost group. Then $(\Lambda, \Lambda^\infty)$ does not admit any
geometric isometric action on a large-scale geodesic proper metric space with discrete isometry group.
\end{corollary}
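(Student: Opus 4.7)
I would argue by contraposition, invoking the non-trivial implication (i)$\Rightarrow$(ii) of Theorem \ref{ThmIsometricActions}. So suppose, aiming for a contradiction, that $(\Lambda, \Lambda^\infty)$ admits a geometric isometric action $\rho : (\Lambda, \Lambda^\infty) \to \Isom(X)$ on a large-scale geodesic proper metric space $(X,d)$ whose isometry group $G := \Isom(X)$ is discrete. By the proof of Theorem \ref{ThmIsometricActions}, the image $(\Xi, \Xi^\infty) := (\rho(\Lambda), \rho(\Lambda^\infty))$ is a uniform approximate lattice in the compactly-generated lcsc group $G$, and $\rho : (\Lambda, \Lambda^\infty) \to (\Xi, \Xi^\infty)$ is a finite extension. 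Thus to reach a contradiction with the hypothesis, it suffices to show that $(\Xi, \Xi^\infty)$ is an almost group in the sense of Example \ref{TrivialExamples}.(2).

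The key observation is that, for a \emph{discrete} ambient group, ``uniform approximate lattice'' reduces to ``almost group''. Indeed, since $G$ is discrete, the uniform discreteness condition in Definition \ref{DefUniformApproximateLattice} is automatic, and the remaining condition says that $\Xi$ is syndetic in $G$, i.e.\ there is a finite subset $F \subset G$ with $G \subset \Xi F$. In particular $\Xi^\infty \subset G \subset \Xi F$, and upon replacing $F$ by $F \cap \Xi^\infty$ (which is well-defined, as for any $g \in \Xi^\infty$ with $g = \xi f$ one automatically has $f = \xi^{-1} g \in \Xi^\infty$), we obtain a finite subset $F' \subset \Xi^\infty$ with $\Xi^\infty \subset \Xi F'$. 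Hence $\Xi$ is a syndetic, symmetric, unital subset of the group $\Xi^\infty$, which is exactly the definition of $(\Xi, \Xi^\infty)$ being an almost group.

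Combining these two steps, $(\Lambda, \Lambda^\infty)$ is a finite extension of the almost group $(\Xi, \Xi^\infty)$, contradicting the hypothesis and finishing the proof. There is no real obstacle here beyond tracking what the compactly-generated lcsc group supplied by Theorem \ref{ThmIsometricActions} is in this specialized setting; the only mildly delicate point is the elementary bookkeeping that lets one replace the finite syndeticity witness $F \subset G$ by one lying inside $\Xi^\infty$, so that the definition of an almost group is satisfied on the nose.
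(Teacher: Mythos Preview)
Your proof is correct and follows exactly the approach the paper intends: the corollary is stated with a \qed and is preceded by the remark that ``an approximate lattice in a discrete group is just an almost group,'' which is precisely what you verify. Your explicit bookkeeping step replacing $F$ by $F \cap \Xi^\infty$ is a nice touch that the paper leaves implicit.
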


\section{Quasi-isometric quasi-actions (qiqacs)}\label{SecQiqac}

Since not all approximate groups are finite extensions of uniform approximate lattices in a compactly generated lcsc group (e.g.\ distorted approximate groups are not), it follows from Theorem \ref{ThmIsometricActions} that not every approximate group admits a geometric isometric action. In order to develop a far-reaching geometric theory of approximate groups one thus has to relax the definition of an isometric action.
\begin{definition}\label{def: qiqac} Let $(\Lambda, \Lambda^\infty)$ be an approximate group, $(X, d)$ be a pseudo-metric space and $o \in X$. Given $k \in \mathbb N$, let $K_k \geq 1$ and $C_k, C_k' \geq 0$ be constants.
\begin{enumerate}[(i)]
\item A map of pairs $\rho: (\Lambda, \Lambda^\infty) \to (\widetilde{{\rm QI}}(X), \widetilde{{\rm QI}}(X))$ is called a \emph{$(K_k, C_k, C_k')$-quasi-isometric quasi-action}\index{qiqac!quasi-isometric-quasi-action}\index{quasi-action!quasi-isometric}\index{quasi-action!qiqac} (or \emph{$(K_k, C_k, C_k')$-qiqac}\index{qiqac} for short) of $(\Lambda, \Lambda^\infty)$ on $X$ if for every $k \geq 1$ we have $\rho(\Lambda^k) \subset \widetilde{\rm QI}_{K_k, C_k, C_k'}(X)$ and
\begin{equation}\label{qac}
d(\rho(\lambda_1)\rho(\lambda_2).x, \rho(\lambda_1\lambda_2).x) < C_k' \quad (x \in X, \lambda_1, \lambda_2 \in \Lambda^k).
\end{equation}
\item If there exists $K>0$ such that $K_k \leq K$ for all $k \in \mathbb N$, then a $(K_k, C_k, C_k')$-qiqac is called a \emph{$(K, C_k, C_k')$-uniform qiqac}\index{qiqac!uniform}. A $(1, C_k, C_k')$-uniform qiqac is called a \emph{$(C_k, C_k')$-rough  quasi-action}\index{qiqac!rough quasi-action} (or \emph{roqac}\index{qiqac!roqac} for short).
\item A map of pairs $\rho: (\Lambda, \Lambda^\infty) \to (\widetilde{{\rm QI}}(X), \widetilde{{\rm QI}}(X))$ is called a \emph{weak qiqac}\index{qiqac!weak} if it factors through a morphism  $\overline{\rho}: (\Lambda, \Lambda^\infty) \to ({\rm QI}(X), {\rm QI}(X))$ (called the \emph{associated morphism}\index{qiqac!associated morphism}). 
\item If  $\rho: (\Lambda, \Lambda^\infty) \to (\widetilde{{\rm QI}}(X), \widetilde{{\rm QI}}(X))$ is a  $(K_k, C_k, C_k')$-qiqac, then the set 
\[
{\rho}(\Lambda).o \coloneqq  \{{\rho}(\lambda).o\mid \lambda \in \Lambda\}
\]
is called the \emph{$\Lambda$-quasi-orbit}\index{quasi-orbit} of $o$ under $\rho$.
\end{enumerate}
\end{definition}
In the sequel, when given a group $G$ and an approximate group $ (\Lambda, \Lambda^\infty)$, we will often denote a map of pairs $ (\Lambda, \Lambda^\infty) \to (G,G)$ simply by $(\Lambda, \Lambda^\infty) \to G$. This is in accordance with our previous abuse of notation for morphisms and applies in particular to qiqacs.
\begin{remark}\label{QuasiActionRemarks}
\begin{enumerate}[(i)]
\item By definition, every isometric action is a uniform qiqac and every uniform qiqac is a qiqac. Moreover, a map $\rho:(\Lambda, \Lambda^\infty) \to \widetilde{{\rm QI}}(X)$ is a weak qiqac in the sense of the previous definition if and only if 
\[
C_{\lambda_1, \lambda_2}\coloneqq  \sup_{x \in X} d(\rho(\lambda_1)\rho(\lambda_2).x, \rho(\lambda_1\lambda_2).x) < \infty \quad \text{for all $\lambda_1, \lambda_2 \in \Lambda$.}
\]
This formula shows that every qiqac is a weak qiqac, and that a weak qiqac needs to satisfy two additional requirements to be called a qiqac: The constants $C_{\lambda_1, \lambda_2}$ have to be bounded \emph{uniformly} as $\lambda_1$ and $\lambda_2$ vary inside some fixed $\Lambda^k$, and the QI-constants of $\rho(\lambda)$ have to be bounded \emph{uniformly} as $\lambda$ varies over some $\Lambda^k$.
\item $\Lambda$-quasi-orbits are quasi-invariant under $\rho(\Lambda)$ and $\Lambda$-quasi-orbits through different base-points for a given qiqac are at mutually bounded distance (by a quasified version of the proof of Proposition \ref{OrbitsQuasiinvariant}).
\item It is immediate from the above definition that every qiqac $\rho:(\Lambda, \Lambda^\infty) \to (\widetilde{{\rm QI}}(X), \widetilde{{\rm QI}}(X))$ restricts to a qiqac $\rho:(\Lambda^k, \Lambda^\infty) \to (\widetilde{{\rm QI}}(X), \widetilde{{\rm QI}}(X))$, called the \emph{induced} qiqac\index{qiqac!induced}. 
\item The conditions in the definition of a qiqac can be weakened. It suffices to assume that $\rho(\Lambda)$ is uniform and that \eqref{qac} holds for all $\lambda_1 \in \Lambda$ and $\lambda_2 \in \Lambda^k$. Using the latter assumption one can then prove by induction that \eqref{qac} holds for all $\lambda_1, \lambda_2 \in \Lambda^k$ (for a larger uniform constant), and conclude that $\rho(\Lambda^k)$ is uniformly close to $(\rho(\Lambda))^k$, hence uniform if $\rho(\Lambda)$ is. We chose the current formulation in order to make the existence of the induced qiqac more obvious.
\item Our definition of a qiqac is of course not the only possible relaxation of the definition of an isometric action. As far as the involved QI constants are concerned, 
one might as well ask for stronger or weaker uniformity conditions and it may seem at this point that our definition is rather arbitrary. However, in reality the wiggle room is quite limited: If one asks for more uniformity then the left-regular quasi-action defined in Section \ref{SecLeftRegular} below will no longer be a quasi-action, and if one relaxes the uniformity too much, then one can no longer establish basic results such as the Milnor--Schwarz lemma.
\item The definition of a qiqac does not require any relation between $\rho(g)$ and $\rho(g)^{-1}$ a priori. However, such a relation does in fact always hold by the following lemma.
\end{enumerate}
\end{remark}
\begin{lemma}\label{Inverses} If $\rho: (\Lambda, \Lambda^\infty) \to \widetilde{{\rm QI}}(X)$ is a qiqac, then there exist constants $D_k$ such that for all $x \in X$ and $g \in \Lambda^k$,
\[
d(\rho(g)^{-1}x, \rho(g^{-1})x) < D_k \quad \text{and}\quad d(\rho(e)x, x) < D_1.
\]
\end{lemma}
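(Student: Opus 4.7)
The plan is to establish the two bounds in the reverse order from how they are stated: first show $d(\rho(e)x, x) < D_1$, then bootstrap this into the inverse bound using the cocycle axiom \eqref{qac}.

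For the second bound, I would apply \eqref{qac} with $\lambda_1 = \lambda_2 = e \in \Lambda$, obtaining $d(\rho(e)\rho(e)x, \rho(e)x) < C_1'$ for every $x$. This says that $\rho(e)$ moves every point in its own image by at most $C_1'$, so it remains to control how far a typical $x$ is from the image. But $\rho(e) \in \widetilde{\rm QI}_{K_1, C_1, C_1'}(X)$ is coarsely surjective with constant $C_1'$, so for any $x$ we may choose $y$ with $d(\rho(e)y, x) < C_1'$. Using the $(K_1, C_1)$-quasi-isometric-embedding part of $\rho(e)$ to estimate $d(\rho(e)x, \rho(e)\rho(e)y) \leq K_1 C_1' + C_1$, the triangle inequality then yields $d(\rho(e)x, x) \leq K_1 C_1' + C_1 + 2 C_1' =: D_1$.

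For the first bound, fix $k \geq 1$ and $g \in \Lambda^k$. Since $\Lambda$ (and hence $\Lambda^k$) is symmetric, $g^{-1} \in \Lambda^k$, so \eqref{qac} applied to the pair $(g, g^{-1})$ gives $d(\rho(g)\rho(g^{-1})x, \rho(e)x) < C_k'$. Combining with the already-proved bound $d(\rho(e)x, x) < D_1$ yields
\[
d(\rho(g)\rho(g^{-1})x, x) < C_k' + D_1.
\]
I would then apply a coarse inverse $\rho(g)^{-1}$ of the $(K_k, C_k, C_k')$-quasi-isometry $\rho(g)$. Using the standard fact that such a coarse inverse satisfies both a $(K', C')$-quasi-isometric-embedding bound and a closeness estimate $d(\rho(g)^{-1}\rho(g)z, z) \leq C''$ with constants $K', C', C''$ depending only on $(K_k, C_k, C_k')$, setting $z := \rho(g^{-1})x$ and applying $\rho(g)^{-1}$ to the displayed inequality gives
\[
d(\rho(g^{-1})x, \rho(g)^{-1}x) \leq d(\rho(g^{-1})x, \rho(g)^{-1}\rho(g)\rho(g^{-1})x) + K' (C_k' + D_1) + C' \leq C'' + K'(C_k' + D_1) + C' =: D_k.
\]

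The only real obstacle is bookkeeping of constants. There is nothing conceptually difficult here, but one must verify that the QI constants of the chosen coarse inverse $\rho(g)^{-1}$ depend only on the uniform triple $(K_k, C_k, C_k')$ governing the image $\rho(\Lambda^k)$, rather than on the particular element $g$; this is the standard construction of a coarse inverse of a quasi-isometry, presumably recorded in Appendix \ref{AppendixLSG}. With this in hand the constants $D_k$ are uniform over $g \in \Lambda^k$ as required, which is exactly what the statement asserts.
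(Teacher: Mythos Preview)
Your proof is correct and follows essentially the same two-step strategy as the paper: first bound $d(\rho(e)x,x)$, then use \eqref{qac} with the pair $(g,g^{-1})$ and the quasi-isometry properties of $\rho(g)$ to get the inverse bound.

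The only difference worth noting is in the first step. You use the coarse surjectivity of $\rho(e)$ together with its upper Lipschitz bound, which works but is a detour. The paper instead uses the \emph{lower} quasi-isometry bound of $\rho(e)$ directly: from $d(\rho(e)\rho(e)x,\rho(e)x) < C_1'$ and $d(\rho(e)\rho(e)x,\rho(e)x) \geq K_1^{-1}d(\rho(e)x,x) - C_1$ one immediately gets $d(\rho(e)x,x) < K_1(C_1'+C_1)$, with no need to choose an auxiliary point $y$. This is a one-line argument versus your three-line one, but the content is the same. For the second step the paper likewise applies the lower bound of $\rho(g)$ to the pair $(\rho(g)^{-1}x,\rho(g^{-1})x)$ rather than applying the quasi-inverse as a map; again this avoids naming the constants $K',C',C''$ of the quasi-inverse, but is otherwise identical to what you do.
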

\begin{proof} Assume that $\rho$ is a $(K_k, C_k, C_k)$-qiqac. Then for all $x \in X$ we have
\[
C_1 > d(\rho(e^2)x, \rho(e)\rho(e)x) = d(\rho(e)x, \rho(e)\rho(e)x)  \geq K_1^{-1} d(x, \rho(e)x)-C_1,
\]
and hence $d(\rho(e)x, x) < 2K_1C_1$ independently of $x$. We deduce that for $g \in \Lambda^k$ we have
\begin{eqnarray*}
d(\rho(g)^{-1}x, \rho(g^{-1})x) &\leq& K_kd(x, \rho(g)\rho(g^{-1})x)+C_k \\&\leq& K_k(d(x, \rho(gg^{-1})x)+ d(\rho(gg^{-1})x, \rho(g)\rho(g^{-1})x))) +C_k\\
&\leq & K_k(d(x, \rho(e)x) + C_k)+C_k\\
&\leq& K_k(2K_1C_1+C_k)+C_k,
\end{eqnarray*}
which is again independent of $x$.
\end{proof}
Important examples of qiqacs arise from isometric actions by a process called \emph{quasi-conjugation}\index{quasi-conjugation}:
\begin{construction}[Quasi-conjugation] Let $\rho: (\Lambda, \Lambda^\infty) \to \widetilde{\rm QI}(X) $ be a qiqac, let $\varphi: X \to Y$ be a quasi-isometry and let $\overline{\varphi}: Y \to X$ be a quasi-inverse of $\varphi$. Then we obtain a qiqac of $(\Lambda, \Lambda^\infty)$ on $Y$ by 
\[
(\varphi, \overline{\varphi})_*\rho: (\Lambda, \Lambda^\infty) \to {\rm QI}(Y), \quad \lambda \mapsto \varphi \circ \rho(\lambda) \circ \overline{\varphi}.
\]
We refer to $(\varphi, \overline{\varphi})_*\rho$ as the \emph{quasi-conjugation} of $\rho$ by $(\varphi, \overline{\varphi})$.
This construction applies in particular to the case where $\rho$ is an isometric action to begin with. In this case, $(\varphi, \overline{\varphi})_*\rho$ is actually a uniform qiqac. More generally, the class of uniform qiqacs is invariant under quasi-conjugation, whereas the class of roqacs is not.
\end{construction}

\begin{definition}  Let $(\Lambda, \Lambda^\infty)$ be an approximate group, $(X, d)$, $(Y,d')$ be  pseudo-metric spaces, and let $\rho_0, \rho_1: (\Lambda, \Lambda^\infty) \to \widetilde{{\rm QI}}(X)$ and $\rho_2:  (\Lambda, \Lambda^\infty) \to \widetilde{{\rm QI}}(Y)$ be qiqacs. For every $k \in \mathbb N$ let $K_k \geq 1$ and $C_k, C_k' \geq 0$ be constants.
\begin{enumerate}[(i)]
\item We say that $\rho_0$ and $\rho_1$ are $(C_k)$-\emph{equivalent} if $d(\rho_0(\gamma).x, \rho_1(\gamma).x) < C_k$ for all $\gamma \in \Lambda^k$ and $x \in X$.
\item We say that $\rho_1$ and $\rho_2$ are $(K_k, C_k, C_k')$-\emph{quasi-conjugate}\index{quasi-conjugate} if there exist $(K_k, C_k, C_k')$-quasi-isometries $\varphi: X \to Y$ and $\overline{\varphi}: Y \to X$ such that $\varphi \circ \overline{\varphi}$ and $\overline{\varphi} \circ \varphi$ are $C_k'$-close to the respective identities and such that $(\varphi, \overline{\varphi})_*\rho_1$ and $\rho_2$ are $(C_k)$-equivalent.
\end{enumerate}
\end{definition}
\begin{remark}
Note that if $\rho_0$ and $\rho_1$ are equivalent qiqacs, then the associated morphisms into ${\rm QI}(X)$ coincide. More generally, if they are quasi-conjugate qiqacs on the same space $X$, then the associated morphisms are conjugate by an element of ${\rm QI}(X)$, hence the name. 

However, being quasi-conjugate is stronger than just being conjugate inside ${\rm QI}(X)$: The definitions require an additional uniformity, which is needed, for example, to ensure that quasi-orbits of equivalent quasi-actions are at bounded Hausdorff distance from each other.
\end{remark}
An important construction in group theory is to restrict a given action of a group $\Gamma$ to a $\Gamma$-invariant set, for example a single $\Gamma$-orbit. Similarly we will need to be able to ``restrict'' qiqacs of an approximate group $(\Lambda, \Lambda^\infty)$ to $\Lambda$-quasi-orbits; see e.g.\ Construction \ref{PreLeftRegular} and Proposition \ref {BoundaryStableOrbitHyp} below for two different applications of this technique. While such $\Lambda$-quasi-orbits are typically not invariant under $\Lambda^\infty$, they are always quasi-invariant under $\Lambda$ by Remark \ref{QuasiActionRemarks}.(ii). We thus need to figure out how to ``restrict'' a qiqac of $(\Lambda, \Lambda^\infty)$ to a $\Lambda$-quasi-invariant set. We start with the technically easier special case in which the set in question is actually invariant under $\Lambda^\infty$.

\begin{construction}[Quasi-restrictions of qiqacs, I]\label{Quasirestriction1} Assume that $\rho$ is a qiqac of an approximate group $(\Lambda, \Lambda^\infty)$ on a pseudo-metric space $(X,d)$ and let $Y \subset X$ be a subset which is quasi-invariant under $\rho(\Lambda^\infty)$. By definition, this means that the inclusion
\[
\iota: Y \hookrightarrow \widetilde{Y} \coloneqq  Y \cup \bigcup_{\lambda \in \Lambda^\infty} \rho(\lambda)(Y)
\]
is coarsely onto. Thus there exists a quasi-isometry $p:X \to X$ with the following properties:
\begin{itemize}
\item $p$ is of finite distance from the identity and $p(x) = x$ for all $x \in X \setminus \widetilde{Y}$.
\item $p|_{\widetilde{Y}}$ is a quasi-inverse to $\iota$.
\end{itemize}
Then $\widetilde{\rho} \coloneqq  p \circ \rho$ defines a qiqac which is equivalent to $\rho$ and such that $Y$ is invariant under $\widetilde{\rho}(\Lambda^\infty)$. We may thus turn every $\rho(\Lambda^\infty)$-quasi-invariant subset into a $\rho(\Lambda^\infty)$-\emph{invariant} subset at the cost of replacing $\rho$ by an equivalent qiqac. We then obtain a qiqac
\[
\widetilde{\rho}|_Y : (\Lambda, \Lambda^\infty)\to \widetilde{\mathrm{QI}}(Y), \quad \lambda \mapsto \widetilde{\rho}(\lambda)|_Y,
\]
whose equivalence class is uniquely determined by $\rho$. We will refer to any qiqac in this equivalence class as a \emph{quasi-restriction}\index{quasi-restriction} of $\rho$ to $Y$.
\end{construction}
A variant of this construction still applies if $Y$ is merely assumed to be quasi-invariant under $\Lambda$, albeit the details become more technical:
\begin{construction}[Quasi-restrictions of qiqacs, II]\label{Quasirestriction2} Assume that $\rho$ is a qiqac of an approximate group $(\Lambda, \Lambda^\infty)$ on a pseudo-metric space $(X,d)$ and let $Y \subset X$ be a subset which is quasi-invariant under $\rho(\Lambda)$. Set $Y_1 \coloneqq  Y$ and for $n \in \bN$ define
\[
 Y_{n+1} \coloneqq  Y_1 \cup Y_2 \cup \dots \cup Y_{n} \cup \bigcup_{\lambda \in \Lambda^{n}} \rho(\lambda)(Y).
\]
If $F \subset \Lambda^\infty$ is finite with $\Lambda^2 \subset F \Lambda$, then $\Lambda^n \subset F^{n-1} \Lambda$, and hence $\rho(\Lambda^n)(Y)$ is at bounded distance from $\rho(\Lambda^{n-1})(Y)$. If $\rho$ is an isometric action, then this bound can in fact be chosen independently of $n$, but in general it will depend on $n$. In any case, we may deduce that the isometric inclusions $\iota_n^{n+1}: Y_{n} \to Y_{n+1}$ are quasi-isometries. We choose quasi-inverses $p^{n+1}_n: Y_{n+1} \to Y_n$ such that \[
p_n^{n+1} \iota^{n+1}_n(y) = y \text{ for all } y \in Y_n  \quad \text{ and } \quad d(\iota_n^{n+1}(p^{n+1}_n(x)), x) \leq \delta_n \text{ for all }x \in Y_{n+1},\]
where $\delta_n$ is a constant depending on the QI constants of $\rho$. Again, if $\rho$ was actually an isometric action, then the sequence $(\delta_n)$ can be chosen to be bounded. 

In the sequel, for all $k<n$ we denote by $\iota_k^n: \Lambda^k \to \Lambda^n$ the isometric embedding and define
\[
 p_{k}^n: \Lambda^n \to \Lambda^k, \quad y \mapsto p^{k+1}_{k} \circ \dots \circ p^{n}_{n-1}(y).
\]
Then for all $k<n$ the maps $p_k^n$ are $(1, \sigma_n, 0)$-quasi-isometries for some constant $\sigma_n \geq 0$, and satisfy $p^n_k(\iota_k^n(y)) = y$. Note that even in the case of an isometric action the sequence $(\sigma_n)$ may grow linearly in $n$ and need not be bounded.

Now let $g \in \Lambda^\infty$ and choose $k$ such that $g \in \Lambda^k \setminus \Lambda^{k-1}$. Then for all $y \in Y$ we have $\rho(g)(y) \in Y_{k+1}$, and we define $\widetilde{\rho}_Y(g) \coloneqq  p^{k+1}_1 \circ \rho(g)$, i.e.,
\[\begin{xy}\xymatrix{
\widetilde{\rho}_Y(g): &Y_1 \ar@/^1pc/[rrrr]^{\rho(g)}& \ar@/^/[l]^{p_1^2}Y_2& \ar@/^/[l]^{p_2^3}Y_3&  \ar@/^/[l]^{p_3^4}
 \dots & \ar@/^/[l]^{p_{k}^{k+1}} Y_{k+1}
}\end{xy}.
\]
If $\rho$ is a $(K_k, C_k, C_k')$-qiqac, then $\widetilde{\rho}_Y(g)$ will be a $(K_k, C_k + \sigma_k, C_k' + \sigma_k)$-quasi-isometry of $Y= Y_1$.
Moreover, if $h \in \Lambda^l$ and $y \in Y$, then we have
\begin{eqnarray*}
&& d\left(\widetilde{\rho}_Y({gh})(y), \widetilde{\rho}_Y(g)\widetilde{\rho}_Y(h)(y)\right) \\
&=&  d\left( p_1^{k+l+1}(\rho(gh)(y)), p_1^{k+1}(\rho(g)(p_1^{l+1}(\rho(h)(y))))\right)\\
&\leq& d\left( p_1^{k+l+1}(\rho(gh)(y)), \rho(gh)(y)\right)\ + \ d\left(\rho(gh)(y), \rho(g)(p_1^{l+1}(\rho(h)(y)))\right)\\ &&  + \; d\left( \rho(g)(p_1^{l+1}(\rho(h)(y)), p_1^{k+1}(\rho(g) (p_1^{l+1}(\rho(h)(y))))\right),
\end{eqnarray*}
which is bounded by some constant depending on $k$ and $l$. This shows that $\widetilde{\rho}_Y$ is a qiqac. Since this construction generalizes Construction \ref{Quasirestriction2} we still refer to $\widetilde{\rho}_Y$ as a \emph{quasi-restriction}\index{quasi-restriction} of $\rho$ to $Y$.
\end{construction}
In view of Remark \ref{QuasiActionRemarks}.(ii) we are now able to restrict any given qiqac of $(\Lambda, \Lambda^\infty)$ to an arbitrary $\Lambda$-quasi-orbit. This construction will become important very soon (see Section \ref{SecLeftRegular}, in particular Construction \ref{PreLeftRegular}).

\section{Geometric quasi-actions}\label{SecGeomQiqac}
Imitating the definition in the case of isometric actions we make the following definition:
\begin{definition} Let  $(\Lambda, \Lambda^\infty)$ be an approximate group and let $X$ be a proper metric space. A qiqac $\rho:  (\Lambda, \Lambda^\infty) \to \widetilde{{\rm QI}}(X)$ is called 
\begin{enumerate}
\item \emph{cobounded}\index{qiqac!cobounded} if $\rho_1: \Lambda \to  \widetilde{{\rm QI}}(X) \subset {\rm Map}(X, X)$ is cobounded in the sense of Definition \ref{DefCobProp};
\item \emph{proper}\index{qiqac!proper} if $\rho_1: \Lambda \to  \widetilde{{\rm QI}}(X) \subset {\rm Map}(X, X)$ is proper in the sense of Definition \ref{DefCobProp};
\item \emph{geometric}\index{qiqac!geometric} if it is cobounded and proper. 
\end{enumerate}
\end{definition}
\begin{remark}\label{RemarkProperness}
\begin{enumerate}
\item By definition, a qiqac $\rho$ is cobounded if some of its quasi-orbits are relatively dense. It then follows from Remark \ref{QuasiActionRemarks} that all quasi-orbits are relatively dense.
\item Properness of $\rho$ means that for every $R >0$ and some $x \in X$ the set
\[
\{\lambda \in \Lambda\mid B_R(x) \cap \rho(\lambda).B_R(x) \neq \emptyset\}
\]
is finite. This then holds for \emph{every} $x \in X$. In fact, from Lemma \ref{GeometricSurvives2} we can draw the stronger conclusion that for every $k \geq 1$, $R>0$ and $x \in X$ the set
\[
\{\mu \in \Lambda^k\mid B_R(x) \cap \rho(\mu).B_R(x) \neq \emptyset\}
\]
is finite.
\item Every isometric action of an approximate group is in particular a qiqac, and hence we obtain notions of a proper, cobounded or geometric isometric action. For isometric actions of groups we recover the usual definitions.
\end{enumerate}
\end{remark}
The property of being geometric is invariant under quasi-conjugation:
\begin{proposition} \label{GeometricSurvives1} 
If two qiqacs are quasi-conjugate, then one is proper, co\-boun\-ded or geometric if and only if the other one has the corresponding property.
\end{proposition}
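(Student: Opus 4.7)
By symmetry, it suffices to show that if $\rho_1: (\Lambda, \Lambda^\infty) \to \widetilde{\mathrm{QI}}(X)$ and $\rho_2: (\Lambda, \Lambda^\infty) \to \widetilde{\mathrm{QI}}(Y)$ are quasi-conjugate via quasi-isometries $\varphi: X \to Y$ and $\overline{\varphi}: Y \to X$, and if $\rho_1$ is cobounded (respectively proper), then $\rho_2$ is cobounded (respectively proper). Geometricity then follows by conjunction. The plan is to transport quasi-orbits and ``approximate stabilizers'' between $X$ and $Y$ through the quasi-conjugation, tracking the accumulated error terms.

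For coboundedness, fix a basepoint $o \in X$ and set $o' := \varphi(o) \in Y$. Using the equivalence condition with $k=1$, the point $\rho_2(\lambda).o'$ lies within some fixed constant $C_1$ of $\varphi(\rho_1(\lambda).\overline{\varphi}(o'))$, and the latter lies within a constant of $\varphi(\rho_1(\lambda).o)$ by applying an upper control of $\varphi$ to the fact that $\overline{\varphi}(o') = \overline{\varphi}\varphi(o)$ is at bounded distance from $o$. Hence the quasi-orbit $\rho_2(\Lambda).o'$ lies within a bounded Hausdorff distance of $\varphi(\rho_1(\Lambda).o)$. If $\rho_1(\Lambda).o$ is relatively dense in $X$, then its image under $\varphi$ is relatively dense in $\varphi(X)$, and since $\varphi$ is a quasi-isometry this image is relatively dense in $Y$. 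Combining these estimates shows $\rho_2(\Lambda).o'$ is relatively dense in $Y$, so $\rho_2$ is cobounded.

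For properness, I would again fix $o' \in Y$, set $o := \overline{\varphi}(o')$, and show that for every $R>0$ there exists $R' = R'(R) > 0$ such that
\[
\{\lambda \in \Lambda \mid B_R(o') \cap \rho_2(\lambda).B_R(o') \neq \emptyset\} \;\subset\; \{\lambda \in \Lambda \mid B_{R'}(o) \cap \rho_1(\lambda).B_{R'}(o) \neq \emptyset\}.
\]
Indeed, if $y \in B_R(o')$ satisfies $\rho_2(\lambda).y \in B_R(o')$, then $x := \overline{\varphi}(y)$ lies in $B_{KR+C}(o)$ by the upper control of $\overline{\varphi}$, while using the equivalence of $\rho_2$ and $(\varphi, \overline{\varphi})_*\rho_1$ together with the lower control of $\overline{\varphi}$ and the fact that $\overline{\varphi}\varphi$ is close to $\mathrm{id}_X$, one obtains a bound of the form $d(\rho_1(\lambda).x, o) < R'$ for an explicit $R'$ depending on $R$ and on the quasi-conjugation constants (but not on $\lambda$). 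Thus both $x$ and $\rho_1(\lambda).x$ lie in $B_{R'}(o)$, witnessing the desired inclusion. Since $\rho_1$ is proper, the right-hand set is finite, so the left-hand set is finite, proving properness of $\rho_2$.

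The only mild obstacle is bookkeeping: since $\overline{\varphi}, \varphi$ and $\rho_1(\lambda)$ are merely quasi-isometries (not isometries), one must carefully combine their upper/lower controls with the defect constants $C_1, C_1'$ of the qiqac and the bounded-distance constants of $\varphi \circ \overline{\varphi} \sim \mathrm{id}_Y$ and $\overline{\varphi} \circ \varphi \sim \mathrm{id}_X$. All uniformity, however, is already built into the definitions of a qiqac and of quasi-conjugation (with constants independent of $\lambda \in \Lambda$), so the constants $R'$ above are indeed independent of $\lambda$ and the argument closes without further input.
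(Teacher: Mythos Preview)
Your proof is correct and follows essentially the same approach as the paper. The paper organizes the argument slightly differently by separating it into two abstract lemmas (Lemma \ref{ProperQCInv} handling the passage from $\rho_1$ to $(\varphi,\overline{\varphi})_*\rho_1$, and Lemma \ref{GeometricInvUnderEquiv} handling the passage between equivalent qiqacs), but the underlying transport of quasi-orbits and approximate stabilizers through the quasi-isometries is the same. One small slip: in the properness argument you mention the ``lower control of $\overline{\varphi}$'', but what you actually need there is the \emph{upper} control of $\overline{\varphi}$ (to bound $d_X(\overline{\varphi}\varphi(\rho_1(\lambda).x), o)$ from above) together with $\overline{\varphi}\varphi \sim \mathrm{id}_X$; this does not affect the validity of the argument.
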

The proof of Proposition \ref{GeometricSurvives1} has nothing to do with approximate groups. In fact, the proposition is just a special case of the following two lemmas, which are formulated in the generality of Definition \ref{DefCobProp}.
\begin{lemma}\label{ProperQCInv} Let $X, Y$ be proper metric spaces, let $\varphi: X \to Y$ be a quasi-isometry and $\bar \varphi: Y \to X$ be a quasi-inverse to $\varphi$. Let $A$ be a set, $\rho: A \to {\widetilde{\rm QI}}(X)$ be a map with uniform image and define $\bar \rho:  A \to \widetilde{\rm QI}(Y)$ by $\bar \rho(a) \coloneqq  \varphi \circ \rho(a) \circ \bar \varphi$. Then the following hold:
\begin{enumerate}[(i)]
\item $\bar \rho$ is cobounded if and only if $\rho$ is cobounded.
\item $\bar \rho$ is proper if and only if $\rho$ is proper.
\end{enumerate}
\end{lemma}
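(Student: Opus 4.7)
The plan is to transfer the properties back and forth along the quasi-isometry by exploiting the uniformity of the image $\rho(A) \subset \widetilde{\rm QI}_{K,C,C'}(X)$ together with the standing assumption that $\varphi \circ \bar\varphi$ and $\bar\varphi \circ \varphi$ are at bounded distance from the respective identity maps. By symmetry of the roles played by $(X,\rho)$ and $(Y,\bar\rho)$ (note that $\bar\rho$ has uniform image as well, since a composition of three quasi-isometries with uniform constants is a quasi-isometry with uniform constants), it suffices in each case to prove one direction of the biconditional; the other follows by swapping the roles of $X$ and $Y$ and the roles of $\varphi$ and $\bar\varphi$.

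First I would record the following key ``comparison estimate'': there exists a constant $D$, depending only on the QI-constants of $\varphi$, $\bar\varphi$, and of the elements of $\rho(A)$, such that for all $a \in A$ and all $y \in Y$,
\[
d_Y(\bar\rho(a)(y),\, \varphi(\rho(a)(\bar\varphi(y)))) = 0
\]
by definition, while for any $x \in X$ we have
\[
d_Y(\bar\rho(a)(\varphi(x)),\, \varphi(\rho(a)(x))) \leq D,
\]
since $\bar\varphi(\varphi(x))$ lies within some uniform distance of $x$ and $\rho(a)$ has uniform QI-constants.

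For cobounded (i), assume $\rho$ is cobounded, say $\rho(A).x_0$ is $R$-dense in $X$ for some $x_0 \in X$. I claim $\bar\rho(A).\varphi(x_0)$ is $R'$-dense in $Y$, for a suitable $R'$. Indeed, given $y \in Y$, pick $a \in A$ with $d_X(\rho(a)(x_0),\bar\varphi(y)) \leq R$; then applying $\varphi$ and using the comparison estimate plus the bounded distance of $\varphi \circ \bar\varphi$ from $\id_Y$ yields $d_Y(\bar\rho(a)(\varphi(x_0)), y) \leq KR + C + D + (\text{bdd error})$, which is the required uniform bound.

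For proper (ii), assume $\rho$ is proper. Fix $y_0 \in Y$ and $R > 0$ and consider the set $S := \{a \in A \mid B_R(y_0) \cap \bar\rho(a)(B_R(y_0)) \neq \emptyset\}$. For each $a \in S$, there are points $y_1,y_2 \in B_R(y_0)$ with $\varphi(\rho(a)(\bar\varphi(y_1))) = y_2$. Applying $\bar\varphi$ and using that $\bar\varphi \circ \varphi$ is close to $\id_X$ plus the uniform QI-constants of $\rho(a)$, we see that $\bar\varphi(y_1)$ and $\bar\varphi(y_2)$ lie in some ball $B_{R''}(\bar\varphi(y_0))$ for $R''$ depending only on $R$, on the QI-constants of $\varphi,\bar\varphi$, and on the uniform QI-constants of $\rho(A)$; moreover $\rho(a)(\bar\varphi(y_1))$ lies within bounded distance of $\bar\varphi(y_2)$. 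Hence $\rho(a)(B_{R''}(\bar\varphi(y_0))) \cap B_{R''+E}(\bar\varphi(y_0)) \neq \emptyset$ for some $E$, so $S$ embeds in the corresponding (finite) set produced by properness of $\rho$, and is therefore finite. This gives properness of $\bar\rho$, completing the argument.

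The main (minor) obstacle will be bookkeeping the precise constants to confirm that the enlarged radii and additive errors depend only on the data $(K,C,C')$ of the uniform image and the QI-constants of $\varphi,\bar\varphi$, and in particular do not depend on $a \in A$; once this is in place both implications are immediate. No ingredient beyond the definitions of quasi-isometry, quasi-inverse, and ``uniform image'' is required.
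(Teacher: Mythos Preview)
Your proposal is correct and follows essentially the same approach as the paper: both parts transfer the relevant properties across the quasi-isometry using the uniform QI-constants of $\rho(A)$ together with the bounded distance of $\bar\varphi\circ\varphi$ and $\varphi\circ\bar\varphi$ from the identity, and both invoke symmetry to handle the converse. The paper carries out the same computations with explicit constants rather than packaging them as a separate ``comparison estimate,'' but the substance is identical.
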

\begin{proof} We choose $K>1$ and $C>0$ such that the following hold: $\varphi$ and all $\rho(a)$ are $(K, C, C)$-quasi-isometries for all $a\in A$. Moreover, there exists a $(K, C, C)$-quasi-isometry $\bar \varphi$ such that $\varphi \bar \varphi$ and $\bar\varphi \varphi$ are at distance at most $C$ from the respective identities. Let $x_o \in X$ and $y_o \coloneqq  \varphi(x_o)$.

(i) Assume that $\rho(A).x_o$ is $R$-relatively dense in $X$. Then for every $y \in Y$ we find $a = a(y) \in A$ such that $d(\rho(a)(x_o), \bar \varphi(y)) \leq R$. Then
\begin{eqnarray*}
d(\bar \rho(a) y_o, y) &=& d(\varphi \rho(a) \bar \varphi\varphi(x_o), y) \ \leq \  d(\varphi \rho(a) \bar \varphi\varphi(x_o), \varphi \rho(a)(x_o)) +  d(\varphi \rho(a)(x_o), y)\\
&\leq & K d( \rho(a) \bar \varphi\varphi(x_o), \rho(a)(x_o))+ C + K d(\bar \varphi \varphi \rho(a)(x_o), \bar \varphi(y)) +C\\
&\leq& K^2 d(\bar \varphi \varphi(x_0),x_0) + KC + C + Kd( \rho(a)(x_o), \bar\varphi(y))  +KC +C\\
&\leq& K^2 C+ KC +C + KR + KC + C \ = \ KR + K^2 C + 2KC + 2C.
\end{eqnarray*}

Since $y \in Y$ was arbitrary, the set $\bar \rho(A).y_o$ is $(KR + K^2 C + 2KC + 2C)$-relatively dense in $Y$, and hence $\bar \rho$ is cobounded. The converse follows by reversing the roles of $X$ and $Y$.

(ii) Assume that $\rho$ is proper, i.e. for every $R > 0$ and every $x \in X$ the set 
\[
A^\rho_{x, R} \coloneqq  \{a \in A\mid B(x, R) \cap \rho(a).B(x, R) \neq \emptyset\}
\]
is finite. Now let $y \in Y$ and assume that 
\begin{equation}\label{ProperQCInvToShow}B(y, R) \cap \bar \rho(a).B(y, R) \neq \emptyset \end{equation} for some  $R \gg 0$ and $a \in A$. We then find $y_1, y_2 \in B(y, R)$ such that $\bar \rho(a)(y_1) = y_2$, and we define $x\coloneqq  \bar \varphi(y)$, $x_1 \coloneqq  \bar\varphi(y_1)$ and $x_2 \coloneqq  \bar\varphi(y_2)$. Then $x_1, x_2 \in B_{KR+ C}(x)$ and
\[
 x_2 = \bar \varphi(y_2) = \bar \varphi \bar\rho(a)(y_1) =  \bar \varphi \varphi \rho(a) \bar \varphi(y_1) = \bar\varphi\varphi \rho(a) x_1,
\]
hence $d( \rho(a)x_1, x_2) < C$ and thus
\[
d(\rho(a)x_1, x) \leq d( \rho(a)x_1, x_2)  + d(x_2, x) < C+KR+ C = KR + 2C =: R'.
\]
We deduce that $\{x_1, \rho(a)x_1\} \subset B_{R'}(x)$ and hence $a \in A^\rho_{x, R'}$. This shows that for every $R \gg 0$ the set of $a \in A$ satisfying \eqref{ProperQCInvToShow} is finite, whence $\bar \rho$ is proper. The converse follows again by reversing the roles of $X$ and $Y$.
\end{proof} 
Similarly we have:
\begin{lemma}\label{GeometricInvUnderEquiv} Let $(X,d)$ be a proper pseudo-metric space, $C>0$, let $A$ be a set and let $\rho_0, \rho_1: A \to {\rm Map}(X, X)$ be such that $d(\rho_0(a)(x), \rho_1(a)(x))<C$ for all $a \in A$ and $x \in X$. Then $\rho_0$ is cobounded, respectively proper, if and only if $\rho_1$ has the corresponding property.
\end{lemma}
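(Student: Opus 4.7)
The plan is to prove both statements by a symmetric argument, showing that a given property transfers from $\rho_0$ to $\rho_1$; by interchanging their roles the converse is then automatic, since the hypothesis $d(\rho_0(a)(x), \rho_1(a)(x)) < C$ is symmetric in the two maps. Throughout, I will exploit the fact that properness of the pseudo-metric space $(X,d)$ together with the (implicit) discreteness of $A$ make both coboundedness and properness essentially metric conditions.

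\emph{Coboundedness.} Suppose $\rho_0$ is cobounded, so there exists $x_0 \in X$ such that the set $\rho_0(A).x_0 = \{\rho_0(a)(x_0) \mid a \in A\}$ is $R$-relatively dense in $X$ for some $R \geq 0$. For each $a \in A$ we have $d(\rho_0(a)(x_0), \rho_1(a)(x_0)) < C$, so $\rho_1(A).x_0$ lies within Hausdorff distance $C$ of $\rho_0(A).x_0$. By the triangle inequality, $\rho_1(A).x_0$ is then $(R+C)$-relatively dense in $X$, so $\rho_1$ is cobounded.

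\emph{Properness.} Recall that properness of the map $p_i: A \times X \to X \times X$, $(a,x) \mapsto (x, \rho_i(a)(x))$, with $A$ discrete and $X$ proper, amounts to the following: for every bounded subset $B \subset X \times X$, the set $\{a \in A \mid \exists\, x \in X : (x, \rho_i(a)(x)) \in B\}$ is finite (and for each such $a$ the fiber over $B$ is bounded in $X$, hence has compact closure). So assume $\rho_0$ is proper and let $B \subset X \times X$ be bounded. Let $B' \subset X \times X$ be a bounded set containing the $C$-neighborhood of $B$ in the second factor; explicitly, if $B \subset X \times \overline{B}(y_0, r)$ for some $y_0, r$, take $B' := \mathrm{pr}_1(B) \times \overline{B}(y_0, r+C)$. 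If $(x, \rho_1(a)(x)) \in B$, then $d(\rho_0(a)(x), \rho_1(a)(x)) < C$ yields $(x, \rho_0(a)(x)) \in B'$. Hence the set of $a \in A$ relevant for $B$ under $\rho_1$ is contained in the corresponding set for $\rho_0$ and $B'$, which is finite by properness of $\rho_0$. A similar argument shows that the fibers remain bounded.

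There is no essential obstacle here; the statement is a straightforward unravelling of definitions once one has the right formulations of coboundedness and properness, and the uniform bound $C$ is what makes everything transfer. The only minor subtlety is book-keeping in the properness part, where one has to check that $C$-closeness of the maps at every point of $X$ (not merely at a basepoint) translates into enlarging the target-side bounded set by $C$; this is immediate from the triangle inequality. Note that no assumption on $A$ (not even a group structure) nor any uniformity of the maps $\rho_i(a)$ is used, which is precisely the generality that makes this lemma applicable in the proof of Proposition \ref{GeometricSurvives1}.
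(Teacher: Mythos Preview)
Your proof is correct and follows essentially the same approach as the paper: the coboundedness argument via bounded Hausdorff distance of orbits is identical, and for properness both you and the paper enlarge the relevant bounded set by $C$ using the uniform closeness hypothesis. The only cosmetic difference is that the paper phrases properness via the ball criterion (showing $B(o,R) \cap \rho_1(\gamma).B(o,R) \neq \emptyset$ implies $B(o,R+C) \cap \rho_0(\gamma).B(o,R+C) \neq \emptyset$), whereas you work directly with preimages of bounded subsets of $X \times X$; these are equivalent formulations of the same idea.
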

\begin{proof} The statement about coboundedness follows immediately from the fact that $\rho_0(A).x$ and $\rho_1(A).x$ are at bounded Hausdorff distance for each $x \in X$. As for properness, assume that $\rho_0$ is proper and assume that for some $\gamma \in A$, $o \in X$ and $R>0$ we have $B(o, R) \cap \rho_1(\gamma).B(o, R) \neq \emptyset$, say $x' = \rho_1(\gamma)(x)$ is in this intersection, for some $x, x' \in B(o, R)$. Then $x'' \coloneqq  \rho_0(\gamma)x \in B(o, R + C)$ and hence $B(o, R+C) \cap \rho_0(\gamma).B(o, R+C) \neq \emptyset$. This shows that $\gamma$ was contained in a finite set, hence $\rho_1$ is proper.
\end{proof}
At this point we have established Proposition \ref{GeometricSurvives1}. We also mention the following closure property of geometric qiqacs.
\begin{lemma} \label{GeometricSurvives2} If a qiqac $\rho:  (\Lambda, \Lambda^\infty) \to \widetilde{{\rm QI}}(X)$ is proper, cobounded or geometric, then the induced quasi-isometric quasi-action $\rho:  (\Lambda^k, \Lambda^\infty) \to \widetilde{{\rm QI}}(X)$ has the same property.
\end{lemma}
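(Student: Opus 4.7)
Coboundedness is immediate: since $\Lambda \subseteq \Lambda^k$, the quasi-orbit $\rho(\Lambda^k).o$ contains the relatively dense set $\rho(\Lambda).o$ and is therefore itself relatively dense. The geometric case is the conjunction of the cobounded and proper cases, so the content of the lemma is the properness statement. Write $A_j(x,R) := \{\mu \in \Lambda^j \mid B_R(x) \cap \rho(\mu).B_R(x) \neq \emptyset\}$; the plan is to show that the finiteness of every $A_1(x, R')$ forces finiteness of each $A_k(x, R)$.

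The key input is Proposition \ref{PropShuffling}(i): taking $l = k-1$ produces a finite set $F_\Lambda \subset \Lambda^\infty$ with $\Lambda^k \subseteq \Lambda F$, where $F := F_\Lambda^{k-1}$; since $F_\Lambda \subseteq \Lambda^3$, this $F$ is contained in a single filtration step $\Lambda^N$ for $N := \max(k, 3(k-1))$. For each $\mu \in \Lambda^k$ fix a decomposition $\mu = \lambda f$ with $\lambda \in \Lambda$ and $f \in F$. Because $f \in \Lambda^N$, the map $\rho(f)$ is a $(K_N, C_N, C_N)$-quasi-isometry, and the finite set $\{\rho(f).x : f \in F\}$ lies in some ball $B_D(x)$. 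Given $\mu \in A_k(x, R)$, witnessed by $y_1, y_2 \in B_R(x)$ with $\rho(\mu).y_1 = y_2$, set $z := \rho(f).y_1$. The QI estimate yields $d(z, x) \leq K_N R + C_N + D$, while the defect inequality \eqref{qac} applied at level $N$ (noting $\lambda, f \in \Lambda^N$) gives
\[
d(\rho(\lambda).z,\, y_2) \;=\; d(\rho(\lambda)\rho(f).y_1,\, \rho(\mu).y_1) \;<\; C_N',
\]
so $d(\rho(\lambda).z, x) < R + C_N'$. With $R' := \max(K_N R + C_N + D,\, R + C_N')$ both $z$ and $\rho(\lambda).z$ lie in $B_{R'}(x)$, so $\lambda \in A_1(x, R')$.

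The map $A_k(x, R) \to A_1(x, R') \times F$, $\mu \mapsto (\lambda, f)$, is injective since $\mu = \lambda f$ is determined by its image, so $|A_k(x, R)| \leq |A_1(x, R')| \cdot |F| < \infty$, establishing properness for the induced qiqac. The only substantive step is constant bookkeeping: one must pass to a filtration level $N$ large enough to simultaneously contain $\Lambda^k$ and the shuffling set $F$, so that a single choice of qiqac constants $(K_N, C_N, C_N')$ controls both the QI bounds for $\rho(f)$ and the defect in $\rho(\lambda)\rho(f) \approx \rho(\lambda f)$. Once this uniformity is secured, the rest of the argument is a triangle-inequality computation, and the geometric case follows by combining the proper and cobounded conclusions.
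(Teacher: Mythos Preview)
Your proof is correct and follows the same strategy as the paper: decompose $\mu = \lambda f$ with $\lambda \in \Lambda$ and $f$ in a fixed finite set, then show that $\mu \in A_k(x,R)$ forces $\lambda \in A_1(x,R')$ for a larger radius $R'$. The only cosmetic difference is that you track the auxiliary point $z = \rho(f).y_1$ and bound $d(\rho(\lambda).z, y_2)$ directly via the defect inequality, whereas the paper keeps the original point $x'$ and bounds $d(\rho(g).x', \rho(\lambda).x')$ through the chain $\rho(\lambda f) \to \rho(\lambda)\rho(f) \to \rho(\lambda)$; both routes lead to the same finiteness conclusion with essentially the same constants.
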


\begin{proof} The statement about coboundedness is immediate since $\rho(\Lambda^k).x \supset \rho(\Lambda).x$ for every $x \in X$. As for properness, let $F_k \subset \Lambda^\infty$ be finite such that $\Lambda^k \subset \Lambda F_k$. Since $F_k$ is finite, we can choose $k'>k$ such that $F_k \subset \Lambda^{k'}$. Choose constants $K_k > 1$ and $C_k >0$ such that $\rho$ is a $(K_k, C_k, C_k)$-qiqac.

From now on we assume that $B(x, R) \cap \rho(g).B(x, R) \neq \emptyset$ for some $g \in \Lambda^k$, $x \in X$ and $R>0$. This means that there exist $x', x'' \in B(x, R)$ such that
\begin{equation}\label{rhogxProperInduces}
x'' = \rho(g)(x').
\end{equation}
We need to show that $g$ is confined to a finite subset of $\Lambda^k$. We first claim that
\begin{equation}
M(x, R) \coloneqq  \max_{f \in F_k} \sup_{x' \in B(x, R)} d(\rho(f)x', x') < \infty.
\end{equation}
Indeed, if $f \in F_k$ and $x' \in B(x, R)$, then
\[
 d(\rho(f)x', x') \leq d(\rho(f)x', \rho(f)x) + d(\rho(f)x, x) + d(x, x') \leq K_{k'} R + C_{k'} + d(\rho(f)x, x) + R,
\]
which is bounded uniformly in $x'$. Secondly, we can write $g = \lambda f$ for some $\lambda \in \Lambda$ and $f \in F_k \subset \Lambda^{k'}$. For all $x' \in B(x, R)$ we then have
\begin{eqnarray*}
d(\rho(g)x', \rho(\lambda)x') &=& d(\rho(\lambda f)x', \rho(\lambda)x') \quad \leq \quad d(\rho(\lambda) \rho(f)x', \rho(\lambda)x') + C_{k'}\\
 &\leq& K_k d( \rho(f)x', x') + C_k + C_{k'} \quad \leq \quad K_kM(x, R) + C_k+C_{k'}.
\end{eqnarray*}
Let us abbreviate $C(x) \coloneqq   K_kM(x,R) + C_k+C_{k'}$ an define $x''' \coloneqq  \rho(\lambda)x'$. Then
\[
d(x'', x''') = d(\rho(g)x', \rho(\lambda)x') \leq C(x),
\]
hence $x', x''' \in B(x, R+C(x))$. We thus deduce that $B(x, R+C(x)) \cap \rho(\lambda).B(x, R+C(x)) \neq \emptyset$. By properness of the original qiqac we conclude that $\lambda$ is confined to a finite subset of $\Lambda$. Since $g = \lambda f$ for some $f \in F_k$, this implies that $g$ is confined to a finite subset of $\Lambda^k$. This finishes the proof.
\end{proof}
We have seen in Proposition \ref{ApogeeQuasiCobounded} that every apogee for a geometrically finitely-generated approximate group is necessarily quasi-cobounded. The same conclusion also holds for pseudo-metric spaces which admit a cobounded qiqac of an approximate group. We can actually make a more precise statement using the following terminology:
\begin{definition} Let $X$ be a pseudo-metric space. We say that $X$ is \emph{semi-uniformly quasi-cobounded}\index{semi-uniformly quasi-cobounded}\index{quasi-cobounded!semi-uniformly} if there exist constants $K \geq 1$ and $C, R \geq 0$ and a set $\cA$ of $(K, C)$-quasi-isometries of $X$ with the following properties: For all $x, y \in X$ there exists $g \in A$ such that $d(g(x), y) < R$ and there exist constants $C_k \geq 0$ such that $\cA^k$ consists of $(K, C_k)$-quasi-isometries for all $k \in \mathbb N$. We say that $X$ is \emph{uniformly quasi-cobounded}\index{uniformly quasi-cobounded}\index{quasi-cobounded!uniformly} if moreover the sequence $C_k$ can be chosen to be bounded.
\end{definition} 
We record the following immediate consequences of this definition for later reference:
\begin{proposition}\label{QuasiCobounded} If a metric space $(X,d)$ admits a geometric qiqac of an approximate group $(\Lambda, \Lambda^\infty)$, then it is quasi-cobounded. If this qiqac can be chosen to be uniform, then it is even semi-uniformly quasi-cobounded.
\end{proposition}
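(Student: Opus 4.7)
\medskip

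\noindent\textbf{Proof plan for Proposition \ref{QuasiCobounded}.}

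The plan is to use coboundedness to move any point of $X$ close to a basepoint via an element of $\rho(\Lambda)$, then compose two such quasi-isometries to move from $x$ to $y$. Fix a basepoint $o \in X$ and constants $K_k, C_k, C_k'$ witnessing that $\rho$ is a $(K_k, C_k, C_k')$-qiqac. By coboundedness (and Remark \ref{RemarkProperness}), there exists $R>0$ such that for every $z \in X$ we may pick $\lambda_z \in \Lambda$ with $d(\rho(\lambda_z).o, z) \leq R$. Recall also from Lemma \ref{Inverses} that $\rho(\lambda)^{-1}$ differs from $\rho(\lambda^{-1})$ (for $\lambda \in \Lambda$) by a bounded error $D_1$, and that $\rho(e)$ is close to $\mathrm{id}_X$.

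\emph{Step 1 (quasi-coboundedness).} Given $x,y \in X$, set $f_{x,y} := \rho(\lambda_y) \circ \rho(\lambda_x)^{-1}$. As a composition of two $(K_1, C_1, C_1)$-quasi-isometries, $f_{x,y}$ is a $(K', C', C')$-quasi-isometry for constants $K', C'$ depending only on $K_1$ and $C_1$. To estimate $d(f_{x,y}(x), y)$, I first use the quasi-inverse property to bound
\[
d\bigl(\rho(\lambda_x)^{-1}(x), o\bigr) \;\leq\; K_1 \cdot d(x, \rho(\lambda_x).o) + C_1 + C_1' \;\leq\; K_1 R + C_1 + C_1',
\]
and then apply $\rho(\lambda_y)$ to obtain
\[
d(f_{x,y}(x), y) \;\leq\; d\bigl(\rho(\lambda_y)\rho(\lambda_x)^{-1}(x), \rho(\lambda_y).o\bigr) + d(\rho(\lambda_y).o, y) \;\leq\; K_1(K_1 R + C_1 + C_1') + C_1 + R.
\]
Setting $r$ equal to this final bound witnesses $(K', C', r)$-quasi-coboundedness of $X$.

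\emph{Step 2 (semi-uniform case).} Now assume $\rho$ is $(K, C_k, C_k')$-uniform, so that \emph{every} $\rho(\lambda)$ with $\lambda \in \Lambda^\infty$ is a $(K,\cdot,\cdot)$-quasi-isometry (same $K$). Define
\[
\cA \;:=\; \bigl\{\rho(\lambda_1)\rho(\lambda_2)^{-1} \,\bigm|\, \lambda_1,\lambda_2 \in \Lambda\bigr\}.
\]
By Step 1, the set $\cA$ suffices to move any $x$ to within $r$ of any $y$. For each $\lambda_1,\lambda_2 \in \Lambda$, the map $\rho(\lambda_1)\rho(\lambda_2)^{-1}$ is at bounded distance (depending only on $D_1, C_2'$) from $\rho(\lambda_1 \lambda_2^{-1}) \in \rho(\Lambda^2)$, which is a $(K, C_2, C_2)$-quasi-isometry; being at bounded distance from a $(K,C,C)$-quasi-isometry preserves the multiplicative constant $K$ at the cost of enlarging $C$. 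Hence every element of $\cA$ is a $(K,\widetilde{C}_2,\widetilde{C}_2)$-quasi-isometry for a constant $\widetilde{C}_2$ independent of the choice of $\lambda_1,\lambda_2$. The analogous argument applied to an arbitrary $k$-fold product in $\cA^k$, which is a composition of $2k$ elements of the form $\rho(\lambda)^{\pm 1}$, shows it lies at bounded distance from a suitable $\rho(\mu) \in \rho(\Lambda^{2k})$, hence is a $(K, \widetilde{C}_{2k},\widetilde{C}_{2k})$-quasi-isometry. This verifies semi-uniform quasi-coboundedness with the same $K$.

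\emph{Step 3 (failure of uniformity).} The parenthetical remark that the conclusion cannot be strengthened to uniform quasi-coboundedness is best justified by pointing to an example, since there is no a priori reason for the sequence $(\widetilde{C}_{2k})$ above to be bounded: in a general uniform qiqac the additive QI constants $C_k$ may grow with the filtration level $k$. An explicit example can be obtained already from a quasi-conjugate of a geometric isometric action by an unbounded quasi-isometry, or from the left-regular qiqac of a distorted approximate group discussed in Section \ref{SecLeftRegular}. The main obstacle in the whole argument is the bookkeeping of constants in Step 2, in particular observing that it is precisely the uniformity $K_k \leq K$ that allows the additive closeness between $\rho(\lambda_1)\cdots\rho(\lambda_n)$ and $\rho(\lambda_1 \cdots \lambda_n)$ to absorb into the additive QI constant without inflating the multiplicative constant.
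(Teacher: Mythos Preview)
Your proof is correct and is exactly the natural direct verification that the paper omits, treating the proposition as an immediate consequence of the definitions (note the \qed in the statement and the phrase ``We record the following immediate consequences of this definition'' preceding it). Your choice of $\cA = \{\rho(\lambda_1)\rho(\lambda_2)^{-1}\}$ and the observation that $k$-fold products in $\cA$ are uniformly close to elements of $\rho(\Lambda^{2k})$, thereby inheriting the fixed multiplicative constant $K$ from uniformity, is precisely the intended argument. A minor point: only coboundedness of $\rho$ is used (not properness), and your constants in Step~1 are slightly imprecise (the quasi-inverse of a $(K_1,C_1,C_1')$-quasi-isometry has additive constants depending also on $C_1'$, cf.\ Remark~\ref{rem:qi  facts}), but this does not affect the argument.
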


\section{Left-regular quasi-actions}\label{SecLeftRegular}
The purpose of this section is to show that every algebraically or geometrically finitely-generated approximate group admits a geometric qiqac on a metric space $(X,d)$, which in the case of a geometrically finitely-generated group can be chosen to be a proper geodesic metric space. The case of an algebraically finitely-generated algebraic group was already treated in \cite{BH}, but we recall it here, since the argument in the geometrically finitely-generated case is based on similar ideas. Our inspiration comes again from the group case:
\begin{example}[The group case]
If $\Gamma$ is a finitely-generated group with finite generating set $S$ and associated word metric $d_S$, then left-regular action of $\Gamma$ on itself defines a geometric isometric action on $(\Gamma, d_S)$. If $X$ is any representative of the canonical QI type of $[\Gamma]$, then we can quasi-conjugate the left-regular action to obtain a geometric qiqac of $\Gamma$ on $X$. Note that the resulting quasi-conjugacy class of qiqacs does not depend on the generating set $S$.
\end{example}
We will give similar constructions both for the internal and external QI type. For the moment let $(\Lambda, \Lambda^\infty)$ be an arbitrary approximate group (without any finiteness assumptions). We fix once and for all a left-admissible metric $d$ on $\Lambda^\infty$. Following \cite{BH} we are going to construct a geometric quasi-action of $(\Lambda, \Lambda^\infty)$ on $(\Lambda, d|_{\Lambda \times \Lambda})$.
\begin{construction}\label{PreLeftRegular}
    Let us denote by $L: (\Lambda, \Lambda^\infty) \to \mathrm{Is}(\Lambda^\infty, d)$ the left-regular action of $\Lambda^\infty$ and note that $\Lambda = L(\Lambda)(e)$ is just the quasi-orbit of $e$ under $L$; in particular, $\Lambda$ is quasi-invariant under $L(\Lambda)$. We may thus form the quasi-restriction 
\[
\lambda \coloneqq  \widetilde{L}_{\Lambda}:  (\Lambda, \Lambda^\infty) \to \widetilde{\mathrm{QI}}(\Lambda, d|_{\Lambda \times \Lambda})
\]
as in Construction \ref{Quasirestriction2}.
\end{construction}
Let us describe $\lambda$ more explicitly to get an estimate of the QI-constants of this qiqac: 

Let a finite $F\subset \Lambda^3$ be such that $\Lambda^2 \subset \Lambda F$ and set $\delta \coloneqq  \max_{f \in F} d(f,e)$. Then, for all $n\in\bN$, the inclusions $\iota_n^{n+1}: \Lambda^{n} \to \Lambda^{n+1}$ are $(1,0, \delta)$-quasi-isometries, so we can find quasi-inverses $p^{n+1}_n: \Lambda^{n+1} \to \Lambda^n$ such that \[
p_n^{n+1} \iota^{n+1}_n(x) = x \text{ for all } x \in \Lambda^n  \quad \text{ and } \quad d(\iota_n^{n+1}(p^{n+1}_n(x)), x) \leq \delta \text{ for all }x \in \Lambda^{n+1},\]
and such that each $p_n^{n+1}$ is a $(1, 2\delta, 0)$-quasi-isometry. Thus, if  for all $k<n$ we denote by $\iota_k^n: \Lambda^k \to \Lambda^n$ the isometric embedding and define
\[
 p_{k}^n: \Lambda^n \to \Lambda^k, \quad x \mapsto p^{k+1}_{k} \circ \dots \circ p^n_{n-1}(x), 
\]
then for all $k<n$ the maps $p_k^n$ are $(1, 2(n-k)\delta, 0)$-quasi-isometries and satisfy $p^n_k(\iota_k^n(x)) = x$. If $g \in \Lambda^k \setminus \Lambda^{k-1}$, then $\lambda(g) = p^{k+1}_1 \circ L(g)$, i.e.,
\[\begin{xy}\xymatrix{
\lambda(g): &\Lambda \ar@/^1pc/[rrrr]^{L(g)}& \ar@/^/[l]^{p_1^2}\Lambda^2& \ar@/^/[l]^{p_2^3}\Lambda^3&  \ar@/^/[l]^{p_3^4}
 \dots & \ar@/^/[l]^{p_k^{k+1}} \Lambda^{k+1}
}\end{xy}.
\]
We deduce that $\lambda(g) : \Lambda \to \Lambda$ is a $(1, 2k\delta)$-quasi-isometric embedding. Moreover, if $g \in \Lambda^k$, $h \in \Lambda^l$ and $x \in \Lambda$, then we have
\begin{eqnarray*}
d(\lambda({gh})(x), \lambda(g)\lambda(h)(x)) 
&\leq& 4(k+l)\delta.
\end{eqnarray*}
Therefore, if $k=l$ we get $d(\lambda({gh})(x), \lambda(g)\lambda(h)(x)) \leq 8k\delta$. Since $\lambda(e) = {\rm Id}_\Lambda$, note that for all $x \in X$ and $g \in \Lambda^k$ we have
\[
d(\lambda(g)\lambda(g^{-1})(x), x) \leq 8k\delta \qand d(\lambda(g^{-1})\lambda(g)(x), x) \leq 8k\delta.
\]
In particular, $\lambda(g)(\Lambda)$ is $8k\delta$-relatively dense in $\Lambda$, hence $\lambda(g)$ is a $(1, 2k\delta, 8k\delta)$-quasi-isometry with quasi-inverse $\lambda(g^{-1})$. We have thus established:
\begin{proposition}\label{GeneralLeftRegularQA} For every left-admissible metric $d$ on $\Lambda^\infty$, the map  $\lambda: (\Lambda, \Lambda^\infty) \to \widetilde{{\rm QI}}(\Lambda, d|_{\Lambda \times \Lambda})$, $g \mapsto \lambda(g)$ is a cobounded quasi-isometric quasi-action, in fact a cobounded $(2k\delta, 8k\delta)$-rough quasi-action. 
\end{proposition}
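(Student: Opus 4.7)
My plan is to simply collect and verify the computations already set up in the discussion preceding the proposition. The construction of $\lambda$ is a direct application of Construction \ref{Quasirestriction2} to the subset $\Lambda \subset \Lambda^\infty$, which is $L(\Lambda)$-quasi-invariant with Hausdorff displacement controlled by $\delta = \max_{f \in F}d(f,e)$, since $\Lambda^2 \subset \Lambda F$ forces $L(g)(\Lambda) \subset \Lambda F$ for all $g \in \Lambda$. Thus the quasi-restriction is well-defined and the various projections $p_k^n$ obtained from the telescoping of quasi-inverses $p_n^{n+1}$ of the inclusions $\iota_n^{n+1}$ have the constants recorded in the text.

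Next I would verify the four items required by Definition \ref{def: qiqac}. For $g \in \Lambda^k$, composing $L(g)$ (an isometry from $\Lambda$ into $\Lambda^{k+1}$) with $p_1^{k+1}$ (a $(1, 2k\delta, 0)$-quasi-isometry) produces $\lambda(g)$ as a $(1, 2k\delta)$-quasi-isometric embedding. The approximate multiplicativity displayed in the text, $d(\lambda(gh)(x), \lambda(g)\lambda(h)(x)) \leq 4(k+l)\delta$, specializes for $g,h \in \Lambda^k$ to the bound $8k\delta$, giving the "defect" part of the rough quasi-action inequality with constant $8k\delta$. Applying this bound with $h = g^{-1}$ and using that $\lambda(e)$ is the identity on $\Lambda$ (since $L(e)$ is and $p_1^1 = \id_\Lambda$) yields $d(\lambda(g)\lambda(g^{-1})x, x) \leq 8k\delta$; this shows $\lambda(g)(\Lambda)$ is $8k\delta$-relatively dense in $\Lambda$ and provides $\lambda(g^{-1})$ as a quasi-inverse, upgrading $\lambda(g)$ from a quasi-isometric embedding to a $(1, 2k\delta, 8k\delta)$-quasi-isometry.

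Finally, coboundedness is essentially immediate: take the basepoint $e \in \Lambda$. For $g \in \Lambda$ we have $L(g)(e) = g \in \Lambda$, and since $p_1^2$ is a quasi-inverse of $\iota_1^2$ equal to the identity on $\Lambda$, $\lambda(g)(e) = g$. Hence the quasi-orbit $\lambda(\Lambda).e = \Lambda$ is tautologically relatively dense in $(\Lambda, d|_{\Lambda\times\Lambda})$. I do not anticipate any genuine obstacle here — the entire content is bookkeeping for the constants produced by the telescoping construction, and the only subtle point (namely that quasi-surjectivity follows from the approximate-multiplicativity inequality) has already been packaged into the preceding computation.
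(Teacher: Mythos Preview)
Your proposal is correct and follows exactly the same approach as the paper: the proposition is stated with a \qed because its proof is precisely the discussion preceding it, and you have faithfully reproduced that computation (constants from the telescoped projections, the $4(k+l)\delta$ defect bound specialized to $k=l$, and quasi-surjectivity via $\lambda(g^{-1})$). Your explicit coboundedness argument via $\lambda(g)(e)=p_1^2(g)=g$ is the natural one-line justification the paper leaves implicit.
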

At this point the arguments for the external and internal QI type start to diverge. Let us first deal with the external case. Thus assume that $\Lambda^\infty$ is finitely-generated and fix a finite-generating set $S$ of $\Lambda^\infty$ and associated word metric $d_S$. From Proposition \ref{GeneralLeftRegularQA} we then obtain a cobounded rough quasi-action
\begin{equation}\label{ExternalQA}
\lambda: (\Lambda, \Lambda^\infty) \to \widetilde{{\rm QI}}(\Lambda, d_S|_{\Lambda \times \Lambda}), \quad g \mapsto \lambda(g).
\end{equation}
We recall that this quasi-action depends on a choice of projections $(p_k^{k+1})_{k \geq 1}$.
\begin{lemma}  \label{ExternalQA1}
\begin{enumerate}[(i)]
\item For any choice of finite generating set $S$ and projections $(p_k^{k+1})_{k \geq 1}$, the quasi-action \eqref{ExternalQA} is proper, hence geometric.
\item For any two choices the resulting quasi-actions are quasi-conjugate.
\end{enumerate}
\end{lemma}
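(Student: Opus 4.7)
The plan is to prove (i) and (ii) by exploiting the fact, visible from the construction, that $\lambda(g)$ is uniformly close (in $d_S$) to genuine left-multiplication by $g$ on each filtration step $\Lambda^k$. This reduces both statements to elementary computations with the quasi-isometry between different word metrics on $\Lambda^\infty$.

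For (i), coboundedness is already contained in Proposition \ref{GeneralLeftRegularQA}, so only properness remains. I take the basepoint $e \in \Lambda$ and show that for every $R>0$ the set
\[
A_R := \{g \in \Lambda \mid B(e,R) \cap \lambda(g)(B(e,R)) \neq \emptyset\}
\]
is finite. If $g \in A_R$, choose $y_1, y_2 \in B(e,R) \cap \Lambda$ with $y_2 = \lambda(g)(y_1) = p_1^2(gy_1)$. By the construction of $p_1^2$ we have $d_S(p_1^2(gy_1), gy_1) \leq \delta$, and left-invariance of $d_S$ gives
\[
d_S(g,e) = d_S(gy_1, y_1) \leq d_S(gy_1, y_2) + d_S(y_2, e) + d_S(e, y_1) \leq \delta + 2R.
\]
Since $d_S$ is a word metric with respect to a finite generating set, the ball $B_{d_S}(e, 2R+\delta)$ is finite, so $A_R$ is finite. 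This establishes properness. (The extension to the induced quasi-actions $(\Lambda^k, \Lambda^\infty) \to \widetilde{{\rm QI}}(\Lambda, d_S|_{\Lambda \times \Lambda})$ then follows from Lemma \ref{GeometricSurvives2}, or by the analogous direct computation with the bound $d_S(g,e) \leq 2R + k\delta$.)

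For (ii), let $\lambda_1$ be constructed from $(S_1, (p_k^{k+1}))$ with associated constant $\delta_1$, and $\lambda_2$ from $(S_2, (\tilde p_k^{k+1}))$ with constant $\delta_2$. By Lemma \ref{ExternalQIType}.(ii) the identity map
\[
\mathrm{id} \colon (\Lambda, d_{S_1}|_{\Lambda \times \Lambda}) \to (\Lambda, d_{S_2}|_{\Lambda \times \Lambda})
\]
is a quasi-isometry, with some constants $(K,C,C)$. To show quasi-conjugacy via this identity, it suffices to check that for every $k \in \mathbb N$ there is a constant $C_k$ such that
\[
d_{S_2}\bigl(\lambda_1(g)(x), \lambda_2(g)(x)\bigr) \leq C_k \qquad \text{for all } g \in \Lambda^k,\ x \in \Lambda.
\]
By construction of each quasi-action, iterating the projection bound gives $d_{S_i}(\lambda_i(g)(x), gx) \leq k \delta_i$. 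Applying the quasi-isometry bound between $d_{S_1}$ and $d_{S_2}$ to the first estimate and then the triangle inequality yields
\[
d_{S_2}\bigl(\lambda_1(g)(x), \lambda_2(g)(x)\bigr) \leq K\cdot k\delta_1 + C + k\delta_2 =: C_k,
\]
uniformly in $x \in \Lambda$ and $g \in \Lambda^k$, which is exactly the required equivalence of qiqacs.

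The whole argument is really just a matter of tracking constants so that the bounds are uniform on each filtration step, with no substantive obstacle: the key structural input is that $\lambda(g)$ is a uniformly controlled perturbation of $L(g)$ (on $\Lambda$) restricted via the projections, and that any two external word metrics are quasi-isometric.
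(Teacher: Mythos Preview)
Your proof is correct and follows essentially the same approach as the paper: both parts rely on the key observation that $d_S(\lambda(g)(x), gx)$ is bounded by a constant depending only on $k$ (for $g \in \Lambda^k$), reducing properness to properness of the left-regular action of $\Lambda^\infty$ on itself and reducing quasi-conjugacy to comparing two such perturbations of $L(g)$. One small remark: in part (ii) you apply the quasi-isometry bound at the pair $(\lambda_1(g)(x), gx)$ where $gx$ may lie in $\Lambda^{k+1}\setminus\Lambda$, so you should invoke the bi-Lipschitz equivalence of $d_{S_1}$ and $d_{S_2}$ on all of $\Lambda^\infty$ rather than Lemma~\ref{ExternalQIType}.(ii), but this is a standard fact and does not affect the argument.
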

\begin{proof}  (i) Denote by $B(o, R)$ the ball of radius $R$ around $o$ in $\Lambda^\infty$ and set $B_\Lambda(o, R) \coloneqq  B(o, R)\cap \Lambda$.
Assume that $\lambda(g).B_\Lambda(o, R) \cap B_\Lambda(o, R) \neq \emptyset$ for some $g \in \Lambda$. Then there exist $x, x' \in B_\Lambda(o, R)$ such that $\lambda(g) x = x'$. Set $x'' \coloneqq  gx$. Then $d(x'', \lambda(g)x) \leq 2\delta$, hence $x, x'' \in B(o, R + 2\delta)$. We deduce that $g.B(o, R) \cap B(o, R) \neq \emptyset$, which by properness of the action of $\Lambda^\infty$ on $(\Lambda^\infty, d_S)$ confines $g$ to a finite subset of $\Lambda^\infty$.

(ii) We first fix a generating set $S$ and define $\delta$ as above. By construction we then have $d(\lambda(g)(h), gh)$
$< 2k\delta$ for all $g \in \Lambda^\infty$ and $h \in \Lambda$, i.e., we have $d(\lambda(g)(h), gh) < 2k\delta$ independently of the choice of projections $p_k^{k+1}$. We deduce that any two left-regular quasi-actions with respect to the same generating set are equivalent.

Now let $S$ and $S'$ be two different finite generating sets for $\Lambda^\infty$. We can then find a quasi-isometry $\varphi: (\Lambda^\infty, d_S) \to (\Lambda^\infty, d_{S'})$ with quasi-inverse $\bar \varphi$. Then quasi-conjugation by $(\varphi, \varphi')$ turns left-regular quasi-actions with respect to $S$ into left-regular quasi-actions with respect to $S'$, where the corresponding projections are also quasi-conjugate. This finishes the proof.
\end{proof}
\begin{remark}[Uniqueness of external left-regular quasi-action]
Note that if $(X, d)$ is an arbitrary representative of $[\Lambda]_{\rm ext}$, then it is quasi-isometric to $(\Lambda, d)$ for some (hence any) external metric $d$ on $\Lambda$, hence we can quasi-conjugate the geometric quasi-action on $(\Lambda, d)$ defined above to a geometric quasi-action on $X$. We refer to any quasi-action obtained in this way as an \emph{external left-regular quasi-action}\index{quasi-action!external left-regular}. By Lemma \ref{ExternalQA1} the external left-regular quasi-action is unique up to quasi-conjugacy.
\end{remark}
The previous construction shows that if $(\Lambda, \Lambda^\infty)$ is algebraically finitely-ge\-ne\-ra\-ted, then it admits a geometric quasi-action on any chosen representative of $[\Lambda]_{\rm ext}$. We now want to establish a similar result for representatives of the internal QI type. This is more subtle and requires us to appeal to the uniform version of Gromov's trivial lemma (Proposition \ref{GromovUniform}). 

Thus let $(\Lambda, \Lambda^\infty)$ be a geometrically finitely-generated approximate group and let $(X,d)$ be a representative of $[\Lambda]_{\mathrm{int}}$. Also, let $\widehat{d}$ be the restriction of a left-admissible metric on $\Lambda^\infty$ to $\Lambda$. By Proposition \ref{GeneralLeftRegularQA} we have a rough quasi-action
\begin{equation}\label{ExternalQA2}
\widehat{\lambda}: (\Lambda, \Lambda^\infty) \to \widetilde{{\rm QI}}(\Lambda, \widehat{d}), \quad g \mapsto \lambda(g),
\end{equation}
and by the choice of $(X,d)$ we have mutually coarsely inverse coarse equivalences
\[
\psi: (\Lambda, \widehat{d}) \to (X, d) \quad \text{and} \quad \overline{\psi}:(X, d) \to  (\Lambda, \widehat{d}).
\]
Now if $f \in \widetilde{{\rm QI}}(\Lambda, \widehat{d})$, then $\psi \circ f \circ \overline{\psi}$ is a coarse self-equivalence of $(X, d)$. Since $(X, d)$ is large-scale geodesic, it is thus a quasi-isometry by the symmetric version of Gromov's trivial lemma (Corollary \ref{GromovTrivialSymmetric}). We thus obtain a well-defined map
\[
\psi_*: \widetilde{{\rm QI}}(\Lambda, \widehat{d}) \to \widetilde{{\rm QI}}(X, d), \quad f \mapsto \psi \circ f \circ \overline{\psi}.
\]
Moreover, by Proposition \ref{GromovUniform} this map preserves uniform subsets of quasi-isometries, i.e.\ for all $K \geq 1$, $C, C' \geq 0$ there exist $k\geq 1$, $c, c' \geq 0$ such that
\[
\psi_*\left(\widetilde{{\rm QI}}_{K, C, C'}(\Lambda, \widehat{d})\right) \subset \widetilde{{\rm QI}}_{k, c, c'}(X,d).
\]
We deduce:
\begin{proposition}\label{UQAExistence} The map $\lambda \coloneqq  \psi_* \circ \widehat{\lambda}: (\Lambda, \Lambda^\infty) \to \widetilde{{\rm QI}}(X, d)$ is a uniform quasi-action.
\end{proposition}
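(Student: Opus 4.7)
The plan is to exploit the crucial feature already built into $\widehat{\lambda}$ by Proposition \ref{GeneralLeftRegularQA}: it is a $(2k\delta, 8k\delta)$-rough quasi-action, meaning $\widehat{\lambda}(\Lambda^k) \subset \widetilde{\mathrm{QI}}_{1, 2k\delta, 8k\delta}(\Lambda, \widehat{d})$. So although the additive constants $2k\delta$ and $8k\delta$ grow with $k$, the multiplicative QI constant is uniformly equal to $1$. All the work goes into showing that after conjugating by $(\psi, \overline{\psi})$, this uniformity in the multiplicative constant is preserved, while the additive constants are allowed to depend on $k$.

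The main step is to verify condition (i) in the definition of a uniform qiqac: that there is a fixed $K$ with $\lambda(\Lambda^k) \subset \widetilde{\mathrm{QI}}_{K, C_k, C_k'}(X,d)$ for all $k$. For this I would apply Proposition \ref{GromovUniform} to the family $\widetilde{\mathrm{QI}}_{1, 2k\delta, 8k\delta}(\Lambda, \widehat{d})$. The essential structural point, which I would either extract directly from Gromov's uniform lemma or verify by a short explicit computation, is that the multiplicative constant produced by $\psi_*$ depends only on the coarse-equivalence constants of $\psi$ and $\overline{\psi}$ and on the multiplicative input constant: if $f$ is a $(K', C, C')$-quasi-isometry, then $\psi \circ f \circ \overline{\psi}$ is a $(K_\psi K_{\overline{\psi}} K', \cdot, \cdot)$-quasi-isometry, with all $C,C'$-dependence absorbed into the new additive constants. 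Setting $K := K_\psi K_{\overline{\psi}}$ (where $K_\psi, K_{\overline{\psi}}$ are the multiplicative constants furnished by Gromov's trivial lemma, which applies because $(X,d)$ is large-scale geodesic) and plugging in $K' = 1$ for each $k$, one obtains $\lambda(\Lambda^k) \subset \widetilde{\mathrm{QI}}_{K, C_k, C_k'}(X,d)$ with $K$ independent of $k$.

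The remaining condition \eqref{qac} is a direct computation. For $g_1, g_2 \in \Lambda^k$ and $x \in X$, insert $\overline{\psi}\psi$ in the middle, use the rough-action inequality $\widehat{d}(\widehat{\lambda}(g_1)\widehat{\lambda}(g_2)y, \widehat{\lambda}(g_1g_2)y) \leq 8k\delta$, use that $\overline{\psi}\psi$ is close to the identity, and use the $(1, 2k\delta, 8k\delta)$-quasi-isometry property of $\widehat{\lambda}(g_1)$ to absorb the near-identity error; finally apply the affine upper control for $\psi$. This yields $d(\lambda(g_1)\lambda(g_2)x, \lambda(g_1g_2)x) \leq C_k'$ uniformly in $x$, with $C_k'$ depending on $k$ and on the constants of $\psi, \overline{\psi}$.

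The main obstacle is the uniformity of the multiplicative constant in step two. Had $\widehat{\lambda}$ been an arbitrary qiqac whose multiplicative constants grew with $k$, the conjugation would produce multiplicative constants $K_k$ growing without bound and the qiqac would fail to be uniform. It is precisely the stronger property recorded in Proposition \ref{GeneralLeftRegularQA} — that $\widehat{\lambda}$ is a \emph{rough} quasi-action, i.e.\ has multiplicative constant $1$ — together with the multiplicative behaviour of conjugation by a quasi-isometry, that allows the uniformity to survive the passage to $X$. Everything else is bookkeeping of additive errors.
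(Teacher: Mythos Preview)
Your overall strategy matches the paper's: push the rough quasi-action $\widehat{\lambda}$ forward through $\psi_*$, invoke Proposition~\ref{GromovUniform} to get quasi-isometry constants on $(X,d)$, and verify~\eqref{qac} by a direct estimate. Your treatment of the qac condition is in fact more careful than the paper's, which suppresses the $\overline{\psi}\psi$ insertion you correctly account for.

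There is, however, a gap in your justification of the \emph{uniformity} of the multiplicative constant. You write that $\psi\circ f\circ\overline{\psi}$ is a $(K_\psi K_{\overline{\psi}} K',\cdot,\cdot)$-quasi-isometry, with $K_\psi,K_{\overline{\psi}}$ ``furnished by Gromov's trivial lemma, which applies because $(X,d)$ is large-scale geodesic.'' But Lemma~\ref{GromovTrivial} produces an affine upper control only when the \emph{domain} is large-scale geodesic. This works for $\overline{\psi}\colon (X,d)\to(\Lambda,\widehat{d})$, but \emph{not} for $\psi\colon (\Lambda,\widehat{d})\to(X,d)$: the metric $\widehat{d}$ is merely the restriction of a left-admissible metric and is only coarsely connected, not large-scale geodesic. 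So there is no ``$K_\psi$'' available, and your formula $K=K_\psi K_{\overline{\psi}}$ does not make sense as written. What Proposition~\ref{GromovUniform} gives directly is, for each $n$, constants $(k_n,c_n,c_n')$ with $\lambda(\Lambda^n)\subset\widetilde{\mathrm{QI}}_{k_n,c_n,c_n'}(X,d)$; tracing through the proof of Lemma~\ref{GromovTrivial}, the multiplicative constant one extracts is $\Phi_\psi^+(K_{\overline{\psi}}c_0+C_{\overline{\psi}}+2n\delta)\cdot a_0$, which visibly depends on $n$ through the additive shift. The paper's proof is equally terse at this point (it simply writes ``we find constants $k\geq 1$ and $c_n,c_n'\geq 0$'' without arguing that $k$ is independent of $n$), so you have not deviated from it, but you have made explicit a claim that does not follow from the cited tools; the uniformity of the multiplicative constant needs a separate argument beyond what either you or the paper supply.
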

\begin{proof} Choose $C_n, C_n' > 0$ for all $n \in \mathbb N$ such that $\widehat{\lambda}$ is a $(1, C_n, C_n')$-qiqac. Then for every $n \in \bN$ we have $\rho(\Lambda^n) \subset \widetilde{\rm QI}_{K_n, C_n, C_n'}(\Lambda, \widehat{d}|_{\Lambda \times \Lambda})$ and hence by the previous remark we find constants $k \geq 1$ and $c_n, c_n' \geq 0$ such that $\lambda(\Lambda^n) \subset \widetilde{\rm QI}_{k, c_n, c_n'}(\Lambda, d)$. On the other hand if $\lambda_1, \lambda_2 \in \Lambda^n$, then for all $x \in \Lambda$ we have
\[
\widehat{d}(\widehat{\lambda}(\lambda_1)\widehat{\lambda}(\lambda_2).x, \widehat{\lambda}(\lambda_1\lambda_2).x) < C_n',
\]
and thus 
\[
{d}({\lambda}(\lambda_1){\lambda}(\lambda_2).x, {\lambda}(\lambda_1\lambda_2).x) < \Phi_+(C_n'),
\]
where $\Phi_+$ is an upper control for $\psi$. This finishes the proof.
\end{proof}
In the sequel we refer to a geometric quasi-action of $(\Lambda, \Lambda^\infty)$ which arises in this way as an \emph{internal left-regular quasi-action}\index{quasi-action!internal left-regular} of $(\Lambda, \Lambda^\infty)$ on $(X,d)$. Appealing to Proposition \ref{GromovUniform} again, one sees that any two internal left-regular quasi-actions are quasi-conjugate. Finally one deduces:
\begin{proposition} If $(\Lambda, \Lambda^\infty)$ is a geometrically finitely-generated approximate group, then every left-regular quasi-action of $(\Lambda, \Lambda^\infty)$ is geometric.
\end{proposition}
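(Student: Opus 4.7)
The plan is to reduce everything to the geometricity of the left-regular qiqac $\widehat{\lambda}$ on $(\Lambda, \widehat{d})$ and then transfer this through the coarse equivalence used to define $\lambda$. Recall that by construction $\lambda = \psi_* \circ \widehat{\lambda}$, where $\psi: (\Lambda, \widehat{d}) \to (X,d)$ and $\overline{\psi}: (X,d) \to (\Lambda, \widehat{d})$ are mutually coarsely inverse coarse equivalences and $\widehat{\lambda}$ is the qiqac of Proposition \ref{GeneralLeftRegularQA}. Fix constants $C_0 \geq 0$ with $\widehat{d}(\overline{\psi}\psi(y), y) \leq C_0$ and $d(\psi\overline{\psi}(x), x) \leq C_0$, an upper control $\Psi_+$ for $\overline{\psi}$, and $\delta := \max_{f \in F}\widehat{d}(f, e)$ for $F \subset \Lambda^3$ with $\Lambda^2 \subset \Lambda F$.

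First I would dispatch $\widehat{\lambda}$. Coboundedness is already recorded in Proposition \ref{GeneralLeftRegularQA}, since $\widehat{\lambda}(g)(e) = g$ exactly. For properness, suppose $g \in \Lambda$ and $x, x' \in \Lambda$ with $\widehat{d}(x,e),\widehat{d}(x',e) \leq R$ satisfy $\widehat{\lambda}(g)(x) = x'$. Since $\widehat{\lambda}(g) = p_1^2 \circ L(g)$ on $\Lambda$ one has $\widehat{d}(\widehat{\lambda}(g)(x), gx) \leq \delta$, whence using left-invariance of $\widehat{d}$,
\[
\widehat{d}(g,e) \;\leq\; \widehat{d}(g,gx) + \widehat{d}(gx,x') + \widehat{d}(x',e) \;=\; \widehat{d}(e,x) + \widehat{d}(gx,x') + \widehat{d}(x',e) \;\leq\; 2R + \delta.
\]
By properness of $\widehat{d}$, the set of admissible $g$ is finite, so $\widehat{\lambda}$ is proper.

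Next I would transfer geometricity to $\lambda$ directly, without invoking Lemma \ref{ProperQCInv} (which requires a quasi-isometry rather than a coarse equivalence). For coboundedness, set $o := \psi(e)$ and note that for $g \in \Lambda$,
\[
\lambda(g)(o) \;=\; \psi\bigl(\widehat{\lambda}(g)(\overline{\psi}(o))\bigr).
\]
Since $\overline{\psi}(o) = \overline{\psi}\psi(e)$ is $C_0$-close to $e$ and $\widehat{\lambda}(g)$ is a $(1,2\delta,8\delta)$-quasi-isometry, $\widehat{\lambda}(g)(\overline{\psi}(o))$ is at a uniformly bounded distance from $\widehat{\lambda}(g)(e) = g$; applying the bornologous map $\psi$ yields that $\lambda(g)(o)$ is uniformly close to $\psi(g)$. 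Hence the $\lambda$-quasi-orbit of $o$ is contained in a bounded neighbourhood of $\psi(\Lambda)$, which is relatively dense in $X$ because $\psi$ is coarsely onto.

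For properness, suppose $g \in \Lambda$ and $x, x' \in B_d(o, R)$ satisfy $\lambda(g)(x) = x'$. Write $y := \overline{\psi}(x)$, $y' := \overline{\psi}(x')$; then $\widehat{d}(y, \overline{\psi}(o)), \widehat{d}(y', \overline{\psi}(o)) \leq \Psi_+(R)$, and since $\overline{\psi}(o)$ is $C_0$-close to $e$, both $y$ and $y'$ lie in $B_{\widehat{d}}(e, \Psi_+(R) + C_0)$. Applying $\overline{\psi}$ to the relation $\psi(\widehat{\lambda}(g)(y)) = x'$ and using $d(\psi\overline{\psi}, \mathrm{id}) \leq C_0$ gives $\widehat{d}(\widehat{\lambda}(g)(y), y') \leq \Psi_+(C_0)$. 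Combining with $\widehat{d}(\widehat{\lambda}(g)(y), gy) \leq \delta$ and the same left-invariance argument as above bounds $\widehat{d}(g,e)$ in terms of $R$ alone, confining $g$ to a finite subset of $\Lambda$. The main obstacle is keeping track of the various controls during this transfer -- in particular one must exploit that $\widehat{d}$ is a genuine left-invariant proper metric, so that the "approximate translation" behaviour of the chosen projections $p_k^{k+1}$ is swamped by the properness of $\widehat{d}$, rather than relying on any QI properties of $\psi$ itself.
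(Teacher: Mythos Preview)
Your proof is correct and follows the same strategy as the paper's (very terse) argument: first verify that $\widehat{\lambda}$ is geometric, then transfer this through the coarse equivalence $(\psi,\overline{\psi})$; the paper simply says coboundedness is ``immediate from the fact that $\psi$ is coarsely onto'' and that properness passes because ``coarse equivalences are proper maps'', whereas you unpack both steps with explicit controls. One minor phrasing issue: in the coboundedness step you say the quasi-orbit is ``contained in a bounded neighbourhood of $\psi(\Lambda)$'', but the inclusion you need (and which your estimate $d(\lambda(g)(o),\psi(g))\leq D$ equally provides) is the reverse one, $\psi(\Lambda)\subset N_D(\lambda(\Lambda).o)$, from which relative density follows.
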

\begin{proof} We consider a left-regular quasi-action $\lambda: (\Lambda, \Lambda^\infty) \to \widetilde{{\rm QI}}(X, d)$ for some $(X,d) \in [\Lambda]$ with notation as above. Coboundedness of $\lambda$ is immediate from the fact that $\psi$ is coarsely onto. As for properness, one first observes as in the external case that $\widehat{\lambda}$ is proper. Since coarse equivalences are proper maps, one deduces that $\lambda$ is also proper.
\end{proof}
To summarize:
\begin{proposition}\label{ExUniformQiqac} If $(\Lambda, \Lambda^\infty)$ is a geometrically finitely-generated approximate group and $X \in [\Lambda]_{\mathrm{int}}$, then $(\Lambda, \Lambda^\infty)$ admits a uniform qiqac on $X$.
\end{proposition}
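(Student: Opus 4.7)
The plan is to combine the constructions already developed in this section: start with the cobounded rough quasi-action on $(\Lambda, \widehat{d})$ produced by the general left-regular construction, then quasi-conjugate it to $X$ via a chosen coarse equivalence, and use the uniform Gromov trivial lemma to verify that the resulting quasi-action is uniform.

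More concretely, first fix a left-admissible metric $\widehat{d}$ on $\Lambda^\infty$ (so its restriction to $\Lambda$ represents $[\Lambda]_c$). Proposition \ref{GeneralLeftRegularQA} supplies a cobounded $(2k\delta, 8k\delta)$-rough quasi-action $\widehat{\lambda}: (\Lambda, \Lambda^\infty) \to \widetilde{\rm QI}(\Lambda, \widehat{d}|_{\Lambda \times \Lambda})$, where $\delta$ depends only on a chosen cover set $F$ with $\Lambda^2 \subset \Lambda F$. In particular $\widehat{\lambda}(\Lambda^k)$ is a uniform family of $(1, 2k\delta, 8k\delta)$-quasi-isometries.

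Next, since $X \in [\Lambda]_{\mathrm{int}}$, by definition $X$ is large-scale geodesic and coarsely equivalent to $(\Lambda, \widehat{d}|_{\Lambda \times \Lambda})$. Fix mutually quasi-inverse coarse equivalences $\psi: (\Lambda, \widehat{d}|_{\Lambda \times \Lambda}) \to X$ and $\overline{\psi}: X \to (\Lambda, \widehat{d}|_{\Lambda \times \Lambda})$, and define
\[
\lambda: (\Lambda, \Lambda^\infty) \to \widetilde{\rm QI}(X,d), \qquad \lambda(g) := \psi \circ \widehat{\lambda}(g) \circ \overline{\psi}.
\]
Each $\lambda(g)$ is \emph{a priori} only a coarse self-equivalence of $X$, but since $X$ is large-scale geodesic, by Corollary \ref{GromovTrivialSymmetric} it is in fact a quasi-isometry. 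This defines $\lambda$ set-theoretically and one checks immediately from the roqac relation for $\widehat{\lambda}$ (push forward the inequality $\widehat{d}(\widehat{\lambda}(\lambda_1)\widehat{\lambda}(\lambda_2)x, \widehat{\lambda}(\lambda_1\lambda_2)x) < 8k\delta$ through $\psi$, using an upper control for $\psi$) that the quasi-action relation \eqref{qac} holds for $\lambda$ with constants depending only on $k$.

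The main obstacle, and the only nontrivial point, is to verify \emph{uniformity}: we need one constant $K$ such that $\lambda(\Lambda^k) \subset \widetilde{\rm QI}_{K, c_k, c_k'}(X, d)$ for every $k$, rather than a multiplicative constant $K_k$ that grows with $k$. This is exactly the content of the uniform version of Gromov's trivial lemma (Proposition \ref{GromovUniform}): because $X$ is large-scale geodesic, the map $\psi_*: f \mapsto \psi \circ f \circ \overline{\psi}$ sends each uniform family of $(1, C, C')$-coarse equivalences of $(\Lambda, \widehat{d})$ into a uniform family of $(K, c, c')$-quasi-isometries of $X$, with $K$ depending only on the coarse control of $\psi, \overline{\psi}$ and the large-scale geodesic constant of $X$ — crucially not on $k$ or on $C, C'$. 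Applied to $\widehat{\lambda}(\Lambda^k) \subset \widetilde{\rm QI}_{1, 2k\delta, 8k\delta}(\Lambda, \widehat{d})$, this yields the desired uniform bound and completes the construction. (Coboundedness and properness, hence the fact that $\lambda$ is geometric, are not claimed in the statement but follow by the same push-forward argument from the corresponding properties of $\widehat{\lambda}$, as shown in the preceding discussion.)
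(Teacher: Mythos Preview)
Your argument is exactly the paper's (Proposition \ref{UQAExistence} and the discussion just before it): take the rough quasi-action $\widehat{\lambda}$ on $(\Lambda,\widehat d)$ from Proposition \ref{GeneralLeftRegularQA}, quasi-conjugate through coarse equivalences $\psi,\overline{\psi}$ to $X$, and invoke Proposition \ref{GromovUniform} to upgrade the resulting coarse self-equivalences of $X$ to quasi-isometries; the quasi-action relation \eqref{qac} is then checked by pushing the roqac estimate through an upper control for $\psi$. That is precisely what the paper does.

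One point deserves more care than you give it. You assert that the multiplicative constant $K$ produced by Proposition \ref{GromovUniform} ``depends only on the coarse control of $\psi,\overline{\psi}$ and the large-scale geodesic constant of $X$ --- crucially not on $k$ or on $C,C'$.'' Proposition \ref{GromovUniform} does not say this: tracing through Lemma \ref{GromovTrivial}, the multiplicative constant it outputs is $a=\Phi_+(c_0)a_0$, where $\Phi_+$ is the upper control of the composite $\psi\circ\widehat\lambda(g)\circ\overline\psi$, and that control involves the additive constant $2k\delta$ of $\widehat\lambda(g)$. So the independence of $K$ from $k$ is not delivered by the proposition you cite. The paper's proof of Proposition \ref{UQAExistence} makes the same appeal and records the resulting multiplicative constant without an $n$-subscript, so your write-up matches it line for line; but the explicit independence claim you add goes beyond what \ref{GromovUniform} actually provides.
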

We record a number of useful consequences of Proposition \ref{ExUniformQiqac}. With Proposition \ref{QuasiCobounded} and Theorem \ref{GFG} we obtain:
\begin{corollary}\label{suqc} If $(\Lambda, \Lambda^\infty)$ is a geometrically finitely-generated approximate group, then every $X \in [\Lambda]_{\mathrm{int}}$ is a semi-uniformly quasi-cobounded coarsely connected space.
\end{corollary}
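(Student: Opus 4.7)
The plan is to assemble the corollary from results already at hand rather than proving anything new: the two properties to verify are coarse connectedness and semi-uniform quasi-coboundedness of an arbitrary representative $X \in [\Lambda]_{\mathrm{int}}$. Since the definition of $[\Lambda]_{\mathrm{int}}$ requires $X$ to be large-scale geodesic, coarse connectedness is automatic — any two points in $X$ can be joined by a discrete $C$-chain for a uniform $C$, which is literally the definition of $C$-coarse connectedness. Alternatively, one may invoke the equivalence $(\mathrm{i}) \iff (\mathrm{v})$ of Theorem \ref{GFG} applied to the proper left-admissible-restricted metric on $\Lambda$, combined with the fact that coarse connectedness is a coarse invariant and $X \in [\Lambda]_c$.

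For semi-uniform quasi-coboundedness, the strategy is to produce a uniform geometric qiqac of $(\Lambda, \Lambda^\infty)$ on $X$ and then feed it into Proposition \ref{QuasiCobounded}. The existence of such a qiqac is exactly Proposition \ref{ExUniformQiqac}: fix an internal left-regular quasi-action $\lambda = \psi_* \circ \widehat\lambda$ on $X$, where $\psi : (\Lambda, \widehat d) \to (X,d)$ is a coarse equivalence furnished by the assumption $X \in [\Lambda]_{\mathrm{int}}$. Proposition \ref{UQAExistence} ensures that $\lambda$ is uniform (all $\lambda(\Lambda^n)$ lie in a single $\widetilde{\mathrm{QI}}_{k,\,c_n,\,c_n'}(X,d)$ with constant multiplicative constant $k$), and the discussion immediately preceding Proposition \ref{ExUniformQiqac} shows that $\lambda$ is geometric (coboundedness from $\psi$ being coarsely onto; properness from $\widehat\lambda$ being proper together with the fact that coarse equivalences are proper maps).

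With a geometric uniform qiqac in hand, the second half of Proposition \ref{QuasiCobounded} applies verbatim and delivers semi-uniform quasi-coboundedness of $X$. There is no real obstacle to foresee: every ingredient has already been established, and the corollary is essentially a packaging statement combining Theorem \ref{GFG}, Proposition \ref{ExUniformQiqac} and Proposition \ref{QuasiCobounded}. If anything requires care, it is only the bookkeeping that a \emph{uniform} qiqac yields the \emph{semi-uniform} (rather than merely plain) version of quasi-coboundedness — but this is precisely the content of the second sentence of Proposition \ref{QuasiCobounded}, so no further argument is needed.
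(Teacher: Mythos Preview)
Your proposal is correct and follows exactly the approach the paper takes: the paper simply states that the corollary follows by combining Proposition \ref{QuasiCobounded} with Theorem \ref{GFG}, and you have unpacked precisely this combination (using Proposition \ref{ExUniformQiqac} to supply the uniform geometric qiqac that feeds into Proposition \ref{QuasiCobounded}, and large-scale geodesicity/Theorem \ref{GFG} for coarse connectedness).
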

Moreover, we can choose $X$ to be a generalized Cayley graph for $(\Lambda, \Lambda^\infty)$; this then shows:
\begin{corollary}\label{EGeometricQA} If $(\Lambda, \Lambda^\infty)$ is a geometrically finitely-generated approximate group, then it admits a uniform geometric qiqac on a quasi-cobounded proper metric space, and in fact on a quasi-cobounded locally finite graph.
\end{corollary}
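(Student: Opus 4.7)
The plan is to combine the existence results for uniform qiqacs with the representability of the internal QI type by a locally finite graph. Specifically, since $(\Lambda, \Lambda^\infty)$ is geometrically finitely-generated, the internal QI type $[\Lambda]_{\mathrm{int}}$ is non-empty. By Remark \ref{RepresentingQIType}, I may choose a representative $X \in [\Lambda]_{\mathrm{int}}$ which is a locally finite graph (a generalized Cayley graph for $(\Lambda, \Lambda^\infty)$), equipped with its canonical geodesic metric; in particular $X$ is proper.

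Next I would apply Proposition \ref{ExUniformQiqac} to this representative $X$, which produces a uniform qiqac $\lambda : (\Lambda, \Lambda^\infty) \to \widetilde{\mathrm{QI}}(X)$ (concretely, the internal left-regular quasi-action obtained by quasi-conjugating the rough quasi-action $\widehat{\lambda}$ on $(\Lambda, \widehat{d})$ along the coarse equivalence $\psi : (\Lambda,\widehat{d}) \to X$). By the preceding proposition in the text, every left-regular quasi-action of a geometrically finitely-generated approximate group is geometric, so $\lambda$ is in fact a uniform \emph{geometric} qiqac on $X$.

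Finally, once we have a uniform geometric qiqac on $X$, quasi-coboundedness of $X$ is automatic: this is exactly the second sentence of Proposition \ref{QuasiCobounded}, which tells us that the existence of a geometric qiqac of an approximate group on a metric space already forces the space to be quasi-cobounded (and semi-uniformly so when the qiqac is uniform). Combining these three ingredients yields a uniform geometric qiqac on the quasi-cobounded proper geodesic metric space $X$, and the choice of $X$ as a locally finite graph gives the stronger ``in fact'' clause of the statement.

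No real obstacle is expected here, since the corollary is essentially a packaging of results proved immediately beforehand; the only minor point to verify is that the representative produced by Remark \ref{RepresentingQIType} is indeed the same representative to which Proposition \ref{ExUniformQiqac} applies, but this is immediate because both statements refer to an arbitrary element of $[\Lambda]_{\mathrm{int}}$.
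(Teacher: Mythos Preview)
Your proposal is correct and follows essentially the same route as the paper: choose a generalized Cayley graph $X \in [\Lambda]_{\mathrm{int}}$ via Remark \ref{RepresentingQIType}, apply Proposition \ref{ExUniformQiqac} and the preceding proposition to obtain a uniform geometric qiqac, and deduce quasi-coboundedness from Proposition \ref{QuasiCobounded}. The paper packages this as an immediate consequence of Corollary \ref{suqc} together with the choice of $X$ as a generalized Cayley graph, which is exactly what you spell out.
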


We conclude this section with a few remarks concerning products of balls with respect to external and internal metrics, which will be used in the proof of the polynomial growth theorem. For external metrics, the results are obvious:
\begin{remark} Let $(\Lambda, \Lambda^\infty)$ be an algebraically finitely-generated approximate group. We fix a left-admissible metric $d$ on $\Lambda^\infty$ and given $j \in \mathbb N$ and $r > 0$ we define
\begin{equation}\label{ExternalBalls}
B_r^{(j)} \coloneqq  \{\lambda \in \Lambda^j \mid d(\lambda, e) < r\} \qand B_r \coloneqq  B_r^{(1)} \subset \Lambda.
\end{equation}
Now if $\lambda_1 \in B_{r_1}^{(j_1)}$ and $\lambda_2 \in B_{r_2}^{(j_2)}$, then
\[
d(\lambda_1\lambda_2, e) \leq d(\lambda_1\lambda_2, \lambda_1) + d(\lambda_1, e) = d(\lambda_2, e) + d(\lambda_1, e) \leq r_1 + r_2
\]
and since $\lambda_1\lambda_2 \in \Lambda^{j_1+j_2}$ we obtain
\begin{equation}
B_{r_1}^{(j_1)}B_{r_2}^{(j_2)} \subset B_{r_1+r_2}^{(j_1+j_2)}, \quad \text{in particular} \quad B_{r}^k = (B_r^{(1)})^k \subset B^{(k)}_{kr} \; \text{for all }k \in \mathbb N, r>0.
\end{equation}
\end{remark}
For balls with respect to internal metrics a slightly weaker conclusion still holds:
\begin{proposition} Let $N \in \mathbb N$ and let $(\Lambda, \Lambda^\infty)$ be a geometrically finitely-generated approximate group. Moreover, let $d$ be an internal metric on $\Lambda^N$ and given $r > 0$ and $j \in \{1, \dots, N\}$ let $B_r^{(j)}$ and $B_r$ be as in \eqref{ExternalBalls}. Then there exist constants $C\geq 1$, $K \geq 0$ such that for all $r_1, r_2>0$ and $j_1, j_2 \in \mathbb N$ with $j_1+j_2 \leq N$ we have
\[
B_{r_1}^{(j_1)}B_{r_2}^{(j_2)} \subset B_{C(r_1+r_2) + K}^{(j_1+j_2)}.
\]
In particular, there exist $C' \geq 1$ and $K' \geq 0$ such that for all $r>0$,
\[
B_r^N =(B_r^{(1)})^N \subset B_{C'r+K'}^{(N)}.
\]
\end{proposition}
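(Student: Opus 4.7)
My plan is to exploit the uniform left-regular qiqac of $(\Lambda^N, \Lambda^\infty)$ on the given space $(\Lambda^N, d)$. Since $(\Lambda, \Lambda^\infty)$ is geometrically finitely-generated, so is $(\Lambda^N, \Lambda^\infty)$, and by Lemma \ref{CanoicalQITypeGood}(ii) the two share the same internal QI type; in particular $(\Lambda^N, d) \in [\Lambda^N]_{\mathrm{int}}$. Proposition \ref{ExUniformQiqac} applied to $(\Lambda^N, \Lambda^\infty)$ therefore furnishes a uniform qiqac $\lambda \colon (\Lambda^N, \Lambda^\infty) \to \widetilde{\mathrm{QI}}(\Lambda^N, d)$, so there exist constants $K_0 \geq 1$ and $C_0 \geq 0$ such that every $\lambda(\mu)$ with $\mu \in (\Lambda^N)^1 = \Lambda^N$ is a $(K_0, C_0, C_0)$-quasi-isometry of $(\Lambda^N, d)$.

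The crux of the argument is to arrange that $\lambda$ coincides with left-multiplication \emph{on the nose} within $\Lambda^N$: that is, $\lambda(\mu)(\xi) = \mu\xi$ whenever $\mu$, $\xi$ and $\mu\xi$ all lie in $\Lambda^N$. To see this, fix any left-admissible metric $\hat d$ on $\Lambda^\infty$ and follow the construction of Proposition \ref{GeneralLeftRegularQA} applied to the approximate group $(\Lambda^N, \Lambda^\infty)$: one chooses projections $p_k^{k+1} \colon (\Lambda^N)^{k+1} \to (\Lambda^N)^k$ satisfying $p_k^{k+1} \circ \iota_k^{k+1} = \mathrm{id}$ and sets $\widehat\lambda(\mu)(\xi) := p_1^{k+1}(\mu\xi)$ for $\mu \in (\Lambda^N)^k$. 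Transferring $\widehat\lambda$ to a uniform qiqac on $(\Lambda^N, d)$ via the identity set-map as in the proof of Proposition \ref{UQAExistence} (which is valid because the identity is a coarse equivalence between these spaces and $(\Lambda^N, d)$ is large-scale geodesic, allowing Proposition \ref{GromovUniform} to be applied) yields $\lambda = \widehat\lambda$ as maps of sets. Whenever $\mu\xi$ already belongs to $\Lambda^N$, the projection $p_1^{k+1}$ fixes it, and hence $\lambda(\mu)(\xi) = \mu\xi$.

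Granted this identity, the estimate is a one-line calculation. Let $\lambda_1 \in B_{r_1}^{(j_1)}$ and $\lambda_2 \in B_{r_2}^{(j_2)}$. Since $j_1 + j_2 \leq N$, both $\lambda_1 = \lambda_1 \cdot e$ and $\lambda_1\lambda_2$ lie in $\Lambda^N$, so $\lambda(\lambda_1)(e) = \lambda_1$ and $\lambda(\lambda_1)(\lambda_2) = \lambda_1\lambda_2$. Using the quasi-isometry property of $\lambda(\lambda_1)$ together with the triangle inequality,
\[
d(\lambda_1\lambda_2, e) \;\leq\; d\bigl(\lambda(\lambda_1)(\lambda_2), \lambda(\lambda_1)(e)\bigr) + d(\lambda_1, e) \;\leq\; K_0\, d(\lambda_2, e) + C_0 + d(\lambda_1, e) \;<\; K_0(r_1 + r_2) + C_0,
\]
so the desired inclusion holds with $C := K_0$ and $K := C_0$. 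The ``in particular'' clause about $B_r^N \subset B^{(N)}_{C'r + K'}$ then follows by iterating the main inclusion $N - 1$ times, with resulting constants $C', K'$ depending only on $N$, $K_0$ and $C_0$.

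The main obstacle is the on-the-nose identity $\lambda(\mu)(\xi) = \mu\xi$: this requires one to be attentive about how the rough qiqac on $(\Lambda^N, \hat d|_{\Lambda^N})$ is transferred to a qiqac on $(\Lambda^N, d)$, and in particular to select the identity as the coarse equivalence used in Proposition \ref{UQAExistence}. If one prefers to cite Proposition \ref{ExUniformQiqac} purely as a black box, it suffices instead to establish the weaker bound $d(\lambda(\mu)(\xi), \mu\xi) \leq D_N$ for a uniform constant $D_N$ (depending only on $N$) when $\mu, \xi, \mu\xi \in \Lambda^N$; this still yields a linear estimate, merely at the cost of enlarging the additive constant by $(1 + K_0)D_N$.
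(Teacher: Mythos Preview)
Your proof is correct and follows essentially the same approach as the paper: both argue that left-multiplication by an element of $\Lambda^{j_1}$ restricts to a $(C,K)$-quasi-isometric embedding $\Lambda^{j_2} \to \Lambda^{j_1+j_2}$ with uniform constants, then apply the triangle inequality. You supply more explicit justification for this ``key observation'' (which the paper simply attributes to the proof of Proposition~\ref{UQAExistence}) by carefully arranging the left-regular qiqac to act by genuine left-multiplication inside $\Lambda^N$, but the structure of the argument is the same.
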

\begin{proof} The key observation, which is contained in the proof of Proposition \ref{UQAExistence}, is that there exist constants $C \geq 1$ and $K \geq 0$ such that  for all $j_1, j_2 \in \mathbb N$ with $j_1+j_2 \leq N$ and for all $\lambda \in \Lambda^{j_1}$ the map 
\[
\Lambda^{j_2} \to \Lambda^{j_1+j_2}, \quad x \mapsto \lambda x
\]
is a $(C,K)$-quasi-isometric embedding. This then implies that for $\lambda_1 \in B_{r_1}^{(j_1)}$ and $\lambda_2 \in B_{r_2}^{(j_2)}$, we have
\begin{eqnarray*}
d(\lambda_1\lambda_2, e) &\leq& d(\lambda_1\lambda_2, \lambda_1) + d(\lambda_1, e) = C d(\lambda_2, e)+K + d(\lambda_1, e)\\
&\leq& r_1 + Cr_2 + K \leq C(r_1+r_2) + K.
\end{eqnarray*}
The first statement then follows as in the external case, and the second statement follows by induction.
\end{proof}
This implies in particular the following small tripling result for internal balls:
\begin{corollary}\label{TriplingInternalBalls} Let $(\Lambda, \Lambda^\infty)$ be a geometrically finitely-generated approximate group and let $d$ be an internal metric on $\Lambda$ which is the restriction of an internal metric from $\Lambda^3$. Denote by $B_r \coloneqq  B(e,r) \subset \Lambda$ the $r$-ball around $e$ with respect to $d$. Then there exist $C, C' \geq 1$ and $K \geq 0$ such that for all $r > 0$,
\[
|B_r^3| \leq C' |B_{Cr+K}|.
\]
\end{corollary}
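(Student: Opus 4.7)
The plan is to deduce the tripling estimate from the preceding proposition by transferring ball-count information between $(\Lambda,d)$ and $(\Lambda^3, d')$, where $d'$ denotes the given internal metric on $\Lambda^3$ (so that $d = d'|_{\Lambda\times\Lambda}$). Write $B^{(3)}_s := \{\lambda \in \Lambda^3 \mid d'(\lambda, e) < s\}$. Applying the preceding proposition with $N = 3$ gives constants $C \geq 1$ and $K \geq 0$ such that
\[
B_r^3 \;\subset\; B^{(3)}_{Cr + K} \qquad \text{for all } r > 0,
\]
so it suffices to bound $|B^{(3)}_{Cr+K}|$ by a constant multiple of $|B_{C''r + K''}|$ for suitable $C'', K''$.

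The main step is to compare the two counting functions. The inclusion $\iota: (\Lambda, d) \hookrightarrow (\Lambda^3, d')$ is a quasi-isometry by Lemma \ref{CanoicalQITypeGood}(ii) (both spaces represent the internal QI type $[\Lambda]_{\mathrm{int}}$). In particular, $\iota$ is coarsely onto, so there exists $R \geq 0$ such that $\Lambda$ is $R$-relatively dense in $(\Lambda^3, d')$. Choose once and for all a map $p: \Lambda^3 \to \Lambda$ with $d'(\lambda, p(\lambda)) \leq R$ for every $\lambda \in \Lambda^3$. If $\lambda \in B^{(3)}_{Cr+K}$, then the triangle inequality together with $d'|_{\Lambda\times\Lambda} = d$ yields $d(p(\lambda), e) \leq Cr + K + R$, hence $p(\lambda) \in B_{Cr + K + R}$.

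The next ingredient is coarse bounded geometry: by Corollary \ref{CoarseBoundedGeometry}(ii), the proper large-scale geodesic space $(\Lambda^3, d')$ is of coarse bounded geometry, so there exists a constant $M := \Phi_+(R)$ with the property that every $d'$-ball in $\Lambda^3$ of radius $R$ contains at most $M$ elements of $\Lambda^3$. Since each fiber $p^{-1}(\mu)$ is contained in such a ball (around $\mu$), we obtain $|p^{-1}(\mu)| \leq M$ for every $\mu \in \Lambda$. Combining the two displays gives
\[
|B_r^3| \;\leq\; |B^{(3)}_{Cr+K}| \;\leq\; M \cdot |p(B^{(3)}_{Cr+K})| \;\leq\; M \cdot |B_{Cr + (K + R)}|,
\]
which is the desired inequality with $C' := M$ and with $K$ replaced by $K + R$.

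The only nontrivial input is the quasi-isometric nature of the inclusion $\Lambda \hookrightarrow \Lambda^3$ for internal metrics (which is not literally obvious since it requires Gromov's trivial lemma for \lsg metrics, packaged via Lemma \ref{CanoicalQITypeGood}(ii)); everything else is formal use of coarse bounded geometry and a pigeonhole argument on fibers of a nearest-point projection. I expect no serious obstacle, modulo taking care that the basepoint $e \in \Lambda \subset \Lambda^3$ is used consistently and that the constants are extracted in the right order.
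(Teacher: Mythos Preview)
Your proof is correct and follows the same two-step outline as the paper: first use the preceding proposition to get $B_r^3 \subset B^{(3)}_{Cr+K}$, then compare $|B^{(3)}_s|$ to $|B_{s'}|$ for suitable $s'$.

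The difference lies only in the second step. The paper uses the approximate group structure directly: choosing a finite $F$ with $\Lambda^3 \subset \Lambda F$, it writes each $g \in B^{(3)}_r$ as $g = \lambda f$ and deduces $B^{(3)}_r \subset B_{r+K_1}F$, hence $|B^{(3)}_r| \leq |F|\cdot|B_{r+K_1}|$. You instead use the metric picture: a nearest-point map $p:\Lambda^3 \to \Lambda$ with $d'(p(\cdot),\cdot) \leq R$, together with uniform local finiteness of $(\Lambda^3,d')$ to bound fiber cardinalities. These are two expressions of the same underlying fact that $\Lambda$ is relatively dense in $\Lambda^3$; your version has the small advantage that it avoids any appeal to left-invariance of the internal metric (which the paper's line $d(gf^{-1},g)=d(f^{-1},e)$ implicitly uses, and which internal metrics need not satisfy literally). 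One minor refinement: Corollary~\ref{CoarseBoundedGeometry}(ii) gives coarse bounded geometry, but to get that $\Lambda^3$ itself is uniformly locally finite you really want that $\Lambda^3$ is a quasi-lattice in $(\Lambda^3,d')$---this follows by combining Lemma~\ref{CoarseBG1} (applied to $\Lambda^3$ with a left-admissible metric) with Lemma~\ref{Quasilattice}.
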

\begin{proof} 

By the previous proposition we find constants $C_0, K_0$ such that $B_r^3 \subset B^{(3)}_{C_0r + K_0}$. It thus remains to show only that there exists a finite set $F \subset \Lambda^\infty$ such that for all $r>0$ we have $B^{(3)}_r \subset B_{C_1r+K_1}F$. We will choose $F \subset \Lambda^\infty$ such that $\Lambda^3 \subset \Lambda F$; then for every $g \in B^{(3)}_r$ we find $\lambda \in \Lambda$ and $f \in F$ such that $g = \lambda f$ and hence if we set $K_1 \coloneqq  \max\{d(f^{-1}, e) \ | f \in F\}$, then
\[
d(\lambda, e) = d(gf^{-1}, e) \leq d(gf^{-1}, g) +  d(g,e) = r+d(f^{-1}, e) \leq r + K_1,
\]
which shows that $B^{(3)}_r \subset B_{r+K_1}F$ and finishes the proof.
\end{proof}

\section{The Milnor--Schwarz lemma and its variants}\label{SecMilnorSchwarz}
 The classical Milnor--Schwarz lemma (Lemma \ref{MSClassic}) says that orbit maps of geometric actions of finitely-genera\-ted groups on proper large-scale geodesic spaces are quasi-isometries. In fact, finite generation of the group in question does not have to be assumed but follows automatically from the existence of such a geometric action. The conclusions of the lemma remain valid if one replaces actions by quasi-actions and geodesicity by large-scale geodesicity; these generalizations are straight-forward. If one weakens the geodesicity assumption further to coarse connectedness, then one still obtains the weaker conclusion that the orbit maps are coarse equivalences. All of these statements remain correct in the wider setting of approximate groups, except for one subtle point: The existence of a geometric qiqac on a proper geodesic metric space only implies \emph{algebraic} finite generation, whereas the other parts of the lemma work with \emph{geometric} finite generation. Taking this subtlety into account, one arrives at Theorem \ref{MSIntro} from the introduction, which we restate here for the convenience of the reader:
\begin{theorem}[Approximate Milnor--Schwarz lemma]\label{MSMain}
For every approximate group $(\Lambda, \Lambda^\infty)$ the following implications hold:
\begin{enumerate}[(i)]
\item If $(\Lambda, \Lambda^\infty)$ admits a geometric qiqac on a coarsely connected proper metric space $X$, then it is algebraically finitely-generated.
\item Conversely, if $(\Lambda, \Lambda^\infty)$ is geometrically finitely-generated, then it admits a geometric qiqac on a proper geodesic metric space $X$, and any such space $X$ is an apogee for $(\Lambda, \Lambda^\infty)$.
\end{enumerate}
\end{theorem}
Part (i) was already established (in a slightly stronger form) in Proposition \ref{ExUniformQiqac}. Slightly stronger forms of Parts (ii) and (iii) are established as Parts (ii) and (vii) of the following theorem, which is the main result of this section:
\begin{theorem}[General Milnor--Schwarz lemma for approximate groups]\label{ThmMS}
Let $(\Lambda, \Lambda^\infty)$ be an approximate group, $(X,d)$ be a coarsely-connected proper pseudo-metric space and let $\rho:(\Lambda, \Lambda^\infty) \to \widetilde{\rm QI}(X)$ be a $(K_k, C_k, C_k')$-qiqac of $(\Lambda, \Lambda^\infty)$ on $X$. Let $x_0 \in X$ be a basepoint and denote by $\iota: \Lambda \to X$ the associated orbit map given by $\lambda \mapsto \rho(\lambda).x_0$.
\begin{enumerate}[(i)]
\item If $\rho$ is cobounded, then there exists $R > 0$ such that $F \coloneqq   \{\mu \in \Lambda^2 \mid B_R(x_0) \cap \rho(\mu).B_R(x_0) \neq \emptyset\}$ generates $\Lambda^\infty$.
\end{enumerate}
If moreover $\rho$ is geometric, then the following hold.
\begin{enumerate}[(i)]
  \setcounter{enumi}{1}
\item $(\Lambda, \Lambda^\infty)$ is algebraically finitely-generated.
\item The orbit map  $\iota: (\Lambda, d_F|_{\Lambda \times \Lambda}) \to (X, d)$, $\lambda \mapsto \rho(\lambda).x_0$ is a coarse equivalence for some (hence any) finite generating set $F$ of $\Lambda^\infty$.
\item If all of the sequences $(K_k)$, $(C_k)$ and $(C_k')$ are bounded, then the upper control of $\iota$ can be chosen to be affine linear, i.e.\ there exist constants $K\geq 1$ and $C \geq 0$ such that $d(\iota(\mu), \iota(\lambda)) \leq K d_F(\mu, \lambda) + C$. In particular, this happens if $\rho$ is an isometric action or $\Lambda = \Lambda^{\infty}$ is a group.
\end{enumerate}
If moreover $X$ is large-scale geodesic, then the following hold.
\begin{enumerate}[(i)]
  \setcounter{enumi}{4}
\item The lower control of $\iota$ can be chosen to be affine linear, i.e.\ there exist constants $K\geq 1$ and $C \geq 0$ such that $d(\iota(\mu), \iota(\lambda)) \geq K^{-1} d_F(\mu, \lambda) - C$.
\item If  all of the sequences $(K_k)$, $(C_k)$ and $(C_k')$ are bounded, then $\iota: (\Lambda, d_F) \to (X, d)$ is a quasi-isometry and hence $(X,d)$ represents $[\Lambda]_{\rm ext}$. In particular, this happens if $\rho$ is an isometric action or $\Lambda = \Lambda^{\infty}$ is a group.
\end{enumerate}
If moreover $(\Lambda, \Lambda^\infty)$ is geometrically finitely-generated, then
\begin{enumerate}[(i)]
  \setcounter{enumi}{6}
\item $(X, d)$ represents $[\Lambda]_{\mathrm{int}}$.
\end{enumerate}
In particular, if a geometrically finitely-generated approximate group quasi-acts geometrically by quasi-isometries on a large-scale geodesic proper pseudo-metric space, then this space represents its internal and external QI type.
\end{theorem}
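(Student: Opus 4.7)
The plan is to prove the seven claims in order, with (i) serving as the combinatorial heart of the argument. For (i), I would exploit coarse connectedness of $X$ (giving a constant $C_0$) and coboundedness of $\rho$ (giving density constant $R_0$): given $\lambda \in \Lambda$, choose a $C_0$-chain $y_0 = x_0, y_1, \dots, y_N = \rho(\lambda).x_0$ in $X$ and pick $\mu_i \in \Lambda$ with $d(\rho(\mu_i).x_0, y_i) \leq R_0$, arranging $\mu_0 = e$ and $\mu_N = \lambda$. Using the level-$1$ qiqac bound $d(\rho(\mu_{i-1}^{-1}\mu_i).x_0, \rho(\mu_{i-1}^{-1})\rho(\mu_i).x_0) \leq C_1'$, Lemma \ref{Inverses}, and the $(K_1, C_1)$-QI property of $\rho(\mu_{i-1})$, one checks that for $R$ large enough (depending only on $R_0, C_0, K_1, C_1, C_1', D_1$) each $\mu_{i-1}^{-1}\mu_i$ lies in $F$. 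Telescoping then yields $\lambda = (\mu_0^{-1}\mu_1)(\mu_1^{-1}\mu_2)\cdots(\mu_{N-1}^{-1}\mu_N) \in F^N$, and since $\Lambda^\infty$ is generated by $\Lambda$, $F$ generates $\Lambda^\infty$. For (ii), properness of $\rho$ (inherited at level $2$ by Lemma \ref{GeometricSurvives2}) forces $F$ to be finite.

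For (iii), coarse surjectivity of the orbit map $\iota$ is precisely coboundedness. For the upper control, I set $\nu := \mu_1^{-1}\mu_2$ and write $\nu = f_1 \cdots f_n$ with $f_i \in F$ and $n = d_F(\mu_1, \mu_2)$. The definition of $F$ together with the level-$2$ QI property of $\rho$ gives a uniform bound $d(\rho(f).x_0, x_0) \leq D$ for $f \in F$, and estimating the partial products $\rho(f_1 \cdots f_i).x_0$ inductively via the level-$2i$ qiqac bound yields an admissible upper-control function of $n$; this collapses to an affine linear bound under the uniform hypothesis, proving (iv). For the lower control, $d(\iota(\mu_1), \iota(\mu_2)) \leq R$ translates via the $(K_1, C_1)$-QI behavior of $\rho(\mu_1)$ and Lemma \ref{Inverses} into $d(x_0, \rho(\nu).x_0) \leq R'$ for some $R' = R'(R, K_1, C_1, C_1', D_1)$; by level-$2$ properness, $\nu$ then ranges over a finite set, so $d_F(e, \nu)$ is bounded as a function of $R$.

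For (v), when $X$ is large-scale geodesic any two points admit a chain of uniformly bounded mesh whose length is a linear function of their distance. Applying the construction of (i) to such a chain of length $\lesssim R'$ from $x_0$ to $\rho(\nu).x_0$ writes $\nu$ as a product of $\lesssim R'$ elements of $F$; this produces an affine lower-control bound whose coefficients depend only on the fixed level-$1$ and level-$2$ constants and the geodesic scale of $X$, which is why (v) does not require uniformity of the higher $K_k, C_k, C_k'$. Assertion (vi) follows either by combining (iv) and (v) or, more cleanly, from Corollary \ref{GromovTrivialSymmetric} applied to the coarse equivalence $\iota$ between the two large-scale geodesic spaces $(\Lambda, d_F)$ and $(X, d)$. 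Claim (vii) is then immediate: $X$ is coarsely equivalent to $(\Lambda, d_F|_{\Lambda \times \Lambda}) \in [\Lambda]_c$ by (iii) and large-scale geodesic by hypothesis, so $X \in [\Lambda]_{\mathrm{int}}$.

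The main obstacle I anticipate is careful bookkeeping of the ``levels'' at which the qiqac conditions are invoked: each error term $C_k'$ and QI bound $(K_k, C_k)$ must be applied at a $k$ large enough to contain the relevant product, and in the inductive upper-control estimate this level grows linearly with the word length $n$. Tracking this level-dependence is exactly what makes the affine bounds of (iv) and (vi) collapse under the uniform QI-constant hypothesis, and what lets (v) go through while appealing only to the fixed level-$1$ and level-$2$ constants.
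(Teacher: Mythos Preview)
Your proposal is correct and matches the paper's proof in structure and method: the chain argument for (i), level-$2$ properness for (ii) and for the lower control in (iii), the inductive word-length estimate with growing levels $2k$ for the upper control in (iii)/(iv), and the definition of $[\Lambda]_{\mathrm{int}}$ for (vii). The only cosmetic difference is in (v), where the paper simply invokes Gromov's trivial lemma (Lemma~\ref{GromovTrivial}) on the coarse inverse of $\iota$ rather than rerunning the chain construction with a linear-length chain; your version is just an explicit unfolding of that citation.
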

\begin{proof}

Throughout the proof we will assume without loss of generality that $(K_k)$ and $(C_k)$ are increasing sequences and that $C_k'<C_k$. We also choose $(D_k)$ as in Lemma \ref{Inverses}. 

(i) Fix a basepoint $x_0 \in X$ and a constant $C \geq 0 $ such that $X$ is $C$-coarsely connected and the quasi-orbit $\mathcal O \coloneqq  \rho(\Lambda).x_0$ is $C$-relatively dense in $X$. Then for every $\lambda \in \Lambda$ there exists a $C$-path from $x_0$ to $x_n = \rho(\lambda).x_0$, i.e., there exist points $x_0, x_1, \dots, x_n \in X$ such that $d(x_i, x_{i+1}) < C$. Since $\mathcal O$ is $C$-relatively dense, there exist $\lambda_0, \dots, \lambda_{n} \in \Lambda$ such that $\lambda_0 = e$, $\lambda_n = \lambda$ and $d( \rho(\lambda_i).x_0, x_i) < 2C$. We then have
\[d( \rho(\lambda_i).x_0,  \rho(\lambda_{i+1}).x_0) < 3C,\]
hence there exists $R > 0$ (depending on $C$ and the implied quasi-isometry constants) such that 
\[
d(\rho(\lambda_i^{-1}\lambda_{i+1}).x_0, x_0) < R
\]
and thus $\{\lambda_0^{-1}\lambda_1, \dots, \lambda_{n-1}^{-1}\lambda_n\} \subset F$. On the other hand,
\[
\lambda = \lambda_0^{-1}\lambda_n = (\lambda_0^{-1}\lambda_1) (\lambda_1^{-1}\lambda_2) \cdots (\lambda_{n-1}^{-1}\lambda_n),
\]
and since $\lambda \in \Lambda$ was arbitrary we conclude that $\langle F \rangle = \langle \Lambda \rangle = \Lambda^\infty$.

(ii) This follows from (i) and Remark \ref{RemarkProperness}.

(iii), (iv) Since the statement does not depend on the choice of generating system, we may assume that $F \coloneqq   \{\mu \in \Lambda^2 \mid B_R(x_0) \cap \rho(\mu).B_R(x_0) \neq \emptyset\}$ where $R$ is chosen as in (i). (Note again that $F$ is finite by Remark \ref{RemarkProperness}.) Since $\rho$ is cobounded, the orbit map $\iota$ has relatively dense image. It thus remains to find upper and lower controls for $\iota$, and to show that the upper control can be chosen affine linear if $(K_n)$ and $(C_n)$ (and hence $(C_n')$ by our convention $C_n' < C_n$) are bounded.

Since $F$ is finite, the maximum 
\[
C_F \coloneqq  \max_{f \in F} d(x_0, \rho(f)x_0)
\]
exists, and we define an upper control $\Phi_+$ by $\Phi_+(n) \coloneqq  K_1( K_{2n}C_F + 2C_{2n})n + (K_1+1)C_1 + D_1$ and note that this upper control is affine linear if and only if $(K_{n})$ and $(C_{n})$ are bounded. Now denote by $d_{\rho}$ the pseudo-metric on $\Lambda^\infty$ given by
\[
d_{\rho}(\mu, \lambda) \coloneqq  d(\rho(\mu).x_0, \rho(\lambda).x_0) = d(\iota(\mu), \iota(\lambda)),
\]
and observe that for $\mu, \lambda \in \Lambda$ we have
\begin{eqnarray*}
d_{\rho}(\mu, \lambda) &=& d(\rho(\mu).x_0, \rho(\lambda).x_0) \leq K_1 d(x_0, \rho(\mu)^{-1}\rho(\lambda).x_0) + C_1\\ &\leq&  K_1 d(x_0, \rho(\mu^{-1})\rho(\lambda).x_0) + C_1 + D_1 \\
&\leq&  K_1 d(x_0, \rho(\mu^{-1}\lambda).x_0) + K_1C_1 + C_1 + D_1.
\end{eqnarray*}
Now if $\mu, \lambda \in \Lambda$ with $d_F(\mu, \lambda) = n$, then there exist $f_1, \dots, f_n \in F$ such that  $\mu^{-1}\lambda = f_1 \cdots f_n$, and hence
\begin{eqnarray*}
d_{\rho}(\mu, \lambda) &\leq&  K_1 d(x_0, \rho(\mu^{-1}\lambda).x_0) + (K_1+1)C_1 + D_1 \\
&=&  K_1 d(x_0, \rho(f_1 \cdots f_n).x_0) + (K_1+1)C_1 + D_1\\
&\leq& K_1 \left(\sum_{k=0}^{n-1} d(\rho(f_1 \cdots f_{k}).x_0, \rho(f_1 \cdots f_{k+1}).x_0) \right)+ (K_1+1)C_1 + D_1,
\end{eqnarray*}
where we will take that, for $k=0$, the expression $\rho(f_1\ldots f_k).x_0$ stands for (just) $x_0$.
So let $0 \leq k \leq n-1$. Since $F \subset \Lambda^2$, we have $f_1 \cdots f_k \in \Lambda^{2k}$, leading to the estimate
\begin{eqnarray*}
&&d(\rho(f_1 \cdots f_k).x_0, \rho(f_1 \cdots f_{k+1}).x_0)\\ &\leq& d(\rho(f_1 \cdots f_k).x_0, \rho(f_1 \cdots f_{k})\rho(f_{k+1}).x_0) + d(\rho(f_1 \cdots f_{k})\rho(f_{k+1}).x_0,  \rho(f_1 \cdots f_{k+1}).x_0)\\
&\leq& (K_{2k}d(x_0, \rho(f_{k+1}).x_0) + C_{2k}) + C_{2k} \leq K_{2k}C_F + 2C_{2k} \leq  K_{2n}C_F + 2C_{2n}.
\end{eqnarray*}
Plugging this into our previous estimate for $d_{\rho}(\mu, \lambda)$ we deduce that
\begin{eqnarray*}
d_{\rho}(\mu, \lambda) &\leq& K_1 \left(\sum_{k=0}^{n-1}K_{2n}C_F + 2C_{2n}  \right)+ (K_1+1)C_1 + D_1\\
&= &K_1( K_{2n}C_F + 2C_{2n})n + (K_1+1)C_1 + D_1 \quad = \quad \Phi_+(n).
\end{eqnarray*}
Using that $n = d_F(\mu, \lambda)$ and unraveling the definition of $d_\rho$ we thus obtain
\begin{equation} d(\iota(\mu), \iota(\lambda)) \leq \Phi_+(d_F(\mu, \lambda)).\end{equation}

To obtain a lower control we introduce the following notation: Given $n \in \mathbb N$ we define
\[
B_n \coloneqq  \{\lambda \in \Lambda \mid d_\rho(e, \lambda) < n\}.
\]
By properness of the quasi-action, each of the sets $B_n$ is finite. We deduce that
\[ B_1 \subset B_2 \subset \dots \quad \text{and} \quad F \cap \Lambda \subset F^2 \cap \Lambda \subset F^3 \cap \Lambda \subset \dots \]
 are finite exhaustions of $\Lambda$. In particular, for every $n \in \mathbb N$ we can find $m \in \mathbb N$ such that $B_{n+1} \subset F^m \cap \Lambda$. In fact, we can
 find a continuous strictly increasing function $\sigma : [0, \infty) \to [0, \infty)$ which satisfies $\sigma(t) \to \infty$ as $t \to \infty$, $\sigma(\mathbb N) = \mathbb N$ and 
  \[
 B_{n+1} \subset F^{\sigma(n)} \cap \Lambda
 \]
 for all $n \in \mathbb N$. Observe that such a function is necessarily invertible, and that its inverse $\Phi_- \coloneqq  \sigma^{-1}$ is a lower control function. Now let $\mu, \lambda \in \Lambda$ and assume that $n \leq d_\rho(\mu, \lambda) < n+1$. Then
\begin{eqnarray*}
& & d_\rho(e ,\mu^{-1}\lambda) < n+1 \Rightarrow\mu^{-1}\lambda \in B_{n+1} \subset F^{\sigma(n)} \\
&\Rightarrow& d_F(\mu, \lambda) = d_F(e, \mu^{-1}\lambda) \leq \sigma(n) \leq \sigma(d_\rho(\mu, \lambda)) \leq \sigma(d(\iota(\mu), \iota(\lambda)))\\
&\Rightarrow& \Phi_-(d_F(\mu, \lambda)) \leq (\Phi_- \circ \sigma)(d(\iota(\mu), \iota(\lambda))) = d(\iota(\mu), \iota(\lambda)).
\end{eqnarray*}
This shows that $\Phi_-$ is a lower control for $\iota$ and finishes the proof.

(v) This follows from (iii) and Lemma \ref{GromovTrivial}.

(vi) This follows from (iv) and (v).

(vii) This follows from (v) and another application of Lemma \ref{GromovTrivial}.
\end{proof}
\begin{remark} Assume that $(\Lambda, \Lambda^\infty)$ is distorted and let $d$ be an external metric on $\Lambda$. Then $(\Lambda, \Lambda^\infty)$ quasi-acts geometrically by quasi-isometries on $(\Lambda, d)$, but by assumption $(\Lambda, d)$ does not represent $[\Lambda]_{\mathrm{int}}$. This shows in particular that (vii) does not hold in general without the assumption that $(X,d)$ be large-scale geodesic.
\end{remark}
We now derive a number of consequences of Theorem \ref{ThmMS}. Firstly, we obtain the following implication which was already mentioned several times before:
\begin{corollary}[Geometric finite generation implies algebraic finite generation]\label{GFGAFG} If $(\Lambda, \Lambda^\infty)$ is a geometrically finitely-generated group, then it is also algebraically finitely-generated.
\end{corollary}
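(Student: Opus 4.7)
The plan is to derive the corollary as a direct consequence of the two main tools developed in the preceding sections, namely the existence of geometric qiqacs on apogees and the Milnor--Schwarz type statement in Theorem \ref{ThmMS}. I would argue in two short steps.

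First, I would invoke Corollary \ref{EGeometricQA} to produce a (uniform) geometric qiqac $\rho : (\Lambda, \Lambda^\infty) \to \widetilde{\mathrm{QI}}(X)$, where $X$ can be taken to be a quasi-cobounded locally finite graph representing the internal QI type $[\Lambda]_{\mathrm{int}}$ (i.e.\ a generalized Cayley graph for $(\Lambda, \Lambda^\infty)$ in the sense of Remark \ref{RepresentingQIType}). Such a graph, equipped with the standard edge metric, is a proper connected graph and hence a proper geodesic metric space; in particular it is coarsely connected, so the hypotheses of Theorem \ref{ThmMS} are all met.

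Second, I would apply Theorem \ref{ThmMS}(i), which produces an explicit finite subset
\[
F := \{\mu \in \Lambda^2 \mid B_R(x_0) \cap \rho(\mu).B_R(x_0) \neq \emptyset\} \subset \Lambda^2
\]
(finite by properness of $\rho$ together with Remark \ref{RemarkProperness}) that generates $\Lambda^\infty$. This is precisely statement (ii) of the theorem, and it says exactly that $\Lambda^\infty$ is finitely-generated as a group, i.e.\ that $(\Lambda, \Lambda^\infty)$ is algebraically finitely-generated.

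There is no genuine obstacle here: all the work has already been done, either in producing the left-regular quasi-action on an apogee (Proposition \ref{ExUniformQiqac}, Corollary \ref{EGeometricQA}) or in the path-lifting argument inside the proof of Theorem \ref{ThmMS}(i) that turns coarse-connectedness of $X$ and coboundedness of $\rho$ into a finite generating set for $\Lambda^\infty$. The corollary is really just the packaging of these two inputs.
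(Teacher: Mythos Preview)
Your proposal is correct and follows essentially the same approach as the paper: invoke Corollary \ref{EGeometricQA} to obtain a geometric qiqac on a proper geodesic metric space, then apply Theorem \ref{ThmMS}(ii) to conclude that $\Lambda^\infty$ is finitely generated. The paper's proof is just a terser version of what you wrote.
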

\begin{proof} By Corollary \ref{EGeometricQA}, $(\Lambda, \Lambda^\infty)$ admits a geometric qiqac on a proper geodesic metric space. The corollary then follows from Theorem \ref{ThmMS}.(ii).
\end{proof}

Secondly, we obtain a new geometric criterion for undistortedness:
\begin{corollary}[Undistortedness from isometric actions] \label{cor:undistort} 
Let $(\Lambda, \Lambda^\infty)$ be a geometrically finitely-generated approximate group which acts geometrically by isometries on a large-scale geodesic proper pseudo-metric space $(X,d)$. Then $(\Lambda, \Lambda^\infty)$ is algebraically finitely-generated and undistorted.
\end{corollary}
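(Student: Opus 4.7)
The plan is to obtain both conclusions directly from the Milnor--Schwarz lemma (Theorem \ref{ThmMS}) applied to the given isometric action $\rho \colon (\Lambda, \Lambda^\infty) \to \mathrm{Is}(X)$. The key observation is that any isometric action is trivially a $(1,0,0)$-qiqac in the sense of Definition \ref{def: qiqac}, so the quasi-isometry constants $K_k, C_k, C_k'$ appearing in Theorem \ref{ThmMS} are uniformly bounded (equal to $1$, $0$, $0$ respectively) for every $k$. This places us in the situation where the ``bounded constants'' conclusions of parts (iv) and (vi) apply.

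First I would extract algebraic finite generation: since $\rho$ is a geometric qiqac on a proper pseudo-metric space, part (ii) of Theorem \ref{ThmMS} yields directly that $(\Lambda, \Lambda^\infty)$ is algebraically finitely-generated, establishing the first assertion of the corollary.

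For undistortedness one must show $[\Lambda]_{\mathrm{int}} = [\Lambda]_{\mathrm{ext}}$ as subsets of $[\Lambda]_c$. Because $X$ is in addition assumed to be large-scale geodesic and the qiqac constants are (trivially) bounded, part (vi) of Theorem \ref{ThmMS} produces a quasi-isometry from $(\Lambda, d_F|_{\Lambda\times\Lambda})$ to $(X,d)$, for $F$ a finite generating set of $\Lambda^\infty$; by definition of the external QI type this gives $(X,d) \in [\Lambda]_{\mathrm{ext}}$. Simultaneously, the standing hypothesis that $(\Lambda, \Lambda^\infty)$ is geometrically finitely-generated lets us invoke part (vii), which asserts $(X,d) \in [\Lambda]_{\mathrm{int}}$. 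Since a single metric space $X$ represents both QI classes, they must coincide, and undistortedness then follows from Definition \ref{def: undistorted}.

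There is no real obstacle here: the corollary is essentially a bookkeeping consequence of the much deeper Theorem \ref{ThmMS}. The only point worth emphasizing is that the passage from a geometric qiqac to a geometric \emph{isometric} action is precisely what makes the uniformity hypotheses of parts (iv) and (vi) automatic; a geometric qiqac with unbounded QI constants would only give coarse equivalence, not quasi-isometry, in part (iii), and thus could not force $[\Lambda]_{\mathrm{int}} = [\Lambda]_{\mathrm{ext}}$. This is exactly the mechanism behind Proposition \ref{DistortionObstruction} of the introduction.
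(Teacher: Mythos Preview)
Your proof is correct and follows exactly the paper's approach: invoke Theorem~\ref{ThmMS}(ii) for algebraic finite generation, then use parts (vi) and (vii) (the latter two applicable precisely because an isometric action is a qiqac with bounded constants and $X$ is large-scale geodesic) to see that $(X,d)$ represents both $[\Lambda]_{\mathrm{ext}}$ and $[\Lambda]_{\mathrm{int}}$, forcing these QI types to coincide. The paper's proof is the same argument, only more terse.
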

\begin{proof} Finite generation follows from Theorem \ref{ThmMS}.(ii). On the other hand, by Theorem \ref{ThmMS}.(vi) and (vii) we have
\[
[\Lambda]_{\rm ext} = [(X, d)] = [\Lambda]_{\mathrm{int}},
\]
which shows that $(\Lambda, \Lambda^\infty)$ is undistorted.
\end{proof}
Thirdly, we can now finish the proof of Theorem \ref{ThmIsometricActions}:
\begin{proof}[Proof of Theorem \ref{ThmIsometricActions}, concluded] If $(\Lambda, \Lambda^\infty)$ is as in Theorem \ref{ThmIsometricActions}, then by Theorem \ref{ThmMS}.(ii) the group $\Lambda^\infty$ is finitely-generated. Moreover, if $S$ is any finite generating set of $\Lambda^\infty$, then by Theorem \ref{ThmMS}.(vi) there is a quasi-isometry between $(X, d)$ and $(\Lambda, d_S|_{\Lambda \times \Lambda})$, and consequently $(\Lambda, d_S|_{\Lambda \times \Lambda})$ is large-scale geodesic. This shows that $(\Lambda, \Lambda^\infty)$ is geometrically finitely-generated. Moreover, some (hence any) external metric on $\Lambda$ is an internal metric, and thus $(\Lambda, \Lambda^\infty)$ is undistorted. Since the equivalence (i)$\iff$(ii) was already established in Section \ref{SecIsometric}, this finishes the proof.
\end{proof}
Finally, it turns out -- somewhat surprisingly -- that under some mild hypothesis every geometric qiqac can be replaced by a \emph{uniform} geometric qiqac on the same space. This is a consequence of the implication (iii)$\Rightarrow$(ii) of the following corollary.
\begin{corollary}[Making geometric quasi-actions uniform]\label{MSConverse} Let $(\Lambda, \Lambda^\infty)$ be a geometrically finitely-generated approximate group and let $(X, d)$ be a large-scale geodesic proper pseudo-metric space. Then the following are equivalent:
\begin{enumerate}[(i)]
\item $(X,d)$ represents the internal QI type of $(\Lambda, \Lambda^\infty)$, i.e.\ $[(X, d)] = [\Lambda]_{\mathrm{int}}$.
\item There exists a geometric uniform qiqac of $(\Lambda, \Lambda^\infty)$ on $X$.
\item There exists a geometric qiqac of $(\Lambda, \Lambda^\infty)$ on $X$.
\end{enumerate}
\end{corollary}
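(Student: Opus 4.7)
The plan is to verify the three implications cyclically, with (ii)$\Rightarrow$(iii) being immediate, (iii)$\Rightarrow$(i) being a direct invocation of the generalized Milnor--Schwarz lemma, and (i)$\Rightarrow$(ii) being a straightforward application of the internal left-regular quasi-action machinery. No genuinely new argument is needed; the corollary is essentially a repackaging of Proposition~\ref{ExUniformQiqac} together with Theorem~\ref{ThmMS}.(vii).

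First I would dispatch (ii)$\Rightarrow$(iii), which holds by definition since every uniform qiqac is a qiqac. Next, for (iii)$\Rightarrow$(i), assume that $(\Lambda, \Lambda^\infty)$ admits a geometric qiqac on $X$. Since $X$ is by hypothesis large-scale geodesic and proper, and $(\Lambda, \Lambda^\infty)$ is geometrically finitely-generated, the hypotheses of Theorem~\ref{ThmMS} are satisfied. Part (vii) of that theorem then yields precisely $[(X,d)] = [\Lambda]_{\mathrm{int}}$, as required.

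The remaining implication (i)$\Rightarrow$(ii) is the substantive one. Assume $X \in [\Lambda]_{\mathrm{int}}$, so there exist mutually quasi-inverse coarse equivalences $\psi\colon (\Lambda, \widehat d) \to (X,d)$ and $\overline\psi\colon (X,d) \to (\Lambda, \widehat d)$, where $\widehat d$ is the restriction to $\Lambda$ of a left-admissible metric on $\Lambda^\infty$. By Proposition~\ref{GeneralLeftRegularQA} the left-regular rough quasi-action $\widehat\lambda$ on $(\Lambda, \widehat d)$ is a cobounded $(2k\delta, 8k\delta)$-roqac, and it is proper by the same argument as in Lemma~\ref{ExternalQA1}.(i). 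By Proposition~\ref{UQAExistence}, quasi-conjugation by $(\psi, \overline\psi)$ transports $\widehat\lambda$ to a \emph{uniform} qiqac $\lambda := \psi_*\widehat\lambda$ on $X$ (here the uniformity in $k$ is the key point, and it crucially uses the uniform version of Gromov's trivial lemma, Proposition~\ref{GromovUniform}, which applies because $X$ is large-scale geodesic). Finally, since being proper and being cobounded are invariant under quasi-conjugation (Proposition~\ref{GeometricSurvives1}, i.e. Lemmas~\ref{ProperQCInv} and \ref{GeometricInvUnderEquiv}), the qiqac $\lambda$ inherits geometricity from $\widehat\lambda$, producing the desired geometric uniform qiqac on $X$.

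The only place where care is needed is the uniformity claim in (i)$\Rightarrow$(ii): one must remember that the left-regular rough quasi-action on $(\Lambda, \widehat d)$ has additive constants growing in $k$, and it is \emph{only} after quasi-conjugation to a large-scale geodesic space that the multiplicative constants can be controlled uniformly in $k$ -- this is precisely why the hypothesis that $X$ be large-scale geodesic is built into the statement of the corollary and cannot be dropped. All other steps are bookkeeping.
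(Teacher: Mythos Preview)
Your proof is correct and follows essentially the same route as the paper. The paper's own proof is a one-liner citing Proposition~\ref{ExUniformQiqac} for (i)$\Rightarrow$(ii), observing (ii)$\Rightarrow$(iii) is obvious, and invoking Theorem~\ref{ThmMS} for (iii)$\Rightarrow$(i); your argument simply unpacks the content of Proposition~\ref{ExUniformQiqac} into its constituent pieces (Propositions~\ref{GeneralLeftRegularQA}, \ref{UQAExistence}, and \ref{GeometricSurvives1}), which is perfectly fine and arguably more self-contained.
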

\begin{proof} The implication (i) $\Rightarrow$ (ii) was established in Proposition \ref{ExUniformQiqac}, the implication  (ii) $\Rightarrow$ (iii) is obvious and (iii)$\Rightarrow$(i) is part of the Milnor--Schwarz lemma (Theorem \ref{ThmMS}).
\end{proof}
A closer inspection of the proof actually shows that every geometric qiqac of $(\Lambda, \Lambda^\infty)$ on $X$ is quasi-conjugate to a uniform one.

\section{Application to polynomial growth}\label{SubsecPGT1}
We now discuss the proof of Theorem \ref{PGT}. As mentioned earlier, the lion's share of the proof is given by the argument of Hrushovski and its refinement due to Breuillard-Green-Tao. Our task here is twofold: Firstly, we need to translate Theorem \ref{PGT} into the correct language to apply the results from \cite{BGT}. Secondly, we need to deduce from the small tripling property of internal balls (Corollary \ref{TriplingInternalBalls}) that symmetrizations of such balls are $k$-approximate groups for a suitable $k$ and a carefully chosen sequence of radii; this argument is basically the same as in the group case, once the small tripling property has been established.

Concerning the easy implications of Theorem \ref{PGT} we note that the equivalences (i)$\Leftrightarrow$(ii)$\Leftrightarrow$(iii) are immediate from Lemma \ref{CanoicalQITypeGood} and Proposition \ref{QIGrowth}, and that (v)$\Rightarrow$(vi)$\Rightarrow$(iii) hold by Corollary \ref{GrowthImpl}. Finally, (iv)$\Rightarrow$(v) holds by the easy direction of Gromov's polynomial growth theorem. The non-obvious implication is thus the implication (i)$\Rightarrow$(iv), which seems to require some version of the Hrushovski-Breuillard-Green-Tao machine. The precise statement we will use is as follows:
\begin{theorem}\label{BGTConvenient}
Let $(\Lambda, \Lambda^\infty)$ be a geometrically-finitely-generated approximate group. Assume that there exist $k \in \bN$ and finite $k$-approximate subgroups $\Lambda_1 \subset \Lambda_2 \subset \dots \subset \Lambda$ such that $\Lambda = \bigcup \Lambda_n$. Then there exists a finite index approximate subgroup $\Lambda' \subset \Lambda^{16}$ which generates a nilpotent group.
\end{theorem}
In the case where $\Lambda$ is an actual group, this result was established by Hrushovski in \cite[Thm.\ 7.1]{Hrushovski}. The approximate version, however, requires the additional machinery developed by Breuillard, Green and Tao in \cite{BGT}. We explain how to derive it from results in \cite{BGT} in Appendix \ref{AppendixBGT}. 

In order to deduce Theorem \ref{PGT} from Theorem \ref{BGTConvenient} we need to construct a sequence $(\Lambda_n)$ as in the theorem in any given approximate group of polynomial growth. For this we are going to use the following result, which is a weak form of \cite[Cor. 3.11]{Tao}:
\begin{lemma}[Tao's quantitative tripling lemma]\label{TaoTaoTao} Let $C>0$ and let $(A_n)_{n \in \mathbb N}$ be a sequence of finite subsets of a group $G$ such that $|A_n^3| \leq C |A_n|$ holds for all $n\in \mathbb N$. Then there exists $k \in \mathbb N$ (depending on $C$) such that all of the sets $\Lambda_n \coloneqq  A_n \cup \{e\} \cup A_n^{-1}$ are $k$-approximate subgroups.
\end{lemma}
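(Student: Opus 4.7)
The plan is to invoke the noncommutative Pl\"unnecke--Ruzsa machinery developed in \cite{Tao}. First I would deduce from the hypothesis $|A_n^3| \leq C|A_n|$ the iterated mixed product set estimates
\[
|A_n^{\epsilon_1} A_n^{\epsilon_2} \cdots A_n^{\epsilon_m}| \leq K_m |A_n|
\]
for every $m \in \bN$ and every sign pattern $\epsilon_1, \ldots, \epsilon_m \in \{\pm 1\}$, with a constant $K_m = K_m(C)$ depending only on $C$ and $m$. Since $|A_n| \leq |\Lambda_n| \leq 3|A_n|$ and each $\Lambda_n^m$ decomposes as a union of at most $3^m$ mixed product sets of the above form (allowing also the trivial letter $e$ as a factor), this yields a uniform bound $|\Lambda_n^m| \leq L_m |\Lambda_n|$ with $L_m = L_m(C)$ independent of $n$. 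In particular, $|\Lambda_n^2| \leq L_2|\Lambda_n|$ and $|\Lambda_n^3| \leq L_3 |\Lambda_n|$.

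The more delicate step is to upgrade the growth bound $|\Lambda_n^2| \leq L_2 |\Lambda_n|$ to a covering statement $\Lambda_n^2 \subseteq F_n \Lambda_n$ with $|F_n|$ bounded uniformly in $n$. For this I would apply Ruzsa's covering lemma, essentially as in the proof of \cite[Cor.~3.11]{Tao}. The naive application with $X = \Lambda_n^2$ and $Y = \Lambda_n$ only gives $\Lambda_n^2 \subseteq F_n \cdot \Lambda_n \Lambda_n^{-1} = F_n \Lambda_n^2$, which is vacuous because $\Lambda_n$ is symmetric. To avoid this degeneracy one has to apply the covering lemma with an asymmetric auxiliary set built out of $A_n$ (for instance $A_n$ itself, or $A_n \cup \{e\}$); combining several such applications with the mixed Pl\"unnecke--Ruzsa estimates from the first step should then yield a cover of $\Lambda_n^2$ by boundedly many translates of sets of the form $A_n$, $A_n^{-1}$ or $\{e\}$, each of which is contained in $\Lambda_n$.

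The main obstacle is precisely this covering step: the symmetry of $\Lambda_n$ forces one to work throughout with the asymmetric set $A_n$, and the full force of Pl\"unnecke--Ruzsa on mixed products has to be invoked at each application of Ruzsa covering in order to ensure that the resulting cover is genuinely by translates of $\Lambda_n$ rather than by translates of $\Lambda_n^2$. Since this is essentially the content of \cite[Cor.~3.11]{Tao}, the cleanest way to finish is probably to quote that result (or its underlying covering argument) as a black box; the constant $k$ then emerges as a polynomial function of $C$, but the precise exponent is immaterial for the subsequent application to the polynomial growth theorem.
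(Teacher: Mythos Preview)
Your proposal is correct and arrives at the same place as the paper: the lemma is stated there with a \qed\ and no proof, being quoted directly as a weak form of \cite[Cor.~3.11]{Tao}. Your sketch of the Pl\"unnecke--Ruzsa plus Ruzsa covering mechanism is an accurate description of what that corollary does, but since the paper treats the result as a black box citation, your final suggestion to do the same is exactly what is required.
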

In order to establish (the remaining implication of) Theorem \ref{PGT} it is thus enough to find a family of subsets $(A_n)$ which exhaust $\Lambda$ and satisfy $|A_n^3| \leq C |A_n|$ for a uniform constant $C > 0$. We will construct $(A_n)$ as a sequence of balls in $\Lambda$ of carefully chosen radii with respect to an internal metric. More precisely, 
let $(\Lambda, \Lambda^\infty)$ be an approximate group of internal polynomial growth and let $d$ be an internal metric on $\Lambda$, which we assume to be the restriction of an internal metric on $\Lambda^3$. As before we then set, for any $r>0$,
\[
B_r \coloneqq  B(e, r) = \{x \in \Lambda \mid d(x,e) < r\}.
\]
By Corollary \ref{CoarseBoundedGeometry} we can then find a constant $C_0 \geq 1$ and $d \in \mathbb N$ such that
\begin{equation}\label{Pigeon1}
|B_r| \leq C_0 r^d.
\end{equation}
On the other hand, by Corollary \ref{TriplingInternalBalls} we can find constants $C_1, C_2, \geq 1$ and $K \geq 0$ such that
\begin{equation}\label{Pigeon2}
|B_r^3| \leq C_2 \cdot |B_{C_1r+K}|. 
\end{equation}
Now from \eqref{Pigeon1} and \eqref{Pigeon2} we can deduce:
\begin{proposition} There exist $C \geq 1$ and an unbounded increasing sequence $(r_n)_{n \in \mathbb N}$ of positive real numbers such that for all $n  \in \mathbb N$,
\[
|B_{r_n}^3| \leq C |B_{r_n}|.
\]
\end{proposition}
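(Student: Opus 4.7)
The plan is a standard pigeonhole/volume-doubling argument combining the polynomial growth bound \eqref{Pigeon1} with the tripling-bound \eqref{Pigeon2}. The key idea is that if the ratio $|B_{C_1 r + K}|/|B_r|$ were uniformly large along a geometrically spaced sequence of radii, then iteration would force super-polynomial growth of $|B_r|$, contradicting \eqref{Pigeon1}.

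First, I would fix an initial radius $s_0 \geq 1$ and define an unbounded increasing sequence of radii by the iteration $s_{k+1} := C_1 s_k + K$. Solving this recursion, $s_k$ grows at most polynomially in $k$ when $C_1 = 1$, and grows like $C_1^k$ (up to a constant) when $C_1 > 1$. In either case, the polynomial growth bound \eqref{Pigeon1} yields a constant $A > 0$ and a base $B \geq 1$ (with $B = 1$ if $C_1 = 1$ and $B = C_1^d$ otherwise) such that
\[
|B_{s_k}| \leq A \cdot B^k \quad \text{for all } k \in \mathbb N.
\]

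Next, I would fix any real number $M > B$ and argue by pigeonhole that there are infinitely many $k$ for which $|B_{s_{k+1}}| \leq M \cdot |B_{s_k}|$. Indeed, if this failed, then for some $k_0$ we would have $|B_{s_{k+1}}| > M \cdot |B_{s_k}|$ for all $k \geq k_0$, giving $|B_{s_k}| > M^{k - k_0} |B_{s_{k_0}}|$ for all $k \geq k_0$; comparing with the upper bound $|B_{s_k}| \leq A \cdot B^k$ forces $M \leq B$, a contradiction. Let $(k_n)_{n \in \mathbb N}$ be an increasing sequence of such indices and set $r_n := s_{k_n}$; since $s_k \to \infty$, the sequence $(r_n)$ is unbounded and can be made strictly increasing by thinning it out further.

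Finally, for each $n$, combining \eqref{Pigeon2} with the chosen bound yields
\[
|B_{r_n}^3| \leq C_2 \cdot |B_{C_1 r_n + K}| = C_2 \cdot |B_{s_{k_n + 1}}| \leq C_2 M \cdot |B_{s_{k_n}}| = C_2 M \cdot |B_{r_n}|,
\]
so the constant $C := C_2 M$ does the job. The only step requiring mild care is the case analysis between $C_1 = 1$ and $C_1 > 1$ for the growth of $s_k$, but in both cases the polynomial growth of $|B_r|$ dominates the iterated bound and closes the argument; no genuine obstacle is anticipated.
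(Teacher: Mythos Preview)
Your approach is essentially identical to the paper's: both define the same recursive sequence $s_{k+1} = C_1 s_k + K$, bound $|B_{s_k}|$ exponentially in $k$ via the polynomial growth hypothesis, and then argue by contradiction/pigeonhole that the doubling ratio $|B_{s_{k+1}}|/|B_{s_k}|$ cannot exceed a fixed constant for all large $k$. The paper phrases this as a proof by contradiction with the explicit choice $C > (2C_1)^d$, while you extract the subsequence directly, but the underlying mechanism is the same.

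There is one small slip in your case analysis: when $C_1 = 1$ you claim $B = 1$ works, i.e.\ that $|B_{s_k}| \leq A$ is bounded. This is false whenever $\Lambda$ is infinite, since $s_k = s_0 + kK$ grows linearly and $|B_{s_k}|$ grows (at most) polynomially in $k$ but is certainly unbounded. The fix is immediate: any $B > 1$ bounds a polynomial from above eventually, so you can take (say) $B = 2$ in this case and the rest of your argument goes through unchanged. The paper sidesteps this by using the uniform bound $s_k = O((2C_1)^k)$, which gives $B = (2C_1)^d > 1$ regardless of whether $C_1 = 1$ or $C_1 > 1$, so no case split is needed.
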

\begin{proof} In view of \eqref{Pigeon2} it suffices to show that there exists $C \geq 1$ such for every sequence $(r_n)_{n \in \mathbb N}$ as in the proposition and all $n \in \mathbb N$ we have 
\begin{equation}\label{Pigeon3}
 |B_{C_1 r_n + K}| \leq C |B_{r_n}|.
\end{equation}
Assume for contradiction that this is not the case. Then for every $C \geq 1$ and every sequence $(r_n)_{n \in \mathbb N}$ the Condition \eqref{Pigeon3} is violated for some $n \in \mathbb N$. However, this argument does not only apply to the sequence $(r_n)_{n \in \mathbb N}$, but also to any of its subsequences, and thus \eqref{Pigeon3} must be violated for almost all $n \in \mathbb N$. We thus find for every $C > 0$ and every sequence $(r_n)_{n \in \mathbb N}$ an element $n_0 \in \mathbb N$ such that
\[
|B_{C_1 r_n + K}| > C |B_{r_n}|
\]
holds for all $n \geq n_0$. We will specifically choose $C$ with $C > (2C_1)^d$ and $(r_n)_{n \in \mathbb N}$ to be the recursively defined sequence given by  $r_1 \coloneqq  1$ and $r_{n+1} \coloneqq  C_1 r_n + K$ for $n \geq 2$. This sequence then satisfies, for all $n \geq n_0$, the inequality $|B_{r_{n+1}}| = |B_{C_1 r_n + K}| > C|B_{r_n}|$, and hence we see inductively that
\[
|B_{r_{n_0+m}}| > C^m|B_{r_{n_0}}|.
\]
On the other hand, since $r_{m+n_0} = O_{K, n_0}(C_1^m)$ we have $r_{n_0+m} < (2C_1)^m$ for all sufficiently large $m$, and hence for all such $m$ we have
\[
C^m |B_{r_{n_0}}|< |B_{r_{n_0+m}}| \overset{\eqref{Pigeon1}}{\leq} C_0 r_{n_0 + m}^d < C_0 (2C_1)^{dm} \implies C < \sqrt[m]{\frac{C_0}{|B_{r_{n_0}}|}} \cdot (2C_1)^d,
\]
which for $m \to \infty$ yields $C \leq (2C_1)^d$, contradicting our choice of $C$. 
\end{proof}
Theorem \ref{PGT} now follows by choosing $A_n \coloneqq  B_{r_n}$ and applying Lemma \ref{TaoTaoTao} to the sequence $(A_n)_{n \in \mathbb N}$.

\chapter{Hyperbolic approximate groups}\label{chap: hyperbolic approx groups}
Among the early successes of geometric group theory is Gromov's geometric theory of hyperbolic groups; this has found many generalizations and still serves as a blueprint for a good geometric group theory today. Extending the definition of a hyperbolic group to approximate groups presents no difficulty. It turns out that hyperbolic approximate groups share many classical properties of hyperbolic groups, which are not shared by general hyperbolic spaces (see \cite{BuyaloSchroeder} for the group case). For example, they are always visual (Proposition \ref{PropVisual}) and have bounded growth at some scale (Theorem \ref{BoundedGrowth1}), and their Gromov boundaries are doubling (Corollary \ref{Doubling1}) and locally quasi-self-similar (Proposition \ref{LemmaQuasiSelfSim}).  In fact, all of these properties hold more generally for quasi-cobounded hyperbolic proper geodesic spaces. Moreover, the Gromov boundaries of hyperbolic approximate groups
have $0$, $2$ or infinitely many points (Proposition \ref{GromovBoundaryTri}), and are minimal as long as the approximate group in question is non-elementary in a suitable sense (Proposition \ref{BoundaryMinimal}). 

\begin{remark}[Where are the examples?]
One reason for the success of the geometric theory of hyperbolic groups is the fact that many naturally occurring groups are hyperbolic. While we discuss some constructions of hyperbolic approximate groups below, we are currently unable to construct a single example of a hyperbolic approximate group which we can show to not be quasi-isometric to a hyperbolic group. There are basically three reasons for this: Firstly, it is hard to show that a given approximate group is not quasi-isometric to a finitely generated group. An example is given in \cite[Ex.\ 2.21]{BH} based on \cite{ElekTardos2000}, but this is not hyperbolic. Secondly, from the way in which approximate subgroups are usually constructed and described (say, in Hrushovski's classification of commensurability classes of approximate subgroups of a group), it is very hard to decide whether they are hyperbolic or not. Thirdly, and most importantly, there are several rigidity results which ensure that all hyperbolic approximate subgroups of a certain form are actually quasi-isometric to groups. For example, we will see that every quasi-convex approximate subgroup of a hyperbolic group and every non-elementary hyperbolic approximate group of asymptotic dimension $1$ is quasi-isometric to a finitely-generated free group (see Corollary \ref{Rigidity0} and Theorem \ref{Asdim1Case} below).
\end{remark}
Why then do we devote a whole chapter to hyperbolic approximate groups at this point? We believe that there are at least four good reasons to develop a basic theory of hyperbolic approximate groups: 
\begin{enumerate}
\item The theory can sometimes be \emph{used} to show that certain hyperbolic approximate groups must be quasi-isometric to hyperbolic groups, and such results can be thought of as \emph{rigidity results}. For example, the proof of the fact that non-elementary hyperbolic approximate groups of asymptotic dimension $1$ are quasi-isometric to groups uses most of the machinery which we develop.
\item Some of the results about Gromov boundaries of hyperbolic approximate groups generalize to Morse boundaries of more general countable approximate groups and (as we will see) can serve as a blueprint for a more general theory. In this wider context, there do indeed exist plenty of examples.
\item Most of the results of this chapter, in particular all of the results from Section \ref{QCHypSpace}, apply not just to hyperbolic approximate groups but in fact to arbitrary quasi-cobounded proper geodesic Gromov hyperbolic spaces. While our motivation comes from the study of hyperbolic approximate groups, these results are of independent interest.
\item We still have the hope that hyperbolic approximate groups which are not quasi-isometric to groups do exist. To find these, one needs good necessary and sufficient conditions for an approximate group to be hyperbolic and for a space to be an apogee for a hyperbolic approximate group.
\end{enumerate}

\section{Hyperbolic approximate groups and their boundaries}
Hyperbolic spaces can be defined as metric spaces with some large-scale version of negative curvature. In fact, as we discuss in Appendix \ref{AppHyperbolic}, there are (at least) three subtly differing definitions of a hyperbolic space in the literature, which we refer to as \emph{Gromov hyperbolic}, \emph{Rips hyperbolic} and \emph{Morse hyperbolic} spaces respectively. For proper geodesic metric spaces, all three notions are defined, QI-invariant and coincide. We can thus refer unambiguously to \emph{hyperbolic proper geodesic metric spaces}.

In some cases (in particular, in the context of Morse boundaries) we will have to deal with proper metric spaces which are not geodesic. For such spaces, Rips hyperbolicity is no longer defined, and Gromov hyperbolicity is defined, but no longer a QI-invariant. Whenever we deal with non-geodesic metric spaces we will thus work with Morse hyperbolicity. Let us briefly recall the definition; see Definition \ref{def:rips hyp} for the notion of a $\delta$-slim triangle and Definition \ref{defn: Morse geodesic} for a definition of Morse quasi-geodesic.
\begin{definition} A metric space $(X,d)$ is called \emph{Morse hyperbolic}\index{Morse hyperbolic}\index{Morse hyperbolic!space} if it is large-scale geodesic and for all $K \geq 1$ and $C \geq 0$ there exists $\delta \geq 0$ so that every $(K, C)$-quasi-geodesic triangle is $\delta$-slim.
\end{definition}
\begin{remark}[Morse hyperbolic vs.\ Gromov hyperbolic]\label{MorseGromov} \mbox{}
\begin{enumerate}[(i)]
\item Since quasi-isometries preserve slim quasi-geodesic triangles, it is immediate from the definition that being Morse hyperbolic is a QI-invariant among metric spaces (cf. Theorem \ref{thm:strongly hyperbolic equiv}).
\item By Theorem \ref{thm:strongly hyperbolic equiv}, Morse hyperbolic spaces can also be characterized as those quasi-geodesic spaces in which every quasi-geodesic is Morse (with a Morse gauge which is uniform in the parameters of the quasi-geodesic), hence the name.
\item Not every Gromov hyperbolic space is Morse hyperbolic, since Gromov hyperbolic spaces need not be quasi-geodesic. Conversely, not every Morse hyperbolic space is Gromov hyperbolic: The graph of the function $f: \R \to \R$, $x \mapsto |x|$ with the induced metric as a subspace of $\R^2$ is quasi-isometric (in fact, bi-Lipschitz) to the real line, hence it is Morse hyperbolic. However, it is not Gromov hyperbolic (see \cite[Rem.\ 4.1.3]{BuyaloSchroeder}). 
\item A Morse hyperbolic space is Gromov hyperbolic if and only if it is quasi-ruled \cite{Mathieu}. In particular, every proper geodesic Gromov hyperbolic space is Morse hyperbolic.
\item One can characterize Morse hyperbolic spaces extrinsically as those metric spaces which are quasi-isometric to proper geodesic Gromov hyperbolic spaces (see Theorem \ref{thm:strongly hyperbolic equiv}). Thus the class of Morse hyperbolic spaces is the smallest QI-invariant class of metric spaces which contains all proper geodesic Gromov hyperbolic spaces.
\end{enumerate}
\end{remark}
Combining this remark with Corollary \ref{suqc} we deduce:
\begin{proposition}\label{HypAG} For a geometrically finitely-generated approximate group $(\Lambda, \Lambda^\infty)$ the following properties are equivalent.
\begin{enumerate}[(i)]
\item Some $(X,d) \in [\Lambda]_{\mathrm{int}}$ is Morse hyperbolic.
\item Every  $(X,d) \in [\Lambda]_{\mathrm{int}}$ is Morse hyperbolic.
\item Every apogee of $\Lambda$ is a semi-uniformly quasi-cobounded hyperbolic proper geodesic metric space.
\item Every generalized Cayley graph of $\Lambda$ is a  semi-uniformly quasi-cobounded hyperbolic proper geodesic metric graph.
\item $(\Lambda, \Lambda^\infty)$ admits a geometric qiqac on a hyperbolic proper geodesic metric space.
\end{enumerate}
\end{proposition}
\begin{definition}\label{DefHypAG} A geometrically finitely-generated approximate group $(\Lambda, \Lambda^\infty)$ satisfying the equivalent conditions of Proposition \ref{HypAG} is called a \emph{hyperbolic approximate group}.
\end{definition}
\begin{construction}[Hyperbolic uniform model sets] Let $X$ be a hyperbolic proper geodesic metric space and assume that $G \coloneqq \mathrm{Is}(X)$ acts cocompactly on $X$. If $\Gamma$ is a lattice in $G \times H$, where $H$ is some auxiliary locally compact group, $W$ is a compact symmetric identity neighborhood in $H$ and $\mathrm{pr}_G: G \times H \to G$ denotes the canonical projection, then by Example \ref{CuPModel} the subset
\[
\Lambda \coloneqq \mathrm{pr}_G(\Gamma \cap (G \times W))
\]
is a discrete approximate subgroup of $G$. Since $\Lambda$ is discrete, the inclusion $(\Lambda, \Lambda^\infty) \hookrightarrow G$ defines a proper isometric action on $X$. If $\Gamma$ is uniform, then $\Lambda$ is 
syndetic in $G$, hence the action is geometric and $(\Lambda, \Lambda^\infty)$ is a hyperbolic approximate group with apogee $X$. 

While this construction yields plenty of explicit examples of hyperbolic approximate groups, most of these are actually almost group. Notably, $(\Lambda, \Lambda^\infty)$ is always an almost group
 if $G$ is discrete (see Corollary \ref{DiscreteIsometry}) or $H$ is totally-disconnected \cite{BH2}.  In fact, up to commensurability of $\Lambda$ we may always assume that $H$ is a connected Lie group \cite[Theorem A.11]{BH2}.
\end{construction}
\begin{proposition} Assume that $X$ is a hyperbolic proper geodesic metric space with the following properties:
\begin{enumerate}
\item $G \coloneqq \mathrm{Is}(X)$ is non-discrete.
\item There exists a connected Lie group $H$ and an irreducible uniform lattice in $G \times H$.
\item $X$ is not quasi-isometric to a finitely-generated group. In particular, $G$ contains no uniform lattices.
\end{enumerate}
Then $X$ is an apogee for a hyperbolic approximate group which is not quasi-isometric to a hyperbolic group.
\end{proposition}
We do not know whether such a space $X$ exists. If it does exist, then it has to be quasi-cobounded and of asymptotic dimension $\geq 2$, which puts further strong restrictions on $X$. 
\begin{example}\label{ExampleHAG1}
Here are some examples of hyperbolic approximate groups, which are not almost groups, but are quasi-isometric to hyperbolic groups:
\begin{enumerate}[(i)]
\item If $X$ is a rank one symmetric space (i.e.\ real, complex or quaternionic hyperbolic space or the octonion plane), then starting from irreducible arithmetic lattices in products of semisimple Lie groups with rank one factors one can construct plenty of uniform model sets $\Lambda \subset G$ such that $(\Lambda, \Lambda^\infty)$ is not an almost group. However, by a classical theorem of Borel and Harish-Chandra, the group $G$ admits a uniform lattice $\Gamma$.
\item Similarly, starting from an irreducible lattice in $\mathrm{SL}_2(\Q_p) \times \mathrm{SL}_2(\R)$ or more generally from irreducible $S$-arithmetic lattices which contain a rank one factor over a non-Archimedean local field, one can construct interesting hyperbolic approximate groups in Bruhat--Tits trees. Again, these are quasi-isometric to uniform lattices in $G$.
\item If $X$ is a tree (not necessarily Bruhat-Tits), then a uniform lattice in a product of $G$ with a connected Lie group may exist, but the resulting hyperbolic approximate groups have asymptotic dimension $1$, hence are either elementary in the sense of Definition \ref{DefNonEl} below or quasi-isometric to finitely-generated groups by Theorem \ref{Asdim1Case}.
\end{enumerate}
\end{example}
 As far as other constructions of approximate groups are concerned, we can construct non-laminar examples using quasikernels of real-valued quasimorphisms, but for these it is hard to decide whether they are hyperbolic:
\begin{example}\label{ExampleHAG2} Let $f: \Gamma \to \bR$ be a real-valued quasimorphism on a finitely-generated group $\Gamma$. Then any quasikernel $\Lambda$ for $f$ is an approximate subgroup of $\Gamma$. One can decide whether $\Lambda$ is geometrically finitely-generated using the quasi-BNS-invariant of Heuer and Kielak \cite{HeuerKielak}, but it is hard to decide whether $\Lambda$ is hyperbolic.
\begin{enumerate}[(i)]
\item If $\Gamma$ happens to be a free group, then there are plenty of real-valued quasimorphisms on $\Gamma$, but if $\Lambda$ is geometrically finitely-generated, then $(\Lambda, \Lambda^\infty)$ is an almost group.
\item If $\Gamma$ is hyperbolic, then there are still plenty of real-valued quasimorphisms on $\Gamma$, but even if $\Lambda$ is geometrically finitely-generated it may not be hyperbolic. The most natural condition that ensures hyperbolicity of $\Lambda$ is quasi-convexity in $\Gamma$, but this forces $(\Lambda, \Lambda^\infty)$ to be an almost group by Corollary \ref{Rigidity0} below.
\item If $\Gamma$ is non-hyperbolic, then it is unclear how to show that $\Lambda$ is hyperbolic.
\end{enumerate}
\end{example}
\begin{example}[Examples from (quasi-)commensurators of hyperbolic groups]
If $\Lambda$ is an approximate subgroup of a group $G$ and $F$ is a finite subset of the commensurator of $\Lambda$ in $G$  (or even the quasi-commensurator in the sense of \cite[Def.\ A.8]{BH2}) such that $e \in F$, then by \cite[Prop.\ A.9]{BH2}, then $F\Lambda  \cup \Lambda F^{-1}$ is again an approximate subgroup of $G$. In particular, if $\Lambda$ is a hyperbolic group, then $F\Lambda  \cup \Lambda F^{-1}$ is a hyperbolic approximate subgroup, which may not be an almost group, but by construction it is still commensurable to $\Lambda$.
\end{example}

\begin{construction}[Gromov boundary of a hyperbolic approximate group]\label{Rem: boundary hyp approx group} Let $(\Lambda, \Lambda^\infty)$ be a hyperbolic approximate group and let $X$ be an apogee for $(\Lambda, \Lambda^\infty)$. Then $X$ is a hyperbolic proper geodesic space, and hence we can define a Gromov compactification and corresponding Gromov boundary $\partial X$, which is defined up to homeomorphism (cf.\ Section \ref{SecGromovBoundaries}). Since $X$ is geodesic, we can use either the sequential model $\partial_s X$ of $\partial X$, in which boundary points are equivalence classes of sequences that converge to infinity, or the ray model $\partial_r X$, in which boundary points are equivalence classes of geodesic rays up to bounded Hausdorff distance. Indeed, an explicit homeomorphism $\partial_r X \to \partial_s X$ is given by $[\gamma] \mapsto [(\gamma(n))_{n \in \mathbb N}]$. Roughly speaking, two equivalence classes of rays are close in $\partial_r X$ if they fellow travel for a long time. Since quasi-isometries between hyperbolic proper geodesic metric spaces induce homeomorphisms of the corresponding Gromov boundaries, 
the homeomorphism type of $\partial X$ does not depend on the choice of apogee $X$ and will be denoted by $\partial \Lambda$. We refer to any representative of $\partial \Lambda$ as a Gromov boundary of $(\Lambda, \Lambda^\infty)$.\index{Gromov boundary!of a hyperbolic approximate group}
\end{construction}
\begin{remark} If $(X,d)$ is a Morse hyperbolic space which is \emph{not} geodesic, then one can still define the sequential Gromov boundary $\partial_s X$, but this is not a QI-invariant and hence of limited usefulness to us. In particular, if we want to compute the Gromov boundary of a hyperbolic approximate group, then we need to first represent $[\Lambda]_{\mathrm{int}}$ by a geodesic space $(X,d)$ before passing to the boundary. 
\end{remark}
Note that if $(\Lambda, \Lambda^\infty)$ is a hyperbolic approximate group, then by construction, $\partial \Lambda$ depends only on $[\Lambda]_{\mathrm{int}}$, and hence on $[\Lambda]_c$ (by Construction \ref{CanoicalQITypeGood}); in particular it is invariant under commensurability and $2$-local isomorphisms (or Freiman $3$-isomorphisms). Moreover, if $\cI$ is a homeomorphism invariant of compact metrizable topological spaces, then we may define
\[
\cI(\partial \Lambda) \coloneqq  \cI(\partial X),
\]
where $X$ is an apogee for $(\Lambda, \Lambda^\infty)$. Following Example \ref{BoundaryInvariantMorse} we then refer to $\cI(\partial \Lambda)$ as a \emph{topological boundary invariant} of $\Lambda$. The following topological boundary invariant will play a mayor role in Chapter \ref{ChapAsdim} below:
\begin{example}[Topological dimension of the boundary]\label{BoundaryDimension}
If $(\Lambda, \Lambda^\infty)$ is a hyperbolic approximate group, then we define the \emph{topological dimension of its boundary}\index{topological dimension!of boundary of approximate group}
by
\[
\dim \partial \Lambda \coloneqq  \dim \partial X,
\]
where $X$ is an apogee for $(\Lambda, \Lambda^\infty)$ and $\dim$ denotes  topological (or covering) dimension (see Definition \ref{Def-dim} below).
\end{example}
\begin{remark}[Visual metrics]
We recall from Section \ref{sec:VisMet} that if $(X,d)$ is a hyperbolic proper geodesic metric space (for example, an apogee of a hyperbolic approximate group), then there is a canonical class of metrics on the Gromov boundary $\partial X$ of $X$, called visual metrics (see Definition \ref{DefVisual}). If $o \in X$ is a basepoint and $a > 1$ and $c \geq 1$ are constants, then a metric $d$ on $\partial X$ is called \emph{visual} with basepoint $o$ and parameters $a$ and $c$ (or \emph{$(o,a,c)$-visual} for short), if it satisfies the visual inequality \eqref{VisualInequalities} for all $\xi, \xi' \in \partial X$. A metric on $\partial X$ is called \emph{visual} if it is $(o,a,c)$-visual for some choice of $o$, $a$ and $c$.
\end{remark}

\section{Quasi-cobounded hyperbolic spaces}\label{QCHypSpace}
By Proposition \ref{HypAG}, apogees of hyperbolic approximate groups are examples of quasi-cobounded hyperbolic proper geodesic metric spaces. We now investigate some of the basic properties of such spaces. 
\begin{definition}\label{def: visual space}
Let $(X,d)$ be a hyperbolic proper geodesic metric space.
\begin{enumerate}[(i)]
\item $(X,d)$ is called  \emph{almost geodesically complete}\index{almost geodesically complete space} if there exists a constant $D\geq 0$ such that for any $o,x \in X$, there is a geodesic ray $\alpha$ emanating from $o$ such that $d(x,\alpha)\leq D$.
\item $(X,d)$ is called \emph{visual}\index{visual space} if for some (hence for any) basepoint $o\in  X$, there is a positive constant $D$ such that  every point in $X$ has distance at most $D$ from some geodesic ray emanating from $o$. 
\end{enumerate}
\end{definition}
\begin{remark} Note that in Part (ii) of Definition \ref{def: visual space} the constant $D$ is allowed to depend on the basepoint $o$, whereas in Part (i) it is assumed to be independent of $o$. Consequently, every almost geodesically complete space is visual.

There exist a number of variants of the above definitions in the literature. The notion of almost geodesic completeness is used e.g.\ in \cite{Ontaneda} or \cite{GeogheganOntaneda}, where it is attributed to Mike Mihalik. In \cite{Bonk-Schramm}, a metric space $X$ is called \emph{visual} if for some (hence for any) basepoint $o\in  X$ there is a positive constant $D$ such that each point in $X$ lies on a $D$-roughly geodesic ray emanating from $o$. In \cite{BuyaloSchroeder}, a Gromov hyperbolic space is said to be \emph{visual} if for some (hence for any) basepoint $o \in X$, there is a positive constant $D$ such that for every $x\in X$ there is a $\xi \in\partial_s X$ such that $d(o,x)\leq (x|\xi)_o+D$. These two definitions are equivalent to each other and also to the definition above for hyperbolic proper geodesic spaces. We have chosen the definition above since it makes the relation to almost geodesically complete spaces clear. Intuitively one should think of visuality as a coarse version of the geodesic extension property (the ability to extend a geodesic segment into a geodesic ray). 
\end{remark}
The fact that infinite hyperbolic groups are visual (see e.g.\ \cite[Lemma 3.1]{BestvinaMess} or \cite{GenevoisMathoverflow}) generalizes as follows:
\begin{proposition}\label{PropVisual}
Every unbounded quasi-cobounded hyperbolic proper geodesic metric space $(X,d)$ is almost geodesically complete, hence in particular visual.
\end{proposition}
\begin{proof} Since $X$ is quasi-cobounded, there exist constants $K \geq 1$ and $C, C', R$  $\geq 0$, and a collection $\cA\subseteq \widetilde{\rm QI}(X)$ of $(K, C, C')$-quasi-isometries such that for all $x, y \in X$, there is a $g\in \cA$ with $g(y)\in B(x,R)$. We choose such a collection once and for all.

Let $o \in X$ be a basepoint and $x \in X$ an arbitrary point. We first prove that there is a bi-infinite geodesic $\gamma$ at bounded distance from $x$. Since $X$ is unbounded, we can pick two sequences of points $(x_n)$ and $(y_n)$ in $X$ with the property that $d(x_n,y_n)\to \infty$. For each pair $(x_n, y_n)$ we then choose a geodesic segment $[x_n,y_n]$ connecting them. For every $n \in \bN$ we denote by $a_n$ the midpoint of $[x_n, y_n]$. We choose a quasi-isometry $g_n\in \cA$ such that $a_n' \coloneqq  g_n(a_n)\in B(x,R)$ and define $x_n' \coloneqq  g_n(x_n)$ and $y_n' \coloneqq  g_n(y_n)$. Then $d(x_n', a_n') \to \infty$ and $d(y_n', a_n') \to \infty$. Moreover, $g_n([x_n, y_n])$ is a quasi-geodesic segment from $x_n'$ to $y_n'$ which contains $a_n'$, hence is of bounded distance from $x$. Since $\cA$ is uniform, both the implied QI-constants and the distance from $x$ of $g_n([x_n, y_n])$ can be bounded independently of $n$.

Now let $[x_n', y_n']$ be a geodesic segment from $x_n'$ to $y_n'$. By Lemma \ref{Morse1} the quasi-geodesic segments $[x_n', y_n']$ and $g_n([x_n, y_n])$ are at uniformly bounded distance, hence $x$ is at uniformly bounded distance from all of the geodesic segments $[x_n', y_n']$. We can thus apply Lemma \ref{ArzelaAscoli1} to obtain a bi-infinite geodesic line $\gamma_x$ which is at uniformly bounded distance from $x$.

For every $n \in \bZ$ we now choose a geodesic segment $[o, \gamma_x(n)]$ from $o$ to $\gamma_x(n)$ and consider the geodesic triangle formed by $[o, \gamma_x(-n)]$, $\gamma_x([-n,n])$ and $[o, \gamma_x(n)]$. By Lemma \ref{RipsGromov} this triangle is $\delta'$-slim for some uniform $\delta'$. In particular, $\gamma_x(0)$ (and hence $x$) is at uniformly bounded distance from either $[o, \gamma_x(-n)]$ or $[o, \gamma_x(n)]$.

Another application of Lemma \ref{ArzelaAscoli1} shows that for some sequence $(n_k)$ the geodesic segments $[o, \gamma_x(-n_k)]$ and $[o, \gamma_x(n_k)]$ converge to geodesic rays emanating from $o$, and at least one of them is at uniformly bounded distance from $x$.
\end{proof}
Here is another property of hyperbolic groups that we can generalize:
\begin{definition}
A metric space $(X,d)$ has \emph{bounded growth at some scale}\index{growth!bounded growth} if there exist constants $R>r>0$ and $N \in \bN$ such that every open ball of radius $R$ in $X$ can be covered by $N$ open balls of radius $r$.
\end{definition}
\begin{theorem}\label{BoundedGrowth1}
Every quasi-cobounded hyperbolic proper geodesic metric space $(X,d)$ has bounded growth at some scale.
\end{theorem}
\begin{proof} Assume that $X$ is $(K_1, C_1, r, o)$-quasi-cobounded. Given $x \in X$ we choose a $(K_1, C_1, C_1)$-quasi-isometry $f:X\to X$ with $d(o, f(x)) \leq r$. We also choose a quasi-inverse $g$ of $f$ and set $h \coloneqq  g \circ f$. Then $g$ is a $(K_2, C_2, C_2)$-quasi-isometry and $h$ is a $(K_3, C_3, C_3)$-quasi-isometry which is $C_3$-close to the identity, where $K_2, C_2, K_3, C_3$ are constants depending only on $K_1$ and $C_1$. Given positive real numbers $R_3$ and $r_3$ we define real numbers $r_0, r_1, r_2, R_0, R_1, R_2$ by
\[
r_2 \coloneqq  r_1 \coloneqq  r_3-C_3, \qand r_0 \coloneqq  K_2^{-1}(r_1-C_2), 
\]
as well as
\[R_2 \coloneqq  R_3+C_3, \quad
R_1 \coloneqq  K_3 R_2 +2K_3C_3, \qand R_0 \coloneqq  K_1R_1+C_1+r.
\]
We now choose $R_3$ and $r_3$ in such a way that $R_i > r_i>0$ for all $i \in \{0, \dots, 3\}$. By properness of $X$ there exists $n \in \bN$ and $o_1, \dots, o_n \in X$ such that
\[
B(o, R_0) \subset \bigcup_{i=1}^N B(o_i,r_0).
\]
Moreover,
\[
f(B(x, R_1)) \subset B(f(x), K_1R_1+C_1) \subset B(o, K_1R_1+C_1+r) = B(o, R_0) \subset \bigcup_{i=1}^N B(o_i,r_0).
\]
If we set $x_i \coloneqq  g(o_i)$, then applying $g$ to the previous expression yields
\begin{eqnarray*}
h(B(x, R_1)) &\subset&  \bigcup_{i=1}^N g(B(o_i,r_0)) \quad \subset \quad \bigcup_{i=1}^N B(g(o_i), K_2r_0+C_2)  \ = \ \bigcup_{i=1}^N B(x_i, r_1).
\end{eqnarray*}
Now let $y \in \mathrm{Im}(h) \cap B(x, R_2)$, say $y = h(z)$. Then
\begin{eqnarray*}
d(x,z) &\leq& K_3 d(h(x), h(z)) +K_3C_3 \quad \leq \quad K_3 (d(x,y) + C_3)  + K_3C_3\\
&<& K_3 R_2 +2K_3C_3  \quad = \quad R_1,
\end{eqnarray*}
and hence $z \in B(x, R_1)$. This shows that
\[
 \mathrm{Im}(h) \cap B(x, R_2) \subset h(B(x, R_1)) \subset  \bigcup_{i=1}^N B(x_i, r_1).
\]
Since $ \mathrm{Im}(h)$ is $C_3$ relatively dense in $X$, we deduce that
\begin{eqnarray*}
B(x, R_3) &=& B(x, R_2-C_3) \subset N_{C_3}(\mathrm{Im}(h) \cap B(x, R_2) ) \\
&\subset& \bigcup_{i=1}^N B(x_i, r_1+C_3) =  \bigcup_{i=1}^N B(x_i, r_3).
\end{eqnarray*}
Since $R_3$, $r_3$ and $N$ are independent of $x$, this finishes the proof.
\end{proof}
An important application of bounded growth at some scale is the following embedding theorem (\cite[Thm.\ 1.1]{Bonk-Schramm}). Here, two metric spaces $X$ and $Y$ are called \emph{roughly similar}\index{roughly similar spaces} if there exists $f: X \to Y$ and constants $k, \lambda >0$ such that 
\[
\lambda d(x,y)-k \leq  d(f(x), f(y)) \leq \lambda d(x,y) + k
\]
and $f(X)$ is $k$-relatively dense in $Y$. This implies, in particular, that $X$ and $Y$ are quasi-isometric.
\begin{theorem}[Bonk-Schramm embedding theorem] If $X$ is a hyperbolic proper geodesic metric space with bounded growth at some scale, then there exists $n \in \bN$ such that $X$ is roughly similar (and hence quasi-isometric) to a closed convex subset of hyperbolic $n$-space $\bH^n$.
\end{theorem}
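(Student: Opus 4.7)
The plan is to reproduce the Bonk--Schramm strategy, whose heart is to factor an embedding $X \hookrightarrow \bH^n$ through (the hyperbolic cone over) the boundary at infinity $\partial X$, using Assouad's embedding theorem on $\partial X$ with a visual metric. The argument naturally splits into four stages: reduction to a visual model, control of the boundary, an Assouad-type snowflake embedding, and reconstruction inside hyperbolic space.

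First I would reduce to the case where $X$ is visual (in the sense of Definition \ref{def: visual space}). For a general proper geodesic Gromov hyperbolic $X$, the ``hyperbolic cone'' construction over its boundary, or more directly the convex hull of geodesics issuing from a basepoint, produces a visual proper geodesic hyperbolic space in which $X$ embeds isometrically and which still has bounded growth at some scale, with constants controlled by those of $X$. Second, I would equip $\partial X$ with a visual metric $d_{\mathrm{vis}}$ of parameter $a>1$ close enough to $1$; the standing assumption of bounded growth at some scale, together with visuality of $X$, forces $(\partial X, d_{\mathrm{vis}})$ to be doubling in the sense of Definition \ref{Def: doubling} (the proof is by a ball-counting argument at shadow sets, exactly as in the proof of Theorem \ref{PropDoubling}, using a local quasi-self-similarity obtained from the bounded-growth hypothesis rather than quasi-coboundedness).

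Third, I would invoke Assouad's embedding theorem: a bounded doubling metric space, after being snowflaked by a sufficiently small exponent $\epsilon\in(0,1)$, embeds bi-Lipschitzly into some Euclidean space $\R^{n-1}$. Applied to $(\partial X, d_{\mathrm{vis}})$, and choosing $\epsilon$ so that $d_{\mathrm{vis}}^{\epsilon}$ is \emph{also} a visual metric (which is possible since any small enough snowflake of a visual metric is visual, with parameter $a^{\epsilon}$), this yields a bi-Lipschitz embedding $\iota: (\partial X, d_{\mathrm{vis}}^{\epsilon}) \hookrightarrow \R^{n-1}$. Now realize $\bH^n$ as the upper half-space $\R^{n-1} \times \R_{>0}$; the Euclidean boundary $\R^{n-1}\cup\{\infty\}$ is canonically the Gromov boundary of $\bH^n$, and the boundary visual metric based at the ``point above'' the image is comparable (up to a rough similarity factor) to the Euclidean metric on any bounded subset. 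Thus $\iota$ upgrades to an identification of boundaries.

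Finally, I would let $Y \subset \bH^n$ be the closed convex hull of $\iota(\partial X)\subset \partial\bH^n$ and produce the rough similarity $X \to Y$. The map is built fiber by fiber over $\partial X$: a point $x\in X$ determines, via a nearest-geodesic-ray argument (using visuality), an approximate boundary pair $(\xi_{-},\xi_{+})\in \partial X \times \partial X$ and a ``height'' parameter given essentially by $(\xi_{-}\mid \xi_{+})_o$; one sends $x$ to the point on the $\bH^n$-geodesic between $\iota(\xi_{-})$ and $\iota(\xi_{+})$ at the corresponding hyperbolic height. That the result is a rough similarity (not merely a quasi-isometry) is forced by comparing Gromov products on both sides: on $X$ via visual parameter $a$, and on $Y\subset\bH^n$ via the bi-Lipschitz control of $\iota$ on the $\epsilon$-snowflake, so the two scale by a common factor $\log a / \log(a^{\epsilon})$. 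Convexity of $Y$ is automatic from its definition as a convex hull inside $\bH^n$. The main obstacle I anticipate is this last step, matching the additive/multiplicative constants so that the map is genuinely a \emph{rough similarity} and not only a quasi-isometric embedding: this is where the precise choice of snowflake exponent $\epsilon$ relative to the visual parameter is critical, and where one must verify that the ``fiber over $\partial X$'' assignment is well-defined up to bounded error, which requires a careful application of the thin-triangles property together with visuality.
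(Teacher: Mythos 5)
First, a point of reference: the paper does not prove this statement at all --- it is imported verbatim from \cite[Thm.\ 1.1]{Bonk-Schramm} and used as a black box (e.g.\ in Corollary \ref{BSEmbedding}), so your proposal can only be measured against the original Bonk--Schramm argument, whose overall architecture (doubling boundary, Assouad on a snowflaked visual metric, realization in the upper half-space model, convex hull of the quasiconvex image) your sketch indeed mirrors.

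The genuine gap is your first step, and it is not a technicality. After step one, everything in your scheme is reconstructed from $(\partial X, d_{\mathrm{vis}})$, so in particular the target dimension $n$ is determined by the boundary alone. This cannot be correct for non-visual $X$: the paper's ``hyperbolic shashlik'' inside $\bH^m$ has a one-point Gromov boundary for every $m$ but asymptotic dimension $m$, so by monotonicity of $\asdim$ under coarse embeddings (Lemma \ref{AsdimInvariant}) it admits no quasi-isometric --- in particular no roughly similar --- embedding into $\bH^n$ for $n<m$; fed a one-point boundary, your construction would try to place it on a geodesic. Moreover, neither of the devices you offer actually produces a visual model containing $X$: the union (or hull) of the geodesic rays emanating from a basepoint is a \emph{subset} of $X$, and it lies within bounded distance of $X$ exactly when $X$ is already visual (for the shashlik it is just the axis, far from the large spheres); and $X$ does not embed coarsely into a hyperbolic cone over $\partial X$, since for the shashlik such a cone is at most one-dimensional. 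The standard repairs are: (a) follow Bonk--Schramm, whose construction does not factor through $\partial X$ alone but, in effect, through a bounded space recording all of $X$ (a visual quasi-metric $e^{-(x\mid y)_o}$ on the bordification $X\cup\partial X$), so that the bounded-growth hypothesis enters as doubling of \emph{that} space; or (b) first enlarge $X$ to a visual space, e.g.\ by attaching geodesic rays along a separated net, and then re-verify hyperbolicity, properness, bounded growth at some scale and doubling of the \emph{new} boundary. With either repair, the rest of your outline (snowflake exponent matched to the visual parameter, upper half-space realization, rough-similarity bookkeeping via Gromov products, convex hull of the quasiconvex image) is the standard argument; as written, your proof only covers visual $X$.

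A smaller but real issue: you justify doubling of $\partial X$ via a ``local quasi-self-similarity obtained from the bounded-growth hypothesis''. In this paper local quasi-self-similarity is extracted from quasi-coboundedness, i.e.\ from an abundance of self-quasi-isometries (Proposition \ref{LemmaQuasiSelfSim}); bounded growth at some scale supplies no such maps, so that route is unavailable, and one should instead run the direct shadow/covering estimate of \cite[Thm.\ 9.2]{Bonk-Schramm}.
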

Combining this with Theorem \ref{BoundedGrowth1} we thus obtain:
\begin{corollary}[Hyperbolic embedding]\label{BSEmb1}
Every quasi-cobounded hyperbolic proper geodesic metric space is roughly similar (and hence quasi-isometric) to a closed convex subset of hyperbolic $n$-space $\bH^n$ for some $n \in \mathbb N$.
\end{corollary}
We now discuss some consequences of this embedding result. For this we need the following notions.
\begin{construction}[Limit sets in hyperbolic sets] 
In the sequel we denote by $\overline{\bH}^n$ the Gromov compactification of hyperbolic $n$-space and by $\partial \bH^n \coloneqq  \overline{\bH}^n \setminus \bH^n$ its Gromov boundary, which is an $(n-1)$-sphere. If $Y$ is an arbitrary subset of $\bH^n$ then we define its \emph{limit set}\index{limit set!in hyperbolic space} $\cL(Y)$ as the subset
\[
\cL(Y) \coloneqq  \overline{Y} \cap \partial \bH^n \subset \partial \bH^n,
\]
where the closure is taken in $\overline{\bH}^n$; this is a special case of the notion of a Gromov limit set as defined in Definition \ref{DefGromovLimitSet} below. 

If $Y$ happens to be closed and convex, then it is a CAT(-1) space and in particular uniquely geodesic. Its Gromov boundary $\partial Y$ can thus be identified with geodesic rays emanating from some basepoint $o \in Y$. Each such geodesic then defines a point in $\partial \bH^n$ and thus we obtain a continuous embedding $\iota: \partial Y \to \partial \bH^n$; it follows from Arzel\`a--Ascoli Lemma \ref{ArzelaAscoli1} that the image is given by the limit set $\cL(Y)$. More precisely, we can fix a basepoint $o \in X$ and given $\xi \in \cL(Y)$ we may choose $x_n \in Y$ with $x_n \to \xi$. Since $Y$ is closed and convex, the geodesic segments $[o, x_n]$ subsequentially converge to a geodesic ray in $Y$ with endpoint $\xi$. This shows that $\cL(Y) \subset \iota(\partial Y)$, and the converse inclusion is obvious. In particular we have
$\partial Y \cong \cL(Y)$, where, by construction, $\cL(Y)$ is a compact subset of the sphere $\partial \bH^n$. 

If $X$ is a quasi-cobounded hyperbolic proper geodesic metric space, then by Corollary \ref{BSEmb1} we can find a closed convex subset $Y \subset \bH^n$ for some $n \in \bN$ and a quasi-isometry $f: X \to Y$, which in turn induces a homeomorphism \[\partial X \cong \partial Y \cong \cL(Y) \subset \partial \bH^n.\]
\end{construction}
\begin{corollary}\label{cor:FinDimBd} 
If $(X,d)$ is a quasi-cobounded hyperbolic proper geodesic metric space, then $\partial X$ embeds homeomorphically into a finite-dimensional sphere. In particular, $\partial X$ is of finite topological dimension.
\end{corollary}
We now consider the problem of recovering a subset $Y \subset \bH^n$ from its limit set.
\begin{remark}[Hulls in real hyperbolic spaces] \label{rem:hulls in hyp spaces}
There are various different ways to associate with a compact subset $L \subset \partial \bH^n$ a set in $\bH^n$ which has $L$ as limit set:

In Definition \ref{def:weak hull} we define (in large generality) the notion of a \emph{weak hull}\index{weak hull} $\mathfrak{H}(L) \subset \bH^n$ of $L$. In the present case, the definition can be made more explicit: If we set $L^{(2)} \coloneqq  \{(\xi, \eta) \in L^2 \mid \xi \neq \eta\}$, then for all $(\xi, \eta) \in L^{(2)}$ there is a unique (up to reparametrization) geodesic $\gamma_{\xi, \eta}$ which is bi-asymptotic to $\xi$ and $\eta$. By Remark \ref{WeakHullGeodesics} we then have
\[
\mathfrak{H}(L) = \bigcup_{(\xi, \eta)\in L^{(2)}} \gamma_{\xi,\eta}(\R),
\]
i.e.\ the weak hull is the union of all geodesics connecting points in $L$. Since $L$ is compact, the weak hull is quasi-convex (Proposition \ref{WeakHullQuasiconvex}) and if $L$ is non-empty, then $L =\cL(\mathfrak{H}(L))$ (Corollary \ref{LimitSetInvertsHull}). 

The slight disadvantage of the weak hull is that it is only quasi-convex, but typically not convex. We thus define
\[
\cH(L) \coloneqq \overline{\mathrm{conv}}(\mathfrak H(L));
\]
this is the smallest closed convex subset of $\bH^n$ which contains all geodesics between points in $L$ and is thus called the \emph{convex hull}\index{convex hull} of $L$. We establish in Proposition \ref{WeakHullConvexHull} below that $\cH(L)$ is at bounded Hausdorff distance from $\mathfrak{H}(L)$. In particular, this implies that 
\begin{equation}
   \cL(\cH(L)) = \cL(\mathfrak{H}(L)) = L.
\end{equation}
\end{remark}
\begin{lemma}\label{Hulling} 
Let $Y \subset \bH^n$ be an unbounded quasi-cobounded closed convex subset. 
Then $\cH(\cL(Y)) \subset Y$ and $Y$ is at bounded Hausdorff distance from  $\cH(\cL(Y))$. In particular, $\cH(\cL(Y))$ is the unique smallest closed convex subset of $\bH^n$ with limit set $\cL(Y)$.
\end{lemma}
\begin{proof} Assume first that $x \in \mathfrak{H}(\cL(Y))$ is contained in the weak hull. Then there exist $\xi, \eta \in \cL(Y)$ with $\xi \neq \eta$ such that $x \in [\xi, \eta]$, and hence $x_n, y_n \in Y$ with $x_n \to \xi$ and $y_n \to \eta$. Since $\xi \neq \eta$ we have $(x_n|y_n)_0 \leq D$ for some $D>0$, and since $Y$ is convex it follows from Lemma \ref{ArzelaAscoli1} that suitable reparametrizations of the geodesic segments $[x_n, y_n]$ converge uniformly on compacta to a parametrization of $[\xi, \eta]$. Since $x \in [\xi, \eta]$ and $Y$ is closed we deduce that $x \in Y$. This proves that $\mathfrak{H}(\cL(Y)) \subset Y$, and since $Y$ is closed and convex we have $\cH(\cL(Y)) \subset Y$.
Conversely, let us fix a basepoint $o \in Y$. Since $Y$ is convex, it is geodesic, hence visual by Proposition \ref{PropVisual}. There thus exists $D>0$ such that for every $x \in Y$ there is a geodesic ray $\gamma$ emanating from $o$ with $\mathrm{dist}(x, \gamma) < D$. Since $\xi \coloneqq  \gamma(\infty) \in \cL(Y)$ we have $\gamma = [o, \xi) \in \cH(\cL(Y))$ and thus $Y \subset N_D(\cH(\cL(Y)))$. The lemma follows.
\end{proof}
Combining this with Corollary \ref{BSEmb1} we obtain:
\begin{corollary}\label{BSEmb2} Let $(X,d)$ be a quasi-cobounded hyperbolic proper geodesic metric space. Then there exists $n \in \bN$, a compact subset $L \subset \partial \bH^n$ and a quasi-isometry $f: X \to \cH(L)$ which induces a homeomorphism $\partial X \to \partial  \cH(L) = L$.
\end{corollary}
We now turn to boundaries of quasi-cobounded hyperbolic spaces. An important property of infinite hyperbolic groups is that their boundaries have the following doubling property with respect to visual metrics:
\begin{definition} \label{Def: doubling} Let $(Z, d)$ be a metric space. We say that $Z$ is \emph{doubling}\index{doubling property} with \emph{doubling constant}\index{doubling constant} $N \in \mathbb N$ if for all $t>0$ and all $\xi \in Z$ there exist $\xi_1, \dots, \xi_N \in Z$ such that
\[
B(\xi, 2t) \subset \bigcup_{i=1}^N B(\xi_i, t).
\]
\end{definition}
It was established by Bonk and Schramm \cite[Thm.\ 9.2]{Bonk-Schramm} that if $X$ is a hyperbolic proper geodesic metric space with bounded growth at some scale, then the Gromov boundary $\partial X$ of $X$ is doubling with respect to any choice of visual metric. From Theorem \ref{BoundedGrowth1} we may thus conclude:
\begin{corollary}\label{Doubling1} If $X$ is a quasi-cobounded hyperbolic proper geodesic metric space, then $\partial X$ is doubling with respect to any visual metric.
\end{corollary}
Another important property of hyperbolic groups that we are going to generalize is local self-similarity (cf.\ \cite[Section 2.3]{BuyaloSchroeder}).
\begin{definition}\label{quasi-homothety and generalization}\label{def: locally quasi-similar}
Let $(X, d_X)$, $(Y, d_Y)$ be metric spaces and let $\lambda\geq 1$, $K \geq 1$, $R>0$ and $R_0 >1$ be constants. 
\begin{enumerate}[(i)]
\item A map $f: X \to Y$ is called a \emph{$\lambda$--quasi-homothetic map with coefficient $R$} (or a \emph{quasi-homothety}\index{quasi-homothety} for short) if for all $x_1, x_2 \in X$ we have
\begin{equation}\label{eqn: q-homothety}
\frac{1}{\lambda}{R} \cdot d_X(x_1, x_2) \quad \leq \quad d_Y(f(x_1), f(x_2))  \quad \leq \quad \lambda {R} \cdot d_X(x_1,x_2).
\end{equation}
\item A map $f: X \to Y$ is called a \emph{generalized $(\lambda, K)$--quasi-homothetic map with coefficient $R$} (or a \emph{generalized quasi-homothety}\index{generalized quasi-homothety}\index{quasi-homothety:generalized} for short) if for all $x_1, x_2 \in X$ we have
\begin{equation}\label{locally-pqs}
\displaystyle \frac{1}{\lambda}R^K (d_X(x_1,x_2))^K\ \leq \ d_Y(f(x_1),f(x_2)) \ \leq \ \lambda \sqrt[K]{R}\sqrt[K]{d_X(x_1,x_2)}.
\end{equation}
\item $(X, d_X)$ is called \emph{locally self-similar}\index{locally self-similar}\index{self-similar!locally} with parameters $\lambda\geq 1$ and $ R_0 > 1$ if for every $R>R_0$ and every subset $A\subset X$ with diameter $\diam A \leq \frac{1}{R}$ there exists a $\lambda$--quasi-homothetic map $f: A \to X$ with coefficient $R$.
\item  $(X, d_X)$ is called \emph{locally quasi-self-similar}\index{locally quasi-self-similar}\index{quasi-self-similar!locally} with parameters $\lambda \geq 1$, $K \geq 1$ and $ R_0 > 1$ if for every $R>R_0$ and every subset $A\subset X$ with diameter $\diam A \leq \frac{1}{R}$ there exists a generalized $(\lambda,K)$--quasi-homothetic map $f: A \to X$ with coefficient $R$.
\end{enumerate}
\end{definition}
As alluded to above, the Gromov boundary of an infinite hyperbolic group is locally self-similar with respect to any visual metric (see e.g.\ \cite[Theorem 2.3.2]{BuyaloSchroeder}). This fact generalizes as follows:
\begin{proposition}\label{LemmaQuasiSelfSim} 
If $(X, d)$ is a quasi-cobounded hyperbolic proper geodesic metric space, then $\partial X$ is locally quasi-self-similar with respect to any visual metric.
\end{proposition}
\begin{proof} We fix a basepoint $o \in X$ and an $(o,a,c)$-visual metric $d_0$ on $\partial X$. By the coboundedness assumption there exist constants $K\geq1$, $C\geq 0$, $r>0$ such that for all $x, y \in X$ there exists a $(K_1, C_1,o)$-quasi-isometry (in the sense of \eqref{KCoQI}) such that $d(f(x), y) < r$.  We are going to show that $(\partial_r X,d_0)$ is locally quasi-self-similar with parameters
\[
R_0 \coloneqq  c_0+1,\quad K\coloneqq K_1 \qand \lambda \coloneqq  \max\{c_0^{1+2K_1} a_0^{4K_1\delta + C_1 + 2\delta + r}, c_0^{1-2K_1^{-1}}a_0^{C_1+r}\}.
\]
Thus let $R>R_0$ and let $B \subset \partial_r X$ with ${\rm diam}_{d_0}(B) \leq 1/R$. Since $ \log_{a_0}(R/c_0)>0$, one may argue as in the proof of \cite[Thm. 2.3.2]{BuyaloSchroeder} to construct a point $g \in X$ with $d(o,g) = \log_{a_0}(R/c_0)$ such that for all $\xi', \xi'' \in B$ one has
\begin{equation}\label{QSS1}
(\xi'\mid \xi'')_o - d(o,g) \; \leq \; (\xi'\mid \xi'')_g \; \leq \; (\xi'\mid \xi'')_o - d(o,g) + 4\delta.
\end{equation}
On the other hand, since $X$ is $(K_1, C_1, r, o)$-quasi-cobounded, we can find a $(K_1, C_1, o)$-quasi-isometry $h$ such that $h(g) \in B(o, r)$, and we define 
\[f \coloneqq  \partial_r h|_{B}: B \to \partial_r X.\]
By Lemma \ref{QIGromovInequality} we then have
\[K_1^{-1} (\xi'\mid \xi'')_g - C_1 \; \leq \;  (f(\xi')\mid  f(\xi''))_{h(g)} \;  \leq \; K_1 (\xi'\mid \xi'')_g + C_1 + 2\delta.\]
Since $d(o, h(g)) < r$ we deduce that
\begin{equation}\label{QSS2}
(f(\xi')\mid f(\xi''))_o \; \leq \; (f(\xi')\mid f(\xi''))_{h(g)} + d(o, h(g)) \; < \;  K_1 (\xi'\mid \xi'')_g + C_1 + 2\delta + r,
\end{equation}
and similarly
\begin{equation}\label{QSS3}
(f(\xi')\mid f(\xi''))_o \; \geq \; (f(\xi')\mid f(\xi''))_{h(g)} - d(o, h(g)) \; > \;  K_1^{-1} (\xi'\mid \xi'')_g - C_1 - r.
\end{equation}
Combining \eqref{QSS1} and \eqref{QSS2} we obtain
\begin{eqnarray*}
(f(\xi')\mid f(\xi''))_o &\leq&  K_1 (\xi'\mid \xi'')_g + C_1 + 2\delta + r\\
&\leq&  K_1((\xi'\mid \xi'')_o - d(o,g) + 4\delta) + C_1 + 2\delta + r\\
&=& K_1(\xi'\mid \xi'')_o - K_1 d(o,g) + 4K_1\delta + C_1 + 2\delta + r.
\end{eqnarray*}
Combining this with the visual inequalities for $d_0$ (see \eqref{VisualInequalities} in Appendix C) and the fact that $a_0^{d(o,g)} = R/c_0$ we obtain
\begin{eqnarray*} 
d_0(f(\xi'), f(\xi'')) &\geq& c_0^{-1} \cdot a_0^{-(f(\xi')\mid f(\xi''))_o}\\
&\geq& c_0^{-1} \cdot a_0^{-K_1(\xi'\mid \xi'')_o + K_1 d(o,g) - 4K_1\delta - C_1 - 2\delta - r}\\
&=& c_0^{-1} \cdot a_0^{-4K_1\delta - C_1-2\delta -r } \cdot (R/c_0)^{K_1} \cdot (a_0^{-(\xi'\mid \xi'')_o})^{K_1}\\
&=&c_0^{-1-2K_1} \cdot a_0^{-4K_1\delta - C_1-2\delta -r } \cdot R^{K_1} \cdot (c_0 a_0^{-(\xi'\mid \xi'')_o})^{K_1}\\
&\geq& {\lambda}^{-1} \cdot R^{K} \cdot d_0(\xi', \xi'') ^{K}.
\end{eqnarray*}
If instead of starting from \eqref{QSS1} and \eqref{QSS2} we start from \eqref{QSS1} and \eqref{QSS3}, then a similar computation yields
\begin{eqnarray*} 
d_0(f(\xi'), f(\xi'')) &\leq& c_0 \cdot a_0^{-K_1^{-1}(\xi'\mid \xi'')_o + K_1^{-1} d(o,g) + C_1  + r}\\
&=& c_0 \cdot a_0^{C_1+r} \cdot \sqrt[K1]{R/c_0} \cdot \sqrt[K_1]{a_0^{-(\xi'\mid \xi'')_o}}\\
&=& c_0^{1-2K_1^{-1}}  \cdot a_0^{C_1+r} \cdot\sqrt[K_1]{R} \cdot \sqrt[K_1]{c_0 a_0^{-(\xi'\mid \xi'')_o}}\\
&\leq& \lambda \cdot \sqrt[K]{R} \cdot \sqrt[K]{d_0(\xi', \xi'')}.
\end{eqnarray*}
This shows that  $(X,d_0)$ is locally quasi-self-similar with parameters $(R_0, K, \lambda)$.
\end{proof}
\section{Boundary actions of hyperbolic approximate groups}
We now apply the results of the previous section to apogees of hyperbolic approximate groups. In view of Proposition \ref{HypAG} the following results follow from Proposition \ref{PropVisual}, Theorem \ref{BoundedGrowth1}, Corollary \ref{BSEmb1}, Corollary \ref{cor:FinDimBd}, Corollary \ref{BSEmb2}, Corollary \ref{Doubling1} and Proposition \ref{LemmaQuasiSelfSim} respectively.
\begin{corollary}[Apogees of hyperbolic approximate groups]\label{HyperbolicApogees} Let $(\Lambda, \Lambda^\infty)$ be an infinite hyperbolic approximate group. Then the following is true:
\begin{enumerate}[(i)]
\item Every apogee $(X,d)$ of $(\Lambda, \Lambda^\infty)$ is visual, it has bounded growth at some scale and it has finite topological dimension.
\item There exists $n \in \mathbb N$ and a compact subset $L \subset \partial \bH^n$ which represents $\partial \Lambda$ such that $X \coloneqq \cH(L) \subset \bH^n$ is an apogee for $(\Lambda, \Lambda^\infty)$.
\item If $(X,d)$ is any apogee of $(\Lambda, \Lambda^\infty)$, then $\partial X$ is doubling and it is locally quasi-self-similar with respect to any visual metric.
\end{enumerate}
\end{corollary}
\begin{definition} A compact subset $L \subset \partial \bH^n$ as in Part (ii) of Corollary \ref{HyperbolicApogees} is called a \emph{Bonk-Schramm realization}\index{Bonk-Schramm realization} of $\partial \Lambda$. In this case,  $X \coloneqq \cH(L)$ is called the \emph{associated apogee}.
\end{definition}
As the first application of the Bonk-Schramm realization we establish the following proposition.
\begin{proposition}[Ends and boundaries of hyperbolic approximate groups]\label{GromovBoundaryTri} 
For a hyperbolic approximate group $(\Lambda, \Lambda^\infty)$ exactly one of the following three possibilities occurs:
\begin{enumerate}[(\text{Type} 1)]
\item $\Lambda$ is finite and hence $|\partial \Lambda| = |\cE_\infty(\Lambda)| = 0$.
\item $\Lambda$ is quasi-isometric to $\bZ$ and $|\partial \Lambda| = |\cE_\infty(\Lambda)| = 2$.
\item $\Lambda$ is infinite with $|\partial \Lambda| = \infty$ (and hence $|\cE_\infty(\Lambda)| \in \{1,2, \infty\}$). 
\end{enumerate}
\end{proposition}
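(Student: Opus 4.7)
The plan is to follow the strategy outlined in the remark immediately preceding the proposition, making the argument precise and filling in the details concerning the topology of convex hulls of finite point sets in $\bH^n$. First I would dispose of the finite case: if $\Lambda$ is finite, then $[\Lambda]_{\mathrm{int}}$ is represented by a bounded proper geodesic metric space, so the Gromov boundary and the space of coarse ends are both empty. This gives Type~1 and shows that the three cases depend only on the cardinality $|\partial \Lambda|$.

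Now assume $\Lambda$ is infinite, so that every apogee $X$ is unbounded. Invoke the Bonk--Schramm realization (Corollary \ref{BSRealization}) to obtain $n \in \mathbb N$ and a compact subset $L \subset \partial \bH^n$ such that $\cH(L)$ is an apogee for $(\Lambda,\Lambda^\infty)$ and such that a representative of $\partial \Lambda$ is given by $L$ with its topology inherited from $\partial\bH^n$. Since $\cH(L)$ is unbounded and $L = \cL(\cH(L))$, we must have $|L| \geq 2$ (the case $|L|=0$ gives $\cH(L) = \emptyset$, and the case $|L|=1$ is impossible because no unbounded closed convex subset of $\bH^n$ can have a single limit point, as its weak hull would be empty and Lemma \ref{Hulling} would force boundedness).

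The heart of the argument is to count coarse ends of $\cH(L)$ when $L$ is finite, say $|L| = k \geq 2$. Write $L = \{\xi_1, \dots, \xi_k\}$; the weak hull $\mathfrak H(L)$ is then the union of the finitely many bi-infinite geodesics $\gamma_{ij}$ between pairs of distinct limit points, and $\cH(L)$ is at bounded Hausdorff distance from $\mathfrak H(L)$ by (the forthcoming) Proposition~\ref{WeakHullConvexHull}. Pick any basepoint $o \in \cH(L)$. Using thinness of ideal hyperbolic polygons together with the fact that there are only finitely many geodesics $\gamma_{ij}$, I would argue that for all sufficiently large $R$ the complement $\cH(L) \setminus B(o, R)$ has exactly $k$ unbounded components, one for each $\xi_i \in L$ (the component corresponding to $\xi_i$ contains the tails into $\xi_i$ of all $\gamma_{ij}$). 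Passing to coarse ends via Construction~\ref{CoarseEnds} (and noting that components agree with coarse ends once $R$ is taken large enough, by properness of $\cH(L)$), this yields $|\mathcal E_\infty(\Lambda)| = k$. Applying Corollary~\ref{EndsApp}, the number of coarse ends of a geometrically finitely-generated approximate group belongs to $\{0,1,2,\infty\}$. Combined with $k \geq 2$ this forces $k = 2$. But $\cH(\{\xi_1,\xi_2\})$ is a single bi-infinite geodesic, hence isometric to $\R$, so $[\Lambda]_{\mathrm{int}} = [\bZ]_{\mathrm{int}}$, giving Type~2.

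This leaves the case where $L$ is infinite. Since $L$ is compact and $\partial \Lambda \cong L$, we have $|\partial \Lambda| = \infty$. The parenthetical claim $|\mathcal E_\infty(\Lambda)| \in \{1,2,\infty\}$ is immediate from Corollary~\ref{EndsApp} (we have excluded $0$ ends, since $\Lambda$ is infinite; this follows for instance from the fact that an apogee is unbounded). The main obstacle I anticipate is giving a clean, self-contained justification that for finite $L$ with $|L|=k\geq 2$, the number of coarse ends of $\cH(L)$ is exactly $k$; this requires the observation that an ideal hyperbolic polyhedron in $\bH^n$ on finitely many vertices is, outside a compact set, a disjoint union of cuspidal regions, one per ideal vertex. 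The remaining steps are routine applications of the excerpt's results.
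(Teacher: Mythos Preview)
Your proposal is correct and follows essentially the same approach as the paper's proof (which is given in the remark immediately preceding the proposition): dispose of the finite case, use the Bonk--Schramm realization to represent $\partial\Lambda$ by a compact $L\subset\partial\bH^n$ with apogee $\cH(L)$, rule out $|L|\leq 1$, and for finite $|L|=k\geq 2$ use thinness of ideal hyperbolic polyhedra to identify $|\cE_\infty(\Lambda)|=k$, then apply Corollary~\ref{EndsApp} to force $k=2$. Your identification of the end-counting step as the main technical point is apt; the paper handles it with the same one-line appeal to thinness of ideal polyhedra that you outline.
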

\begin{proof} If $\Lambda$ is finite, then we are clearly in Type 1. Assume now that $\Lambda$ is infinite and fix a Bonk-Schramm realization $L \subset \partial \bH^n$ of $\partial \Lambda$ so that $k \coloneqq |L| = |\partial \Lambda| > 0$. Assume first that $L$ is finite.

Let $(\Lambda, \Lambda^\infty)$ be a hyperbolic approximate group. Assume that some representative $L$ of $\partial \Lambda$ is finite; then all representatives of $\partial \Lambda$ are finite with the same cardinality $|\partial \Lambda| \coloneqq  |L|$. If $k=1$, then $\cH(L) = \emptyset$ which is impossible, hence we always have $k \geq 2$ and $\cH(L)$ is an ideal hyperbolic polyhedron in $\bH^n$.
Since such polyhedra are thin, the hull $\cH(L)$ intersects the complement of any sufficiently large ball in $\bH^n$ in $k$ components, corresponding to the $k$ elements of $L$. This shows that the number of ends of $L$ is given by
\[
|\cE_\infty(\Lambda)| = |\cE(L)| = k.
\]
With Corollary \ref{EndsApp} we deduce that $k \leq 2$ and hence $k=2$. This however means that $[\Lambda]_{\mathrm{int}}$ is represented by the convex hull of two points in $\bH^n$, which is a geodesic line or, in other words, that $[\Lambda]_{\mathrm{int}} = [\Z]$, hence we are in Type 2. Finally, if $L$ is infinite, then by Corollary \ref{EndsApp} we are in Type 3. 
\end{proof}
In the sequel we will focus on hyperbolic approximate groups of Type 3 and in particular assume that $\Lambda$ is infinite.
\begin{construction}[Boundary action]
Let $(\Lambda, \Lambda^\infty)$ be an infinite hyperbolic approximate group, let $L \subset \partial \bH^n$ be a Bonk-Schramm realization of $\partial \Lambda$ and let $X$ denote the associated apogee. Then $(\Lambda, \Lambda^\infty)$ quasi-acts geometrically on $X$ by the left-regular quasi-action 
\[\rho: (\Lambda, \Lambda^\infty) \to \widetilde{\mathrm{QI}}(X).\] 
In particular, the quasi-orbit maps $\iota_x: \Lambda \to X$, $g \mapsto g.x$ are quasi-isometries for $x \in X$ and their images are quasi-lattices in $X$. Moreover, for every $g \in \Lambda$, the map $\rho(g)$ is a quasi-isometry of $X$, hence extends to a homeomorphism $\partial \rho(g)$ of $\mathcal L(X) = L$. It is easy to check that the map
\[
\partial \rho: (\Lambda, \Lambda^\infty) \to \mathrm{Homeo}(L)
\]
is actually a homeomorphism, hence defines an action of $\Lambda^\infty$ on $L$ by homeomorphisms, called the \emph{boundary action} on $L$; we will actually prove a more general statement in Lemma
\ref{BoundaryActionGromov} below.
\end{construction}
An important distinction in the theory of hyperbolic groups is the dichotomy between elementary and non-elementary hyperbolic groups. The following well-known theorem provides several equivalent characterizations of elementary hyperbolic groups:
\begin{theorem}[Characterizations of elementary hyperbolic groups]\label{TheoremGroupElementary} Let $\Gamma$ be an infinite hyperbolic group. Then the following conditions are equivalent:
\begin{enumerate}[(i)]
    \item There exists a subgroup $\Gamma'<\Gamma$ of index $\leq 2$ which fixes a point in $\partial \Gamma$.
    \item There exists a finite index subgroup $\Gamma' < \Gamma$ which fixes a point in $\partial \Gamma$.
    \item There exists a finite index subgroup $\Gamma'' < \Gamma$ which is infinite cyclic.
    \item $|\partial \Gamma| = 2$.
    \item $|\partial \Gamma| \leq 2$
    \item $\partial \Gamma$ is finite.
\end{enumerate}
\end{theorem}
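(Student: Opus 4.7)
The plan is to prove the cycle of implications (i)$\implies$(ii)$\implies$(iv)$\implies$(iii)$\implies$(i), together with the trivial $(iv)\implies(v)$ and the short deduction $(v)\implies(iv)$. The implication (i)$\implies$(ii) is immediate, and (iv)$\implies$(v) is trivial. For (v)$\implies$(iv) I would simply invoke Proposition \ref{GromovBoundaryTri} applied to $\Gamma$, viewed as a (finitely-generated hyperbolic) approximate group: the boundary is either empty, has two points, or is infinite, so finite already forces $|\partial\Gamma|\leq 2$.

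For (iv)$\implies$(iii) I would again use Proposition \ref{GromovBoundaryTri}: assuming $|\partial\Gamma|\leq 2$, either $\Gamma$ is finite (and we are done), or $\Gamma$ is quasi-isometric to $\bZ$. In the latter case I would invoke the classical QI-rigidity statement that any finitely-generated group quasi-isometric to $\bZ$ admits a finite-index infinite cyclic subgroup; this is a well-known consequence of the two-ended case of Stallings' theorem, or alternatively of the fact that $\bZ$ has polynomial growth of degree $1$ combined with Gromov's theorem.

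For (iii)$\implies$(i) I would proceed as follows. Given an infinite-cyclic subgroup $\Gamma''<\Gamma$ of finite index, pass to its normal core $N:=\bigcap_{\gamma\in\Gamma}\gamma\Gamma''\gamma^{-1}$, which is a finite-index normal subgroup of $\Gamma$ and, being a nontrivial subgroup of $\Gamma''\cong\bZ$, is itself infinite cyclic. Write $N=\langle g\rangle$. Since $g$ has infinite order and $\Gamma$ acts on any apogee, $g$ is a loxodromic element of the boundary action, and thus $N$ has exactly two fixed points $g^+,g^-\in\partial\Gamma$ (the limits of $g^{\pm n}\cdot o$). Normality of $N$ in $\Gamma$ implies that the set $\{g^+,g^-\}$ is $\Gamma$-invariant, giving a homomorphism $\Gamma\to\mathrm{Sym}\{g^+,g^-\}\cong\bZ/2$. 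Its kernel $\Gamma'$ has index at most $2$ and fixes $g^+$.

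The main obstacle is (ii)$\implies$(iv), which I would handle by contradiction: assume $\Gamma$ is infinite, some finite-index subgroup $\Gamma'$ fixes a point $\xi\in\partial\Gamma$, and $|\partial\Gamma|\geq 3$. By Proposition \ref{GromovBoundaryTri} the boundary is then infinite. Since $\Gamma'$ has finite index, it is quasi-isometric to $\Gamma$, hence is itself a hyperbolic group, and its Gromov boundary is canonically identified (as a topological space with $\Gamma'$-action) with $\partial\Gamma$. Now I would invoke the classical fact that a non-elementary hyperbolic group acts minimally on its Gromov boundary; this is where the real work lies and is proved in the group case via the ping-pong lemma applied to two independent loxodromic elements. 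Minimality forces every $\Gamma'$-orbit in $\partial\Gamma$ to be dense, contradicting $\Gamma'\cdot\xi=\{\xi\}$ in the infinite Hausdorff space $\partial\Gamma$. Hence $|\partial\Gamma|\leq 2$.
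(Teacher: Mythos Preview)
Your proof is correct, but it routes the implications differently from the paper. The paper proves $(i)\Rightarrow(ii)$, $(iii)\Rightarrow(iv)\Rightarrow(v)$, $(v)\Rightarrow(iv)$ (via Proposition~\ref{GromovBoundaryTri}, as you do), then $(iv)\Rightarrow(i)$ directly by taking $\Gamma':=\mathrm{Stab}_\Gamma(\xi_1)$ when $\partial\Gamma=\{\xi_1,\xi_2\}$, and finally treats $(ii)\Rightarrow(iii)$ as the only non-trivial step, citing the structure theorem for infinite boundary-point stabilizers \cite[Thm.~8.3.30]{GhysdelaHarpe}.

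The differences are instructive. Your $(iv)\Rightarrow(iii)\Rightarrow(i)$ via QI-rigidity of $\bZ$ and the normal-core argument is correct but heavier than the paper's one-line $(iv)\Rightarrow(i)$: since $\Gamma$ permutes a two-point boundary, the stabilizer of either point already has index $\leq 2$. Conversely, your hard step $(ii)\Rightarrow(iv)$ via boundary minimality is a genuinely different argument from the paper's $(ii)\Rightarrow(iii)$ via the stabilizer structure theorem; both appeal to classical hyperbolic-group facts of comparable depth, but yours is more dynamical (ping-pong and minimality) while the paper's is more algebraic (virtually cyclic stabilizers). Your route has the mild advantage that minimality of the boundary action is proved later in the paper itself (Proposition~\ref{BoundaryMinimal}) in the approximate setting, so the argument is closer in spirit to what the book develops.
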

\begin{proof} The implications (i)$\implies$(ii) and (iii)$\implies$(iv)$\implies$(v)$\implies$(vi) are obvious, and (vi)$\implies$ (v) is a special case of Proposition \ref{GromovBoundaryTri}. Also, since $\Gamma$ is assumed to be infinite, Proposition \ref{GromovBoundaryTri} implies $|\partial \Gamma| \geq 2$, hence (v)$\implies$(iv). If (iv) holds, then $\partial \Gamma = \{\xi_1, \xi_2\}$ for some $\xi_1, \xi_2 \in \partial \Gamma$, in which case (i) holds with $\Gamma' \coloneqq  \mathrm{Stab}_\Gamma(\xi_1)$. Thus the only non-trivial implication is (ii)$\implies$(iii), which follows from \cite[Theorem 8.3.30]{GhysdelaHarpe}.

More precisely, the group $\Gamma'$ from (ii) has an infinite point stabilizer in $\partial \Gamma' = \partial \Gamma$, hence by \cite[Theorem 8.3.30]{GhysdelaHarpe} contains a further finite index subgroup $\Gamma''$ which is infinite cyclic.
\end{proof}
Note that passing to an index $2$ subgroup may be necessary in condition (i), as the example of an infinite dihedral group shows.
\begin{definition}
A hyperbolic group $\Gamma$ is called \emph{elementary} if it is either finite or if it is infinite and satisfies the equivalent conditions of Theorem \ref{TheoremGroupElementary}, otherwise it is called \emph{non-elementary}.\index{hyperbolic!group!elementary}
\index{hyperbolic!group!non-elementary}
\end{definition}
Our next goal is to define a notion of a non-elementary hyperbolic approximate group. All of the properties in Theorem \ref{TheoremGroupElementary} have counterparts in the theory of hyperbolic approximate groups, but the non-trivial implication (ii)$\implies$(iii) breaks down. Not only is the iteration argument behind the proof of \cite[Theorem 8.3.30]{GhysdelaHarpe} not available in the context of approximate groups, but it turns out that the condition (iii) is actually strictly stronger than the other conditions:
\begin{example}
If $\Lambda$ is a uniform approximate lattice in $\bR$, then $\Lambda$ is quasi-isometric to $\bR$ and hence $|\partial \Gamma| = 2$. In particular, $\Lambda$ is infinite, but it need not contain any infinite cyclic subgroup (let alone of finite index). For example, the set of vertices of the Fibonacci tiling does not.
\end{example}
Other than that, the proof of Theorem \ref{TheoremGroupElementary} carries over almost verbatim to give the following corollary:

\begin{corollary}\label{AGElementary} 
Let $(\Lambda, \Lambda^\infty)$ be an infinite hyperbolic approximate group and let $L$ be a Bonk-Schramm realization of $\partial \Lambda$. We denote by $\partial \rho$ the associated boundary action on $L$ and
consider the following conditions:
\begin{enumerate}[(i)]
    \item There exists a syndetic approximate subgroup $\Lambda'' \subset \Lambda$ which is an infinite cyclic group.
    \item $|L| \leq 2$ (and hence $|L| = 2$).
    \item $L$ is finite.
    \item There exists a syndetic approximate subgroup $\Lambda' \subset \Lambda^2$ such that $\partial \rho((\Lambda')^\infty)$ fixes a point in $L$.
    \item There exists an approximate subgroup $\Lambda' \subset \Lambda^\infty$ which is commensurable to $\Lambda$ and such that $\partial \rho((\Lambda')^\infty)$ fixes a point in $L$.
\end{enumerate}
Then the implications (i)$\implies$(ii)$\iff$(iii)$\implies$(iv)$\implies$(v) hold.
\end{corollary}
\begin{proof} (i)$\implies$(ii)$\iff$(iii) are immediate from Proposition \ref{GromovBoundaryTri} and (iv)$\implies$(v) is obvious, hence it remains to prove (ii)$\implies$(iv).

If $\partial \Lambda = \emptyset$ then $\Lambda$ is finite, hence (iv) holds. Otherwise we have $\partial \Lambda = \{\xi_1, \xi_2\}$ and the stabilizer of $\xi_1$ defines a finite index subgroup $\Gamma$ of $\Lambda^\infty$. We may thus choose $\Lambda' \coloneqq  \Lambda^2 \cap \Gamma$; this stabilizes $\xi_1$ and is an approximate subgroup of $\Lambda^2$ by Lemma \ref{Intersection}.
\end{proof}
We define the strongest possible definition of non-elementary hyperbolic group by negating the weakest of the conditions in Corollary \ref{AGElementary}:
\begin{definition}\label{DefNonEl} Let $(\Lambda, \Lambda^\infty)$ be an infinite hyperbolic approximate group and let $L$ be a Bonk-Schramm realization of $\partial \Lambda$ with boundary action $\partial \rho$.
\begin{enumerate}[(i)]
\item We say that $\Lambda$ has \emph{fixpoints at infinity}\index{fixpoints at infinity} if there exists some $\xi \in L$ such that $\partial \rho(\lambda)(\xi) = \xi$ for all $\lambda \in \Lambda$ (and hence all $\lambda \in \Lambda^\infty$).
\item We say that $\Lambda$ is \emph{non-elementary} if no $\Lambda' \subset \Lambda^\infty$ which is commensurable to $\Lambda$ has fixpoints at infinity\index{hyperbolic approximate subgroup!non-elementary}. Otherwise we say $\Lambda$ is \emph{elementary}.
\end{enumerate}
\end{definition}
Note that these notions do not depend on the choice of a Bonk-Schramm realization. As in the group case, we make the convention that a finite approximate subgroup is considered to be elementary. By Corollary \ref{AGElementary} every non-elementary hyperbolic approximate group has an infinite boundary. We do not know whether the converse is true.

It is a classical fact in the theory of hyperbolic groups (see e.g. \cite[Cor. 8.2.26]{GhysdelaHarpe}) that the action of a non-elementary hyperbolic group on its boundary is minimal; here an action of a group on a topological space is called \emph{minimal}\index{action!group action!minimal} if every orbit is dense. Similarly we define:
\begin{definition} An action $\rho: (\Lambda, \Lambda^\infty) \to \mathrm{Homeo}(Z)$ of an approximate group $(\Lambda, \Lambda^\infty)$ on a topological space $Z$ is \emph{minimal} if every $\rho(\Lambda)$-orbit in $Z$ is dense.\index{action!approximate group action!minimal}
\end{definition}
Note that minimality of an action $(\Lambda, \Lambda^\infty) \curvearrowright X$ is a priori stronger than minimality of $\Lambda^\infty \curvearrowright X$. Fixpoints are an obvious obstruction to minimality (if the space has at least two points), and in the case of the boundary action of a hyperbolic approximate group they turn out to be the only obstruction:
\begin{proposition}[Boundary minimality]\label{BoundaryMinimal} For a hyperbolic approximate group $(\Lambda, \Lambda^\infty)$, the following are equivalent:
\begin{enumerate}[(i)]
\item  $(\Lambda, \Lambda^\infty)$ has no fixpoints at infinity.
\item The action of $(\Lambda, \Lambda^\infty)$ on some (hence any) representative of $\partial \Lambda$ is minimal.
\item The action of $\Lambda^\infty$ on some (hence any) representative of $\partial \Lambda$ is minimal.
\end{enumerate}
In particular, these hold if $(\Lambda, \Lambda^\infty)$ is non-elementary.
\end{proposition}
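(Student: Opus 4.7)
The ``in particular'' clause is immediate: if $(\Lambda,\Lambda^\infty)$ is non-elementary then, being commensurable with itself, $\Lambda$ has no fixpoints at infinity, giving (i). Among the three equivalences, (ii)$\Rightarrow$(iii) holds because every $\Lambda^\infty$-orbit on $\partial\Lambda$ contains a $\Lambda$-orbit. Both (ii)$\Rightarrow$(i) and (iii)$\Rightarrow$(i) follow from the observation that since $\partial\rho$ is a group homomorphism $\Lambda^\infty\to\mathrm{Homeo}(\partial\Lambda)$ and $\Lambda$ generates $\Lambda^\infty$, every $\Lambda$-fixpoint is also a $\Lambda^\infty$-fixpoint, giving a singleton orbit; by Proposition~\ref{GromovBoundaryTri} this is inconsistent with density since $|\partial\Lambda|\neq 1$.

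The substantive implication is (i)$\Rightarrow$(iii), which I plan to prove by a weak-hull argument. Fix $\xi\in\partial\Lambda$ and set $Z:=\overline{\partial\rho(\Lambda^\infty)\xi}$, a closed $\Lambda^\infty$-invariant subset with $|Z|\geq 2$ by (i). Via the Bonk--Schramm realization (Corollary~\ref{BSRealization}) I identify $\partial\Lambda$ with $\cL(X)\subset\partial\bH^n$ and form the weak hull $\mathfrak H(Z)\subset X\subset\bH^n$. Since $\partial\rho(\Lambda)$ preserves $Z$ setwise, each $\rho(\lambda)$ carries a geodesic joining two points of $Z$ to a quasi-geodesic with endpoints in $Z$; by Morse stability this image lies in a uniform neighbourhood of the genuine geodesic, which is still inside $\mathfrak H(Z)$. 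Hence $\rho(\Lambda)\mathfrak H(Z)\subset N_R(\mathfrak H(Z))$ for some uniform $R$. Choosing $o\in\mathfrak H(Z)$, coboundedness of the left-regular qiqac gives $X=N_{R'}(\rho(\Lambda)o)\subset N_{R+R'}(\mathfrak H(Z))$, so $\mathfrak H(Z)$ is relatively dense in $X$. Taking limit sets and applying the weak-hull inversion $Z=\cL(\mathfrak H(Z))$ (valid for closed nonempty $Z$), I conclude $Z=\cL(X)=\partial\Lambda$.

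For (i)$\Rightarrow$(ii) the plan is to apply the same template to $Y:=\overline{\partial\rho(\Lambda)\xi}$, but $Y$ is only \emph{quasi}-invariant under $\Lambda$: writing $\Lambda^2\subset\Lambda F$ for a finite $F\subset\Lambda^\infty$, one only gets $\partial\rho(\Lambda)Y\subset Y^+:=\bigcup_{f\in F\cup\{e\}}\overline{\partial\rho(\Lambda)(f\xi)}$, and the coboundedness step yields the weaker conclusion that $\partial\Lambda$ equals the \emph{finite} union $Y^+$. The main obstacle is to collapse this finite union onto $Y$ itself. I plan to do this via the convergence-action property of the uniform boundary action $\partial\rho$: for any $\eta\in\partial\Lambda$, coboundedness supplies $\lambda_n\in\Lambda$ with $\rho(\lambda_n)o\to\eta$, and after subsequencing one has $\rho(\lambda_n^{-1})o\to\eta'$ together with $\partial\rho(\lambda_n)\zeta\to\eta$ for all $\zeta\neq\eta'$; whenever $\xi\neq\eta'$, this directly gives $\partial\rho(\lambda_n)\xi\to\eta$, so $\eta\in Y$. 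In the residual case $\xi=\eta'$ for every admissible approximating sequence, I precompose by a fixed $\mu\in\Lambda$ with $\partial\rho(\mu)\xi\neq\xi$, available by (i), and absorb the resulting $\Lambda^2$-correction into $\Lambda F$ along a constant-coset subsequence $\lambda_n\mu=\lambda_n'f$: a short quasi-action computation (using Lemma~\ref{Inverses} to identify $\partial\rho(\mu^{-1})$ with $\partial\rho(\mu)^{-1}$) shows that $\rho(\lambda_n')o\to\eta$ persists while the new exceptional point becomes $\partial\rho(f\mu^{-1})\xi$, which differs from $\xi$ provided $\mu$ is chosen outside the finite union $\bigcup_{f\in F}f\cdot\mathrm{Stab}_{\Lambda^\infty}(\xi)$. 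The only way this selection of $\mu$ can fail is if $\Lambda$ itself is covered by finitely many cosets of $\mathrm{Stab}_{\Lambda^\infty}(\xi)$, forcing $\partial\rho(\Lambda)\xi$ to be finite; but a finite $\Lambda$-orbit is $\Lambda^\infty$-invariant, contradicting (iii) which has already been established. Hence every $\eta\in\partial\Lambda$ lies in $Y$, completing (ii).
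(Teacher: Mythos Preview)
Your easy implications and your weak-hull proof of (i)$\Rightarrow$(iii) are correct; this is exactly the paper's argument, except that the paper applies it directly to the $\Lambda$-orbit closure $L'=\overline{\partial\rho(\Lambda)\xi}$ rather than to the $\Lambda^\infty$-orbit closure, and thereby obtains (i)$\Rightarrow$(ii) in a single step via Lemma~\ref{lem:invariant boundary invariant wh}. The paper does not pause to verify the $\Lambda^\infty$-invariance hypothesis of that lemma for $L'$, so your decision to treat (ii) separately is not unreasonable.

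Your convergence-action route to (i)$\Rightarrow$(ii), however, has genuine gaps. First, the north--south property you invoke (that $\partial\rho(\lambda_n)\zeta\to\eta$ for all $\zeta\neq\eta'$) is not established anywhere in the paper for qiqacs; you would need to prove or cite it. Second, the residual-case algebra is off: from $\lambda_n\mu=\lambda_n'f$ one gets $(\lambda_n')^{-1}=f\mu^{-1}\lambda_n^{-1}$, so the new repelling point is $\partial\rho(f\mu^{-1})\xi$, and this equals $\xi$ precisely when $\mu\in\mathrm{Stab}(\xi)\cdot f$ --- a \emph{right}-coset condition on $\mu$. But your claim that ``$\Lambda$ covered by finitely many $\mathrm{Stab}(\xi)$-cosets forces $\partial\rho(\Lambda)\xi$ finite'' only works for \emph{left} cosets: if $\lambda=fs$ with $s\in\mathrm{Stab}(\xi)$ then $\partial\rho(\lambda)\xi=\partial\rho(f)\xi$, whereas $\lambda=sf$ gives $\partial\rho(s)(\partial\rho(f)\xi)$, which is unconstrained since $s$ fixes $\xi$, not $\partial\rho(f)\xi$. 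Finally, even if the $\Lambda$-orbit of $\xi$ were finite, it would not automatically be $\Lambda^\infty$-invariant (nothing in the approximate-group axioms forces $\partial\rho(\Lambda^2)\xi\subset\partial\rho(\Lambda)\xi$), so the appeal to (iii) at the end does not yield the intended contradiction.
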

For the proof we will use a statement concerning invariance of weak hulls that we will only prove later in a more general context in Lemma \ref{lem:invariant boundary invariant wh}. We invite the reader to check that Proposition \ref{BoundaryMinimal} is not used in the proof of this lemma, so that our argument is not circular.
\begin{proof} Assume that $(\Lambda, \Lambda^\infty)$ has no fixpoints at infinity and let $\xi \in L$. Then the orbit closure $L' \coloneqq  \overline{\rho(\Lambda).\xi}$ is a compact subset of $\partial \bH^n$ which contains at least one other point than $\xi$, say $\eta$. Now the weak hull $\mathfrak{H}(L')$ contains the geodesic $[\eta, \xi]$ and thus $\mathfrak{H}(L') \neq \emptyset$.

It follows from Lemma \ref{lem:invariant boundary invariant wh} that $d_\mathrm{Haus}(\rho(\Lambda).\mathfrak{H}(L'), \mathfrak{H}(L'))<D$ for some $D>0$. On the other hand, since $\rho$ is cobounded, we know that there exists an $R\geq 0$ such that for any $z \in \mathfrak{H}(L')$ we have $d_\mathrm{Haus}(\rho(\Lambda).z, \mathfrak{H}(L))<R$. Setting $T\coloneqq R+D$, we conclude $d_\mathrm{Haus}(\mathfrak{H}(L'), \mathfrak{H}(L))<T$. Passing to the respective limit sets we deduce with Proposition \ref{prop: limit set of hull is limit set again} that
\[
L' = \cL(\mathfrak{H}(L')) = \cL(\mathfrak{H}(L)) = L.
\] 
This shows that $(\Lambda, \Lambda^\infty)$ acts minimally on $L$ (hence any representative of $\partial \Lambda$) and proves the implications (i) $\implies$ (ii). The implication (ii)$\implies$(iii)$\implies$(i) are obvious.
\end{proof}
Here is a sample application of boundary minimality to the topology of $\partial \Lambda$:
\begin{proposition}[Boundaries of non-elementary hyperbolic approx.\ groups]\label{BoundaryNonel} Assume that $(\Lambda, \Lambda^\infty)$ is an infinite hyperbolic approximate group with Bonk-Schramm realization $L \subset \partial \bH^n$ which has no fixpoints at infinity. Then precisely one of the following two options holds:
\begin{enumerate}[(i)]
\item $|L| = 2$.
\item $L$ is an infinite perfect compact subset of $\partial \bH^n$.
\end{enumerate}
In particular, the boundary of a non-elementary hyperbolic approximate group is represented by a perfect infinite compact subset of a sphere, whose homeomorphism group acts minimally.
\end{proposition}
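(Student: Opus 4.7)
The plan is to invoke the trichotomy from Proposition \ref{GromovBoundaryTri} to reduce to the case where $|L|$ is infinite, and then to exclude isolated points by a minimality argument.

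First, since $(\Lambda,\Lambda^\infty)$ is an infinite hyperbolic approximate group, Proposition \ref{GromovBoundaryTri} places it in either Type 2 or Type 3. In Type 2 we have $|L|=2$ and the first alternative of the claim holds, so I may assume we are in Type 3 and in particular $|L|$ is infinite. The task is then to show that $L$ has no isolated points.

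Next, since $(\Lambda,\Lambda^\infty)$ has no fixpoints at infinity by hypothesis, Proposition \ref{BoundaryMinimal} applies and tells me that the boundary action $\partial\rho$ of $\Lambda^\infty$ on $L$ is minimal, i.e.\ every closed $\Lambda^\infty$-invariant subset of $L$ is either empty or all of $L$. Let $I \subset L$ denote the set of isolated points of $L$. Then $I$ is open in $L$, so its complement $L\setminus I$ is closed in $L$. Moreover, because $\partial\rho$ acts by homeomorphisms of $L$, and homeomorphisms preserve the property of being an isolated point, $I$ is $\Lambda^\infty$-invariant; hence so is $L\setminus I$.

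By minimality, $L\setminus I$ equals either $\emptyset$ or $L$. If $L\setminus I=\emptyset$ then every point of $L$ is isolated, so $L$ would be a discrete compact Hausdorff space, hence finite, contradicting $|L|=\infty$. Therefore $L\setminus I=L$, i.e.\ $L$ has no isolated points; being a closed subset of $\partial\bH^n$ without isolated points it is perfect. The final ``in particular'' clause is then immediate: a non-elementary hyperbolic approximate group by definition has no fixpoints at infinity, and by the contrapositive of the chain (ii)$\Rightarrow$(iv)$\Rightarrow$(v) in Corollary \ref{AGElementary} it cannot satisfy $|\partial\Lambda|\leq 2$, so $|L|$ is infinite and the second alternative of the proposition applies. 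No significant obstacle is expected; the only subtlety is to notice that the set of isolated points (rather than its complement) is the $\Lambda^\infty$-invariant open set to which minimality must be applied.
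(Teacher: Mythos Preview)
Your proof is correct and follows essentially the same approach as the paper: both arguments exploit that the set of isolated points is invariant under the boundary action by homeomorphisms, apply minimality (Proposition \ref{BoundaryMinimal}) to force either all or no points to be isolated, and then invoke Proposition \ref{GromovBoundaryTri} to handle the finite case. The only cosmetic difference is order: the paper assumes $L$ is not perfect, shows the orbit of an isolated point forces $L$ to be finite, and then appeals to the trichotomy, whereas you invoke the trichotomy first and then argue with the full set $I$ of isolated points and its closed invariant complement $L\setminus I$---arguably a slightly cleaner packaging of the same idea.
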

\begin{proof} Note first that $L$ is compact as a closed subset of $\partial \bH^n$. Now assume that $L$ is not perfect. The subset $K$ of isolated points in $L$ is open and invariant, so the complement of $M=L\setminus K$, is a closed invariant subset. By minimality of the action shown in Proposition \ref{BoundaryMinimal}, we conclude that $M$ is empty. Thus $L$ consists only of isolated points and is a discrete subset of the compact space $L$, hence finite.
By Proposition \ref{GromovBoundaryTri} we thus have either $|L| = 2$ or $|L| = 0$, but since $\Lambda$ is infinite, the latter is impossible. This shows that $L$ is either perfect and compact or of cardinality $2$, and the proposition follows.
\end{proof}

\chapter{Limit sets of quasi-isometric quasi-actions}

In this chapter we consider the behavior of quasi-isometric quasi actions (qiqacs) of approximate groups at infinity. 
We have seen in the last chapter that the left-regular qiqac of a hyperbolic approximate group on its apogee extends to an action by homeomorphisms on the Gromov boundary. In Section \ref{SecMorseGromovLimit} we generalize this construction, first to Gromov boundaries (for qiqacs on hyperbolic spaces) and then to Morse boundaries (for arbitrary proper geodesic metric spaces). In general the boundary action will not be minimal anymore, and as in the group case we identify a closed invariant subset of the boundary called the \emph{limit set} (see Definition \ref{DefGromovLimitSet} and Definition \ref{MorseLimitSet}) whose elements are called \emph{limit points} of the qiqac. 

As in the group case, one can distinguish different types of limit points depending on whether they are approached by quasi-orbit points in a bounded neighborhood of a geodesic or not. Limit points of the first type are usually called \emph{conical}, and the set of conical limit points is an important refinement of the limit set. In Theorem \ref{QuasikernelsConical} we provide an explicit example of an infinite family of pairwise non-commensurable approximate subgroups of free groups (in fact, quasikernels of counting quasimorphisms) which all have full limit set in the ambient free group, but can be pairwise distinguished by their respective conical limit points. This specific example has no counterpart in the group world.

On the other hand,
in Sections \ref{SecStableQO} and \ref{Section:ConvexCocompact} we discuss examples of qiqacs in which every limit point is conical, namely convex cocompact qiqacs or, equivalently, qiqacs with stable quasi-orbits. Our results generalize work of Durham and the first-named author \cite{cordes:2016ab}, which in turn generalizes work of Swenson \cite{Swenson} in the hyperbolic case.

\section{Morse and Gromov limit sets of approximate groups}\label{SecMorseGromovLimit}

In this section we define the boundary action of a qiqac and the associated limit set. Throughout we will work in the following setting:
\begin{notation}\label{BoundarySetting}
The following notations and conventions will be used throughout this chapter:
\begin{itemize}
\item $(\Lambda, \Lambda^\infty)$ denotes an approximate group;
\item  $X$ is a proper geodesic metric space with basepoint $o\in X$;
\item $\rho\colon (\Lambda, \Lambda^\infty) \to \widetilde{{\rm QI}}(X)$ is a $(K_k, C_k, C_k')$-qiqac.
\end{itemize}
\end{notation}
Our basic constructions apply to a wide variety of boundaries. For example, if $X$ is Gromov hyperbolic, then we can define a limit set in the Gromov boundary of $X$. If $X$ is an arbitrary proper geodesic metric space, then we can define a limit set in the Morse boundary of $X$ (see Appendix \ref{AppendixMorse} for the definition of Morse boundaries). In fact, one could as well define limit sets in other type of boundaries - rather than trying to axiomatize the situation, we will discuss limit sets in Gromov and Morse boundaries and leave it to the reader to adapt the constructions presented here to their context.

For the first part of this section, up to Proposition \ref{GromovLimitSetInvariant}, we will assume that $X$ is Gromov hyperbolic. We then denote by $\partial X \coloneqq  \partial_s X$ the sequential model of the \emph{Gromov boundary}\index{Gromov boundary!sequential} of $X$ and by $\overline{X} = X \cup \partial X$ the \emph{Gromov compactification}\index{Gromov compactification}. Note that $\partial X$ is always compact, and it is non-empty as soon as $X$ is unbounded.
\begin{definition}\label{DefGromovLimitSet} Given a subset $Y \subset X$ we define its \emph{Gromov limit set}\index{Gromov limit set!of a subset}\index{limit set!Gromov} as $\cL(Y) \coloneqq  \overline{Y} \cap \partial X$, where the closure is taken inside $\overline{X}$.
\end{definition}
Note that if two subsets $Y, Z \subset X$ are at bounded Hausdorff distance, then $\cL(Y) = \cL(Z)$, which implies:
\begin{lemma} \label{lem:limit set indep1}  For all $o, o' \in X$ and $k \in \bN$ we have
\[
\cL(\rho(\Lambda).o) = \cL(\rho(\Lambda).o') = \cL(\rho(\Lambda^k).o) \subset \partial X.
\]
\end{lemma}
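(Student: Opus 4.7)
The plan is to reduce both equalities to a single general principle: two subsets of $X$ at finite Hausdorff distance have the same Gromov limit set. This principle is easy to verify: if $y_n \in Y$ converges to $\xi \in \partial X$ (so $(y_n|y_m)_o \to \infty$) and $z_n \in Z$ satisfies $d(y_n, z_n) \leq D$, then $(z_n|z_m)_o \geq (y_n|y_m)_o - 2D \to \infty$, so $z_n \to \xi$ as well. Hence it suffices to check that the three sets in the statement are pairwise at bounded Hausdorff distance in $X$.

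For the first equality, this is precisely the content of Remark \ref{QuasiActionRemarks}(ii): quasi-orbits of a qiqac through different basepoints are at uniformly bounded Hausdorff distance. Concretely, for each $\lambda \in \Lambda$ one has
\[
d(\rho(\lambda).o, \rho(\lambda).o') \leq K_1 \, d(o, o') + C_1,
\]
because $\rho(\lambda)$ is a $(K_1, C_1, C_1')$-quasi-isometry, and this bound is uniform in $\lambda$.

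For the second equality, clearly $\rho(\Lambda).o \subset \rho(\Lambda^k).o$, so I need only bound the other direction. By Proposition \ref{PropShuffling}, there is a finite set $F \subset \Lambda^\infty$ with $\Lambda^k \subset \Lambda F^{k-1}$, so each $\mu \in \Lambda^k$ can be written $\mu = \lambda f$ with $\lambda \in \Lambda$ and $f \in F^{k-1}$. Using the quasi-action inequality \eqref{qac} and the quasi-isometry bound for $\rho(\lambda)$, I get
\[
d(\rho(\mu).o, \rho(\lambda).o) \leq d(\rho(\lambda f).o, \rho(\lambda)\rho(f).o) + d(\rho(\lambda)\rho(f).o, \rho(\lambda).o) \leq C_k' + K_1 \, d(\rho(f).o, o) + C_1.
\]
Since $F^{k-1}$ is finite and lies in some $\Lambda^N$, the quantity $d(\rho(f).o, o)$ is bounded uniformly over $f \in F^{k-1}$ (here I use properness of $X$ to ensure the maximum is finite, though really it is immediate from $F^{k-1}$ being finite). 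This gives a uniform bound on $d(\rho(\mu).o, \rho(\Lambda).o)$, completing the argument.

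I do not foresee any obstacle; the only mildly technical point is making the finite set $F$ and its image $\rho(F^{k-1}).o$ explicit, but this is standard once one invokes Proposition \ref{PropShuffling} and the defining inequality of a qiqac.
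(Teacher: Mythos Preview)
Your proof is correct and follows essentially the same approach as the paper: both reduce to the principle that subsets at bounded Hausdorff distance have equal Gromov limit sets, then invoke Remark \ref{QuasiActionRemarks} for the first equality and Proposition \ref{PropShuffling} for the second. The paper's proof is simply terser, stating only that $\Lambda$ has finite index in $\Lambda^k$ without unpacking the distance estimate as you do. One small slip: in your second estimate the constant $C_k'$ is not quite right, since $f \in F^{k-1} \subset \Lambda^{3(k-1)}$ need not lie in $\Lambda^k$, so the qiqac inequality \eqref{qac} should be applied with $\lambda_1, \lambda_2 \in \Lambda^N$ for $N = 3(k-1)$, yielding $C_N'$ instead; this is harmless for the argument.
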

\begin{proof} The first equality follows from the fact that, by Remark \ref{QuasiActionRemarks}, any two $\Lambda$-quasi-orbits are at bounded Hausdorff distance. Similarly, the quasi-orbits of $\Lambda$ and $\Lambda^k$ are at bounded Hausdorff distance since $\Lambda \subset \Lambda ^k$ is of finite index by Proposition \ref{PropShuffling}.\end{proof}
\begin{definition}
The set  $\cL_\rho(\Lambda) \coloneqq  \cL(\rho(\Lambda).o)$ is called the \emph{Gromov limit set}  of the qiqac $\rho$.
\index{Gromov limit set!of qiqac}\index{qiqac!Gromov limit set of}
\end{definition}
By Lemma \ref{lem:limit set indep1} the Gromov limit set of $\rho$ is independent of the basepoint $o$.
\begin{construction}[Boundary action of a qiqac]\label{BoundaryAction1} Recall that every quasi-isometry $f: X \to X$ induces a homeomorphism $\partial f: \partial X \to \partial X$. Explicitly, if $(x_n)$ is a sequence in $X$ which represents $\xi \in  \partial X$, then the sequence $(f(x_n))$ represents $f(\xi)$. In particular, we may define a map
\[
\partial \rho\colon \Lambda^\infty \to \mathrm{Homeo}(\partial X), \quad \partial \rho(\lambda) \coloneqq  \partial(\rho(\lambda)).
\]
\end{construction}
\begin{lemma}[Boundary action]\label{BoundaryActionGromov} The map $\partial\rho$ is a group homomorphism and hence defines an action of $(\Lambda, \Lambda^{\infty})$ on $\partial X$ by homeomorphisms.
\end{lemma}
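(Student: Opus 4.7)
The plan is to verify three things in turn: (a) for every $\lambda \in \Lambda^\infty$ the map $\rho(\lambda)$ is honestly a quasi-isometry of $X$, so that $\partial\rho(\lambda) := \partial(\rho(\lambda))$ is a well-defined self-homeomorphism of the Gromov boundary; (b) $\partial\rho(e) = \mathrm{id}_{\partial X}$; (c) $\partial\rho(\lambda\mu) = \partial\rho(\lambda) \circ \partial\rho(\mu)$ for all $\lambda, \mu \in \Lambda^\infty$. The key technical input throughout is the standard fact that two quasi-isometries of a proper geodesic Gromov hyperbolic space which are at bounded distance induce the same homeomorphism of $\partial X$, together with the defect bound built into the definition of a qiqac and the relation between $\rho(\lambda)$ and $\rho(\lambda^{-1})$ supplied by Lemma \ref{Inverses}.

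For (a), given $\lambda \in \Lambda^\infty$ choose $k$ with $\lambda \in \Lambda^k$. Then $\rho(\lambda)$ is a $(K_k,C_k,C_k')$-quasi-isometric embedding by hypothesis, and by Lemma \ref{Inverses} the maps $\rho(\lambda)\rho(\lambda^{-1})$ and $\rho(\lambda^{-1})\rho(\lambda)$ are at uniformly bounded distance from $\rho(e)$, which in turn is at bounded distance from $\mathrm{id}_X$. Hence $\rho(\lambda^{-1})$ is a quasi-inverse for $\rho(\lambda)$, so $\rho(\lambda) \in \widetilde{\mathrm{QI}}(X)$ is a genuine quasi-isometry and induces a homeomorphism $\partial \rho(\lambda) \in \mathrm{Homeo}(\partial X)$.

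For (b) and (c), I would use the elementary lemma that if $f,g \colon X \to X$ are quasi-isometries with $\sup_x d(f(x),g(x)) < \infty$, then $\partial f = \partial g$; this is immediate from the sequential description of $\partial X$, since any sequence $(x_n)$ tending to $\xi \in \partial X$ satisfies $(f(x_n)|g(x_n))_{o} \to \infty$, so the two sequences represent the same boundary point. Applying this to $\rho(e)$ and $\mathrm{id}_X$ (which are $D_1$-close by Lemma \ref{Inverses}) gives (b). For (c), given $\lambda,\mu \in \Lambda^\infty$ pick $k$ with $\lambda,\mu \in \Lambda^k$; the qiqac axiom \eqref{qac} yields $\sup_{x \in X} d(\rho(\lambda)\rho(\mu).x, \rho(\lambda\mu).x) \le C_k' < \infty$, so the lemma gives $\partial\rho(\lambda\mu) = \partial(\rho(\lambda)\rho(\mu))$. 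Combined with the functoriality $\partial(f \circ g) = \partial f \circ \partial g$ for quasi-isometries of proper geodesic Gromov hyperbolic spaces, this yields $\partial\rho(\lambda\mu) = \partial\rho(\lambda) \circ \partial\rho(\mu)$, finishing the proof.

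The only mildly subtle point is that the qiqac defect \eqref{qac} is only uniform on each $\Lambda^k$ rather than on all of $\Lambda^\infty$ at once; but since any two elements $\lambda,\mu$ are captured by a single sufficiently large $k$, this creates no obstruction for a pointwise statement about $\partial\rho$.
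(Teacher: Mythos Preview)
Your proof is correct and follows essentially the same route as the paper. The paper's argument is terser: it works directly with the sequential model, noting that for $\lambda,\mu\in\Lambda^\infty$ the qiqac axiom gives a bound $R$ (depending on $\lambda,\mu$ via a common $k$) on $d(\rho(\lambda)\rho(\mu).x,\rho(\lambda\mu).x)$, so the sequences $(\rho(\lambda)\rho(\mu)(x_n))$ and $(\rho(\lambda\mu)(x_n))$ represent the same boundary point. Your step (a) is redundant since the definition of a qiqac already stipulates $\rho(\Lambda^\infty)\subset\widetilde{\mathrm{QI}}(X)$, and your step (b) is implicit once (c) holds (a multiplicative map into a group sends the identity to an idempotent, hence to the identity), but neither addition is incorrect.
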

\begin{proof} Let $\xi=[(x_n)] \in \partial X$ and $\mu, \lambda \in \Lambda^\infty$. Since $\rho$ is a qiqac, there exists a constant $R$ (depending on $\lambda$ and $\mu$) such that for all $x \in X$ we have $d(\rho(\lambda)(\rho(\mu)(x)), \rho(\lambda \mu)(x)) < R$. We deduce that
\[
\partial \rho(\lambda)(\partial \rho(\mu)(\xi)) = [(\rho(\lambda)(\rho(\mu)(x_n))] = [(\rho(\lambda \mu)(x_n))] = \partial \rho(\lambda \mu)(\xi),
\]
and hence $\partial\rho(\lambda) \circ \partial \rho(\mu) = \partial \rho(\lambda \mu)$.
\end{proof}
\begin{definition} $\partial \rho: (\Lambda, \Lambda^\infty) \to \mathrm{Homeo}(\partial X)$ is called the \emph{boundary action} of the qiqac $\rho$.\index{qiqac!boundary action of}
\end{definition}
The following crucial observation follows essentially from Lemma \ref{lem:limit set indep1}:
\begin{proposition}[Invariance of the limit set]\label{GromovLimitSetInvariant} The limit set $\cL_\rho(\Lambda)$ is invariant under the boundary action $\partial \rho$, i.e.,
\[
\partial \rho(\Lambda^\infty)(\cL_\rho(\Lambda)) \subset \cL_\rho(\Lambda).
\]
\end{proposition}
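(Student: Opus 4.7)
The plan is to unpack the definitions and reduce everything to Lemma \ref{lem:limit set indep1}. Fix $\xi \in \cL_\rho(\Lambda)$ and $\lambda \in \Lambda^\infty$; we must show $\partial\rho(\lambda)(\xi) \in \cL_\rho(\Lambda)$. Since $\Lambda^\infty = \bigcup_k \Lambda^k$, choose $k \in \bN$ with $\lambda \in \Lambda^k$. By definition of the Gromov limit set, we may write $\xi = [(x_n)]$ where $x_n = \rho(\mu_n).o$ for some sequence $\mu_n \in \Lambda$ with $(x_n)$ converging to infinity. By construction of $\partial\rho$, we then have $\partial\rho(\lambda)(\xi) = [(\rho(\lambda)(x_n))] = [(\rho(\lambda)\rho(\mu_n).o)]$.

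First I would compare this sequence with the ``shifted'' sequence $y_n := \rho(\lambda\mu_n).o$. Since $\rho$ is a $(K_k, C_k, C_k')$-qiqac and since both $\lambda$ and $\mu_n$ lie in $\Lambda^k$ (enlarging $k$ if needed so that $\Lambda \subset \Lambda^k$), the defining inequality \eqref{qac} yields
\[
d\bigl(\rho(\lambda)\rho(\mu_n).o,\, \rho(\lambda\mu_n).o\bigr) < C_k'
\]
uniformly in $n$. Since the sequences $(\rho(\lambda)\rho(\mu_n).o)$ and $(y_n)$ are therefore at uniformly bounded distance, a standard property of the Gromov boundary (sequences at bounded Hausdorff distance have Gromov products differing by a bounded amount, hence converge to the same boundary point when one does) gives $\partial\rho(\lambda)(\xi) = [(y_n)]$ as elements of $\partial X$.

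Now I observe that $\lambda\mu_n \in \Lambda^{k+1}$ for all $n$, so $y_n \in \rho(\Lambda^{k+1}).o$. Consequently $[(y_n)]$ lies in $\cL(\rho(\Lambda^{k+1}).o)$, and by Lemma \ref{lem:limit set indep1} this set coincides with $\cL_\rho(\Lambda)$. Combining, $\partial\rho(\lambda)(\xi) \in \cL_\rho(\Lambda)$, as required.

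The whole argument is essentially bookkeeping; the only point requiring a little care is the passage from ``bounded distance'' to ``same boundary point'', but this is the standard fact already implicit in the construction of $\partial\rho$ via representative sequences (and indeed used in the proof of Lemma \ref{BoundaryActionGromov}). No new ideas beyond Lemma \ref{lem:limit set indep1} and the quasi-action defect inequality \eqref{qac} are needed.
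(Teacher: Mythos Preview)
Your proof is correct and follows essentially the same approach as the paper: both use the qiqac defect inequality \eqref{qac} to pass from $\rho(\lambda)\rho(\mu_n).o$ to $\rho(\lambda\mu_n).o \in \rho(\Lambda^{k+1}).o$, and then invoke Lemma \ref{lem:limit set indep1}. The only cosmetic difference is that the paper phrases this at the level of sets (showing $\rho(\lambda)(\rho(\Lambda).o)$ is at bounded Hausdorff distance from $\rho(\Lambda^{k+1}).o$), whereas you work pointwise with a chosen sequence.
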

\begin{proof} We fix a basepoint $o \in X$ so that $\cL_\rho(\Lambda) = \cL(\rho(\Lambda).o)$. Given $\mu \in \Lambda^\infty$ we can choose $k \in \mathbb N$ such that $\mu \in \Lambda^k$. Since $\rho$ is a qiqac, a similar argument as in the proof of Lemma \ref{BoundaryActionGromov} shows that $\rho(\mu)(\rho(\Lambda).o)$ is contained in a neighborhood of $\rho(\Lambda^{k+1}).o$. Since the quasi-orbits of $\Lambda^{k+1}$ are contained in a neighbourhood of the
quasi-orbits of $\Lambda$, the result follows. 
\end{proof}
For every qiqac of $(\Lambda, \Lambda^\infty)$ on a Gromov hyperbolic space $X$ we have thus managed to construct an action of $(\Lambda, \Lambda^\infty)$ by homeomorphism on the corresponding Gromov limit set. This construction is functorial in the following sense: Assume that $\rho'$ is another qiqac of $(\Lambda, \Lambda^\infty)$ on a proper geodesic hyperbolic space $Y$ and that $\phi: X \to Y$ is a quasi-isometry with quasi-inverse $\psi$ and that $\rho'$ is equivalent to $\phi \circ \rho \circ \psi$. In this case, $\phi(\rho(\Lambda).o)$ is at bounded Hausdorff distance from $\rho'(\Lambda).\phi(o)$, and hence $\partial \phi$ maps $\cL_{\rho}(\Lambda)$ homeomorphically onto $\cL_{\rho'}(\Lambda)$. We then obtain the following commuting diagram:
\begin{center} \begin{tikzcd}
  \partial X \arrow[rr, "{\partial \phi}"] 
    && \partial Y  \\
   \cL_\rho(\Lambda) \arrow[rr, "{\partial \phi|_{\cL_\rho(\Lambda)}}"] \arrow[u, hook]
&&\cL_{\rho'}(\Lambda) \arrow[u, hook] \end{tikzcd} \end{center}

We now want to establish similar results for qiqacs on arbitrary proper geodesic metric spaces, which are not necessarily Gromov hyperbolic.  A convenient choice of boundary for this purpose is the so-called \emph{Morse boundary}; we refer the reader to Appendix \ref{AppendixMorse} for background concerning this boundary.

From now on, $(X,d)$ is allowed to be an arbitrary proper geodesic metric space and $o \in X$ denotes a fixed choice of basepoint. We denote by $\partial_s^M X_{o}$ the sequential model of the Morse boundary of $X$ with basepoint $o$ (see Remark \ref{rem:seq morse boundary} for the definition). Since a change of basepoint induces a homeomorphism on the boundary, we will usually suppress the basepoint in the notation and write  $\partial_s^M X$ for $\partial_s^M X_{o}$. 
\begin{remark}[On the scope of our setting]
If $X$ is Gromov hyperbolic, then $\partial_s^M X = \partial_s X$ is just (the sequential model of) the Gromov boundary of $X$, and hence the results about Morse limit sets below reduce to the results concerning Gromov limit sets above.

Assume now that $X$ is not Gromov hyperbolic; then one of two cases will happen: If there are no Morse geodesics in $X$, then $\partial_s^M X_{o}$ is empty. This happens for example if every geodesic in $X$ bounds a half-flat, in particular in higher rank buildings and symmetric spaces. If there are Morse geodesics, then $\partial_s^M X_{o}$ is actually non-compact \cite[Theorem 5.7]{Liu:dynamics}. By \cite{sisto} this is the case if $X$ is quasi-isometric to an acylindrically hyperbolic group (for example a relatively hyperbolic group, a mapping class group, $\mathrm{Out}(F_n)$ or a non-directly decomposable right angled Artin group, cf.\ \cite{Osin_AcylHyp}), but there are examples outside this class, including finitely-generated groups such as Tarski monsters and infinite torsion groups \cite{OOS, fink}.
\end{remark}
In our more general setting, there is no canonical topology on $X \cup \partial_s^M X$, and hence we cannot define the limit set of a subset $Y \subset X$ as in the hyperbolic case. Instead, following  Definition \ref{defn:limit set}, we define the  \emph{Morse limit set}\index{Morse!limit set}\index{limit set!Morse} of $Y$ in $\partial_s^M X$ as the closed subset given by
\[\cL(Y)= \setcon{\xi \in \partial_s^M X}{\exists\small{\text{ Morse gauge }} N \text{ and } (y_n)\subset X^{(N)}_{o} \cap Y \text{ such that } \lim{y_n}= \xi}.\]
In the case that $X$ is hyperbolic, then $\partial_s^M X = \partial_s X$ is simply (the sequential model of) the Gromov boundary of $X$, and in this case the Morse limit set coincides with the Gromov limit set. In this sense, the Morse limit set generalizes the Gromov limit set.

As in the hyperbolic case, we observe that if two subsets $Y, Z \subset X$ are of bounded Hausdorff distance, then $\cL(Y) = \cL(Z)$. The proof of Lemma \ref{lem:limit set indep1} thus yields:
\begin{lemma} \label{lem:limit set indep}  For all $o, o' \in X$ and $k \in \bN$ we have
\[\pushQED{\qed}
\cL(\rho(\Lambda).o) = \cL(\rho(\Lambda).o') = \cL(\rho(\Lambda^k).o) \subset \partial_s^M X.\qedhere\popQED
\]
\end{lemma}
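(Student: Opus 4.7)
The plan is to mirror the proof of Lemma \ref{lem:limit set indep1} for Gromov limit sets. The key observation stated immediately before the lemma -- namely that Morse limit sets are invariant under bounded Hausdorff distance of subsets of $X$ -- reduces the task to showing that the three sets $\rho(\Lambda).o$, $\rho(\Lambda).o'$ and $\rho(\Lambda^k).o$ are pairwise at bounded Hausdorff distance inside $X$. I would package these two Hausdorff-distance bounds separately.

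For the first equality, I would invoke directly Remark \ref{QuasiActionRemarks}(ii), which records that $\Lambda$-quasi-orbits through different basepoints for a fixed qiqac are at mutually bounded distance; this is the quasified version of Proposition \ref{OrbitsQuasiinvariant}(ii). Applying the bounded-Hausdorff-distance invariance of $\cL$ gives $\cL(\rho(\Lambda).o) = \cL(\rho(\Lambda).o')$.

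For the second equality, I would use Proposition \ref{PropShuffling}(i) to find a finite set $F \subset \Lambda^\infty$ with $\Lambda^k \subset \Lambda F^{k-1}$. Any $\mu \in \Lambda^k$ factors as $\mu = \lambda f$ with $\lambda \in \Lambda$ and $f \in F^{k-1}$. Setting $o_f := \rho(f).o$ and applying the qiqac defect inequality \eqref{qac} at level $k$ yields
\[
d(\rho(\mu).o,\; \rho(\lambda).o_f) \leq C_k'.
\]
Since $F^{k-1}$ is finite, the collection of basepoints $\{o_f : f \in F^{k-1}\}$ is bounded, and by the already-established first equality each quasi-orbit $\rho(\Lambda).o_f$ is at bounded Hausdorff distance from $\rho(\Lambda).o$. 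It follows that $\rho(\Lambda^k).o$ is at bounded Hausdorff distance from $\rho(\Lambda).o$, and another application of the invariance principle closes the chain of equalities.

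The main obstacle I anticipate is not the approximate-group bookkeeping above but the auxiliary observation that $\cL$ is preserved under bounded Hausdorff distance in the Morse setting. In the Gromov case this is automatic, but convergence in the Morse topology requires the approximating sequence to lie inside a single Morse stratum $X^{(N)}_{o}$. If $(y_n) \subset X^{(N)}_{o} \cap Y$ converges to $\xi \in \partial_s^M X$ and $d(y_n, z_n) \leq D$ with $z_n \in Z$, one must promote $(z_n)$ into some stratum $X^{(N')}_{o}$ with $N'$ depending only on $N$ and $D$, and verify $z_n \to \xi$ in the Morse topology. This rests on the stability of Morse geodesics under bounded perturbation: geodesics $[o,z_n]$ stay within bounded Hausdorff distance of the $N$-Morse geodesics $[o,y_n]$, so the former are $N'$-Morse for a controlled gauge $N'$ (cf.\ the general facts about Morse gauges in Appendix \ref{AppendixMorse}).
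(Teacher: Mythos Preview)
Your proposal is correct and follows exactly the same approach as the paper, which simply invokes Remark \ref{QuasiActionRemarks} for the first equality and the finite-index statement from Proposition \ref{PropShuffling} for the second. Your final paragraph is in fact more careful than the paper: the invariance of $\cL$ under bounded Hausdorff distance in the Morse setting is stated by the paper as an observation just before the lemma without further argument, whereas you correctly identify the stratum-promotion issue and sketch its resolution via stability of Morse gauges under bounded perturbation.
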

\begin{definition}\label{MorseLimitSet}
The set  $\cL_\rho(\Lambda) \coloneqq  \cL(\rho(\Lambda).o)$ is called the \emph{(Morse) limit set}\index{qiqac!limit set} of the qiqac $\rho$.
\end{definition}
Note that by Lemma \ref{lem:limit set indep},  the limit set of $\rho$ is independent of the basepoint $o$.

As in the hyperbolic case, the qiqac $\rho$ also gives rise to a boundary action of $\Lambda^\infty$ on the Morse boundary, which preserves the limit set, but the proof is slightly more involved: Firstly, by Corollary \ref{MorseBoundaryQIInvariant} every
quasi-isometry $f: X \to X$ induces a homeomorphism $\partial^M_s f: \partial^M_sX \to \partial^M_sX$, and hence we may define a map:
\[
\partial_s \rho\colon \Lambda^\infty \to \mathrm{Homeo}(\partial^M_s X), \quad \partial_s \rho(\lambda) \coloneqq  \partial_s(\rho(\lambda)).
\]
The following two lemmas show that this defines an action of $\Lambda^\infty$ which preserves the limit set; the proofs are based on the same ideas as the proofs of Lemma \ref{BoundaryActionGromov} and Lemma \ref{GromovLimitSetInvariant}; the only additional ingredient is the fact that quasi-isometries preserve Morse geodesics.
\begin{lemma}[Boundary action]\label{BoundaryActionMorse} The map $\partial_s\rho$ is a group homomorphism and hence defines an action of $(\Lambda, \Lambda^{\infty})$ on $\partial_s^M X$ by homeomorphisms.
\end{lemma}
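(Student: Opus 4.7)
The plan is to mimic the argument of Lemma \ref{BoundaryActionGromov}, replacing the direct notion of convergence in the Gromov compactification by convergence in the sequential model of the Morse boundary. First I would verify that each map $\partial_s\rho(\lambda)$ for $\lambda \in \Lambda^\infty$ is well-defined as a homeomorphism of $\partial_s^M X$. This is a direct appeal to Corollary \ref{MorseBoundaryQIInvariant}: since $\lambda \in \Lambda^k$ for some $k$ and $\rho(\Lambda^k) \subset \widetilde{\mathrm{QI}}_{K_k, C_k, C_k'}(X)$, the map $\rho(\lambda)$ is a quasi-isometry and thus induces a homeomorphism on the Morse boundary.

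For the homomorphism property, I would fix $\lambda, \mu \in \Lambda^\infty$, pick $k$ with $\lambda, \mu \in \Lambda^k$ so that $\lambda\mu \in \Lambda^{2k}$, and fix a point $\xi \in \partial_s^M X$ represented by a sequence $(x_n) \subset X^{(N)}_{o}$ converging to $\xi$. By the QI-invariance of Morse strata (with a controlled change of Morse gauge), both sequences
\[
y_n := \rho(\lambda)\bigl(\rho(\mu)(x_n)\bigr) \qand z_n := \rho(\lambda\mu)(x_n)
\]
lie eventually in $X^{(N')}_o$ for a common Morse gauge $N'$ depending only on $N$ and on the uniform quasi-isometry constants $(K_{2k}, C_{2k}, C_{2k}')$. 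Moreover, by definition they converge to $\partial_s\rho(\lambda)(\partial_s\rho(\mu)(\xi))$ and $\partial_s\rho(\lambda\mu)(\xi)$ respectively. The qiqac property \eqref{qac} applied with the bound $C_{2k}'$ gives $d(y_n, z_n) \leq C_{2k}'$ uniformly in $n$.

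The crux of the argument is then to show that two sequences $(y_n), (z_n)$ in $X^{(N')}_o$ which are at uniformly bounded distance and which both converge to infinity in the Morse boundary must represent the same point. This is the step where one needs to be careful with the definition of the sequential model. The cleanest way is to use the Gromov-product characterization of convergence: since $d(y_n, z_n) \leq C_{2k}'$ and $d(o, y_n) \to \infty$, one has
\[
(y_n \mid z_n)_o \geq d(o, y_n) - \tfrac{1}{2} C_{2k}' \to \infty,
\]
so $(y_n)$ and $(z_n)$ are equivalent in the sense of Remark \ref{rem:seq morse boundary} and determine the same point of $\partial_s^M X$. Combining this with the previous step yields the equality $\partial_s\rho(\lambda\mu) = \partial_s\rho(\lambda) \circ \partial_s\rho(\mu)$, and applying this to all pairs in $\Lambda^\infty$ shows that $\partial_s\rho$ is a group homomorphism, hence an action of $(\Lambda, \Lambda^\infty)$ on $\partial_s^M X$ by homeomorphisms.

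The main potential obstacle lies in the convergence argument for sequences at bounded distance, since the sequential Morse boundary topology is more delicate than the Gromov compactification topology. If the Gromov-product reformulation in Remark \ref{rem:seq morse boundary} is not literally available in the form stated above, one could instead argue via the ray model: fix a Morse geodesic ray representing $\xi$, observe that $\rho(\lambda) \circ \rho(\mu)$ and $\rho(\lambda\mu)$ are quasi-isometries at bounded distance from one another on this ray, and then use Lemma \ref{Morse1}-style fellow traveling to conclude that their images define the same class in the ray model of the Morse boundary, which is homeomorphic to the sequential model.
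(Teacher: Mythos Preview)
Your proposal is correct and follows essentially the same approach as the paper: use the qiqac bound to get a uniform distance between the two image sequences, invoke robustness of Morse geodesics under quasi-isometries and change of basepoint to place both sequences in a common stratum $X^{(N')}_o$, and conclude they represent the same boundary point. The paper's proof is more terse, leaving the Gromov-product verification implicit, while you spell it out; your alternative via the ray model is also a valid route. One minor bookkeeping slip: with $\lambda,\mu \in \Lambda^k$ the qiqac inequality \eqref{qac} already gives the bound $C_k'$ (not $C_{2k}'$), and your Gromov-product estimate should read $d(o,y_n) - C_k'$ rather than $d(o,y_n) - \tfrac{1}{2}C_k'$, but neither affects the argument.
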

\begin{proof} Let $\xi =[(x_n)] \in \partial_s^M X$ and $\lambda, \mu \in \Lambda^\infty$. We choose a Morse gauge $N$ such that $(x_n) \subset X^{(N)}_o$. Since $\rho$ is a qiqac we can argue as in the proof of Lemma \ref{BoundaryActionGromov} to find a constant $R$ (depending on $\lambda$ and $\mu$) such that for all $x \in X$ we have \begin{equation}\label{BoundaryActionMorseEq} 
    d(\rho(\lambda)(\rho(\mu)(x)),\rho(\lambda\mu)(x))<R.
\end{equation} To finish the proof we need to check that $[(\rho(\lambda)(\rho(\mu)(x_n)))]$ is equivalent to \\ $[(\rho(\lambda\mu)(x_n))]$ in the Morse boundary. In view of \eqref{BoundaryActionMorseEq} and the definition of the Morse boundary as a direct limit, this amounts to showing that 
there is a Morse gauge $N'$ such that $(\rho(\lambda)(\rho(\mu)(x_n)))$ and $(\rho(\lambda\mu)(x_n))$ are subsets of $X^{(N')}_o$. However, this follows from the fact that Morse geodesics are robust under quasi-isometries and change of basepoint.
\end{proof}

\begin{proposition}[Invariance of the limit set] \label{lem:limit set invariant} The limit set $\cL_\rho(\Lambda)$ is invariant under the boundary action $\partial_s \rho$, i.e.,
\[
\partial_s \rho(\Lambda^\infty)(\cL_\rho(\Lambda)) \subset \cL_\rho(\Lambda).
\]
\end{proposition}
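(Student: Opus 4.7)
The plan is to adapt the Gromov-case argument of Proposition \ref{GromovLimitSetInvariant}, taking care that in the Morse setting one must also track Morse gauges along the way. Fix $\xi \in \cL_\rho(\Lambda)$ and $\mu \in \Lambda^\infty$, and choose $k \in \mathbb N$ with $\mu \in \Lambda^k$. By definition of the Morse limit set, there is a Morse gauge $N$ and a sequence $(\lambda_n) \subset \Lambda$ with $y_n := \rho(\lambda_n).o \in X_o^{(N)}$ and $\xi = [(y_n)]$ in $\partial_s^M X$. Set $z_n := \rho(\mu)(y_n)$. Since $\rho(\mu)$ is a quasi-isometry and the Morse property of sequences is QI-invariant and stable under bounded change of basepoint (from $\rho(\mu).o$ back to $o$, at distance $\le d(o,\rho(\mu).o)$), there exists a Morse gauge $N'$ with $(z_n) \subset X_o^{(N')}$; by the construction of $\partial_s\rho(\mu)$ in Lemma \ref{BoundaryActionMorse} we then have $\partial_s\rho(\mu)(\xi) = [(z_n)]$.

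Next I approximate $(z_n)$ by a sequence in $\rho(\Lambda).o$. By the qiqac estimate \eqref{qac} applied at level $k+1$, there is a constant $R_1$ with $d(z_n, \rho(\mu\lambda_n).o) \le R_1$ for all $n$. Now $\mu\lambda_n \in \Lambda^{k+1}$, and by Proposition \ref{PropShuffling} there is a finite set $F \subset \Lambda^\infty$ with $\Lambda^{k+1} \subset \Lambda F$; write $\mu\lambda_n = \nu_n f_n$ with $\nu_n \in \Lambda$ and $f_n \in F$, and set $w_n := \rho(\nu_n).o \in \rho(\Lambda).o$. Using \eqref{qac} once more and the fact that $\rho(\nu_n)$ has QI-constants bounded uniformly in $\nu_n \in \Lambda$, together with the uniform bound $\max_{f \in F} d(o,\rho(f).o) < \infty$, we find a constant $R_2$ independent of $n$ such that
\[
d(w_n, \rho(\mu\lambda_n).o) \;\le\; K_1 \,\max_{f \in F} d(o,\rho(f).o) + C_1 + C_{k+1}' \;\le\; R_2 - R_1,
\]
and hence $d(w_n, z_n) \le R_2$.

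Finally, since $(z_n) \subset X_o^{(N')}$ and $d(w_n,z_n) \le R_2$, the standard basepoint- and perturbation-stability of the Morse property (see Appendix \ref{AppendixMorse}) yields a Morse gauge $N''$ with $(w_n) \subset X_o^{(N'')}$. Moreover, a uniformly bounded perturbation of a sequence does not affect the equivalence class in the sequential Morse boundary, since it only shifts the Gromov products by a uniform additive constant; thus $[(w_n)] = [(z_n)] = \partial_s\rho(\mu)(\xi)$. As $(w_n) \subset \rho(\Lambda).o$, this exhibits $\partial_s\rho(\mu)(\xi)$ as an element of $\cL(\rho(\Lambda).o) = \cL_\rho(\Lambda)$, which completes the proof.

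The main technical point, and the only place where the argument goes beyond the Gromov-hyperbolic case of Proposition \ref{GromovLimitSetInvariant}, is the bookkeeping of Morse gauges in the two transitions $y_n \leadsto z_n$ (under the quasi-isometry $\rho(\mu)$ and a bounded change of basepoint) and $z_n \leadsto w_n$ (under a uniformly bounded perturbation). Both are exactly the sort of stability statements recorded in Appendix \ref{AppendixMorse}, so no new ideas beyond those already used for the Gromov limit set are needed.
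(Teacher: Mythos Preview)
Your proof is correct and follows essentially the same approach as the paper: both apply the quasi-isometry $\rho(\mu)$ to a representing sequence in $\rho(\Lambda).o \cap X_o^{(N)}$, track the Morse gauge under the quasi-isometry and basepoint change, and then use the qiqac estimate together with the finite-index inclusion $\Lambda \subset \Lambda^{k+1}$ to approximate by a sequence in $\rho(\Lambda).o$ at uniformly bounded distance. The paper is slightly more terse---it simply observes that the image sequence is at bounded Hausdorff distance from $\rho(\Lambda^{k+1}).o$ and then invokes Lemma~\ref{lem:limit set indep}---whereas you unwind this explicitly by writing $\mu\lambda_n = \nu_n f_n$ with $\nu_n \in \Lambda$, $f_n \in F$; a minor quibble is that the constant $C_{k+1}'$ in your displayed estimate should really be $C_m'$ for some $m$ with $F \subset \Lambda^m$, but this does not affect the argument.
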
 
\begin{proof} Let $\mu \in \Lambda^\infty$ and $\xi=[(x_n)] \in  \cL_\rho(\Lambda)$. We can then choose $k \in \mathbb N$ and a Morse gauge $N$ such that $\mu \in \Lambda^k$ and $(x_n)$ is contained in $ X^{(N)}_o \cap \rho(\Lambda).o$. Since Morse geodesics are robust under quasi-isometry and change of basepoint, there exists a Morse gauge $N'$ so that the sequence $(\rho(\mu)(x_n))$ is contained in $X^{(N')}_o$ and represents $\partial_s \rho(\mu)(\xi)$.

Since $\rho$ is a qiqac, the same argument as in the proof of Lemma \ref{BoundaryActionMorse} now shows that this sequence is 
contained in a neighborhood of $\rho(\Lambda^{k+1}).o$, hence the result follows.
\end{proof}
\begin{definition} $\partial_s \rho: (\Lambda, \Lambda^\infty) \to \mathrm{Homeo}(\partial_s(X))$ is called the \emph{boundary action} of the qiqac $\rho$. \index{qiqac!boundary action of}
\end{definition}
As in the hyperbolic case, we have the following functoriality result: If $Y$ is another proper geodesic metric space and $\phi: X \to Y$ is a quasi-isometry with quasi-inverse $\psi$, then the qiqac $\rho$ induces a qiqac $\rho'$ on $Y$ by $\rho'(\lambda) \coloneqq  \phi \circ \rho(\lambda) \circ \psi$. As in the hyperbolic case we get that $\phi(\rho(\Lambda).o)$ is at bounded Hausdorff distance from $\rho'(\Lambda).\phi(o)$ and we obtain:
\begin{lemma}[Functoriality]\label{lem:equivariance of boundary maps}
In the situation above, the following diagram $\Lambda^\infty$-equivariantly commutes:
\begin{center} \begin{tikzcd}
  \partial^M_s X \arrow[rr, "{\partial_s \phi}"] 
    && \partial^M_s Y  \\
   \cL_\rho(\Lambda) \arrow[rr, "{\partial_s \phi|_{\cL_\rho(\Lambda)}}"] \arrow[u, hook]
&&\cL_{\rho'}(\Lambda) \arrow[u, hook] \end{tikzcd} \end{center}
In particular, $\partial_s\phi$ maps $\cL_\rho(\Lambda)$ homeomorphically onto $\cL_{\rho'}(\Lambda)$.
\end{lemma}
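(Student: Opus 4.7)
The lemma essentially unpacks the bounded Hausdorff distance between $\phi(\rho(\Lambda).o)$ and $\rho'(\Lambda).\phi(o)$ that is asserted (but not proved) just above its statement. To establish this bound I would compute, for each $\lambda\in\Lambda$, that
\[
\rho'(\lambda)(\phi(o)) \;=\; \phi(\rho(\lambda)(\psi(\phi(o)))).
\]
Since $\psi\circ\phi$ is $C$-close to $\mathrm{id}_X$ for some constant $C$, and $\rho(\Lambda)$ consists of uniform quasi-isometries (with constants $K_1, C_1$), the point $\rho(\lambda)(\psi(\phi(o)))$ lies within bounded distance of $\rho(\lambda)(o)$, and applying $\phi$ preserves this bound up to the QI constants of $\phi$. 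Combined with Remark \ref{QuasiActionRemarks}(ii) (bounded Hausdorff distance between quasi-orbits through different basepoints), this yields the required uniform bound.

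Next I would check that the diagram commutes. For $\xi \in \cL_\rho(\Lambda)$ represented by a sequence $(x_n) \subset X^{(N)}_o \cap \rho(\Lambda).o$ for some Morse gauge $N$, Corollary \ref{MorseBoundaryQIInvariant} guarantees that $(\phi(x_n))$ lies in some stratum $Y^{(N')}_{\phi(o)}$ and represents $\partial_s\phi(\xi)$. Using the Hausdorff bound above, one can find a companion sequence $(y_n) \subset \rho'(\Lambda).\phi(o)$ at uniformly bounded distance from $(\phi(x_n))$; stability of Morse strata under bounded perturbations of basepoint or of the sequence places $(y_n)$ in some stratum $Y^{(N'')}_{\phi(o)}$, and it still represents $\partial_s\phi(\xi)$ in $\partial_s^M Y$. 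Hence $\partial_s\phi(\xi)\in\cL_{\rho'}(\Lambda)$, proving both the inclusion and the commutativity. Equivariance then reduces to the observation that, for each $\lambda\in\Lambda^\infty$, the quasi-isometries $\phi\circ\rho(\lambda)$ and $\rho'(\lambda)\circ\phi=\phi\circ\rho(\lambda)\circ\psi\circ\phi$ are at bounded distance from one another (again because $\psi\circ\phi$ is close to $\mathrm{id}_X$ and $\rho(\lambda)$ is a fixed quasi-isometry), so they induce the same homeomorphism between the Morse boundaries by functoriality of $\partial_s$.

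For the final statement, I would apply the same scheme with the roles of $(X,\phi,\rho)$ and $(Y,\psi,\rho')$ interchanged. This produces the qiqac $\rho'' := \psi\circ\rho'(\cdot)\circ\phi$ on $X$, whose $\Lambda$-quasi-orbit $\rho''(\Lambda).o$ is at bounded Hausdorff distance from $\rho(\Lambda).o$ (using that $\phi\circ\psi$ is close to $\mathrm{id}_Y$, as in the first paragraph), hence defines the same limit set by Lemma \ref{lem:limit set indep}. Consequently $\partial_s\psi$ restricts to a continuous map $\cL_{\rho'}(\Lambda)\to\cL_\rho(\Lambda)$, and since $\partial_s\psi$ is globally the inverse of $\partial_s\phi$, the two restrictions are mutually inverse continuous maps. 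The main technical hurdle throughout is keeping track of the Morse gauges: each application of a quasi-isometry, change of basepoint, or bounded perturbation of a sequence enlarges the relevant gauge, and one must verify that these enlargements are compatible with the direct-limit definition of the sequential Morse boundary --- this is precisely the content packaged in Corollary \ref{MorseBoundaryQIInvariant} and the Morse-stratum stability results of Appendix \ref{AppendixMorse}.
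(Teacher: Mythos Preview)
Your proof is correct and follows the same approach the paper intends: the lemma is stated with a \qed, relying on the sentence immediately preceding it (bounded Hausdorff distance between $\phi(\rho(\Lambda).o)$ and $\rho'(\Lambda).\phi(o)$) together with the analogous argument given earlier in the Gromov-boundary case. You have simply unpacked that sketch in full, including the Morse-gauge bookkeeping that the paper leaves implicit by citing Corollary~\ref{MorseBoundaryQIInvariant}.
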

As a special case we observe that the homeomorphism type of the limit set is invariant under quasi-conjugacies. We conclude this section by discussing qiqacs $\rho$ which have \emph{full limit set} in the sense that $\cL_\rho(\Lambda) = \partial_s^M X$. An obvious class of examples is given by cobounded qiqacs:
\begin{example}[Limit sets of cobounded qiqacs] Assume that the qiqac $\rho$ is cobounded. In this case the quasi-orbit $\rho(\Lambda).o$ is at bounded Hausdorff distance from $X$ and hence
\[
\cL_\rho(\Lambda) = \cL(\rho(\Lambda).o) = \cL(X) = \partial_s^M X.
\]
In particular, every cobounded action has full limit set.
\end{example}
However, the limit set can also be full without the action being cobounded (even in the Gromov hyperbolic case).
\begin{example}[A small action with full limit set]\label{FullLimitSet} Let $\Gamma$ be a non-elementary hyperbolic group with finite generating set $S$, and the word metric $d_S$. It is known that in this case the action of $\Gamma$ on the Gromov boundary $\partial_s \Gamma$ of $(\Gamma, d_S)$ is minimal (\cite{gromov:1987aa}), i.e.\ every orbit is dense. Now let $(\Lambda, \Lambda^\infty)$ be an infinite approximate group with $\Lambda^\infty = \Gamma$ and consider the natural isometric action $\rho$ of $(\Lambda, \Lambda^\infty)$ on $(\Lambda^\infty, d_S)$ given by left-multiplication. Since $\Lambda$ is infinite, the Gromov limit set $\cL_\rho(\Lambda) \subset \partial_s \Gamma$ is non-empty. Since it is also closed and $\Lambda^\infty$-invariant, and since $\partial_s \Gamma$ is minimal, we deduce that $\cL_\rho(\Lambda)  = \partial_s \Gamma$, i.e.\ $\rho$ has full limit set. On the other hand, if $\Lambda$ does not have finite index in $\Lambda^\infty$, then $\rho$ is not cobounded.

A concrete example can be given as follows: Let $F$ be the free group on two generators $a$ and $b$ and let $f:F \to \Z$ denote the cyclic $ab$-counting quasimorphism (cf.\ Remark \ref{CyclicWords}), which by Remark \ref{FaizievGood} is the homogenization of the counting quasimorphism $\phi_{ab}$, hence in particular symmetric. To see that the quasikernel
\[
\Lambda := \{w\in F \mid |f(w)|\leq 1\} \subset F
\]
has full limit set in $\partial F$ we observe that for any $w \in F$ there exists a letter $s \in \{a,b,a^{-1}, b^{-1}\}$ such that $ws(ab)^{-f(w)}$ is reduced and $f(ws(ab)^{-f(w)})=0$, and hence $ws(ab)^{-f(w)} \in \Lambda$. Now think of elements of $\partial F$ as infinite reduced words over $\{a,b,a^{-1}, b^{-1}\}$. If $\xi = (\xi_n)_{n \in \bN}$ is such a word, then we thus find letters $s_n \in  \{a,b,a^{-1}, b^{-1}\}$ such that the words $w_n := \xi_1 \cdots \xi_n s_n (ab)^{-f(\xi_1 \cdots \xi_n)} \in \Lambda$ are reduced. We then have $w_n \to \xi$, and thus $\Lambda$ has full limit set.  
\end{example}

\section{Conical limit points}\label{SecConicalLimitPoints}
We keep the notation of the last section (see Notation \ref{BoundarySetting}). We will be concerned with a special kind of limit points, defined as follows:

\begin{definition} \label{def:conical limit point}
For $Y \subset X$, a limit point $\xi \in \cL(Y)$ is called a \emph{conical limit point of $Y$}\index{conical limit point}\index{limit point!conical} if for all geodesic rays $\gamma$ representing $\xi$ there exists $K>0$ and a strictly increasing sequence $k_n \in \mathbb{N}$ such that
\[ 
B(\gamma(k_n), K) \cap Y \neq \emptyset \quad \text{for all }n \in \bN.
\]  
The set of all conical limit points of $Y$ is denoted $\cL^{\mathrm{con}}(Y)$ and called the \emph{conical limit set}\index{limit set!conical}\index{conical limit set} of $Y$.
\end{definition}

It is immediate from the definition that if $Y, Z \subset X$ are two subspaces at bounded Hausdorff distance, then $\cL^{\mathrm{con}}(Y) = \cL^{\mathrm{con}}(Z)$. Given a qiqac $\rho$ of $(\Lambda, \Lambda^\infty)$ on $X$ we may thus say that a point $\xi \in \cL_\rho(\Lambda)$ is a \emph{conical limit point of $\rho$} if $\xi$ is a conical limit point of $\Lambda.o$ for some (hence any) $o \in X$. The \emph{conical limit set of $\rho$}\index{conical limit set! of qiqac} is 
\[
\cL^{\mathrm{con}}_\rho(\Lambda) \coloneqq  \{\xi \in \cL_{\rho}(\Lambda) \mid \xi \text{ is a conical limit point of }
\rho\}.
\]
If $\Xi \subset \Lambda^\infty$ is commensurable to $\Lambda$ we then have $\cL^{\mathrm{con}}_\rho(\Lambda) = \cL^{\mathrm{con}}_\rho(\Xi)$, in particular 
 $\cL^{\mathrm{con}}_\rho(\Lambda) = \cL^{\mathrm{con}}_\rho(\Lambda^n)$ for all $n \in \mathbb N$. From the latter property we deduce, by the same argument as in the case of $\cL_\rho$, the following invariance property of the conical limit set:
\begin{proposition}[Invariance of the conical limit set] 
The conical limit set $\cL^{\mathrm{con}}_\rho(\Lambda)$ is a $\partial \rho(\Lambda^\infty)$-invariant subset of $\cL_\rho(\Lambda)$.
\end{proposition}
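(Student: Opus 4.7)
My plan is to leverage the already-established invariance of $\cL_\rho(\Lambda)$ (Proposition \ref{lem:limit set invariant}, or its hyperbolic counterpart Proposition \ref{GromovLimitSetInvariant}) and then refine the argument to track conical approach. Fix $\xi \in \cL^{\mathrm{con}}_\rho(\Lambda)$ and $\mu \in \Lambda^\infty$, and choose $k \in \mathbb{N}$ such that $\mu \in \Lambda^k$. Write $\eta := \partial\rho(\mu)(\xi)$; we already know $\eta \in \cL_\rho(\Lambda)$, so the task is to verify the conical approach condition for $\eta$ with respect to an arbitrary geodesic ray $\gamma'$ representing it. Since any two geodesic rays representing the same boundary point of a proper geodesic hyperbolic (resp.\ Morse) space are at uniformly bounded Hausdorff distance, it suffices to exhibit conical approach along one specific choice of $\gamma'$ (the bound $K$ in Definition \ref{def:conical limit point} will only grow by a constant when switching rays).

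Next I would produce the data. Pick a geodesic ray $\gamma$ representing $\xi$ together with the data granted by $\xi \in \cL^{\mathrm{con}}_\rho(\Lambda)$: a constant $K>0$, a strictly increasing sequence $(k_n) \subset \mathbb{N}$, and elements $\lambda_n \in \Lambda$ with $d(\gamma(k_n), \rho(\lambda_n).o) < K$. Apply $\rho(\mu)$, which is a $(K_k, C_k, C_k')$-quasi-isometry by hypothesis on $\rho$. Then $\rho(\mu)\circ\gamma$ is a quasi-geodesic ray whose constants depend only on $k$. In the Gromov hyperbolic case, Morse's lemma gives a genuine geodesic ray $\gamma'$ at bounded Hausdorff distance $D_1$ from $\rho(\mu)\circ\gamma$, and by construction $\gamma'$ represents $\eta$. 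In the Morse boundary setting, the analogous statement follows because quasi-isometries send Morse geodesic rays to rays whose Morse-projection to a geodesic ray realises $\partial_s\rho(\mu)(\xi)$ (this is exactly what makes Lemma \ref{BoundaryActionMorse} work).

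Now I would chain the quasi-action defect estimates. Using \eqref{qac},
\[
d\bigl(\rho(\mu)(\rho(\lambda_n).o),\ \rho(\mu\lambda_n).o\bigr) \leq C_{k+1}',
\]
and combining with the Lipschitz estimate $d(\rho(\mu)(\gamma(k_n)),\rho(\mu)(\rho(\lambda_n).o)) \leq K_k\cdot K + C_k$, I obtain
\[
d\bigl(\rho(\mu)(\gamma(k_n)),\ \rho(\mu\lambda_n).o\bigr) \leq K_k K + C_k + C_{k+1}'.
\]
Since $\mu\lambda_n \in \Lambda^{k+1}$ and by Proposition \ref{PropShuffling} the orbit $\rho(\Lambda^{k+1}).o$ lies within a uniform distance $D_2$ of $\rho(\Lambda).o$, I can pick $\lambda_n' \in \Lambda$ with $d(\rho(\mu\lambda_n).o, \rho(\lambda_n').o) < D_2$. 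The bounded Hausdorff proximity between $\gamma'$ and $\rho(\mu)\circ\gamma$ lets me choose parameters $k_n'$ on $\gamma'$ with $d(\gamma'(k_n'), \rho(\mu)(\gamma(k_n))) < D_1 + 1$. All errors combine into a single constant $K'$ (depending only on $k$, hence on $\mu$) such that $d(\gamma'(k_n'), \rho(\lambda_n').o) < K'$.

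The remaining point, which I flag as the main technical obstacle, is ensuring that $(k_n')$ can be taken strictly increasing and unbounded. Since $\gamma'$ is a geodesic ray, $k_n'$ equals $d(\gamma'(0), \gamma'(k_n'))$, which differs from $d(\rho(\mu).o,\rho(\mu)(\gamma(k_n)))$ by at most a uniform additive constant; but the latter satisfies
\[
d(\rho(\mu).o,\rho(\mu)(\gamma(k_n))) \geq K_k^{-1} d(o,\gamma(k_n)) - C_k = K_k^{-1} k_n - C_k \to \infty,
\]
so $k_n' \to \infty$, and passing to a subsequence achieves strict monotonicity. This verifies the conical condition for $\eta$ along $\gamma'$, and by the bounded-Hausdorff-distance remark at the outset, for every geodesic ray representing $\eta$. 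Hence $\eta \in \cL^{\mathrm{con}}_\rho(\Lambda)$, proving $\partial\rho(\Lambda^\infty)$-invariance.
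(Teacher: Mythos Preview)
Your argument is correct and complete. The paper's own proof is extremely terse: it observes that $\cL^{\mathrm{con}}(Y)$ depends only on $Y$ up to bounded Hausdorff distance (hence $\cL^{\mathrm{con}}_\rho(\Lambda) = \cL^{\mathrm{con}}_\rho(\Lambda^n)$ for all $n$) and then invokes ``the same argument as in the case of $\cL_\rho$'', namely that $\rho(\mu)(\rho(\Lambda).o)$ is at bounded Hausdorff distance from $\rho(\Lambda^{k+1}).o$. Your write-up is precisely the honest unpacking of that second step: the implicit claim that $\partial\rho(\mu)\bigl(\cL^{\mathrm{con}}(Y)\bigr) \subset \cL^{\mathrm{con}}\bigl(\rho(\mu)(Y)\bigr)$ for a quasi-isometry $\rho(\mu)$ requires exactly the Morse-lemma ray-tracking and qiqac-defect bookkeeping you carry out. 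So the two approaches coincide; yours simply makes explicit what the paper leaves to the reader. One minor cosmetic point: in your defect estimate you may use $C_k'$ rather than $C_{k+1}'$, since both $\mu$ and $\lambda_n$ lie in $\Lambda^k$; and the $k_n'$ should be rounded to integers to match Definition~\ref{def:conical limit point}, at the cost of increasing $K'$ by~$1$.
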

One of the guiding questions for the remainder of this chapter is how different the conical limit set of a qiqac is from the actual limit set. We will later give conditions which ensure that every limit point is conical (see in particular Proposition \ref{cor:conical} below). Before we do so, however, we illustrate by explicit examples that in general the conical limit set contains much more information than the full limit set.

For the discussion of these examples we recall from Example \ref{QkerZ} that if $\Gamma$ is a countable group and $f: \Gamma \to \Z$ is a symmetric $\Z$-valued quasimorphism with defect $d(f)$, then for every $C \geq d(f)$ the subset
\[
\qker(f, C) \coloneqq\{x \in \Gamma \mid |f(x)| \leq C\}
\]
is a quasikernel for $f$. In particular, all these subsets are mutually commensurable approximate subgroups. These approximate subgroups then act on $\Gamma$ by left-multiplication, and if $\Gamma$ is finitely-generated then this action is isometric with respect to any choice of word metric. In particular, if $\Gamma$ is hyperbolic, then we can consider the conical limit set of $\qker(f)$ in the Gromov boundary of $\Gamma$.
\begin{proposition}[Conical limit sets of quasikernels]\label{ConicalQker} Let $\Gamma$ be a hyperbolic group with Gromov boundary $\partial \Gamma$ and let $f: \Gamma \to \mathbb{Z}$ be a symmetric quasimorphism and $C \geq d(f)$. Then the following hold:
\begin{enumerate}[(i)]
\item $\cL(\mathrm{qker}(f,C))$ and $\cL^{\mathrm{con}}(\mathrm{qker}(f,C))$ are independent of $C$.
\item $\cL(\mathrm{qker}(f,C)) = \partial \Gamma$.
\item $\xi \in \cL^{\mathrm{con}}(\mathrm{qker}(f,C))$ if and only if $\varliminf_{t \to \infty} |f(\gamma(t))| < \infty$ for some (hence every) geodesic ray $\gamma$ representing $\xi$.
\end{enumerate}
\end{proposition}
\begin{proof} (i) This is immediate from the fact that the approximate subgroups $\qker(f, C)$ for $C \geq d(f)$ are mutually commensurable.

(ii) By (i) we may enlarge $C$ to guarantee that $|f(s)| \leq C$ for all $s$ in some finite generating system $S$ of $\Gamma$. Then $\qker(f, C)$ generates $\Gamma$ and hence (ii) is just a special case of Example \ref{FullLimitSet}. 

(iii) We fix a finite generating set $S$ for $\Gamma$ and realize $\partial \Gamma$ as the boundary of the Cayley graph of $\Gamma$ with respect to $S$. Since any two geodesic rays representing $\xi$ are at bounded Hausdorff distance and $f$ is Lipschitz by Remark \ref{quasiremark}.(v), the condition $\varliminf_{t \to \infty} |f(\gamma(t))| < \infty$ is independent of the choice of $\gamma$.

Assume now that $\xi$ is a conical limit point which is represented by a geodesic $\gamma$ emanating from the identity. Choose a strictly increasing sequence $(k_n)$ in $\bN$ such that
\[
B(\gamma(k_n), K) \cap \qker(f, C) \neq \emptyset \quad \text{for all }n \in \bN.
\]
Since $f$ is Lipschitz and bounded along its stable quasikernel, this implies that $|f(\gamma(k_n))|$ is bounded, and hence 
\[
 \varliminf_{t \to \infty} |f(\gamma(t))| \leq \lim_{n \to \infty}|f(\gamma(k_n))| <\infty.\]
Conversely let $\gamma$ be a geodesic ray representing $\xi$ and assume that $\varliminf_{t \to \infty} |f(\gamma(t))| < \infty$. There then exists $C' > 0$ and a sequence $t_n \nearrow \infty$ such that $|f(\gamma(t_n))| \leq C'$ for all $n \in \bN$. By (i) we may enlarge $C$ to ensure that $C \geq C'$. This ensures that $\gamma(t_n) \in \qker(f, C)$ for all $n \in \bN$ and hence $\xi$ is a conical limit point of $\qker(f, C)$.
\end{proof}

 In order to illustrate Proposition \ref{ConicalQker}
it is instructive to consider the case of homogeneous counting quasimorphisms in free groups (see Example \ref{HomogeneousCounting}). 
\begin{example}[A class of homogeneous counting quasimorphisms]
We consider the situation of Notation \ref{SpecialCounting}: $F_r$ is a non-abelian free group with free generating set  $S \coloneqq  \{a_1, \dots, a_r\}$ and $u \in W_r$ is a cyclically reduced non-self-overlapping word of length $\|u\|_S \geq 2$. We then consider an arbitrary quasikernel $\Lambda_u \subset F_r$ of the cyclic counting quasimorphism $\phi^{\mathrm{cyc}}_u: F_r \to \bZ$. By Remark \ref{FaizievGood} the latter is the homogenization of the counting quasimorphism $\phi_u$, in particular conjugation-invariant and symmetric. For concretness' sake we may choose $\Lambda_u \coloneqq \qker(\phi^{\mathrm{cyc}}_u, d(\phi^{\mathrm{cyc}}_u))$, although the precise value of the constant will not matter at all.

Since $u$ is non-self-overlapping and of length $\geq 2$, its initial and final letter are distinct, and hence $\phi_u(s) = 0$ for all $s \in S$. This shows, firstly, that $\phi_u$ is not a homomorphism (since $\phi_u|_S = 0$ and $\phi_u(u) = 1$) and hence $d(\phi_u) \geq 1$ and, secondly, that $S \subset \Lambda_u$, and hence $\Lambda_u$ generates $F_r$.

The action of $F_r$ on its Cayley tree $T_{2r}$ with respect to $S$ now induces an isometric action of $(\Lambda_u, \Lambda^\infty_u)$ on $T_{2r}$. Since $T_{2r}$ is hyperbolic, we can consider the limit set $\cL(\Lambda_u) \subset \partial T_{2r}$ as well as the corresponding conical limit set $\cL^{\mathrm{con}}(\Lambda_u) \subset \partial T_{2r}$. Since $\phi^{\mathrm{cyc}}_u$ is homogeneous, for every $w \in F_r$ we have
\[
\lim_{n \to \infty} |\phi^{\mathrm{cyc}}_u(w^n)| = \left\{\begin{array}{rl} \infty, & \text{if } \phi^{\mathrm{cyc}}_u(w) \neq 0\\ 0 & \text{if } \phi^{\mathrm{cyc}}_u(w) = 0\end{array}\right.
\]
Thus if we denote by $w^\infty$ the endpoint of the unique geodesic ray from the origin through the vertices $w^n$, then by Proposition \ref{ConicalQker} we have
\begin{equation}\label{ConicalPowers}
w^\infty \in \cL^{\mathrm{con}}(\Lambda_u) \iff \phi^{\mathrm{cyc}}_u(w) = 0.    
\end{equation}
\end{example}
\begin{theorem}[Recovering quasikernels from their conical limit sets]\label{QuasikernelsConical} Let $F_r$ be a free group of rank $r\geq 3$ with free generating set $S$ and let $u,v$ be two cyclically reduced, non-self-overlapping words in $F_r$ of lengths $|u|_S, |v|_S \geq 2$. Then the following hold:
\begin{enumerate}[(i)]
\item $\cL(\Lambda_u) = \cL(\Lambda_v) = \partial T_{2r}$.
\item $\cL^{\mathrm{con}}(\Lambda_u) = \cL^{\mathrm{con}}(\Lambda_v)$ if and only if $\Lambda_u = \Lambda_v$ if and only if $u=v^{\pm 1}$.
\end{enumerate}
Thus cyclically reduced non-self-overlapping words of length $\geq 2$ in $F_r$ are uniquely determined up to inverse by the conical limit sets of the associated cyclic counting quasimorphisms, whereas the full limit set is independent of the chosen word.
\end{theorem}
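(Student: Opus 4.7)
\emph{Part (i)} is essentially immediate from what precedes. Since $u$ is cyclically reduced of length $\geq 2$, its initial and terminal letters differ, so $\phi_u(s) = 0$ for every $s \in S$; hence $S \subset \Lambda_u$ and $\Lambda_u^\infty = F_r$. Since $r \geq 3$, the group $F_r$ is non-elementary hyperbolic, and $\phi^{\mathrm{cyc}}_u$ is a surjective symmetric quasimorphism (by Remark \ref{FaizievGood}). Applying Proposition \ref{ConicalQker} to $(\Gamma,f) = (F_r,\phi^{\mathrm{cyc}}_u)$ yields $\cL(\Lambda_u) = \partial F_r \cong \partial T_{2r}$, and the same argument applies verbatim to $v$.

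\emph{The two easy directions of (ii).} The implication ``$u = v^{\pm 1} \Rightarrow \Lambda_u = \Lambda_v$'' is immediate: $\phi^{\mathrm{cyc}}_{u^{-1}} = -\phi^{\mathrm{cyc}}_u$, and since $D_{\mathrm{sym}}$ is invariant under sign change, the quasikernels coincide. The implication ``$\Lambda_u = \Lambda_v \Rightarrow \cL^{\mathrm{con}}(\Lambda_u) = \cL^{\mathrm{con}}(\Lambda_v)$'' is tautological.

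\emph{The main direction.} We prove the contrapositive: if $u \neq v^{\pm 1}$, then $\cL^{\mathrm{con}}(\Lambda_u) \neq \cL^{\mathrm{con}}(\Lambda_v)$. Since $T_{2r}$ is a tree, every $\xi \in \partial T_{2r}$ is represented by a \emph{unique} geodesic ray $[e,\xi)$ from the identity, so by the characterization in Proposition \ref{ConicalQker} we have $\xi \in \cL^{\mathrm{con}}(\Lambda_w)$ if and only if $\phi^{\mathrm{cyc}}_w$ is bounded on $[e,\xi)$. Identifying $\partial T_{2r}$ with the set of infinite reduced words in $S \cup S^{-1}$, the value of $\phi^{\mathrm{cyc}}_w$ on a prefix of $\xi$ agrees, up to a uniformly bounded error, with the algebraic count of occurrences of $w$ minus occurrences of $w^{-1}$ in that prefix. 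Since $r \geq 3$, we may pick a generator $a \in S$ such that none of $a^{\pm 1}$ appears in $u$ or $v$. Define the witness boundary points
\[
\xi_u := u \, a \, u^2 \, a \, u^3 \, a \, u^4 \, a \cdots, \qquad \xi_v := v \, a \, v^2 \, a \, v^3 \, a \cdots,
\]
which are reduced by the choice of $a$ together with the cyclic reducedness of $u$ and $v$. Along $\xi_u$ (respectively $\xi_v$), the non-self-overlapping hypothesis ensures that the occurrences of $u$ (resp.\ $v$) in $u^k$ are exactly the $k$ aligned copies, so $\phi^{\mathrm{cyc}}_u$ (resp.\ $\phi^{\mathrm{cyc}}_v$) is unbounded on the corresponding ray. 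The key claim is then: if $u \neq v^{\pm 1}$, then $\phi^{\mathrm{cyc}}_v$ is bounded along $\xi_u$, or $\phi^{\mathrm{cyc}}_u$ is bounded along $\xi_v$. Because $a^{\pm 1}$ does not occur in $v^{\pm 1}$, every occurrence of $v^{\pm 1}$ in $\xi_u$ must be contained in a single $u^k$-block; a standard combinatorial argument on words, using that both $u$ and $v$ are non-self-overlapping and $|u|,|v|\geq 2$, shows that $v^{\pm 1}$ can appear infinitely often inside the blocks $u^k$ only when $v$ equals (a cyclic rotation of) $u^{\pm 1}$ --- and in the remaining ``rotation'' case one switches the roles of $u$ and $v$ and uses $\xi_v$ instead. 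In either case a separating $\xi$ is produced and the equivalence follows.

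\emph{Main obstacle.} The delicate point is the word-combinatorial analysis of the occurrences of $v^{\pm 1}$ inside the prefixes of $\xi_u$: one must rule out both spurious occurrences straddling the separator $a$ (handled by the choice of $a \notin u \cup v$) and infinite families of occurrences inside the powers $u^k$. The non-self-overlapping hypothesis for $u$ is essential here, as it forces aligned periodic structure on any infinite family of occurrences inside $u^\infty$, which combined with the analogous hypothesis for $v$ forces the identification $v = u^{\pm 1}$ needed to reach a contradiction with the assumption $u \neq v^{\pm 1}$.
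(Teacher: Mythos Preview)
Your argument for (i) and the easy directions of (ii) is fine and matches the paper. The main direction, however, has several genuine gaps.

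\textbf{The separator need not exist.} You claim that since $r\geq 3$ one can choose $a\in S$ with $a^{\pm 1}$ appearing in neither $u$ nor $v$. This is false: for $r=3$ take $u=a_1a_2$ and $v=a_2a_3$; all three generators occur. Your construction of $\xi_u$ then breaks down, and the argument that occurrences of $v^{\pm1}$ cannot straddle the separator collapses.

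\textbf{The ``switch to $\xi_v$'' step is circular.} Even when the separator exists, your handling of the rotation case fails. If $v$ is a nontrivial cyclic rotation of $u$ (say $u=a_1a_2a_3$, $v=a_2a_3a_1$, both cyclically reduced and non-self-overlapping, with $u\neq v^{\pm1}$), then $v$ appears roughly $k$ times in $u^k$, so $\phi^{\mathrm{cyc}}_v$ is unbounded along $\xi_u$. You propose to switch to $\xi_v$; but by the same symmetry $u$ appears roughly $k$ times in $v^k$, so $\phi^{\mathrm{cyc}}_u$ is unbounded along $\xi_v$ as well. Neither witness separates.

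\textbf{The combinatorial claim is wrong as stated.} You assert that $v^{\pm1}$ can occur unboundedly often inside the blocks $u^k$ only when $v$ is a cyclic rotation of $u^{\pm1}$. This is false already for length reasons: with $u=a_1a_2a_1a_3$ (non-self-overlapping) and $v=a_2a_1$, the word $v$ appears $k$ times in $u^k$ although $|v|<|u|$.

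The paper takes a completely different route. Rather than exhibiting a single explicit witness, it shows that the conical limit set determines first the zero set $N_u=\{w:\phi^{\mathrm{cyc}}_u(w)\neq 0\}$ (Claim~1) and then the sign partition $\sigma_u$ on it (Claim~2), using periodic boundary points $w^\infty$ and, crucially, the blocker machinery of Lemma~\ref{Quasi2} (based on Theorem~\ref{BlockersExist}) to produce suitable mixed periodic words $x'wy'z$. From $N_u=N_v$ and $\sigma_u=\sigma_v$ one gets $\mathrm{Pos}(\phi^{\mathrm{cyc}}_u)=\mathrm{Pos}(\pm\phi^{\mathrm{cyc}}_v)$, and the reconstruction theorem (Corollary~\ref{Reconstruction}) then forces $\phi^{\mathrm{cyc}}_u$ and $\phi^{\mathrm{cyc}}_v$ to be proportional; the final identification $u=v^{\pm1}$ is imported from \cite{HartnickTalambutsa}. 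The blocker theorem is precisely what replaces your naive separator and makes the combinatorics go through for arbitrary $u,v$.
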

\begin{remark}[Concerning the proof of Theorem \ref{QuasikernelsConical}] 
Part (i) of Theorem \ref{QuasikernelsConical} is just a special case of Example \ref{FullLimitSet}. Concerning (ii) we have the obvious implications
\[
u=v^{\pm 1} \implies \Lambda_u = \Lambda_v \implies \cL^{\mathrm{con}}(\Lambda_u) = \cL^{\mathrm{con}}(\Lambda_v).
\]
The non-trivial implication $\cL^{\mathrm{con}}(\Lambda_u) = \cL^{\mathrm{con}}(\Lambda_v) \implies u = v^{\pm 1}$ is based on three ingredients: our starting point is Proposition \ref{ConicalQker} (or rather its consequence \eqref{ConicalPowers}), which allows us to identify the conical limit sets in question. In addition, we will need a number of rather specific combinatorial arguments concerning free groups (summarized in Lemma \ref{Quasi1} and Lemma \ref{Quasi2} below) and a reconstruction theorem for quasimorphisms (Corollary \ref{Reconstruction}) due to Gabi Ben Simon and the second author.
\end{remark}
\begin{proof}[Proof of Theorem \ref{QuasikernelsConical}] Let $u,v$ be as in the theorem and assume that $\cL^{\mathrm{con}}(\Lambda_u)= \cL^{\mathrm{con}}(\Lambda_v)$. We then have to show that $v = u$ or $v=u^{-1}$. Throughout the proof we are going to abbreviate $\cL \coloneqq  \cL^{\mathrm{con}}(\Lambda_u)= \cL^{\mathrm{con}}(\Lambda_v)$. We also set $N_u \coloneqq  \{w \in F_r \mid \phi^{\mathrm{cyc}}_u(w) \neq 0\}$ and define an equivalence relation $\sigma_u$ on $N_u$ by $w \sigma_u w' :\iff \phi^{\mathrm{cyc}}_u(w)\phi^{\mathrm{cyc}}_u(w') > 0$. The set $N_v$ and equivalence relation $\sigma_v$ will be defined accordingly. We make the following two claims:\\
\textbf{Claim 1:} $N_u = N_v$.\\
\textbf{Claim 2:} $\sigma_u = \sigma_v$.\\
Assuming the two claims for the moment, we complete the proof of the theorem: By the first claim we have $N_u = N_v$ and we denote this set by $N$. Moreover, the equivalence classes of $\sigma_u$ are given by $\mathrm{Pos}(\phi^{\mathrm{cyc}}_u)=\{w \in F_r \mid \phi^{\mathrm{cyc}}_u(w)>0\}$ and 
$\mathrm{Pos}(-\phi^{\mathrm{cyc}}_u)$, and similarly the equivalence classes of $\sigma_v$ are given by $\mathrm{Pos}(\phi^{\mathrm{cyc}}_v)$ and $\mathrm{Pos}(-\phi^{\mathrm{cyc}}_v)$. We thus deduce from Claim 2 that either $\mathrm{Pos}(\phi^{\mathrm{cyc}}_v) = \mathrm{Pos}(\phi^{\mathrm{cyc}}_u)$ or $\mathrm{Pos}(\phi^{\mathrm{cyc}}_v) = \mathrm{Pos}(-\phi^{\mathrm{cyc}}_u)$. It then follows from the reconstruction theorem (Corollary \ref{Reconstruction}) that $\phi^{\mathrm{cyc}}_u$ and $\phi^{\mathrm{cyc}}_v$ are linearly dependent. However, according to \cite{HartnickTalambutsa} this is only possible if $v = u$ or $v = u^{-1}$. It thus remains to establish the two claims.

Claim 1 is immediate from \eqref{ConicalPowers}. For the proof of Claim 2 we set $N\coloneqq  N_u = N_v$.
and $C_0 \coloneqq  \max\{C_u, C_v\}$, where $C_u$ and $C_v$ are defined as in Lemma \ref{Quasi2}; we then define
\[C_1 \coloneqq  \max_{f \in \{\phi^{\mathrm{cyc}}_u, \phi^{\mathrm{cyc}}_v\}}\left(\sup_{x \in B(e, C_0)} |f(x)| + \frac 3 2  d(f) + 1\right).\]
We first observe that $x\sigma_u y$ if and only if $x' \sigma_u y'$ for some (hence all) $x', y' \in F_r$ which are conjugate to some positive power of $x$ and $y$ respectively; the same holds for $\sigma_v$ instead of $\sigma_u$. We now define 
\[
N' \coloneqq  \{w \in N\mid w \text{ cyclically reduced},\, \|w\|_S > C_0, \min\{|\phi^{\mathrm{cyc}}_u(w)|, |\phi^{\mathrm{cyc}}_v(w)|\} > C_1\}.
\]
Since $w \in N$ if and only if some conjugate of a positive power of $w$ lies in $N'$, it suffices to show that $\sigma_u$ and $\sigma_v$ agree on $N'$. 
Thus let $x,y \in N'$ and assume that $x \cancel{\sigma}_u y$; thus either $\phi^{\mathrm{cyc}}_u(x)>0>\phi^{\mathrm{cyc}}_u(y)$ or $\phi^{\mathrm{cyc}}_u(y)>0>\phi^{\mathrm{cyc}}_u(x)$. 

We first consider the case in which $\phi^{\mathrm{cyc}}_u(x)>0>\phi^{\mathrm{cyc}}_u(y)$. Since $\phi^{\mathrm{cyc}}_u$ is homogeneous and takes integer values we can then find $a,b \in \mathbb N$ such that 
\begin{equation}\label{xayb0}
\phi^{\mathrm{cyc}}_u(x^a) = -\phi^{\mathrm{cyc}}_u(y^b) > 0.
\end{equation}
This implies in particular that $\#^{\mathrm{cyc}}_u(x^a) >0$ and 
$\#^{\mathrm{cyc}}_{u^{-1}}(y^b) >0$. Moreover, $x^a$ and $y^b$ are cyclically reduced and $\|x^a\|_S > \|u\|_S$ and $\|y^b\|_S > \|u^{-1}\|_S$. It then follows from Lemma \ref{Quasi1} (applied once to the pair $(x^a, u)$ and once to the pair $(y^b, u^{-1})$ that there exist $x', y' \in F_r$ which are cyclically equivalent to $x^a$, respectively $y^b$ such that 
\begin{equation}\label{x'y'}
\phi^{\mathrm{cyc}}_u(x') = \phi_u(x') \qand \phi^{\mathrm{cyc}}_u(y') = \phi_u(y').
\end{equation}
In particular, $x'$ is conjugate to $x^a$ and $y'$ is conjugate to $y^b$ and $\min\{\|x'\|_S, \|y'\|_S\} \geq \|u\|$. By Lemma \ref{Quasi2} we now find $w, z \in F_r$ with the following properties:
\begin{itemize}
    \item $w,z \in B(e, C_u) \subset B(e,C_0)$;
    \item the product $x'wy'z$ is reduced and cyclically reduced;
    \item every copy of $u$ or $u^{-1}$ in $(x'wy'z)^n$ is contained in one of the $n$ copies of $x'$ or in one of the $n$ copies of $y'$.
\end{itemize}
Combining these properties with \eqref{xayb0} and \eqref{x'y'} we deduce that for all $n \in \mathbb N$ we have
\begin{eqnarray*}
\phi_u((x'wy'z)^n) &=& n \phi_u(x') + n \phi_u(y') = n \phi^{\mathrm{cyc}}_u(x') + n \phi^{\mathrm{cyc}}_u(y')\\
&=& n \phi^{\mathrm{cyc}}_u(x^a) + n \phi^{\mathrm{cyc}}_u(y^b) = 0.
\end{eqnarray*}
This implies that $\phi^{\mathrm{cyc}}_u(x'wy'z) = 0$ and hence $(x'wy'z)^\infty \in \cL$ by \eqref{ConicalPowers}. We have thus proved that if $x \cancel{\sigma}_u y$ and $\phi^{\mathrm{cyc}}_u(x)>0>\phi^{\mathrm{cyc}}_u(y)$, then 

$(*)$ \emph{there exist powers $x'$ and $y'$ of conjugates of $x$ and $y$ respectively and words $w,z$ of length $\leq C_0$ such that $x'wy'z$ is reduced and cyclically reduced and its powers converge to an element $\xi \in \cL$.}

If instead we have $\phi^{\mathrm{cyc}}_u(y)>0>\phi^{\mathrm{cyc}}_u(x)$, then $(*)$ holds after reversing the roles of $x$ and $y$. We now claim that, conversely, if Condition $(*)$ holds for either the pair $(x,y)$ or the pair $(y,x)$, then $x \cancel{\sigma}_u y$. Since the relation $\sigma_u$ is symmetric, we may assume that $(*)$ holds for $(x,y) \in (N')^2$. Also assume for contradiction that $x \sigma_u y$. Now $\phi^{\mathrm{cyc}}_u(x')$ and $\phi^{\mathrm{cyc}}_u(y')$ have the same sign (since $\phi^{\mathrm{cyc}}_u(x)$ and $\phi^{\mathrm{cyc}}_u(y)$ have the same sign) and $|\phi^{\mathrm{cyc}}_u(x')|$ and $|\phi^{\mathrm{cyc}}_u(y')|$ are $\geq C_1$ (since
$|\phi^{\mathrm{cyc}}_u(x)|$ and $|\phi^{\mathrm{cyc}}_u(y)|$ are). With  notation as in $(*)$ we then have
\begin{eqnarray*}
|\phi^{\mathrm{cyc}}_u(x'wy'z)| &\geq&|\phi^{\mathrm{cyc}}_u(x') + \phi^{\mathrm{cyc}}_u(y')| - |\phi^{\mathrm{cyc}}_u(w)| - |\phi^{\mathrm{cyc}}_u(z)| - 3 d(\phi^{\mathrm{cyc}}_u) \\&=&|\phi^{\mathrm{cyc}}_u(x')| + |\phi^{\mathrm{cyc}}_u(y')| - |\phi^{\mathrm{cyc}}_u(w)| - |\phi^{\mathrm{cyc}}_u(z)| - 3 d(\phi^{\mathrm{cyc}}_u) \\ &\geq& 2C_1 - 2 \sup_{x \in B(e, C_0)} |\phi^{\mathrm{cyc}}_u(x)| - 3d(\phi^{\mathrm{cyc}}_u) >0
\end{eqnarray*}
by the very definition of $C_1$. By \eqref{ConicalPowers} this implies that $\xi \not \in \cL$, which is a contradiction. The same arguments can also applied to $v$ instead of $u$. We thus obtain for all $x,y \in N''$ the equivalences 
\[
x\sigma_u y \iff \text{Condition $(*)$ holds neither for $(x,y)$ nor for $(y,x)$}\iff x \sigma_v y. \qedhere\]
\end{proof}

\section{Qiqacs with stable quasi-orbits}\label{SecStableQO}

We keep the notation of the previous sections (see Notation \ref{BoundarySetting}). We observe that if the quasi-orbit $\rho(\Lambda).o$ is quasi-convex in $X$, then every quasi-orbit of $X$ is quasi-convex in $X$; we then simply say that $\rho$ \emph{has quasi-convex quasi-orbits}. For proper qiqacs on \emph{hyperbolic} spaces having quasi-convex quasi-orbits is a rather strong condition with strong consequences for the limit set:
\begin{proposition}[Consequences of quasi-convex quasi-orbits, hyperbolic case] \label{BoundaryStableOrbitHyp} If $X$ is Gromov hyperbolic, $(\Lambda, \Lambda^\infty)$ is geometrically finitely-generated and $\rho$ is proper and has quasi-convex quasi-orbits, then the following hold:
\begin{enumerate}[(i)]
    \item $\cO \coloneqq  \rho(\Lambda).o\in [\Lambda]_{\mathrm{int}}$.
    \item $(\Lambda, \Lambda^\infty)$ is a hyperbolic approximate group.
    \item If $A$ is an apogee for $\Lambda$, then there is a quasi-isometric embedding $\iota: A \to X$ whose image is given by $\cO$.
    \item Any map $\iota$ as in (iii) induces a continuous embedding $\partial \iota: \partial A \hookrightarrow \partial X$ whose image is given by $\cL_\rho(\Lambda)$.
\end{enumerate}
In particular $\cL_\rho(\Lambda)$ is a representative of $\partial \Lambda$.
\end{proposition}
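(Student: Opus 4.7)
My plan is to establish (i) and (ii) simultaneously by quasi-restricting $\rho$ to $\cO$ and invoking the Milnor--Schwarz lemma, then deduce (iii) and (iv) from QI-invariance and the standard extension of QI-embeddings to boundaries.

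\medskip

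\textbf{Step 1 (Parts (i) and (ii)).} Since any quasi-orbit is quasi-invariant under $\rho(\Lambda^\infty)$ (Remark \ref{QuasiActionRemarks}), Construction \ref{Quasirestriction2} provides a quasi-restriction of $\rho$ to a qiqac $\widetilde{\rho}_{\cO}\colon (\Lambda,\Lambda^\infty)\to\widetilde{\mathrm{QI}}(\cO)$, where $\cO$ carries the induced subspace metric. By properness of $\rho$ (Remark \ref{RemarkProperness}) the quasi-orbit $\cO$ is a discrete, locally finite subset of the proper space $X$, so $(\cO,d|_{\cO\times\cO})$ is itself proper. Moreover, the quasi-convexity hypothesis lets us connect any two points of $\cO$ by a geodesic of $X$ contained in a uniform neighborhood of $\cO$, and discretizing such a geodesic yields a uniformly bounded jump sequence in $\cO$; hence $(\cO,d|_{\cO\times\cO})$ is large-scale geodesic. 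The restricted qiqac $\widetilde\rho_\cO$ is cobounded by construction and proper as a restriction of a proper qiqac. The generalized Milnor--Schwarz lemma (Theorem \ref{ThmMS}, in particular part (vii)) then implies $\cO\in[\Lambda]_{\mathrm{int}}$, proving (i). Since $\cO$ is quasi-convex in the Gromov hyperbolic space $X$, it is Gromov hyperbolic with the induced metric, so $[\Lambda]_{\mathrm{int}}$ contains a hyperbolic representative; by QI-invariance of (Morse) hyperbolicity (Remark \ref{MorseGromov}(i)) this yields (ii).

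\medskip

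\textbf{Step 2 (Parts (iii) and (iv)).} Let $A$ be any apogee for $(\Lambda,\Lambda^\infty)$. By definition of an apogee and part (i), both $A$ and $(\cO,d|_{\cO\times\cO})$ are large-scale geodesic representatives of $[\Lambda]_{\mathrm{int}}$, hence quasi-isometric (Gromov's trivial lemma, Corollary \ref{GromovTrivialSymmetric}). Composing any such quasi-isometry $A\to\cO$ with the isometric inclusion $\cO\hookrightarrow X$ produces the desired quasi-isometric embedding $\iota\colon A\to X$ with image coarsely equal to $\cO$, proving (iii). Since both $A$ and $X$ are proper geodesic Gromov hyperbolic and $\iota$ is a QI-embedding, $\iota$ extends canonically to a continuous map $\partial\iota\colon\partial A\to\partial X$, and this extension is a topological embedding (a standard consequence of stability of quasi-geodesics in hyperbolic spaces, as recorded in Appendix \ref{AppHyperbolic}). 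Because $\iota(A)$ lies at finite Hausdorff distance from $\cO=\rho(\Lambda).o$, we have
\[
\partial\iota(\partial A)=\cL(\iota(A))=\cL(\cO)=\cL_\rho(\Lambda),
\]
where the middle equality uses Hausdorff-distance invariance of the Gromov limit set and the last is Lemma \ref{lem:limit set indep1}. This gives (iv).

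\medskip

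\textbf{Step 3 (Final statement).} Since $A$ is an apogee of $(\Lambda,\Lambda^\infty)$, its Gromov boundary $\partial A$ represents $\partial\Lambda$ by Definition \ref{Rem: boundary hyp approx group}. Combining with (iv), the homeomorphism $\partial\iota\colon\partial A\xrightarrow{\cong}\cL_\rho(\Lambda)$ shows that $\cL_\rho(\Lambda)$ also represents $\partial\Lambda$.

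\medskip

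\textbf{Main obstacle.} The principal technical point is verifying in Step 1 that $(\cO,d|_{\cO\times\cO})$ is large-scale geodesic: this is where the quasi-convexity hypothesis is essential and where one must carefully use both quasi-convexity (to remain near $\cO$ along a geodesic of $X$) and properness of the qiqac (to ensure the discretization lands in $\cO$ with uniformly bounded jumps). Once this is in place, the rest of the argument is a fairly mechanical application of the Milnor--Schwarz lemma and the standard boundary extension for QI-embeddings of hyperbolic spaces.
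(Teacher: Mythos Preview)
Your proof is correct and follows essentially the same route as the paper's: quasi-restrict $\rho$ to the quasi-orbit, use quasi-convexity to see that $\cO$ is large-scale geodesic, apply the Milnor--Schwarz lemma for (i), then deduce (ii)--(iv) from QI-invariance and the boundary extension of QI-embeddings. The paper streamlines two of your steps by citing named results: large-scale geodesicity of $\cO$ follows directly from Remark~\ref{QuasiconvexQuasigeodesic} together with Lemma~\ref{lsgquasigeodesic}, and (iv) is exactly Corollary~\ref{BoundaryStableSubsetHyp}. Two small imprecisions in your write-up: quasi-orbits are quasi-invariant under $\rho(\Lambda)$, not $\rho(\Lambda^\infty)$ (though Construction~\ref{Quasirestriction2}, which you cite, is designed for exactly this weaker hypothesis); and a quasi-convex subset of a Gromov hyperbolic space need not be \emph{Gromov} hyperbolic in the induced metric (cf.\ Remark~\ref{MorseGromov}), only Morse hyperbolic---your subsequent appeal to QI-invariance of Morse hyperbolicity repairs this, but the intermediate sentence is slightly off. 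Finally, your ``main obstacle'' commentary overstates the role of properness: large-scale geodesicity of $\cO$ follows from quasi-convexity alone, and properness enters only through the Milnor--Schwarz lemma.
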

\begin{proof} (i) We first note that since $\cO \subset X$ is quasi-convex, it is quasi-geodesic by Remark \ref{QuasiconvexQuasigeodesic}, and hence large-scale geodesic by Lemma \ref{lsgquasigeodesic}. We also note that $\cO$ is quasi-invariant under $\rho(\Lambda)$ and denote by $\widetilde{\rho}$ the quasi-restriction of $\rho$ to $\cO$ (cf. Construction \ref{Quasirestriction2}).

Since $\rho$ is proper, this quasi-restriction is proper as well, and by definition it is cobounded, hence geometric. Since $\Lambda$ is geometrically finitely-generated and $\cO$ is large-scale geodesic we may thus apply the Milnor--Schwarz Lemma (Theorem \ref{ThmMS}) to conclude that $\cO \in [\Lambda]_{\mathrm{int}}$.

(ii) follows from (i) and Corollary \ref{QCMorseHyp}.

(iii) follows from (i).

(iv) follows from (i) and Corollary \ref{BoundaryStableSubsetHyp}.
\end{proof}
If we remove the assumption that $X$ be Gromov hyperbolic, then having quasi-convex orbits by itself will no longer allow one to draw any similar conclusions. However, we can still establish a version of Proposition \ref{BoundaryStableOrbitHyp} if we replace quasi-convexity by the stronger condition of \emph{stability}: Recall from Definition \ref{DefStable} that a quasi-convex subset $Y \subset X$ is called $N$-stable\index{stable subset} (with respect to some Morse gauge $N$) provided every pair of points in $Y$ can be connected by an $N$-Morse geodesic in $X$. We say that the qiqac $\rho$ \emph{has stable quasi-orbits} if some quasi-orbit is $N$-stable for some Morse gauge $N$ (and hence every quasi-orbit is stable with a Morse gauge which may depend on the given quasi-orbit).
\begin{proposition}[Consequences of stable quasi-orbits] \label{BoundaryStableOrbit} \label{lem: stable implies hyperbolic approx group} If $(\Lambda, \Lambda^\infty)$ is geometrically finitely-gene\-ra\-ted and $\rho$ is proper and has stable quasi-orbits, then the following hold:
\begin{enumerate}[(i)]
    \item $\cO \coloneqq  \rho(\Lambda).o\in [\Lambda]_{\mathrm{int}}$.
    \item $(\Lambda, \Lambda^\infty)$ is a hyperbolic approximate group.
    \item If $A$ is an apogee for $\Lambda$, then there is a quasi-isometric embedding $\iota: A \to X$ whose image is given by $\cO$.
    \item Any map $\iota$ as in (iii) induces a continuous embedding $\partial \iota: \partial A \hookrightarrow \partial X$ whose image is given by $\cL_\rho(\Lambda)$.
\end{enumerate}
In particular, $\cL_\rho(\Lambda)$ is a representative of $\partial \Lambda$, and hence compact.
\end{proposition}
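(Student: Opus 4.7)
\medskip

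The plan is to mimic the proof of Proposition \ref{BoundaryStableOrbitHyp} step by step, but systematically replace Gromov-boundary arguments (which required $X$ itself to be hyperbolic) with Morse-boundary arguments that only need stability of the quasi-orbit. First, I would observe that by definition an $N$-stable subset is in particular quasi-convex, so $\cO = \rho(\Lambda).o$ is quasi-convex in $X$. Hence by Remark \ref{QuasiconvexQuasigeodesic} it is quasi-geodesic and by Lemma \ref{lsgquasigeodesic} large-scale geodesic. Exactly as in the proof of Proposition \ref{BoundaryStableOrbitHyp}, the quasi-restriction $\widetilde\rho$ of $\rho$ to $\cO$ (via Construction \ref{Quasirestriction2}) is a qiqac which inherits properness from $\rho$ and is cobounded by construction. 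Applying the generalized Milnor--Schwarz lemma (Theorem \ref{ThmMS}) to $\widetilde\rho$ then yields (i).

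For (ii), the key upgrade over the hyperbolic case is to see that the stability of $\cO$ forces it to be intrinsically Morse hyperbolic. The argument is: by $N$-stability, any two points of $\cO$ are joined by an $N$-Morse geodesic of $X$, and by quasi-convexity any quasi-geodesic in $\cO$ (with respect to the restricted metric) is also a quasi-geodesic in $X$ staying uniformly close to $\cO$. The Morse property then supplies, for each pair of quasi-geodesic constants, a uniform slimness bound for quasi-geodesic triangles in $\cO$, which is exactly the defining property of Morse hyperbolicity. Since $\cO \in [\Lambda]_{\mathrm{int}}$ by (i) and Morse hyperbolicity is a QI-invariant (Remark \ref{MorseGromov}), every apogee of $(\Lambda, \Lambda^\infty)$ is Morse hyperbolic. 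Part (iii) is now formal: fix any apogee $A$ and compose a quasi-isometry $A \to \cO$ (which exists since both are representatives of $[\Lambda]_{\mathrm{int}}$) with the inclusion $\cO \hookrightarrow X$ to obtain the desired quasi-isometric embedding $\iota$ with image $\cO$.

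For (iv), I would argue as follows: since $A$ is a proper geodesic Gromov hyperbolic space, $\partial A$ coincides with both its ray and sequential Gromov boundaries. The map $\iota: A \to X$ sends each geodesic ray $\gamma$ in $A$ to a quasi-geodesic ray $\iota \circ \gamma$ in $X$; the stability of $\cO$ together with the QI constants of $\iota$ produces a single Morse gauge $N'$ so that $\iota \circ \gamma \subset X^{(N')}_{\iota(o)}$. This gives a well-defined continuous map $\partial \iota: \partial A \to \partial_s^M X$, and injectivity follows from the fact that $\iota$ is a quasi-isometric embedding (distinct endpoints have linearly divergent rays, preserved under quasi-isometric embedding). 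Its image lies in $\cL_\rho(\Lambda)$ because $\iota(\gamma(n)) \in \cO$ yields a representing sequence of Morse type. Conversely, any $\xi \in \cL_\rho(\Lambda)$ is represented by a Morse sequence in $\cO$, and pulling back through a quasi-inverse of $\iota$ produces a Morse sequence in $A$ whose limit maps to $\xi$; this is where one invokes (the Morse analogue of) the boundary-embedding machinery underlying Corollary \ref{BoundaryStableSubsetHyp}. The main obstacle I anticipate is precisely this last step: formalizing the Morse version of the boundary embedding for stable quasi-convex subsets of a non-hyperbolic proper geodesic space, and in particular showing that ``Morse limit'' of a sequence in $\cO$ always has a preimage in $\partial A$ -- the hyperbolic argument appeals to visibility of $\partial X$, which has no immediate analogue in $\partial_s^M X$, so one must instead work directly with Morse gauges and the direct-limit topology on the Morse boundary.
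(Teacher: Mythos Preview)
Your proposal is correct and follows essentially the same route as the paper. The paper's proof of (i)--(iii) is identical to yours (for (i) it literally says ``as in Proposition~\ref{BoundaryStableOrbitHyp}'', and for (ii)--(iii) it cites (i) together with the fact that stable subspaces are Morse hyperbolic, which you reprove inline). For (iv), the paper does not carry out the Morse-boundary embedding argument from scratch as you sketch; instead it simply invokes Proposition~\ref{BoundaryStableSubset}, which already packages exactly the statement you were worried about formalizing: for any stable subspace $Y\subset X$, the inclusion induces a homeomorphism from a Gromov boundary of $Y$ onto $\cL(Y)\subset\partial_s^M X$. So the obstacle you anticipate is real work, but it has already been done in the appendix (ultimately relying on results from~\cite{Cordes:2016aa}), and you should cite that proposition rather than redo it.
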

\begin{proof} (i) Literally as in Proposition \ref{BoundaryStableOrbitHyp}.

(ii) follows from (i) and the fact that a stable subspace is Morse hyperbolic (see Proposition \ref{BoundaryStableSubset}).

(iii) follows from (i).

(iv) follows from (i) and Proposition \ref{BoundaryStableSubset}.
\end{proof}

\section{Convex cocompact qiqacs}\label{Section:ConvexCocompact}
We keep the notation of the previous sections (see Notation \ref{BoundarySetting}). In addition we are going to assume that $\Lambda$ is infinite and that $\rho$ is proper. These assumptions imply in particular that all quasi-orbits of $\Lambda$ are unbounded. The main goal of this section is to characterize proper qiqacs with stable quasi-orbits geometrically. It will turn out they are precisely the ``convex cocompact'' qiqacs and that all of their limit points are conical.

As the name suggests, a ``convex cocompact'' qiqac is a qiqac which quasi-restricts to a cobounded qiqac on a quasi-convex subset of $X$. To construct such a subset we need to recall from Definition \ref{def:weak hull} the notion of the weak hull $\mathfrak{H}(Z) \subset X$ of a subset $Z \subset \partial^M_s X$: By definition, $\mathfrak{H}(Z)$ is the union of all bi-infinite geodesic lines which are bi-asymptotic to a pair of distinct points in $Z$. The following observation shows that - in the generality of our present setting (cf. Notation \ref{BoundarySetting}) - the weak hull of the limit set of $\rho$ provides a candidate for a $\Lambda$-cobounded subspace of $X$, provided the limit set is compact:
\begin{proposition}[Weak hull of the limit set]\label{WeakHullQuasiInvariant} If $\mathcal{L}_\rho(\Lambda)$ is compact, then $\mathfrak{H}(\mathcal{L}_\rho(\Lambda))$ is $\Lambda$-quasi-invariant. \end{proposition}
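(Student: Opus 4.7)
The plan is to produce a uniform constant $D \geq 0$ with $d_{\mathrm{Haus}}(\mathfrak{H}(\mathcal{L}_\rho(\Lambda)), \rho(\lambda)(\mathfrak{H}(\mathcal{L}_\rho(\Lambda)))) \leq D$ for every $\lambda \in \Lambda$ and conclude from the definition of quasi-invariance. I would begin by exploiting compactness of the limit set together with the direct limit description of the Morse boundary: since $\mathcal{L}_\rho(\Lambda) \subset \partial^M_s X = \varinjlim_N \partial^{(N)}_s X$ is compact, there is a single Morse gauge $N_0$ with $\mathcal{L}_\rho(\Lambda) \subset \partial^{(N_0)}_s X$. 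Standard Morse-boundary facts (Appendix \ref{AppendixMorse}) then yield an enlarged Morse gauge $N = N(N_0)$ such that any two distinct points of $\mathcal{L}_\rho(\Lambda)$ can be joined by an $N$-Morse bi-infinite geodesic line, and every bi-infinite geodesic line whose endpoints both lie in $\mathcal{L}_\rho(\Lambda)$ is $N$-Morse. In particular, every geodesic comprising $\mathfrak{H}(\mathcal{L}_\rho(\Lambda))$ is $N$-Morse.

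For the forward Hausdorff bound I would take $y \in \mathfrak{H}(\mathcal{L}_\rho(\Lambda))$ lying on an $N$-Morse bi-infinite geodesic $\gamma$ with endpoints $\xi, \eta \in \mathcal{L}_\rho(\Lambda)$. Since $\rho(\lambda) \in \widetilde{\mathrm{QI}}_{K_1,C_1,C_1'}(X)$ uniformly in $\lambda \in \Lambda$, the curve $\rho(\lambda) \circ \gamma$ is a $(K_1,C_1)$-quasi-geodesic. By the invariance statement of Proposition~\ref{lem:limit set invariant}, its endpoints $\partial_s\rho(\lambda)\xi$ and $\partial_s\rho(\lambda)\eta$ lie again in $\mathcal{L}_\rho(\Lambda)$, hence are joined by an $N$-Morse bi-infinite geodesic $\gamma' \subset \mathfrak{H}(\mathcal{L}_\rho(\Lambda))$. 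The Morse property (with uniform gauge $N$ and uniform quasi-geodesic constants $K_1,C_1$) then produces a constant $D_1 = D_1(N,K_1,C_1)$ with $d_{\mathrm{Haus}}(\rho(\lambda) \circ \gamma,\gamma') \leq D_1$, so $\rho(\lambda)(y) \in N_{D_1}(\mathfrak{H}(\mathcal{L}_\rho(\Lambda)))$.

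For the reverse inclusion I would symmetrize. Given a geodesic $\gamma \subset \mathfrak{H}(\mathcal{L}_\rho(\Lambda))$ between $\xi,\eta \in \mathcal{L}_\rho(\Lambda)$, set $\xi' := \partial_s\rho(\lambda^{-1})\xi$ and $\eta' := \partial_s\rho(\lambda^{-1})\eta$, which are again in $\mathcal{L}_\rho(\Lambda)$ by Proposition~\ref{lem:limit set invariant}. The weak hull contains an $N$-Morse bi-infinite geodesic $\gamma''$ from $\xi'$ to $\eta'$. Then $\rho(\lambda) \circ \gamma''$ is a $(K_1,C_1)$-quasi-geodesic whose endpoints, using Lemma~\ref{Inverses} (to compare $\rho(\lambda)\rho(\lambda^{-1})$ with $\rho(e)$ up to bounded error) together with the qiqac compatibility $d(\rho(\lambda)\rho(\lambda^{-1}).x,\rho(e).x) < C_1'$ and the definition of convergence in $\partial^M_s X$, are again $\xi$ and $\eta$. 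A second application of the Morse lemma with the same uniform gauge yields $D_2 = D_2(N,K_1,C_1)$ with $d_{\mathrm{Haus}}(\gamma,\rho(\lambda) \circ \gamma'') \leq D_2$, so every point of $\gamma$ lies in $N_{D_2}(\rho(\lambda)(\mathfrak{H}(\mathcal{L}_\rho(\Lambda))))$. Taking $D := \max\{D_1, D_2\}$ completes the argument.

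The main obstacle, and precisely where compactness of $\mathcal{L}_\rho(\Lambda)$ is used, is the uniform Morse-gauge step: without a single gauge $N$ controlling every bi-infinite geodesic in $\mathfrak{H}(\mathcal{L}_\rho(\Lambda))$, one cannot convert ``quasi-geodesic bi-asymptotic to two limit points'' into ``uniformly close to a true geodesic in the weak hull''. Once the uniform gauge is in place, everything else is driven by the built-in uniformity of the QI constants of $\rho$ on $\Lambda$ and the functoriality of the boundary action established earlier in the chapter.
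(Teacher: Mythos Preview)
Your proof is correct and follows essentially the same strategy as the paper. The paper packages the argument as a general lemma (Lemma~\ref{lem:invariant boundary invariant wh}) about arbitrary compact $\Lambda^\infty$-invariant subsets $Z \subset \partial_s^M X$, then invokes Proposition~\ref{lem:limit set invariant}; it also carries out the ``quasi-geodesic close to a geodesic in the hull'' step via an explicit Arzel\`a--Ascoli limit of truncated segments rather than appealing directly to the Morse property for bi-infinite curves, and it only writes out one Hausdorff inclusion (the other being implicit from symmetry of $\Lambda$), whereas you make both directions explicit.
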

Since the limit set is $\Lambda^\infty$-invariant by Proposition \ref{lem:limit set invariant}, the proposition is actually a special case of the following general observation:

\begin{lemma}\label{lem:invariant boundary invariant wh} Assume a set $Z \subset \partial_s^M X$ is compact and $\Lambda^\infty$-invariant. Then  $\mathfrak{H}(Z)$ is $\Lambda$-quasi-invariant. \end{lemma}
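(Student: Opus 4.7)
The plan is to exploit the fact that compactness of $Z$ in the Morse boundary forces all its points to sit in a single Morse stratum, which will let me apply the Morse lemma uniformly to images of geodesics inside $\mathfrak{H}(Z)$.

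First I would reduce the problem to showing one-sided inclusion. Since $\Lambda$ is symmetric and $\rho(\Lambda) \subset \widetilde{\mathrm{QI}}_{K_1,C_1,C_1'}(X)$ is uniform, it suffices to find a uniform constant $D$ such that $\rho(\lambda)(\mathfrak{H}(Z)) \subset N_D(\mathfrak{H}(Z))$ for every $\lambda \in \Lambda$; applying this also to $\lambda^{-1} \in \Lambda$, and using Lemma \ref{Inverses} (which bounds $d(\rho(\lambda)\rho(\lambda^{-1})x,x)$ by a uniform constant depending only on $k=1$), yields the reverse inclusion $\mathfrak{H}(Z) \subset N_{D'}(\rho(\lambda)(\mathfrak{H}(Z)))$, hence uniform bound on $d_{\mathrm{Haus}}$.

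Next, I would extract a single Morse gauge that controls all of $Z$. Because the Morse boundary $\partial_s^M X$ carries the direct limit topology coming from the strata $X^{(N)}_o$, any compact subset is contained in a single stratum; thus there is a Morse gauge $N_0$ such that every $\xi \in Z$ is represented by an $N_0$-Morse geodesic ray from $o$. It follows (by concatenating two such rays and using that $Z$ is $\Lambda^\infty$-invariant, so we can translate basepoints) that there is a (possibly larger) Morse gauge $N_0'$ such that for every pair $\xi \neq \eta$ in $Z$, every bi-infinite geodesic bi-asymptotic to $\xi,\eta$ is $N_0'$-Morse. This is the crucial uniformity step, and it is where compactness of $Z$ is essential — this is also the step I expect to require the most care, since it relies on the precise formulation of the Morse boundary topology recalled in Appendix \ref{AppendixMorse}.

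With this in hand, fix $\lambda \in \Lambda$ and a bi-infinite geodesic $\gamma \subset \mathfrak{H}(Z)$ bi-asymptotic to points $\xi,\eta \in Z$. Because $\rho(\lambda)$ is a $(K_1,C_1,C_1')$-quasi-isometry, the image $\rho(\lambda)\circ\gamma$ is a $(K_1,C_1)$-quasi-geodesic bi-asymptotic to $\partial_s\rho(\lambda)(\xi)$ and $\partial_s\rho(\lambda)(\eta)$, and these endpoints again lie in $Z$ by $\Lambda^\infty$-invariance. Any bi-infinite geodesic $\gamma'$ bi-asymptotic to this pair of endpoints is $N_0'$-Morse and, by definition, contained in $\mathfrak{H}(Z)$. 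Applying the definition of Morse-ness of $\gamma'$ to the $(K_1,C_1)$-quasi-geodesic $\rho(\lambda)\circ\gamma$ produces a constant $D=D(N_0',K_1,C_1)$ with $\rho(\lambda)(\gamma) \subset N_D(\gamma') \subset N_D(\mathfrak{H}(Z))$. Taking the union over all geodesics in $\mathfrak{H}(Z)$ and using that $D$ depends only on $N_0'$, $K_1$ and $C_1$ — and in particular not on $\lambda$ or on the chosen geodesic — gives $\rho(\lambda)(\mathfrak{H}(Z)) \subset N_D(\mathfrak{H}(Z))$ uniformly in $\lambda$, completing the proof in view of the first paragraph.
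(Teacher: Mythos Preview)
Your approach is essentially the paper's: reduce to a one-sided inclusion, use compactness of $Z$ to land in a single Morse stratum, push a geodesic $\gamma\subset\mathfrak{H}(Z)$ through $\rho(\lambda)$, and find a nearby bi-infinite geodesic in $\mathfrak{H}(Z)$ with the correct endpoints.

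There is, however, a genuine technical gap in your final step. You write ``applying the definition of Morse-ness of $\gamma'$ to the $(K_1,C_1)$-quasi-geodesic $\rho(\lambda)\circ\gamma$'', but the Morse property of $\gamma'$ (Definition~\ref{defn: Morse geodesic}) only controls quasi-geodesic \emph{segments with endpoints on} $\gamma'$. Your $\rho(\lambda)\circ\gamma$ is bi-infinite, and you have not established that restrictions $\rho(\lambda)\circ\gamma|_{[-n,n]}$ have endpoints close to $\gamma'$ --- that is essentially what you are trying to prove. The paper closes this gap by reversing the roles: it first records that $\rho(\lambda)\circ\gamma$ is itself uniformly Morse (since the original $\gamma$ is $N'$-Morse by Theorem~\ref{IdealMorseTriangle}(ii) and Morseness is preserved under uniform quasi-isometries), then applies \emph{its} Morse property to the geodesic segments $[\rho(\lambda)(\gamma(-n)),\rho(\lambda)(\gamma(n))]$, which do have endpoints on $\rho(\lambda)\circ\gamma$; an Arzel\`a--Ascoli limit of these segments yields the required bi-infinite geodesic in $\mathfrak{H}(Z)$ at uniformly bounded Hausdorff distance. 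A smaller point: in your step 3 the $\Lambda^\infty$-invariance of $Z$ is irrelevant to the uniform Morse gauge for bi-infinite geodesics between points of $Z$; the paper simply invokes Theorem~\ref{IdealMorseTriangle}(ii).
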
 
\begin{proof} We need to show that there exists a constant $D\geq 0$ so that for any $\lambda \in \Lambda$ we have $d_{\mathrm{Haus}}(\mathfrak{H}(Z), \rho(\lambda).\mathfrak{H}(Z))<D$. Thus let $\lambda \in \Lambda$ and $x \in \mathfrak{H}(Z)$; there then exist $\xi^-, \xi^+ \in Z$ with $\xi^- \neq \xi^+$ and a geodesic line $\gamma$ bi-asymptotic to $(\xi^-, \xi^+)$ such that $x \in \gamma(\R)$.

We first observe that since $Z$ is compact we have $Z \subset \partial_s^N X$ for some fixed Morse gauge $N$. By Theorem \ref{IdealMorseTriangle}.(ii) we deduce that $\gamma$ is $N'$-Morse for some Morse-gauge $N'$ depending only on $N$. Let $\gamma' \coloneqq  \rho(\lambda).\gamma$; this is an $N''$-Morse $(K,C)$-quasi-geodesic, where $N''$, $K$ and $C$ are independent of $\gamma$ and $\lambda$ and depend only on $N'$ (hence $N$) and the QI constants of $\rho$ (see \cite[Lemma 2.5 (2)]{charney:2015aa}). We set $D\coloneqq 2N''(K,C)$.

Let $\gamma'_n$ be a geodesic segment joining $\gamma'(-n)$ and $\gamma'(n)$. By \cite[Lemma 2.5 (3)]{charney:2015aa} the geodesics $\gamma'_n$ are bounded Hausdorff distance from $\gamma'([-n,n])$, where the bound depends only on $N, K, C$ and thus by \cite[Lemma 2.5 (1)]{charney:2015aa} are $N'''$-Morse, where $N'''$ depends only on $N, K, C$. By the Arzel\`a--Ascoli Lemma \ref{ArzelaAscoli1} and \cite[Lemma 2.10]{cordes:2016ad}, this converges to an $N'''$-Morse bi-infinite geodesic $\gamma''$ that is of Hausdorff distance at most $D$ from $\gamma'$. In particular, $\rho(\lambda)(x)$ is of distance at most $D$ from $\gamma''$. Since $\gamma''$ is bi-asymptotic to $(\partial_s\rho(\lambda)(\xi^-), \partial_s\rho(\lambda)(\xi^+)) \in Z^2$, hence contained in $\mathfrak{H}(Z)$, the lemma follows.
\end{proof}

\begin{remark}[Quasi-restriction to the weak hull] \label{weak hull restricted quasi-action}
Assume that $\mathcal{L}_\rho(\Lambda)$ is compact so that the weak hull is quasi-invariant under $\rho(\Lambda)$. We can then quasi-restrict the qiqac $\rho$ to a qiqac $\widetilde{\rho}_{\mathfrak{H}(\Lambda)}$ on $\mathfrak{H}(\mathcal{L}_\rho(\Lambda))$ (see Construction \ref{Quasirestriction2}). Then, by construction,  $\widetilde{\rho}_{\mathfrak{H}(\Lambda)}$ is a qiqac on $\mathfrak{H}(\mathcal{L}_\rho(\Lambda))$ and it is at bounded distance from $\rho$ in the following sense: for any $k \in \mathbb{N}$ there exists a constant $R_k$ so that for any $\lambda \in \Lambda^k$ and $x \in X$ we have
\begin{equation}\label{QuasiRestrictionClose}
d(\widetilde{\rho}_{\mathfrak{H}(\Lambda)}(\lambda).x, \rho(\lambda).x)< R_k.
\end{equation}
\end{remark}

\begin{definition} Let $(\Lambda, \Lambda^\infty)$ be an infinite geometrically finitely-generated approximate group and let $\rho\colon (\Lambda, \Lambda^\infty) \to \widetilde{{\rm QI}}(X)$ be a qiqac on a proper geodesic space $X$. We say that $\rho$ is \emph{convex cocompact}\index{qiqac!convex cocompact} if:
\begin{enumerate}[(CC1)]
\item $\Lambda$ quasi-acts properly on $X$ via $\rho$;
\item $\cL_\rho(\Lambda) \subset \partial_s^M X$ is non-empty and compact; 
\item $\Lambda$ quasi-acts coboundedly on $\mathfrak{H}(\mathcal{L}_\rho(\Lambda))$ via 
$\widetilde{\rho} \coloneqq  \widetilde{\rho}_{\mathfrak{H}(\Lambda)}$.
\end{enumerate}
\end{definition}
It will sometimes be convenient to use the following equivalent reformulation of Condition (CC3):
\begin{itemize}
    \item[(CC3$'$)] There exists $D>0$ such that for every $z \in \mathfrak{H}(\mathcal{L}_\rho(\Lambda))$ there is some $\lambda \in \Lambda$ with $d(\rho(\lambda).o, z) < D$.
\end{itemize}
The equivalence of (CC3) and (CC3$'$) is immediate from \eqref{QuasiRestrictionClose}. We can now state the main theorem of this section; in the group case, a similarly formulated equivalence is proved in \cite{cordes:2016ab}.
\begin{theorem}[Convex cocompactness vs.\ stable quasi-orbits] \label{thm:stab equiv to cocompact}
Suppose that $X$ is a proper geodesic metric space,  $(\Lambda, \Lambda^\infty)$ is a geometrically finitely-generated approximate group, and $\rho\colon (\Lambda, \Lambda^\infty) \to \widetilde{{\rm QI}}(X)$ is a qiqac. Then the following are equivalent:
\begin{enumerate}[(i)]
    \item $\rho$ is convex cocompact.
    \item $\rho$ is proper and has stable quasi-orbits.
\end{enumerate}
In this case, $(\Lambda, \Lambda^\infty)$ is a hyperbolic approximate group and $\cL_\rho(\Lambda)$ is compact and represents $\partial \Lambda$.
 \end{theorem}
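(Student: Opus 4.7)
The plan is to exploit the fact that convex cocompactness and stability are both encoded geometrically in the relation between the quasi-orbit $\cO := \rho(\Lambda).o$ and the weak hull $\mathfrak{H} := \mathfrak{H}(\cL_\rho(\Lambda))$ of the limit set. Since the weak hull lies in a uniform Morse stratum whenever the limit set is compact (by Theorem \ref{IdealMorseTriangle}.(ii)), the heart of the proof is to show $d_{\mathrm{Haus}}(\cO, \mathfrak{H}) < \infty$ under either hypothesis. Together with Proposition \ref{WeakHullQuasiInvariant} and the properness-preserving properties of quasi-restriction (see Remark \ref{weak hull restricted quasi-action} and Lemma \ref{GeometricInvUnderEquiv}), this suffices to move between (i) and (ii); the final clause about hyperbolicity and $\cL_\rho(\Lambda)$ representing $\partial \Lambda$ then comes for free from Proposition \ref{BoundaryStableOrbit}.

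For (ii) $\Rightarrow$ (i), assume $\rho$ is proper and quasi-orbits are $N$-stable. Proposition \ref{BoundaryStableOrbit} yields at once: $(\Lambda, \Lambda^\infty)$ is hyperbolic, $\cO \in [\Lambda]_{\mathrm{int}}$ is the image of a quasi-isometric embedding of an apogee $A$ of $\Lambda$ into $X$, and $\cL_\rho(\Lambda)$ is compact with $\cL_\rho(\Lambda) \cong \partial A \cong \partial \Lambda$. What remains is to establish that the quasi-restriction $\widetilde{\rho}$ is cobounded on $\mathfrak{H}$, for which it is enough to prove $d_{\mathrm{Haus}}(\cO, \mathfrak{H}) < \infty$. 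For $\mathfrak{H} \subset N_R(\cO)$: given distinct $\xi,\eta \in \cL_\rho(\Lambda)$, pick sequences $y_n, z_n \in \cO$ with $y_n \to \xi$, $z_n \to \eta$ in the Morse boundary; by $N$-stability the geodesics $[y_n, z_n]$ are uniformly Morse and stay in a uniform neighborhood of $\cO$, and an Arzel\`a--Ascoli argument produces a bi-infinite Morse geodesic bi-asymptotic to $(\xi,\eta)$ lying near $\cO$. Morse uniqueness (Theorem \ref{IdealMorseTriangle}) then forces \emph{every} line in $\mathfrak{H}$ with these endpoints to lie near $\cO$. For $\cO \subset N_{R'}(\mathfrak{H})$: invoke visuality of the apogee $A$ (Proposition \ref{ApogeeVisual}) to write each point of $\cO$ as within bounded distance of a geodesic ray to $\cL_\rho(\Lambda)$; the trichotomy of Proposition \ref{GromovBoundaryTri} lets one pair such a ray with a second ray to obtain a nearby bi-infinite geodesic whose endpoints lie in $\cL_\rho(\Lambda)$, and the finite cases ($\Lambda$ finite or two-ended) are handled directly.

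For (i) $\Rightarrow$ (ii), assume $\rho$ is convex cocompact. Compactness of $\cL_\rho(\Lambda)$ places it in some stratum $\partial_s^N X$, so by Theorem \ref{IdealMorseTriangle}.(ii) every geodesic line comprising $\mathfrak{H}$ is $N'$-Morse for a common Morse gauge $N'$. This forces $\mathfrak{H}$ to be quasi-convex in $X$ with every pair of its points joined by an $N'$-Morse geodesic staying in a uniform neighborhood of $\mathfrak{H}$, i.e.\ $\mathfrak{H}$ is a stable subspace of $X$; in particular $\mathfrak{H}$ is large-scale geodesic. The hypothesis supplies a cobounded quasi-action $\widetilde{\rho}$ on $\mathfrak{H}$, and properness of $\widetilde{\rho}$ transfers from properness of $\rho$ via the uniform estimate \eqref{QuasiRestrictionClose} (Lemma \ref{GeometricInvUnderEquiv}). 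The Milnor--Schwarz lemma (Theorem \ref{ThmMS}) then identifies $\mathfrak{H} \in [\Lambda]_{\mathrm{int}}$, so the orbit map $\Lambda \to \mathfrak{H}$ is a quasi-isometry, $d_{\mathrm{Haus}}(\cO, \mathfrak{H}) < \infty$, and stability of $\mathfrak{H}$ passes to $\cO$ with an enlarged Morse gauge. Together with properness of $\rho$, this gives (ii), and the final claim about hyperbolicity and $\partial \Lambda$ again follows from Proposition \ref{BoundaryStableOrbit}.

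The hardest step is showing that a compact subset of $\partial_s^M X$ has a \emph{stable} weak hull, not merely a quasi-convex one, and doing so with Morse constants uniform over the hull; this requires combining Theorem \ref{IdealMorseTriangle}.(ii) with an Arzel\`a--Ascoli limiting argument so that the uniform Morse gauge controlling ideal triangles transfers to all bi-infinite lines in $\mathfrak{H}$. The secondary difficulty is the book-keeping in the (ii) $\Rightarrow$ (i) direction when $|\cL_\rho(\Lambda)| = 2$, where $\mathfrak{H}$ is a single geodesic line and one must verify by hand that $\cO$, being quasi-isometric to $\Z$, lies in a bounded neighborhood of it; this is an elementary but necessary check that Proposition \ref{ApogeeVisual} does not by itself supply.
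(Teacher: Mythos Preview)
Your proposal is correct and follows essentially the same strategy as the paper: both directions hinge on stability of the weak hull $\mathfrak{H}(\cL_\rho(\Lambda))$ when the limit set is compact, together with the fact that stability passes between $\mathfrak{H}$ and the quasi-orbit $\cO$ once they are Hausdorff-close.

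That said, you do more work than necessary in two places. In (i)$\Rightarrow$(ii), your detour through Milnor--Schwarz is superfluous: coboundedness of $\widetilde{\rho}$ (i.e.\ (CC3)) already says $\cO$ is relatively dense in $\mathfrak{H}$, so once $\mathfrak{H}$ is stable (which is exactly Proposition~\ref{WeakHullQuasiconvex}, not something you need to re-derive from Theorem~\ref{IdealMorseTriangle}), stability of $\cO$ is immediate. In (ii)$\Rightarrow$(i), condition (CC3') only asks for $\mathfrak{H}\subset N_D(\cO)$, so the reverse inclusion via visuality and the case analysis from Proposition~\ref{GromovBoundaryTri} are not needed. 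For the inclusion that \emph{is} needed, the paper takes a slightly different route from yours: rather than Arzel\`a--Ascoli limits of geodesics $[y_n,z_n]$ built inside $X$, it pushes a bi-infinite geodesic forward from the apogee $A$ via the quasi-isometric embedding $\iota:A\to X$ supplied by Proposition~\ref{BoundaryStableOrbit}, then straightens it in $X$ --- equivalent in spirit to your argument but avoiding the need to track sequences.
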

\begin{proof}
(i)$\implies$(ii): Since $\rho$ is proper by (CC1), we only need to show stability of $\cO \coloneqq  \rho(\Lambda).o$, where we may assume that $o \in \mathfrak{H}(\cL_\rho(\Lambda))$. By \eqref{QuasiRestrictionClose}, this is equivalent to showing stability of $\cO' \coloneqq  \widetilde{\rho}(\Lambda).o$. In view of (CC3) it thus suffices to show that $\mathfrak{H}(L)$ is stable.
Since $L \coloneqq  \cL_\rho(\Lambda)$ is compact by (CC2), the latter follows from Proposition \ref{WeakHullQuasiconvex}.

(ii)$\implies$(i): (CC1) holds by assumption and compactness of $\cL_\rho(\Lambda)$ follows from Proposition \ref{BoundaryStableOrbit}. Also, since $\Lambda$ is infinite and $\rho$ is proper, quasi-orbits are \emph{unbounded} stable sets and hence their limit set is non-empty. This proves (CC2), and it remains to show (CC3), or equivalently (CC3$'$).

For this we set $\cO \coloneqq  \rho(\Lambda).o$ and $L \coloneqq  \cL(\cO)$. We also fix an apogee $A$ for $(\Lambda, \Lambda^\infty)$. By Proposition \ref{BoundaryStableOrbit} there exists a $(K,C)$-quasi-isometric embedding $\iota: A \to X^{(N)}_o$ whose image is given by $\cO$ and which extends to a homeomorphism $\partial \iota: \partial A \to L$. 

Now assume that $z \in \mathfrak{H}(L)$. Then there exist $\xi^-, \xi^+ \in L$ and a geodesic line $\gamma$ bi-asymptotic to $(\xi^-, \xi^+)$ such that $z \in \gamma(\R)$. Now let $\gamma'$ denote a geodesic line in $A$ which is bi-asymptotic to $((\partial \iota)^{-1}(\xi^-), (\partial\iota)^{-1}(\xi^+))$. Then $\gamma'' \coloneqq  \iota(\gamma')$ is a $(K,C)$-quasi-geodesic line in $\cO$. Since $\cO \subset X^{(N)}_o$,  the quasi-geodesic $\gamma''$ is $N'$-Morse for some $N'$ depending only on $N$ by Theorem \ref{thm:strongly hyperbolic equiv}. By the same Arzel\`a--Ascoli argument as in the proof of Lemma \ref{lem:invariant boundary invariant wh} we thus find a geodesic line $\gamma'''$ in $X$ which is at bounded Hausdorff distance from $\gamma''$. In particular, $\gamma'''$ is bi-asymptotic to $(\xi^-, \xi^+)$, hence also at bounded Hausdorff distance from $\gamma$ by Proposition \ref{prop:limit geodesics are asymptotic}, where again the bound depends only on $N$. In particular, $d(z, \gamma'') \leq d(\gamma, \gamma'') < D$ and hence $\mathfrak{H}(L) \subset N_D(\cO)$ for some constant $D$ depending only on $N$. This finishes the proof.

We have thus established (i)$\iff$(ii), and the final sentence follows from Proposition \ref{BoundaryStableOrbit}.
\end{proof}
\begin{remark} Combining Proposition \ref{BoundaryNonel} and Theorem \ref{thm:stab equiv to cocompact} we deduce that the limit set of any convex cocompact action of a non-elementary hyperbolic approximate group is a finite-dimensional perfect compact space, which can be metrized by a locally quasi-self-similar and doubling metric.
\end{remark}
It seems worthwhile to record the following special case of Theorem \ref{thm:stab equiv to cocompact}; in the group case, this result is due to Swenson \cite{Swenson}.
\begin{corollary}[Convex cocompactness vs.\ quasi-convex quasi-orbits] \label{thm:quasi-convex equiv to cocompact}
Suppose that $X$ is a proper geodesic hyperbolic space,  $(\Lambda, \Lambda^\infty)$ is a geometrically finitely-generated approximate group, and $\rho\colon (\Lambda, \Lambda^\infty) \to \widetilde{{\rm QI}}(X)$ is a qiqac. Then the following are equivalent:
\begin{enumerate}[(i)]
    \item $\rho$ is convex cocompact.
    \item $\rho$ is proper and has quasi-convex quasi-orbits.
\end{enumerate}
In this case, $(\Lambda, \Lambda^\infty)$ is a hyperbolic approximate group and $\cL_\rho(\Lambda)$ represents $\partial \Lambda$.
 \end{corollary}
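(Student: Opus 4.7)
The plan is to deduce this corollary directly from Theorem \ref{thm:stab equiv to cocompact} by showing that in the hyperbolic setting the condition of having stable quasi-orbits coincides with the condition of having quasi-convex quasi-orbits. Since one implication is essentially tautological (a stable subset is quasi-convex by definition), the entire content lies in showing the converse in the presence of Gromov hyperbolicity.

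First, I would recall that a qiqac $\rho$ has quasi-convex quasi-orbits iff some (hence any) quasi-orbit $\cO = \rho(\Lambda).o$ is a quasi-convex subset of $X$, meaning that every pair of points in $\cO$ can be joined by a geodesic of $X$ lying in a bounded neighborhood of $\cO$. To upgrade this to stability, I need to exhibit a Morse gauge $N$ such that all such connecting geodesics are $N$-Morse. Here the hyperbolicity of $X$ enters decisively: in a $\delta$-hyperbolic proper geodesic metric space, every $(K,C)$-quasi-geodesic is $N_{K,C,\delta}$-Morse, with the Morse gauge depending only on $K$, $C$, and $\delta$ (this is the usual stability-of-quasi-geodesics lemma; cf.\ Lemma \ref{Morse1} and the discussion around Morse hyperbolic spaces in Remark \ref{MorseGromov}). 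In particular, honest geodesics of $X$ are uniformly Morse with a gauge depending only on $\delta$, and hence every quasi-convex subset of $X$ is automatically $N$-stable for a suitable $N$.

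Having established this, the corollary will follow at once: (i) implies (ii) by applying Theorem \ref{thm:stab equiv to cocompact} together with the trivial observation that a stable subset is quasi-convex; conversely, (ii) implies (i) because the quasi-convex quasi-orbits are then automatically stable (by the observation above), so Theorem \ref{thm:stab equiv to cocompact} applies to give convex cocompactness. The final assertion that $(\Lambda,\Lambda^\infty)$ is hyperbolic and that $\cL_\rho(\Lambda)$ represents $\partial \Lambda$ is then simply inherited from the corresponding conclusion of Theorem \ref{thm:stab equiv to cocompact} (which in turn appeals to Proposition \ref{BoundaryStableOrbitHyp}).

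I expect no serious obstacle in this argument. The only subtlety worth handling carefully is uniformity of the Morse gauge: for stability one needs a single $N$ that works for all geodesic connectors between points of $\cO$, but this is immediate because hyperbolicity yields a Morse gauge depending only on $\delta$, independent of the endpoints. Consequently the proof reduces to a two-line invocation of Theorem \ref{thm:stab equiv to cocompact} once the equivalence of "stable" and "quasi-convex" for subsets of proper geodesic Gromov hyperbolic spaces is recorded.
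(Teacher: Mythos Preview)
Your proposal is correct and takes essentially the same approach as the paper: the corollary is recorded as an immediate special case of Theorem \ref{thm:stab equiv to cocompact}, using precisely the observation (cf.\ Remark \ref{StableHyperbolic}(6)) that in a proper geodesic Gromov hyperbolic space every quasi-convex subset is automatically stable. The paper gives no further argument, so your write-up simply makes explicit the one-line reduction the authors leave implicit.
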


It turns out that convex cocompactness of a qiqac is enough to ensure that all limit points are conical:
\begin{proposition}[Limit points are conical] \label{cor:conical}
If $\rho\colon (\Lambda, \Lambda^\infty) \to \widetilde{{\rm QI}}(X)$ is a convex cocompact qiqac on a proper geodesic metric space $X$, then every limit point of $\rho$ is conical. 
\end{proposition}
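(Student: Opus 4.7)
Fix $\xi\in\cL_\rho(\Lambda)$ together with an arbitrary geodesic ray $\beta\colon[0,\infty)\to X$ representing $\xi$; by translating the basepoint we may assume $\beta(0)=o$. My goal is to produce a constant $K$ and a strictly increasing sequence $(k_n)\subset\mathbb N$ with $B(\beta(k_n),K)\cap\rho(\Lambda).o\neq\emptyset$. The strategy is to use the equivalence of convex cocompactness with stable quasi-orbits (Theorem \ref{thm:stab equiv to cocompact}) in order to pick a second limit point, connect it to $\xi$ by a Morse geodesic lying in $\mathfrak H(\cL_\rho(\Lambda))$, and then shadow that geodesic simultaneously by the quasi-orbit (via (CC3$'$)) and by the ray $\beta$ (via Morse fellow-travelling).

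First I would invoke Proposition \ref{BoundaryStableOrbit} to conclude that $(\Lambda,\Lambda^\infty)$ is a hyperbolic approximate group whose boundary $\partial\Lambda$ is represented by $\cL_\rho(\Lambda)$. Since $\Lambda$ is infinite by the standing hypothesis on convex cocompact qiqacs, Proposition \ref{GromovBoundaryTri} forces $|\cL_\rho(\Lambda)|\geq 2$, so I may choose $\eta\in\cL_\rho(\Lambda)\setminus\{\xi\}$. The compactness of $\cL_\rho(\Lambda)$ in $\partial_s^M X$ gives a single Morse gauge $N$ with $\cL_\rho(\Lambda)\subset\partial_s^N X$, and Theorem \ref{IdealMorseTriangle} then supplies a bi-infinite $N'$-Morse geodesic $\gamma'\colon\mathbb R\to X$, bi-asymptotic to $(\eta,\xi)$, with $N'$ depending only on $N$. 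By construction $\gamma'(\mathbb R)\subset\mathfrak H(\cL_\rho(\Lambda))$, and condition (CC3$'$) yields a constant $D>0$ and elements $\lambda_n\in\Lambda$ such that $d(\rho(\lambda_n).o,\gamma'(n))<D$ for every $n\in\mathbb N$.

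Next I would compare the forward half of $\gamma'$ with $\beta$. Both rays terminate at the Morse boundary point $\xi$ and are $N''$-Morse for a gauge $N''$ depending on $\xi$ and $\beta$ (and hence on $N$ after possibly enlarging it), so Proposition \ref{prop:limit geodesics are asymptotic} gives $R=R(N'',\beta)>0$ such that, up to reparametrizing by an additive constant, $\gamma'([0,\infty))$ and $\beta([0,\infty))$ are at Hausdorff distance less than $R$. For each large $n$ I pick $k_n$ with $d(\gamma'(n),\beta(k_n))<R$, so that $d(\rho(\lambda_n).o,\beta(k_n))<D+R=:K$. Since $\gamma'(n)\to\xi\in\partial_s^M X$ while $\beta$ is a geodesic ray, $d(o,\beta(k_n))\to\infty$ and hence $k_n\to\infty$; passing to a subsequence we obtain the required strictly increasing sequence.

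The only substantive point that is not immediate from the definitions is the bi-infinite Morse geodesic connecting $\eta$ to $\xi$: in the genuinely non–Gromov-hyperbolic setting one has to be careful that distinct Morse boundary points are always joined by such a geodesic with a Morse gauge depending only on $N$, and that two geodesic rays with the same Morse endpoint are at bounded Hausdorff distance on their forward tails. Both facts are by now standard in the theory of Morse boundaries and are recorded in the appendix (Theorem \ref{IdealMorseTriangle} and Proposition \ref{prop:limit geodesics are asymptotic}); once they are in hand, the remainder of the argument is just triangle-inequality bookkeeping, and the same proof specializes in the Gromov-hyperbolic case to the classical Swenson-style statement.
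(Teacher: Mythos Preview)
Your proposal is correct and follows essentially the same route as the paper: pick a second limit point, use Theorem \ref{IdealMorseTriangle} to produce a bi-infinite Morse geodesic in $\mathfrak H(\cL_\rho(\Lambda))$, invoke (CC3$'$) to get orbit points near it, and then use Morse fellow-travelling to transfer this to the given ray. One small citation slip: Proposition \ref{prop:limit geodesics are asymptotic} is stated for bi-infinite \emph{lines} with the same pair of endpoints, whereas you need the comparison between the ray $\beta$ and the forward tail of $\gamma'$; the correct reference is the slimness of ideal Morse triangles in Theorem \ref{IdealMorseTriangle}(iii), applied to the triangle with vertices $(o,\eta,\xi)$ and sides $\beta$, $\gamma'$, and a ray from $o$ to $\eta$ --- this is precisely what the paper uses, and you already flag it in your last paragraph.
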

\begin{proof} We may assume that $\Lambda$ is infinite, since otherwise the proposition holds vacuously. By Proposition \ref{GromovBoundaryTri} this implies that $|\partial \Lambda| \geq 2$. By Theorem \ref{thm:stab equiv to cocompact} we may now fix a Morse gauge $N$ such that $\cO \coloneqq  \rho(\Lambda).o$ is $N$-stable. Denote $L \coloneqq  \cL(\cO)$ and note that by Theorem \ref{thm:stab equiv to cocompact} we have $|L| \geq 2$.

Let $\xi \in L$. Since $\cO$ is $N$-stable, there exists a sequence $(x_n)$ in $\rho(\Lambda).o \cap X^{(N)}_o$ such that $\xi = [(x_n)]$. Let $\gamma_n$ be a geodesic joining $o$ to $x_n$. By Arzel\`a--Ascoli (Lemma \ref{ArzelaAscoli1}) we know that the $\gamma_n$ subsequentially converge to an $N$-Morse geodesic $\gamma$ representing $\xi$. Since $|L| \geq 2$ we can choose $\xi' \in L \setminus \{\xi\}$. By Theorem \ref{IdealMorseTriangle} there then exists a bi-infinite geodesic line $\alpha$ which is bi-asymptotic to $(\xi', \xi)$, and thus contained in $\mathfrak{H}(L)$. 

Again, by Theorem \ref{IdealMorseTriangle}, there is a constant $\delta$ depending only on $N$ and constants $T, T'$ (depending on $\gamma$ and $\alpha$) such that $d_\mathrm{Haus}(\gamma([T, \infty)), \alpha([T', \infty)))<\delta$. Since $\alpha([T', \infty]) \subset \mathfrak{H}(L)$, we know from (CC3') that $B(\gamma(n), \delta+D)\cap \cO \neq \emptyset$ for all $n \geq T$. Let $\beta$ be any other geodesic ray representing $\xi$. By the proof of Proposition 2.4 in \cite{cordes:2016ad}, we have a constant $E$ depending only on $N$  and constants $S, S'$ (depending on $\gamma$ and $\beta$) so that $d_\mathrm{Haus}(\gamma([T, \infty)), \alpha([T', \infty)))<E$. Thus it follows that $B(\gamma(n), \delta+D+E)\cap \cO \neq \emptyset$ for all $n \geq \max(S,T)$. Since $\gamma$ represents $\xi$, this shows that $\xi$ is conical.
\end{proof}
It follows from the proposition that the isometric actions studied in Theorem \ref{QuasikernelsConical} are not convex cocompact. 
\begin{remark}[Convex cococompact isometric actions]\label{CoCoIso}
There is a major difference between convex cocompact qiqacs and convex cocompact \emph{isometric} actions. Indeed, if an isometry preserves a subset $\cL \subset \partial X$, then it maps geodesics between points of $\cL$ to such geodesics and hence preserves the weak hull $\mathfrak H(\cL)$; for quasi-isometries this is not the case. Consequently, if $\rho \colon (\Lambda, \Lambda^\infty) \to \mathrm{Is}(X)$ is a convex cocompact isometric action, then the weak hull $\mathfrak{H}(\cL_\rho(X))$ is invariant under $\rho(\Lambda)$ and hence under $\rho(\Lambda^\infty)$. If $\rho$ is only a convex cocompact qiqac, then $\mathfrak{H}(\cL_\rho(X))$ is quasi-invariant under $\rho(\Lambda)$, but not quasi-invariant under $\rho(\Lambda^\infty)$. This difference has drastic consequences.
\end{remark}
\begin{corollary}\label{Rigid1} Let $\rho:(\Lambda, \Lambda^\infty) \to \mathrm{Is}(X)$ be a convex cocompact isometric action. If the stabilizer of $\cL_\rho(X)$ in $\mathrm{Is}(X)$ is discrete or $\rho(\Lambda^\infty)$ acts properly on $X$, then $(\Lambda, \Lambda^\infty)$ is an almost group.
\end{corollary}
\begin{proof} By Remark \ref{CoCoIso} the action of $(\Lambda, \Lambda^\infty)$ restricts to an action on the weak hull $\mathfrak{H}(\cL_\rho(X))$, which by Proposition \ref{WeakHullQuasiconvex} is quasi-convex in $X$ and hence large-scale geodesic. The corollary then follows from Corollary \ref{DiscreteIsometry}.
\end{proof}
Note that there exist plenty of Fuchsian groups which act isometrically and convex cocompactly on the hyperbolic plane and whose limit set has a discrete stabilizer in $\mathrm{PSL}_2(\R)$; these are mainly responsible for the rich zoo of convex cocompact Fuchsian groups. In the world of approximate groups which are not almost groups, such examples cannot exist. If we combine Corollary \ref{Rigid1} with Corollary \ref{thm:quasi-convex equiv to cocompact} then we obtain the following surprising rigidity result:
\begin{corollary}\label{Rigidity0} Let $(\Lambda, \Lambda^\infty)$ be an approximate group. 
\begin{enumerate}[(i)]
\item If $\Lambda^\infty$ is a hyperbolic group and $\Lambda$ is quasi-convex in $\Lambda^\infty$, then $(\Lambda, \Lambda^\infty)$ is an almost group.
\item If $\Lambda^\infty$ is a free group and  $(\Lambda, \Lambda^\infty)$ is geometrically finitely-generated, then $(\Lambda, \Lambda^\infty)$ is an almost group.
\end{enumerate}
\end{corollary}
\begin{proof} (i) Let $X$ be a Cayley graph for $\Lambda^\infty$; then $X$ is a proper geodesic hyperbolic metric space and
$(\Lambda, \Lambda^\infty)$ acts isometrically on $X$ via the inclusion $\rho: (\Lambda, \Lambda^\infty) \to \mathrm{Is}(X)$ and both $\Lambda$ and $\Lambda^\infty$ act properly on $X$ via $\rho$.
Since $\rho(\Lambda).e = \Lambda$ is quasi-convex, we deduce that $\rho$ has quasi-convex quasi-orbits. By Corollary \ref{thm:quasi-convex equiv to cocompact} this implies that $\rho$ is convex cocompact. Simce $\rho(\Lambda)^\infty$ acts properly, the corollary now follows from Corollary \ref{Rigid1}.

\item (ii) This follows from (i) and the fact that every coarsely-connected subset of a tree is quasi-isometric to a subtree and hence quasi-convex.
\end{proof}

\chapter{Asymptotic dimension} \label{ChapAsdim}

A well-known formula in geometric group theory, first conjectured by Gromov \cite[1.E$_1'$]{Gromov} and later proved by Buyalo and Lebedeva \cite{BuyaloLebedeva}, states that the asymptotic dimension of a hyperbolic group $\Gamma$ equals the topological dimension of its boundary plus one, i.e., $\asdim\Gamma = \dim \partial\Gamma +1$. More generally, this formula is true for cobounded hyperbolic proper geodesic metric spaces. In this chapter we extend the Buyalo--Lebedeva theorem to all \emph{quasi-cobounded} hyperbolic proper geodesic metric spaces, hence in particular to hyperbolic approximate groups (Theorem \ref{AsdimMainConvenient}).

\section{Statement of the main result}

In Definition \ref{def: asdim lambda} we defined the asymptotic dimension of countable approximate groups, which is their coarse invariant. 
 If $(\Lambda, \Lambda^\infty)$ is a \emph{hyperbolic} approximate group, then we saw, in Example \ref{BoundaryDimension}, how to define the topological dimension of the Gromov boundary of $\Lambda$ (where $\partial\Lambda$ depends on the internal QI-type of $(\Lambda, \Lambda^\infty)$). 
 The goal of this chapter is to relate $\asdim \Lambda$ and $\dim \partial\Lambda$, more precisely we will prove:
\begin{theorem}[Asymptotic dimension vs.\ boundary dimension]\label{AsdimMainConvenient} If $X$ is a quasi-cobounded hyperbolic proper geodesic metric space with Gromov boundary $\partial X$, then
\[
\asdim X = \dim \partial X + 1.
\]
In particular, if $(\Lambda, \Lambda^\infty)$ is a hyperbolic approximate group, then
\[
\asdim \Lambda  = \dim \partial \Lambda + 1. 
\]
\end{theorem}
For hyperbolic groups, Theorem \ref{AsdimMainConvenient} was conjectured by Gromov (\cite[1.E$_1'$]{Gromov}) and established by Sergei Buyalo and Nina Lebedeva \cite{BuyaloLebedeva}, based on earlier work by many authors (like \cite{BestvinaMess}, \cite{Swiatkowski-asdimhomol}, \cite{BuyaloSchroeder-hypdim}). Our proof follows the steps of the proof in the group case, as presented in \cite{BuyaloSchroeder}. We would like to point out that one of the proofs (\cite[Thm.\ 1.1]{BuyaloLebedeva} in the original, which is reproduced as \cite[Thm.\ 12.2.1]{BuyaloSchroeder}) contains a gap that we were unable to fix. Nina Lebedeva kindly provided us with a list of corrections for this proof, and our arguments will be parallel to her corrected version (see Section \ref{SecProofldim}). While the (modified) Buyalo--Lebedeva argument applies not only to hyperbolic groups, but also to cobounded hyperbolic proper geodesic spaces, the extension to the quasi-cobounded case requires additional arguments. It may be possible to further weaken the quasi-coboundedness assumption (see Remark \ref{ThreeProperties}), but as the following example shows, the assumption cannot be dropped altogether.
\begin{example}[Hyperbolic Shashlik] Let $\gamma: [0, \infty) \to \bH^n$ be a geodesic ray in hyperbolic $n$-space and let $x_1, x_2, \dots$ be points of $\gamma([0, \infty))$ such that $d(x_n, x_{n+1}) \geq 2^{n+2}$, for all $n \in \bN$. We consider the \emph{hyperbolic shashlik} \index{hyperbolic!shashlik}
\[
X \coloneqq  \gamma([0, \infty)) \cup \bigcup_{n=1}^\infty B(x_n, 2^n) \subset \bH^n,
\]
taken with the path-length metric, which is a hyperbolic proper geodesic metric space. Since $X$ contains arbitrarily large balls in $\bH^n$, we have $\asdim X = \asdim \bH^n = n$. On the other hand, $X$ contains a single geodesic ray so $\partial X$ is one point, and thus $\dim \partial X = 0$. Consequently, for general hyperbolic proper geodesic metric spaces their asymptotic dimension cannot be bounded in terms of the topological dimension of their boundary.
 \end{example}

Theorem \ref{AsdimMainConvenient} will actually be established by proving the following result, in Section \ref{AsdimMainTheoremProofSketch} below:
\begin{theorem}\label{AsdimMain} 
If $X$ is a quasi-cobounded hyperbolic proper geodesic metric space with Gromov boundary $\partial X$ and $d$ is a visual metric on $\partial X$, then
\[
{\asdim}\, X =  \dim \partial X + 1 = \ell\text{-}\dim (\partial X,d) + 1.
\]
\end{theorem}
For \emph{cobounded} (rather than just quasi-cobounded) spaces, this is \cite[Thm.\ 12.3.3]{BuyaloSchroeder}. This theorem is mentioning $\ell$-$\dim$, which is a notion of dimension for metric spaces known as \emph{linearly controlled metric dimension}\index{linearly controlled metric dimension}\index{dimension!linearly controlled metric}. We introduce the notion of $\ell$-$\dim$, along with discussing some other necessary background on dimension theory in  Section \ref{AsdimPrelim}. For the process of proving Theorem \ref{AsdimMain}, the most technical step will be Theorem \ref{l-dim-leq-dim}, whose proof will be given in Section \ref{SecProofldim}.

\section{Background on dimension theory}\label{AsdimPrelim}

For the reader's convenience, we summarize in this section some basic properties of topological dimension $\dim$ and of linearly controlled metric dimension $\ell$-$\dim$. We start with some general notation concerning covers of metric spaces.

If $X$ is a set, then a collection $\cU$ of subsets of $X$ is called a \emph{cover}\index{cover} of $X$ if $X = \bigcup \cU$. If $X$ is a topological space, then a cover $\cU$ is called an \emph{open cover}\index{cover!open} if every $U \in \cU$ is an open subset of $X$. 
While in classical dimension theory one is mostly concerned with open covers of topological or metric spaces, let us emphasize that the covers considered in the definition of asymptotic dimension need not be open. Given covers $\cU$ and $\cV$ of $X$, the cover $\cV$ \emph{refines}\index{refinement!of a cover} $\cU$ if for each $V\in \cV$ there is a $U\in\cU$ so that $V\subset U$.
The \emph{multiplicity}\index{multiplicity!of cover} 
(or \emph{order})\index{order!of cover} $\mult \cU$ of a cover $\cU$ is defined as the maximal number $n$ such that there exist $U_1, \dots, U_n \in \cU$ with $U_1 \cap \dots \cap U_n \neq \emptyset$. With this notation understood we have the following definition of topological dimension (see e.g.\ \cite{Munkres}):

\begin{definition}\label{Def-dim}
For a nonempty topological space $X$, its \emph{topological dimension}\index{topological dimension}\index{dimension!topological}\index{$\dim$}\index{covering dimension}\index{dimension!covering}
(or \emph{covering dimension}) $\dim X$ is the minimal integer $n\in\bN_0$
such that for every open cover $\cU$ of $X$, there is an open cover $\cV$ of $X$ which refines $\cU$, and which has the multiplicity $\mult\cV\leq n+1$. If no such $n\in\bN_0$ exists, we say that $\dim X=\infty$. We also set $\dim \emptyset \coloneqq  -1$.
\end{definition}

\begin{remark}\label{Remark: dim def}
Note that the definition of topological dimension $\dim$ is often stated for \emph{finite} open covers $\cU$ (see, for example, \cite[Def.\ 1.6.7]{Engelking} or \cite[Def.\ 1.1.8]{Coornaert}), but as long as a topological space $X$ is Hausdorff and paracompact, this definition is equivalent to ours (see \cite[Prop.\ 3.2.2]{Engelking}).
\end{remark}
We will often be interested in covers of metric spaces. If $\cU$ is a cover of a metric space $(X,d)$, then we can define its \emph{mesh}\index{mesh} as \[\mesh\cU\coloneqq \sup\{\diam U \mid U\in\cU\}.\]
The following definition is taken from \cite[Ch.\ 1.6]{Engelking}.

\begin{definition} \label{Def myu-dim}
Let $(X, d)$ be a nonempty metric space. 
The \emph{metric dimension}\index{metric dimension}\index{$\mu\dim$}\index{dimension!metric} $\mu\dim X$ is the smallest integer $n\in \bN_0$ such that for every $\varepsilon >0$ there is an open cover $\cU$ of $X$ with $\mult \cU\leq n+1$ and $\mesh \cU\leq \varepsilon$.
\end{definition}

\begin{remark} In \cite[p.\ 107]{BuyaloSchroeder}, the dimension $\dim X$ of a metric space $(X,d)$ is defined as $\mu \dim X$. This is somewhat unfortunate since \cite[Ex.\ 1.10.23]{Engelking} shows that $\mu\dim X$ and $\dim X$ do not coincide for general metric spaces. The reason that this will not cause any problems for us in the sequel, when applying results from \cite{BuyaloSchroeder}, is that we will only consider topological dimension of non-empty compact metric spaces, and for these we have the following result.
\end{remark}

\begin{theorem}\label{Thm-dim-compact}
If $(X,d)$ is a non-empty compact metric space, then $\mu\dim X = \dim X$.
\end{theorem}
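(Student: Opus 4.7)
The plan is to establish the two inequalities $\mu\dim X \leq \dim X$ and $\dim X \leq \mu\dim X$ separately, using two classical facts about compact metric spaces: first, that balls of a given radius form an open cover, and second, the Lebesgue number lemma. Compactness (in fact, paracompactness and the Hausdorff property, which compact metric spaces enjoy) also allows us to work with finite open covers in the definition of $\dim$ (cf.\ Remark~\ref{Remark: dim def}).

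For the inequality $\mu\dim X \leq \dim X$, I would assume $\dim X \leq n$ and show that for every $\varepsilon > 0$ one can find an open cover $\cV$ of $X$ with $\mult \cV \leq n+1$ and $\mesh \cV \leq \varepsilon$. To this end, consider the open cover $\cU_\varepsilon := \{B(x, \varepsilon/2) \mid x \in X\}$ of $X$. Since $\dim X \leq n$, there exists an open refinement $\cV$ of $\cU_\varepsilon$ with $\mult \cV \leq n+1$. Each $V \in \cV$ is contained in some $B(x, \varepsilon/2)$ and thus has diameter at most $\varepsilon$, so $\mesh \cV \leq \varepsilon$.

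For the reverse inequality $\dim X \leq \mu\dim X$, I would assume $\mu\dim X \leq n$ and show that every open cover $\cU$ of $X$ admits an open refinement $\cV$ with $\mult \cV \leq n+1$. Here compactness enters crucially via the Lebesgue number lemma: there exists $\lambda > 0$ (a Lebesgue number of $\cU$) such that every subset of $X$ of diameter less than $\lambda$ is contained in some element of $\cU$. By the definition of $\mu\dim$, applied with $\varepsilon := \lambda/2$, there is an open cover $\cV$ of $X$ with $\mesh \cV \leq \lambda/2 < \lambda$ and $\mult \cV \leq n+1$. By the choice of $\lambda$ every $V \in \cV$ is contained in some element of $\cU$, so $\cV$ refines $\cU$.

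Combining both directions yields $\mu\dim X = \dim X$. There is no substantial obstacle in this proof; the only subtle point is the appeal to the Lebesgue number lemma, which is precisely where the compactness hypothesis is used. Without compactness, the two notions need not coincide, as the example cited from \cite[Ex.\ 1.10.23]{Engelking} shows.
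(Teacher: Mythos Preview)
Your argument is correct and is the standard proof of this classical fact. The paper itself does not give a proof at all: it simply cites \cite{Engelking}, Definition~1.6.7 and Theorem~1.6.12, and moves on. So your proposal actually supplies the details that the paper defers to the reference, and the two-inequality argument you outline (balls of radius $\varepsilon/2$ in one direction, the Lebesgue number lemma in the other) is exactly the expected route.
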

\begin{proof}
See \cite{Engelking}, Definition 1.6.7 and Theorem 1.6.12. 
\end{proof}

Thus, when we know we are dealing with a compact metric space $X$, we may use Definition \ref{Def myu-dim} as our definition for $\dim X$. Our next goal is to introduce the notion of linearly controlled metric dimension of a metric space. For this we need the definition of the Lebesgue number of an open cover. 
\begin{definition} \label{def: Lebesgue original}
For an open cover $\cU$ of a metric space $X$,  \emph{the Lebesgue number of $\cU$}\index{Lebesgue number!of a cover} is
the number $L(\cU)\coloneqq \inf_{x\in X}L(\cU,x)$, where $L(\cU,x)\coloneqq \sup_{ U\in\cU} \dist(x, X\setminus U)$.
\end{definition}

Note that $L(\cU)$ may be strictly larger than $\mesh \cU$ (and even infinite, as will be the case for $L(\widehat\cU_{-1})$ in the proof of Theorem \ref{l-dim-leq-dim} below). The following definition of linearly controlled metric dimension is given in Subsection 9.1.4 of \cite{BuyaloSchroeder}. 
\begin{definition}\label{def:ldim}
If $(X,d)$ is a non-empty metric space, then its \emph{linearly controlled metric dimension}\index{linearly controlled metric dimension}\index{dimension!linearly controlled metric}\index{$\ell$-$\dim$} $\ell$-$\dim X$ is the minimal integer $n\in\bN_0$ with the following property: there exists a $\delta \in (0,1)$ such that for every sufficiently small $r>0$ there is an open cover $\cU$ of $X$ with $\mult \cU\leq n+1$, $\mesh \cU\leq r$ and $L(\cU)\geq \delta r$.  If no such $n\in\bN_0$ exists, we say that $\ell$-$\dim X=\infty$.
Since $\ell$-$\dim$ of a metric space depends on the choice of metric, we will often emphasize this choice by writing $\ell$-$\dim (X,d)$ instead of just $\ell$-$\dim X$. 
\end{definition}
In \cite{BuyaloLebedeva}, linearly controlled metric dimension is referred to as \emph{capacity dimension}; \index{dimension!capacity} the term \emph{microscopic Assouad-Nagata dimension}\index{dimension!microscopic Assouad-Nagata} is also sometimes used.

\begin{remark} \label{rem: l-dim and dim}
As a consequence of \cite[Proposition 2.2]{Lang-Sch}, note that for general metric spaces $(X,d)$ we always have the inequality $\ell$-$\dim X \geq  \dim X$, but the converse is not true in general. See \cite[Chapter 11]{BuyaloSchroeder} for some examples of calculation of $\ell$-$\dim$ and $\dim$. 
\end{remark}

\begin{remark}[Coloring definitions of $\dim$ and $\ell$-$\dim$] \label{Rem: coloring def} \index{asymptotic dimension!coloring definition of}\index{topological dimension!coloring definition of}
Our definition of asymptotic dimension (Definition \ref{def: asdim} above) was in terms of \emph{colored}\index{cover!colored} coverings. There are similar \emph{coloring versions} of definitions of $\dim$ and $\ell$-$\dim$, which are often easier to work with than the original definitions, and which we briefly recall here. As explained on page 109 of \cite{BuyaloSchroeder}, a family of subsets $\cU$ of a space $X$ is disjoint if it is pairwise disjoint, i.e., if its multiplicity $\mult\cU=1$. 

For $m\in\bN$, a family $\cU$ is called \emph{$m$-colored} if it is the union of $m$ families $\bigcup_{i=1}^m \cU^i$, and each family $\cU^i$ (of the same color $i$) is disjoint. Clearly, the multiplicity of an $m$-colored family is at most $m$, i.e., each point contained in the union of this family can be contained in at most $m$ different elements of the family. As it turns out (see in \cite[Section 9.1.6]{BuyaloSchroeder}), for a metric space $X$, in the definitions for $\mu \dim$ and $\ell$-$\dim$ above one can replace the requirement that multiplicity of the relevant family be $\leq n +1$ by the requirement that this family be $(n+1)$-colored. Thus, for a nonempty compact metric space $X$, and $n\in\bN_0$, we have $\dim X\leq n$ if and only if for every $\varepsilon >0$, there is an $(n+1)$-colored open cover $\cU=\bigcup_{i=0}^n\cU^i$ of $X$ with  $\mesh \cU\leq \varepsilon$.
Similarly, for a nonempty metric space $X$, and $n\in\bN_0$, we have $\ell$-$\dim X\leq n$ if and only if there exists a $\delta \in (0,1)$ such that for every sufficiently small $r>0$ there is an  $(n+1)$-colored open cover $\cU=\bigcup_{i=0}^n\cU^i$ of $X$ with $\mesh \cU\leq r$ and $L(\cU)\geq \delta r$.
\end{remark}


\section{Asymptotic dimension vs.\ dimension of the boundary}\label{AsdimMainTheoremProofSketch}
The remainder of this chapter is devoted to the proof of Theorem \ref{AsdimMain}.  Note that Theorem \ref{AsdimMain} holds trivially if $X$ is bounded: in this case, $\asdim X = 0$ and $\partial X = \emptyset$, hence $\dim \partial X = -1 = \asdim X-1$. Thus, from now on, \emph{$X$ is an unbounded quasi-cobounded hyperbolic proper geodesic metric space with Gromov boundary $\partial X$ and $d$ denotes a fixed choice of visual metric on the Gromov boundary $\partial X$.} The assumptions on $X$ imply in particular that $\partial X$ is compact, so it is safe to use (the coloring version of) Definition \ref{Def myu-dim}
 of $\dim\partial X$
 (see Remark \ref{Rem: coloring def}).

One of the two inequalities in Theorem \ref{AsdimMain} actually holds in the complete generality of hyperbolic proper geodesic spaces without any (quasi-)coboundedness assumptions, see \cite[Thm. 10.1.2]{BuyaloSchroeder}: 
\begin{theorem}\label{Thm: BS asdim-lower bound}
For every hyperbolic proper geodesic metric space $X$, we have 
\[
\asdim X \geq \dim \partial X+1.
\]
\end{theorem}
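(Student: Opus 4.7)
My plan is to reduce the inequality $\asdim X \geq \dim \partial X + 1$ to the stronger statement $\ell\text{-}\dim(\partial X, d) \leq \asdim X - 1$ for any visual metric $d$ on $\partial X$. Since $\partial X$ is compact metrizable, the general bound $\dim \leq \ell\text{-}\dim$ then yields $\dim \partial X \leq \asdim X - 1$, which is the desired inequality. The core technical input will be the visual-metric dictionary $d(\xi,\eta) \asymp a^{-(\xi|\eta)_o}$ for a fixed basepoint $o \in X$, combined with $\delta$-slimness of ideal triangles.

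I would carry out a \emph{shadow construction} on the boundary. Suppose $\asdim X \leq n$. Given a small $r > 0$, set $R := \log_a(1/r)$ and, using the asymptotic dimension hypothesis at a scale of order $R$ (times a hyperbolicity-dependent constant), pick an $(n+1)$-colored uniformly bounded cover $\mathcal{U} = \bigsqcup_{i=0}^n \mathcal{U}^{(i)}$ of $X$ whose color classes are $CR$-disjoint, with $C = C(\delta)$. For each $U \in \mathcal{U}$ meeting a chosen annular shell $A_R$ at radius $\sim R$, define its shadow $\hat U \subset \partial X$ as a small open thickening of the set $\{\xi \in \partial X \mid \text{some geodesic ray } [o,\xi) \text{ enters } U \cap A_R\}$. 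Routine $\delta$-hyperbolic estimates should yield: (a) the shadows cover $\partial X$, because $X$ is proper geodesic hyperbolic so every boundary point is represented by a geodesic ray from $o$, which must pass through $A_R$ and hence hit some $U$; (b) $\mathrm{diam}_d(\hat U) \lesssim a^{-R}\,\mathrm{diam}(U) \lesssim r$, since any two shadow points have Gromov product $\gtrsim R$ at $o$; (c) the Lebesgue number of the shadow cover is $\gtrsim r$, because $CR$-disjointness of color classes in $X$ translates, via slimness of triangles, into uniform separation of the corresponding shadows in the visual metric.

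The main obstacle is producing the $-1$: the naive shadow argument yields only an $(n+1)$-colored cover of $\partial X$, which would give $\ell\text{-}\dim(\partial X, d) \leq n$ instead of the required $n-1$. One way to reclaim the missing color is to invoke a Hurewicz-type dimension-lowering theorem (in the spirit of Theorem \ref{AbstractHurewicz}) applied to the distance-to-$o$ map $X \to \R_{\geq 0}$, observing that its effective "fibers"---annuli of bounded width---collapse to points at infinity in the visual metric and contribute the missing radial dimension. Alternatively, following the strategy of Buyalo--Schroeder \cite[Ch.\ 10]{BuyaloSchroeder}, one iterates the shadow construction over a logarithmically spaced sequence of annuli $\{A_{R_k}\}$, and then shows that at any fixed boundary scale two of the $n+1$ colors can never contribute simultaneously to the shadow cover, so they can be merged into a single color. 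I expect this color-merging step, together with the verification that the Lebesgue-number estimate survives the merging, to be the bulk of the technical work; once that is in place, the remaining assembly is a routine application of the visual-metric toolbox available in any proper geodesic hyperbolic space.
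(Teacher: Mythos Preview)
The paper does not actually prove this theorem; it is stated with a terminal \qed and attributed directly to \cite[Thm.\ 10.1.2]{BuyaloSchroeder}, so there is no ``paper's own proof'' to compare against. Your sketch is in fact an outline of the Buyalo--Schroeder argument from that reference: reduce to $\ell\text{-}\dim(\partial X)\le \asdim X-1$, push an asymptotic-dimension cover of an annulus to boundary shadows via the visual metric, and then recover the missing $-1$.

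Your sketch is broadly on the right track, but two points deserve comment. First, the Hurewicz-type route you propose for the $-1$ (applying something like Theorem~\ref{AbstractHurewicz} to $x\mapsto d(o,x)$) is not how the argument actually goes and would not directly give what you need: asymptotic Hurewicz bounds $\asdim$ from above, whereas here you are trying to \emph{lower} the number of colors in the boundary cover, which is a different statement. The genuine mechanism in \cite[Ch.\ 10]{BuyaloSchroeder} is your second suggestion: one works on a single sphere (or thin annulus) of radius $\sim R$, not on all of $X$, and observes that for a geodesic ray the trace on that sphere is essentially a point, so one of the $n+1$ colors can be systematically merged into the others at each scale. Second, your Lebesgue-number claim (c) as stated is slightly off: $R$-disjointness of same-color sets in $X$ does not by itself give Lebesgue number $\gtrsim r$ on the boundary; one needs in addition a lower bound on the \emph{diameters} of the pieces one keeps (or equivalently an inradius-type estimate), which is where the uniform bound on the cover and the careful choice of annulus width enter. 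These are exactly the technicalities handled in \cite[\S10.1]{BuyaloSchroeder}, so citing that result, as the paper does, is entirely appropriate.
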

In order to establish Theorem \ref{AsdimMain} it thus remains to show only that
\begin{equation}\label{AsdimMainToShow}
\asdim X \leq \ell\text{-}\dim (\partial X,d)+1 \leq \dim (\partial X)+1.
\end{equation}
The main step in the proof of \eqref{AsdimMainToShow} (and hence Theorem \ref{AsdimMain}) is the following theorem, which generalizes \cite[Thm.\ 12.2.1]{BuyaloSchroeder}.
\begin{theorem} \label{l-dim-leq-dim}
If a metric space $X$ is locally quasi-similar to a compact metric space $Y$ and its linearly controlled metric dimension $\ell$-$\dim X$ is finite, then $\ell$-$\dim X \leq \dim Y$.
\end{theorem}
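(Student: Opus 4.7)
The strategy is to adapt the Buyalo--Schroeder proof of the similar (rather than quasi-similar) case, namely Theorem 12.2.1 of \cite{BuyaloSchroeder}, and follow Lebedeva's corrected version, tracking how the polynomial distortion inherent in quasi-similarity propagates through mesh and Lebesgue number estimates. Set $n := \dim Y$ and $m := \ell\text{-}\dim X < \infty$, and let $\lambda, K, R_0$ denote the local quasi-similarity parameters of $X$. The goal is to exhibit a constant $\delta \in (0,1)$ such that for all sufficiently small $r>0$, $X$ admits an $(n+1)$-colored open cover with mesh $\leq r$ and Lebesgue number $\geq \delta r$.

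I would begin with a local pullback calculation. Given $C \subset X$ with $\diam C \leq 1/R$ and a quasi-similarity $f : C \to Y$ satisfying \eqref{locally-pqs}, a cover $\mathcal V$ of $Y$ with $\mesh \mathcal V \leq \epsilon$ and Lebesgue number $\geq L$ pulls back along $f$ to a cover of $C$ whose mesh is at most $\sqrt[K]{\lambda \epsilon}/R$ (by the lower inequality in \eqref{locally-pqs}) and whose Lebesgue number is at least $L^K/(\lambda^K R)$ (by the upper inequality). Since $Y$ is compact with $\dim Y = n$, for every $\epsilon > 0$ one can produce a finite $(n+1)$-colored open cover of $Y$ of mesh $\leq \epsilon$, and its Lebesgue number is then automatically positive by the Lebesgue number lemma. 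Choosing $\epsilon$ small enough makes the pullback an $(n+1)$-colored cover of $C$ whose mesh and Lebesgue number are both of order $1/R$.

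Such local pullbacks only cover small pieces of $X$, so to globalize I would bring in the finiteness of $\ell\text{-}\dim X$. Fix an $(m+1)$-colored open cover $\mathcal W = \bigcup_{j=0}^{m}\mathcal W^{(j)}$ of $X$ of mesh $\rho$ and Lebesgue number $\geq \delta_0 \rho$, where $\rho \ll r$ is chosen smaller than $r$ by a factor depending on $K$, $\lambda$ and $m+1$. Applying the local pullback construction on each $W \in \mathcal W$ (using $R$ comparable to $1/\rho$) produces an $(n+1)$-colored cover $\mathcal U_W$ of $W$ at scale $\rho$; the naive union of all $\mathcal U_W$ is $(m+1)(n+1)$-colored, so the crux of the argument is a color reduction procedure.

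The color reduction proceeds by induction over the color classes of $\mathcal W$. Starting from the pullback cover on $\mathcal W^{(0)}$ (already $(n+1)$-colored), one absorbs the successive classes $\mathcal W^{(1)}, \dots, \mathcal W^{(m)}$ one at a time: at stage $j$ each newly arriving pullback piece on an element of $\mathcal W^{(j)}$ is merged into an existing color-$i$ set by a controlled swelling, the disjointness within the single color class $\mathcal W^{(j)}$ and the Lebesgue number of $\mathcal W$ guaranteeing that these swellings do not create unwanted overlaps. After $m+1$ iterations one obtains the desired $(n+1)$-colored global cover. The hardest part, and where Lebedeva's corrections must be invoked, is to verify that this iteration still closes up under quasi-similarity distortion: because the pullback relation inflates mesh by a $K$-th root and deflates Lebesgue number by a $K$-th power, all parameter estimates must be tracked under iterated nonlinear compositions, and $\rho$ must be chosen exponentially small in $m+1$ (with exponent a function of $K$ and $\lambda$) so that the cumulative mesh inflation stays below $r$ while the cumulative Lebesgue number remains at least $\delta r$ for a uniform $\delta > 0$ independent of $r$.
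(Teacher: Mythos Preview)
Your proposal is essentially the paper's approach: partition $X$ via an $(m+1)$-colored cover coming from $\ell\text{-}\dim X = m$, pull back $(n+1)$-colored covers of $Y$ via quasi-similarities on each piece, and then iteratively merge across the $m+1$ color classes using the saturation operation of Proposition~\ref{saturated-family}. Your pullback estimates (mesh $\leq \sqrt[K]{\lambda\epsilon}/R$, Lebesgue number $\geq L^K/(\lambda^K R)$) match Lemma~\ref{adjusted-lemma12.2.3}.

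One point of imprecision worth flagging: you write that $\rho$ must be chosen exponentially small in $m+1$, but in the paper's argument the mesh $\tau$ of the initial $\ell$-dim cover $\cV$ is simply ``sufficiently small'' (so that $R = 1/\tau$ activates the quasi-similarity). What actually cascades is a \emph{sequence of $(n+1)$-colored covers} $\widetilde\cU_0,\dots,\widetilde\cU_m$ of $Y$, one per color class of $\cV$, satisfying $\mesh\widetilde\cU_{j+1} \leq \frac{1}{2^K\lambda^{1+K^2}}\min\{L(\widetilde\cU_j)^{K^2},(\mesh\widetilde\cU_j)^{K^2}\}$. This super-exponential shrinkage (a $K^2$-th power at each step) is precisely what is needed for the saturation hypothesis $\mesh\cU_{j+1} \leq \tfrac12 L_X(\widehat\cU_j,\bigcup_{i\leq j}X_i)$ to close up after the quasi-similarity distortion. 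Your last paragraph gestures at this, but the hierarchy lives in the $Y$-covers, not in $\rho$.
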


We will prove Theorem \ref{l-dim-leq-dim} in Section \ref{SecProofldim} below. We now explain how this implies inequalities \eqref{AsdimMainToShow} and hence Theorem \ref{AsdimMain}. 
\begin{remark}\label{ThreeProperties} In the proof of Theorem \ref{AsdimMain}, we will use three properties of $X$:
\begin{enumerate}[(i)]
\item The space $X$ is visual (see Proposition \ref{PropVisual}).
\item $(\partial X, d)$ is locally quasi-self-similar, i.e., locally quasi-similar to itself (see Proposition \ref{LemmaQuasiSelfSim}). 
\item $(\partial X, d)$ is doubling (see Corollary \ref{Doubling1}).
\end{enumerate}
\end{remark}

Equipped with these three properties we can now complete the proof of Theorem \ref{AsdimMain} using the same embedding argument as in \cite[Chapter 12]{BuyaloSchroeder}, which is based on the following theorem:
\begin{theorem}[Buyalo--Schroeder embedding theorem]\label{BuSchEmbedding}
Let $Z$ be a visual Gromov hyperbolic space such that $n \coloneqq  \ell\text{-}\dim \partial Z < \infty$. Then there exists a quasi-isometric embedding $Z \to T_1\times\ldots\times T_{n+1}$, where $T_1, \dots, T_{n+1}$ are (not necessarily locally finite) simplicial trees.
\end{theorem}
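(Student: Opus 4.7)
The plan is to reduce the embedding problem for $Z$ to an embedding problem for its boundary $\partial_s Z$, using the fact that a visual proper geodesic hyperbolic space is essentially recovered (up to quasi-isometry) from its boundary via the hyperbolic cone construction. More precisely, I would first construct the hyperbolic cone $\mathrm{Con}(\partial_s Z)$ over $\partial_s Z$ equipped with a fixed visual metric $d$; this is a proper geodesic Gromov hyperbolic space, and the assumption that $Z$ is visual ensures that $Z$ is quasi-isometric to $\mathrm{Con}(\partial_s Z)$ (rather than merely quasi-isometrically embedded in it). Next, the construction $W \mapsto \mathrm{Con}(W)$ behaves well under products: the hyperbolic cone over a finite product of metric spaces (with $\max$-metric) quasi-isometrically embeds into the product of the individual hyperbolic cones. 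Crucially, when $(T, d_T)$ is a simplicial tree, its Gromov boundary equipped with a visual metric is an ultrametric space, and conversely the hyperbolic cone over any complete ultrametric space is quasi-isometric to a simplicial tree. Thus it suffices to produce a quasi-symmetric embedding $\partial_s Z \hookrightarrow U_1 \times \cdots \times U_{n+1}$ into a product of $(n+1)$ ultrametric spaces.

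The key construction is to turn the colored-cover definition of $\ell$-$\dim \partial_s Z \leq n$ into $n+1$ ultrametrics $d_0, \dots, d_n$ on $\partial_s Z$. Choose a scaling factor $\lambda > 1$ (to be fixed later, depending on the constants $\delta$ and the mesh-to-Lebesgue ratio) and, for each $k \in \mathbb{Z}$, select an open cover $\mathcal{U}_k = \bigsqcup_{i=0}^n \mathcal{U}_k^{(i)}$ of $\partial_s Z$ with $\mathrm{mesh}(\mathcal{U}_k) \leq \lambda^{-k}$, $L(\mathcal{U}_k) \geq \delta \lambda^{-k}$, and such that each color class $\mathcal{U}_k^{(i)}$ is disjoint. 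For each fixed color $i$, declare two points $\xi, \eta \in \partial_s Z$ to be $k$-equivalent under $\sim_k^{(i)}$ if they lie in a common element of $\mathcal{U}_k^{(i)}$, and define
\[
d_i(\xi, \eta) := \inf\{\lambda^{-k} \mid \xi \sim_k^{(i)} \eta\}.
\]
After a standard monotonization argument (replacing each $\mathcal{U}_k^{(i)}$ by the union of all $\mathcal{U}_j^{(i)}$ for $j \leq k$ that refine appropriately), the resulting $d_i$ become genuine ultrametrics. The product map $\Phi : \partial_s Z \to \prod_{i=0}^n (\partial_s Z, d_i)$, $\xi \mapsto (\xi, \dots, \xi)$, is then bi-Lipschitz onto its image when the product is given the $\max$-metric: the upper bound $\max_i d_i(\xi, \eta) \leq C_1 d(\xi, \eta)$ follows from the mesh estimate, while the lower bound $\max_i d_i(\xi, \eta) \geq C_2 d(\xi, \eta)$ uses the Lebesgue-number estimate combined with the pigeonhole principle over the $n+1$ colors — at scale $k$ where $d(\xi, \eta) \approx \delta \lambda^{-k}$, any ball of radius $L(\mathcal{U}_k)$ around $\xi$ lies in some cover element, forcing $\xi$ and $\eta$ to be $k$-equivalent in at least one color.

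Each ultrametric space $(\partial_s Z, d_i)$ embeds isometrically into the (end) boundary of a rooted simplicial tree $T_i$: take the vertex set to be the set of all $\sim_k^{(i)}$-equivalence classes across all $k$, with edges between consecutive scales and edge lengths geometrically scaled by $\lambda$. Pulling the composite embedding $\partial_s Z \hookrightarrow \prod_i \partial T_i$ back through the hyperbolic cone construction and composing with the quasi-isometry $Z \to \mathrm{Con}(\partial_s Z)$ yields the desired quasi-isometric embedding $Z \to T_1 \times \cdots \times T_{n+1}$.

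The main obstacle is the coordinated construction of the covers $\mathcal{U}_k$ so that the resulting relations $\sim_k^{(i)}$ are actually nested (hence genuinely define ultrametrics) and so that the doubling or quasi-self-similarity properties propagate through the hyperbolic cone to give an embedding into \emph{locally finite} trees — without local finiteness, the target would not be a product of genuine simplicial trees with the natural path metric. This requires delicate refinement: at each scale one must replace cover elements by slightly enlarged sets that still respect disjointness within each color while guaranteeing that every element at scale $k+1$ sits inside a unique element at scale $k$ of the same color. The compatibility of the mesh/Lebesgue constants through this refinement, together with keeping the total number of cover elements at each scale bounded by the doubling constant of $(\partial_s Z, d)$, is the technical heart of the argument and is what forces the appearance of the precise rank $n+1$.
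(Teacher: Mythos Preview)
The paper does not prove this theorem; it is quoted from \cite{Buyalo2006} and \cite{BuyaloSchroeder} as a black box, so there is no ``paper's own proof'' to compare against. Your outline is broadly in the spirit of the original Buyalo--Schroeder argument (colored covers encoding $\ell$-$\dim$, passage to trees via an ultrametric-type structure, visuality to recover $Z$ from its boundary), but two of your steps do not go through as written.

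First, your bi-Lipschitz claim on the boundary is miscalculated. With your definition $d_i(\xi,\eta)=\inf\{\lambda^{-k}:\xi\sim_k^{(i)}\eta\}$ and nested covers, the mesh bound gives $d_i(\xi,\eta)\geq c\,d(\xi,\eta)$ for \emph{every} colour $i$, while the Lebesgue bound gives only $\min_i d_i(\xi,\eta)\leq C\,d(\xi,\eta)$: at a given scale the Lebesgue number places $\xi,\eta$ together in \emph{some} colour, not all. Hence $d\asymp\min_i d_i$, not $\max_i d_i$, and the diagonal map into the product with the $\max$-metric is not bi-Lipschitz --- indeed $\max_i d_i(\xi,\eta)$ can equal the diameter even when $d(\xi,\eta)$ is arbitrarily small, since two nearby points may never share a cover element of a fixed colour. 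You have the roles of mesh and Lebesgue interchanged in your sketch. The actual proof in \cite{BuyaloSchroeder} avoids this issue by working not with a boundary embedding but with a \emph{hyperbolic approximation}: a graph built directly from the covers $\mathcal U_k$, quasi-isometric to $Z$, which is then mapped into the product of trees level by level.

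Second, your final paragraph misidentifies the obstacle. Local finiteness of the $T_i$ is neither achieved nor needed: the paper remarks immediately after the theorem statement that the construction in \cite{BuyaloSchroeder} yields typically \emph{non}-locally-finite simplicial trees, and no doubling hypothesis enters at this point.
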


\begin{remark} See Definition \ref{Def1Tree} and Remark \ref{MetricTree} for the notions of metric trees and simplicial trees.
The conclusion of \cite[Theorem 12.1.1]{BuyaloSchroeder} is that a certain metric space admits a quasi-isometric embedding into a product of metric trees. However, as remarked on \cite[p.147]{BuyaloSchroeder}, the proof actually constructs an embedding into a product of (typically non-locally finite) \emph{simplicial} trees. 
\end{remark}
We now complete the proof of Theorem \ref{AsdimMain}.
\begin{proof}[Proof of Theorem \ref{AsdimMain} modulo Theorem \ref{l-dim-leq-dim}]
The fact that $(\partial X,d)$ is doubling implies by \cite[Lemma 12.2.2]{BuyaloSchroeder} that
 $n\coloneqq \ell\text{-}\dim (\partial X, d)< \infty$. Since $X$ is visual we can apply Theorem \ref{BuSchEmbedding} to $X$. We deduce that there exists a quasi-isometric embedding
$F:X\to T_1\times\ldots\times T_{n+1}$ of our space $X$ into a product of $(n+1)$ simplicial trees. By \cite[Prop.\ 10.2.1]{BuyaloSchroeder} we have $\asdim T\leq 1$ for every metric tree $T$, and hence by the product theorem for asymptotic dimension (see \cite[Thm.\ 32]{BellDran1}) we have
\[
\asdim (T_1 \times \dots \times T_{n+1}) \leq n+1.
\]
Now recall from Lemma \ref{AsdimInvariant}.(iii) that asdim is monotone under coarse embeddings. We thus obtain (compare \cite[Cor.\ 12.1.11]{BuyaloSchroeder}) that
\[
\asdim X \leq \asdim (T_1\times\ldots\times T_{n+1}) \leq n+1 =  \ell\text{-}\dim (\partial X,d) + 1.
\]
Finally, since $(\partial X,d)$ is locally quasi-self-similar and has finite $\ell$-$\dim$, we apply Theorem \ref{l-dim-leq-dim} to get $\ell\text{-}\dim(\partial X,d) \leq \dim \partial X$, so
\[
\asdim X \leq  \ell\text{-}\dim (\partial X,d) + 1 \leq \dim \partial X +1,
\]
which is the desired inequality \eqref{AsdimMainToShow}.
\end{proof}

\begin{remark}
By Remark \ref{rem: l-dim and dim} the inequality
$\ell$-$\dim Z\geq \dim Z$ holds for any metric space $Z$. For $X$ as in Theorem \ref{AsdimMain} and any choice of visual metric $d$ on $\partial X$ we thus have both $\ell\text{-}\dim(\partial X,d) \geq \dim \partial X$ and $\ell\text{-}\dim(\partial X ,d)\leq \dim \partial X$ by Theorem \ref{l-dim-leq-dim}. We thus get the equality $\ell\text{-}\dim(\partial X,d) = \dim \partial X$ without referring to inequalities involving $\asdim X$.
\end{remark}

\section{The proof of Theorem 7.14}\label{SecProofldim}

In the previous section we have reduced the proof of our main theorem (Theorem \ref{AsdimMain}) to the proof of Theorem \ref{l-dim-leq-dim}, which we give in this section. Before we start with the actual proof we collect various preliminary notions and results which will be used in the proof.

\subsection{Restrictions of coverings} In the proof of Theorem \ref{l-dim-leq-dim} we will have to restrict various open coverings to subspaces. We thus introduce some terminology to deal with such restrictions.

From now on let $(X,d)$ be a metric space and let $A \subset X$ be a subset. If $\cU$ is a family of subsets of $X$, then we define the \emph{restriction of $\cU$ to $A$} by \[
\cU|_A=\{U\cap A\ | \ U\in\cU\}.
\]
We say that a family $\cU$ of (open) subsets of $X$ is an \emph{(open) covering family of $A$ in $X$}\index{covering family} if $\bigcup\cU\supseteq A$. This is equivalent to $\cU|_A$ being an (open) cover of $A$. We introduce the following notion of the Lebesgue number for open covering families:
\begin{definition} If $A$ is a subset of a metric space $X$ and $\cU$ is an open covering family of $A$ in $X$, then \emph{the Lebesgue number of $(\cU, A, X)$}\index{Lebesgue number!the Lebesgue number of a covering family} is defined as 
\begin{align}
L_X(\cU, A)\coloneqq \inf_{x\in A}L_X(\cU,x), \ \text{ where } L_X(\cU,x)\coloneqq \sup_{ U\in\cU} \dist(x, X\setminus U).
\end{align}
\end{definition}
Note that for $A=X$ we have
\[
 L_X(\cU, X) = \inf_{x\in X}L_X(\cU,x) = L(\cU),
\]
hence in the case $A=X$ we recover the usual definition of the Lebesgue number. The following inequalities are immediate from the definitions:
\begin{lemma}\label{lemma: Lebesgue-restriction-of-family}\label{Lebesgue-subspace}
If $\cU$ is an open cover of a metric space $X$ and $A\subseteq B\subseteq X$, then
\[
L(\cU) = L_X(\cU, X)\leq L_X (\cU,A) \leq L_B(\cU|_B, A). 
\]
\end{lemma}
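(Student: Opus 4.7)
The plan is to verify the three relations in order, using only the definitions of the Lebesgue numbers and the elementary fact that infimum over a smaller set is at least as large, while distance to a smaller target set is at least as large.

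First, the equality $L(\cU) = L_X(\cU, X)$ is immediate upon specializing the definition of $L_X(\cU,A)$ to $A = X$: one recovers exactly Definition~\ref{def: Lebesgue original}. For the first inequality, I would simply observe that $L_X(\cU,A) = \inf_{x\in A} L_X(\cU,x)$ and $L_X(\cU,X) = \inf_{x\in X}L_X(\cU,x)$, and since $A\subseteq X$ the infimum on the right is taken over a larger index set and is therefore no larger.

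The second inequality is the only non-trivial step. The key observation is that for every $U\in\cU$, the complement of $U\cap B$ \emph{inside $B$} equals $B\cap(X\setminus U)$, which is a subset of $X\setminus U$. Since the distance from a point to a subset is monotone decreasing in the subset, we get for every $x\in A$,
\[
\dist(x,X\setminus U)\leq \dist\bigl(x, B\cap(X\setminus U)\bigr) = \dist\bigl(x, B\setminus (U\cap B)\bigr).
\]
Taking the supremum over $U\in\cU$ on both sides yields $L_X(\cU,x)\leq L_B(\cU|_B,x)$ for every $x\in A$, and then taking the infimum over $x\in A$ gives $L_X(\cU,A)\leq L_B(\cU|_B,A)$.

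There is no real obstacle here; the statement is a direct bookkeeping consequence of the definitions. The only point that requires a line of care is the identification $B\setminus(U\cap B) = B\cap(X\setminus U)$ which underlies the second inequality.
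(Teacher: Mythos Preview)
Your proof is correct. The paper does not actually provide a proof of this lemma: it states the inequalities as ``immediate from the definitions'' and places the \qedsymbol\ directly in the statement. Your argument is exactly the unpacking of those definitions that the paper omits, so there is nothing to compare.
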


We emphasize that, in the situation of Lemma \ref{Lebesgue-subspace}, the Lebesgue number of the covering family $\cU|_B$ of $A$ in $B$ may be larger than the Lebesgue number of the original family.  In the case $A=B$ the last inequality of Lemma \ref{lemma: Lebesgue-restriction-of-family} specializes to \[L_X(\cU, A) \leq  L_A(\cU|_A, A) = L(\cU|_A),\] and it is important to note that even in this special case the inequality can be strict.  Similarly, if $\cU$ is an open cover of a metric space $X$ and $A\subset X$ is a subset, then
\begin{equation}\label{MeshRestriction}
\mesh \cU|_A \leq \mesh \cU,
\end{equation}
but equality does not hold in general. Thus, when restricting an open cover of $X$ to a subspace, the Lebesgue number may increase and the mesh may decrease. The gap in the original proof of \cite[Thm.\ 1.1]{BuyaloLebedeva} (\cite[Thm.\ 12.2.1]{BuyaloSchroeder}) is caused by the fact that the authors do not take these effects into account carefully enough. This illustrates the importance of keeping the ambient space in the Lebesgue number notation and mesh notation, which we may omit only when the ambient space is the obvious one.

\subsection{Saturation of covering families} Let $(X,d)$ be a metric space. Given a subset $W\subset X$ and $r>0$, we will denote by $N_r(W)$ the open $r$-neighborhood of the set $W$, and we set $N_{-r}(W)\coloneqq X\setminus \overline{N}_r(X\setminus W)$.
\begin{definition} Let $U \subset X$ and let $\cU, \cV$ be families of subsets of $X$.
\begin{enumerate}[(i)]
\item The \emph{saturation}\index{saturation!of a set by a family} $U\ast\cV$ of the set $U$ by the family $\cV$ is the union of $U$ with all members $V\in\cV$ with $U\cap V\neq \emptyset$.
\item The \emph{saturation}\index{saturation!of a family by a family} of the family $\cU$ by the family $\cV$ is the family 
\[
\cU\ast\cV=\{U\ast\cV\ | \ U\in\cU\}.
\]
\end{enumerate} 
\end{definition}
The following result is \cite[Lemma 3.1]{BuyaloLebedeva} (see also \cite[Proposition 9.6.1]{BuyaloSchroeder}):
\begin{proposition}\label{saturated-family}
Suppose that $X$ is a metric space and let $A, B\subset X$. Let $\cU=\bigcup_{c\in C} \cU^c$ and  $\cV=\bigcup_{c\in C} \cV^c$ be covering families of $A$ and $B$, respectively,  which are both open in $X$ and $m$-colored with $m=|C|\geq 1$. If $\mesh \cV\leq \frac{L_X(\cU, A)}{2}$, then the family $\cW=\bigcup_{c\in C} \cW^c$ given by
\begin{eqnarray*}
\cW^c\coloneqq  \left( N_{\frac{-L_X(\cU,A)}{2}}(\cU^c)\ast \cV^c\right) \cup \left\{V\in \cV^c \ | \  N_{\frac{-L_X(\cU,A)}{2}}(U)\cap V=\emptyset\  \!\text{ for all } U\in \cU^c\right\}
\end{eqnarray*}
is an open $m$-colored covering family of $A\cup B$ in $X$, with $L_X(\cW,A\cup B)$ $\geq$  $\min\{\frac{L_X(\cU,A)}{2}, L_X(\cV,B)\}$ and $\mesh\cW\leq \max \{\mesh\cU, \mesh\cV\}$.
\end{proposition}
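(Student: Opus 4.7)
The plan is to verify each claim about $\cW$ in turn. Throughout I will abbreviate $L := L_X(\cU, A)$, so by hypothesis $\mesh\cV \leq L/2$. Openness is immediate: each $N_{-L/2}(U) = X \setminus \overline{N}_{L/2}(X\setminus U)$ is open by definition, and elements of $\cV$ are open in $X$, so every $W \in \cW$ is a union of open sets. Coverage of $A \cup B$ splits into two cases. For $x \in A$, pick $c$ and $U \in \cU^c$ with $\dist(x, X\setminus U) \geq L$; then $x \in N_{-L/2}(U) \subset N_{-L/2}(U)\ast \cV^c \in \cW^c$. For $x \in B$, choose any $V \in \cV$ containing $x$, say $V \in \cV^c$; either $V$ meets some $N_{-L/2}(U)$ with $U \in \cU^c$, whence $V \subset N_{-L/2}(U)\ast \cV^c$, or $V$ qualifies as a Type~II member of $\cW^c$.

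The heart of the argument is pairwise disjointness of each $\cW^c$. The key geometric inequality is: if $U_1, U_2 \in \cU^c$ are distinct (hence $U_1 \cap U_2 = \emptyset$ by the coloring of $\cU$), then for any $x \in N_{-L/2}(U_1)$ and $y \in U_2$ one has $y \in X \setminus U_1$, so $d(x,y) > L/2$; hence any $V \in \cV^c$ with $\diam V \leq \mesh \cV \leq L/2$ can intersect $N_{-L/2}(U)$ for at most one $U \in \cU^c$. From this I check the three pairwise cases. \emph{Type I vs.\ Type I:} for distinct $U_1, U_2 \in \cU^c$, the deflated sets $N_{-L/2}(U_1)$ and $N_{-L/2}(U_2)$ are disjoint (they lie in disjoint $U_i$), and no $V \in \cV^c$ can appear in both saturations by the key inequality. \emph{Type I vs.\ Type II:} a Type~II member $V' \in \cV^c$ misses every $N_{-L/2}(U)$ by definition, and is disjoint from every other $V \in \cV^c$ by the coloring of $\cV$. \emph{Type II vs.\ Type II:} immediate from the coloring of $\cV$.

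For the Lebesgue bound, let $x \in A \cup B$. If $x \in A$, choose $c$ and $U$ as above; since $X \setminus N_{-L/2}(U) = \overline{N}_{L/2}(X \setminus U)$, the triangle inequality gives $\dist(x, X \setminus N_{-L/2}(U)) \geq \dist(x, X \setminus U) - L/2 \geq L/2$, and because the enclosing element $W := N_{-L/2}(U)\ast \cV^c$ of $\cW^c$ contains $N_{-L/2}(U)$, we get $\dist(x, X\setminus W) \geq L/2$. If $x \in B$, take $V \in \cV^c$ with $\dist(x, X\setminus V) \geq L_X(\cV, B)$; the element $W \in \cW^c$ containing $V$ (whether by absorption into some Type~I set or as Type~II) satisfies $W \supset V$, so $\dist(x, X\setminus W) \geq L_X(\cV, B)$. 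The mesh bound for Type~I is obtained by noting that any two points of $N_{-L/2}(U) \cup \bigcup\{V \in \cV^c : V \cap N_{-L/2}(U) \neq \emptyset\}$ can be connected by a short chain passing through at most one member of $\cU$ and two of $\cV$, which, in combination with the hypothesis $\mesh \cV \leq L/2$, yields the claimed bound; Type~II sets have diameter at most $\mesh \cV$.

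The step I expect to require the most care is the mesh estimate: a naive triangle-inequality argument on a Type~I saturation produces a bound of the form $\mesh \cU + 2\mesh \cV$, and extracting the sharper $\max\{\mesh\cU, \mesh\cV\}$ claimed in the statement will require invoking the relation between $L$ and $\mesh\cU$ at the right moment (so that the two mesh quantities are controlled by one another via the assumption $\mesh\cV \leq L/2$). The three-case disjointness analysis in $\cW^c$ is the other delicate point, but once the single inequality $d(N_{-L/2}(U_1), U_2) > L/2$ is isolated, each case reduces to a one-line verification.
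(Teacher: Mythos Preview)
Your proof is correct, and the paper itself does not give a proof of this proposition: it is cited from \cite[Lemma 3.1]{BuyaloLebedeva} (see also \cite[Proposition 9.6.1]{BuyaloSchroeder}) with a bare \qed. So there is no ``paper's own proof'' to compare against.

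Your uncertainty about the mesh bound is unwarranted, and the chain argument you sketch is unnecessary. The same inequality you isolate for disjointness --- that $d(N_{-L/2}(U), X\setminus U) > L/2$ --- implies that any $V \in \cV^c$ meeting $N_{-L/2}(U)$ is \emph{entirely contained in} $U$: if $p \in V \cap N_{-L/2}(U)$ and $q \in V$, then $d(p,q) \leq \diam V \leq L/2 < \dist(p, X\setminus U)$, so $q \in U$. Hence each Type~I set $W_U = N_{-L/2}(U)\ast\cV^c$ satisfies $W_U \subset U$, giving $\diam W_U \leq \diam U \leq \mesh\cU$ directly. Type~II sets have diameter at most $\mesh\cV$, so $\mesh\cW \leq \max\{\mesh\cU, \mesh\cV\}$ with no further work. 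No relation between $L$ and $\mesh\cU$ is needed.
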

In the sequel we will denote the family $\cW$ constructed in Proposition \ref{saturated-family} by $\cW=\cU\circledast\cV$.

\subsection{The mesh and the Lebesgue number under generalized quasi-homotheties} Recall the Definition \ref{quasi-homothety and generalization}, which introduces quasi-homotheties and their generalized version.
An important role in the original theorem of Buyalo and Lebedeva is played by \cite[Lemma 3.4]{BuyaloLebedeva} (see \cite[Lemma 12.2.3]{BuyaloSchroeder}) which describes the behavior of the mesh and the Lebesgue number under quasi-homotheties. This lemma also works for generalized quasi-homotheties, in the following form:

\begin{lemma}\label{adjusted-lemma12.2.3}
Let $h:A\to B$ be a generalized quasi-homothety between metric spaces, i.e., a map such that, for some $\lambda\geq 1$, $K\geq 1$ and for some $R>0$, we have, for each $a_1,a_2 \in A$,
\begin{equation}\label{lambda-quasi-homothetic}
\displaystyle \frac{1}{\lambda} R^K \cdot (d_A(a_1,a_2))^K \ \leq \ d_B(h(a_1),h(a_2)) \ \leq \ \lambda \sqrt[K]{R}\cdot \sqrt[K]{d_A(a_1,a_2)}.
\end{equation}
Let $C\subseteq A$, let $\widetilde\cU$ be an open covering family of $h(C)$ in $h(A)$, and let $\cU=h^{-1}(\widetilde\cU)=\{h^{-1}(\widetilde U)\ | \ \widetilde U\in \widetilde\cU\}$.
Then $\cU$ is an open covering family of $C$ in $A$, $h:A\to h(A)$ is bijective, and the following is true:
\begin{enumerate}
\item[(i)] $\frac{1}{\lambda}R^K\cdot (\mesh\cU)^K \ \leq \  \mesh\widetilde\cU \ \leq \ \lambda \sqrt[K]{R}\cdot \sqrt[K]{\mesh\cU}$, \ and
\item[(ii)] $\frac{1}{\lambda}R^K\cdot (L_A(\cU,C))^K\ \leq\  L_{h(A)}(\widetilde\cU, h(C)) \ \leq\  \lambda
\sqrt[K]{R}\cdot \sqrt[K]{L_A(\cU, C)}$.
\end{enumerate}
\end{lemma}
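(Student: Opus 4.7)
The proof is a relatively routine extension of \cite[Lemma 12.2.3]{BuyaloSchroeder} from $\lambda$-homotheties to $(\lambda, K, R)$-quasi-homotheties; the main task is to track how suprema and infima interact with the powers $K$ and $1/K$. My plan is to first establish the structural claims ($h\colon A\to h(A)$ is bijective and $\cU$ is an open covering family of $C$), then derive the mesh and Lebesgue inequalities by a uniform pointwise application of \eqref{lambda-quasi-homothetic}.

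First I would observe that the lower bound in \eqref{lambda-quasi-homothetic} forces $h$ to be injective (if $a_1\neq a_2$ then $d_A(a_1,a_2)>0$, and since $\lambda,K,R>0$ the left-hand side is strictly positive, hence $h(a_1)\neq h(a_2)$). Thus $h\colon A\to h(A)$ is bijective. The upper bound shows $h$ is continuous (in fact $(1/K)$-Hölder), so for every open $\widetilde U \subseteq h(A)$ the preimage $h^{-1}(\widetilde U)$ is open in $A$. Since $\widetilde\cU$ covers $h(C)$ in $h(A)$, applying $h^{-1}$ shows $\cU = h^{-1}(\widetilde\cU)$ covers $C$ in $A$. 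Moreover, by bijectivity $h(h^{-1}(\widetilde U)) = \widetilde U$ and $h(A\setminus U) = h(A)\setminus \widetilde U$ for every corresponding pair $U\leftrightarrow \widetilde U$.

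For (i), I would fix such a corresponding pair and apply \eqref{lambda-quasi-homothetic} to arbitrary $a_1,a_2\in U$ (so $h(a_1),h(a_2)\in\widetilde U$). The lower bound gives $\frac{1}{\lambda}R^K d_A(a_1,a_2)^K \leq d_B(h(a_1),h(a_2)) \leq \diam\widetilde U$, and the upper bound gives $d_B(h(a_1),h(a_2)) \leq \lambda R^{1/K} d_A(a_1,a_2)^{1/K}$. Taking the supremum over $a_1,a_2\in U$ (and using that $t\mapsto t^K$ and $t\mapsto t^{1/K}$ are nondecreasing on $[0,\infty)$, so they commute with sup) yields
\[
\tfrac{1}{\lambda}R^K(\diam U)^K \leq \diam\widetilde U \leq \lambda R^{1/K}(\diam U)^{1/K}.
\]
A second sup over $U\in\cU$ then gives (i).

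For (ii), I would again use the bijection $U\leftrightarrow\widetilde U$ and the identity $h(A\setminus U) = h(A)\setminus\widetilde U$. Fix $c\in C$ and $U\in\cU$. Applying \eqref{lambda-quasi-homothetic} with $a_1=c$ and $a_2\in A\setminus U$ ranging, and taking the infimum (which commutes with the monotone maps $t\mapsto t^K$ and $t\mapsto t^{1/K}$), gives
\[
\tfrac{1}{\lambda}R^K \dist_A(c,A\setminus U)^K \leq \dist_{h(A)}(h(c), h(A)\setminus\widetilde U) \leq \lambda R^{1/K}\dist_A(c,A\setminus U)^{1/K}.
\]
Next take the supremum over $U\in\cU$ (equivalently $\widetilde U\in\widetilde\cU$) to obtain the corresponding bounds between $L_A(\cU,c)^K$ (resp.\ $L_A(\cU,c)^{1/K}$) and $L_{h(A)}(\widetilde\cU,h(c))$. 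Finally, taking the infimum over $c\in C$ (again monotone) yields (ii). No step requires more than careful bookkeeping, so I do not anticipate a genuine obstacle; the only point that needs attention is verifying that $h(A\setminus U) = h(A)\setminus\widetilde U$, which follows from bijectivity of $h\colon A\to h(A)$ together with $\widetilde U\subseteq h(A)$.
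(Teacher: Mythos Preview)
Your proposal is correct and follows essentially the same approach as the paper's proof: both arguments use the bijectivity of $h\colon A\to h(A)$ to identify $\widetilde U$ with $h(U)$ and $h(A)\setminus\widetilde U$ with $h(A\setminus U)$, then apply \eqref{lambda-quasi-homothetic} pointwise and pass to the relevant suprema and infima using monotonicity of $t\mapsto t^K$ and $t\mapsto t^{1/K}$. Your version is in fact slightly more explicit than the paper's in deriving injectivity of $h$ from the lower bound in \eqref{lambda-quasi-homothetic} and in isolating the identity $h(A\setminus U)=h(A)\setminus\widetilde U$, both of which the paper uses without comment.
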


\begin{proof}
First note that $h$ is continuous, so $\cU$ is a family of open sets in $A$, and there is a bijective correspondence between elements of $\widetilde \cU$ and $\cU$: each $\widetilde U$ from $\widetilde\cU$ corresponds to $U=h^{-1}(\widetilde U)$ from $\cU$, and $h(U)=\widetilde U$, that is, $h(\cU)=\widetilde\cU$. \\
For (i): 
\begin{eqnarray*}\displaystyle \mesh \widetilde\cU & = & \sup_{\widetilde U \in \widetilde\cU} \diam \widetilde U=\sup_{\widetilde U \in \widetilde\cU}\  \sup_{b_1, b_2 \in \widetilde U}d_B(b_1,b_2)
 = \sup_{h( U) \in h(\cU)}\  \sup_{b_1, b_2 \in h( U)}d_B(b_1,b_2)\\
& = & \sup_{U \in \cU}\  \sup_{a_1, a_2 \in  U}d_B(h(a_1),h(a_2)) \overset{\eqref{lambda-quasi-homothetic}}\leq \sup_{U \in \cU}\  \sup_{a_1, a_2 \in  U}\lambda \sqrt[K]{R}\cdot \sqrt[K]{d_A(a_1,a_2)} \\
& = & \lambda \sqrt[K]{R}\cdot \sqrt[K]{\sup_{U\in \cU}\diam U} = \lambda \sqrt[K]{R}\cdot \sqrt[K]{\mesh\cU}.
\end{eqnarray*}
The other inequality for $\mesh$ is proven analogously.\\
For (ii):  
\begin{eqnarray*}\displaystyle
L_{h(A)}(\widetilde  \cU, h(C)) & = & \inf_{b\in h(C)} L_{h(A)}(\widetilde\cU, b)=\inf_{b\in h(C)}\  \sup_{\widetilde U\in \widetilde\cU}\ \dist_B(b,h(A)\setminus\widetilde U)\\
& = & \inf_{h(a)\in h(C)}\  \sup_{h( U)\in h(\cU)}\ \dist_B(h(a),h(A)\setminus h(U))\\
& = & \inf_{a\in C}\  \sup_{U\in \cU}\ \dist_B(h(a),h(A\setminus U))\\
&=&\inf_{a\in C}\  \sup_{U\in \cU} \ \inf_{x\in A\setminus U} d_B(h(a), h(x))\\
& \overset{\eqref{lambda-quasi-homothetic}}\leq & \inf_{a\in C}\  \sup_{U\in \cU} \ \inf_{x\in A\setminus U} \lambda \sqrt[K]{R}\cdot \sqrt[K]{d_A(a,x)}\\
&=& \lambda \sqrt[K]{R} \cdot \inf_{a\in C}\  \sup_{U\in \cU} \sqrt[K]{\dist_A (a, A\setminus U)}\\
& = & \lambda \sqrt[K]{R} \cdot \sqrt[K]{\inf_{a\in C} L_A(\cU, a)}= \lambda \sqrt[K]{R}\cdot \sqrt[K]{L_A(\cU, C)}.
\end{eqnarray*}
The other inequality for the Lebesgue number is proven analogously.
\end{proof}

\subsection[Proof of Theorem 7.14]{Proof of Theorem \ref{l-dim-leq-dim}
}

We now turn to the proof of Theorem \ref{l-dim-leq-dim}. This will also conclude the proof of Theorem \ref{AsdimMain} and thereby conclude this chapter. Our proof is based on the corrected proof of \cite[Thm.\ 1.1]{BuyaloLebedeva} (which is \cite[Thm.\ 12.2.1]{BuyaloSchroeder}), using a list of corrections provided to us by Nina Lebedeva in private correspondence. The authors wish to wholeheartedly thank Nina Lebedeva for sharing these corrections with them. 

\begin{proof}[Proof of Theorem \ref{l-dim-leq-dim}]
Let $N \coloneqq  \ell$-$\dim X < \infty$. If $\dim Y=\infty$ we are done, so let us assume that  $n \coloneqq  \dim Y < \infty$. We have to show that $N \leq n$.

From $\ell$-$\dim X=N$, we know that there is a constant $\delta\in (0,1)$ such that for every sufficiently small $\tau>0$ there exists an $(N+1)$-colored open cover $\cV=\bigcup_{j=0}^N\cV^j$ of $X$ with $\mesh \cV\leq \tau$ and $L(\cV)\geq \delta\tau$. 

Since $X$ is locally quasi-similar to $Y$, there exist constants $\lambda\geq 1$ and $K\geq 1$ such that for every sufficiently large $R>1$ and every $V\subset X$ with $\diam V\leq \frac{1}{R}$, there is a generalized quasi-homothety from $V$ to $Y$, i.e., a map $h_V:V\to Y$ with 
\begin{equation}\label{lambda-quasi-power-hv}
\displaystyle \frac{1}{\lambda}R^K (d_X(v_1,v_2))^K\ \leq \ d_Y(h_V(v_1),h_V(v_2)) \ \leq \ \lambda \sqrt[K]{R}\sqrt[K]{d_X(v_1,v_2)}, \quad \forall v_1,v_2 \in V.
\end{equation}
We would like to build such a generalized quasi-homothety $h_V$ for every $V\in \cV$. Since $\mesh\cV\leq \tau$, let us require that $\tau=\frac{1}{R}$ (thus for $\tau$ sufficiently small, $R$ is sufficiently large).
Now for every $V \in \cV$, we fix a choice of such a map $h_V:V\to Y$ once and for all.
 
Using that $Y$ is compact and $\dim Y=n$, we can choose, for each $j\in\{0,\ldots , N\}$, a finite $(n+1)$-colored open cover $\widetilde\cU_j$ of $Y$, say $\displaystyle\widetilde\cU_j =\bigcup_{c=0}^n \widetilde\cU^c_j$, such that the following hold:
\begin{enumerate}
\item[(i)] $\displaystyle \mesh \widetilde\cU_0\leq \frac{\delta^K}{2^K\cdot\lambda}$,\ \  and
\item[(ii)] $\displaystyle \mesh \widetilde\cU_{j+1}\leq \frac{1}{2^K\cdot\lambda^{1+K^2}}\cdot \min \left\{\left(L(\widetilde\cU_j)\right)^{K^2}, \left(\mesh \widetilde\cU_j\right)^{K^2}\right\}$,  \ for every $j=0,\ldots , N-1$.
\end{enumerate}
We claim that (i) and (ii) imply that
\begin{equation}\label{eq: mesh tilda}
\displaystyle \mesh\widetilde\cU_j\leq \frac{\delta^K}{2^K\cdot\lambda}, \qquad \forall j=0,\ldots , N.
\end{equation}
Indeed, for $j=0$ this is exactly the statement of (i). Now let $j\geq 1$. Since $\delta \in (0,1), \lambda \geq 1$ and $K\geq 1$, it follows from (i) that $\mesh\widetilde\cU_0 \in (0, \frac{1}{2})$, so $(\mesh\widetilde\cU_0)^{K^2}\leq \mesh\widetilde\cU_0$. Therefore, by (ii), $\mesh \widetilde\cU_{1}\leq \frac{1}{2^K\cdot\lambda^{1+K^2}}\cdot (\mesh\widetilde\cU_0)^{K^2} \leq \frac{1}{2}\mesh\widetilde\cU_0 < \mesh\widetilde\cU_0 \leq \frac{\delta^K}{2^K\cdot\lambda}$. This proves the claim, and the rest can be proven by induction:

Using the notation $l_j\coloneqq \min\{L(\widetilde\cU_j), \mesh\widetilde\cU_j\}$, notice that \eqref{eq: mesh tilda}, (i) and (ii) imply
\begin{equation}\label{eq: elj}
\displaystyle l_j\leq \frac{\delta^K}{2^K\cdot\lambda}, \qquad \forall j=0,\ldots , N, \text{ and}
\end{equation}
\begin{equation}\label{eq: elj+1}
\displaystyle l_{j+1}\leq \frac{1}{2^K\cdot\lambda^{1+K^2}}\cdot (l_j)^{K^2}, \qquad \forall j=0,\ldots , N-1,
\end{equation}
from which, considering that $l_j\in \left( 0,\frac{1}{2}\right]$, it follows that
\begin{equation}\label{eq: elj-root}
\displaystyle l_{j+1}\leq \frac{1}{\sqrt[K]{2}}\cdot l_j, \qquad \forall j=0,\ldots , N-1.
\end{equation}

Also define the constant $\displaystyle l\coloneqq \min\left\{\frac{1}{2^{N-j}}\left( L(\widetilde\cU_j)\right)^{K} \mid j=0, \ldots , N\right\}>0$.
 
 \medskip

We proceed by shrinking $\cV$ a little: for each $V\in\cV$, consider the smaller subset $V'\coloneqq N_{\frac{-\delta\tau}{2}}(V)$. Since $L(\cV)\geq \delta\tau$, we still have that $\{V'\ | \ V\in\cV\}$ is an open cover of $X$. 
Put  $\widetilde V\coloneqq h_V(V')$. Now for every $j\in\{0,\ldots , N\}$, and every $V \in \cV^j$, we define the family 
\begin{eqnarray*}
\widetilde\cU_{j,V}\coloneqq \{\widetilde U\in \widetilde\cU_j\ | \ \widetilde V \cap \widetilde U \neq \emptyset\}.
\end{eqnarray*}
 This $\widetilde\cU_{j,V}$ is a finite, open, at most $(n+1)$-colored covering family of $\widetilde V$ in $Y$. Note that $\widetilde\cU_{j,V}$ need not be contained in $h_V(V)$ in general, so it is important to distinguish between $\widetilde\cU_{j,V}$ and $\widetilde\cU_{j,V}|_{h_V(V)}$.
 
 Continuing with the proof, one considers
\begin{eqnarray*}\cU_{j,V} &\coloneqq & h_V^{-1}(\widetilde\cU_{j,V}) =\{h_V^{-1}(\widetilde U)\mid \widetilde U \in \widetilde\cU_{j,V}\} \\
&=&\{h_V^{-1}(\widetilde U \cap h_V(V))\mid \widetilde U \in \widetilde\cU_{j,V}\}
= h_V^{-1}(\widetilde\cU_{j,V}|_{h_V(V)}).
\end{eqnarray*}
Each $\cU_{j,V}$ defines an at most $(n+1)$-colored open covering family of $V'$ in $V$.

One can now apply Lemma  \ref{adjusted-lemma12.2.3}  to the map  $h_V:V\to Y$, with $C=V'$,
 $\widetilde\cU=\widetilde\cU_{j,V}|_{h_V(V)}$ and $\cU=\cU_{j,V}$, to obtain
\begin{align}\label{Lemma12.2.3-ineq-adj-(i)}
\frac{1}{\lambda}R^K\cdot (\mesh \cU_{j,V})^K\leq \mesh \widetilde\cU _{j,V}|_{h_V(V)} \leq \lambda \sqrt[K]{R}\cdot \sqrt[K]{\mesh \cU_{j,V}}, \quad \text{ and}
\end{align}
\begin{align}\label{Lemma12.2.3-ineq-adj-(ii)}
\frac{1}{\lambda}R^K\cdot (L_V (\cU_{j,V}, V'))^K\leq L_{h_V(V)}( \widetilde\cU _{j,V}|_{h_V(V)}, \widetilde V) \leq \lambda \sqrt[K]{R}\cdot\sqrt[K]{ L_V (\cU_{j,V},V')}.
\end{align}
 From the left side of \eqref{Lemma12.2.3-ineq-adj-(i)} and from \eqref{MeshRestriction} it follows that 
 \begin{eqnarray}\label{Lemma12.2.3-ineq-adj-(i)-half}
\frac{1}{\lambda}R^K\cdot (\mesh \cU_{j,V})^K\leq \mesh \widetilde\cU _{j,V},
\end{eqnarray}
while from the right side of \eqref{Lemma12.2.3-ineq-adj-(ii)} and from Lemma \ref{lemma: Lebesgue-restriction-of-family}, it follows that 
\begin{eqnarray}\label{Lemma12.2.3-ineq-adj-(ii)-half}
 L_Y( \widetilde\cU _{j,V}, \widetilde V) \leq \lambda \sqrt[K]{R}\cdot\sqrt[K]{ L_V (\cU_{j,V},V')}.
\end{eqnarray}

\medskip

Given a $j\in\{0, \ldots , N\}$, define $X_j\coloneqq \bigcup_{V\in\cV^j} V'$. Then $\{ X_j \mid j=0,\ldots , N\}$ form an open cover of $X$.
The family of sets 
\begin{eqnarray*}
\cU_j\coloneqq \displaystyle\bigcup_{V\in\cV^j} \cU_{j,V}
\end{eqnarray*}
 is an open covering family of the set $X_j$ in $X$.
The main step of the proof is to establish the following properties of this family:
\begin{enumerate}
\item[(1)] for each $j\in\{0,\ldots , N\}$,  the family $\cU_j$ is at most $(n+1)$-colored,
\medskip

\item[(2)] $\displaystyle\mesh \cU_j\leq \tau \sqrt[K]{\lambda}\sqrt[K]{\mesh\widetilde \cU_j}, \quad \forall j \in\{0, \ldots , N\}$,
\medskip

\item[(3)] \label{eq: mesh short}
$\displaystyle \mesh\cU_{j+1}\leq \frac{1}{2}\cdot\frac{\tau}{\lambda^K}\cdot l_j^K, \quad\forall j\in\{0,\ldots , N-1\}, $
\medskip

\item[(4)] $\displaystyle \mesh\cU_{j}\leq \frac{\delta\tau}{2}, \quad\forall j\in\{0,\ldots , N\},  $
\medskip

\item[(5)]   $\displaystyle L_X(\cU_j, X_j)\geq \min\left\{\frac{\tau}{\lambda^K} \cdot(L(\widetilde\cU_j))^K, \frac{\delta\tau}{2}\right\}, \ \forall j \in\{0, \ldots , N\}$,
\medskip 

\item[(6)] $ \displaystyle L_X(\cU_j, X_j)\geq \frac{\tau}{\lambda^K}\cdot l_j^K, \quad\forall j\in\{0,\ldots , N\}$, and
\medskip

\item[(7)] $\displaystyle \mesh \cU_{j+1}\leq \frac{1}{2} \cdot L_X(\cU_j, X_j), \ \forall j\in\{0,\ldots , N-1\}$. 
\end{enumerate}

\medskip
The only thing to check for (1)  is that within each of the colors $c\in\{0,\ldots, n\}$, i.e., in $\cU_j^c$, the sets are disjoint.
Indeed, if $U_1$, $U_2$ are elements of $\cU_j$ of the same color $c$, i.e., $U_1,\ U_2 \in \cU_j^{c}$, this means $U_1\in \cU_{j,V_1}^c$, $U_2\in \cU_{j,V_2}^c$. If $V_1=V_2$, then $U_1=h_{V_1}^{-1}(\widetilde U_1)$, $U_2=h_{V_1}^{-1}(\widetilde U_2)$, for some $\widetilde\U_1, \widetilde U_2\in \widetilde\cU_{j,V_1}^c$, but these are disjoint since $\widetilde \cU_j^c$, as well as  $\widetilde \cU_{j,V_1}^c$, consists of disjoint elements, $\forall c=0,\ldots , n$. If $V_1\neq V_2$, then these two are disjoint as different elements of $\cV^j$, so $U_1$ and $U_2$ are disjoint, too.

The inequality in (2) follows from\eqref{Lemma12.2.3-ineq-adj-(i)-half} and the fact that $\mesh \widetilde\cU_{j,V}\leq \mesh\widetilde\cU_j$, for all indexes $j=0, \ldots, N$:
\begin{eqnarray*}
 (\mesh \cU_{j,V})^K & \overset{\eqref{Lemma12.2.3-ineq-adj-(i)-half}}{\leq} &\lambda\cdot \frac{1}{R^K}\cdot\mesh\widetilde\cU_{j,V}\leq \lambda\cdot \frac{1}{R^K}\cdot\mesh\widetilde\cU_j\\
\Rightarrow\quad \mesh \cU_{j,V }& \leq & \frac{1}{R}\cdot\sqrt[K]{\lambda}\sqrt[K]{\mesh \widetilde\cU_j}, \quad \forall V\in\cV^j\\
\Rightarrow \qquad \mesh\cU_j &= & \sup_{V\in \cV^j}\mesh \cU_{j,V}\ \ \leq \ \ \tau\cdot\sqrt[K]{\lambda}\sqrt[K]{\mesh\widetilde\cU_j}.
\end{eqnarray*}
\medskip

To prove (3),
we use (ii), (2) and definition of $l_j$, for indexes $j=0, \ldots, N-1$:
\begin{eqnarray*}\label{eq: long mesh}
\mesh \cU_{j+1} &\overset{(2)}{\leq} & \tau \sqrt[K]{\lambda}\sqrt[K]{\mesh\widetilde \cU_{j+1}} \nonumber \\
&\overset{(ii)}{\leq} &
\tau \sqrt[K]{\lambda}\cdot\frac{1}{2}\cdot\frac{1}{\sqrt[K]{\lambda}}\cdot \frac{1}{\lambda^K}\cdot \min\left\{(L(\widetilde\cU_j))^K, (\mesh\widetilde\cU_j)^K\right\} \nonumber\\
& =& \frac{1}{2}\cdot\frac{\tau}{\lambda^K}\cdot\min\left\{(L(\widetilde\cU_j))^K, (\mesh\widetilde\cU_j)^K\right\}=\frac{1}{2}\cdot\frac{\tau}{\lambda^K}\cdot l_j^K.
\end{eqnarray*}

Also, since $\delta\in (0,1)$, $\lambda \geq 1$ and $K\geq 1$, for $j=0, \ldots , N$ we have
\begin{align}\label{two*}
\frac{\tau}{\lambda^K}(\mesh\widetilde\cU_j)^K
\overset{\eqref{eq: mesh tilda}}{\leq} \frac{\tau}{\lambda^K}\left( \frac{\delta^K}{2^K\cdot\lambda}\right)^K = \frac{\tau \delta^{K^2}}{\lambda^{2K}2^{K^2}}\leq\frac{\delta\tau}{2}.
\end{align}

Now let us prove (4). First, using (3) and \eqref{two*} we get:
\begin{align*}
\mesh \cU_{j+1} \overset{(3)}{\leq} \frac{1}{2}\min\left\{\frac{\tau}{\lambda^K}(L(\widetilde\cU_j))^K, \frac{\tau}{\lambda^K}(\mesh\widetilde\cU_j)^K\right\} \overset{\eqref{two*}}{\leq} \frac{1}{2}\cdot\frac{\delta\tau}{2} < \frac{\delta\tau}{2},
\end{align*}
which gives us (4) for indexes $j=1, \ldots , N$. Finally, for index $j=0$, (4) follows from (i) and (2):
\begin{align*}
    \mesh \cU_0 \overset{(2)}{\leq} \tau \sqrt[K]{\lambda}\sqrt[K]{\mesh\widetilde \cU_0} \overset{(i)}{\leq} \tau \sqrt[K]{\lambda}\sqrt[K]{\frac{\delta^K}{2^K\cdot\lambda}}=\frac{\delta\tau}{2}.
\end{align*}

\bigskip
For the statement in (5), note that, since elements $V'$ of $X_j$ are disjoint, we get
\begin{eqnarray*}
L_X(\cU_j, X_j) &=& \inf_{x\in X_j} \sup_{U\in \cU_j} \dist (x, X\setminus U) = \inf_{V\in \cV^j} \inf_{x\in V'}\sup_{U\in \cU_{j, V}} \dist (x, X\setminus U)\\
&=& \inf_{V\in \cV^j} \inf_{x\in V'}\sup_{U\in \cU_{j, V}} \min\left\{ \dist (x, X\setminus V),\dist (x, V\setminus U) \right\}.
\end{eqnarray*}
Since $x\in V'=N_{-\frac{\delta\tau}{2}}(V)$ implies $\dist (x, X\setminus V)\geq \frac{\delta\tau}{2}$, we have
\begin{eqnarray}\label{eq: L long}
L_X(\cU_j, X_j) &\geq& \nonumber \inf_{V\in \cV^j} \inf_{x\in V'}\sup_{U\in \cU_{j, V}} \min\left\{ \frac{\delta\tau}{2},\dist (x, V\setminus U) \right\}\\
& = & \nonumber \min\left\{ \frac{\delta\tau}{2},  \inf_{V\in \cV^j} \inf_{x\in V'}\sup_{U\in \cU_{j, V}}\dist (x, V\setminus U) \right\}\\
& = & \min\left\{ \frac{\delta\tau}{2},  \inf_{V\in \cV^j} L_V(\cU_{j,V}, V')\right\}.
\end{eqnarray}
On the other hand,
\begin{align}\label{eq: L tilda Uj}
 L(\widetilde\cU_j)=L_Y(\widetilde\cU_j,Y)\overset{Lemma \ \ref{lemma: Lebesgue-restriction-of-family}}\leq L_Y(\widetilde\cU_j,\widetilde V)= L_Y(\widetilde\cU_{j,V}, \widetilde V), \quad \forall V\in\cV^j,
\end{align}
where the last equality is true because, for $y \in \widetilde V$,
the only sets $\widetilde U$ from $\widetilde \cU_j$ that enter in the definition of $L_Y(\widetilde\cU_{j}, y)$ are those that are in $\widetilde\cU_{j,V}$.

Using \eqref{Lemma12.2.3-ineq-adj-(ii)-half} and \eqref{eq: L tilda Uj}, we may deduce that 
\begin{eqnarray*}
\nonumber L(\widetilde\cU_j) &\overset{\eqref{eq: L tilda Uj}}{\leq}& L_Y(\widetilde\cU_{j,V}, \widetilde V)
\overset{\eqref{Lemma12.2.3-ineq-adj-(ii)-half}}{\leq}\lambda \sqrt[K]{R}\cdot\sqrt[K]{ L_V (\cU_{j,V},V')}, \quad \forall V\in\cV^j\\
\nonumber \Rightarrow \quad L(\widetilde\cU_j) &\leq & \lambda \sqrt[K]{R}\cdot\inf_{V\in\cV^j} \sqrt[K]{ L_V (\cU_{j,V},V')}=
\lambda \sqrt[K]{R}\cdot \sqrt[K]{\inf_{V\in\cV^j} L_V (\cU_{j,V},V')}
\end{eqnarray*}
\begin{eqnarray}\label{eq: L to the K}
\Rightarrow \quad \frac{1}{\lambda^K}\cdot\frac{1}{R}\cdot (L(\widetilde\cU_j))^K \leq  \inf_{V\in\cV^j} L_V (\cU_{j,V},V').
\end{eqnarray}
Therefore, \eqref{eq: L long} and \eqref{eq: L to the K} imply
\begin{eqnarray*}
L_X(\cU_j,X_j) &\overset{\eqref{eq: L long}}{\geq}& \min\left\{ \frac{\delta\tau}{2},  \inf_{V\in \cV^j} L_V(\cU_{j,V}, V')\right\} \\
& \overset{\eqref{eq: L to the K}}{\geq} & \min\left\{ \frac{\delta\tau}{2}, \frac{\tau}{\lambda^K}\cdot(L(\widetilde \cU_j))^K\right\}, \ \  \forall j=0,\ldots , N.
\end{eqnarray*}
This finishes the proof of (5).

\medskip
To prove (6), we combine (5) and \eqref{two*}:
\begin{eqnarray*}
 L(\cU_j, X_j) &\overset{(5)}{\geq} &  \min\left\{  \frac{\tau}{\lambda^K}\cdot(L(\widetilde \cU_j))^K,  \frac{\delta\tau}{2}    \right\}\\  &\overset{\eqref{two*}}{\geq} &
 \min\left\{\frac{\tau}{\lambda^K}\cdot(L(\widetilde\cU_j))^K, \frac{\tau}{\lambda^K}\cdot(\mesh\widetilde\cU_j)^K\right\}\\
& = & \frac{\tau}{\lambda^K}\cdot l_j^K, \quad \forall j=0, \ldots , N.
\end{eqnarray*}

Finally, (7) follows from (3), \eqref{two*} and (5):
\begin{eqnarray*}
\mesh \cU_{j+1} &\overset{(3)}{\leq}& \frac{1}{2}\min\left\{\frac{\tau}{\lambda^K}(L(\widetilde\cU_j))^K, \frac{\tau}{\lambda^K}(\mesh\widetilde\cU_j)^K\right\} \\
 & \overset{\eqref{two*}}{\leq} &
\frac{1}{2}\min\left\{\frac{\tau}{\lambda^K}(L(\widetilde\cU_j))^K, \frac{\delta\tau}{2}\right\} \\
&\overset{(5)}{\leq} &\frac{1}{2}L_X(\cU_j, X_j), \quad \forall j=0,\ldots , N-1.
\end{eqnarray*}

\medskip

Now that the properties (1) -- (7) are established, the final goal of the construction is to use them to build an open $(n+1)$-colored cover $\cU$ of $X$ that will have properties which guarantee that $\ell$-$\dim X\leq n$. The way to build it is by induction in finitely many steps.

First, put $\widehat \cU_{-1}\coloneqq \{X\}$, noting that $\displaystyle L(\widehat\cU_{-1})=\inf_{x\in X}\sup_{U\in\widehat\cU_{-1}}\dist (x,X\setminus U)=\inf_{x\in X} \dist(x, \emptyset)=\infty$. Put 
 $\widehat\cU_0\coloneqq \cU_0$,
and assume that, for some $j\in\{0,\ldots, N-1\}$, we have already constructed families $\widehat\cU_0, \ldots , \widehat\cU_j$ so that the following properties are true, for indexes $m\in\{0,\ldots, j\}$ (except (II)$_0$, which does not make sense):
\begin{enumerate}
\item[(I)$_m$] $\widehat\cU_m$ is an $(n+1)$-colored open covering family of $X_0\cup\ldots \cup X_m$ in $X$,
\item[(II)$_m$] $\mesh \cU_m\leq\frac{1}{4}\cdot L_X(\widehat\cU_{m-2},\bigcup_{i=0}^{m-2} X_i)$,
\item[(III)$_m$] $\mesh \cU_m\leq\frac{1}{2}\cdot L_X(\widehat\cU_{m-1},\bigcup_{i=0}^{m-1} X_i)$,
\item[(IV)$_m$] $\mesh\widehat\cU_m\leq\frac{\delta\tau}{2}$,
\item[(V)$_m$] $L_X(\widehat \cU_m,\bigcup_{i=0}^{m} X_i)\geq \min\left\{\frac{1}{2}L_X(\widehat\cU_{m-1},\bigcup_{i=0}^{m-1} X_i) , L_X(\cU_m,X_m) \right\}$, and
\item[(VI)$_m$] $L_X(\widehat \cU_m,\bigcup_{i=0}^{m} X_i)\geq \frac{\tau}{\lambda^K}\cdot l_m^K$.
\end{enumerate}

\medskip

Before proceeding, we should check steps $m=0$ and $m=1$, for the basis of induction. 
We leave to the reader to check (I)$_0$ and (III)$_0$ through (VI)$_0$ (skipping (II)$_0$, which is void).

For $m=1$: (II)$_1$ is true because $\mesh \cU_1 \leq \frac{1}{4}L_X(\widehat \cU_{-1})=\infty$. Property (III)$_1$ follows from (7): $\mesh \cU_{1}\leq \frac{1}{2} \cdot L(\cU_0, X_0)=\frac{1}{2} \cdot L(\widehat\cU_0, X_0)$.
So now we  apply Proposition \ref{saturated-family} to produce $\widehat\cU_1\coloneqq \widehat\cU_0\circledast \cU_1$. Clearly (I)$_1$ is true, while properties (IV)$_1$ and (V)$_1$ follow directly from conclusions of Proposition \ref{saturated-family}: $L_X(\widehat\cU_1,X_0\cup X_1)\geq \min \left\{\frac{1}{2}L_X(\widehat\cU_0, X_0) , L_X(\cU_1,X_1) \right\}$ is exactly the statement of (V)$_1$, while from $\mesh \widehat\cU_1\leq\max\{\mesh\widehat\cU_0, \mesh \cU_1\}$, (IV)$_0$ and (4) we get the statement of (IV)$_1$.
It remains to show (VI)$_1$, which follows from (V)$_1$, (VI)$_0$, \eqref{eq: elj-root} and (6). First note:
\[\frac{1}{2}L_X(\widehat\cU_0, X_0)\overset{\text{(VI)}_0}{\geq}\frac{1}{2}\cdot\frac{\tau}{\lambda^K}\cdot l_0^K= \frac{\tau}{\lambda^K}\cdot \left(\frac{1}{\sqrt[K]{2}}\cdot l_0\right)^K \overset{\eqref{eq: elj-root}}{\geq}
\frac{\tau}{\lambda^K}\cdot l_1^K,\]
and $L_X(\cU_1,X_1) \overset{(6)}{\geq}\frac{\tau}{\lambda^K}\cdot l_1^K$, so these two facts combined with (V)$_1$ yield (VI)$_1$.

\medskip

For the step of induction, assuming, as above, that properties (I)$_m$ -- (VI)$_m$ are true for all $m\leq j$, let us prove these properties for index $m=j+1$.

Start by checking (II)$_{j+1}$:
\begin{eqnarray*} 
\mesh\cU_{j+1} \overset{(3)}{\leq}\frac{1}{2}\cdot\frac{\tau}{\lambda^K}\cdot l_j^K &\overset{\eqref{eq: elj-root}}{\leq}& \frac{1}{2}\cdot\frac{\tau}{\lambda^K}\cdot \left(\frac{1}{\sqrt[K]{2}}\cdot l_{j-1}\right)^K = \frac{1}{4}\cdot\frac{\tau}{\lambda^K}\cdot l_{j-1}^K\\
&\overset{\text{(VI)}_{j-1}}{\leq}& \frac{1}{4}\cdot  L_X(\widehat \cU_{j-1},\bigcup_{i=0}^{j-1} X_i).
\end{eqnarray*}
Now we can show (III)$_{j+1}$, because from (II)$_{j+1}$ and (7) we get:
\[ 
\mesh \cU_{j+1}\leq \min\left\{ \frac{1}{4}\cdot  L_X(\widehat \cU_{j-1},\bigcup_{i=0}^{j-1} X_i), \frac{1}{2} L_X(\cU_j,X_j)\right\}\overset{\text{(V)}_j}{\leq} \frac{1}{2}\cdot  L_X(\widehat \cU_{j},\bigcup_{i=0}^{j} X_i).
\]
Now, according to Proposition \ref{saturated-family}, we may produce $\widehat\cU_{j+1}\coloneqq \widehat\cU_j\circledast \cU_{j+1}$. Clearly (I)$_{j+1}$ is true, property (V)$_{j+1}$  is delivered directly by the conclusion about the Lebesgue number of Proposition \ref{saturated-family}, while we get the statement of (IV)$_{j+1}$ from $\mesh \widehat\cU_{j+1}\leq\max\{\mesh\widehat\cU_j, \mesh \cU_{j+1}\}$, and properties (IV)$_j$ and (4) for $j+1$.
It remains to show (VI)$_{j+1}$, which can be done by repeating the same argument used to obtain (VI)$_{1}$:
$\frac{1}{2}L_X(\widehat\cU_j, \bigcup_{i=0}^{j} X_i)\geq\frac{\tau}{\lambda^K}\cdot l_{j+1}^K$
and $L_X(\cU_{j+1},X_{j+1}) \geq\frac{\tau}{\lambda^K}\cdot l_{j+1}^K$ 
combined with (V)$_{j+1}$ yield (VI)$_{j+1}$.

\medskip

After finitely many steps of induction, we obtain an open $(n+1)$-colored cover $\cU\coloneqq \widehat\cU_N$ of $X=\bigcup_{i=0}^N X_i$, which satisfies (IV)$_N$, that is, $\mesh\cU=\mesh\widehat\cU_N\leq\frac{\delta\tau}{2}$. Also, after applying property (V) $N$ times, as well as (5) and the fact that 
$\ \displaystyle\min_{i\in I} \min\{a_i,b_i\}=\min \{\min_{i\in I} a_i, \min_{i\in I} b_i \}$,  we get
\begin{eqnarray*}
L(\cU)=L(\widehat\cU_N) 
&\overset{(V)}\geq& \min\left\{ \frac{1}{2^{N-j}}L_X(\cU_j, X_j)\ | \ j\in\{0,\ldots , N\} \right\}\\
&\overset{(5)}\geq & \min\left\{
\min\left\{ \frac{\tau}{\lambda^K}\cdot\frac{1}{2^{N-j}}(L(\widetilde\cU_j))^K, \frac{1}{2^{N-j}}\cdot\frac{\delta\tau}{2}\right\} \mid  j\in\{0,\ldots , N\} \right\}\\
& =& \min\left\{\frac{\tau}{\lambda^K}\cdot
\min\left\{ \frac{1}{2^{N-j}}(L(\widetilde\cU_j))^K\mid  j\in\{0,\ldots , N\} \right\} , \frac{1}{2^{N}}\cdot\frac{\delta\tau}{2}\right\}\\
& =& \min\left\{ \frac{\tau}{\lambda^K}\cdot l, \frac{1}{2^N}\cdot\frac{\delta\tau}{2}\right\}=\frac{\delta\tau}{2}\cdot \min\left\{ \frac{2l}{\delta\lambda^K}, \frac{1}{2^N}\right\}.
\end{eqnarray*}
Since we can choose $\tau>0$ to be arbitrarily small, and the constants $\delta$, $\lambda$, $l$, $K$ and $N$ are independent of $\tau$, this shows that $N=\ell$-$\dim X\leq n$: just put 
$\delta '\coloneqq \min\left\{\frac{2l}{\delta\lambda^K}, \frac{1}{2^N}\right\}$ and $\tau '\coloneqq \frac{\delta}{2}\tau$, and now we can say that there exists  $\delta'$ in $(0,1)$ such that for all $\tau'>0$ small enough, we can find an open, $(n+1)$-colored cover $\cU$ of $X$ with $\mesh\cU\leq\tau'$ and $L(\cU)\geq\delta'\tau'$.
\end{proof}

\chapter{Hyperbolic approximate groups revisited}\label{ChapterHyp2}

In this final chapter we discuss how the results of the previous chapters restrict the possible QI types of hyperbolic approximate groups. 

Our first main result (Theorem \ref{Asdim1Case}) provides the QI classification of non-elementary hyperbolic approximate groups of asymptotic dimension $1$. Recall that every non-elementary hyperbolic group of asymptotic dimension $1$ is quasi-isometric to some (hence any) non-abelian finitely-generated free group, hence there is only one QI class of non-elementary hyperbolic groups of $\asdim =1$. We are going to extend this result by showing that there is also only one QI class of non-elementary hyperbolic approximate groups of asymptotic dimension $1$, in particular, every non-elementary hyperbolic approximate group of asymptotic dimension $1$ is quasi-isometric to a group. After establishing this result, we will also discuss a number of further related rigidity results.

Our second main result establishes that non-elementary hyperbolic approximate groups have exponential internal growth (Theorem \ref{ExponentialGrowth}). The corresponding result in the group case is easily established by a ping-pong argument, but this argument is not available in the approximate case, since non-elementary hyperbolic approximate groups do not have to contain non-abelian free subsemigroups. In our proof we distinguish two cases. If the asymptotic dimension of the approximate group in question equals $1$, then we can refer to our first main result. For non-elementary hyperbolic approximate groups of asymptotic dimension $\geq 2$ we can establish exponential growth via classical arguments comparing asymptotic dimension and critical exponent in quasi-cobounded hyperbolic spaces. One advantage of this approach is that it provides an explicit lower bound on the exponential growth rate in terms of asymptotic dimension.

\section{Hyperbolic approximate groups of asymptotic dimension 1}

The goal of this section is to establish the following theorem; here, following \cite{MSW}, a locally finite simplicial tree is called \emph{bushy}, if every vertex is at uniformly bounded distance from a vertex which has at least $3$ unbounded complementary components.\index{simplicial tree!bushy}\index{bushy simplicial tree}
\begin{theorem}[Non-elementary hyperbolic approximate groups of asymptotic dimension $1$]\label{Asdim1Case}
Gi\-ven a non-elementary hyperbolic approximate group $(\Lambda, \Lambda^\infty)$, the following are equivalent:
\begin{enumerate}[(i)]
\item $\asdim \Lambda = 1$.
\item $\ell$-$\dim \partial \Lambda = 0$.
\item $\dim \partial \Lambda = 0$.
\item Some (hence any) representative of $\partial \Lambda$ is totally disconnected.
\item Some (hence any) representative of $\partial \Lambda$ is homeomorphic to a Cantor space.
\item Some (hence any) representative of $[\Lambda]_{\mathrm{int}}$ admits a quasi-isometric embedding into a simplicial tree.
\item $[\Lambda]_{\mathrm{int}}$ can be represented by a simplicial tree.
\item $[\Lambda]_{\mathrm{int}}$ can be represented by a locally finite bushy simplicial tree with uniformly bounded valences.
\item $\Lambda$ is quasi-isometric to a finitely-generated non-abelian free group.
\end{enumerate}
\end{theorem}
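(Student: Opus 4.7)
My plan is to organize the nine conditions into three groups---``dimensional'' (i)--(iii), ``topological'' (iii)--(v), and ``tree-theoretic'' (vi)--(ix)---and prove the equivalences group by group, with a cycle of implications connecting them. The dimensional equivalences (i)$\iff$(ii)$\iff$(iii) follow immediately from Theorem \ref{AsdimMainConvenient}, which gives $\asdim \Lambda = \dim \partial \Lambda + 1 = \ell\text{-}\dim \partial \Lambda + 1$. The topological equivalences (iii)$\iff$(iv)$\iff$(v) are classical: by Corollary \ref{BoundaryNonel}, a representative of $\partial\Lambda$ is a perfect compact metrizable space, and for such spaces topological dimension $0$ is equivalent to being totally disconnected, while Brouwer's characterization identifies any perfect compact metrizable totally disconnected space with the Cantor set.

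For the tree-theoretic conditions I will establish the cycle (i)$\implies$(vi)$\implies$(vii)$\implies$(viii)$\implies$(ix)$\implies$(vi)$\implies$(i), which will link everything. To obtain (i)$\implies$(vi), I apply the Buyalo--Schroeder embedding theorem (Theorem \ref{BuSchEmbedding}) with $n=0$ to an apogee $X$, which is visual by Proposition \ref{ApogeeVisual}; this yields a quasi-isometric embedding $f\colon X \hookrightarrow T$ into a single simplicial tree. For (vi)$\implies$(vii), I observe that $f(X)$ is quasi-convex in $T$ (geodesics in $X$ map to quasi-geodesics in $T$, hence lie within bounded distance of the unique tree geodesics between their endpoints), and in a simplicial tree any quasi-convex subset is at bounded Hausdorff distance from its minimal enclosing subtree $T_0$; after possibly subdividing, $T_0$ is itself a simplicial tree quasi-isometric to $X$. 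For (vii)$\implies$(viii), bounded valence of $T_0$ follows from coarse bounded geometry of $X$ (Corollary \ref{CoarseBoundedGeometry}), and the bushy property is forced by the combination of quasi-coboundedness of $T_0$ (Proposition \ref{ApogeeQuasiCobounded}) with the fact that $\partial T_0 \cong \partial \Lambda$ is an infinite Cantor set, preventing any vertex from being too far from a branching vertex. The equivalence (viii)$\iff$(ix) is a standard fact: the Cayley graph of $F_r$ ($r\geq 2$) is the $2r$-regular tree, and by classical results (see e.g.\ \cite{MSW}) any two locally finite bushy simplicial trees of uniformly bounded valence are mutually quasi-isometric, hence all quasi-isometric to the Cayley graph of $F_2$. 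Finally, (vi)$\implies$(i) is elementary: a simplicial tree has asymptotic dimension at most $1$, and $X$ is an unbounded geodesic space so $\asdim X \geq 1$, hence the quasi-isometric embedding $X \hookrightarrow T$ forces $\asdim \Lambda = \asdim X = 1$.

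The main obstacle is step (vii)$\implies$(viii), and specifically the \emph{uniformity} in the bushy property. While coarse bounded geometry cleanly produces a bound on valences, showing that every vertex of $T_0$ lies within a \emph{uniformly} bounded distance of a vertex with three or more unbounded complementary components requires a careful interplay between quasi-coboundedness and non-degeneracy of the boundary. A natural technical device is to first prune $T_0$ by removing finite branches and contracting chains of valence-two vertices, producing a ``minimal'' subtree on which branching is automatic, and then to verify that this pruned tree is at bounded Hausdorff distance from $T_0$ by exploiting minimality of the boundary action (Proposition \ref{BoundaryMinimal}) together with the uniform quasi-cobounded qiqac of $\Lambda$ on $T_0$. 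A cleaner alternative would be to quasi-conjugate the induced qiqac of $\Lambda$ on $T_0$ directly to an isometric action on a bounded-valence bushy tree using Mosher--Sageev--Whyte style machinery \cite{MSW}, but since that framework is set up for groups rather than approximate groups, it will need to be adapted with some extra care.
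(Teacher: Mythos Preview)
Your proposal is correct and follows essentially the same route as the paper. The dimensional and topological equivalences (i)--(v), the use of the Buyalo--Schroeder embedding theorem for (ii)$\implies$(vi), the passage to a subtree for (vi)$\implies$(vii), the appeal to coarse bounded geometry for the valence bound, and the invocation of \cite{MSW} for (viii)$\iff$(ix) all match the paper exactly. Your closing implication (vi)$\implies$(i) is a slight variant (the paper instead uses (ix)$\implies$(v), since the boundary of a non-abelian free group is a Cantor set), but both work.

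You are overcomplicating the bushy step. The paper's argument for uniformity in (vii)$\implies$(viii) is a one-liner: since $\partial T_0$ is infinite (because $\Lambda$ is non-elementary), there exists \emph{at least one} vertex $v_0$ with three or more unbounded complementary components; since $T_0$ is quasi-cobounded (Proposition~\ref{ApogeeQuasiCobounded}), the set of such vertices is automatically relatively dense. No pruning, no minimality of the boundary action, and no adaptation of Mosher--Sageev--Whyte is needed---the uniform family of quasi-isometries moving any vertex near $v_0$ already does the job, because a uniform quasi-isometry of a tree sends a vertex with three unbounded complementary components to within bounded distance of another such vertex (the induced boundary homeomorphism preserves the three boundary pieces).
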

For the proof of Theorem \ref{Asdim1Case}, we note that the equivalences (i)$\iff$(ii)$\iff$(iii) are immediate from Theorems \ref{AsdimMainConvenient} and \ref{AsdimMain}.
In view of Proposition \ref{BoundaryNonel} the equivalence (iii)$\iff$(iv) is a special case of the following classical result in dimension theory (see e.g.\ \cite[Thm.\ 2.7.1]{Coornaert}):
\begin{theorem}[Zero-dimensional compact metrizable spaces] If $Z$ is a compact metrizable space, then $\dim Z = 0$ if and only if $Z$ is totally-disconnected.
\end{theorem}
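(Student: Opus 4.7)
The plan is to prove both directions directly from Definition \ref{Def-dim}, using the simplification from Remark \ref{Remark: dim def} that for compact (hence Hausdorff paracompact) spaces it suffices to work with finite open covers.

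For the forward direction, suppose $\dim Z = 0$ and let $x \neq y$ in $Z$. Since $Z$ is $T_1$, the pair $\mathcal{U} = \{Z \setminus \{x\}, Z \setminus \{y\}\}$ is an open cover. Applying $\dim Z = 0$ produces an open refinement $\mathcal{W}$ with $\mult \mathcal{W} \leq 1$, i.e., a cover of $Z$ by pairwise disjoint open sets. Each $W \in \mathcal{W}$ is then clopen, because its complement is the union of the remaining (open) elements of $\mathcal{W}$. The element of $\mathcal{W}$ containing $x$ is a clopen set separating $x$ from $y$, so $Z$ is totally disconnected.

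For the reverse direction, assume $Z$ is totally disconnected, compact, and metrizable (and non-empty; otherwise the convention $\dim \emptyset = -1$ makes the statement vacuous). The key classical fact I would invoke is that in a compact Hausdorff space, the connected component of any point coincides with its quasi-component. Since $Z$ is totally disconnected, components are singletons, hence so are quasi-components: for any two distinct points $x, y \in Z$, there is a clopen set containing $x$ but not $y$. A standard compactness argument then upgrades this to the statement that for every $x \in Z$ and every open neighborhood $U$ of $x$, there is a clopen set $C$ with $x \in C \subset U$ (cover the compact set $Z \setminus U$ by clopen sets missing $x$, extract a finite subcover, and take the complement of its union).

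Given this, I would verify $\dim Z \leq 0$ as follows. Let $\mathcal{U}$ be an open cover of $Z$. For each $x \in Z$ choose $U_x \in \mathcal{U}$ with $x \in U_x$ and a clopen $C_x$ with $x \in C_x \subset U_x$. By compactness, finitely many $C_{x_1}, \dots, C_{x_n}$ cover $Z$. Disjointify by setting $D_1 := C_{x_1}$ and $D_i := C_{x_i} \setminus (C_{x_1} \cup \dots \cup C_{x_{i-1}})$ for $i \geq 2$. Each $D_i$ is clopen, the $D_i$ are pairwise disjoint and cover $Z$, and each lies in some $U_{x_i} \in \mathcal{U}$; after discarding empty $D_i$, we obtain an open refinement of $\mathcal{U}$ with multiplicity $\leq 1$. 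Hence $\dim Z \leq 0$, and since $Z \neq \emptyset$ we conclude $\dim Z = 0$. The main obstacle lies in the second direction: promoting ``two points can be separated by a clopen set'' to ``every open neighborhood contains a clopen neighborhood'' is the substantive step, resting on the identification of components with quasi-components in compact Hausdorff spaces; once this is in hand, the disjointification is routine Boolean algebra.
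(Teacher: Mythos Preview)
Your proof is correct and follows a standard route through the classical dimension-theoretic characterization. However, the paper does not prove this theorem at all: it simply cites it as a classical result in dimension theory, referring to \cite[Thm.\ 2.7.1]{Coornaert}, and uses it as a black box to establish the equivalence (iii)$\iff$(iv) in Theorem \ref{Asdim1Case}. So there is nothing to compare at the level of argument; you have supplied a complete proof where the paper gives only a reference.
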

The implication (iv)$\implies$(v) is immediate from Proposition \ref{BoundaryNonel} and the following classical theorem of Brouwer \cite{Brouwer}:
\begin{theorem}[Brouwer] Every non-empty totally-disconnected perfect compact metrizable space is homeomorphic to a Cantor space.
\end{theorem}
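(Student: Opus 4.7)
The plan is to construct an explicit homeomorphism between $Z$ and the Cantor space $\{0,1\}^{\mathbb N}$ by producing a nested sequence of finite clopen partitions of $Z$ of exponentially decreasing mesh. The starting observation is that a compact metrizable totally-disconnected space is zero-dimensional: for any point $z$ and any open neighborhood $U$ of $z$, the quasi-component of $z$ in $Z \setminus (Z \setminus U)$ collapses to the connected component $\{z\}$, and a standard compactness argument (separating $z$ from the compact set $Z \setminus U$ by a finite union of basic open sets whose boundaries can be absorbed) produces a clopen set $V$ with $z \in V \subset U$. Consequently $Z$ has a basis of clopen sets, and by compactness every open cover has a finite refinement by pairwise disjoint non-empty clopen sets (replace $U_1, \dots, U_k$ by $U_1, U_2 \setminus U_1, \dots$).

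The next step is to exploit perfectness to promote these partitions into binary partitions. I claim that every non-empty clopen subset $C \subset Z$ admits a partition $C = C' \sqcup C''$ into two non-empty clopen sets. Indeed, since $Z$ is perfect, $C$ contains at least two points $x \neq y$, and by the basis of clopen sets just established, applied to the subspace $C$ (itself compact, metrizable, totally-disconnected, and still perfect once $|C| \geq 2$), we find a clopen $C' \subset C$ with $x \in C'$ and $y \notin C'$; then $C'' := C \setminus C'$ is the required complement. Iterating this splitting procedure and using a fixed compatible metric $d$ on $Z$, I would build inductively, for each $n \geq 0$, a partition $\cP_n = \{C_s : s \in \{0,1\}^n\}$ of $Z$ into $2^n$ non-empty clopen pieces such that $C_{s0} \sqcup C_{s1} = C_s$ for all $s$ and $\mesh(\cP_n) \leq 2^{-n}$. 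The mesh condition is arranged by first choosing, via compactness and zero-dimensionality, an auxiliary clopen partition $\cQ_n$ of mesh $\leq 2^{-n}$, then refining $\cP_{n-1} \wedge \cQ_n$ further using the binary splitting above until each piece of $\cP_{n-1}$ has been subdivided into exactly two non-empty clopen pieces; perfectness guarantees that the splitting never stalls.

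With the tree of partitions in hand, define $\Phi : \{0,1\}^{\mathbb N} \to Z$ by $\Phi(\xi) := $ the unique point of $\bigcap_{n \geq 0} \overline{C_{\xi|_n}} = \bigcap_{n \geq 0} C_{\xi|_n}$; uniqueness follows because the diameters shrink to $0$ and non-emptiness from compactness (finite intersection property applied to a decreasing sequence of non-empty closed sets). The map $\Phi$ is surjective because $\cP_n$ covers $Z$ for each $n$, injective because distinct sequences eventually index disjoint pieces, and continuous since $\Phi^{-1}(C_s) = [s]$ is the clopen cylinder determined by $s$. Being a continuous bijection between compact Hausdorff spaces, $\Phi$ is automatically a homeomorphism.

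The main obstacle I anticipate is the bookkeeping in the inductive step: one must simultaneously maintain (a) the binary tree structure $C_{s0} \sqcup C_{s1} = C_s$, (b) the diameter bound $\mesh(\cP_n) \leq 2^{-n}$, and (c) non-emptiness of every piece. Handling these together requires the refine-then-rebalance maneuver sketched above, and the key non-formal input is perfectness, which ensures that however fine the partition $\cP_{n-1}$ has become, each piece still has at least two points and can be genuinely split once more.
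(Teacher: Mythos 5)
The paper does not actually prove this statement -- it is quoted as a classical theorem and attributed to Brouwer with a citation, so there is no in-paper argument to compare against. Your proposal is the standard textbook proof (clopen basis from total disconnectedness plus compactness, a binary tree of non-empty clopen sets obtained by splitting via perfectness, and the resulting continuous bijection from $\{0,1\}^{\mathbb N}$ which is automatically a homeomorphism), and all of its individual ingredients are correct: compact Hausdorff totally-disconnected spaces are zero-dimensional, every finite clopen cover refines to a disjoint clopen partition, and every non-empty clopen subset of a perfect space is infinite and hence splits into two non-empty clopen pieces.

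There is, however, one genuine flaw in the construction as you state it: the inductive invariant that $\cP_n$ is obtained from $\cP_{n-1}$ by splitting each piece into \emph{exactly two} clopen pieces \emph{and} satisfies $\mesh(\cP_n)\leq 2^{-n}$ is not achievable in general, and your ``refine-then-rebalance'' step does not deliver it. If a piece $C_s$ of $\cP_{n-1}$ needs, say, five pieces of the auxiliary partition $\cQ_n$ to reach diameter $\leq 2^{-n}$, then any grouping of those five into two clopen children leaves one child whose diameter may be as large as $\operatorname{diam}(C_s)$; and one cannot in general halve diameters by a single clopen bisection (metrize a Cantor set with a ``hub'' piece, e.g.\ $K\times\{0\}\cup\bigcup_n K\times\{1/n\}$ suitably embedded: any clopen set containing the hub contains all but finitely many slices and so has nearly full diameter). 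The repair is standard and cheap: you only need $\operatorname{diam}(C_{\xi|_n})\to 0$ along every infinite branch, not a mesh bound at every fixed level. So either allow each node finitely many ($\geq 2$) children of diameter $\leq 2^{-n}$ at stage $n$ and then re-encode the resulting finitely-branching tree into a binary one, or keep strictly binary splitting but spread the passage from $\cP_{n-1}$ to a diameter-$2^{-n}$ refinement over several consecutive binary levels (split off one small piece at a time, ``caterpillar'' fashion); since each such interpolation takes only finitely many levels, every branch still meets pieces of diameter $\leq 2^{-n}$ for every $n$, and the rest of your argument (well-definedness, bijectivity, continuity on cylinders, compactness giving the inverse) goes through verbatim.
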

Since any Cantor space has topological dimension $0$ (see e.g. \cite[Thm.\ 2.7.1]{Coornaert}) we have thus established the equivalence of the Conditions (i)--(v).

As for the remaining implications, the implication (ii)$\implies$(vi) is a special case of Theorem \ref{BuSchEmbedding} above. The implication (vi)$\implies$(vii)$\implies$(viii) follows from the following construction:
\begin{construction}[Growing bushy trees] Let $(\Lambda, \Lambda^\infty)$ be a geometrically finitely-generated infinite approximate group, let $d$ be an internal metric on $\Lambda$ and let $(\cT, d_{\cT})$ be a simplicial tree. We assume that we are given a quasi-isometric embedding $\varphi_0: (\Lambda, d) \to (\cT, d_{\cT})$.

Since all edges in $\cT$ have length $1$, the isometric embedding $V \hookrightarrow \cT$ of the vertex set of $\cT$ has relatively dense image, so it is a quasi-isometry. Composing $\varphi_0$ with a quasi-inverse of this quasi-isometry we obtain a quasi-isometric embedding $\varphi: (\Lambda, d) \to (V, d_{\cT}|_{V \times V})$. We denote by $V_0$ the image of $\varphi$; since $(\Lambda, \Lambda^\infty)$ is geometrically finitely-generated, $(\Lambda, d)$ is coarsely connected, and hence $V_0$ is $C$-coarsely connected for some $C \geq 0$. 

Now let $\cT_0$ denote the convex hull of $V_0$. Since $\cT_0$ can be obtained from $V_0$ by connecting each vertex in $V_0$ to its nearest neighbors in $V$ by a geodesic segment of length $\leq C$, it follows that $\cT_0$ is at Hausdorff distance at most $C$ from $V_0$. Consequently, the map $\varphi$ induces a quasi-isometry (denoted by the same letter)
$\varphi: (\Lambda, d) \to (\cT_0, d_{\cT_0})$ and $(\cT_0, d_{\cT_0})$ represents $[\Lambda]_{\mathrm{int}}$. It remains to show that the tree $\cT_0$ is locally finite with uniformly bounded valences and bushy.

By Corollary \ref{CoarseBoundedGeometry}, the space $(\Lambda, d)$ has coarse bounded geometry, and hence $(\cT_0, d_{\cT_0})$ has coarse bounded geometry by Proposition \ref{QIGrowth}. In particular, $(\cT_0, d_{\cT_0})$ is uniformly locally finite. We have thus established that $(\Lambda, \Lambda^\infty)$ is quasi-isometric to a locally finite simplicial tree with uniformly bounded valences.

Moreover, since $(\Lambda, \Lambda^\infty)$ is non-elementary, $\partial \cT_0$ is infinite. This implies that there exists at least one vertex with at least $3$ unbounded complementary components. Moreover, $\partial \cT_0$ is quasi-cobounded, and hence the set of all such vertices is relatively dense. This shows that $\cT_0$ is bushy and finishes the proof.
\end{construction}
As pointed out in \cite[Sec.\ 2.1]{MSW}, any two locally finite bushy simplicial trees with uniformly bounded valences are quasi-isometric to each other, hence quasi-isometric to (the Cayley tree of) a finitely-generated non-abelian free group. This shows that (viii)$\implies$(ix). Since the boundary of a finitely-generated non-abelian free group is homeomorphic to a Cantor space, we also deduce that (ix)$\implies$(v), and hence we have established Theorem \ref{Asdim1Case}. 

\begin{remark} If $\Gamma$ is a non-elementary hyperbolic group of asymptotic dimension $1$, then it follows from Theorem \ref{Asdim1Case} that $\Gamma$ quasi-acts geometrically on a  locally finite bushy simplicial tree with uniformly bounded valences. By the celebrated QI rigidity theorem of Mosher--Sageev--Whyte \cite{MSW}, this qiqac is in fact quasi-conjugate to an isometric action on a (possibly different) locally finite bushy simplicial tree with uniformly bounded valences. This shows that every non-elementary hyperbolic group has asymptotic dimension $1$ if and only if it admits a geometric isometric action on a locally finite bushy simplicial tree with uniformly bounded valences.

In the approximate group case, we only obtain a geometric qiqac, and it is unclear to us whether it is quasi-conjugate to an actual isometric action on a tree.
\end{remark}
We record three immediate consequences of Theorem \ref{Asdim1Case}:
\begin{corollary}  Let $(\Lambda, \Lambda^\infty)$ be a non-elementary hyperbolic approximate group of asymptotic dimension $1$ without fixpoints at infinity. Then any apogee for $\Lambda$ has infinitely many ends.
\end{corollary}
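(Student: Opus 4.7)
The plan is to deduce the statement directly from the structural characterization already established in Theorem \ref{Asdim1Case}, together with the QI-invariance of the space of (coarse) ends.

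First, since $(\Lambda, \Lambda^\infty)$ is a non-elementary hyperbolic approximate group with $\asdim \Lambda = 1$, condition (viii) of Theorem \ref{Asdim1Case} applies: the internal QI type $[\Lambda]_{\mathrm{int}}$ is represented by a locally finite bushy simplicial tree $\cT$ with uniformly bounded valences. By definition of bushy, every vertex of $\cT$ is at uniformly bounded distance from a vertex having at least $3$ unbounded complementary components. An elementary ``branching'' argument (iterated application of this property along geodesics going into any unbounded complementary component) then shows that $\cT$ has uncountably many ends; in fact, the space of ends of $\cT$ is a Cantor space, which is consistent with the fact that $\partial \Lambda$ is a Cantor space (equivalence (v) of Theorem \ref{Asdim1Case}).

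Next, if $X$ is any apogee for $(\Lambda, \Lambda^\infty)$, then $X$ is a proper geodesic metric space representing $[\Lambda]_{\mathrm{int}}$, and hence $X$ is quasi-isometric to $\cT$. The space of coarse ends is a coarse invariant of metric spaces (Theorem \ref{CoarseEndsInvariant}), so in particular it is a QI-invariant. Consequently $\cE_\infty(X) \cong \cE_\infty(\cT)$ is uncountable, and in particular infinite. Alternatively, one can directly invoke Corollary \ref{EndsApp}: as soon as we have produced a single representative of $[\Lambda]_c$ with more than $2$ coarse ends (namely $\cT$), the corollary forces the number of coarse ends to be uncountable, hence any apogee has uncountably many ends.

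The main (essentially only) obstacle is to justify the passage from ``bushy locally finite simplicial tree'' to ``uncountably many ends'', which is a standard and easy consequence of the definition of bushy (one produces a binary-tree subtree by repeated branching), but must be stated carefully because ``bushy'' was defined in terms of unbounded complementary components rather than directly in terms of ends. Everything else is a routine invocation of the QI-invariance of the set of (coarse) ends for proper geodesic spaces.
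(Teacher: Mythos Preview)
Your proposal is correct and follows essentially the same approach as the paper, which simply records this as an immediate consequence of Theorem \ref{Asdim1Case} (hence the bare \qed). A slightly more direct route is to invoke condition (ix) rather than (viii): $\Lambda$ is quasi-isometric to a finitely-generated non-abelian free group, which is well known to have infinitely many ends, and then QI-invariance of ends (via Theorem \ref{CoarseEndsInvariant} and Proposition \ref{EndsCoarse} for the apogee) finishes immediately without the extra argument about bushy trees.
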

\begin{corollary}\label{Asdim1Rigidity} Every non-elementary hyperbolic approximate group of asymptotic dimension $1$ is quasi-isometric to a finitely-generated group.
\end{corollary}
\begin{corollary}\label{ExponentialGrowth0} Let $(\Lambda, \Lambda^\infty)$ be a non-elementary hyperbolic approximate group of asymptotic dimension $1$ without fixpoints at infinity. Then $\Lambda$ has exponential internal growth.
\end{corollary}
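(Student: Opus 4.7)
The plan is to reduce the statement directly to the classical exponential growth of non-abelian free groups by invoking the structural theorem just proved. Concretely, Theorem \ref{Asdim1Case} (specifically the implication (i)$\Rightarrow$(ix)) tells us that any non-elementary hyperbolic approximate group of asymptotic dimension $1$ is quasi-isometric to some finitely-generated non-abelian free group $F_r$ with $r \geq 2$. In particular, some representative of $[\Lambda]_{\mathrm{int}}$ is quasi-isometric, as a metric space, to (a Cayley graph of) $F_r$.

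Next I would recall that internal growth is defined (via Corollary \ref{CoarseBoundedGeometry}) as the growth of any representative of $[\Lambda]_{\mathrm{int}}$, and that by Proposition \ref{QIGrowth} growth is a QI-invariant among spaces of coarse bounded geometry. Since both $F_r$ and the chosen apogee of $\Lambda$ have coarse bounded geometry, we obtain
\[
\mathrm{gr}_{\mathrm{int}}(\Lambda) \;=\; \mathrm{gr}(F_r).
\]
It is a classical fact (going back to the normal form for reduced words) that the growth function of $F_r$ with respect to a free generating set satisfies $\gamma_{F_r}(n) = 1 + 2r\sum_{k=0}^{n-1}(2r-1)^k$, which grows exponentially in $n$ as soon as $r \geq 2$. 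Hence $\mathrm{gr}_{\mathrm{int}}(\Lambda)$ is represented by an exponentially growing function, i.e.\ $\Lambda$ has exponential internal growth.

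There is essentially no obstacle here beyond correctly bookkeeping the two invariances: that the QI-type of a free group is independent of the choice of (non-abelian) rank, and that growth passes through quasi-isometries of spaces of coarse bounded geometry. The heavy lifting, namely showing that an apogee can be taken to be a bushy locally finite tree of bounded valence (and hence quasi-isometric to $F_r$), has already been carried out in the proof of Theorem \ref{Asdim1Case}.
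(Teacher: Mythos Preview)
Your proposal is correct and matches the paper's approach: the corollary is stated with a \qed immediately after it, as it follows at once from Theorem \ref{Asdim1Case}(ix) together with QI-invariance of growth (Proposition \ref{QIGrowth}) and the exponential growth of non-abelian free groups. Your aside about the QI-type of a free group being independent of the rank is true but not needed here---any $F_r$ with $r\geq 2$ already has exponential growth.
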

Corollary \ref{ExponentialGrowth0} will be generalized to hyperbolic approximate groups of higher asymptotic dimension in the next section.

\section{Exponential growth}\label{SecExpGrowth}
In this section we are finally going to establish the following result:
\begin{theorem}[Exponential growth]\label{ExponentialGrowth} Every non-elementary hyperbolic approximate group $(\Lambda, \Lambda^\infty)$ has exponential internal growth.
\end{theorem}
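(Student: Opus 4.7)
The plan is to split the argument into two cases according to the asymptotic dimension of $(\Lambda, \Lambda^\infty)$. If $\asdim \Lambda = 1$, then by Theorem \ref{Asdim1Case} the approximate group $(\Lambda, \Lambda^\infty)$ is quasi-isometric to a finitely-generated non-abelian free group, hence the result follows immediately from Corollary \ref{ExponentialGrowth0} (which is precisely this case stated independently). It thus remains to treat the case $\asdim \Lambda \geq 2$, which by Theorem \ref{AsdimMainConvenient} is equivalent to $\dim \partial \Lambda \geq 1$.

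For this remaining case, I would fix an apogee $X$ of $(\Lambda, \Lambda^\infty)$. By Proposition \ref{apogee} and Corollary \ref{suqc}, $X$ is a semi-uniformly quasi-cobounded proper geodesic Gromov hyperbolic space whose Gromov boundary has topological dimension $\dim \partial X \geq 1$. I would then fix a visual metric $d_v$ on $\partial X$ with visual parameter $a > 1$ and use the classical fact that, for any compatible metric on a compact metrizable space, the Hausdorff dimension is bounded below by the topological dimension. Hence $H\dim(\partial X, d_v) \geq \dim \partial X \geq 1$, and we may choose $s \in (0, H\dim(\partial X, d_v))$.

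The main step is then a shadow-counting argument in the spirit of Coornaert: fix a basepoint $o \in X$ and note that for any point $x \in X$ at distance $R$ from $o$, the shadow on $\partial X$ of a fixed-radius ball around $x$ has $d_v$-diameter comparable to $a^{-R}$ (up to multiplicative constants depending only on the visual parameters). Since $s < H\dim(\partial X, d_v)$, the $s$-dimensional Hausdorff measure of $\partial X$ with respect to $d_v$ is infinite, so any covering of $\partial X$ by sets of $d_v$-diameter at most $a^{-R}$ requires at least $\asymp a^{Rs}$ members. Using visuality of $X$ (Proposition \ref{ApogeeVisual}) together with semi-uniform quasi-coboundedness of the left-regular qiqac to ensure that every boundary point lies in the shadow of a quasi-orbit point at depth approximately $R$, one concludes that
\[
|\{\lambda \in \Lambda \mid d(\rho(\lambda).o, o) \leq R\}| \;\gtrsim\; a^{Rs}.
\]
By Corollary \ref{CoarseBoundedGeometry} and Proposition \ref{QIGrowth}, this function represents the internal growth of $(\Lambda, \Lambda^\infty)$, which is therefore bounded below by an exponential.

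The main obstacle will be the shadow-covering step, where the classical argument assumes a geometric isometric action on a proper geodesic hyperbolic space. Here we only have a semi-uniform geometric qiqac on a quasi-cobounded apogee, so some care is needed to control the implied constants: the uniform quasi-isometry bounds of the qiqac combined with visuality of the apogee yield enough orbit points in each ``annulus'' to saturate the covering, but setting up the shadow lemma in this looser setting (and verifying that the multiplicative constants do not depend on $R$) is the technical heart of the argument. Once this is in place, the passage from a lower bound on the number of shadows to a lower bound on the internal growth is routine.
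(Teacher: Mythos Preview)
Your proposal follows the same overall strategy as the paper: split into cases according to $\asdim \Lambda$, handle $\asdim \Lambda = 1$ via Corollary~\ref{ExponentialGrowth0}, and for $\asdim \Lambda \geq 2$ deduce exponential growth from the fact that $\dim \partial \Lambda \geq 1$ (hence the Hausdorff dimension of the boundary with respect to a visual metric is at least $1$).

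The execution of the second case differs, however. The paper first passes to a Bonk--Schramm realization (Corollary~\ref{BSRealization}), so that the apogee is a closed convex subset of $\bH^n$, and works in the ball model with explicit hyperbolic trigonometry (Lemma~\ref{HypTrig}). It then introduces the critical exponent $\delta_\Lambda$ and establishes the chain $\delta(\Lambda) \geq \delta_\Lambda \geq \dim_{\mathrm{Haus}}(\cL^{\mathrm{con}}(\Lambda)) \geq \asdim \Lambda - 1$ via a Bishop--Jones type argument (Theorem~\ref{CritExp}). This is essentially the dual of your shadow-counting: rather than lower-bounding the number of shadows needed to cover the boundary, the paper upper-bounds the Hausdorff dimension of the conical limit set by the exponent of any convergent growth series. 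Your direct Coornaert-style shadow argument on a general apogee avoids both the critical-exponent formalism and the detour through $\bH^n$, and is arguably more self-contained; the paper's route, on the other hand, yields the sharper quantitative statement $\delta(\Lambda) \geq \asdim \Lambda - 1$ (Theorem~\ref{ExpGrowthHigher}) and sidesteps your acknowledged ``technical heart'' (the shadow lemma in the qiqac setting) by doing everything concretely in the ball model of $\bH^n$. Both approaches are correct and rest on the same underlying idea.
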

For hyperbolic approximate groups of asymptotic dimension 1 this was established in Corollary \ref{ExponentialGrowth0}. Thus for the rest of this section we are going to assume that $(\Lambda, \Lambda^\infty)$  is a non-elementary hyperbolic approximate group with $\asdim \Lambda \geq 2$. We will work in the following setting: 
\begin{notation}[General Setting]\label{BSAsdim2} Throughout this section we fix a Bonk--Schramm realization $L \subset \partial \bH^n$ of $\partial \Lambda$ so that in particular,
\begin{equation}\label{dimBonkSchramm}
    \dim L = \dim \partial \Lambda = \asdim \Lambda -1 \geq 1.
\end{equation}
Then $X \coloneqq  \cH(L) \subset \bH^n$ is a closed convex apogee for $\Lambda$ with $\partial X = \cL(X) = L$. We fix a left-regular quasi-action $\rho: (\Lambda, \Lambda^\infty) \to \widetilde{\mathrm{QI}}(X)$  with $\rho(e) = \mathrm{Id}$. 

We also denote by ${\mathrm{pr}}_X: \bH^n \to X$ the nearest point projection and fix a basepoint $o \in X$. We then denote by $\iota: \Lambda \to X$, $\lambda \mapsto \rho(\lambda)(o)$ the associated orbit map. Since $\rho$ is geometric, the map $\iota$ is a quasi-isometric embedding with respect to some (hence any) internal metric on $\Lambda$.

We denote by $\cO \coloneqq  \rho(\Lambda).o$ the image of $\iota$ (i.e. the quasi-orbit of $o$), and given $\lambda \in \Lambda$ we set
\[
\|\lambda\| \coloneqq  d(o, \iota(\lambda)).
\]
Finally we abbreviate $\overline{B}_R \coloneqq  \{\lambda \in \Lambda \mid \|\lambda\| \leq R\}$ and $A_R \coloneqq  \overline{B}_{R+1} \setminus \overline{B}_R$ so that $\Lambda = \bigsqcup A_R$.
\end{notation}

\begin{remark}[Exponential growth rate] 
Since $d$ is an internal metric on $\Lambda$, internal exponential growth of $\Lambda$ amounts to showing that $0<\delta(\Lambda) < \infty$, where 
\[
  \delta(\Lambda) \coloneqq  \liminf \frac{1}{R} \cdot \log|B_R| < \infty
\]
is the \emph{exponential growth rate}\index{exponential growth rate} of $\Lambda$. Since $(\Lambda, d)$ is large-scale geodesic, it is quasi-isometric to a locally finite graph, and since it has bounded local geometry any such graph has bounded valences. This implies that $\delta(\Lambda) < \infty$, and thus to establish exponential growth we only need to show that $\delta(\Lambda) > 0$.
\end{remark}
\begin{definition} 
The \emph{growth series}\index{growth series!of approximate group} of $\Lambda$ is defined as
\[
S_\alpha \coloneqq   \sum_{\lambda \in \Lambda} e^{-\alpha \|\lambda\|},
\]
and the \emph{critical exponent}\index{critical exponent!of approximate group} $\delta_\Lambda$ is defined as
\[
\delta_\Lambda \coloneqq  \inf\{\alpha \mid S_\alpha \text{ converges}\}.
\]
\end{definition}
With this terminology at hand, we are going to establish the following more precise version of Theorem \ref{ExponentialGrowth} (in the case of higher asymptotic dimension):
\begin{theorem}[Asymptotic dimension and exponential growth rate]\label{ExpGrowthHigher}
Under our standing assumption that $\asdim \Lambda \geq 2$ we have
\[\delta(\Lambda) \quad \geq \quad \delta_\Lambda \quad \geq \quad \asdim \Lambda -1 \quad \geq \quad 1.\]
In particular, $\delta(\Lambda) > 0$, i.e.\ $\Lambda$ has internal exponential growth.
\end{theorem}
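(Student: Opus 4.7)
The plan is to prove the three inequalities in the chain separately. The rightmost inequality $\asdim\Lambda - 1 \geq 1$ is immediate from our standing assumption $\asdim\Lambda \geq 2$. The middle inequality $\delta_\Lambda \geq \asdim\Lambda - 1$ is the heart of the argument and is essentially an adaptation of the classical Sullivan--Coornaert comparison between the critical exponent of a group action and the Hausdorff dimension of its limit set, combined with the identification of $\asdim\Lambda$ in terms of $\dim \partial\Lambda$ provided by Theorem~\ref{AsdimMainConvenient}. The leftmost inequality $\delta(\Lambda) \geq \delta_\Lambda$ is a regularity-of-growth statement and will follow once one shows that $\log |\overline{B}_R|$ is quasi-subadditive in $R$.

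For the middle inequality I would proceed as follows. Working in the Bonk--Schramm realization (Notation~\ref{BSAsdim2}), fix a visual metric $d_o$ on $L = \partial X$; after rescaling the metric on $X$ if necessary, one may assume the visual parameter is $e$, so that $d_o(\xi, \eta) \asymp e^{-(\xi|\eta)_o}$. Since $X$ is visual by Proposition~\ref{ApogeeVisual}, every $\xi \in L$ is the endpoint of a geodesic ray $[o,\xi)$ in $X$, and since $\rho$ is cobounded, this ray passes within a uniformly bounded distance $D$ of an orbit point at every scale. For $\lambda \in \Lambda$ define the shadow
\[
\mathrm{Sh}(\lambda) \;:=\; \{\xi \in L \mid [o,\xi) \cap B(\iota(\lambda), D) \neq \emptyset\}.
\]
A standard Gromov-hyperbolic computation gives $\mathrm{diam}_{d_o}\mathrm{Sh}(\lambda) \leq C\, e^{-\|\lambda\|}$, and by coboundedness, for every $R_0 > 0$ the family $\{\mathrm{Sh}(\lambda) : \lambda \in \Lambda,\ \|\lambda\| \geq R_0\}$ covers $L$. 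Fixing $s > \delta_\Lambda$, the tail $\sum_{\|\lambda\| \geq R_0} e^{-s\|\lambda\|}$ tends to $0$ as $R_0 \to \infty$, and hence so does the $s$-Hausdorff content estimate
\[
\sum_{\|\lambda\| \geq R_0}(\mathrm{diam}\,\mathrm{Sh}(\lambda))^s \;\leq\; C^s \sum_{\|\lambda\|\geq R_0} e^{-s\|\lambda\|}.
\]
This yields $H^s(L, d_o) = 0$, hence $\dim_H(L,d_o) \leq s$; letting $s \searrow \delta_\Lambda$ gives $\dim_H(L, d_o) \leq \delta_\Lambda$. The classical Szpilrajn inequality $\dim Z \leq \dim_H(Z,d)$ together with Theorem~\ref{AsdimMainConvenient} then yields
\[
\asdim\Lambda - 1 \;=\; \dim L \;\leq\; \dim_H(L, d_o) \;\leq\; \delta_\Lambda.
\]

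For the first inequality I would prove the quasi-submultiplicative bound $|\overline{B}_{R+S}| \leq C \, |\overline{B}_{R+C}| \cdot |\overline{B}_{S+C}|$, from which Fekete's lemma implies that $\frac{1}{R}\log|\overline{B}_R|$ converges; combined with the easy observation that this limit is at least $\delta_\Lambda$ (by summing the growth series over annuli), this forces $\delta(\Lambda) = \delta_\Lambda$. To obtain the factorization bound, given $\lambda \in \overline{B}_{R+S}$ choose a geodesic $[o, \iota(\lambda)]$ in $X$ and a point $y$ on it with $d(o, y) \approx R$. By (semi-uniform) quasi-coboundedness of $X$ (Corollary~\ref{suqc}) there is $\mu \in \Lambda$ with $d(\iota(\mu), y) \leq D'$, hence $\|\mu\| \leq R + D'$; the qiqac identity $\rho(\mu) \circ \rho(\mu^{-1}\lambda)$ being uniformly close to $\rho(\lambda)$ (using Lemma~\ref{Inverses} and the constants of the uniform qiqac from Proposition~\ref{ExUniformQiqac}) then gives $\|\mu^{-1}\lambda\| \leq S + D''$. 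Thus each $\lambda$ admits a (non-unique) factorization $\lambda = \mu \cdot \nu$ with controlled sizes, and counting preimages yields the desired bound.

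The main obstacle will be the execution of the shadow covering argument in the qiqac setting: one needs uniform control on both the shadow diameters and their coverage of $L$ despite the fact that $\rho$ is not an isometric action but only a uniform qiqac, and the identity $\rho(\mu)^{-1} = \rho(\mu^{-1})$ holds only up to bounded error. The right tools to handle this are the existence of a uniform geometric qiqac (Proposition~\ref{ExUniformQiqac}) together with Lemma~\ref{Inverses}, which ensure that all the hyperbolic-geometric estimates one would classically make for an isometric group action survive the passage to the approximate group setting with merely uniform, and hence harmless, error terms.
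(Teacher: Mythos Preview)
Your approach to the middle inequality $\delta_\Lambda \geq \asdim\Lambda - 1$ is essentially the same as the paper's: both use a Sullivan/Bishop--Jones style shadow covering argument to bound the Hausdorff dimension of the limit set $L$ by $\delta_\Lambda$, then invoke the Szpilrajn inequality $\dim L \leq \dim_H(L, d_o)$ together with Theorem~\ref{AsdimMainConvenient}. The paper phrases this in the Poincar\'e ball model using radial projections and balls of radius $\asymp (1 - \|\iota(\lambda)\|_E)$ rather than shadows, but this is only a change of coordinates. Your observation that coboundedness of the orbit in $X$ makes the shadows cover $L$ is exactly the paper's use of the fact that all limit points are conical (Remark~\ref{ConicalLimtSetExpGrowth}).

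For the first inequality $\delta(\Lambda) \geq \delta_\Lambda$ you take a different and more elaborate route than the paper, and there is a genuine gap. The paper gives a short direct argument: group $S_\alpha = \sum_R \sum_{\lambda \in A_R} e^{-\alpha\|\lambda\|}$ by annuli and bound the contribution of each $A_R$ using the definition of $\delta(\Lambda)$; no factorization or subadditivity is needed. Your proposed Fekete argument, by contrast, rests on the estimate $\|\mu^{-1}\lambda\| \leq S + D''$, and this is where the qiqac setting bites. Since $\rho(\mu)$ is only a $(K, C)$-quasi-isometry (uniform $K$, but in general $K > 1$), from $d\big(\rho(\mu)(o),\, \rho(\mu)\rho(\mu^{-1}\lambda)(o)\big) \approx S$ you can only conclude
\[
\|\mu^{-1}\lambda\| \;=\; d\big(o,\, \rho(\mu^{-1}\lambda)(o)\big) \;\leq\; K S + D''.
\]
The resulting bound $|\overline{B}_{R+S}| \leq C\,|\overline{B}_{R+C}|\cdot|\overline{B}_{KS+C}|$ is not Fekete-subadditive when $K>1$, so the claimed convergence of $\frac{1}{R}\log|\overline{B}_R|$ does not follow. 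Your closing remark that the qiqac error terms are ``merely uniform, and hence harmless'' is too optimistic here: the \emph{multiplicative} QI constant is precisely what breaks the subadditivity. The paper's direct annulus bound sidesteps this entirely, since it never needs to translate by orbit elements.
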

The proof of the first inequality is standard:
\begin{lemma}[Exponential growth rate vs.\ critical exponent]
We have $\delta(\Lambda) \geq \delta_\Lambda$.
\end{lemma}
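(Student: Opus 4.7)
The plan is to prove the contrapositive statement: for any $\alpha > \delta(\Lambda)$ I will show that the Poincar\'e series $S_\alpha$ converges, which forces $\alpha \geq \delta_\Lambda$; letting $\alpha \searrow \delta(\Lambda)$ then yields $\delta(\Lambda) \geq \delta_\Lambda$. The bridge between the series and the counting function is the standard integral identity
\[
S_\alpha \; = \; \alpha \int_0^\infty |B_R|\, e^{-\alpha R}\, dR,
\]
obtained by writing $e^{-\alpha\|\lambda\|} = \alpha\int_{\|\lambda\|}^\infty e^{-\alpha R}\,dR$ and swapping sum and integral; equivalently, the annular decomposition $\Lambda = \bigsqcup_R A_R$ combined with Abel summation reduces convergence of $S_\alpha$ to convergence of the discrete series $\sum_R |B_R|\,e^{-\alpha R}$.

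Next I will extract, for any $\alpha > \delta(\Lambda)$, a bound $|B_R| \leq C\, e^{\alpha R}$ valid for \emph{all} sufficiently large $R$. The $\liminf$ definition only delivers this bound along a sparse sequence $R_n \to \infty$, so the subsequential estimate has to be propagated to a uniform one. For this I would exploit the geometric data of Notation~\ref{BSAsdim2}: uniformity of the left-regular qiqac (Proposition~\ref{ExUniformQiqac}) together with bounded geometry of the quasi-orbit $\cO \subset X \subset \bH^n$ (Corollary~\ref{CoarseBoundedGeometry}) and the hyperbolic isoperimetric inequality inside $\bH^n$ combine to give an annular comparison $|A_R| \leq C_0\,|B_R|$ for large $R$, equivalently a doubling inequality $|B_{R+1}| \leq (1+C_0)\,|B_R|$. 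Combined with monotonicity of $|B_R|$, this doubling propagates the sparse bound to $|B_R| \leq C'\, e^{(\alpha+\varepsilon) R}$ for all large $R$, at the cost of an arbitrarily small $\varepsilon > 0$.

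Plugging the uniform bound into the integral formula gives convergence of $S_{\alpha+2\varepsilon}$, so $\delta_\Lambda \leq \alpha+2\varepsilon$; letting $\varepsilon \searrow 0$ and then $\alpha \searrow \delta(\Lambda)$ produces the desired inequality $\delta_\Lambda \leq \delta(\Lambda)$.

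The main obstacle will be the doubling-and-propagation step: in the classical group case the inclusion $B_{R+1} \subset B_R \cdot S$ for a finite generating set $S$ supplies doubling instantly, whereas in the approximate group setting one has to exploit uniformity of the qiqac and the ambient hyperbolic geometry together. The opposite and unneeded inequality $\delta_\Lambda \geq \delta(\Lambda)$, on the other hand, is trivial: the lower bound $|B_R| \geq e^{(\delta(\Lambda)-\varepsilon)R}$ that follows directly from $\liminf$ forces $S_\alpha$ to diverge for all $\alpha < \delta(\Lambda)$.
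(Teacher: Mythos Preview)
Your overall strategy—show $S_\alpha<\infty$ for every $\alpha>\delta(\Lambda)$ and pass to the infimum—matches the paper exactly; the integral identity and the paper's annular sum are two packagings of the same computation. The paper takes $\alpha=\delta(\Lambda)+\epsilon$, asserts a uniform bound $\log|B_{R+1}|\leq(\alpha-\epsilon/2)R$ for all large $R$, bounds each annular term by $|A_R|^{-1}e^{-\epsilon R/2}$, and sums a geometric series. You instead observe, correctly, that the $\liminf$ in the definition of $\delta(\Lambda)$ only delivers such a bound along a subsequence $R_n\to\infty$, and you propose to propagate it to a uniform bound via a doubling inequality $|B_{R+1}|\leq(1+C_0)|B_R|$.

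This propagation is where your argument has a genuine gap. Doubling yields $|B_R|\leq(1+C_0)^{R-R_n}|B_{R_n}|$ for $R\in[R_n,R_{n+1}]$, but when $\log(1+C_0)>\alpha$ (the generic situation, since doubling only bounds the $\limsup$ of $R^{-1}\log|B_R|$ and says nothing about the $\liminf$) and the gaps $R_{n+1}-R_n$ are of order $R_n$, the right-hand side grows like $e^{\log(1+C_0)\cdot R}$, not $e^{(\alpha+\varepsilon)R}$. Nothing in the $\liminf$ hypothesis forces $(R_n)$ to be dense enough for the upgrade to succeed. Indeed, abstract counterexamples exist: a ball-growth function alternating long stagnant stretches with maximal-rate bursts has $\liminf=0$ yet critical exponent equal to the doubling rate. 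To make the $\liminf$ reading rigorous you would need something stronger than doubling—for instance a quasi-submultiplicativity estimate $|B_{R+S}|\leq C\,|B_R|\cdot|B_S|$ coming from the uniform qiqac, which forces the limit $\lim R^{-1}\log|B_R|$ to exist—or else to read $\delta(\Lambda)$ as the $\limsup$ (the standard convention for exponential growth rate), under which the uniform bound is immediate and the paper's short argument goes through without any of this extra machinery.
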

\begin{proof} Let $\epsilon > 0$ and $\alpha \coloneqq  \delta(\Lambda) + \epsilon$. Since
\[
\alpha > \liminf \frac{1}{R+1} \cdot \log|B_{R+1}| + \epsilon =  \liminf \frac{1}{R} \cdot \log|B_{R+1}| + \epsilon,
\]
we can find $R_0$ such that for all $R>R_0$ we have $\alpha R \geq \log  |B_{R+1}| + \epsilon R/2$. For all $R>R_0$ and all $\lambda \in A_R = B_{R+1} \setminus B_R$ we then have
\[
e^{-\alpha \|\lambda\|} \leq e^{-\alpha R} \leq |B_{R+1}|^{-1} \cdot e^{-\epsilon R/2} \leq |A_{R}|^{-1} \cdot e^{-\epsilon R/2}.
\]
This implies
\begin{eqnarray*}
S_\alpha &=& \sum_{R=0}^\infty \sum_{\lambda \in A_R} e^{-\alpha\|\lambda\|} \quad \leq \quad  \sum_{R=0}^{R_1} \sum_{\lambda \in A_R} e^{-\alpha\|\lambda\|} +  \sum_{R=R_1+1}^\infty \sum_{\lambda \in A_R} |A_{R}|^{-1} \cdot e^{-\epsilon R/2}\\
&\leq& |B_{R+1}| + \sum_{R = R+1}^\infty e^{-\epsilon R/2} \quad < \quad \infty.
\end{eqnarray*}
This shows that $S_\alpha$ converges for every $\alpha > \delta(\Lambda)$, and thus $\delta(\Lambda) \geq \delta_\Lambda$.
\end{proof}

In order to prove the final inequality 
\begin{equation}
    \delta_\Lambda \geq \asdim\Lambda-1,
\end{equation}
we will interpret the right-hand side as a lower bound on the Hausdorff dimension of the conical limit set of $\Lambda$.
\begin{remark}[Conical limit set]\label{ConicalLimtSetExpGrowth} Since $\rho$ is geometric, it is convex cocompact and thus, by Proposition \ref{cor:conical} every limit point of $\Lambda$ (or $\cO$) in $\partial X$ is conical. On the other hand, since $\cO$ is relatively dense in $X$ and the latter is a closed convex subset of $\bH^n$, we have $\cL(\Lambda) = \cL(X) = L$. This shows that if we consider $\Lambda$ as a subset of $\bH^n$, then
\begin{equation}\label{ConicalLimitBS}
\cL^{\mathrm{con}}(\Lambda) = \cL(\Lambda) = L.
\end{equation}
\end{remark}
\begin{remark}[Hausdorff dimension]\label{Hausdorff}
Let $E$ be a subset of a metric space $(Z,d)$. Given $\alpha > 0$ and $\delta \in [0, \infty]$ we define
\[
H_\alpha^\delta(E,d) \coloneqq  \inf\left\{\sum_{j=1}^\infty r_j^\alpha \mid \exists\, x_j \in E, r_j \in (0, \delta):\;  E \subset \bigcup_{j=1}^\infty B(x_j, r_j)\right\}.
\]
Then $H_\alpha(E,d) \coloneqq  \lim_{\delta \to 0} H_\alpha^\delta(E,d)$ is called the \emph{$\alpha$-dimensional Hausdorff measure}\index{Hausdorff measure!$\alpha$-dimensional} of $E$ and $H_\alpha^\infty(E,d)$ is called the \emph{$\alpha$-dimensional Hausdorff content}\index{Hausdorff content!$\alpha$-dimensional} of $E$. With this notation understood, the \emph{Hausdorff dimension}\index{Hausdorff dimension}\index{dimension!Hausdorff} of $E$ (with respect to $d$) can be characterized as
\begin{eqnarray*}
\dim_{\mathrm{Haus}}(E,d) &\coloneqq & \inf\{\alpha\mid H_\alpha(E,d) = 0\} = \inf\{\alpha\mid H^\infty_\alpha(E,d) = 0\}\\
&=&\inf\{\alpha\mid H_\alpha(E,d) < \infty\}. 
\end{eqnarray*}
Each Hausdorff measure is monotone and $\sigma$-additive on Borel sets \cite[Theorem 27]{Rogers}, and hence if a Borel set $E$ can be written as a countable union of Borel sets $E_M$, then we have
\[
\dim_{\mathrm{Haus}}(E,d) = \sup_{M} \dim_{\mathrm{Haus}}(E_M,d).
\]
\end{remark}
\begin{proposition}[Hausdorff dimension of the conical limit set]
If $d$ is any metric on $\partial \bH^n$ which induces the usual topology, then
\[
\dim_{\mathrm{Haus}}(\cL^{\mathrm{con}}(\Lambda), d) \geq \asdim\Lambda - 1.
\]
\end{proposition}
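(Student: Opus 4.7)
The plan is to reduce this statement to the classical Szpilrajn--Marczewski inequality between topological (covering) dimension and Hausdorff dimension.

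First, I identify the conical limit set explicitly. By Remark \ref{ConicalLimtSetExpGrowth}, in particular equation \eqref{ConicalLimitBS}, the convex cocompactness of the left-regular quasi-action on $X = \cH(L)$ forces every limit point to be conical, so
\[
\cL^{\mathrm{con}}(\Lambda) \; = \; \cL(\Lambda) \; = \; L \;\subset\; \partial \bH^n.
\]
By \eqref{dimBonkSchramm}, we have $\dim L = \dim \partial \Lambda = \asdim\Lambda - 1$. It therefore suffices to prove that for any metric $d$ on $\partial \bH^n$ inducing the usual topology one has
\[
\dim_{\mathrm{Haus}}(L, d) \;\geq\; \dim L.
\]

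Next, I would invoke the classical theorem of Szpilrajn--Marczewski: for any separable metric space $(Z, d)$, the covering dimension $\dim Z$ is bounded above by the Hausdorff dimension $\dim_{\mathrm{Haus}}(Z, d)$. The proof, which I would sketch for completeness, proceeds as follows: if $\alpha > \dim_{\mathrm{Haus}}(Z, d)$, then for every $\epsilon > 0$ one can cover $Z$ by countably many balls $B(x_i, r_i)$ with $r_i < \epsilon$ and $\sum r_i^\alpha < \epsilon$; after perturbing the radii so that no element of $Z$ lies on a sphere $\{y : d(y, x_i) = r_i\}$ and passing to a suitable refinement, one obtains an open cover of multiplicity at most $\lfloor \alpha \rfloor + 1$ and arbitrarily small mesh, giving $\dim Z \leq \lfloor \alpha \rfloor$. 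Since $L \subset \partial \bH^n$ is a compact, hence separable, space and $d|_{L \times L}$ is a compatible metric, applying the theorem to $(L, d|_{L \times L})$ yields the required inequality.

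The main obstacle, such as there is one, is the classical theorem itself; it is an off-the-shelf tool and no approximate-group-specific input is required at this stage. It is worth emphasizing that the standing assumption $\asdim \Lambda \geq 2$, equivalently $\dim L \geq 1$, is essential for the subsequent deduction of $\delta_\Lambda > 0$: if $\asdim \Lambda = 1$, the bound $\dim_{\mathrm{Haus}}(L, d) \geq 0$ would be tautological, which is precisely why that case was handled separately via Theorem \ref{Asdim1Case} and Corollary \ref{ExponentialGrowth0}.
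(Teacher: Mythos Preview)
Your proof is correct and follows essentially the same route as the paper: identify $\cL^{\mathrm{con}}(\Lambda)=L$ via Remark \ref{ConicalLimtSetExpGrowth}, use $\dim L=\asdim\Lambda-1$, and then invoke the classical Szpilrajn inequality $\dim \leq \dim_{\mathrm{Haus}}$ for separable metric spaces. The paper routes the last step through $\dim L = \mathrm{ind}(L)$ and cites Hurewicz--Wallman for $\mathrm{ind}\leq\dim_{\mathrm{Haus}}$, but this is the same theorem you quote directly.
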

\begin{proof} By Theorem \ref{AsdimMainConvenient} we have 
\[
\dim L =\dim \partial \Lambda = \asdim\Lambda - 1.
\]
Now $L$ is a separable metrizable space, and thus for $L$ the topological dimension $\dim L$ coincides with the small inductive dimension $\ind L$ by \cite[Thm.\ 1.6.4 and Thm.\ 4.1.3]{Engelking}. Finally, as explained in \cite[Section VII.4]{HurewiczWallman}, it follows from \cite[Thm.\ VII.2]{HurewiczWallman} that $\ind L$ (which unfortunately is denoted $\dim L$ in \emph{loc.\ cit.}) is bounded above by the Hausdorff dimension $\dim_{\mathrm{Haus}}(L, d)$. In view of Remark \ref{ConicalLimtSetExpGrowth} this establishes the proposition.
\end{proof}
We have thus reduced the proof of Theorem \ref{ExpGrowthHigher} to the following statement:
\begin{theorem}[Critical exponent and Hausdorff dimension]\label{CritExp}
We have $\delta_\Lambda \geq \dim_{\mathrm{Haus}}(\cL^{\mathrm{con}}(\Lambda))$.
\end{theorem}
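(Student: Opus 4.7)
The plan is to cover $\cL^{\mathrm{con}}(\Lambda)$ by boundary shadows of quasi-orbit points and read off a Hausdorff-content estimate from the convergence of the growth series $S_\alpha$. Throughout I work in the Bonk--Schramm realization of Notation \ref{BSAsdim2}: since $X = \cH(L)\subset\bH^n$ is closed and convex, every geodesic ray of $X$ is also a geodesic of $\bH^n$, so the Gromov products at $o\in X$ computed in $X$ and in $\bH^n$ agree. Fix on $\partial\bH^n$ the standard visual metric $d_0$ with visual parameter $a_0=e$, so that $d_0(\xi,\eta)\asymp e^{-(\xi|\eta)_o}$. For $x\in\bH^n$ and $r>0$ define the \emph{shadow} $\mathcal{S}_r(x)\subset\partial\bH^n$ as the set of endpoints of those geodesic rays from $o$ which pass within distance $r$ of $x$.

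The first ingredient is the classical \emph{shadow lemma}: there exists $C_r>0$ such that $\diam_{d_0}(\mathcal{S}_r(x))\leq C_r\,e^{-d(o,x)}$ for all $x\in\bH^n$. Indeed, if $\xi,\eta\in\mathcal{S}_r(x)$ then $(\xi|\eta)_o\geq d(o,x)-2r$, and the upper visual inequality yields the estimate. The second ingredient reads off conicality in terms of shadows: by Definition \ref{def:conical limit point} applied to the quasi-orbit $\cO=\rho(\Lambda).o$, each $\xi\in\cL^{\mathrm{con}}(\Lambda)$ admits a constant $K(\xi)>0$, a geodesic ray $\gamma_\xi$ from $o$ to $\xi$ (which lives in $X\subset\bH^n$), and elements $\lambda_n\in\Lambda$ with $\|\lambda_n\|\to\infty$ such that $\iota(\lambda_n)\in B(\gamma_\xi(k_n),K(\xi))$. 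Equivalently, $\xi\in\mathcal{S}_{K(\xi)}(\iota(\lambda_n))$ for all $n$.

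To remove the dependence of $K$ on $\xi$, stratify $\cL^{\mathrm{con}}(\Lambda)=\bigcup_{M\in\bN}\cL_M$ with $\cL_M:=\{\xi\mid K(\xi)\leq M\}$. Fix $M$; for every $N\in\bN$ the family $\{\mathcal{S}_M(\iota(\lambda)):\lambda\in\Lambda,\ \|\lambda\|\geq N\}$ covers $\cL_M$ (choose the $\lambda_n$ with $\|\lambda_n\|\geq N$), and each member has $d_0$-diameter at most $C_M e^{-N}$. Let $\alpha>\delta_\Lambda$, so that $S_\alpha=\sum_\lambda e^{-\alpha\|\lambda\|}<\infty$. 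Then
\[
H_\alpha^{C_M e^{-N}}(\cL_M,d_0)\ \leq\ \sum_{\|\lambda\|\geq N}\bigl(C_M e^{-\|\lambda\|}\bigr)^{\alpha}\ =\ C_M^{\alpha}\sum_{\|\lambda\|\geq N} e^{-\alpha\|\lambda\|}\ \xrightarrow{N\to\infty}\ 0,
\]
so $H_\alpha(\cL_M,d_0)=0$ and $\dim_{\mathrm{Haus}}(\cL_M,d_0)\leq\alpha$. By $\sigma$-stability of Hausdorff dimension (Remark \ref{Hausdorff}) this gives $\dim_{\mathrm{Haus}}(\cL^{\mathrm{con}}(\Lambda),d_0)\leq\alpha$, and letting $\alpha\searrow\delta_\Lambda$ proves the theorem.

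The chief subtlety is the interplay between the intrinsic geometry of the apogee $X$, in which conicality is defined and $\|\lambda\|=d_X(o,\iota(\lambda))=d_{\bH^n}(o,\iota(\lambda))$, and the ambient geometry of $\bH^n$ in which shadows and the visual metric are naturally expressed; convexity of $X$ in $\bH^n$ makes these compatible. The only other delicate point is the non-uniformity of the conical constant $K(\xi)$, which is handled cleanly by the stratification $\cL^{\mathrm{con}}=\bigcup_M\cL_M$ and the $\sigma$-additivity of Hausdorff dimension as above.
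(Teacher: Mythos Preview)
Your proof is correct and follows essentially the same strategy as the paper's: stratify $\cL^{\mathrm{con}}(\Lambda)$ by the conical constant, cover each stratum by boundary neighborhoods of quasi-orbit points whose $\alpha$-powers of radii are controlled by the tail of the growth series $S_\alpha$, and use $\sigma$-stability of Hausdorff dimension (Remark \ref{Hausdorff}) to conclude. The only difference is in packaging: the paper works in the Poincar\'e ball model with the angular metric $d_\angle$ and converts $S_\alpha$ into the Nicholls form $\sum_\lambda (1-\|\rho(\lambda)(0)\|_E)^\alpha$ (your $e^{-\|\lambda\|}$ and the paper's $1-\|\rho(\lambda)(0)\|_E$ are comparable), then uses an explicit hyperbolic-trigonometry lemma (Lemma \ref{HypTrig}) in place of your shadow estimate $(\xi|\eta)_o\geq d(o,x)-2r$, and an abstract upper-bound lemma (Lemma \ref{HausdorffUpper}) in place of your direct content computation. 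These are the same geometric facts expressed in different coordinates.
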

The remainder of this section is devoted to the proof of Theorem \ref{CritExp}, which is essentially the same as the proof of \cite[Thm.\ 2.1]{BishopJones}. Firstly, we need a lemma which allows us to provide upper bounds on Hausdorff dimension:
\begin{lemma}[Upper dimension bound]\label{HausdorffUpper} Let $(X,d)$ be a metric space and let
$\alpha>0$. Assume that there exists a sequence $(x_n)$ in $X$ and a sequence $(r_n)$ of positive real numbers such that each $x \in X$ is contained in infinitely many of the balls $B(x_n, r_n)$ and such that $\sum r_n^\alpha < \infty$. Then $\dim_{\mathrm{Haus}}(X) \leq \alpha$.
\end{lemma}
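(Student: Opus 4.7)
The plan is to show directly that the $\alpha$-dimensional Hausdorff measure $H_\alpha(X,d)$ vanishes, since by Remark~\ref{Hausdorff} we then obtain $\dim_{\mathrm{Haus}}(X,d)\leq\alpha$. In view of the definition
\[
H_\alpha^\delta(X,d) = \inf\Bigl\{\sum_{j=1}^\infty s_j^\alpha \;\Bigl|\; \exists\, y_j\in X,\, s_j\in(0,\delta):\; X \subset \bigcup_{j=1}^\infty B(y_j,s_j)\Bigr\},
\]
it suffices to exhibit, for every $\delta>0$ and every $\varepsilon>0$, a countable cover of $X$ by balls of radii in $(0,\delta)$ whose $\alpha$-powers sum to at most $\varepsilon$.

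The key observation is that the hypothesis $\sum_{n=1}^\infty r_n^\alpha<\infty$ forces $r_n^\alpha\to 0$, hence $r_n\to 0$, and moreover provides arbitrarily small tail sums $\sum_{n\geq N}r_n^\alpha$. The second hypothesis, that each $x\in X$ lies in infinitely many of the balls $B(x_n,r_n)$, immediately implies that for every $N\in\mathbb N$ the subfamily $\{B(x_n,r_n)\}_{n\geq N}$ still covers $X$: any point $x$ has infinitely many indices $n$ with $x\in B(x_n,r_n)$, and in particular some such index is $\geq N$.

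Given $\delta>0$ and $\varepsilon>0$, I would therefore choose $N=N(\delta,\varepsilon)$ large enough so that simultaneously
\[
r_n<\delta \text{ for all } n\geq N, \qquad \sum_{n\geq N} r_n^\alpha < \varepsilon,
\]
using the convergence of the series for both requirements. Then the tail family $\{B(x_n,r_n)\}_{n\geq N}$ is an admissible cover in the definition of $H_\alpha^\delta(X,d)$, yielding
\[
H_\alpha^\delta(X,d) \;\leq\; \sum_{n\geq N} r_n^\alpha \;<\; \varepsilon.
\]
Letting $\varepsilon\to 0$ gives $H_\alpha^\delta(X,d)=0$ for every $\delta>0$, and then $H_\alpha(X,d)=\lim_{\delta\to 0}H_\alpha^\delta(X,d)=0$, so $\dim_{\mathrm{Haus}}(X,d)\leq\alpha$ as desired. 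There is no real obstacle here; the only subtlety worth flagging is that one must pass to a tail of the sequence (rather than use the full cover) in order to meet the mesh constraint $r_n<\delta$ of the Hausdorff-measure definition, and this is precisely where the ``infinitely many'' clause in the hypothesis is used.
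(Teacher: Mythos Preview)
Your proof is correct and follows essentially the same approach as the paper: both pass to tail covers $\{B(x_n,r_n)\}_{n\geq N}$ and use that the tail sums $\sum_{n\geq N}r_n^\alpha$ tend to zero. The only difference is that the paper works with the Hausdorff \emph{content} $H_\alpha^\infty$ (which, per Remark~\ref{Hausdorff}, also characterizes $\dim_{\mathrm{Haus}}$), thereby avoiding the mesh constraint $r_n<\delta$ and hence the auxiliary observation that $r_n\to 0$; your route via $H_\alpha^\delta$ is equally valid but marginally longer.
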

\begin{proof} Since every $x \in X$ is contained in infinitely many balls, the collection of balls $\cU_N \coloneqq  (B(x_n, r_n))_{n \geq N}$ covers $X$ for every $N \in \mathbb N$. By definition of the $\alpha$-dimensional Hausdorff content we thus have
\[
H_\alpha^\infty(X) \leq C_N \coloneqq  \sum_{n=N}^\infty r_n^\alpha.
\]
Since $C_1$ is finite we have $C_n \to 0$ and thus $H_\alpha^\infty(X) = 0$, which implies the lemma.
\end{proof}
As a second ingredient we need some estimates from hyperbolic trigonometry. 
\begin{notation}\label{HypBallModel}
We are going to work in the Poincar\'e ball model of $\bH^n$; thus from now on $\bH^n$ denotes the unit ball in $\R^n$ with the hyperbolic metric and we assume that the basepoint is given by $o = 0$. Given $x \in \bH^n$ we denote by $\|x\|$ the hyperbolic distance of $x$ from $0$ and by $\|x\|_E \in [0,1)$ the Euclidean distance of $x$ from $0$ so that
\[
\|x\| = 2 \mathrm{arctanh} (\|x\|_E) = \log\left(\frac{1+\|x\|_E}{1-\|x\|_E}\right).
\]
We then identify $\partial \bH^n$ with the unit sphere in $\R^n$ and denote by $d_{\angle}$ the angular metric on $\partial \bH^n$. By definition, if $\eta, \xi \in \partial \bH^n$, then $d_{\angle}(\xi,\eta)$ is the arc length of the shortest segment of a great circle which contains $\xi$ and $\eta$.
\end{notation}
\begin{lemma}\label{HypTrig} For every $C>0$ there exists $M_C>0$ such that the following holds: If $x \in \bH^n$ with $\|x\|_E > \frac 1 2$ and $\gamma \colon [0, \infty) \to \bH^n$ is a geodesic ray in $\mathbb{H}^n$ emanating from $0$ with $d(\gamma, x)\leq C$, then the endpoint $\xi$ of $\gamma$ in $\partial \bH^n$ satisfies\[d_{\angle}\left(\xi, \frac{x}{\|x\|_E}\right)\leq M_C(1-\|x\|_E) \quad \text{or} \quad d_{\angle}\left(\xi, \frac{x}{\|x\|_E}\right) \geq \frac{\pi}{2}.\]
\end{lemma}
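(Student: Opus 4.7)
The plan is to identify $d_\angle(\xi, x/\|x\|_E)$ with the angle at $0$ in a hyperbolic triangle and then invoke the right-angled hyperbolic law of sines. Let $\xi := \gamma(\infty) \in \partial \bH^n$; since $\gamma$ emanates from $o = 0$, in the Poincar\'e ball model $\gamma$ is the Euclidean radius from $0$ to $\xi$. Set $\theta := d_\angle(\xi, x/\|x\|_E) \in [0,\pi]$, $b := \|x\|$ and $r := \|x\|_E$; note that $\theta$ is precisely the Euclidean angle at $0$ between $\gamma$ and the radial ray through $x$, which also equals the corresponding hyperbolic angle.

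The key dichotomy is on the size of $\theta$. If $\theta \geq \pi/2$ we are in the second alternative and there is nothing to prove. Otherwise $\theta < \pi/2$, in which case the orthogonal projection $y$ of $x$ onto the complete geodesic through $\gamma$ lies on $\gamma$ itself; therefore the hypothesis gives $c := d(x,y) = d(x,\gamma) \leq C$. Applying the right-angled hyperbolic identity $\sinh c = \sinh b \cdot \sin\theta$ (the hyperbolic analogue of $c = b \sin\theta$, valid in a right triangle with hypotenuse $b$) yields
\[
\sin \theta \;\leq\; \frac{\sinh C}{\sinh b}.
\]

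To convert this into the desired linear bound I exploit the explicit formula $\sinh b = 2r/(1-r^2)$, which one computes from $b = \log\bigl((1+r)/(1-r)\bigr)$. Since $r > 1/2$ implies $1+r \leq 2$ and $r \geq 1/2$, we get $\sinh b \geq 1/(2(1-r))$, so $\sin \theta \leq 2\sinh(C)\,(1-r)$. Concavity of $\sin$ on $[0,\pi/2]$ gives $\sin \theta \geq (2/\pi)\theta$, hence $\theta \leq (\pi/2)\sin\theta \leq \pi\sinh(C)\,(1-\|x\|_E)$. Thus the choice $M_C := \pi\sinh(C)$ works. The only subtle point, and the one that dictates the form of the dichotomy, is checking that the foot of the perpendicular actually lies on the ray $\gamma$ (not on its opposite extension beyond $0$); this is precisely what $\theta < \pi/2$ guarantees, and the opposite regime is absorbed into the second alternative of the conclusion.
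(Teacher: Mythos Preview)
Your proof is correct and follows essentially the same argument as the paper's: both reduce to the hyperbolic right-triangle identity $\sin\theta = \sinh c/\sinh b$ and then bound $\sinh b$ below via the explicit relation $\sinh b = 2r/(1-r^2)$. The only differences are cosmetic---you use the sharper bound $\sin\theta \ge (2/\pi)\theta$ (yielding $M_C=\pi\sinh C$) where the paper uses $\sin\theta \ge \theta/2$ (yielding $M_C=4\sinh C$), and you make explicit the point that the foot of the perpendicular lies on the ray precisely when $\theta<\pi/2$, which the paper handles implicitly.
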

\begin{proof} Since the statement is invariant under isometries, we may assume without loss of generality that $n=2$. Now let 
\[
\theta \coloneqq  d_\angle\left(\xi, \frac{x}{\|x\|_E}\right).
\]
We will assume that $\theta < \frac \pi 2$. Note that, by definition, $\theta$ is just the hyperbolic angle between the rays through $\xi$ and $x$. If $C'$ is the hyperbolic distance between $x$ and its nearest point projection onto $\gamma$, then $C' \leq C$, and by basic hyperbolic trigonometry we have
\[\sin(\theta) = \frac{\sinh(C')}{\sinh(\|x\|)}\leq \frac{\sinh(C)}{\sinh(2\mathrm{arctanh}(\|x\|_E))}=\frac{\sinh(C)(1-\|x\|_E^2)}{2\|x\|_E}.\]
Since $\theta < \frac \pi 2$ we have $\sin(\theta) \geq \frac{1}{2}\theta$ and since $\|x\|_E \geq \frac 1 2$ we have $\frac{1-\|x\|_E^2}{\|x\|_E} \leq 4(1-\|x\|_E)$. We deduce that
\[
\theta \leq 2 \sin(\theta) < 4 \sinh(C) \cdot (1-\|x\|_E).
\]
We may thus choose $M_C \coloneqq  4 \sinh(C)$.
\end{proof}

\begin{proof}[Proof of Theorem \ref{CritExp}] It will be convenient for us to work in the ball model of $\bH^n$; we will use the notation of Notation \ref{HypBallModel}.

Let $\epsilon>0$ and $\alpha \coloneqq  \delta_\Lambda + \epsilon$; by definition of $\delta_\Lambda$ this means that $S_\alpha$ converges. Since $d(0,x) = \|x\| =  \log\left(\frac{1+\|x\|_E}{1-\|x\|_E}\right)$, convergence of $S_\alpha$ is equivalent to the statement that
\begin{equation}\label{NichollsFormula}
    \sum_{\lambda \in \Lambda} (1-\|\rho(\lambda)(0)\|_E)^\alpha < \infty,
\end{equation}
see \cite[Sec.\ 1.6]{Nicholls}. For every $x \in \bH^n \setminus\{0\}$ we denote by $\gamma_x$ the unique geodesic ray from $0$ through $x$. We then define
\[
\pi_\infty: \bH^n \setminus\{0\} \to \partial\bH^n, \quad x \mapsto \gamma_x(\infty) = \frac{x}{\|x\|_E}.
\]
We extend this map to all of $\bH^n$ by setting $\pi_\infty(0) \coloneqq  (1, 0\dots, 0)^\top$ and enumerate $\Lambda = \{\lambda_1, \lambda_2, \dots\}$. For every $M \in \mathbb N$ we then define
\[
\xi_n \coloneqq  \pi_\infty(\rho(\lambda_n)(0)) \qand r^{(M)}_n \coloneqq  M(1-\|\rho(\lambda_n)(0)\|_E).
\]
We then set $\cU_M \coloneqq  \{B(\xi_n, r^{(M)}_n) \mid n \in \mathbb N\}$ and denote by $E_M$ the set of all $\xi \in \partial \bH^n$ which are contained in infinitely many elements of $\cU_M$. We also define $E \coloneqq  \bigcup E_M$.
By \eqref{NichollsFormula} and Lemma \ref{HausdorffUpper} we then have $\dim_{\mathrm{Haus}}(E_M) \leq \alpha$ for all $M \in \mathbb N$ and thus $\dim_{\mathrm{Haus}}(E) \leq \alpha$ by Remark \ref{Hausdorff}; letting $\epsilon \to 0$ we therefore obtain $\dim_{\mathrm{Haus}}(E) \leq \delta_\Lambda$.

To conclude the proof, it suffices to show that every conical limit point is contained in the set $E$. Thus let $\xi$ be a conical limit point and let $\gamma$ be a geodesic ray emanating from $0$ which represents $\xi$. Then there exist $C>0$ and infinitely many elements $x_1, x_2, \dots,$ in the quasi-orbit $\rho(\Lambda).o$ which are at distance $< C$ from $\gamma$. Since the quasi-orbit is locally finite, we have \[d_\angle\left(\xi, \frac{x_n}{\|x_n\|_E}\right)< \frac \pi 2 \qand \|x_n\|_E \geq \frac 1 2 \quad \text{for almost all } n.\]
If we now choose $M\coloneqq M_C$ as in Lemma \ref{HypTrig}, then the lemma shows that $\xi \in E_M$. This finishes the proof.
\end{proof}

\begin{appendix}

\chapter{Background from large-scale geometry}\label{AppendixLSG}

\section{Notations concerning (pseudo-)metric spaces}
In this section we fix our notation concerning metric spaces. In fact, it will be convenient for us to allow for a slight generalization of metric spaces called pseudo-metric spaces:
\begin{definition}\label{DefPseudoMetric}  
Let $X$ be a set. A function $d: X \times X \to [0, \infty)$ is called a \emph{pseudo-metric}\index{pseudo-metric} if for all $x,y,z \in X$ the following hold:
\begin{enumerate}[(i)]
\item $d(x,x) = 0$,
\item $d(x,y) = d(y,x)$,
\item $d(x,z) \leq d(x,y) + d(y,z)$.
\end{enumerate}
In this case the pair $(X,d)$ is called a \emph{pseudo-metric space}\index{pseudo-metric!space}.
\end{definition}
If $(X,d)$ is a pseudo-metric space, then we obtain an equivalence relation $\sim$ on $X$ by setting $x \sim y :\Leftrightarrow d(x,y) = 0$. The pseudo-metric $d$ then descends to a metric $\bar d$ on the quotient space $\bar X \coloneqq  X/_\sim$, and we refer to $(\bar X, \bar d)$ as the \emph{metric quotient}\index{metric quotient} of $(X,d)$. Our reason to consider pseudo-metric spaces is that they arise naturally in the context of isometric group actions:
\begin{example} Assume that a group $\Gamma$ acts by isometries on a metric space $(X,d_X)$ and let $o \in X$ be a basepoint. Then we obtain a left-invariant pseudo-metric $d$ on $\Gamma$ by setting $d(g,h) \coloneqq  d_X(g.o, h.o)$. In general, this pseudo-metric is not a metric.
\end{example}

We will use the following notations concerning pseudo-metric spaces: If $(X, d)$ is a pseudo-metric space, a point $x \in X$ and $R > 0$, we denote by 
\[B(x, R) \coloneqq  \{x' \in X\mid d(x,x')< R\} \quad \text{and} \quad \bar B(x, R) \coloneqq   \{x' \in X\mid d(x,x')\leq R\} \]
the open, respectively closed ball of radius $R$ around $x$.  Given $A \subset X$ we denote by $N_R(A)$ the (open) $R$-neighborhood of $A$ in $X$, i.e.
\[
N_R(A) \coloneqq  \bigcup_{x \in A} B(x, R).
\]
The open balls $B(x, R)$ form the basis for a topology $\tau_d$ on $X$, which is Hausdorff if and only if $d$ is a metric. By abuse of language, if $(X, \tau_d)$ satisfies a topological property, we will also say that $(X, d)$ has this property. For example, we say that $(X,d)$ is connected if $(X, \tau_d)$ is. We say that $(X,d)$ is \emph{proper}\index{proper!space} if for some (hence any)
$o \in X$ the map $x \mapsto d(o, x)$ is proper. Equivalently, the balls $B(o, R)$ are \emph{relatively compact}\index{relatively compact} (i.e.\ they have compact closure) for all $R>0$ and some (hence any) $o \in X$.
\begin{definition}\label{DefDelone1} Let $(X, d)$ be a pseudo-metric space, let $A \subset X$ and let $R > r > 0$.
\begin{enumerate}[(i)]
\item $A$ is called \emph{$r$-uniformly discrete}\index{uniformly discrete}\index{uniformly discrete!$r$-uniformly discrete} in $X$ if $d(x,y) > r$ for all $x,y \in A$ with $x \neq y$.
\item $A$ is called \emph{relatively dense}\index{relatively dense!$R$-relatively dense} in $X$ if $N_R(A) = X$. 
\item $A$ is called \emph{$(r, R)$-Delone}\index{Delone!set}\index{Delone!$(r,R)$-Delone} if it is $r$-uniformly discrete and $R$-relatively dense. In this case $(r, R)$ are called \emph{Delone parameters}\index{Delone!parameters} for $A$.
\end{enumerate}
\end{definition}
It is well-known that every pseudo-metric space contains a Delone set (cf. \cite[Prop. 3.C.3]{CdlH}). There are other terms for the properties from the previous definition, especially for (ii): such a set $A$ is often called quasi-dense, or coarsely dense. 

\section{Coarse maps and coarse equivalences} 

The following terminology follows \cite[Chapter 3]{CdlH} (except for coarse equivalence, which is called \emph{metric coarse equivalence} in \cite{CdlH}, even among pseudo-metric spaces).

\begin{definition}\label{def: coarsely - maps} Let $(X, d_X)$, $(Y, d_Y)$ be pseudo-metric spaces and $f, f': X \to Y$ be maps.
\begin{enumerate}[(i)]
\item A non-decreasing function $\Phi_+: [0, \infty) \to [0, \infty)$ is called an \emph{upper control function}\index{upper control function}. A non-decreasing function $\Phi_-:  [0, \infty) \to [0, \infty]$ with $\lim_{t \to \infty} \Phi_-(t) = \infty$ is called a \emph{lower control function}\index{lower control function}.
\item An upper control function $\Phi_+$ (respectively a lower control function $\Phi_-$) is an \emph{upper control for $f$} (respectively a \emph{lower control for $f$}) if $d_Y(f(x), f(x')) \leq \Phi_+(d_X(x,x'))$ (respectively $d_Y(f(x), f(x'))$ $\geq \Phi_-(d_X(x,x'))$) for all $x, x' \in X$.
\item $f$ is called \emph{coarsely Lipschitz}\index{coarsely Lipschitz} (respectively \emph{coarsely expansive}\index{coarsely expansive}, a \emph{coarse embedding}\index{coarse embedding}) if it admits an upper control (respectively a lower control, both an upper and a lower control).
\item $f$ is called \emph{coarsely proper}\index{coarsely proper} if pre-images of bounded sets are bounded.
\item  $f$ is called \emph{essentially surjective}\index{essentially surjective} or \emph{coarsely surjective}\index{coarsely surjective} if its image is relatively dense in $(Y, d_Y)$. It is called a \emph{coarse equivalence}\index{coarse equivalence} if it is a coarse embedding and essentially surjective.
\item $f$ and $f'$ are \emph{close}\index{close functions}, denoted $f \sim f'$, if they are at uniformly bounded distance in the sense that
\[
\sup_{x \in X}d_Y(f(x), f'(x)) < \infty.
\]
We denote by $[f]$ the equivalence class of $f$ under closeness.
\item A map $g: Y \to X$ is called a \emph{coarse inverse}\index{coarse inverse} to $f$ if $[g \circ f] = [{\rm Id}_X]$ and $[f \circ g] = [{\rm Id}_Y]$.
\end{enumerate}
\end{definition}

We refer the reader to \cite[Chapter 3]{CdlH} for basic properties of coarsely expansive and coarsely Lipschitz functions. The terminology varies a lot throughout the literature (cf.\ \cite[Remark 3.A.4]{CdlH}). In \cite{Roe}, coarsely Lipschitz maps between metric spaces are referred to as (uniformly) bornologous, while in \cite{BDLM} they are called large scale uniform; in \cite{BellDran1}, coarsely expansive maps are called uniformly expansive maps. Coarsely Lipschitz maps and coarsely expansive maps can also be characterized without explicit mentioning of control functions (see \cite[Prop.\ 3.A.5]{CdlH}):
\begin{lemma} \label{characterisation-CL,CE}
A map $f:X\to Y$ between pseudo-metric spaces is coarsely Lipschitz if and only if for every $r\geq 0$, there exists $s\geq 0$ such that, if $x, x' \in X$ satisfy $d_X(x, x')\leq r$, then $d_Y (f(x),f(x'))\leq s$. 

A map $f:X\to Y$ between pseudo-metric spaces is coarsely expansive if and only if for each $s\geq 0$, there exists $r\geq 0$ such that, if $x, x' \in X$ satisfy $d_X(x, x')\geq r$, then $d_Y (f(x),f(x'))\geq s$. \qed
\end{lemma}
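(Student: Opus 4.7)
The plan is to prove both equivalences by observing that the two characterisations are simply reformulations: the existence of control functions is captured by taking suitable suprema or infima indexed by the scale $r$. No deep ideas are involved; the only subtlety is making sure that the explicitly constructed control functions are everywhere finite (respectively tend to infinity) and are non-decreasing. I will handle the two equivalences separately.

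For the coarsely Lipschitz statement, the forward direction is immediate: if $\Phi_+$ is an upper control for $f$, then given $r\geq 0$ I set $s:=\Phi_+(r)$ and use that $\Phi_+$ is non-decreasing, so $d_X(x,x')\leq r$ forces $d_Y(f(x),f(x'))\leq \Phi_+(r)=s$. For the converse I define
\[
\Phi_+(r):=\sup\{d_Y(f(x),f(x'))\mid x,x'\in X,\; d_X(x,x')\leq r\},
\]
with the convention $\sup\emptyset=0$. The hypothesis supplies, for each $r$, an $s=s(r)$ bounding all such $d_Y(f(x),f(x'))$, so $\Phi_+(r)\leq s(r)<\infty$. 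Because the set over which the supremum is taken grows with $r$, the function $\Phi_+$ is non-decreasing, and by construction $d_Y(f(x),f(x'))\leq \Phi_+(d_X(x,x'))$ for all $x,x'\in X$. Thus $\Phi_+$ is an upper control function for $f$.

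For the coarsely expansive statement, the forward direction uses that a lower control $\Phi_-$ satisfies $\Phi_-(t)\to\infty$: given $s\geq 0$, choose $r$ so large that $\Phi_-(r)\geq s$; then $d_X(x,x')\geq r$ yields $d_Y(f(x),f(x'))\geq\Phi_-(d_X(x,x'))\geq \Phi_-(r)\geq s$, using monotonicity of $\Phi_-$. For the converse I set
\[
\Phi_-(t):=\inf\{d_Y(f(x),f(x'))\mid x,x'\in X,\; d_X(x,x')\geq t\},
\]
again with $\inf\emptyset=\infty$. The set over which the infimum is taken shrinks as $t$ increases, so $\Phi_-$ is non-decreasing. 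To see $\Phi_-(t)\to\infty$, fix $s\geq 0$ and choose $r=r(s)$ as provided by the hypothesis; then for every $t\geq r(s)$ and every pair $(x,x')$ with $d_X(x,x')\geq t\geq r(s)$ we get $d_Y(f(x),f(x'))\geq s$, whence $\Phi_-(t)\geq s$. By construction $d_Y(f(x),f(x'))\geq \Phi_-(d_X(x,x'))$, so $\Phi_-$ is a lower control function for $f$.

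The only step that might look delicate is verifying that the constructed $\Phi_-$ tends to infinity; this is really the whole content of the direction that takes the pointwise hypothesis and produces a single control function. Everything else reduces to elementary monotonicity observations and the standard supremum/infimum construction, so no genuine obstacle is expected.
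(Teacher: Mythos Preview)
Your proof is correct and is precisely the standard argument. The paper itself does not give a proof of this lemma: it is stated with a \qed and a reference to \cite[Prop.~3.A.5]{CdlH}, so there is no in-paper argument to compare against, but your sup/inf construction of the control functions is exactly what one expects.
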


\begin{remark}[Metric coarse category]\label{Coa0} Note that coarsely expansive (respectively coarsely Lipschitz) functions are closed under composition, and that composition descends to closeness classes. We may thus define a category $\mathbf{Coa}_0$ as follows: Objects are non-empty pseudo-metric spaces, and morphisms from $(X,d)$ to $(Y,d)$ are closeness classes of coarsely Lipschitz maps. This category is called the \emph{metric coarse category}\index{metric coarse category} in \cite[Def.\ 3.A.7]{CdlH}, and in this category the following hold (\cite[Prop. 3.A.16]{CdlH}).
\end{remark}

\begin{lemma}[Morphisms in the metric coarse category]\label{CoarseMorphisms} Let $f: X \to Y$ be a coarsely Lipschitz map between non-empty pseudo-metric spaces and $[f]$ the corresponding morphism in $\mathbf{Coa}_0$. 
\begin{enumerate}[(i)]
\item $[f]$ is a monomorphism in $\mathbf{Coa}_0$ if and only if $f$ is coarsely expansive.
\item $[f]$ is an epimorphism in $\mathbf{Coa}_0$ if and only if $f$ is coarsely surjective.
\item $[f]$ is an isomorphism in $\mathbf{Coa}_0$ if and only if $f$ is a coarse equivalence if and only if $[f]$ is a monomorphism and an epimorphism.
\end{enumerate}
\end{lemma}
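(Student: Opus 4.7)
The plan is to prove (i) and (ii) independently by explicit constructions of test pseudo-metric spaces, and then to derive (iii) by combining them with a direct construction of a coarse inverse.

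For (i), the direction ``coarsely expansive $\Rightarrow$ monomorphism'' is immediate: if $\Phi_-$ is a lower control for $f$ (with $\Phi_-(t) \to \infty$) and $[f \circ g] = [f \circ h]$ for coarsely Lipschitz $g, h: Z \to X$, then $d_Y(f(g(z)), f(h(z))) \leq C$ for some $C$ and all $z$, so choosing $R$ with $\Phi_-(R) > C$ forces $d_X(g(z), h(z)) \leq R$, i.e., $g \sim h$. For the converse, assume $f$ is not coarsely expansive, so by Lemma \ref{characterisation-CL,CE} there exist $s \geq 0$ and sequences $(x_n), (y_n) \subset X$ with $d_X(x_n, y_n) \to \infty$ and $d_Y(f(x_n), f(y_n)) \leq s$. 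I would take as test space $Z := \{z_n : n \in \mathbb{N}\}$ equipped with the pseudo-metric $d_Z(z_n, z_m) := \max\{d_X(x_n, x_m), d_X(y_n, y_m)\}$, whose triangle inequality follows from that of $X$ together with the subadditivity $\max(a+b,c+d) \leq \max(a,c) + \max(b,d)$. Setting $g(z_n) := x_n$ and $h(z_n) := y_n$ gives two $1$-Lipschitz maps with $[f \circ g] = [f \circ h]$ (since $d_Y(f(x_n), f(y_n)) \leq s$) but $[g] \neq [h]$ (since $d_X(x_n, y_n) \to \infty$), showing that $[f]$ is not a monomorphism.

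For (ii), the direction ``coarsely surjective $\Rightarrow$ epimorphism'' is again direct: if $f(X)$ is $R$-relatively dense and $g, h: Y \to Z$ are coarsely Lipschitz with $g \circ f \sim h \circ f$, then for any $y \in Y$, picking $x \in X$ with $d_Y(f(x), y) \leq R$ and applying the coarse Lipschitz upper controls of $g$ and $h$ to the triangle inequality bounds $d_Z(g(y), h(y))$ uniformly in $y$. For the converse, assume $f$ is not coarsely surjective, so there is a sequence $(y_n) \subset Y$ with $d_Y(y_n, f(X)) \to \infty$. I would take $Z := [0, \infty)$ with the Euclidean metric, $g(y) := 0$, and $h(y) := d_Y(y, f(X))$. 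The map $h$ is $1$-Lipschitz by the standard property of the distance function to a set, while $g$ is constant. Since $f(x) \in f(X)$ gives $h(f(x)) = 0 = g(f(x))$, we have the equality $g \circ f = h \circ f$; yet $d_Z(g(y_n), h(y_n)) = d_Y(y_n, f(X)) \to \infty$, so $[g] \neq [h]$ and $[f]$ is not an epimorphism.

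For (iii), the implication iso $\Rightarrow$ mono and epi is formal, and combined with (i) and (ii) it identifies isomorphisms in $\mathbf{Coa}_0$ precisely with coarse equivalences. For the reverse direction, given a coarse equivalence $f$ with lower control $\Phi_-$ and $R$-dense image, I would construct a coarse inverse $g: Y \to X$ by choosing for each $y \in Y$ a point $x_y \in X$ with $d_Y(f(x_y), y) \leq R$ and setting $g(y) := x_y$. The relations $f \circ g \sim \mathrm{id}_Y$ and $g \circ f \sim \mathrm{id}_X$ follow directly from the defining inequality of $g$ together with coarse expansiveness of $f$ (the latter used to bound $d_X(g(f(x)), x)$ via the bounded image $d_Y(f(g(f(x))), f(x))$). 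The key technical step is verifying that $g$ itself is coarsely Lipschitz: if $d_Y(y, y') \leq r$, then $d_Y(f(g(y)), f(g(y'))) \leq r + 2R$, and coarse expansiveness of $f$ yields the desired upper bound on $d_X(g(y), g(y'))$. The main obstacle in the whole proof lies in designing the test pseudo-metrics used in the converse halves of (i) and (ii) so that both the triangle inequality and coarse Lipschitz-ness of the auxiliary maps can be checked cleanly; once the constructions above are in place, everything else reduces to routine triangle-inequality estimates involving the control functions.
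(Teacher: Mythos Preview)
Your proof is correct. The paper itself does not prove this lemma; it simply cites \cite[Prop.\ 3.A.16]{CdlH}, so there is no in-paper argument to compare against. Your constructions for the converse directions of (i) and (ii) are clean and do the job: the $\max$-pseudo-metric on the test space in (i) is a nice choice that makes both $g$ and $h$ automatically $1$-Lipschitz, and the distance-to-image function in (ii) is the standard trick. The construction of the coarse inverse in (iii) and the verification that it is coarsely Lipschitz via the lower control of $f$ are exactly what one finds in standard references such as \cite{CdlH}. One minor stylistic point: in (iii) your phrasing ``combined with (i) and (ii) it identifies isomorphisms \dots\ with coarse equivalences'' slightly obscures the logical flow; it would read more cleanly to say explicitly that mono $+$ epi $\Rightarrow$ coarse equivalence (by (i), (ii)) $\Rightarrow$ iso (by your explicit inverse), closing the cycle.
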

Explicitly, (iii) states that a coarsely Lipschitz map $f: X \to Y$ (with $X \neq \emptyset$) is a coarse equivalence if and only if there exists a coarsely Lipschitz map $g: Y \to X$ which is a coarse inverse for $f$.

\begin{definition} Two pseudo-metric spaces $(X, d_X)$ and $(Y, d_Y)$ are called \emph{coarsely equivalent} if there exists a coarse equivalence $f: X \to Y$.
\end{definition}

By Lemma \ref{CoarseMorphisms}, for non-empty pseudo-metric spaces, coarse equivalence is the same as isomorphism in the category $\mathbf{Coa}_0$. In particular, coarse equivalence defines an equivalence relation on pseudo-metric spaces. An invariant of metric spaces will be called a \emph{coarse invariant}\index{coarse invariant} if it is invariant under this equivalence relation. We denote by $[(X, d)]_c$ (or simply $[X]_c$) the coarse equivalence class of the pseudo-metric space $(X,d)$.

\begin{remark}\label{coarse structure def}
 The above notions can be defined in the wider context of coarse spaces in the sense of Roe \cite{Roe} (cf.\ \cite[3.E]{CdlH}). Recall that if $X$ is a set, then a non-empty collection $\mathcal{E}$ of subsets of $X\times X$ is called a \emph{coarse structure}\index{coarse structure} (or a \emph{uniform bornology}\index{uniform bornology}) on $X$ and the pair $(X,\mathcal{E})$ is called a \emph{coarse space}\index{coarse space} if the following properties hold. 
\begin{enumerate}[({E}1)]
\item$\Delta(X)=\{(x,x) \mid x\in X\} \in \mathcal E$,
\item If $E\in \mathcal{E}$, and $F\subset E$, then $F\in \mathcal{E}$,
\item If $E_1,E_2\in \mathcal{E}$, then $E_1\cup E_2\in\mathcal{E}$,
\item If $E\in \mathcal{E}$, then $E^{-1}\in \mathcal{E}$,
\item If $E, F \in \mathcal{E}$, then $E \circ F \in \mathcal E$,
\end{enumerate}
where  $E^{-1}=\{(x',x) \mid (x,x')\in E\}$, $E \circ F= \{(x', x'') \mid \exists x\in X \text{ with } (x', x)\in E, \ (x,x'' )\in F\}$.\\
The elements of $\mathcal E$ are then called \emph{controlled sets}\index{controlled sets} or \emph{entourages}\index{entourages}. A subset $B \subset X$ is called \emph{$\mathcal E$-bounded}\index{bounded!$\mathcal E$-bounded} if $B \times B$ is controlled.

If $(X, \mathcal E_X)$ and $(Y, \mathcal E_Y)$ are coarse spaces, then a map $f: X \to Y$ is called \emph{proper}\index{proper!map} if pre-images of bounded sets under $f$ are bounded, and
 \emph{bornologous}\index{bornologous}\index{bornologous!map} if $(f\times f)(\mathcal E_X) \subset \mathcal E_Y$. A proper bornologous map is called a \emph{coarse map}\index{coarse map}. Two coarse maps $f, f': X \to Y$ are considered \emph{equivalent}\index{coarse map!equivalence}, denoted $f \sim f'$, if $\{(f(x), f'(x)) \mid x \in X\}$ is controlled. Coarse spaces and equivalence classes of coarse maps form a category $\mathbf{Coa}$. (Let us note here that what we call a bornologous map, in \cite[ Def.\ 3.E.1]{CdlH} it is called a coarse map, i.e., their coarse maps do not have to satisfy  the requirement of properness.)

If $(X,d)$ is a pseudo-metric space, then 
\[
\mathcal E_d \coloneqq  \{E \subset X \times X \mid  \sup_{(x,y) \in E} d(x,y) < \infty\}
\]
defines a coarse structure on $X$, called the \emph{coarse structure generated by $d$}\index{coarse structure!generated by $d$}. The $\mathcal E_d$-bounded subsets of $X$ are just the $d$-bounded subsets. If $(X,d_X)$ and $(Y, d_Y)$ are pseudo-metric spaces, then a map $f: (X, \mathcal E_{d_X}) \to (Y, \mathcal E_{d_Y})$ is bornologous if and only if it is coarsely Lipschitz. On the other hand, while $f: (X, \mathcal E_{d_X}) \to (Y, \mathcal E_{d_Y})$ being coarsely expansive implies that $f$ is proper, a proper map $f$ need not be coarsely expansive (see Remark 3.A.13 (2), \cite{CdlH}).
Still, it is true that if two pseudo-metric spaces are coarsely equvialent, then their associated coarse spaces are isomorphic in the category $\mathbf{Coa}$. 
\end{remark}

\section{Quasi-isometries, bi-Lipschitz maps and rough isometries}
We will be particularly interested in coarse equivalences with (at most) affine  control functions.
\begin{definition} Let $(X, d_X)$, $(Y, d_Y)$ be pseudo-metric spaces and $f: X \to Y$ be a map. Given $C, C' \geq 0$ and $K\geq 1$ we say that $f$ is
\begin{enumerate}[(i)]
\item a \emph{$(K,C)$-quasi-isometric embedding}\index{quasi-isometric embedding}\index{embedding!quasi-isometric} if for all $x_1, x_2 \in X$,
\[
\frac{1}{K} \cdot d_X(x_1,x_2) - C \leq d_Y(f(x_1), f(x_2)) \leq K \cdot d_X(x_1,x_2)+C;
\]
\item a \emph{$(K, C, C')$-quasi-isometry}\index{quasi-isometry} if it is a $(K,C)$-quasi-isometric embedding whose  image is, moreover, $C'$-relatively dense in $Y$;
\item a \emph{$K$-bi-Lipschitz embedding}\index{bi-Lipschitz embedding}\index{embedding!bi-Lipschitz} if it is a $(K, 0)$-quasi-isometric embedding;
\item a \emph{$K$-bi-Lipschitz equivalence}\index{bi-Lipschitz equivalence} if it is a surjective bi-Lipschitz embedding;
\item a \emph{$C$-coarse isometric embedding}\index{coarse isometric embedding}\index{embedding!coarse isometric} if it is a $(1,C)$-quasi-isometric embedding;
\item a \emph{$(C, C')$-coarse isometry}\index{coarse isometry} if it is a $(1,C, C')$ quasi-isometry;
\item an \emph{isometric embedding}\index{isometric embedding}\index{embedding!isometric} if it is a $(1,0)$-quasi-isometric embedding;
\item an \emph{isometry}\index{isometry} if it is a surjective isometric embedding.
\end{enumerate}
\end{definition}
\begin{remark}\label{rem:qi facts}
 If the precise constants do not matter, we simply refer to $f$ as a quasi-isometric embedding, quasi-isometry etc.\ An equivalence class of quasi-isometries will be called a $(K, C, C')$-quasi-isometry class if it admits a representative which is a $(K, C, C')$-quasi-isometry, and we will use similar terminology for the other types of maps. A collection of quasi-isometric embeddings, quasi-isometries etc.\ (or equivalence classes thereof) is called \emph{uniform}\index{uniform collection!of maps} if the implied constants are uniformly bounded.\index{uniform collection!of quasi-isometries}

If $f: X \to Y$ is a $(K_1, C_1)$-quasi-isometric embedding and $g: Y \to Z$ is a $(K_2, C_2)$-quasi-isometric embedding, then $g \circ f: X \to Z$ is a $(K_1K_2, K_2C_1 + C_2)$ quasi-isometric embedding. If $f$ is a $(K_1, C_1, C_1')$-quasi-isometry and $g$ is a $(K_2, C_2, C_2')$-quasi-isometry, then $f \circ g$ is a $(K_1K_2, K_2C_1+C_2, K_2C_1'+C_2 + C_2')$-quasi-isometry. In particular, if $A$ is a uniform collection of quasi-isometries from $X \to Y$ and $B$ is a uniform collection of quasi-isometries from $Y$ to $Z$, then the set $B \circ A$ of all compositions is a uniform collection of quasi-isometries.

We say that $(X, d_X)$ and $(Y, d_Y)$ are \emph{quasi-isometric} (respectively \emph{bi-Lipschitz, coarsely-isometric, isometric}) if there exists a quasi-isometry (bi-Lipschitz equivalence, coarse isometry, isometry) $f: X \to Y$. If $f$ is a $(K, C, C')$-quasi-isometry, then $f$ admits a coarse inverse $\bar f$ which is a $(K, KC+2KC', KC+KC')$-quasi-isometry. It follows that quasi-isometry (and similarly bi-Lipschitzness, coarse isometry, isometry) is an equivalence relation on pseudo-metric spaces. Note that a coarse inverse for a quasi-isometry is usually referred to as quasi-inverse.

In the sequel we will denote the equivalence class of $(X, d_X)$ under quasi-isometry by $[X, d_X]$ (or simply $[X]$) and refer to it as the \emph{QI type}\index{QI-type} of $X$. Note that $[X, d_X] = [\bar X, \bar d_X]$, hence every QI type can be represented by a metric (as opposed to pseudo-metric) space.
\end{remark}

\begin{remark}\label{ProperVsLocallyFinite} As mentioned earlier, every pseudo-metric space $(X,d)$ contains a Delone set $A \subset X$. Since $A$ is relatively dense in $X$, the inclusion $A \hookrightarrow X$ is a quasi-isometry. It follows that every pseudo-metric space is quasi-isometric to a uniformly discrete metric space. In particular, every proper pseudo-metric space is quasi-isometric to a locally finite metric space.
\end{remark}

\begin{remark} Given a pseudo-metric space $(X, d_X)$, we denote by $\widetilde{\rm QI}_{K,C, C'}(X)$ the set of all $(K, C, C')$-quasi-isometries from $X$ to itself. We also set
\[
\widetilde{\rm QI}(X) \coloneqq  \underset{C, C'\geq 0}{\bigcup_{K \geq 1}} \widetilde{\rm QI}_{K,C, C'}(X).
\]
By the previous remark, $\widetilde{\rm QI}(X)$ is a semigroup under composition, which contains the \emph{isometry group} ${\rm Is}(X)  = \widetilde{\rm QI}_{1,0, 0}(X)$ as a subgroup. 
Moreover, the quotient ${\rm QI}(X) \coloneqq  \widetilde{\rm QI}(X)/_\sim$ is a group with respect to composition of representatives, and we refer to this group as the \emph{quasi-isometry group}\index{quasi-isometry group} of $X$. Composing the inclusion ${\rm Is}(X) \to  \widetilde{\rm QI}(X)$ with the quotient map $ \widetilde{\rm QI}(X) \to {\rm QI}(X)$ we obtain a \emph{canonical homomorphism}
${\rm Is}(X) \to {\rm QI}(X)$. In general, the canonical homomorphism is neither injective nor surjective.
\end{remark}
\section{Geodesics and quasi-geodesics in pseudo-metric spaces}
\begin{definition} Let $(X,d)$ be a pseudo-metric space, $K \geq 1$, $C \geq 0$ and $I \subset \R$ be connected. A $(K,C)$-quasi-isometric embedding $\gamma \colon I \to X$ is called a: 
\begin{enumerate}[(i)]
    \item $(K,C)$-\emph{quasi-geodesic segment} if $I$ is bounded; \index{quasi-geodesic!segment}
    \item $(K,C)$-\emph{quasi-geodesic ray} if $I$ is bounded below; \index{quasi-geodesic!ray}
    \item $(K,C)$-\emph{quasi-geodesic line} if $I=\R$. 
    \index{quasi-geodesic!line}
\end{enumerate}
We say that $\gamma$ is a \emph{quasi-geodesic} segment, ray or line (or simply a \emph{quasi-geodesic}) if it is a $(K,C)$-quasi-geodesic segment, ray or line for some $K$ and $C$ as above.

If $K=1$ we refer to $\gamma$ as a \emph{$C$-rough geodesic segment}\index{rough geodesic segment}, a \emph{$C$-rough geodesic ray}\index{rough geodesic ray} or a \emph{$C$-rough geodesic}\index{rough geodesic line}, instead, and If $K=1$ and $C=0$, i.e., $\gamma$ is an isometric embedding, then $\gamma$ is referred to as a \emph{geodesic segment}\index{geodesic!segment}, a \emph{geodesic ray}\index{geodesic!ray}, or a \emph{geodesic line}\index{geodesic!line}.
\end{definition}

Unless otherwise stated, rays will have domain $[0, \infty)$. Note that by this definition, all geodesics are parametrized by arc length. We will sometimes need to reparametrize geodesics:

\begin{definition}
Let $I \subset \R$ be connected. A map $\gamma \colon I \to X$ is said to be a \emph{linearly reparametrized geodesic}\index{geodesic!linearly reparametrized}, if there exists a constant $\lambda$ such that $d(\gamma(t), \gamma(t'))=\lambda|t-t'|$ for all $t,t' \in I$.
\end{definition}

We now state two corollaries of the Arzel\`a--Ascoli Theorem that will be useful in proofs about the Gromov and Morse boundaries.

\begin{lemma}[Arzel\`a--Ascoli]\label{ArzelaAscoli1}
Let $X$ be a proper metric space and $o \in X$. Let $a_n \leq 0 \leq b_n$ and let $(\beta_n: [a_n, b_n] \to X)_{n\in\mathbb{N}}$ be a sequence of geodesics in $X$. Assume that there exists $C>0$ such that $d(\beta_n(0), o) < C$ for all $n \in \bN$.
\begin{enumerate}[(i)]
\item If $a_n \to -\infty$ and $b_n \to \infty$, then  $(\beta_n)_n$ has a subsequence which converges uniformly on compact sets to a bi-infinite geodesic line $\beta: \R \to X$.
\item If  $a_n=0$ and $b_n \to \infty$, then  $(\beta_n)_n$ has a subsequence which converges uniformly on compact sets to a geodesic ray $\beta: [0, \infty) \to X$.
\end{enumerate}
\end{lemma}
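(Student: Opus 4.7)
The plan is to deduce both parts simultaneously from the classical Arzelà--Ascoli theorem by producing a uniformly convergent subsequence on each compact sub-interval of $\mathbb{R}$ (resp.\ $[0,\infty)$) and then diagonalizing. The two ingredients we need are equicontinuity of the family $(\beta_n)$ and precompactness of its pointwise orbits; both are immediate from our hypotheses.

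First I would observe that each $\beta_n$ is an isometric embedding, in particular $1$-Lipschitz, so the family $(\beta_n)$ is automatically equicontinuous (even $1$-Lipschitz with a uniform constant). Second, for any fixed $t\in\mathbb{R}$ (resp.\ $t\in[0,\infty)$) and any $n$ with $t\in[a_n,b_n]$, the triangle inequality gives
\[
d(\beta_n(t),o)\ \leq\ d(\beta_n(t),\beta_n(0))+d(\beta_n(0),o)\ \leq\ |t|+C,
\]
so $\{\beta_n(t)\}$ eventually lies in the closed ball $\overline{B}(o,|t|+C)$, which is compact by properness of $X$. Hence the family has precompact pointwise orbits.

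Now for case (i), fix $N\in\mathbb{N}$. Since $a_n\to-\infty$ and $b_n\to\infty$, eventually $[-N,N]\subset[a_n,b_n]$, and the restrictions $\beta_n|_{[-N,N]}$ form an equicontinuous family of maps into the compact set $\overline{B}(o,N+C)$. Classical Arzelà--Ascoli produces a subsequence converging uniformly on $[-N,N]$. A standard diagonal argument over $N=1,2,3,\ldots$ then yields a single subsequence $(\beta_{n_k})$ converging uniformly on every compact subset of $\mathbb{R}$ to a limit map $\beta\colon\mathbb{R}\to X$. To check that $\beta$ is a geodesic line, fix $s,t\in\mathbb{R}$, choose $N$ with $s,t\in[-N,N]$, and pass to the limit in the identity $d(\beta_{n_k}(s),\beta_{n_k}(t))=|s-t|$; continuity of $d$ gives $d(\beta(s),\beta(t))=|s-t|$, as required. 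Case (ii) is handled by the same argument applied to the compact intervals $[0,N]$ in place of $[-N,N]$.

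I do not anticipate any serious obstacle: the proof is a routine application of Arzelà--Ascoli, and the only mild subtlety is to perform the diagonalization cleanly so that the final subsequence converges uniformly on \emph{every} compact set rather than just on each $[-N,N]$ (resp.\ $[0,N]$) separately; this is standard since every compact subset of $\mathbb{R}$ is contained in some $[-N,N]$.
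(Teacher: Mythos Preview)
Your proof is correct and follows the standard approach: equicontinuity from the $1$-Lipschitz property, pointwise precompactness from properness via the bound $d(\beta_n(t),o)\leq |t|+C$, then Arzel\`a--Ascoli on each $[-N,N]$ (resp.\ $[0,N]$) followed by a diagonal extraction, with the isometric embedding property passing to the limit by continuity of $d$. The paper states this lemma without proof as a standard consequence of the classical Arzel\`a--Ascoli theorem, so there is no alternative approach to compare against; your argument is exactly the routine verification one would expect.
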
 
It was observed by Morse \cite{morse} that geodesics in negatively curved spaces have additional properties; one of these properties is nowadays called the \emph{Morse property}\index{Morse!property} and can be defined as follows:
\begin{definition} \label{defn: Morse geodesic}
Let $X$ be a proper geodesic metric space, and let $N: [1, \infty) \times [0, \infty) \rightarrow [0, \infty)$ be an arbitrary function. A quasi-geodesic $\gamma: I \to X$ is called $N$-\emph{Morse}\index{Morse!quasi-geodesic} if any $(K,C)$-quasi-geodesic segment $q$ with endpoints on $\gamma$ is contained in the $N(K,C)$-neighborhood $\mathcal N_{N(K,C)}(\gamma)$ of $\gamma$. If $\gamma$ is a quasi-geodesic ray we obtain the notion of an \emph{$N$-Morse quasi-geodesic ray}\index{Morse!quasi-geodesic ray}, and if $\gamma$ is a quasi-geodesic line then we call $\gamma$ an \emph{$N$-Morse quasi-geodesic line}\index{Morse!quasi-geodesic line}. If $\gamma$ is an $N$-Morse quasi-geodesic, then we call $N$ a \emph{Morse gauge}\index{Morse!gauge} for $\gamma$.
\end{definition}

Two important classes of metric spaces for us will be quasi-geodesic and geodesic metric spaces. 

\begin{definition} \label{QuasiGeodesic}
A pseudo-metric space $(X,d)$ is called a \emph{$(K,C)$-quasi-geo\-de\-sic space}\index{quasi-geodesic space} if for all $x,y\in X$ there exists a $(K,C)$-quasi-geodesic segment $\gamma:[a,b] \to X$ such that $\gamma(a) = x$ and $\gamma(b) = y$. If $K=1$ it is called a \emph{$C$-roughly geodesic space}\index{roughly geodesic space}, and if moreover $C=0$ then it is called a \emph{geodesic space}\index{geodesic space}. We say that $(X,d)$ is quasi-geodesic if it is $(K, C)$-quasi-geodesic, for some $K$ and $C$ as above.
\end{definition}
\begin{remark}[QI-invariance]\label{QuasiGeodesicQI} Since the image of a geodesic under a quasi-isometry is a quasi-geodesic, the image of a geodesic metric space under a quasi-isometric embedding is a quasi-geodesic metric space. Moreover, if $X$ is a relatively dense subset of a metric space $Y$, then $X$ is quasi-geodesic if and only if $Y$ is quasi-geodesic. Combining these two facts we see that being quasi-geodesic is QI-invariant among pseudo-metric spaces.
\end{remark}
In general, a subspace of a geodesic metric space need not be geodesic, or even quasi-geodesic. 
\begin{definition} Let $(X,d)$ be a geodesic metric space. A subspace $Y$ is called a \emph{quasi-convex subspace}\index{quasi-convex subspace} of $X$ if there exists $C>0$ such that every geodesic in $X$ with endpoints in $Y$ is contained in $N_C(Y)$.
\end{definition}
\begin{remark}\label{QuasiconvexQuasigeodesic}
We note that if $Y \subset X$ is a quasi-convex subset of a geodesic metric space, then $Y$ is quasi-geodesic with respect to the induced metric.

Indeed, assume that $Y$ is $R$-quasi-convex for some constant $R$; we may then join any two points in $Y$ by a geodesic in $X$ in the $R$-neighborhood of $Y$. It follows that one can construct a $(1, 2R+1)$-quasi-geodesic in $Y$ with bounded Hausdorff distance $2R$ from the geodesic. Thus $Y$ is quasi-geodesic where the quasi-constants depend only on $R$. 
\end{remark}

\begin{example}\label{CanonicalMetric} If $\Gamma$ is a connected graph with edge set $\mathcal E$ and $w: \mathcal E \to [0, \infty)$ is an arbitrary function, then there exists a unique path metric $d_w$ on
(the geometric realization of) $\Gamma$ such that (the geometric realization of) any given edge $e \in \mathcal E$ is isometric to an interval of length $w(e)$. If $w$ is the constant function $1$, then we refer to $d$ as the \emph{canonical metric}\index{canonical metric (on a graph)} on $\Gamma$. Each of the metric spaces $(|\Gamma|, d_w)$ is geodesic; in particular, every graph is a geodesic metric space with respect to its canonical metric.

The canonical metric of a graph is proper if and only if the graph is locally finite; by contrast, every countable graph admits a proper metric of the form $d_w$ for some (weight) function $w$.
\end{example}
We will see in the next section that, up to quasi-isometry, locally finite graphs (with their canonical metrics) are the only examples of proper geodesic metric spaces.

\section{Coarsely connected and large-scale geodesic spaces}
\begin{definition} Let $(X,d)$ be a pseudo-metric space.
\begin{enumerate}[(i)]
\item Let $n \in \mathbb N$ and $c > 0$. A sequence $(x_0, \dots, x_n)$ of points in $X$ is called a \emph{$c$-path of length $n$}\index{path!$c$-path} from $x_0$ to $x_n$ if $d(x_i, x_{i+1}) < c$, for all $i = 0, \dots, n-1$.
\item $(X,d)$ is \emph{coarsely connected}\index{coarsely connected} if there exists $c>0$ such that for all $x, x' \in X$ there exists a $c$-path from $x$ to $x'$.
\end{enumerate}
\end{definition}

The terminology is justified by the following observation, see \cite[Prop. 3.B.7]{CdlH}:
\begin{lemma} A pseudo-metric space is coarsely connected if and only if it is coarsely equivalent to a connected metric space. In particular, coarse connectedness is a coarse invariant, hence a QI-invariant.\qed
\end{lemma}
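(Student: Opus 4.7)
My plan is to prove both directions of the equivalence and deduce the coarse (and hence QI) invariance of coarse connectedness as a formal consequence.

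For the easy direction, suppose $(X,d)$ is coarsely equivalent to a connected metric space $(Y, d_Y)$. I would first show that any connected metric space is itself coarsely $c$-connected for every $c > 0$: fix $c > 0$ and declare $y \sim_c y'$ whenever a $c$-path joins them. Each equivalence class is open (any point within $d_Y$-distance $c$ of a given $y$ is automatically $\sim_c y$) and hence closed (as the complement of the union of the other classes), so by connectedness of $Y$ there is only one class. Now let $f \colon X \to Y$ be a coarse equivalence with coarse inverse $g \colon Y \to X$, satisfying $d_X(g(f(x)),x) \leq C$ for all $x$ and with some upper control $\Phi_+$ for $g$. Given $x,x' \in X$, take a $c$-path $f(x)=y_0,y_1,\dots,y_n=f(x')$ in $Y$ and push it back to $x, g(y_0),g(y_1),\dots,g(y_n), x'$ in $X$; consecutive distances are bounded by $\max(C, \Phi_+(c))$, which yields a $c'$-path in $X$ for any $c' > \max(C, \Phi_+(c))$.

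For the harder direction, suppose $(X,d)$ is coarsely $c$-connected. The naive attempt -- building the graph $\Gamma_c$ with vertex set $X$ and edges between pairs at $d$-distance less than $c$, then equipping its geometric realization with the canonical path metric -- does not in general give a space coarsely equivalent to $(X,d)$: two vertices close in $d$ may be joined only by a long detour through the graph, so the combinatorial distance can blow up on $d$-bounded families of pairs, changing the coarse structure. The remedy is to use the \emph{induced} metric from an ambient Banach space. Fix a basepoint $x_0 \in X$ and apply the Kuratowski embedding $\iota \colon X \hookrightarrow \ell^\infty(X)$ sending $x$ to $\phi_x(z) := d(x,z) - d(x_0,z)$; this is isometric since $\|\phi_x - \phi_y\|_\infty = \sup_{z} |d(x,z)-d(y,z)| = d(x,y)$ by the reverse triangle inequality. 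Define
\[
Y \;:=\; \iota(X) \;\cup\; \bigcup_{\{x,y\}\,:\,0 < d(x,y) < c} [\phi_x, \phi_y] \;\subset\; \ell^\infty(X),
\]
where $[\phi_x,\phi_y]$ is the straight line segment, and equip $Y$ with the restriction $d_Y$ of the $\ell^\infty$-norm. Then $d_Y|_{\iota(X)} = d$, every point of $Y$ lies on a segment of length $< c$ and is thus within distance $c/2$ of $\iota(X)$, and $Y$ is connected: each segment is path-connected, adjacent segments are glued along shared endpoints in $\iota(X)$, and $\Gamma_c$ being connected means any two vertices can be joined by concatenating segments along a $c$-path. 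Hence $\iota$ is an isometric, coarsely surjective -- and thus coarse -- equivalence onto the connected metric space $(Y, d_Y)$.

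The ``in particular'' statement now follows formally: if $X$ and $X'$ are coarsely equivalent pseudo-metric spaces, then by the just-proved equivalence and transitivity of coarse equivalence, $X$ is coarsely connected iff $X'$ is; QI-invariance follows because every quasi-isometry is a coarse equivalence. The main obstacle lies squarely in the forward direction and amounts to choosing the right metric on the filled-in space $Y$: coarse $c$-connectedness gives no control on how long a connecting $c$-path may be, so the path metric on $\Gamma_c$ can distort distances unboundedly, whereas the induced $\ell^\infty$-metric on the Kuratowski-realised $Y$ restricts to $d$ on $X$ exactly and thus automatically preserves the coarse structure.
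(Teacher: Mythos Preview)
Your proof is correct. The paper does not supply its own argument for this lemma; it simply cites \cite[Prop.~3.B.7]{CdlH} and marks the statement with \qed, so there is no in-paper proof to compare against directly.

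Your approach is a clean self-contained construction. The key insight---that one must use the induced metric from the Kuratowski embedding into $\ell^\infty(X)$ rather than the intrinsic path metric on the Rips-type graph $\Gamma_c$---is exactly the right one, and your explanation of why the naive graph approach fails is on point: it reflects precisely the gap between ``coarsely connected'' and ``coarsely geodesic'' in the hierarchy of Lemma~\ref{CharLSG}. A space that is coarsely connected but not coarsely geodesic (such spaces exist) would be upgraded to a geodesic space by the path-metric construction, contradicting the characterization of coarse geodesicity in that lemma; your $\ell^\infty$-realization avoids this by keeping the restricted metric on $\iota(X)$ equal to $d$ itself. One minor point worth making explicit: since $(X,d)$ is only a pseudo-metric space, the Kuratowski map $\iota$ need not be injective, but it is still an isometric surjection onto $\iota(X)$ and hence a coarse equivalence, so nothing is lost.
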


We compare the notion of being quasi-geodesic (Definition \ref{QuasiGeodesic}) to the following terminology from \cite{CdlH}:
\begin{definition}\label{DefLargeScaleGeodesic} Let $(X,d)$ be a pseudo-metric space. Moreover, let $\Phi_+$ be an upper control and let $a,c >0$, $b \geq 0$.
\begin{enumerate}[(i)]
\item $(X,d)$ is \emph{$(\Phi_+, c)$-coarsely geodesic}\index{coarsely geodesic space} if for all $x, x' \in X$ there exists a $c$-path from $x$ to $x'$ of length $n \leq 
\Phi_+(d(x,x'))$.
\item $(X,d)$ is \emph{$(a,b,c)$-large-scale geodesic}\index{large-scale geodesic space} if it is $(\Phi_+, c)$-coarsely geodesic with respect to the affine control $\Phi_+(x) = ax+b$.
\end{enumerate}
We say that $(X,d)$ is coarsely geodesic (respectively  large-scale geodesic) if it is $(\Phi_+, c)$-coarsely geodesic ($(a,b,c)$-large-scale geodesic), for some $\Phi_+$  and $c$ as above ($a, b,c$ as above).
\end{definition}
For the following lemma see again \cite[Prop. 3.B.7 and Lemma 3.B.6(6)]{CdlH}:
\begin{lemma}[Coarse connectedness properties]\label{CharLSG}\ 
\begin{enumerate}[(i)]
\item Every large-scale geodesic space is coarsely geodesic, and every coarsely geodesic space is coarsely connected.
\item A pseudo-metric space is coarsely geodesic if and only if it is coarsely equivalent to a geodesic metric space.
\item A pseudo-metric space is large-scale geodesic if and only if it is quasi-isometric to a geodesic metric space.
\item A proper pseudo-metric space is large-scale geodesic if and only if it is quasi-isometric to a locally finite graph (with its canonical metric).
\end{enumerate}
In particular, coarse geodesicity is a coarse invariant and large-scale geodesicity is a QI-invariant. 
\end{lemma}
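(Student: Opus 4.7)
The plan is to dispatch (i) directly from the definitions and then to reduce the non-trivial directions of (ii), (iii), (iv) to a single Rips-type graph construction. Statement (i) is immediate: an $(a,b,c)$-large-scale geodesic space is $(\Phi_+,c)$-coarsely geodesic with affine upper control $\Phi_+(t) = at+b$, and any coarsely geodesic space admits $c$-paths between any two points, hence is coarsely connected. The easy ``geodesic $\implies$ large-scale geodesic'' direction is also immediate, and the ``(geodesic representative) $\implies$ coarsely/large-scale geodesic'' directions will follow once we check that coarse geodesicity (respectively large-scale geodesicity) is invariant under coarse equivalence (respectively quasi-isometry); this is a routine composition argument using that composition of quasi-isometric embeddings is a quasi-isometric embedding (Remark \ref{rem:qi facts}) and the fact that a geodesic space is trivially $(1,0,c)$-large-scale geodesic for any $c>0$.

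For the non-trivial ``only if'' directions, I would use a single Rips-type construction. Suppose $(X,d)$ is $(\Phi_+,c)$-coarsely geodesic. Pick a maximal $c$-separated subset $Y \subset X$ via Zorn's lemma; by maximality $Y$ is $(c,c)$-Delone in $X$, so the inclusion $\iota_Y \colon (Y,d|_{Y\times Y}) \hookrightarrow (X,d)$ is a $(1,0,c)$-quasi-isometry. Now form the Rips graph $\Gamma$ with vertex set $Y$ and an edge between distinct $y,y' \in Y$ whenever $d(y,y') \leq 3c$, endowed with the canonical metric $d_\Gamma$ on its geometric realization $|\Gamma|$. Connectedness of $\Gamma$ follows from coarse geodesicity: given $y,y' \in Y$, take a $c$-path $x_0 = y, x_1, \ldots, x_n = y'$ of length $n \leq \Phi_+(d(y,y'))$, replace each intermediate $x_i$ by some $y_i \in Y$ with $d(x_i,y_i) \leq c$, and observe $d(y_i, y_{i+1}) \leq 3c$; this chain is a path in $\Gamma$. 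The same chain gives the upper bound $d_\Gamma(y,y') \leq \Phi_+(d(y,y'))$, while the bound $d(y,y') \leq 3c \cdot d_\Gamma(y,y')$ on vertices of $\Gamma$ is clear, giving a lower control $\Phi_-(r) = r/(3c)$. Since $Y$ is $\tfrac12$-relatively dense in $|\Gamma|$, the inclusion $(Y, d|_{Y\times Y}) \hookrightarrow (|\Gamma|, d_\Gamma)$ is a coarse equivalence with upper control $\Phi_+$. Composing with $\iota_Y^{-1}$ (up to closeness) yields the desired coarse equivalence between $(X,d)$ and the geodesic space $(|\Gamma|,d_\Gamma)$, proving (ii).

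For (iii), if $\Phi_+$ is affine then the coarse equivalence above has affine upper control, hence is in fact a quasi-isometry, and $(X,d)$ is quasi-isometric to the geodesic space $(|\Gamma|,d_\Gamma)$. For (iv), assume additionally that $X$ is proper. Then $Y$ is locally finite, since $Y \cap B(y, 3c)$ is a $c$-separated subset of the relatively compact ball $B(y, 3c)$ and hence finite for every $y \in Y$; so $\Gamma$ is locally finite. Conversely, a locally finite graph with its canonical metric is proper, and properness is preserved under quasi-isometry between metric spaces, so the reverse implication in (iv) is automatic.

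The main obstacle is bookkeeping of constants: one must choose the Rips threshold ($3c$, or more generously $4c$) large enough relative to the Delone parameters to ensure connectedness of $\Gamma$ from a generic $c$-path, while keeping the estimates tight enough that an affine upper control $\Phi_+$ passes through the construction to yield a genuine quasi-isometry rather than a mere coarse equivalence. Everything else, including the ``coarse invariance of coarse geodesicity / QI-invariance of large-scale geodesicity'' tag at the end, follows formally by composing the coarse maps or quasi-isometries with the upper controls produced above.
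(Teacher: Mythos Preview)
Your proof is essentially correct and follows the same Rips-graph strategy that the paper sketches in the remark immediately following the lemma (the lemma itself is stated without proof, citing \cite{CdlH}). The only difference is order of operations: you first pass to a maximal $c$-separated Delone set $Y$ and then build the Rips graph with threshold $3c$, whereas the paper's remark first builds the Rips graph on all of $X$ with threshold $c$ and only afterwards passes to a Delone subset to achieve local finiteness. Your ordering is slightly cleaner since it handles (ii)--(iv) in one pass.

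One small slip: in the converse direction of (iv) you claim that ``properness is preserved under quasi-isometry between metric spaces'', which is false (e.g.\ $\mathbb{Q}$ is quasi-isometric to $\mathbb{R}$ via the inclusion, but $\mathbb{Q}$ is not proper). Fortunately this claim is not needed: properness is already a standing hypothesis in (iv), and the reverse implication ``proper and QI to a locally finite graph $\implies$ large-scale geodesic'' follows immediately from (iii), since a connected graph with its canonical metric is geodesic.
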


\begin{remark} The proof of (iv) is actually constructive: Indeed, let $(X, d)$ be an $(a,b,c)$-large-scale geodesic pseudo-metric space. We construct a metric graph $\Gamma_0$ as follows. The vertex set of $\Gamma_0$ is simply $X$, and two vertices $v_1, v_2 \in X$ are connected by an edge if and only if they are distinct and $d(v_1, v_2) \leq c$. Finally let $w$ be the constant function on edges of $\Gamma_0$ taking value $c$. Then by \cite[Lemma 3.B.6(6)]{CdlH} the metric graph $(|\Gamma_0|, d_w)$ is quasi-isometric to $(X,d)$, albeit in general not locally finite. 

Now assume that $(X,d)$ is moreover proper. By Remark \ref{ProperVsLocallyFinite} we can replace $(X,d)$ by a quasi-isometric space $(X', d')$ which is locally finite. Applying the above construction then yields a \emph{locally finite} graph $(|\Gamma_0|, d_w)$ which is quasi-isometric to $(X', d')$ (and hence to $(X,d)$). In general, the metric $d_w$ will not be the canonical metric on $|\Gamma_0|$, but will differ from the latter by a factor $c$. However, this can be corrected by subdividing each edge of $\Gamma_0$ into $\lfloor c \rfloor$ edges; then $(X,d)$ is quasi-isometric to the resulting graph $\Gamma$ with its canonical metric. Note that $\Gamma$ is locally finite, since $\Gamma_0$ is.
\end{remark}

\begin{lemma}\label{lsgquasigeodesic}
A metric space $(X,d)$ is large-scale geodesic if an only if it is quasi-geodesic.
\end{lemma}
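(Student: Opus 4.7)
The plan is to prove the two implications separately, both via direct constructions with explicit constants.

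For the forward direction (large-scale geodesic implies quasi-geodesic), I would simply invoke Lemma \ref{CharLSG}(iii), which tells us that a large-scale geodesic space $(X,d)$ is quasi-isometric to some geodesic metric space $Y$. A geodesic space is tautologically $(1,0)$-quasi-geodesic, and Remark \ref{QuasiGeodesicQI} already records that being quasi-geodesic is QI-invariant among pseudo-metric spaces (quasi-isometries send geodesics to quasi-geodesics, so the quasi-isometry from $Y$ to $X$ transports the quasi-geodesics from $Y$ into $X$, possibly with worse parameters). Hence $(X,d)$ is quasi-geodesic. This direction requires no extra work.

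For the reverse direction (quasi-geodesic implies large-scale geodesic), I would argue directly from the definitions. Suppose $(X,d)$ is $(K,C)$-quasi-geodesic. Given $x,y \in X$, pick a $(K,C)$-quasi-geodesic segment $\gamma\colon [0,L]\to X$ from $x$ to $y$. The lower quasi-isometry bound $L/K - C \leq d(x,y)$ gives the linear estimate $L \leq K\,d(x,y) + KC$. Now sample $\gamma$ at the integer parameters together with the endpoint $L$, producing a sequence $x = \gamma(0),\gamma(1),\dots,\gamma(\lfloor L\rfloor),\gamma(L) = y$. Because consecutive parameters differ by at most $1$, the upper quasi-isometry bound yields $d(\gamma(i),\gamma(i+1)) \leq K+C$, so this sequence is a $c$-path with $c = K+C+1$. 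Its number of steps is at most $\lceil L\rceil + 1 \leq K\,d(x,y) + KC + 2$, which is an affine function of $d(x,y)$. This exhibits $(X,d)$ as $(K,\,KC+2,\,K+C+1)$-large-scale geodesic.

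There is no real obstacle here; the only thing to keep an eye on is bookkeeping of the constants in the $c$-path construction (in particular making the inequality $d(x_{i-1},x_i) < c$ strict, which is why I pass from $K+C$ to $K+C+1$). Both directions follow almost immediately from the definitions combined with earlier results in the appendix, which is why this appears as an unlabeled remark-style lemma at the end of the section.
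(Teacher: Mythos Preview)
Your proposal is correct and follows essentially the same approach as the paper: the forward direction via Lemma~\ref{CharLSG} and Remark~\ref{QuasiGeodesicQI}, and the reverse direction by sampling a $(K,C)$-quasi-geodesic at integer parameters to produce a $c$-path of affine length. The only difference is cosmetic bookkeeping of constants (and your care with the strict inequality in the $c$-path definition is actually slightly more precise than the paper's version).
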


\begin{proof}
The forward implication follows from Remark \ref{QuasiGeodesicQI} and Lemma \ref{CharLSG}.

For the reverse implication, assume that $X$ is $(K,C)$-quasi-geodesic. Given $x,x' \in X$ we pick a $(K,C)$-quasi-geodesic segment $\gamma \colon [a,b] \to X$ joining them. The points $\{\gamma(a), \gamma(a+1),\ldots,  \gamma(a+\lfloor b-a \rfloor-1), \gamma(b)\}$ form a $(K+C)$-path of length $n \leq \lfloor b-a \rfloor+1\leq b-a +1$. To show that $X$ is large-scale geodesic, it suffices to find constants $a_0>0$ and $b_0\geq 0$, which are independent of the choice of $x,x'$, such that  $n \leq a_0 d(x,x')+ b_0$. Since $\gamma$ is a $(K,C)$-quasi-geodesic, we know that $b-a \leq K d(x,x') + KC$. Thus we get $n \leq Kd(x,x')+KC+1$.
\end{proof}

The usefulness of large-scale geodesic metric spaces in coarse geometry is based on the fact that in large-scale geodesic spaces, coarse equivalences are quasi-iso\-me\-tries. This was first observed by Gromov \cite[0.2.D]{Gromov}, who refers to this fact as a ``trivial lemma''.

\begin{lemma} \label{GromovTrivial}
Let $(X, d_X)$ be a large-scale geodesic metric space, $(Y, d_Y)$ an arbitrary metric space and $f: X \to Y$ a map. Assume that there exists an upper control $\Phi_+: [0, \infty) \to [0, \infty)$ of $f$.
Then there exist constants $a\geq 1$, $b \geq 0$ such that for all $x_1, x_2 \in X$ we have
\begin{equation*}
d_Y(f(x_1), f(x_2))  \leq a \cdot d_X(x_1,x_2) + b. 
\end{equation*}
\end{lemma}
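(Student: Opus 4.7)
The plan is to exploit the definition of a large-scale geodesic space to replace an arbitrary pair of points by a discrete chain of closely-spaced points, then to bound the distance between successive images via the given upper control $\Phi_+$, and finally to apply the triangle inequality in $Y$. Since large-scale geodesicity of $X$ gives us affine control on the length of the chains in terms of the original distance, the resulting bound will be affine in $d_X$.

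More precisely, first I would unpack the hypothesis that $X$ is large-scale geodesic: fix constants $a_0 \geq 1$, $b_0 \geq 0$ and $c > 0$ such that $X$ is $(a_0, b_0, c)$-large-scale geodesic. Given $x_1, x_2 \in X$, choose a $c$-path $(z_0, z_1, \dots, z_n)$ with $z_0 = x_1$, $z_n = x_2$ and $n \leq a_0 d_X(x_1, x_2) + b_0$. Since $d_X(z_{i-1}, z_i) < c$ for each $i$, the hypothesis on $f$ gives $d_Y(f(z_{i-1}), f(z_i)) \leq \Phi_+(c)$ for each $i$.

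Then I would apply the triangle inequality in $Y$ along the chain:
\[
d_Y(f(x_1), f(x_2)) \;\leq\; \sum_{i=1}^n d_Y(f(z_{i-1}), f(z_i)) \;\leq\; n\,\Phi_+(c) \;\leq\; \bigl(a_0 d_X(x_1, x_2) + b_0\bigr)\Phi_+(c),
\]
which is affine in $d_X(x_1, x_2)$. Setting $a := \max\{a_0 \Phi_+(c), 1\}$ and $b := b_0 \Phi_+(c)$ yields the required constants (with $a \geq 1$).

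There is essentially no main obstacle here: the argument is a one-line application of large-scale geodesicity together with the triangle inequality, and this is exactly why Gromov calls it a ``trivial lemma''. The only minor care one needs to take is in handling the case $\Phi_+(c) = 0$ or $a_0 \Phi_+(c) < 1$, which is why we take a maximum with $1$ to enforce $a \geq 1$; this does not affect the inequality since enlarging $a$ only weakens the bound.
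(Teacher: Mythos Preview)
Your proof is correct and follows essentially the same approach as the paper: choose a $c$-path from the large-scale geodesic structure, apply $\Phi_+$ to bound each step, and sum via the triangle inequality to obtain an affine bound. The only (cosmetic) difference is that you take $a := \max\{a_0\Phi_+(c),1\}$ to guarantee $a \geq 1$, whereas the paper simply sets $a := a_0\Phi_+(c_0)$ without this adjustment.
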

\begin{proof} Assume that $(X, d_X)$ is $(a_0, b_0, c_0)$-large-scale geodesic, and, given $x, x' \in X$, let $x = x_0, \dots, x_n = x'$ in $X$ be such that $d_X(x_{i-1}, x_i) \leq c_0$, for $i=1,\ldots , n $, and let $n \leq a_0 d_X(x,x') +b_0$. Then, since  
$d_Y(f(x_{i-1}), f(x_i))  \leq \Phi_+(d_X(x_{i-1},x_i))$,
\begin{eqnarray*}
d_Y(f(x), f(x')) &\leq& \sum_{i=1}^n d_Y(f(x_{i-1}), f(x_i)) \leq n \cdot \Phi_+(c_0) \\
&\leq& \Phi_+(c_0)\cdot a_0 d_X(x, x') + \Phi_+(c_0)\cdot b_0.
\end{eqnarray*}
We may thus choose $a \coloneqq  \Phi_+(c_0)a_0$ and $b \coloneqq  \Phi_+(c_0)b_0$. This finishes the proof.
\end{proof}
\begin{corollary}\label{GromovTrivialSymmetric} Every coarse equivalence between large-scale geodesic spaces is a quasi-isometry, hence two large-scale geodesic metric spaces are coarsely equivalent if and only if they are quasi-isometric. 
\end{corollary}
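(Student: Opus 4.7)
The plan is to deduce the corollary directly from Lemma~\ref{GromovTrivial} (Gromov's trivial lemma) together with the characterization of coarse equivalences via coarse inverses provided by Lemma~\ref{CoarseMorphisms}. Let $f\colon X \to Y$ be a coarse equivalence between large-scale geodesic metric spaces. To conclude that $f$ is a quasi-isometry, I need to upgrade both the upper and the lower control of $f$ from arbitrary control functions to affine linear ones; essential surjectivity is already part of the assumption.

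For the upper control, the argument is immediate: $f$ is coarsely Lipschitz, so it admits some upper control $\Phi_+$. Since $X$ is large-scale geodesic, Lemma~\ref{GromovTrivial} applies and upgrades $\Phi_+$ to an affine bound $d_Y(f(x_1),f(x_2)) \leq a\,d_X(x_1,x_2) + b$ for suitable constants $a\geq 1$, $b\geq 0$.

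For the lower control, the idea is to push the same trick through a coarse inverse. By Lemma~\ref{CoarseMorphisms}(iii), since $[f]$ is an isomorphism in $\mathbf{Coa}_0$ there exists a coarsely Lipschitz coarse inverse $g\colon Y\to X$ with $g\circ f \sim \mathrm{Id}_X$. Now $Y$ is large-scale geodesic, so a second application of Lemma~\ref{GromovTrivial} (this time to $g$) provides constants $a'\geq 1$, $b'\geq 0$ with $d_X(g(y_1),g(y_2)) \leq a'\,d_Y(y_1,y_2) + b'$ for all $y_1,y_2 \in Y$. Choosing $D\geq 0$ with $d_X(g(f(x)),x)\leq D$ for all $x\in X$ and applying the triangle inequality yields
\[
d_X(x_1,x_2) \leq 2D + d_X(g(f(x_1)),g(f(x_2))) \leq a'\,d_Y(f(x_1),f(x_2)) + b' + 2D,
\]
which rearranges to the desired affine lower control $d_Y(f(x_1),f(x_2)) \geq (a')^{-1}\,d_X(x_1,x_2) - (a')^{-1}(b'+2D)$. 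Combining this with the upper control and the essential surjectivity shows that $f$ is a $(K,C,C')$-quasi-isometry for appropriate constants, proving the first sentence of the corollary.

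The second sentence is then purely formal: a quasi-isometry is in particular a coarse equivalence (an affine upper control is an upper control, an affine lower control is a lower control, and essential surjectivity is shared), so quasi-isometric large-scale geodesic spaces are coarsely equivalent; conversely, the first part of the proof shows that coarsely equivalent large-scale geodesic spaces are quasi-isometric. No step here is genuinely hard once Lemma~\ref{GromovTrivial} is in hand—the only subtlety worth emphasizing is that the lower control cannot be obtained by a symmetric application of Lemma~\ref{GromovTrivial} to $f$ itself (the lemma only upgrades upper controls), which is exactly why passing through a coarse inverse $g$ and using the large-scale geodesicity of the \emph{target} $Y$ is essential.
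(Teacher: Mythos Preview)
Your proof is correct and is precisely the intended argument: the paper states the corollary with a bare \qed, treating it as an immediate consequence of Lemma~\ref{GromovTrivial}, and your two applications of that lemma (to $f$ using large-scale geodesicity of $X$, and to a coarse inverse $g$ using large-scale geodesicity of $Y$) spell out exactly that deduction.
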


While this symmetric version of Gromov's trivial lemma is often sufficient in applications, we will also need the asymmetric version of Lemma \ref{GromovTrivial} in the proof of our generalized Milnor--Schwarz lemma. Moreover, it will be important to us to have also a uniform version of Gromov's lemma at our disposal. Given metric spaces $(X, d_X)$, $(Y, d_Y)$ and a collection $A$ of coarse equivalences from $X$ to $Y$, we say that the collection $A$ is \emph{uniform}\index{uniform collection!of coarse equivalences} if there exist an upper control function $\Phi_+$ and a lower control function $\Phi_-$, which are an upper, respectively lower control for every $f \in A$. Recall that, in Remark \ref{rem:qi facts}, we similarly introduced the notion of uniform collection of quasi-isometries, as a collection with uniformly bounded QI-constants.
 With this terminology understood we have:

\begin{proposition}\label{GromovUniform} Let $(X, d_X)$, $(Y, d_Y)$ be large-scale geodesic metric spaces. Then every uniform collection of coarse equivalences from $X$ to $Y$ is a uniform collection of quasi-isometries from $X$ to $Y$.
\end{proposition}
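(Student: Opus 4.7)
The plan is to mimic the proof of the single-map Gromov trivial lemma (Lemma~\ref{GromovTrivial}) but to track how all constants depend only on the uniform data $(\Phi_+,\Phi_-)$ of the collection $A$ and the large-scale geodesic parameters of $X$ and $Y$. Since a quasi-isometry constant package $(K,C,C')$ consists of an upper affine bound on $d_Y(f(\cdot),f(\cdot))$, a lower affine bound on the same quantity, and a uniform relative density constant for $f(X)$ in $Y$, the proof splits naturally into these three parts.

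First I would produce a uniform upper affine bound. Fix $(a_0,b_0,c_0)$ so that $(X,d_X)$ is $(a_0,b_0,c_0)$-large-scale geodesic. Given $x,x'\in X$, connect them by a $c_0$-path of length $n\le a_0 d_X(x,x')+b_0$ and push it forward by an arbitrary $f\in A$. The same calculation as in Lemma~\ref{GromovTrivial}, using only the uniform upper control $\Phi_+$, gives
\[
d_Y(f(x),f(x'))\;\le\; \Phi_+(c_0)\,a_0\,d_X(x,x')+\Phi_+(c_0)\,b_0,
\]
with constants independent of $f\in A$. This yields the $K$-upper bound uniformly.

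Next I would produce the lower affine bound by running the dual argument in $Y$. Fix $(a_1,b_1,c_1)$ so that $(Y,d_Y)$ is $(a_1,b_1,c_1)$-large-scale geodesic, and let $D$ be a uniform relative density constant for the images $f(X)\subset Y$ (the existence of such a $D$ being part of the notion of a uniform collection of coarse equivalences). Given $x,x'\in X$ and $f\in A$, take a $c_1$-path $f(x)=y_0,y_1,\dots,y_m=f(x')$ of length $m\le a_1 d_Y(f(x),f(x'))+b_1$, and choose preimages $w_j\in X$ with $d_Y(f(w_j),y_j)\le D$, $w_0=x$, $w_m=x'$. Then $d_Y(f(w_j),f(w_{j+1}))\le c_1+2D$, so the uniform lower control yields $\Phi_-(d_X(w_j,w_{j+1}))\le c_1+2D$ and hence $d_X(w_j,w_{j+1})\le R$, where
\[
R\;:=\;\sup\{t\ge 0 \mid \Phi_-(t)\le c_1+2D\}\;<\;\infty
\]
because $\Phi_-(t)\to\infty$. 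Summing and using the triangle inequality gives
\[
d_X(x,x')\;\le\; mR\;\le\; R\,a_1\,d_Y(f(x),f(x'))+R\,b_1,
\]
which, together with the first step, provides uniform quasi-isometric embedding constants $(K,C)$ depending only on $\Phi_+,\Phi_-,D$, and the large-scale geodesic parameters of $X$ and $Y$. Since the relative density constant $D$ is already uniform, we obtain the desired $(K,C,C')$ with $C'=D$.

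The main obstacle is the bookkeeping in the second step: one must extract a single constant $D$ that controls the density of $f(X)$ in $Y$ uniformly over $f\in A$. If one takes "uniform collection of coarse equivalences" to mean only uniform upper and lower controls, a uniform $D$ is not automatic and must be incorporated into the definition; alternatively, one may construct uniform quasi-inverses $\bar f\colon Y\to X$ (uniformly close to identity after composition with $f$) and apply Lemma~\ref{GromovTrivial} directly to $\bar f$, in which case the upper affine bound for $\bar f$ translates into the required lower affine bound for $f$. Either route works, but the cleanest presentation is the direct one above, taking a uniform $D$ as part of the hypothesis.
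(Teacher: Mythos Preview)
Your argument is correct, and in fact more carefully worked out than the paper's. The paper's proof is two sentences: the upper bound is handled exactly as you do it (via Lemma~\ref{GromovTrivial}), and for the lower bound the paper simply says ``apply the same argument to a quasi-inverse of the coarse equivalence in question'' --- i.e.\ it takes your second route rather than your direct one.

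Your direct approach to the lower bound (pushing a $c_1$-path in $Y$ back to $X$ via approximate preimages and using $\Phi_-$) is a perfectly good alternative that avoids explicitly constructing and analysing the quasi-inverses; the paper's quasi-inverse route is slightly slicker because it reuses Lemma~\ref{GromovTrivial} verbatim rather than reproving a dual version, but the two arguments are essentially equivalent in content. Your observation about the uniform relative density constant $D$ is well taken: the paper's definition of a uniform collection of coarse equivalences mentions only uniform $\Phi_\pm$, yet both the paper's quasi-inverse argument and your direct argument need a uniform $D$ (for the quasi-inverse route, one needs it to get a uniform upper control on the inverses). In the paper's actual applications the maps in question are already uniform quasi-isometries of $(\Lambda,\widehat d)$, so a uniform $D$ is available, but you are right that the statement as written is slightly underspecified on this point.
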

\begin{proof} For the upper bound we argue as in the proof of Lemma \ref{GromovTrivial} and observe that the QI constants $a \coloneqq  \Phi_+(c_0)a_0$ and $b \coloneqq  \Phi_+(c_0)b_0$ depend only on the upper control function and the constants $a_0, b_0, c_0$, which depend only on $X$. For the lower bounds we apply the same argument to a quasi-inverse of the coarse equivalence in question.
\end{proof}

\section{Spaces of coarse bounded geometry}\label{Growth}
The following terminology was introduced by Block and Weinberger \cite{BlockWeinberger1992}:
\begin{definition}\label{Weinberger} Let $(X,d)$ be a pseudo-metric space, let $R \geq 0$ and let $\Phi_+$ be an upper control function. A subset $\Lambda \subset X$ is \emph{$\Phi_+$-uniformly locally finite}\index{uniformly locally finite subset} if $|\Lambda \cap \overline{B}(x,r)| \leq \Phi_+(r)$ holds for all $x \in X$ and $r >0$. This $\Lambda$ is called a \emph{$(R,\Phi_+)$-quasi-lattice}\index{quasi-lattice} if it is moreover $R$-relatively dense. The space $(X,d)$ is called of \emph{coarse bounded geometry}\index{coarse bounded geometry!space of} if it contains a quasi-lattice.
\end{definition}
It turns out that having coarse bounded geometry is a coarse invariant:
\begin{lemma}\label{Quasilattice} Let $f: X \to Y$ be a coarse equivalence with control functions $(\Psi_-, \Psi_+)$ and $C$-relatively dense image and let $\Lambda \subset X$ be a $(R, \Phi_+)$-quasi-lattice. Then $f(\Lambda)$ is a $(\Psi_+(R)+C, \Phi_{++})$-quasi-lattice with upper control function $\Phi_{++}$ given by
\[
\Phi_{++}(r) \coloneqq  \Phi_+(\widehat{\Psi}_-(r+C)), 
\quad \text{where} \quad \widehat{\Psi}_-(s) \coloneqq  \sup\{t \in [0, \infty) \mid \Psi_-(t) \leq s\}.
\]
\end{lemma}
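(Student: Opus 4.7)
My plan is to verify the two defining properties of a quasi-lattice (relative density and uniform local finiteness) separately, in each case transporting the property from $\Lambda \subset X$ to $f(\Lambda) \subset Y$ by a two-step argument that uses the $C$-relative density of $f(X)$ as a bridge.

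For relative density, given $y \in Y$, first use that $f(X)$ is $C$-relatively dense to pick $x \in X$ with $d_Y(y,f(x)) < C$; then use that $\Lambda$ is $R$-relatively dense in $X$ to pick $\lambda \in \Lambda$ with $d_X(x,\lambda) < R$. The upper control $\Psi_+$ gives $d_Y(f(x),f(\lambda)) \leq \Psi_+(R)$, and the triangle inequality yields $d_Y(y,f(\lambda)) \leq \Psi_+(R)+C$, as desired.

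For the uniform local finiteness bound, fix $y \in Y$ and $r \geq 0$, and again use relative density of $f(X)$ to choose a \emph{single} basepoint $x_0 \in X$ with $d_Y(y,f(x_0)) \leq C$. The key estimate is then: for any $\lambda \in \Lambda$ with $f(\lambda) \in \overline{B}(y,r)$, the triangle inequality gives $d_Y(f(\lambda),f(x_0)) \leq r+C$, so the lower control forces $\Psi_-(d_X(\lambda,x_0)) \leq r+C$ and hence $d_X(\lambda,x_0) \leq \widehat{\Psi}_-(r+C)$. Consequently the set of such $\lambda$ is contained in $\Lambda \cap \overline{B}(x_0, \widehat{\Psi}_-(r+C))$, which by $\Phi_+$-uniform local finiteness of $\Lambda$ has cardinality at most $\Phi_+(\widehat{\Psi}_-(r+C)) = \Phi_{++}(r)$. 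Since $|f(\Lambda) \cap \overline{B}(y,r)|$ is bounded above by the cardinality of this preimage set, the bound follows; monotonicity of $\Phi_{++}$ is automatic from monotonicity of $\widehat{\Psi}_-$ and $\Phi_+$, and $\widehat{\Psi}_-$ takes finite values because $\Psi_-(t) \to \infty$ as $t \to \infty$.

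The main conceptual point — and really the only subtle one — is to route the estimate through the single basepoint $x_0$ rather than estimating pairwise distances between the relevant $\lambda$'s; the latter would cost a factor of $2$ in the argument of $\widehat{\Psi}_-$ and would not match the statement. Apart from this, the proof is a direct book-keeping exercise, so I anticipate no real obstacle.
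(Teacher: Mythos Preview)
Your proof is correct and follows essentially the same approach as the paper's: first establishing relative density via the triangle inequality through a point of $f(X)$, then establishing uniform local finiteness by routing through a single basepoint $x_0$ with $d_Y(y,f(x_0)) < C$ and using the lower control to pull back into a ball of radius $\widehat{\Psi}_-(r+C)$ in $X$. Your additional remarks on the well-definedness and monotonicity of $\Phi_{++}$ are a welcome clarification that the paper leaves implicit.
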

\begin{proof} Since $\Lambda$ is $R$-relatively dense in $X$, the set $f(\Lambda)$ is $\Psi_+(R)$-relatively dense in $f(X)$ and thus $\Psi_+(R)+C$ relatively dense in $Y$. 

Now let $y \in f(\Lambda) \cap \overline B(y_0,r)$, for some $y_0 \in Y$ and $r >0$. There then exists $x \in X$ such that $d(f(x), y_0) < C$ and thus $y \in \overline{B}(f(x), r+C)$. Choose $\lambda \in \Lambda$ such that $y = f(\lambda)$. Then
\[
\Psi_-(d(\lambda,x)) \leq d(f(\lambda), f(x)) \leq r+C \implies d(\lambda,x) \leq  \widehat{\Psi}_-(r+C).
\]
We deduce that
\[
f(\Lambda) \cap \overline B(y_0,r) = f(\Lambda \cap \overline B(x, \widehat{\Psi}_-(r+C))) \implies |f(\Lambda) \cap \overline B(y_0,r)| \leq  \Phi_+(\widehat{\Psi}_-(r+C)),
\]
and the lemma follows.
\end{proof}

\begin{remark}[Growth equivalence of functions]\label{GrowthEq} Let $f,g: [0, \infty) \to [0, \infty)$ be two functions. We say that $g$ \emph{dominates}\index{dominating function} $f$ and write $f \preceq g$ if there are $a,b \geq 1$ and $c,d \geq 0$ such that for all $r \geq c$ we have $f(r) \leq a g(br+d)$. In fact, it is always possible to choose $d=0$ (at the cost of increasing $b$ and $c$). We write $f \approx g$ if $f \preceq g$ and $g \preceq f$. This defines an equivalence relation, and the corresponding equivalence class $[f]$ of $f$ is called the \emph{growth equivalence class}\index{growth!equivalence class} of $f$.
\end{remark}

\begin{lemma}[Growth of quasi-lattices]\label{GrowthQuasiLattices} Let $(X,d)$ be a metric space of coarse bounded geometry and let $\Lambda_1, \Lambda_2 \subset X$ be two quasi-lattices. Then for any $x_1, x_2 \in X$, the functions
\[
\gamma_1(r) \coloneqq  |B(x_1,r) \cap \Lambda_1| \qand \gamma_2(r) \coloneqq  |B(x_2,r) \cap \Lambda_2|
\]
are growth-equivalent.
\end{lemma}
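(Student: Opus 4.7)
Let $\Lambda_i$ be a $(R_i, \Phi_i)$-quasi-lattice for $i = 1,2$, and set $D := d(x_1, x_2)$. The plan is to show $\gamma_2 \preceq \gamma_1$; the reverse inequality will follow by symmetry.

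The key step is to construct a map $\pi\colon \Lambda_2 \to \Lambda_1$ with controlled fibers. Since $\Lambda_1$ is $R_1$-relatively dense, for every $\lambda \in \Lambda_2$ we may choose $\pi(\lambda) \in \Lambda_1$ with $d(\lambda, \pi(\lambda)) \leq R_1$. Then for every $\mu \in \Lambda_1$, the fiber satisfies
\[
\pi^{-1}(\mu) \subset \Lambda_2 \cap \overline{B}(\mu, R_1),
\]
and by $\Phi_2$-uniform local finiteness of $\Lambda_2$ we get $|\pi^{-1}(\mu)| \leq \Phi_2(R_1)$.

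Next, observe that if $\lambda \in \Lambda_2 \cap B(x_2, r)$, then $\pi(\lambda) \in \Lambda_1 \cap B(x_2, r+R_1) \subset \Lambda_1 \cap B(x_1, r+R_1+D)$. Combining this containment with the fiber bound yields
\[
\gamma_2(r) \;=\; |\Lambda_2 \cap B(x_2,r)| \;\leq\; \Phi_2(R_1) \cdot |\Lambda_1 \cap B(x_1, r+R_1+D)| \;=\; \Phi_2(R_1)\cdot \gamma_1(r+R_1+D).
\]
This is precisely the statement $\gamma_2 \preceq \gamma_1$ (with constants $a = \Phi_2(R_1)$, $b = 1$, $d = R_1+D$, $c = 0$ in the notation of Remark \ref{GrowthEq}). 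Reversing the roles of $\Lambda_1$ and $\Lambda_2$ (and of $x_1$ and $x_2$) gives $\gamma_1 \preceq \gamma_2$, hence $\gamma_1 \approx \gamma_2$.

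I do not anticipate a serious obstacle here; the argument is a quantitative version of the standard fact that nets in the same metric space ``see the same growth''. The only point requiring a moment's care is that the assignment $\pi$ need not be injective, which is why we must estimate the fiber size via uniform local finiteness of the \emph{target-side} lattice $\Lambda_2$, rather than simply counting images.
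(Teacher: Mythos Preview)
Your proof is correct and follows essentially the same approach as the paper's: both use relative density of one quasi-lattice to associate to each of its points a nearby point of the other, then bound the resulting multiplicity via uniform local finiteness. (One terminological slip: $\Lambda_2$ is the \emph{domain} of your map $\pi$, not the target---but your use of $\Phi_2$ for the fiber bound is exactly right.)
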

\begin{proof} Choose $R$ and $\Phi_+$ such that $\Lambda_1$ and $\Lambda_2$ are $(R, \Phi_+)$-quasi-lattices. Fix a choice of $x_1, x_2 \in X$, then choose $\delta > d(x_1, x_2)$ and let $\lambda_1 \in B(x_1,r) \cap \Lambda_1$, for some $r>0$. Choose $\lambda_2 \in \Lambda_2$ such that $d(\lambda_1, \lambda_2) \leq R$. Then
\[
d(\lambda_2, x_2) \leq d(\lambda_2, \lambda_1) + d(\lambda_1, x_1) + d(x_1, x_2) < R+ r+\delta.
\]
Since $\lambda_1 \in \overline{B}(\lambda_2, R)$, we obtain
\[
B(x_1,r) \cap \Lambda_1 \quad \subset \bigcup_{\lambda_2 \in B(x_2, r+R+\delta) \cap \Lambda_2} \overline{B}(\lambda_2, R),
\]
and hence $\gamma_1(r) \leq \gamma_2( r+R+\delta)\cdot \Phi_+(R)$. This shows that $\gamma_1 \preceq \gamma_2$, and similarly $\gamma_2 \preceq \gamma_1$.
\end{proof}

\section{Coarse ends of metric spaces}
The purpose of this section is to discuss a coarse version (due to \'Alvarez L\'opez and Candel) of the notion of ends of a topological space. We start by recalling the classical notion; our notation is set up in such a way that the correspondence to the coarse version becomes particularly apparent.

\begin{construction}[Ends]\label{Ends} Let $X$ be a topological space. We denote by $\cK(X)$ the poset of compact subsets of $X$, ordered by inclusion. Given $K \in \cK(X)$ we denote by $\cU_{\cK}$ the space of unbounded connected components of $X \setminus K$, equipped with the discrete topology.

If $K \subset K'$ are compact subsets of $X$, then we define $\eta_{K, K'}: \cU_{K'} \to \cU_{K}$ by mapping every $U' \in \cU_{K'}$ to the unique $U \in \cU_{K}$ with $U' \subset U$. With respect to the maps $\eta_{K, K'}$ the family $(\cU_{K})_{K \in \cK(X)}$ is an inverse system and the \emph{space of ends}\index{space of ends}\index{ends!space of} of $X$ can be defined as the inverse limit space
\[
\cE(X) \coloneqq  \lim_\leftarrow \cU_{K}.
\]
\end{construction}
We now introduce a coarse version of the space of ends for metric spaces, following \'Alvarez L\'opez and Candel \cite[Chapter 6]{AlvarezCandel}. We need the following notion:
\begin{definition}[$c$-connected components]
If $(X,d)$ is a metric space and $c>0$, then we can define an equivalence relation $\sim_c$ on $X$ by setting $x \sim_c y$ if there is a $c$-path from $x$ to $y$. The equivalence classes of this equivalence relation are precisely the maximal $c$-connected subsets of $X$, and they are called the \emph{$c$-connected components}\index{connected components!$c$-connected components} of $X$. 
\end{definition}

Note that a connected metric space is $c$-connected for all $c>0$ (see, for example, \cite{Joshi}), so for all $c>0$ every connected component of a metric space is contained in a unique $c$-connected component.

\begin{construction}[Coarse ends]\label{CoarseEnds} Let $(X,d)$ be a metric space. We denote by $\cB(X)$ the poset of bounded subsets of $X$, ordered by inclusion. Given $c>0$ and $B \in \cB(X)$, we denote by $\cU_{c,B}$ the set of unbounded $c$-connected components of $X \setminus B$, considered as a discrete topological space. 

Now let $c'>c>0$ and $B \subset B'$ be bounded subsets of $X$. We define $\eta_{c,B, B'}: \cU_{c,B'} \to \cU_{c,B}$ by mapping every $U' \in \cU_{c,B'}$ to the unique $U \in \cU_{c, B}$ with $U' \subset U$. Similarly, we define $\theta_{c, c', B}: \cU_{c,B} \to \cU_{c', B}$ by mapping every $U \in \cU_{c,B}$ to the unique $U' \in \cU_{c', B}$ such that $U \subset U'$. Then we obtain a commuting square
\begin{equation}\label{ENDetatheta}
\begin{xy}\xymatrix{
\cU_{c,B}  \ar[d]_{\theta_{c,c', B}} &&   \ar[ll]_{\eta_{c, B, B'}}  \cU_{c, B'} \ar[d]^{\theta_{c,c', B'}}\\
\cU_{c', B}  &&  \ar[ll]_{\eta_{c', B, B'}} \cU_{c',B'}.
}\end{xy}\end{equation}
With respect to the maps $\eta_{c,B,B'}$, the family $(\cU_{c,B})_{B \in \cB(X)}$ is an inverse system and we define a topological space
\[
\cE_c(X) \coloneqq  \lim_\leftarrow \cU_{c,B}.
\]
The elements of $\cE_c(X)$ are called the \emph{$c$-ends}\index{ends!$c$-ends} of $X$.
By \eqref{ENDetatheta} the maps $\theta_{c,c', B'}$ induce continuous maps $\theta_{c,c'}: \cE_c(X) \to \cE_{c'}(X)$, and with respect to these maps the family $(\cE_c(X))_{c>0}$ is a direct system. We may thus define a topological space
\[
\cE_\infty(X) \coloneqq  \lim_{\rightarrow} \cE_c(X);
\]
the elements of $\cE_\infty(X)$ are called the \emph{coarse ends}\index{ends!coarse ends}\index{coarse ends} of $X$.
\end{construction}
The following is established in \cite[Prop.\ 6.35]{AlvarezCandel}:
\begin{theorem}[Coarse invariance of coarse ends]\label{CoarseEndsInvariant}
Let $X$ and $Y$ be metric spaces and let $f: X \to Y$ be a coarsely proper coarsely Lipschitz map. Then $f$ induces a continuous map $f_\infty: \cE_\infty(X) \to \cE_\infty(Y)$, which depends only on the closeness class of $f$. In particular, the homeomorphims type of $\cE_\infty(X)$ is a coarse invariant of metric spaces.
\end{theorem}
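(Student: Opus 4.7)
The plan is to construct $f_\infty$ in three stages: first I will produce, for each $c>0$, a continuous map $\hat f_c \colon \cE_c(X) \to \cE_{\Phi_+(c)}(Y)$, where $\Phi_+$ is an upper control for $f$; then I will check these maps are compatible with the direct system $(\theta_{c,c'})$ and so induce $f_\infty$ on the direct limit; and finally I will verify functoriality and closeness-invariance, from which the corollary about coarse invariance of the homeomorphism type drops out by applying the construction to $f$ and a coarse inverse.

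\textbf{Construction of $\hat f_c$.} Fix $c>0$. Given a bounded $B' \subset Y$, coarse properness gives $B := f^{-1}(B') \in \cB(X)$. If $U \in \cU_{c,B}$ is an unbounded $c$-connected component of $X \setminus B$, then $f(U) \subset Y \setminus B'$, and since $f$ sends $c$-paths to $\Phi_+(c)$-paths, $f(U)$ is $\Phi_+(c)$-connected in $Y \setminus B'$. Moreover $f(U)$ is unbounded: otherwise $U \subset f^{-1}(\overline{f(U)})$ would be bounded by coarse properness. Hence $f(U)$ lies in a unique $V \in \cU_{\Phi_+(c),B'}$, and we set $\hat f_{c,B'}(U) := V$. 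Starting from a compatible family $(U_B)_{B \in \cB(X)} \in \cE_c(X)$, the family $(\hat f_{c,B'}(U_{f^{-1}(B')}))_{B' \in \cB(Y)}$ is compatible with the transition maps $\eta^Y_{\Phi_+(c), B'_1, B'_2}$ (this uses only that $B'_1 \subset B'_2$ implies $f^{-1}(B'_1) \subset f^{-1}(B'_2)$ and uniqueness of the containing $\Phi_+(c)$-component). This defines $\hat f_c$, and continuity is automatic from the universal property of inverse limits once one checks that each $\hat f_{c,B'}$ is a map of discrete spaces.

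\textbf{Passage to $\cE_\infty$ and closeness-invariance.} If $0 < c \leq c'$, then $\Phi_+(c) \leq \Phi_+(c')$ and the squares
\[
\begin{xy}\xymatrix{
\cU_{c,f^{-1}(B')} \ar[d]_{\theta^X_{c,c',f^{-1}(B')}} \ar[r]^{\hat f_{c,B'}} & \cU_{\Phi_+(c),B'} \ar[d]^{\theta^Y_{\Phi_+(c),\Phi_+(c'),B'}} \\
\cU_{c',f^{-1}(B')} \ar[r]_{\hat f_{c',B'}} & \cU_{\Phi_+(c'),B'}
}\end{xy}
\]
commute, since the $\Phi_+(c')$-component containing $f(U)$ is the same as the one containing the $c'$-component through $U$. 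Passing to inverse limits and then direct limits yields a continuous $f_\infty \colon \cE_\infty(X) \to \cE_\infty(Y)$. For closeness-invariance, suppose $d(f(x),g(x)) \leq K$ for all $x$; then for any $c>0$ and any $U \in \cU_{c,f^{-1}(B') \cup g^{-1}(B')}$, the sets $f(U)$ and $g(U)$ lie in $Y \setminus B'$ and are joined pointwise by edges of length $\leq K$, so they share a common $\max(\Phi_+(c),K+1)$-component. After passing to the direct limit in $c$, the induced maps agree: $f_\infty = g_\infty$.

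\textbf{Coarse invariance of $\cE_\infty$.} Functoriality $(g \circ f)_\infty = g_\infty \circ f_\infty$ and $(\id_X)_\infty = \id_{\cE_\infty(X)}$ is immediate from the construction. If $f$ is a coarse equivalence, choose a coarse inverse $g$ (which is coarsely proper and coarsely Lipschitz). Then $g \circ f \sim \id_X$ and $f \circ g \sim \id_Y$, so $g_\infty \circ f_\infty = \id$ and $f_\infty \circ g_\infty = \id$; since both $f_\infty$ and $g_\infty$ are continuous, $f_\infty$ is a homeomorphism.

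\textbf{Main obstacle.} The main subtlety is the shift in the ``scale'' parameter $c \mapsto \Phi_+(c)$ forced by the upper control: $\hat f_c$ does not land in $\cE_c(Y)$ but in $\cE_{\Phi_+(c)}(Y)$, and the constant $K+1$ appearing in the closeness argument lies outside any fixed scale. This is precisely what makes the direct limit $\cE_\infty$ (rather than any single $\cE_c$) the correct invariant, and it is the reason that coarse properness—needed to keep $f^{-1}(B')$ bounded and to prevent $f(U)$ from collapsing to a bounded set—cannot be dropped.
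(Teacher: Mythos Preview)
The paper does not give its own proof of this theorem: it is stated with a \qed{} and attributed to \'Alvarez--Candel \cite[Prop.\ 6.35]{AlvarezCandel}. Your proposal is a correct self-contained proof, and its structure---pushing $c$-components forward through $f$ with a scale shift $c \mapsto \Phi_+(c)$, checking compatibility with the inverse system over bounded sets and the direct system over scales, then deducing closeness-invariance and functoriality---is exactly the natural argument (and matches the one in the cited reference). One cosmetic point: since $c$-paths are defined with strict inequality $d(x_i,x_{i+1}) < c$, the image under $f$ is a priori only a path with $d(f(x_i),f(x_{i+1})) \leq \Phi_+(c)$, so strictly speaking you land in $\cE_{\Phi_+(c)+\varepsilon}(Y)$ rather than $\cE_{\Phi_+(c)}(Y)$; this is harmless since it disappears in the direct limit, but it is worth a word.
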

While the space of coarse ends can be defined for an arbitrary metric space $X$ and is a coarse invariant in this context, some assumptions on $X$ are needed to make this space well-behaved. The following is \cite[Prop.\ 6.29]{AlvarezCandel}:
\begin{proposition}[Compactness condition] If $X$ is a coarsely connected metric space of coarse bounded geometry, then $\cE_\infty(X)$ is compact.
\end{proposition}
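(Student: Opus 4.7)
My approach is to reduce the statement to the compactness of the space of ends of a locally finite connected graph, using the coarse invariance of $\cE_\infty$ provided by Theorem~\ref{CoarseEndsInvariant}.

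The first step is to extract discrete data from $X$. Using coarse bounded geometry, I fix a quasi-lattice $\Lambda \subset X$ with parameters $(R, \Phi_+)$, and using coarse connectedness I fix $c_0 > 0$ such that $X$ is $c_0$-connected. Setting $c := c_0 + 2R$, I build the graph $\Gamma$ with vertex set $\Lambda$ and an edge $\{\lambda, \mu\}$ whenever $0 < d_X(\lambda, \mu) \leq c$. Local finiteness of $\Gamma$ is immediate from the $\Phi_+$-bound on balls in $X$. Connectedness follows by taking any $c_0$-path in $X$ between two points of $\Lambda$ and replacing each intermediate vertex by a nearest $\Lambda$-point, producing a sequence of $\Lambda$-steps of length at most $c_0 + 2R = c$, hence a $\Gamma$-path.

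The second step is to verify that the inclusion $(\Lambda, d_\Gamma) \hookrightarrow (X, d_X)$ is a coarse equivalence, with quasi-inverse given by a nearest-$\Lambda$-point projection. The forward direction is routine, since every $\Gamma$-edge has $d_X$-length at most $c$. The reverse direction — providing an upper control on $d_\Gamma(\lambda, \mu)$ in terms of $d_X(\lambda, \mu)$ — is the main obstacle: it amounts to showing that coarse connectedness of $X$ upgrades, in the presence of coarse bounded geometry, to a quantitative control on $c_0$-path lengths between $\Lambda$-points as a function of their $d_X$-distance. I would attempt this via an iteration argument, compressing long $c_0$-paths into $\Gamma$-paths of length bounded by a function of $d_X$, using the $\Phi_+$-bound to control the number of $\Lambda$-points that can appear in any intermediate ball.

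Once the coarse equivalence is established, Theorem~\ref{CoarseEndsInvariant} yields a homeomorphism $\cE_\infty(X) \cong \cE_\infty(\Gamma)$, reducing the proposition to compactness of $\cE_\infty(\Gamma)$. Since $\Gamma$ is a locally finite connected graph, with its canonical metric it is a proper geodesic metric space. For any bounded (hence finite-vertex) $B \subset \Gamma$ and any $c \geq 1$, the $c$-components of $\Gamma \setminus B$ coincide with the ordinary connected components, and local finiteness forces only finitely many of them to be unbounded. Consequently each $\cU_{c, B}$ is finite and discrete, each $\cE_c(\Gamma) = \varprojlim_B \cU_{c, B}$ is profinite (hence compact), and the transition maps $\theta_{c, c'}$ are homeomorphisms for all $c, c' \geq 1$. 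Therefore $\cE_\infty(\Gamma)$ is compact, which concludes the argument.
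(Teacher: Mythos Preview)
The paper does not prove this statement; it is quoted as \cite[Prop.~6.29]{AlvarezCandel} and closed with a \qed, so there is no proof in the paper to compare against.

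Your strategy is reasonable, but the step you yourself flag as ``the main obstacle'' is a genuine gap that your sketched ``iteration argument'' does not close. Showing that $(\Lambda, d_\Gamma) \hookrightarrow (X, d_X)$ is a coarse equivalence amounts to proving that a coarsely connected space of coarse bounded geometry is automatically coarsely \emph{geodesic}. The $\Phi_+$-bound controls only the cardinality of balls, not the combinatorics of $c$-paths through them: a $c_0$-path between two $d_X$-nearby $\Lambda$-points may wander arbitrarily far outside any ball around the endpoints, and bounded geometry places no constraint on how long such a detour can be. There is simply no mechanism in your sketch for ``compressing'' such a path using local cardinality bounds, and this implication is not a standard fact (indeed the paper carefully distinguishes coarse connectedness from coarse geodesicity in Lemma~\ref{CharLSG}).

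The gap is avoidable. Use instead the immediate coarse equivalence $(\Lambda, d_X|_\Lambda) \hookrightarrow (X, d_X)$ coming from relative density, and argue directly that $\cE_\infty(\Lambda, d_X|_\Lambda)$ is compact. For $c$ at least the coarse-connectedness constant $c_0$, every $c$-component of $\Lambda \setminus B$ meets the finite set $N_c(B) \cap \Lambda$ (run a $c$-path from any of its points toward $B$ and stop at the last point before entry), so each $\cU_{c,B}$ is finite and each $\cE_c(\Lambda)$ is profinite. The maps $\cU_{c_0,B} \to \cU_{c,B}$ are surjective (an unbounded $c$-component is a finite union of $c_0$-components, not all of which can be bounded), and a Mittag--Leffler argument then gives surjectivity of $\theta_{c_0,c}$ on the inverse limits. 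Since $\{c \geq c_0\}$ is cofinal in the direct system, the canonical map $\cE_{c_0}(\Lambda) \to \cE_\infty(\Lambda)$ is a continuous surjection from a compact space, and compactness follows.
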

Our next goal is to compare the space $\cE_\infty(X)$ of coarse ends to the space $\cE(X)$ for proper metric spaces $X$. We first observe that there is always a natural continuous map $\cE(X) \to \cE_\infty(X)$:
\begin{construction}[Comparison maps] Assume that $(X,d)$ is a proper metric space and let $o \in X$ be a basepoint. We abbreviate $\overline{B}_r \coloneqq  \overline{B}(o,r)$; then the sequence $\overline{B}_n$ is cofinal both in $\cK(X)$ and $\cB(X)$, i.e.\ every element of $\cK(X)$ or $\cB(X)$ is contained in some $\overline{B}_n$.  
Moreover, since for every $c>0$ every connected component is contained in a unique $c$-connected component, we have maps $c_{n,c}: \cU_{\overline{B}_n} \to \cU_{c,\overline{B}_n}$. Passing to the inverse limit over $n$ we obtain a comparison map $c_{\infty, c}: \cE(X) \to \cE_c(X)$, and then taking a direct limit over all $c>0$ we obtain a comparison map $c_{\infty, \infty}: \cE(X) \to \cE_\infty(X)$.
\end{construction}

By \cite[Sec.\ 6.3]{AlvarezCandel} the comparison map is a homeomorphism for connected graphs and complete Riemannian manifolds. We extend this list as follows:
\begin{proposition}[Ends vs.\ coarse ends]\label{EndsCoarse} If $(X,d)$ is any proper geodesic metric space, then the comparison map yields a homeomorphism $\cE(X) \cong \cE_\infty(X)$.
\end{proposition}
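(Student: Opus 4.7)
The plan is to prove the comparison map $c_{\infty,\infty}\colon\cE(X)\to\cE_\infty(X)$ is a bijection, and then verify that it and its inverse are continuous. The core geometric input is the following key lemma. In a proper geodesic space with basepoint $o$ and $\overline{B}_n:=\overline{B}(o,n)$, if $n>c>0$ and $x,y\in X\setminus\overline{B}_n$ lie in the same $c$-connected component of $X\setminus\overline{B}_n$, then they lie in a common connected component of $X\setminus\overline{B}_{n-c}$. The proof is to concatenate geodesic segments along a $c$-path $x=y_0,\dots,y_k=y$: for any point $z$ on a geodesic $[y_i,y_{i+1}]$ of length less than $c$, the triangle inequality gives $d(o,z)\geq d(o,y_i)-d(z,y_i)>n-c$, so the concatenation is a continuous path in $X\setminus\overline{B}_{n-c}$ joining $x$ and $y$. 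The converse observation is easier: in a geodesic space, open connected sets are path-connected, and any continuous path can be discretized into a $c$-path for every $c>0$, so a connected component of $X\setminus\overline{B}_n$ sits inside a single $c$-connected component for every $c$.

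Injectivity of $c_{\infty,\infty}$ follows directly. Suppose two ends $(V_n)$ and $(V_n')$ have the same image in $\cE_\infty(X)$; then by construction of the direct limit there is a single $c>0$ such that $V_n$ and $V_n'$ lie in the same $c$-component of $X\setminus\overline{B}_n$ for every $n$. Given any $m$, applying the key lemma with $n=m+c$ shows that $V_n$ and $V_n'$ lie in a common connected component of $X\setminus\overline{B}_m$, and compatibility in the inverse system gives $V_n\subset V_m$ and $V_n'\subset V_m'$, forcing $V_m=V_m'$. For surjectivity, given a coarse end represented by $(U_n)\in\cE_c(X)$ (compatible unbounded $c$-components), pick $x_n\in U_n$ with $d(o,x_n)\to\infty$, and use Arzel\`a-Ascoli (Lemma \ref{ArzelaAscoli1}) to extract a subsequence along which $[o,x_n]$ converges to a geodesic ray $\gamma$. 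Define $V_m$ as the connected component of $X\setminus\overline{B}_m$ containing $\gamma((m,\infty))$; this yields a well-defined end, since each $V_m$ is unbounded (contains a tail of $\gamma$) and $V_{m+1}\subset V_m$. To see its image is the given coarse end, note that for each $m$ and for $n$ sufficiently large, the arc-length parametrization of $[o,x_n]$ forces its entire tail after time $m$ to stay in $X\setminus\overline{B}_m$ and, by uniform convergence on compacta, to lie in $V_m$; hence $x_n\in V_m\cap U_m$, and since $V_m$ is connected (hence $c$-connected) and $U_m$ is a $c$-component, $V_m\subset U_m$.

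For the topology, continuity of $c_{\infty,\infty}$ is direct: the preimage of a basic open $\pi_n^{-1}(\{U\})$ in $\cE_c(X)$ is the union over connected components $V\subset U$ of the basic opens $\pi_n^{-1}(\{V\})$ in $\cE(X)$. For continuity of the inverse, the injectivity argument shows that each basic open $\pi_m^{-1}(\{V_m\})\subset\cE(X)$ is the $c_{\infty,\infty}$-preimage of (the image in $\cE_\infty(X)$ of) the basic open at level $n\geq m+c$ in $\cE_c(X)$ picking out the $c$-component containing $V_n$. I expect the main technical obstacle to be carefully bookkeeping the direct-limit topology on $\cE_\infty(X)$: the transition maps $\theta_{c,c'}$ are not injective, since they merge distinct coarse ends as $c$ grows, so one must check that the correspondence between basic opens really is compatible with the colimit structure rather than just the underlying sets. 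The geometric content, once the key lemma is in hand, reduces to essentially standard Arzel\`a-Ascoli-based arguments.
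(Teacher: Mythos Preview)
Your key geometric lemma is exactly the heart of the matter, and it is the same observation the paper uses (the paper states it with $m=n-2c$ rather than your sharper $n-c$, but that is immaterial). The difference lies entirely in how the homeomorphism is extracted from it.

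The paper does not argue bijectivity and continuity separately. Instead it invokes Lemma~\ref{ComparisonIso}: once you know that every $c$-connected component of $X\setminus\overline{B}_n$ sits inside a single connected component of $X\setminus\overline{B}_{m(c,n)}$ with $m(c,n)\to\infty$, you get maps $\mu_{n,c}\colon\cU_{c,\overline{B}_n}\to\cU_{\overline{B}_{m(c,n)}}$ at the level of the defining systems, and these assemble into an honest inverse $\mu_{\infty,\infty}\colon\cE_\infty(X)\to\cE(X)$ of the comparison map. This handles surjectivity, injectivity, and both continuities in one stroke, with no appeal to Arzel\`a--Ascoli and no separate bookkeeping for the direct-limit topology.

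Your route is correct but more laborious. The ray-extraction argument for surjectivity works, but it is unnecessary: the inverse map already tells you where a coarse end came from without ever producing a geodesic ray. And the ``technical obstacle'' you flag for continuity of the inverse---tracking compatibility with the colimit structure---is precisely what the system-level maps $\mu_{n,c}$ resolve automatically. In effect, your sketch for continuity of the inverse would, if made precise, reconstruct the proof of Lemma~\ref{ComparisonIso}; it is cleaner to isolate that lemma once and then just verify its hypothesis, which is your key lemma.
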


The proof is based on the following observation, which is similar to \cite[Prop. 6.31]{AlvarezCandel}:
\begin{lemma}\label{ComparisonIso} Let $(X,d)$ be a proper metric space. Assume that there exists $c_0 \geq 0$ such that for all $c>c_0$, there exists $n_0(c) \in \bN$ such that for all $n \geq n_0$ there exists some $m = m(c,n) \in \bN$ such that every coarse $c$-connected component of $X \setminus \overline{B}_n$ is contained in a unique connected component of $X \setminus \overline{B}_m$ and such that $\lim_{n \to \infty} m(c, n) = \infty$ for every $n \in \bN$. Then $c_{\infty, \infty}: \cE(X) \to \cE_\infty(X)$ is a homeomorphism.
\end{lemma}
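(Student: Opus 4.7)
The plan is to construct a continuous two-sided inverse to $c_{\infty,\infty}$. Continuity of $c_{\infty,\infty}$ itself is immediate from the construction: the maps $c_{n,c}\colon \cU_{\overline{B}_n} \to \cU_{c,\overline{B}_n}$ are compatible with the bonding maps of the two inverse systems, so they assemble into continuous maps $c_{\infty,c}\colon \cE(X) \to \cE_c(X)$, and composing with the colimit structure map yields $c_{\infty,\infty}$. Before starting the inverse construction, I would replace $m(c,n)$ by $\min_{k \geq n} m(c,k)$ to arrange that $n \mapsto m(c,n)$ is non-decreasing without losing the divergence or uniqueness properties stated in the hypothesis.

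For injectivity, suppose $(U_n),(U'_n) \in \cE(X)$ map to the same coarse end. Then for some $c > c_0$ and all sufficiently large $n$, the $c$-connected components of $X \setminus \overline{B}_n$ containing $U_n$ and $U'_n$ coincide; call this common component $V_n$. For $n \geq n_0(c)$ the hypothesis produces a unique connected component $W$ of $X \setminus \overline{B}_{m(c,n)}$ with $V_n \subset W$. Since $U_n \cup U'_n \subset V_n \subset W$, and $U_{m(c,n)}$ (respectively $U'_{m(c,n)}$) is by definition the connected component of $X \setminus \overline{B}_{m(c,n)}$ containing the connected set $U_n$ (respectively $U'_n$), one concludes $U_{m(c,n)} = W = U'_{m(c,n)}$. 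Letting $n \to \infty$ and using $m(c,n) \to \infty$, the bonding relations in the inverse system defining $\cE(X)$ propagate this equality to all indices, so $(U_n) = (U'_n)$.

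For surjectivity, fix a representative $(V_n) \in \cE_c(X)$ with $c > c_0$. For $n \geq n_0(c)$ let $W_n$ be the unique connected component of $X \setminus \overline{B}_{m(c,n)}$ containing $V_n$; monotonicity of $m(c,\cdot)$ combined with $V_{n+1} \subset V_n$ forces $W_{n+1} \subset W_n$. For each $k \in \bN$, pick any $n$ with $m(c,n) \geq k$ and let $U_k$ denote the connected component of $X \setminus \overline{B}_k$ containing $W_n$; independence of the choice of $n$ is immediate from nestedness of the $W_n$, and this defines $(U_k) \in \cE(X)$. To check that $c_{\infty,\infty}((U_k)) = [(V_n)]$, observe that for each sufficiently large $k$ the $c$-connected component $V'_k$ of $X \setminus \overline{B}_k$ containing $U_k$ also contains $V_n \subset W_n \subset U_k$ for any large $n$ with $m(c,n) \geq k$; since $V_n \subset V_k$ as well, the $c$-connected components $V'_k$ and $V_k$ share the non-empty set $V_n$ and hence agree. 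Continuity of the inverse follows from a similar analysis: the preimage of a basic open $\{(U_k) \mid U_{k_0} = U\}$ in $\cE(X)$ consists of those coarse ends $[(V_n)]$ such that, for some $n$ with $m(c,n) \geq k_0$, one has $V_n \subset U$, which is an open condition at each scale.

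The main obstacle I foresee is the consistency check that the inverse is well-defined on $\cE_\infty(X)$ rather than on individual $\cE_c(X)$'s. When one passes from a representative at scale $c$ to a representative at a strictly larger scale $c'$, the $c$-connected components merge into $c'$-connected components, and one must verify that both constructions produce the same end in $\cE(X)$. This ultimately reduces to showing that for $c' > c > c_0$ and $n$ sufficiently large, the $c'$-connected component $V'_n \supset V_n$ lies in the same connected component of $X \setminus \overline{B}_{m(c',n)}$ as $V_n$ does, which follows from the uniqueness clause of the hypothesis applied at scale $c'$ combined with the inclusion $V_n \subset V'_n$.
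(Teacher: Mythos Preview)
Your proposal is correct and follows essentially the same approach as the paper: you construct a two-sided inverse to $c_{\infty,\infty}$ by sending a $c$-connected component $V_n$ of $X \setminus \overline{B}_n$ to the unique connected component of $X \setminus \overline{B}_{m(c,n)}$ containing it, and then passing to limits. The paper states this in one sentence (naming the maps $\mu_{n,c}: \cU_{c,\overline{B}_n} \to \cU_{\overline{B}_{m(c,n)}}$, assembling them into $\mu_{\infty,c}$ and then $\mu_{\infty,\infty}$) and defers the verification that $\mu_{\infty,\infty}$ is inverse to $c_{\infty,\infty}$ to the analogous argument in \'Alvarez--Candel, whereas you spell out injectivity, surjectivity, continuity of the inverse, and compatibility across scales $c < c'$ by hand; your monotonicity trick for $m(c,\cdot)$ is a reasonable preliminary reduction that the paper does not bother with.
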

\begin{proof} The condition ensures (for sufficiently large $n$ and $c$) the existence of maps $\mu_{n,c}: \cU_{c,\overline{B}_n} \to \cU_{\overline{B}_{m(c,n)}}$, which induce maps $\mu_{\infty, c}: \cE_c(X) \to \cE(X)$ and hence a map $\mu_{\infty, \infty}: \cE_\infty(X) \to\cE(X)$, which can be shown to be inverse to the comparison map, as in the proof of \cite[Prop. 6.31]{AlvarezCandel}.
\end{proof}

\begin{proof}[Proof of Proposition \ref{EndsCoarse}] We show that Lemma \ref{ComparisonIso} applies with $c_0 \coloneqq  0$, $n_0(c) \coloneqq  5c$ and $m(c,n) \coloneqq  n - 2c$. Indeed, let $c$ and $n$ be chosen accordingly and let $U$ be a  coarse $c$-connected component of $X \setminus \overline{B}_n$. Let $x,y \in U$ be two points of distance $\leq c$ and let $\gamma$ be a geodesic segment in $X$ which connects $x$ and $y$. Since the endpoints of $\gamma$ lie outside $\overline{B}_n$ and $\gamma$ has length $c$ we have $\gamma \subset X \setminus \overline{B}_m$, where $m \coloneqq  n-2c$. We deduce that $x$ and $y$ lie in the same connected component of $X \setminus \overline{B}_m$. Consequently, $U$ is contained in a single connected component of 
$X \setminus \overline{B}_m$, and the proposition follows.
\end{proof}

It is well-known that if $X$ is a Cayley graph of a finitely-generated group, then the space of ends of $X$ with the topology defined in Construction \ref{Ends} (which coincides with the space of coarse ends by Proposition \ref{EndsCoarse}) is homeomorphic to a Cantor space as soon as it contains at least three points. It is an interesting question in which generality this result holds. The following general criterion was established by 
\'Alvarez L\'opez and Candel \cite[Thm.\ 6.39]{AlvarezCandel}; see  Definition \ref{def: quasi cobounded} for the notion of a quasi-cobounded space.
\begin{theorem}[\'Alvarez L\'opez--Candel criterion]\label{ACCriterion} If $X$ is a quasi-cobounded large-scale geodesic metric space of coarse bounded geometry, then either $|\cE_\infty(X)| \leq 2$ or $\cE_\infty(X)$ is homeomorphic to a Cantor set.
\end{theorem}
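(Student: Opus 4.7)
The plan is to reduce to the case where $X$ is a locally finite connected graph, identify $\cE_\infty(X)$ with the classical end space $\cE(X)$ via Proposition \ref{EndsCoarse}, observe that $\cE(X)$ is always compact, metrizable and totally disconnected, and then run a Hopf-style argument — adapted from isometric group actions to uniform QI families — to show that $|\cE_\infty(X)|\geq 3$ forces $\cE(X)$ to be perfect. All three hypotheses of the theorem are QI-invariant: large-scale geodesic by Lemma \ref{CharLSG}, coarse bounded geometry by Lemma \ref{Quasilattice}, and quasi-coboundedness by conjugating the given uniform QI family through a quasi-isometry. Moreover $\cE_\infty$ itself is a QI-invariant by Theorem \ref{CoarseEndsInvariant}. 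Since coarse bounded geometry furnishes a quasi-lattice, I may replace $X$ by a uniformly discrete and hence proper space and then, via Lemma \ref{CharLSG}(iv), by a locally finite connected graph. In this graph setting Proposition \ref{EndsCoarse} gives $\cE_\infty(X) = \cE(X)$, and the latter is a countable inverse limit of finite discrete sets — finite by local finiteness, indexed by the cofinal sequence $\overline{B}(o,n)$ — and is therefore a compact, metrizable, totally disconnected space.

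For the perfectness step under $|\cE_\infty(X)|\geq 3$, I would fix $\xi \in \cE(X)$ and a basic neighborhood $V_n$ of $\xi$, i.e.\ the set of ends whose representing tails lie in the unbounded component $W_n$ of $X \setminus \overline{B}(o,n)$ that contains the tail of $\xi$. I would also fix three distinct ends $\xi_1, \xi_2, \xi_3 \in \cE(X)$ represented by three pairwise distinct unbounded components $U_1, U_2, U_3$ of $X \setminus K_0$ for some finite subgraph $K_0$. Using the $(K,C,r)$-quasi-coboundedness of $X$ with a uniform family $\cA$ of $(K,C,C)$-quasi-isometries, I would pick $x \in W_n$ so deep inside $W_n$ that $d(x, \overline{B}(o,n))$ exceeds $K\,\diam(K_0)+C+r$ plus extra slack, and take $f \in \cA$ with $d(f(o), x) < r$. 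Then $f(K_0)$ is a bounded set whose $(C+r)$-neighborhood $N$ sits inside $W_n$, and the main claim is that the three sets $f(U_1), f(U_2), f(U_3)$ land in three pairwise distinct unbounded components of $X \setminus N$, each contained in $W_n$; this produces three distinct ends of $X$ in $V_n$, at least two of which are not $\xi$. The claim uses the lower QI bound $d(f(u),f(v)) \geq K^{-1}d(u,v) - C$, which forbids two of the $f(U_i), f(U_j)$ from being connected in $X \setminus N$ (such a connection would pull back via a quasi-inverse to a coarse path from $U_i$ to $U_j$ in $X \setminus K_0$), together with the observation that at most one of the three components can be the ``outer'' one leading back to $\overline{B}(o,n)$.

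Combining the two steps, $\cE_\infty(X)$ is a non-empty compact metrizable totally disconnected space that is either of cardinality at most $2$ or perfect; in the latter case Brouwer's characterization identifies it with the Cantor space, yielding the dichotomy. The main obstacle is the transplantation–separation step in the middle paragraph: unlike the classical Hopf setting, where an isometric group action places a literal isometric copy $g \cdot K_0$ inside $W_n$ with cleanly separated complementary components, here $f(K_0)$ is only a quasi-copy and must be inflated to a truly separating bounded set; verifying via the lower QI bound that this inflation preserves the partition of the $f(U_i)$'s into distinct components — while simultaneously keeping the inflated set inside $W_n$ — is the core quantitative content of the proof.
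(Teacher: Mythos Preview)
The paper does not prove this theorem at all: the statement ends with \qed and is simply attributed to \cite[Thm.\ 6.39]{AlvarezCandel}. What you have written is therefore not a comparison target but an independent proof, and your outline is essentially the argument \'Alvarez and Candel give --- reduce to a proper (indeed graph) model, identify coarse ends with classical ends, note that the end space is compact metrizable totally disconnected, and then run a Hopf-type argument using the uniform QI family to push a separating set deep into any given end-neighborhood.

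One small imprecision worth flagging: you assert that the three images $f(U_1),f(U_2),f(U_3)$ yield \emph{three} distinct ends in $V_n$, and then in the next breath concede that one of the three complementary components may be the ``outer'' one meeting $\overline{B}(o,n)$. These two claims are in tension. The correct count is that at most one of the three components escapes $W_n$, so at least \emph{two} of them are contained in $W_n$ and hence determine at least two distinct ends in $V_n$; since at most one of these can equal $\xi$, you get the accumulation point you need. The argument is fine once phrased this way. The pull-back step via a quasi-inverse also deserves one more sentence of care: you should inflate $N$ by enough (in terms of $K,C$ and the closeness constant of $g\circ f$ to $\id$) that a path in $X\setminus N$ is mapped by $g$ to a coarse path genuinely disjoint from $K_0$, not merely close to being disjoint.
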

Combining Proposition \ref{EndsCoarse} and Theorem \ref{ACCriterion} we obtain:
\begin{corollary}\label{ACConvenient} If $X$ is a quasi-cobounded proper geodesic metric space of coarse bounded geometry, then either $|\cE(X)| \leq 2$ or $\cE(X)$ is homeomorphic to a Cantor set.
\end{corollary}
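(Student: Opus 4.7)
The plan is to derive Corollary \ref{ACConvenient} by stitching together the two results stated immediately above it, namely Proposition \ref{EndsCoarse} and Theorem \ref{ACCriterion}. The logical skeleton is essentially a one-line implication: we would first verify that the hypotheses of Theorem \ref{ACCriterion} are met, apply it to learn the topological dichotomy for $\cE_\infty(X)$, and then transport the conclusion back to $\cE(X)$ via Proposition \ref{EndsCoarse}.

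More precisely, I would first observe that any geodesic metric space is trivially large-scale geodesic: if $x,y \in X$ and $\gamma\colon[0,L]\to X$ is a geodesic from $x$ to $y$, then for any fixed $c>0$ the samples $x_i := \gamma(ic)$ for $i=0,1,\ldots,\lfloor L/c\rfloor$ together with $y$ form a $c$-path from $x$ to $y$ of length at most $L/c+1 = c^{-1} d(x,y)+1$. Hence every proper geodesic metric space is in particular large-scale geodesic, so the hypotheses ``quasi-cobounded, proper geodesic, coarse bounded geometry'' of the corollary imply the hypotheses ``quasi-cobounded, large-scale geodesic, coarse bounded geometry'' of Theorem \ref{ACCriterion}. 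Applying that theorem then yields the dichotomy: either $|\cE_\infty(X)| \leq 2$, or $\cE_\infty(X)$ is homeomorphic to a Cantor space.

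Next, since $X$ is proper and geodesic, Proposition \ref{EndsCoarse} furnishes a canonical homeomorphism $\cE(X) \cong \cE_\infty(X)$. Transferring the dichotomy across this homeomorphism gives the desired conclusion: either $|\cE(X)| \leq 2$, or $\cE(X)$ is a Cantor space. There is essentially no obstacle here beyond checking that the hypotheses line up; the genuine work was already absorbed into Proposition \ref{EndsCoarse} (which in turn relies on Lemma \ref{ComparisonIso} with $c_0=0$, $n_0(c)=5c$, $m(c,n)=n-2c$) and into the proof of the Álvarez--Candel criterion (Theorem \ref{ACCriterion}). Consequently, my proof would simply be a brief paragraph invoking these two results in succession.
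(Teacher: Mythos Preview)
Your proposal is correct and matches the paper's approach exactly: the paper introduces the corollary with the sentence ``Combining Proposition \ref{EndsCoarse} and Theorem \ref{ACCriterion} we obtain:'' and then marks the statement with \qed, so the intended proof is precisely the two-step combination you describe.
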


\chapter{Some notions from geometric group theory}\label{AppendixGGT}

\section{Left-invariant (pseudo-)metrics on groups}\label{SecGroupMetrics}

A pseudo-metric $d$ on a group $G$ is called \emph{left-invariant}\index{left-invariant!pseudo-metric}\index{pseudo-metric!left-invariant} if $d(gh, gk) = d(h, k)$ for all $g,h, k \in G$. Left-invariant pseudo-metrics on groups correspond to pseudo-norms in the sense of the following definition (see e.g. \cite{DranSmith}).
\begin{definition} A \emph{pseudo-norm}\index{pseudo-norm} on a group $G$ is a map $\|\cdot \|: G \to [0, \infty)$ with the following properties
\begin{enumerate}[(N1)]
\item $\|e\| = 0$,
\item $\|g\|  = \|g^{-1}\|$ for all $g\in G$,
\item $\|gh\| \leq \|g\| + \|h\|$ for all $g,h \in G$.
\end{enumerate}
It is called a \emph{norm}\index{norm} if instead of (N1) we have
\begin{enumerate}[(N1')]
\item $\|g\| = 0$ if and only if $g = e$.
\end{enumerate}
\end{definition}
Given a pseudo-norm $\|\cdot\|$ we define $d_{\|\cdot\|}: G \times G \to [0,\infty)$ by $d_{\|\cdot\|}(g,h) \coloneqq  \|g^{-1}h\|$, and given a pseudo-metric $d: G \times G \to [0,\infty)$ we define $\|\cdot\|_d: G \to [0, \infty)$ by $\|g\|_d \coloneqq  d(e,g)$. With these notations understood, the following lemma follows straight from the definitions:
\begin{lemma} The maps $d \mapsto \|\cdot\|_d$ and $\|\cdot\| \mapsto d_{\|\cdot\|}$ define mutually inverse bijections between the set of left-invariant pseudo-metrics on $G$ and the set of pseudo-norms on $G$. Under these bijections, left-invariant metrics correspond to norms.\qed
\end{lemma}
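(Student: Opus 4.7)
The lemma consists of four assertions that should be verified in turn: (a) for any left-invariant pseudo-metric $d$ on $G$, the function $\|\cdot\|_d$ is a pseudo-norm; (b) for any pseudo-norm $\|\cdot\|$ on $G$, the function $d_{\|\cdot\|}$ is a left-invariant pseudo-metric; (c) the two constructions are mutually inverse; (d) the bijection restricts to a bijection between left-invariant metrics and norms. All four checks are essentially formal, so my plan is to proceed by direct verification, in the order (a)--(d), and then present them as a single short computation.

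For (a), properties (N1)--(N3) of $\|\cdot\|_d$ follow immediately from the definition: (N1) reduces to $d(e,e)=0$; for (N2) one writes $\|g^{-1}\|_d = d(e,g^{-1}) = d(g,e) = d(e,g)$ using left-invariance together with the symmetry of $d$; and (N3) follows from the triangle inequality together with $d(g,gh)=d(e,h)$, again using left-invariance. For (b), the four properties of a left-invariant pseudo-metric are verified similarly: reflexivity comes from (N1), symmetry from (N2) applied to $g^{-1}h$, the triangle inequality from (N3) applied after inserting a middle element $h^{-1}h$ inside $g^{-1}k$, and left-invariance from the computation $(gh)^{-1}(gk) = h^{-1}k$.

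For (c), I would just unwind the two round-trip compositions. Starting from a pseudo-metric $d$, left-invariance gives $d_{\|\cdot\|_d}(g,h) = \|g^{-1}h\|_d = d(e,g^{-1}h) = d(g,h)$; starting from a pseudo-norm $\|\cdot\|$, the definition directly gives $\|g\|_{d_{\|\cdot\|}} = d_{\|\cdot\|}(e,g) = \|g\|$. Finally, for (d), one observes that $d$ is a metric if and only if $d(g,h) = 0$ forces $g=h$, which, in terms of the associated pseudo-norm, is the condition that $\|g^{-1}h\|_d = 0$ forces $g^{-1}h = e$; substituting $x = g^{-1}h$ this is exactly (N1').

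I do not anticipate any genuine obstacle: the statement is a purely formal bookkeeping lemma, and the only small point to watch is to apply left-invariance in the correct direction when verifying symmetry of $\|\cdot\|_d$ and the round-trip identity in (c). The entire proof should fit in under half a page.
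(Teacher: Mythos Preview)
Your proposal is correct and is exactly the straightforward verification the paper has in mind; in fact the paper states this lemma without proof (note the \qed immediately after the statement), remarking only that it ``follows straight from the definitions.'' Your four-step direct check (a)--(d) is the standard argument and there is nothing to add.
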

If $G$ carries a topology, we are interested in left-invariant metrics on $G$ which to some extent are compatible with the given topology. 

\begin{definition}\label{DefLeftAdmissible} Let $G$ be a topological group. A pseudo-metric $d$ on $G$ is called
\begin{enumerate}[(i)]
\item \emph{locally bounded}\index{pseudo-metric!locally bounded}\index{locally bounded!pseudo-metric} if every point in $G$ has a $d$-bounded open neighborhood;
\item \emph{left-adapted}\index{left-adapted!pseudo-metric}\index{pseudo-metric!left-adapted} if it is left-invariant, proper (i.e.\ closed balls are compact) and locally bounded;
\item \emph{left-admissible}\index{left-admissible!pseudo-metric}\index{pseudo-metric!left-admissible} if it is left-adapted and induces the given topology on $G$.
\end{enumerate}
\end{definition}
We say that a metric is \emph{continuous} if the map $d \colon G \times G \to [0,\infty)$ is continuous. Note that every continuous pseudo-metric on a locally compact group is automatically locally bounded. We emphasize that, following \cite{CdlH}, we do not require left-adapted pseudo-metrics to be continuous. The following is contained in \cite[Prop. 2.A.9, Thm. 2.B.4, Prop 4.A.2]{CdlH}:
\begin{lemma}\label{AutomaticAdmissible} For a locally compact group $G$ the following are equivalent:
\begin{enumerate}[(i)]
    \item There exists a left-adapted continuous pseudo-metric on $G$.
    \item There exists a left-adapted pseudo-metric on $G$.
    \item There exists a left-adapted metric on $G$.
    \item There exists a left-admissible metric on $G$.
    \item $G$ is second-countable.
\end{enumerate}
\end{lemma}
Throughout this book we refer to a locally compact second-countable group $G$ as an \emph{lcsc}\index{group!lcsc}\index{lcsc group} group. Such groups arise naturally as isometry groups of pseudo-metric spaces:
\begin{example} Let $(X, d)$ be a proper pseudo-metric space; then on the isometry ${\rm Is}(X,d)$ the topologies of pointwise convergence, of uniform convergence on compacta and the compact-open topology all coincide and define a lcsc topology on ${\rm Is}(X,d)$ \cite[5.B.5]{CdlH}. In this book we will always consider ${\rm Is}(X,d)$ as a lcsc group with respect to this topology. 

By a theorem of Malicki and Solecki \cite[Thm 5.B.14]{CdlH}, every lcsc group can be realized as ${\rm Is}(X,d)$ for a proper metric space $(X,d)$, hence we can think of lcsc groups simply as isometry groups of proper metric spaces.

It can be shown that the action of $\mathrm{Is}(X,d)$ on $X$ is jointly continuous (i.e. the map $\mathrm{Is}(X,d) \times X \to X$ is continuous with respect to the product topology on $\mathrm{Is}(X,d)\times X$) and proper \cite[Lemma 5.B.4]{CdlH}.
\end{example}

A particularly important class of examples of lcsc groups for our purposes are countable discrete groups. In this context we have, by Lemma \ref{AutomaticAdmissible} and \cite[Section 1]{Smith}:
\begin{corollary}\label{AutomaticAdmissibleDiscrete} A discrete group $\Gamma$ admits a left-admissible metric if and only if $\Gamma$ is countable. Moreover, for a metric $d$ on a countable discrete group $\Gamma$ the following are equivalent:
\begin{enumerate}[(i)]
\item $d$ is left-invariant and proper;
\item $d$ is left-adapted;
\item $d$ is left-admissible.\qed
\end{enumerate}
\end{corollary}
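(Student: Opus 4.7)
The plan is to deduce both assertions directly from Proposition \ref{Struble} and Lemma \ref{AutomaticAdmissible}, observing that discreteness trivializes several of the hypotheses that appear in the general lcsc setting.

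First, I would establish the initial claim. A discrete group $\Gamma$ is locally compact, so by Proposition \ref{Struble} the existence of a left-admissible metric on $\Gamma$ is equivalent to $\Gamma$ being second-countable. Since $\Gamma$ carries the discrete topology, every singleton is open, so any topological basis must contain all singletons; hence $\Gamma$ is second-countable if and only if the underlying set is countable. This gives the first equivalence.

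Now fix a countable discrete group $\Gamma$ and a metric $d$ on $\Gamma$; I would prove (iii) $\Rightarrow$ (ii) $\Rightarrow$ (i) $\Rightarrow$ (ii) $\Rightarrow$ (iii) as follows. The implication (iii) $\Rightarrow$ (ii) is built into the definition of left-admissibility, and (ii) $\Rightarrow$ (i) follows immediately by discarding the local-boundedness clause from the definition of left-adaptedness. For (i) $\Rightarrow$ (ii), one only has to check local boundedness, and this is automatic because for each $g \in \Gamma$ the singleton $\{g\}$ is an open neighborhood of $g$ with $d$-diameter $0$. Finally, for (ii) $\Rightarrow$ (iii), one needs to verify that $d$ induces the discrete topology; here one invokes Lemma \ref{AutomaticAdmissible}, which requires continuity of $d$, and continuity holds automatically since $\Gamma \times \Gamma$ carries the discrete topology, so every function on $\Gamma \times \Gamma$ is continuous.

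I do not expect any genuine obstacles: the whole content of the corollary is that the three conditions in Definition \ref{DefLeftAdmissible} collapse in the discrete setting because discreteness simultaneously forces local boundedness (singletons are open of zero diameter) and continuity (every map on a discrete space is continuous), while Lemma \ref{AutomaticAdmissible} then promotes properness to topology-inducing behavior. The only point worth double-checking while writing the proof is that Proposition \ref{Struble} is applied in the correct direction for the first claim, namely that second-countability of a discrete space is the same as countability of its underlying set.
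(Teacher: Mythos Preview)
Your proposal is correct and follows exactly the approach the paper indicates: the corollary is stated with a \qed and the text immediately preceding it says it is obtained by ``specializing Proposition~\ref{Struble} and Lemma~\ref{AutomaticAdmissible} to this context,'' which is precisely what you do. The details you supply (second-countable discrete means countable, singletons give local boundedness, discreteness gives continuity so Lemma~\ref{AutomaticAdmissible} applies) are the intended ones.
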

While left-admissible metrics on lcsc groups are far from unique, different left-admissible metrics share a number of important properties. For example, we will see in the next section that they all lie in the same coarse equivalence class. To given another example, recall from Definition \ref{DefDelone1} the definition of a Delone set in a metric space. Let $G$ be a lcsc group and $\Lambda \subset G$. Recall from Definition \ref{def:syndetic, commensurable} that $\Lambda$ is \emph{left-syndetic} in $(G,d)$ if there exists a compact $F \subset G$ so that $G \subset \Lambda F$. The following proposition guarantees that Delone sets in lcsc groups with respect to a left-admissible metric admit a purely group theoretic characterization:
\begin{proposition}[Characterization of Delone sets in lcsc groups, \cite{BH}]\label{TopCharDelone} Let $G$ be a lcsc group, $\Lambda \subset G$ a subset and $d$ a left-admissible metric.
\begin{enumerate}[(i)]
\item $\Lambda$ is uniformly discrete in $(G,d)$ if and only if the identity $e \in G$ is not an accumulation point of $\Lambda^{-1}\Lambda$.
\item $\Lambda$ is relatively dense in $(G,d)$ if and only if it is left-syndetic.
\end{enumerate}
In particular, the property of being a Delone set in $(G, d)$ is independent of the choice of $d$.\qed
\end{proposition}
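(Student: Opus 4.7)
The plan is to translate both metric conditions into purely topological conditions on $G$ using left-invariance and the fact that $d$ induces the given topology on $G$. Since both resulting conditions make no reference to $d$, the final ``in particular'' statement will follow immediately.

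For part (i), I would first observe that, by left-invariance, $d(x,y) = \|x^{-1}y\|_d$, so $\Lambda$ is $r$-uniformly discrete if and only if $\Lambda^{-1}\Lambda \cap B(e,r) \subseteq \{e\}$. Since $d$ is left-admissible, the family of open balls $B(e,r)$ forms a neighborhood basis of $e$ in the given topology of $G$. Hence the existence of some $r>0$ with $\Lambda^{-1}\Lambda \cap B(e,r) \subseteq \{e\}$ is equivalent to the existence of an open neighborhood $U$ of $e$ with $\Lambda^{-1}\Lambda \cap U \subseteq \{e\}$, which is precisely the statement that $e$ is not an accumulation point of $\Lambda^{-1}\Lambda$. (If $\Lambda = \emptyset$ both sides hold trivially; if $\Lambda \neq \emptyset$ then $e \in \Lambda^{-1}\Lambda$, so ``not an accumulation point'' is the correct formulation.)

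For part (ii), I would again use left-invariance to rewrite $N_R(\Lambda) = \Lambda \cdot B(e,R)$, so relative density with constant $R$ amounts to $G = \Lambda \cdot B(e,R)$. For the forward direction, since $d$ is proper the closed ball $\overline{B(e,R)}$ is compact, and $G = \Lambda \cdot \overline{B(e,R)}$ exhibits $\Lambda$ as left-syndetic. For the converse, suppose $G = \Lambda K$ for a compact $K$. Since $d$ is locally bounded and continuous (the latter because $d$ induces the topology, so open $d$-balls are open in $G$), the continuous function $g \mapsto \|g\|_d$ attains a finite maximum $R$ on $K$, so $K \subseteq \overline{B(e,R)} \subseteq B(e, R+1)$, and hence $G = \Lambda \cdot B(e,R+1)$, giving relative density.

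The main obstacle, if any, is merely verifying the automatic continuity of left-admissible metrics used in the converse of (ii); but this is immediate from the definition, since a pseudo-metric whose open balls form a basis for the topology of $G$ is continuous as a map $G \times G \to [0,\infty)$. Once parts (i) and (ii) are established, the final assertion follows because the right-hand characterizations (``$e$ is not an accumulation point of $\Lambda^{-1}\Lambda$'' and ``$\Lambda$ is left-syndetic'') depend only on the topology and group structure of $G$, not on the particular choice of left-admissible metric $d$.
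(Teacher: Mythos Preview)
Your argument is correct. The paper does not actually prove this proposition---it is stated with a \qed{} and attributed to \cite{BH}---so there is no proof in the paper to compare against. Your approach (rewriting uniform discreteness via $\Lambda^{-1}\Lambda \cap B(e,r)$ and relative density via $G = \Lambda \cdot B(e,R)$, then using that a left-admissible metric induces the topology and is proper) is the natural one and is presumably what is done in the cited reference. One small remark: in the converse of (ii) you do not even need continuity of $d$; local boundedness of $d$ together with compactness of $K$ already forces $K$ to be $d$-bounded, since finitely many $d$-bounded open sets cover $K$.
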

\begin{definition}\label{DefDelone}
A subset $\Lambda$ of a lcsc group $G$ is called a \emph{Delone set in $G$}\index{Delone!set in a lcsc group} if it is a Delone set with respect to some (hence any) left-admissible metric on $G$.
\end{definition}

\section{The coarse class of a countable group}\label{SecCoarseClass}
With every countable discrete group and, more generally with every lcsc group we can associate a canonical coarse equivalence class of metric spaces. This construction is based on the following coarse uniqueness property of left-adapted metrics (see \cite[Cor. 4.A.6]{CdlH}):
\begin{proposition}\label{CoarseGroupClass} Let $d$ be a left-adapted pseudo-metric on a lcsc group $G$ and let $d'$ be a left-adapted pseudo-metric on a closed subgroup $H< G$. Then the inclusion map $(H, d') \hookrightarrow (G, d)$ is a coarse embedding. In particular, if $d$ and $d'$ are left-adapted pseudo metrics on $G$, then $(G, d)$ and $(G,d')$ are coarsely equivalent.
\end{proposition}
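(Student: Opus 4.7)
The plan is to prove both statements by exhibiting explicit upper and lower control functions for the inclusion $\iota\colon (H,d') \hookrightarrow (G,d)$. The strategy rests on a single observation which I will call the \emph{diameter lemma}: if $\delta$ is a locally bounded pseudo-metric on a topological space $Y$ and $K \subset Y$ is compact, then $\delta\text{-}\diam(K) < \infty$. Indeed, by local boundedness every point of $K$ has an open $\delta$-bounded neighborhood; extracting a finite subcover $U_1,\dots,U_n$ and picking $u_i \in U_i$, the triangle inequality gives $\delta(x,y) \leq \diam U_i + \delta(u_i,u_j) + \diam U_j$ for $x\in U_i, y \in U_j$, which is uniformly bounded. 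I will record this lemma first, since both directions of the proof use it.

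Next, I will use left-invariance of $d$ and $d'$ to reduce everything to estimates on the associated pseudo-norms $\|h\|_d := d(e,h)$ and $\|h\|_{d'} := d'(e,h)$ on $H$: indeed, $d(\iota(h),\iota(h')) = \|h^{-1}h'\|_d$ and $d'(h,h') = \|h^{-1}h'\|_{d'}$, so it suffices to build a function $\Phi_+\colon [0,\infty) \to [0,\infty)$ with $\|h\|_d \leq \Phi_+(\|h\|_{d'})$ for all $h\in H$, and a function $\Phi_-$ of the reverse form. For the upper control, given $R>0$ consider the closed $d'$-ball $K_R := \{h \in H : \|h\|_{d'} \leq R\}$. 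Since $d'$ is proper on $H$, the set $K_R$ is compact in $H$, hence also in $G$. Since $d$ is locally bounded on $G$, the diameter lemma yields a finite constant $\Phi_+(R) := d\text{-}\diam(K_R) < \infty$, and $\|h\|_d = d(e,h) \leq \Phi_+(R)$ for $h \in K_R$. For the lower control, fix $S>0$ and consider $L_S := \{h \in H : \|h\|_d \leq S\}$. The $d$-ball $\overline{B}_d(e,S)$ is compact in $G$ by properness of $d$, and since $H$ is \emph{closed} in $G$, the intersection $L_S = \overline{B}_d(e,S) \cap H$ is closed in a compact set, hence compact in $H$. Applying the diameter lemma to the locally bounded pseudo-metric $d'$ on $H$, we get $\Phi_-^{-1}(S) := d'\text{-}\diam(L_S) < \infty$; inverting yields the required lower control.

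For the ``in particular'' statement, set $H := G$, which is trivially closed and takes the subspace topology from itself. The inclusion is then the identity map, which is automatically essentially surjective; combined with the coarse embedding property just proved, this gives a coarse equivalence $(G,d) \to (G,d')$.

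The main obstacle, although a minor one, is the careful handling of the asymmetry in which space carries which pseudo-metric: the upper control needs properness of $d'$ on $H$ combined with local boundedness of $d$ on $G$, whereas the lower control requires properness of $d$ on $G$ combined with local boundedness of $d'$ on $H$ --- and crucially uses that $H$ is closed in $G$ so that $d$-compact sets intersected with $H$ remain compact (rather than merely relatively compact) in $H$. Absent the closedness hypothesis, the lower control argument breaks down, which is exactly why the proposition imposes it.
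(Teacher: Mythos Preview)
Your proof is correct. Note that the paper does not actually supply a proof of this proposition: it is quoted from \cite[Cor.~4.A.6]{CdlH} and marked with a \qed. Your argument is essentially the standard one that appears there --- reduce to pseudo-norms by left-invariance, use properness on one side to get compact balls, then use local boundedness on the other side (via what you call the diameter lemma) to bound their diameter in the other pseudo-metric; the closedness of $H$ is needed exactly where you say, to ensure that $\overline{B}_d(e,S)\cap H$ is compact in $H$ rather than merely relatively compact.

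One minor point worth tightening: the step ``inverting yields the required lower control'' deserves a sentence. What you have shown is that $\|h\|_{d'} \leq \Psi(\|h\|_d)$ for a non-decreasing finite-valued $\Psi$; the lower control is then $\Phi_-(t) := \inf\{s \geq 0 : \Psi(s) \geq t\}$, which is non-decreasing, tends to infinity (since $\Psi$ is everywhere finite), and satisfies $\Phi_-(\|h\|_{d'}) \leq \|h\|_d$. This is routine, but since $\Psi$ need not be invertible the word ``inverting'' is slightly informal.
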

Let us spell out the special case of countable discrete groups, since it lies at the heart of our approach:
\begin{corollary}\label{InclusionCoarseEmbedding}\label{CoarseGroupClassCountable} 
Let $\Gamma_1$ be a countable group and $\Gamma_2 < \Gamma_1$ be a subgroup. If $d_1$ and $d_2$ are left-adapted metrics on $\Gamma_1$ and $\Gamma_2$ respectively, then the inclusion $(\Gamma_2, d_2) \hookrightarrow (\Gamma_1, d_1)$ is a coarse embedding. In particular, if $d, d'$ are two left-adapted pseudo-metrics on a countable group $\Gamma$, then the identity $(\Gamma, d) \to (\Gamma, d')$ is a coarse equivalence.\qed
\end{corollary}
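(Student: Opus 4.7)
The plan is to deduce this corollary as a direct specialization of Proposition \ref{CoarseGroupClass} to the discrete setting. First I would observe that every countable group $\Gamma_1$, equipped with the discrete topology, becomes an lcsc group, and that in a discrete group \emph{every} subgroup $\Gamma_2 < \Gamma_1$ is automatically closed (indeed, every subset is). Next, I would note that by the definition of left-admissibility (Definition \ref{DefLeftAdmissible}), every left-admissible pseudo-metric is in particular left-adapted, since admissibility merely adds the extra requirement that the pseudo-metric induce the given topology. Hence the given metrics $d_1$ on $\Gamma_1$ and $d_2$ on $\Gamma_2$ qualify as left-adapted pseudo-metrics in the sense required by Proposition \ref{CoarseGroupClass}.

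With these observations in hand, the first statement is immediate: applying Proposition \ref{CoarseGroupClass} to $G := \Gamma_1$, $H := \Gamma_2$ (a closed subgroup, since $\Gamma_1$ is discrete), and to the left-adapted pseudo-metrics $d_1$ and $d_2$, yields that the inclusion $(\Gamma_2, d_2) \hookrightarrow (\Gamma_1, d_1)$ is a coarse embedding.

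For the second statement, I would specialize to $\Gamma_1 = \Gamma_2 = \Gamma$ and apply the first statement twice, once to the pair $(d,d')$ and once to $(d',d)$. This shows that both the identity map $(\Gamma, d) \to (\Gamma, d')$ and its inverse $(\Gamma, d') \to (\Gamma, d)$ are coarse embeddings. Since the identity is trivially surjective (hence essentially surjective), it is in fact a coarse equivalence by definition, with the identity in the opposite direction serving as a genuine inverse (not merely a coarse inverse).

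The main obstacle here is really only a bookkeeping one: verifying that the definitional hypotheses of Proposition \ref{CoarseGroupClass} (left-adaptedness, closedness of the subgroup, lcsc-ness of the ambient group) are met in the discrete/countable context. There is no genuine geometric content beyond Proposition \ref{CoarseGroupClass} itself; the corollary is a packaging of that result in the form most convenient for the subsequent theory of coarse classes of countable (approximate) groups.
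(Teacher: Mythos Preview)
Your proposal is correct and matches the paper's approach exactly: the paper presents this corollary with a \qed{} immediately after the statement, having just introduced it as ``the special case of countable discrete groups'' of Proposition \ref{CoarseGroupClass}. Your bookkeeping (discrete $\Rightarrow$ lcsc, every subgroup closed, left-admissible $\Rightarrow$ left-adapted) is precisely what is needed; note that the second statement is even more immediate since Proposition \ref{CoarseGroupClass} already contains the clause ``if $d$ and $d'$ are left-adapted pseudo-metrics on $G$, then $(G,d)$ and $(G,d')$ are coarsely equivalent,'' so you need not argue via two applications of the first part.
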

\begin{definition}\label{DefCoarseClass}\index{coarse class!of a lcsc group} If $G$ is a lcsc group, then the \emph{coarse class} $[G]_c$ of $G$ is the coarse equivalence class of the metric space $(G,d)$, where $d$ is some (hence any) left-adapted pseudo-metric on $G$.
\end{definition}
The definition applies in particular to the case of a countable discrete group $\Gamma$. In order to give an explicit representative of $[\Gamma]_c$ we need to construct an explicit left-adapted metric on $\Gamma$. This can be done by means of the following construction:
\begin{definition} Let $\Gamma$ be a countable group and $S\subset \Gamma$ be a symmetric subset. A function $w: S\cup\{e\}\to [0, \infty)$ is called a \emph{weight function on $S$}\index{weight function} if it is proper and satisfies $w^{-1}(0) = \{e\}$ and $w(s) = w(s^{-1})$ for all $s \in S$.
\end{definition}
\begin{lemma}\label{ExistLeftAdmissible} Let $S$ be a symmetric generating set of a countable group $\Gamma$ and let $w: S \cup\{e\}\to [0, \infty)$ be a weight function. Then 
\[
\|g\|_{S, w} : = \inf\left\{\sum_{i=1}^n w(s_i)\mid g = s_1 \cdots s_n, \; s_i \in S\right\}
\]
defines a norm on $\Gamma$, and the associated metric $d_{S,w}$ is left-adapted. In particular, $(\Gamma, d_{S,w})$ represents $[\Gamma]_c$.
\end{lemma}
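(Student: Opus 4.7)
My plan is to verify the three defining properties of a norm for $\|\cdot\|_{S,w}$, then establish that the associated metric $d_{S,w}$ is proper and left-invariant; left-admissibility will then be a consequence of Corollary~\ref{AutomaticAdmissibleDiscrete}, and the final statement about the coarse class is immediate from Definition~\ref{DefCoarseClass}.

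First, I would observe that properness of $w$ forces a uniform positive lower bound on $w|_{S\setminus\{e\}}$: by properness $w^{-1}([0,1])$ is a finite subset of $S\cup\{e\}$, so $w^{-1}([0,1])\setminus\{e\}$ is finite and, using $w^{-1}(0)=\{e\}$, the value
\[
c_0 := \inf\{w(s) \mid s \in S\setminus\{e\}\}
\]
is a positive real number. This is the key quantitative input behind both non-degeneracy and properness.

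Next I would establish the norm properties. Left-invariance and the triangle inequality are formal from the definition (concatenating two factorizations of $g$ and $h$ yields a factorization of $gh$), and symmetry $\|g\|_{S,w}=\|g^{-1}\|_{S,w}$ follows because $S$ is symmetric and $w(s)=w(s^{-1})$, by mapping $g=s_1\cdots s_n$ to $g^{-1}=s_n^{-1}\cdots s_1^{-1}$. For non-degeneracy, clearly $\|e\|_{S,w}=0$; conversely, if $g\neq e$ and $g=s_1\cdots s_n$ is any factorization, then after deleting all occurrences of $e$ (which does not change the value of the product nor the sum of weights, since $w(e)=0$) at least one factor $s_i\neq e$ remains, so $\sum_i w(s_i)\geq c_0 > 0$, giving $\|g\|_{S,w}\geq c_0$.

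Finally, for properness of $d_{S,w}$ (equivalently, finiteness of closed balls around $e$), fix $R>0$ and suppose $\|g\|_{S,w}\leq R$. Then $g$ admits a factorization $g=s_1\cdots s_n$ with each $s_i\in S\setminus\{e\}$ and $\sum w(s_i)\leq R+1$, say. The lower bound $w(s_i)\geq c_0$ gives $n\leq (R+1)/c_0$, and each factor lies in the finite set $S_{R+1}:=\{s\in S\mid w(s)\leq R+1\}$ (finite by properness of $w$). Hence the closed ball of radius $R$ around $e$ is contained in $\bigcup_{n\leq (R+1)/c_0} S_{R+1}^{\,n}$, which is finite; left-invariance propagates finiteness to all closed balls. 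The main (and only mildly subtle) point is the extraction of $c_0>0$ from properness of $w$; everything else is formal. Left-admissibility now follows from Corollary~\ref{AutomaticAdmissibleDiscrete}, and the last sentence is immediate from Definition~\ref{DefCoarseClass}.
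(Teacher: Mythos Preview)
Your proof is correct and follows the same approach as the paper's (very terse) argument, which simply says that (N1)--(N3) follow from the defining properties of $w$ and that properness of $w$ yields properness of $d_{S,w}$. In fact you have made explicit the one point the paper glosses over: the extraction of the positive lower bound $c_0$ on $w|_{S\setminus\{e\}}$ from properness of $w$, which is needed both for (N1${}'$) and for properness of the metric.
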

\begin{proof} (N1) follows from $w^{-1}(0) = \{e\}$, (N2) follows from symmetry of $S$ and $w$ and (N3) holds by construction. Properness of $w$ implies that the associated metric is proper.
\end{proof}
If $S$ is finite, then we may choose $w \coloneqq  1$, that is, $w(s)=1, \forall s\in S$. In this case, $d_S \coloneqq  d_{S,1}$ is called a \emph{word metric}\index{word metric} on $\Gamma$.

\section{The canonical QI class of a finitely-generated group}
With any generating set $S$ of a lcsc group $G$ we can associate a word metric $d_S = d_{S,1}$ by the same formula as in Lemma \ref{ExistLeftAdmissible}. This metric is proper if and only if $S$ is compact. In this case it is even left-adapted and large-scale geodesic \cite[Prop.\ 4.B.4]{CdlH}.
\begin{lemma}\label{CompactWordMetric} If $G$ is a compactly-generated lcsc group and $S$ is a compact symmetric generating set of $G$, then the associated word metric $d_S$ is left-adapted and large-scale geodesic.
\end{lemma}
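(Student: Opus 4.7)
The plan is to verify the three defining conditions of being left-adapted (left-invariance, properness of closed balls, local boundedness of the topology) and the fourth condition of large-scale geodesicity separately, recognizing at the outset that only one of the four requires genuine work.

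Left-invariance of $d_S$ is immediate from the formula $d_S(g,h) = \|g^{-1}h\|_S$ together with $d_{\|\cdot\|_{S,1}}$ being defined by a norm. Properness follows from the observation that, with $w \equiv 1$, the word norm $\|\cdot\|_S$ takes values in $\mathbb{N}_0$ (via the empty-product convention $\|e\|_S = 0$), so that the closed ball $\overline B(e,n) = \bigcup_{k=0}^{\lfloor n \rfloor} S^k$. Each $S^k$ is the image of the compact set $S^{\times k}$ under the continuous multiplication map $G^k \to G$, hence compact; a finite union of compact sets in the Hausdorff space $G$ is compact, so $\overline B(e,n)$ is compact. Left-invariance extends compactness to every closed ball. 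For large-scale geodesicity, given $g,h \in G$ with $d_S(g,h) = n$, write $g^{-1}h = s_1\cdots s_n$ with $s_i \in S$ and set $g_i := gs_1\cdots s_i$; then $(g_0,\dots,g_n)$ is a $1$-path from $g$ to $h$ of length $n = d_S(g,h)$, so $(G,d_S)$ is $(1,0,1)$-large-scale geodesic in the sense of the appendix.

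The substantive point — and the only step that is not a formality — is local boundedness. By left-invariance it suffices to produce an open neighborhood $V$ of $e$ with $V \subset S^k$ for some $k$, and it is here that we must use both the \emph{compactness} of $S$ and the \emph{local compactness} of $G$ (neither of which has been used so far). Since $S$ generates $G$, we have $G = \bigcup_{n \in \mathbb N} S^n$, and each $S^n$ is compact, hence closed in the Hausdorff group $G$. A locally compact Hausdorff space is a Baire space, so this countable union of closed sets cannot have each summand nowhere dense; thus $S^n$ has non-empty interior for some $n \in \mathbb N$. Picking $g_0$ in the interior of $S^n$ and using $g_0^{-1} \in (S^n)^{-1} = S^n$ (by symmetry of $S$), the set $(\mathrm{int}\, S^n)\, g_0^{-1}$ is an open neighborhood $V$ of $e$ contained in $S^n \cdot S^n = S^{2n}$, giving $\sup_{g \in V}\|g\|_S \leq 2n < \infty$. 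Together with left-invariance, this yields a bounded open neighborhood of every point and completes the verification.

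I expect no further obstacle: once the Baire-category step is in place, every other verification is a direct unwinding of definitions, and the large-scale geodesicity claim is obtained for free from the integer-valued nature of $d_S$.
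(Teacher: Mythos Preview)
Your proof is correct. The paper does not give its own proof of this lemma but simply cites \cite[Prop.~4.B.4]{CdlH}; your argument is essentially the standard one found there, with the Baire-category step for local boundedness being exactly the key ingredient. One cosmetic point: the appendix defines a $c$-path via the \emph{strict} inequality $d(x_i,x_{i+1}) < c$, so your construction yields a $(1,0,c)$-large-scale geodesic structure for any $c>1$ rather than $(1,0,1)$, but this has no bearing on the conclusion.
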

Recall that a lcsc group $G$ is called \emph{compactly generated}\index{compactly generated group}\index{group!compactly generated} if it admits a compact generating set $S$. By Lemma \ref{CompactWordMetric}, every compactly-generated lcsc group admits a left-adapted large-scale geodesic metric. While the word metric $d_S$ from Lemma \ref{CompactWordMetric} is in general not continuous, it is always possible to find a \emph{continuous}\index{metric!continuous} left-adapted large-scale geodesic metric (\cite[Prop. 4.B.4]{CdlH}). For example, if $G$ is a connected Lie group, then any left-invariant Riemannian metric on $G$ is a large-scale geodesic left-admissible metric. This shows:
\begin{proposition}\label{CompactlyGeneratedConsequences} Let $G$ be a compactly-generated lcsc group. Then there exists a large-scale geodesic left-admissible metric $d$ on $G$.\qed
\end{proposition}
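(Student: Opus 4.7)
The plan is to construct the desired metric by applying a length-space smoothing procedure to an arbitrary continuous left-admissible metric. By Struble's theorem (Proposition \ref{Struble}) together with Lemma \ref{AutomaticAdmissible}, $G$ admits a continuous proper left-invariant metric $d_0$, which I fix once and for all. The word metric $d_S$ associated to a compact symmetric generating set $S$ of $G$ is already left-adapted and large-scale geodesic (Lemma \ref{CompactWordMetric}), but it is almost never continuous; the aim is to combine the good large-scale behavior of $d_S$ with the good local behavior of $d_0$.

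First I would fix a compact symmetric generating set $S$ of $G$, and set
\[
C := \sup_{s \in S} d_0(s, e),
\]
which is finite since $S$ is compact and $d_0$ is continuous. By left-invariance of $d_0$, any factorization $g = s_1 \cdots s_n$ with $s_i \in S$ gives rise to a $C$-path from $e$ to $g$ in $(G, d_0)$, so $(G, d_0)$ is $C$-coarsely connected. I then define
\[
d(g, h) := \inf\left\{\sum_{i=1}^n d_0(x_{i-1}, x_i) \;:\; x_0 = g,\, x_n = h,\, d_0(x_{i-1}, x_i) \leq C \text{ for all } i\right\},
\]
which is finite by the $C$-coarse connectedness. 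Left-invariance and the triangle inequality are clear from the definition.

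Next I would verify the three required properties of $d$. Taking the one-step chain gives $d(g,h) \leq d_0(g,h)$ whenever $d_0(g,h) \leq C$, while the triangle inequality for $d_0$ gives $d \geq d_0$ always. Hence $d$ coincides with $d_0$ on a neighborhood of the diagonal, which forces $d$ to be continuous for the topology of $G$ and also shows that $d$ is a genuine metric. Properness of $d$ is immediate from $d \geq d_0$ and properness of $d_0$, so Lemma \ref{AutomaticAdmissible} implies that $d$ is left-admissible.

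The one step requiring actual work, and the main obstacle in the argument, is large-scale geodesicity. Given $g, h$ with $L := d(g, h)$ and $\epsilon > 0$, I would choose a chain $x_0, \dots, x_n$ with steps bounded by $C$ in $d_0$ and total $d_0$-length at most $L + \epsilon$, and then greedily merge any two consecutive steps whose $d_0$-lengths sum to at most $C$ (the merged step remains admissible by the triangle inequality and the total length can only decrease). After this reduction, one has $d_0(x_{i-1}, x_i) + d_0(x_i, x_{i+1}) > C$ for every $i$; grouping the steps into disjoint consecutive pairs gives $L + \epsilon > C \lfloor n/2 \rfloor$, whence $n \leq 2L/C + O(1)$. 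Since each step has $d$-length at most $d_0(x_{i-1}, x_i) \leq C$, this produces a $C$-path in $(G, d)$ from $g$ to $h$ of length at most $2L/C + O(1)$, establishing that $(G, d)$ is large-scale geodesic.
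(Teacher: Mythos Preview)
Your argument is correct. The paper does not actually prove this proposition: it simply cites \cite[Prop.~4.B.4]{CdlH} and remarks that for connected Lie groups any left-invariant Riemannian metric works, then marks the statement with a \qed. Your construction---the intrinsic ``length metric at scale $C$'' associated to a continuous proper left-invariant metric $d_0$---is the standard way to prove this and is essentially what one finds in the cited reference. The greedy merging argument for large-scale geodesicity is clean; one minor point is that the paper's definition of a $c$-path uses strict inequality $d(x_{i-1},x_i) < c$, so you should take $c$ slightly larger than $C$ (say $c = C+1$) when verifying the large-scale geodesic condition, which is harmless.
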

In fact, Proposition \ref{CompactlyGeneratedConsequences} can be turned into a characterization of compactly-generated lcsc groups:
\begin{proposition}[Geometric characterizations of compactly-generated groups]\label{CompGenGroup} Let $G$ be a lcsc group. Then the following are equivalent:
\begin{enumerate}[(i)]
\item $G$ is compactly-generated.
\item Every left-admissible metric on $G$ is coarsely connected.
\item $G$ admits a coarsely connected left-admissible metric.
\item $G$ admits a large-scale geodesic left-admissible metric.
\item Every representative of $[G]_c$ is coarsely connected.
\item Every representative of $[G]_c$ is coarsely geodesic.
\item $[G]_c$ admits a large-scale geodesic representative.
\end{enumerate}
If any one of these equivalent conditions holds, then any two large-scale geodesic representatives of $[G]_c$ are quasi-isometric.
\end{proposition}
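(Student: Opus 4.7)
The plan is to prove the equivalences via a single cycle
\[\text{(i)} \Rightarrow \text{(iv)} \Rightarrow \text{(vii)} \Rightarrow \text{(vi)} \Rightarrow \text{(v)} \Rightarrow \text{(ii)} \Rightarrow \text{(iii)} \Rightarrow \text{(i)},\]
and then deduce the final uniqueness statement from Gromov's trivial lemma. The main ingredients already available are Proposition \ref{CompactlyGeneratedConsequences} (which immediately supplies (i)$\Rightarrow$(iv)), Proposition \ref{CoarseGroupClass} (which guarantees that every left-admissible metric on $G$ represents the single coarse class $[G]_c$), and Lemma \ref{CharLSG} together with the coarse-invariance remark following it (which pins down the relations between ``coarsely connected'', ``coarsely geodesic'' and ``large-scale geodesic'').

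The implications (iv)$\Rightarrow$(vii) and (ii)$\Rightarrow$(iii) are immediate: the first because a large-scale geodesic left-admissible $(G,d)$ is itself a representative of $[G]_c$, the second by trivial logic. For (vii)$\Rightarrow$(vi)$\Rightarrow$(v), I would chain the two facts from Lemma \ref{CharLSG}(i) that large-scale geodesic implies coarsely geodesic implies coarsely connected, together with the coarse-invariance of coarse geodesicity and coarse connectedness; thus starting from a single large-scale geodesic representative each property propagates to every other representative of $[G]_c$. The implication (v)$\Rightarrow$(ii) is obtained by recalling that $(G,d)$ is a representative of $[G]_c$ for every left-admissible metric $d$ (Definition \ref{DefCoarseClass}), so the hypothesis applies directly to each such $(G,d)$.

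The only non-formal step is the closing implication (iii)$\Rightarrow$(i), and this is where I expect the work to lie, though it too is quite short. Given a left-admissible metric $d$ on $G$ that is $c$-coarsely connected, I would take $S := \overline{B}_d(e, c)$. Properness of $d$ makes $S$ compact, and left-invariance combined with the symmetry of $d$ gives $d(e, s^{-1}) = d(s, e) = d(e, s)$, so $S$ is symmetric. To see that $S$ generates $G$, for any $g \in G$ pick a $c$-path $e = g_0, g_1, \ldots, g_n = g$ from the definition of coarse connectedness; then $s_i := g_{i-1}^{-1} g_i$ satisfies $d(e, s_i) = d(g_{i-1}, g_i) < c$ by left-invariance, so $s_i \in S$ and $g = s_1 s_2 \cdots s_n$ lies in the subgroup generated by $S$.

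For the final uniqueness claim, if $X$ and $Y$ are two large-scale geodesic representatives of $[G]_c$ then they are coarsely equivalent to each other, and Corollary \ref{GromovTrivialSymmetric} upgrades this coarse equivalence to a quasi-isometry. I do not foresee any serious obstacles; the main care required is to invoke the correct coarse-invariance property at each step of the cycle, and to verify that the compact symmetric set $\overline{B}_d(e,c)$ produced in the closing step really is a generating set rather than just a relatively dense subset.
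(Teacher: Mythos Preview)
Your proposal is correct and follows essentially the same approach as the paper: the same appeal to Proposition~\ref{CompactlyGeneratedConsequences} for (i)$\Rightarrow$(iv), the same use of Lemma~\ref{CharLSG} and coarse-invariance to shuttle between the connectedness conditions, the identical closed-ball argument $S=\overline{B}_d(e,c)$ for the closing implication, and the same invocation of Corollary~\ref{GromovTrivialSymmetric} for the final statement. The only cosmetic difference is that you run a single cycle while the paper uses two overlapping chains, closing with (ii)$\Rightarrow$(i) rather than (iii)$\Rightarrow$(i); one small point worth making explicit is that your step (ii)$\Rightarrow$(iii) tacitly uses Struble's theorem (Proposition~\ref{Struble}) to guarantee that at least one left-admissible metric exists.
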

\begin{proof} The implication (i) $\Rightarrow$ (iv) follows from Proposition \ref{CompactlyGeneratedConsequences}. Using Lemma \ref{CharLSG} we also have implications (iv)$\Rightarrow$ (iii) $\Rightarrow$ (v) $\Rightarrow$ (ii) and implications (iv)$\Rightarrow$(vii)$\Rightarrow$(vi)$\Rightarrow$(v). To see that (ii) $\Rightarrow$ (i), observe that if $d$ is a left-admissible metric on $G$ such that $(G,d)$ is $C$-coarsely connected, then $G$ is generated, then the compact set $\overline{B}(e, C)$ generates $G$, hence $G$ is compactly-generated. This process yields the equivalence of conditions (i)-(vii), and the final statement is immediate from Gromov's trivial lemma (Corollary \ref{GromovTrivialSymmetric}).
\end{proof}
\begin{definition} The \emph{canonical QI-type}\index{canonical QI-type}\index{QI-type!canonical} of a compactly-generated lcsc group is
\[
[G] \coloneqq  \{X \in [G]_c \mid X \text{is large-scale geodesic}\}.
\]
\end{definition}
Note that the canonical QI type of a compactly-generated lcsc group is indeed a QI type by Proposition \ref{CompGenGroup} and Lemma \ref{CharLSG}. It follows from Lemma \ref{CompactWordMetric} that if $S$ is a compact generating set of a lcsc group $G$, then $(G, d_S)$ is a representative of $[G]$. By Proposition \ref{CompactlyGeneratedConsequences} the canonical QI type can always be represented by a continuous metric on $G$. We emphasize once more that the canonical QI type of a lcsc group is only defined if the group is compactly generated.

Another important aspect of geometric group theory are nice group actions:
\begin{definition}\label{ProperAct} According to \cite[Def. 4.C.1]{CdlH}, an isometric action of a topological group $G$ on a pseudo-metric space $(X,d)$ is called 
\begin{enumerate}[(i)]
\item \emph{cobounded}\index{action!group action!cobounded}, if some (hence any) $G$-orbit is relatively dense in $X$;
\item \emph{locally bounded}\index{action!group action!locally bounded}, if for every $g \in G$ and $B \subset X$ bounded there exists an open neighborhood $U$ of $g$ such that $UB \subset X$ is bounded;
\item \emph{metrically proper}\index{action!group action!metrically proper}, if $\{g \in G\mid d(x, g.x) \leq R \}$ is relatively compact for all $x \in X$ and $R \geq 0$;
\item \emph{geometric}\index{action!group action!geometric}, if it is cobounded, locally bounded and metrically proper.
\end{enumerate}
\end{definition}
\begin{remark}[On the notion of geometric actions of groups] \label{rem:geom_act}
However, in many situations of interest, geometric actions admit a much simpler characterization. Assume for example that $(X,d)$ is a non-empty proper metric space, and that $G$ is a locally compact group acting continuously on $X$ by isometries. (This includes the case where $G$ is discrete.) Then the $G$ action on $X$ is automatically locally bounded, and it is cobounded if and only if it is \emph{cocompact}\index{action!group action!cocompact} (i.e.\ there exists a compact subset $K \subset X$ such that $G.K = X$), and metrically proper if and only if it is proper (i.e.\ the map $G \times X \to X \times X$ given by $(g, x) \mapsto (x, g.x)$ is proper). In particular, such an action is geometric if and only if it is proper and cocompact. In the case of actions of discrete groups on proper metric spaces, this is the definition given in most books on geometric group theory.
\end{remark}

\begin{example}[Examples of compactly-generated lcsc groups]
The following are examples of compactly-generated lcsc groups.
\begin{enumerate}[(i)]
\item Every connected lcsc group $G$ is compactly-generated - any compact identity neighborhood is a generating set. 
\item More generally, every lcsc group $G$ with finitely many connected components is compactly-generated - any compact identity neighborhood which intersects all of the components is a generating set.
\item Isometry groups of  \lsg proper metric spaces are compactly-generated if they act coboundedly; this is a consequence of the following proposition.
\end{enumerate}
\end{example}
\begin{proposition}\label{MSTop} Let $(X, d)$ be a \lsg proper metric space and assume that $G \coloneqq  {\rm Is}(X,d)$ acts coboundedly on $X$. Then the following hold:
\begin{enumerate}[(i)]
\item $G$ is compactly generated (hence the canonical QI type $[G]$ is well-defined).
\item If $S$ is any compact symmetric generating set of $G$, then every orbit map defines a quasi-isometry $(G, d_S) \to (X, d)$. In particular, $(X,d)$ represents the canonical QI type $[G]$ of $G$.
\end{enumerate}
\end{proposition}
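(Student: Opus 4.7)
The argument is the standard Milnor--Schwarz argument in the \lsg topological setting. Fix a basepoint $o \in X$. By coboundedness, choose $R>0$ so that the orbit $G.o$ is $R$-relatively dense in $X$, and fix constants $(a,b,c)$ so that $(X,d)$ is $(a,b,c)$-large-scale geodesic. Set
\[
S := \{g \in G \mid d(g.o, o) \leq 2R+c\}.
\]
I would first verify that $S$ is compact and symmetric: symmetry is immediate from $d(g.o,o) = d(o, g^{-1}.o)$, and $S$ is closed in $G$ as the preimage of a closed ball under the continuous orbit map. Compactness of $S$ follows from properness of the action of $G = \mathrm{Is}(X,d)$ on the proper space $(X,d)$, which is part of the general setup for isometry groups recalled before the statement.

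Next I would show simultaneously that $S$ generates $G$ and that $\|g\|_S \leq a\,d(o,g.o)+b$ for all $g$. Given $g \in G$, pick a $c$-path $o=x_0,x_1,\dots,x_n=g.o$ with $n\leq a\,d(o,g.o)+b$ coming from \lsg city. Using $R$-relative density of $G.o$, pick $g_i \in G$ with $d(g_i.o, x_i)<R$, taking $g_0=e$ and $g_n=g$. The triangle inequality gives
\[
d(g_i.o,\, g_{i+1}.o) \leq d(g_i.o,x_i)+d(x_i,x_{i+1})+d(x_{i+1},g_{i+1}.o) < 2R+c,
\]
so $s_i := g_i^{-1}g_{i+1} \in S$ by left-invariance of $d$ under $g_i$. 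The telescoping identity $g = s_0 s_1 \cdots s_{n-1}$ yields both that $S$ generates $G$ (which is part (i)) and the bound $\|g\|_S \leq n \leq a\,d(o,g.o)+b$.

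The reverse inequality is elementary: if $g = s_1 \cdots s_n$ with $s_i \in S$, then by iterated triangle inequality and left-invariance,
\[
d(g.o,o) \leq \sum_{i=1}^n d(s_1\cdots s_i.o,\, s_1\cdots s_{i-1}.o) = \sum_{i=1}^n d(s_i.o,o) \leq (2R+c)\,n,
\]
so $d(g.o,o) \leq (2R+c)\,\|g\|_S$. Combining the two bounds shows the orbit map $\iota_o:(G,d_S)\to(X,d)$, $g \mapsto g.o$, is a quasi-isometric embedding, and coboundedness makes its image $R$-relatively dense, hence $\iota_o$ is a quasi-isometry. This handles (ii) for the particular generating set $S$ constructed above.

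For an arbitrary compact symmetric generating set $S'$ of $G$, I would invoke Lemma \ref{CompactWordMetric} to conclude that both $d_S$ and $d_{S'}$ are large-scale geodesic left-adapted metrics on $G$, so they both represent the coarse class $[G]_c$ by Proposition \ref{CoarseGroupClass}; Gromov's trivial lemma (Corollary \ref{GromovTrivialSymmetric}) then upgrades the coarse equivalence between them to a quasi-isometry. Composing with the quasi-isometry $\iota_o:(G,d_S)\to(X,d)$ already established shows every orbit map $(G,d_{S'})\to(X,d)$ is a quasi-isometry, and in particular $(X,d) \in [G]$. I do not expect a serious obstacle here; the only point requiring care is confirming that properness of $\mathrm{Is}(X,d) \curvearrowright X$ applies in our pseudo-metric setting, but this was already noted in the discussion preceding Proposition \ref{Struble}.
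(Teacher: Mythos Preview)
Your argument is correct and is essentially the classical Milnor--Schwarz proof spelled out in full. The paper takes a slightly different route: rather than constructing an explicit compact generating set and proving the two quasi-isometry inequalities directly, it cites \cite[Prop.~5.B.10 and Thm.~4.C.5]{CdlH} to conclude that the pullback pseudo-metric $d_o(g,h):=d(g.o,h.o)$ is left-adapted and that the orbit map $(G,d_o)\to(X,d)$ is a quasi-isometry; it then observes that $d_o$ is large-scale geodesic (since $(X,d)$ is) and hence represents $[G]$. Your final step---passing from one compact generating set to another via Proposition~\ref{CoarseGroupClass} and Corollary~\ref{GromovTrivialSymmetric}---is identical to the paper's. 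What your version buys is self-containedness (no black-box citation of \cite{CdlH}); what the paper's version buys is brevity and the observation that the argument really only uses the coarse class $[G]_c$ together with large-scale geodesicity, rather than explicit QI constants.

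One small correction: the properness of the $\mathrm{Is}(X,d)$-action that you invoke for compactness of $S$ is recorded in the Example \emph{following} Lemma~\ref{AutomaticAdmissible}, not in the discussion preceding Proposition~\ref{Struble}. Also note that $(X,d)$ is a genuine metric space by hypothesis, so no pseudo-metric subtlety arises.
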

\begin{proof} (i) This is contained in \cite[Prop. 5.B.10]{CdlH}.

(ii) By \cite[Prop. 5.B.10]{CdlH}, the action of $G$ on $X$ is geometric in the sense of \cite[Def. 4.C.1]{CdlH}. It thus follows from  \cite[Theorem 4.C.5]{CdlH} that if $o\in X$ is an arbitrary basepoint, then the map $d_o: G \times G \to [0, \infty)$ given by $d_o(g, h) \coloneqq  d(g.o, h.o)$ defines a left-adapted pseudo-metric on $G$ such that the orbit map $(G, d_o) \to (X,d)$ with respect to $o$ is a quasi-isometry. Since $d_o$ is left-adapted we have $(G, d_o) \in [G]_c$, and hence $(X,d) \in [G]_c$. Since $(X,d)$ is large-scale geodesic, so is $(G, d_o)$ and hence we even have $(G, d_o) \in [G]$ and $(X,d) \in [G]$. Finally, if $S$ is a compact generating set of $G$, then the identical map $(G, d_o)\to(G, d_S)$ is a coarse equivalence by Proposition \ref{CoarseGroupClass}, and since both spaces are \lsg (see Lemma \ref{CompactWordMetric}) it is even a quasi-isometry. The proposition follows.
\end{proof}
Again, we are particularly interested in the case of countable discrete groups. For such groups, being finitely-generated is also equivalent to the classical finiteness property (F1) (see e.g.\ \cite[Prop.~7.2.1]{Geoghegan}). We thus obtain the following version of Proposition \ref{CompGenGroup}:
\begin{proposition}[Geometric characterizations of finitely-generated groups]\label{FinGenGroup} Let $\Gamma$ be a countable group. Then the following are equivalent.
\begin{enumerate}[(i)]
\item $\Gamma$ is finitely-generated.
\item $\Gamma$ is of type $F_1$, i.e.\ admits a classifying space with finite $1$-skeleton.
\item Every left-admissible metric on $\Gamma$ is coarsely connected.
\item $\Gamma$ admits a coarsely connected left-admissible metric.
\item $\Gamma$ admits a large-scale geodesic left-admissible metric.
\item Every representative of $[\Gamma]_c$ is coarsely connected.
\item Every representative of $[\Gamma]_c$ is coarsely geodesic.
\item $[\Gamma]_c$ admits a large-scale geodesic representative.
\end{enumerate}
If any one of these equivalent conditions holds, then any two large-scale geodesic representatives of $[\Gamma]_c$ are quasi-isometric.
\end{proposition}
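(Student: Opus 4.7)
The plan is to deduce Proposition \ref{FinGenGroup} from the already-established Proposition \ref{CompGenGroup} together with a classical fact from geometric group theory connecting finite generation to the finiteness property $(F_1)$. The key observation is that every countable group $\Gamma$, equipped with the discrete topology, is an lcsc group, and that compact subsets of a discrete group are finite. Hence ``compactly generated'' in the lcsc sense specializes to ``finitely generated'' in the discrete sense, so Proposition \ref{CompGenGroup} immediately yields the equivalence among (i) and (iii)--(viii), as well as the final statement that any two large-scale geodesic representatives of $[\Gamma]_c$ are quasi-isometric (this last point being ultimately a consequence of Gromov's trivial lemma, Corollary \ref{GromovTrivialSymmetric}).

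It therefore remains only to incorporate condition (ii) into the chain of equivalences. I would establish the equivalence (i) $\iff$ (ii) directly, without going through the other conditions. For (i) $\Rightarrow$ (ii): if $\Gamma$ has a finite generating set $S$, one can construct a classifying space $K(\Gamma,1)$ whose $2$-skeleton is built from a presentation complex associated with $S$ (a wedge of $|S|$ circles with $2$-cells attached), whose $1$-skeleton $K(\Gamma,1)^{(1)}$ is therefore finite. For (ii) $\Rightarrow$ (i): if $\Gamma$ admits a classifying space $X$ with finite $1$-skeleton, then the $1$-skeleton of the universal cover $\widetilde X$ is a connected graph on which $\Gamma$ acts freely and cocompactly, and a standard covering-space argument (equivalently, the group-theoretic Reidemeister--Schreier procedure, or a direct application of the Milnor--Schwarz lemma in the group case) produces a finite generating set for $\Gamma$ from the finitely many edge orbits. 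Both directions are classical; one may cite \cite[Prop.~7.2.1]{Geoghegan} for a uniform reference.

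The main (and only mild) obstacle is presentational: condition (ii) sits slightly outside the purely metric-geometric framework of Proposition \ref{CompGenGroup}, so one must be explicit about inserting it into the equivalence chain. A clean way is to prove the cycle of implications by adjoining (ii) as a sidecar: establish (i) $\iff$ (ii) independently by the classical argument sketched above, then invoke Proposition \ref{CompGenGroup} to close the loop through (iii)--(viii). The final uniqueness statement requires no new input, as it is already part of the conclusion of Proposition \ref{CompGenGroup}. Thus the proof reduces to (a) quoting Proposition \ref{CompGenGroup} and observing the discreteness specialization, and (b) invoking the classical equivalence between finite generation and property $(F_1)$ for countable groups.
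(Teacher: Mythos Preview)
Your proposal is correct and matches the paper's approach exactly: the paper states the proposition with a \qed and, in the preceding sentence, simply notes that it is the discrete specialization of Proposition \ref{CompGenGroup} together with the classical equivalence (i) $\iff$ (ii), for which it cites \cite[Prop.~7.2.1]{Geoghegan}.
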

Recall from Lemma \ref{CompactWordMetric} that if $S$ is any finite symmetric generating set of a finitely-generated group $\Gamma$ with associated word metric $d_S$, then $(\Gamma, d_S)$ represents the canonical QI class $[\Gamma]$. Since this QI class is \lsg, we know from Proposition \ref{CharLSG} that it can be represented by a locally finite graph. In fact, it is easy to construct such a graph explicitly:
\begin{example}[Cayley graphs of finitely-generated groups] If $\Gamma$ is finitely-generated with finite symmetric generating set $S$, then the \emph{Cayley graph}\index{Cayley graph} of $\Gamma$ with respect to $S$ is the locally finite graph ${\rm Cay}(\Gamma, S)$ with vertex set $\Gamma$ and edges given by pairs $(g, gs)$ with $g \in \Gamma$ and $s\in S$. If we restrict the canonical metric of ${\rm Cay}(\Gamma, S)$ to the vertices then we recover the word metric $d_S$. Since the vertex set of any locally finite graph is relatively dense with respect to the canonical metric, the isometric inclusion $(\Gamma, d_S) \hookrightarrow {\rm Cay}(\Gamma, S)$ is a quasi-isometry. This shows that $ {\rm Cay}(\Gamma, S)$ is a locally finite graph which represents $[\Gamma]$.
\end{example}
We conclude this section by recalling the Milnor--Schwarz lemma for countable discrete groups; recall the definition of geometric action (Definition \ref{ProperAct} and Remark \ref{rem:geom_act}).

\begin{lemma}[Milnor--Schwarz lemma]\label{MSClassic} Let $(X,d)$ be a large-scale geodesic proper metric space and let $\Gamma$ be a countable discrete group acting geometrically on $X$. Then $\Gamma$ is finitely-generated and $(X,d)$ is a representative of the canonical QI class $[\Gamma]$.
\end{lemma}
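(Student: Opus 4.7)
The plan is to adapt the standard Milnor--Schwarz argument in the form that directly uses large-scale geodesicity (not just geodesicity): I will exhibit a specific finite generating set $S \subset \Gamma$ and argue simultaneously that it generates $\Gamma$ and that the corresponding orbit map is a quasi-isometry onto $X$.

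First I would set up constants. Fix a basepoint $o \in X$. Since $(X,d)$ is large-scale geodesic, choose $a\geq 1$, $b \geq 0$, $c > 0$ so that $X$ is $(a,b,c)$-large-scale geodesic. Since $\Gamma$ acts geometrically on a proper metric space, by Remark \ref{ProperAct} the action is proper and cocompact; in particular there exists $R > 0$ such that $\Gamma \cdot \overline B(o,R) = X$. Set $T := 2R + c$ and define
\[
S := \{\, g \in \Gamma \setminus \{e\} \;:\; d(o,g.o) \leq T\,\}.
\]
The set $S$ is finite because the action is proper (so $\{g\in\Gamma : g.\overline B(o,T)\cap \overline B(o,T)\neq\emptyset\}$ is finite), and it is symmetric because $\Gamma$ acts by isometries.

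Next I would show that $S$ generates $\Gamma$ and at the same time establish a linear upper bound for $d_S$ in terms of $d$. Given $\gamma \in \Gamma$, use large-scale geodesicity to produce a $c$-path $o = x_0, x_1, \dots, x_n = \gamma.o$ with $n \leq a\cdot d(o, \gamma.o) + b$. Using $\Gamma.\overline B(o,R) = X$, choose $g_i \in \Gamma$ with $d(x_i, g_i.o) \leq R$, taking $g_0 = e$ and $g_n = \gamma$. Then for each $i$, by left-invariance of $d$ under the isometric $\Gamma$-action,
\[
d(o, g_i^{-1}g_{i+1}.o) = d(g_i.o, g_{i+1}.o) \leq d(g_i.o, x_i) + d(x_i, x_{i+1}) + d(x_{i+1}, g_{i+1}.o) \leq 2R + c = T,
\]
so $s_i := g_i^{-1}g_{i+1} \in S \cup \{e\}$. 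Hence $\gamma = s_0 s_1 \cdots s_{n-1}$, which shows $\langle S\rangle = \Gamma$, and moreover
\[
d_S(e, \gamma) \leq n \leq a \cdot d(o, \gamma.o) + b.
\]
Thus $\Gamma$ is finitely generated.

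Finally I would check that the orbit map $\iota_o \colon (\Gamma, d_S) \to (X,d)$, $g \mapsto g.o$, is a quasi-isometry. The image is $R$-relatively dense by the choice of $R$. The trivial upper bound $d(g.o, h.o) = d(o, g^{-1}h.o) \leq T \cdot d_S(e, g^{-1}h) = T \cdot d_S(g,h)$ holds because every generator in $S$ moves $o$ by at most $T$. The lower bound $d_S(g,h) \leq a\cdot d(g.o, h.o) + b$ is exactly what we just proved, applied to $\gamma = g^{-1}h$. Hence $\iota_o$ is a $(\max\{a,T\}, b, R)$-quasi-isometry, showing that $(X,d)$ belongs to $[\Gamma]$.

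The only step with any real content is the middle paragraph, i.e.\ the combination of large-scale geodesicity with cocompactness to realize an arbitrary group element as a short word in $S$; the rest is bookkeeping. I do not anticipate any serious obstacle, since all the heavy lifting (existence of left-admissible metrics, Gromov's trivial lemma, compactness criteria) has already been set up in the appendix.
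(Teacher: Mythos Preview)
Your proof is correct and follows the standard Milnor--Schwarz argument adapted to large-scale geodesic spaces. The paper itself does not prove this particular statement: it is recorded in the appendix as a classical background result (note the \texttt{\textbackslash qed} marker with no proof environment), while the paper's actual contribution is the far more general Theorem~\ref{ThmMS} for quasi-isometric quasi-actions of approximate groups, of which the classical lemma is a very special case.
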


\section{Free groups and combinatorics of words}
In this section we collect some combinatorial results about words in free groups which will be needed in the proof of Theorem \ref{QuasikernelsConical}. Throughout this section let $r \geq 2$ be a natural number and $S = \{a_1, \dots, a_r\}$ be a set of cardinality $r$. We denote by $F_r$ the free group over $S$. Given $g \in F_r$ we denote by $\|g\| \coloneqq  \|g\|_S$ the word length of $g$ with respect to $S$.
\begin{notation}[Reduced products] 
If $u,u_1, \dots, u_l \in F_r \setminus\{e\}$ then we write $u \overset{r}=u_1 \cdots u_l$ provided $u = u_1 \cdots u_l$ and $\|u\| = \|u_1\| + \dots + \|u_l\|$; we then say that $u$ is the \emph{reduced product}\index{reduced product} of $u_1, \dots, u_l$.

Set $S^* = S \cup S^{-1}$; then every $g \in F_r\setminus \{e\}$ can be written uniquely as $g \overset{r}{=} g_1 \cdots g_l$ with $g_1, \dots, g_l \in S^*$ and $\|g\| = l$; we then call $(g_1, \dots, g_l)$ the \emph{reduced word} representing $g$. Similarly, we say that the empty word is the reduced word representing $e$.
\end{notation} 
\begin{definition}\label{Occurrence} Let $u \overset{r}{=} u_1 \cdots u_l$ and $v \overset{r}= v_1 \cdots v_n$ with $l,n \in \mathbb N$, $u_1, \dots, u_l$, $ v_1, \dots, v_n \in S^*$. If for some $k \in \{1, \dots, n-l+1\}$ we have
\[
v_k = u_1, \dots, v_{k+l} = u_l,
\]
then we say that $u$ \emph{occurs} in $v$ at position $k$. We then denote by $\#_u(v)$\index{$\#_u(v)$} the number of such occurrences of $u$ in $v$.
\end{definition}
It will sometimes be convenient to also define $\#_u(e) \coloneqq 0$ and $\#_e(v) = \|v\|$ for $u,v \in F_r$. Note that in the definition of $\#_u(v)$ occurrences of $u$ in $v$ may overlap, e.g.\ $\#_{a_1^2}(a_1^5) = 4$.
\begin{definition}
Let $u, x,y\in F_r \setminus\{e\}$ with $\|u\| \geq 2$.
We say that $w$ is a \emph{$u$-blocker}\index{$u$-blocker} for $(x,y)$ if $xwy$ is a reduced product and 
\[
\#_{u}(xwy) = \#_{u}(x) + \#_{u}(y) \qand \#_{u^{-1}}(xwy) = \#_{u^{-1}}(x) + \#_{u^{-1}}(y).
\]
\end{definition}
\begin{example}[An unblockable word]
Let $u=a_1a_2 \in F_2$. We claim that there is no $u$-blocker for the pair $(a_1, a_1^{-1})$ or in fact for any pair $(x,y)$ where $x$ ends in $a_1$ and $y$ starts with $a_1^{-1}$. Assume for contradiction that $w$ was such a blocker. Since $xw$ is reduced and does not contain any copies of $a_1a_2$ or $a_2^{-1}a_1^{-1}$ overlapping $w$, we see that $w$ has to start from either $a_1$ or $a_2^{-1}$. Moreover, every $a_1$ in $w$ can only be followed by $a_1$ or $a_2^{-1}$, and every $a_2^{-1}$ can only be followed by $a_1$ or $a_2^{-1}$. But this means that $w$ has to end with either $a_1$ or $a_2^{-1}$. In the former case, $wy$ is not reduced, and in the latter case we generate a copy of $u^{-1}$. This is a contradiction.
\end{example}
We are going to show that this issue does not exist in free groups of higher rank:
\begin{theorem}[Existence of blockers]\label{BlockersExist} Assume that $r \geq 3$ and let $u \in F_r$ with $\|u\| \geq 2$. Then there exists a constant $C_u>0$ with the following property:
For all $x,y\in F_r \setminus\{e\}$ there exists a $u$-blocker $w$ for the pair $(x,y)$ which satisfies
\[
\|u\| \leq \|w\| < C_u.
\]
\end{theorem}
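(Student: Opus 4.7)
The plan is to build the blocker $w$ using a ``safe'' generator whose $\pm 1$-powers cannot appear as a first or last letter of $u$ or $u^{-1}$, and to use long runs of this letter to block any occurrence of $u$ from creeping across the boundaries of $w$ with $x$ or $y$. The first step is to choose a generator $a \in S$ with $|a| \notin \{|u_1|, |u_l|\}$: this is possible because $\{|u_1|, |u_l|\}$ involves at most two of the $r \geq 3$ generators, and it is the one place where the hypothesis $r \geq 3$ enters. Consequently, none of $a, a^{-1}$ coincides with any of $u_1, u_1^{-1}, u_l, u_l^{-1}$.

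Writing $\alpha$ for the last letter of $x$ and $\beta$ for the first letter of $y$, and setting $N := \|u\|$, I would distinguish four cases by where $\alpha$ and $\beta$ sit relative to $\{a, a^{-1}\}$: if $\alpha \neq a^{-1}$ and $\beta \neq a^{-1}$, set $w := a^{N}$; if $\alpha \neq a$ and $\beta \neq a$, set $w := a^{-N}$; if $\alpha = a$ and $\beta = a^{-1}$, set $w := a^{N}\, b\, a^{-N}$; and if $\alpha = a^{-1}$ and $\beta = a$, set $w := a^{-N}\, b\, a^{N}$. Here $b$ is any fixed letter of $S^{\ast}\setminus\{a, a^{-1}\}$ (for instance $b := u_1$). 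These four cases exhaust all possibilities, and a short check confirms that $xwy$ is reduced in each, with $\|u\| = N \leq \|w\| \leq 2N+1 = 2\|u\|+1$; so one may take $C_u := 2\|u\| + 2$.

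Finally one must verify that no new occurrence of $u$ or $u^{-1}$ appears in $xwy$. The key point is that an occurrence of $u$ straddling the $x$-$w$ boundary forces the first $k$ letters of $w$, for some $k \in \{1, \dots, \|u\|-1\}$, to equal the length-$k$ suffix of $u$; since each such suffix ends in $u_l$ and the choice of $N$ places these first $k$ letters inside the initial pure $a^{\pm 1}$-run of $w$, the mismatch $u_l \neq a^{\pm 1}$ rules out any match. A symmetric argument handles the $w$-$y$ boundary and the case of $u^{-1}$. For occurrences contained entirely in $w$ (relevant only to the two augmented cases), the letter $u_1 \neq a^{\pm 1}$ would have to match the unique $b$ inside $w$, forcing the subsequent letters of $u$ to match the adjoining run of $a^{\mp 1}$ and hence $u_l \in \{a, a^{-1}\}$, contradicting the choice of $a$; and the augmented $w$ is too short to admit an occurrence of $u$ spanning both boundaries simultaneously. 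The main subtlety is the two problematic cases in which neither $a^{N}$ nor $a^{-N}$ alone gives a reduced product with both $x$ and $y$: the device of inserting a single transition letter $b$ between the two $a^{\pm 1}$-runs bridges this reducedness gap while remaining invisible to $u$ and $u^{-1}$, precisely because any attempt to match $u$ inside $w$ at the position of $b$ is immediately forced into the adjoining $a^{\pm 1}$-run.
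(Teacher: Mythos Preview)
Your proof is correct and takes a genuinely different, more elementary route than the paper's.

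The paper reduces to the case $\|x\| = \|y\| = \|u\|-1$ and then recasts the problem in terms of the De Bruijn--Martin graph $\Gamma_{l,r}$: vertices are words of length $\|u\|-1$, edges are words of length $\|u\|$, and finding a $u$-blocker amounts to finding an oriented path of length $\geq \|u\|$ from $x$ to a ``$u$-acceptance state'' for $y$ while avoiding the two forbidden edges $u$ and $u^{-1}$. The paper then proves (via a counting argument) that acceptance states exist, and (via an explicit detour through the vertices $t^{l-1}$ and $(t')^{l-1}$ for suitably chosen letters $t,t'$) that the graph with the two edges removed remains strongly connected. This graph-theoretic packaging is conceptually tidy and connects to other uses of De Bruijn--Martin graphs, but the resulting bound $C_u$ is not made explicit.

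Your approach bypasses all of this machinery: the single observation that some generator $a$ satisfies $a,a^{-1} \notin \{u_1, u_l, u_1^{-1}, u_l^{-1}\}$ immediately produces an explicit blocker (either $a^{\pm N}$ or $a^{\pm N} b a^{\mp N}$), and the verification that no copy of $u^{\pm 1}$ can cross into or lie inside $w$ is a short case analysis driven entirely by matching first and last letters. This yields the concrete bound $C_u = 2\|u\|+2$, which is sharper and more transparent than what the paper's method gives. Two small points: the claim that the augmented $w$ is ``too short'' to permit an occurrence spanning both boundaries should read that $w$ is \emph{long enough} (equivalently, $u$ is too short); and in Cases~1 and~2 the ``entirely in $w$'' possibility (namely $u = a^{\pm N}$) deserves a one-line mention, though it is immediately ruled out by $u_1 \neq a^{\pm 1}$.
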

The following proof of Theorem \ref{BlockersExist} arose from discussions with Alexey Talambutsa. We are grateful to him for allowing us to include the proof in this book. Until the end of the proof of Lemma \ref{Alexey1} let $r \geq 3$ and fix $u \in F_r \setminus\{e\}$ with $l\coloneqq  \|u\| \geq 2$; we write $u \overset{r} = u_1 \cdots u_l$ with $u_1, \dots, u_l \in S^*$ and $l \geq 2$. 
\begin{lemma}[Reduction to the equal length case] Theorem \ref{BlockersExist} holds if for every $(x,y) \in F_r^2$ with $\|x\| = \|y\| = l-1$ there exists a $u$-blocker of length $\geq \|u\|$.
\end{lemma}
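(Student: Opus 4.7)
The plan is to reduce the general case to the equal-length case via a simple padding argument, exploiting the fact that whether a word $w$ is a $u$-blocker for $(x,y)$ depends only on short boundary segments of $x$ and $y$. More precisely, since $u$ has length $l \geq 2$, any occurrence of $u$ in the reduced product $xwy$ that fails to lie entirely in $x$ or entirely in $y$ must cross either the $x$-$w$ or the $w$-$y$ boundary, and such an occurrence is determined by the last $l-1$ letters of $x$ and the first $l-1$ letters of $y$ together with $w$. The same holds for $u^{-1}$. So intuitively, the identity of $x$ and $y$ beyond the last and first $l-1$ letters is irrelevant to the blocker property.

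Given an arbitrary pair $(x,y) \in (F_r \setminus\{e\})^2$, I would construct a pair $(x', y')$ with $\|x'\| = \|y'\| = l-1$ as follows. If $\|x\| \geq l-1$, let $x'$ be the suffix of $x$ of length $l-1$. If $\|x\| < l-1$, then using the hypothesis $r \geq 3$ I prepend a reduced word $p$ of length $l-1-\|x\|$ such that $px$ is still reduced, which is possible because at each step of building $p$ right-to-left there are at least $2r-1 \geq 5$ allowable letters, and set $x' := px$. Define $y'$ analogously by right-padding. By hypothesis, there exists a $u$-blocker $w$ for $(x',y')$ with $\|w\| \geq \|u\|$. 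Since there are only finitely many pairs of reduced words of length $l-1$, choosing one such blocker for each pair and taking $C_u$ to be one plus the maximum of their lengths produces a constant depending only on $u$.

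It remains to check that this $w$ is also a $u$-blocker for the original pair $(x,y)$. Reducedness of $xwy$ follows from reducedness of $x'wy'$ together with the fact that $x$ is a suffix of $x'$ and $y$ a prefix of $y'$. For the occurrence count, observe first that because $\|x'\| = \|y'\| = l - 1 < l$, we have $\#_u(x') = \#_u(y') = 0 = \#_{u^{-1}}(x') = \#_{u^{-1}}(y')$, so the blocker condition forces $\#_u(x'wy') = 0$ and likewise for $u^{-1}$. Now classify occurrences of $u$ in $xwy$ as (a) entirely contained in $x$, (b) entirely contained in $y$, or (c) crossing into $w$; the first two groups contribute exactly $\#_u(x) + \#_u(y)$. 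For a type-(c) occurrence at position $k$ in $xwy$, the corresponding shifted position $k'$ in $x'wy'$ yields an occurrence of $u$ there (the letters agree because $x, y$ embed identically into $x', y'$), contradicting $\#_u(x'wy')=0$. Hence there are no type-(c) occurrences, and identically for $u^{-1}$, so $w$ is a $u$-blocker for $(x,y)$ with $\|u\| \leq \|w\| < C_u$.

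The main thing to be careful with is the bookkeeping of positions in the correspondence between occurrences in $xwy$ and in $x'wy'$, especially in the edge cases $\|x\| < l-1$ or $\|y\| < l-1$ where a boundary-crossing occurrence in $xwy$ might match an occurrence in $x'wy'$ that also involves padding letters; but since we only need the \emph{absence} of such occurrences in $x'wy'$, the reduction goes through cleanly.
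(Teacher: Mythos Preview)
Your approach is essentially the same as the paper's: reduce to $\|x'\|=\|y'\|=l-1$ by truncation or padding, then use finiteness of such pairs to get the uniform constant $C_u$. The paper phrases the first step more abstractly (``the property of $w$ being a $u$-blocker for $(x,y)$ depends only on the last $l-1$ letters of $x$ and the first $l-1$ letters of $y$''), but the content is identical.

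One small slip: you write ``$x$ is a suffix of $x'$'' and ``$x,y$ embed identically into $x',y'$'', which is only true in the padding case. In the truncation case ($\|x\|\geq l-1$) it is $x'$ that is a suffix of $x$. What you actually need, and what holds in both cases, is that $x$ and $x'$ share the same last $\min(\|x\|,l-1)$ letters (and similarly $y,y'$ share their first letters). This is what guarantees both reducedness at the boundaries and that any type-(c) occurrence in $xwy$ corresponds to one in $x'wy'$. Once you state it that way, your verification goes through cleanly and the argument is complete.
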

\begin{proof} On the one hand, the property of $w$ being a $u$-blocker for $(x,y)$ depends only on the last $(l-1)$ letters of $x$ and the first $(l-1)$ letters of $y$, hence we may assume that $\|x\|, \|y\| \leq l-1$. Since there are only finitely many words of length at most $(l-1)$ this implies in particular that the sizes of blockers are uniformly bounded (by some constant $C_u$ depending on $u$), provided such blockers exist.

On the other hand, if $x$ or $y$ have length less than $(l-1)$, then we can prolong them to words $x'$ and $y'$ of length $(l-1)$ by adding letters to the left of $x$ and the right of $y$; every $u$-blocker for $(x', y')$ is then automatically a blocker for $(x,y)$. 
\end{proof}
From now on we will only consider elements $x,y \in F_r$ with $\|x\| = \|y\|=l-1 < \|u\|$. Note that this assumption implies that \[\#_u(x) = \#_{u^{-1}}(x) = \#_u(y) = \#_{u^{-1}}(y) = 0,\]
and thus $w$ is a $u$-blocker for $(x,y)$ if and only if
\[
\#_u(xwy) = \#_{u^{-1}}(xwy) = 0. 
\]
To establish the existence of such a blocker we consider the following class of graphs which is a variant of the classical De Bruijn graphs \cite{DeBruijn46} and was studied by Reiner Martin in his PhD thesis \cite{MartinPhD}:

\begin{definition} Let $l \geq 2$. The \emph{(oriented, labelled) $l$-th De Bruijn--Martin graph}\index{De Bruijn--Martin graph} $\Gamma_{l,r}$ of $F_r$ is the graph with vertex set $V \coloneqq  \{v \in F_r \mid \|v\|=l-1\}$ and set of oriented edges $\vec{E} \coloneqq  \{u \in F_r\mid\|u\| = l\}$. If $u \overset{r} = u_1 \cdots u_l$ with $u_1, \dots, u_l \in S^*$, then the edge $u$ is considered as an edge from the vertex $u_1 \cdots u_{l-1}$ to the vertex $u_2 \cdots u_l$ and labelled by $\ell(u) \coloneqq  u_l$. Moreover, the pair of edges $(u, u^{-1}) \in \vec{E}^2$ is called an \emph{inverse edge pair}\index{inverse edge pair}.
\end{definition}
To relate these graphs to the problem at hand, we use the following terminology:
\begin{definition} Let $y$ be a vertex of $\Gamma_{l,r}$. We say that a vertex $v$ is a \emph{$u$-acceptance state}\index{$u$-acceptance state} for $y$ if the product $vy$ is reduced with 
\[
\#_u(vy) = \#_{u^{-1}}(vy) = 0.
\]
\end{definition}
We then have the following reformulation of Theorem \ref{BlockersExist}
\begin{proposition}[Graph theoretic formulation]\label{GraphFormulation} Theorem \ref{BlockersExist} holds provided for all vertices $x,y$ of $\Gamma_{l,r}$ there exists an oriented edge path of length $\geq l$ in $\Gamma_{l,r}$ from $x$ to a $u$-acceptance state $v$ for $y$ which does not use either of the edges $u$ or $u^{-1}$.
\end{proposition}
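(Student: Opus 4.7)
The plan is to translate the graph-theoretic data into word-theoretic data and show directly that the "body" of the path is the desired blocker. Assume $(\Lambda, \Lambda^\infty)$ notation is not needed here; we work purely in the free group setting of the proposition. After the reduction already carried out in the text, it suffices to produce, for arbitrary $x, y \in F_r$ of length $l-1$, a $u$-blocker $w$ with $\|w\| \geq l$. Let me describe how the hypothesized path yields such a $w$.

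The first step is the dictionary between paths and words. An oriented edge path $e_1, e_2, \dots, e_n$ in $\Gamma_{l,r}$ starting at $x = v_0$ and ending at $v = v_n$ determines a reduced word
\[
P = p_1 p_2 \cdots p_{n+l-1} \in F_r,
\]
where $p_1 \cdots p_{l-1}$ is the word $x$ and $p_{l-1+j}$ ($j=1,\dots,n$) is the label $\ell(e_j)$. By construction the vertex $v_i$ equals $p_{i+1} \cdots p_{i+l-1}$, and the edge $e_i$ is exactly the length-$l$ subword $p_i p_{i+1} \cdots p_{i+l-1}$. Since $P$ has length $n+l-1$, every length-$l$ subword of $P$ arises as one of the edges $e_1,\dots,e_n$. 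Reducedness of $P$ is automatic, because each $e_i \in F_r$ is reduced and consecutive edges overlap in an $(l-1)$-letter vertex. Under the assumption $n \geq l$ and that no $e_i$ equals $u$ or $u^{-1}$, we therefore obtain $\#_u(P) = \#_{u^{-1}}(P) = 0$.

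Now define $w := p_l p_{l+1} \cdots p_{n+l-1}$, so that $P = xw$ and $\|w\| = n \geq l = \|u\|$. Because the last $l-1$ letters of $P$ are precisely the terminal vertex $v$, and $v$ is a $u$-acceptance state for $y$, the product $vy$ is reduced; combining this with reducedness of $P$ (whose last letter is the last letter of $v$) shows that $xwy = P \cdot y$ is a reduced product. The key counting step is the decomposition
\[
\#_u(xwy) = \#_u(P) + \#_u(vy),
\]
valid because any occurrence of the length-$l$ word $u$ in $xwy$ that involves any letter of $y$ must start at one of the last $l-1$ positions of $P$ and thus lie entirely inside the length-$(2l-2)$ segment $vy$; occurrences that avoid $y$ lie entirely inside $P$. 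Both terms on the right vanish by the two hypotheses, and the analogous identity holds for $u^{-1}$. Since $\|x\| = \|y\| = l-1 < \|u\|$, we have $\#_u(x) = \#_u(y) = \#_{u^{-1}}(x) = \#_{u^{-1}}(y) = 0$, so the blocker conditions $\#_u(xwy) = \#_u(x) + \#_u(y)$ and $\#_{u^{-1}}(xwy) = \#_{u^{-1}}(x) + \#_{u^{-1}}(y)$ are satisfied trivially. The only subtle point is the bookkeeping of occurrences that straddle the $P$–$y$ junction, but this is handled cleanly by the observation above, so there is no real obstacle beyond setting up the correspondence between paths and subwords correctly.
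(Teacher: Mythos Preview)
Your proof is correct and follows essentially the same approach as the paper: define $w$ as the concatenation of the edge labels, observe that $P = xw$ is reduced with terminal $(l-1)$-suffix $v$, and then split occurrences of $u^{\pm 1}$ in $xwy$ according to whether they touch $y$ (these lie in $vy$ and vanish by the acceptance-state condition) or not (these lie in $P$ and correspond to edges of the path, hence vanish by the forbidden-edge condition). Your write-up is simply more explicit about the path--word dictionary and the occurrence bookkeeping than the paper's version.
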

\begin{proof} Assume that there is a path as in the proposition with labels given by $w_1, \dots, w_N$ with $N \geq l$. We define $w \coloneqq  w_1 \cdots w_N$. By definition of $\Gamma_{l,r}$ we then have $\|w\|=N$ and $xw$ is reduced and ends with $v$. We claim that $w$ is a $u$-blocker for $(x,y)$. For this we first note that since $w$ ends in $v$ and $v$ is an acceptance state for $y$, the word $xwy$ is reduced. It thus remains to show that neither $u$ nor $u^{-1}$ appears in $xwy$.

Since $\|w\| \geq l$, every occurrence of $u$ or $u^{-1}$ in $xwy$ occurs either in $xw$ or in $vy$; the latter is excluded  since $v$ is an acceptance state for $y$. On the other hand, an occurrence of $u$ or $u^{-1}$ in $xw$ means precisely that at some point the considered path from $x$ to $v$ crosses one of the two forbidden edges. This finishes the proof.
\end{proof}
Proposition \ref{GraphFormulation} reduces the proof of Theorem \ref{BlockersExist} to showing that acceptance states always exist and that they can be reached from any vertex, avoiding any given forbidden edge pair. This is established by the following two lemmas.
\begin{lemma}[Acceptance states exist] For every vertex $y$ of $\Gamma_{l,r}$ there exists a $u$-acceptance state.
\end{lemma}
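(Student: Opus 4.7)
The plan is to exhibit an acceptance state of the very simple form $v = s^{l-1}$ for a carefully chosen $s \in S^*$, exploiting the hypothesis $r \geq 3$ to have enough room.

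Write $y \overset{r}= y_1 \cdots y_{l-1}$ with $y_i \in S^*$, and $u \overset{r}= u_1 \cdots u_l$ with $l = \|u\| \geq 2$. I would choose $s \in S^*$ subject to the three constraints
\[
s \neq y_1^{-1}, \qquad s \neq u_1, \qquad s \neq u_l^{-1}.
\]
Since $|S^*| = 2r \geq 6$ and we exclude at most three letters, such an $s$ always exists; this is the only place where the hypothesis $r \geq 3$ is used. Set $v := s^{l-1}$. Then $v$ is a reduced word of length $l-1$, hence a vertex of $\Gamma_{l,r}$.

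Next I would verify the two defining properties of an acceptance state. Reducedness of $vy$ is immediate: $v$ itself is reduced, $y$ is reduced by hypothesis, and at the junction we have $v_{l-1} = s \neq y_1^{-1}$. For the counting condition, observe that $vy$ has length $2(l-1) = 2l-2$, so any occurrence of a word of length $l$ in $vy$ must start at a position $k \in \{1, \ldots, l-1\}$. For such $k$, the initial letter of the alleged occurrence is $(vy)_k = v_k = s$. But the first letter of $u$ is $u_1 \neq s$, and the first letter of $u^{-1}$ is $u_l^{-1} \neq s$, by the choice of $s$. Hence neither $u$ nor $u^{-1}$ can occur in $vy$, so $\#_u(vy) = \#_{u^{-1}}(vy) = 0$ and $v$ is a $u$-acceptance state for $y$.

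There is no real obstacle here; the only subtle point is the bookkeeping on admissible starting positions of occurrences, which is why the simple constant-letter choice $v = s^{l-1}$ works. The argument fails for $r = 2$ because then $|S^*| = 4$ and the three constraints on $s$ can exhaust all letters (this matches the unblockable example $u = a_1 a_2$ in $F_2$ given earlier in the excerpt).
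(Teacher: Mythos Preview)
Your proof is correct and takes a genuinely different route from the paper. The paper argues by counting: it bounds the total number of vertices of $\Gamma_{l,r}$ from below and the number of non-acceptance states from above (case-splitting according to which of the conditions $v_{l-1} = y_1^{-1}$ or $v_k \cdots v_{l-1} y_1 \cdots y_{k-1} \in \{u,u^{-1}\}$ fails), concluding that the difference is strictly positive. Your approach, by contrast, is explicitly constructive: exhibit the single vertex $v = s^{l-1}$ for a well-chosen $s$. This is shorter and more transparent, and it yields an acceptance state one can write down. The paper's counting argument has the marginal advantage of showing that acceptance states are in fact abundant (roughly a positive fraction of all vertices), though this extra information is not used anywhere.

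One small correction to your closing remark: your argument actually \emph{does} work for $r = 2$. The three constraints $s \neq y_1^{-1}$, $s \neq u_1$, $s \neq u_l^{-1}$ exclude at most three elements of $S^*$, and since $|S^*| = 2r = 4$ when $r = 2$, at least one admissible $s$ remains. This is consistent with the paper's own remark that this lemma (unlike the subsequent strong-connectedness lemma) already holds for $r = 2$.
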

\begin{proof} The total number of vertices of $\Gamma_{l,r}$ is given by $2r(2r-1)^{l-2}$. Now let $v$ be a vertex; we write $v \overset{r} = v_1 \cdots v_{l-1}$ and $y \overset{r}= y_1 \cdots y_{l-1}$ with $v_1, \dots, v_{l-1}, y_1, \dots, y_{l-1} \in S^*$. Then $v$ is \emph{not} a $u$-acceptance state if one of the following cases occurs:
\begin{itemize}
    \item $v_{l-1} = y_{1}^{-1}$; this is the case for $(2r-1)^{l-2}$ vertices;
    \item $v_1 \cdots v_{l-1} y_1$ equals $u$ or $u^{-1}$; this is the case for at most $2$ vertices, namely $v=u_1\cdots u_{l-1}$ and $v=u_{l}^{-1} \cdots u_2^{-1}$;
    \item $v_2 \cdots v_l y_1y_2$ equals $u$ or $u^{-1}$; in this case there are at most two choices for $v_2 \cdots v_l$ and thus at most $2(2r-1)$ choices for $v$;
    \item \dots
    \item $v_{l-1}y_1 \cdots y_{l-1}$ equals $u$ or $u^{-1}$; here there are $2(2r-1)^{l-2}$ choices for $v$.
\end{itemize}
The number $\alpha$ of $u$-acceptance states thus satisfies
\begin{eqnarray*}
\alpha &\geq& 2r(2r-1)^{l-2} - (2r-1)^{l-2} -2 -2(2r-1) -\dots - 2(2r-1)^{l-2}\\
&=& 2r(2r-1)^{l-2} - (2r-1)^{l-2} -2 \frac{(2r-1)^{l-1}-1}{(2r-1)-1}\\
&=& (2r-1)^{l-2} \cdot \left(2r -1 - \frac{(2r-1)^{l-1}-1}{(2r-1)^{l-2}(r-1)} \right)\\
& > & (2r-1)^{l-2} \cdot \left(2r -1 -\frac{2r-1}{r-1} \right) \ = \  (2r-1)^{l-2}\cdot\left(2r-3-\frac{1}{r-1}\right) \ \geq \ 0.
\end{eqnarray*}
Thus $\alpha>0$ and the lemma follows.
\end{proof}
Note that the previous lemma even applies in the case $r=2$. This is also the case for Part (i), but not for Part (ii) of the following lemma.
\begin{lemma}[Strong connectedness of de Bruijn-Martin graphs]\label{Alexey1} Let $x,y$ be vertices of $\Gamma_{l,r}$.
\begin{enumerate}[(i)]
    \item There exists an oriented edge path of length $\geq l$ from $x$ to $y$ in $\Gamma_{l,r}$.
    \item There exists an oriented edge path of length $\geq l$ from $x$ to $y$ in $\Gamma_{l,r}$ which does not use either of the edges $u$ or $u^{-1}$.
\end{enumerate}
\end{lemma}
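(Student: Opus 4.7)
For Part (i), the plan is to construct an explicit path of length exactly $l$. Given $x = x_1 \cdots x_{l-1}$ and $y = y_1 \cdots y_{l-1}$, I will choose a single ``buffer'' letter $s \in S^*$ satisfying $s \neq x_{l-1}^{-1}$ (so the first edge stays reduced) and $s \neq y_1^{-1}$ (so the subsequent append of $y_1$ is reduced). Since $|S^*| = 2r \geq 4$, such an $s$ exists. Traversing the edges corresponding to appending $s, y_1, y_2, \ldots, y_{l-1}$ in order then yields a path of length $l$ ending at the vertex $y_1 y_2 \cdots y_{l-1} = y$; reducedness of each successive edge follows from the choice of $s$ together with reducedness of $y$.

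For Part (ii) the additional hypothesis $r \geq 3$ becomes essential. The length-$l$ construction may fail, since for fixed $x, y$ each of the $l$ edges can equal $u$ or $u^{-1}$ for at most one value of the buffer letter, potentially forbidding up to $2l$ values, which exceeds the $2r - 2$ available once $l$ is large relative to $r$. My plan is therefore to use a longer path of length $L = k + l - 1$ with buffer letters $s_1, \ldots, s_k$ followed by $y_1, \ldots, y_{l-1}$, for $k$ chosen sufficiently large (say $k \geq 2l$). I will select the buffer letters in two subphases. In the \emph{preparation subphase} I first fix the last $l - 1$ buffer letters $s_{k-l+2}, \ldots, s_k$; together with the forced $y$-letters these determine the final $l - 1$ edges of the path, and the subphase is designed so that none of those edges equals $u$ or $u^{-1}$. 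This reduces to a purely combinatorial problem on reduced sequences in $S^*$ of length $l - 1$: one must avoid the $2(l-1)$ specific ``bad'' sequences arising from the possibilities $e_{k+i} \in \{u, u^{-1}\}$, subject to being reduced and to $s_k \neq y_1^{-1}$. Since the number of reduced sequences of length $l - 1$ with this boundary condition is at least $(2r-2)(2r-1)^{l-2}$, which for $r \geq 3$ grows exponentially while the number of bad sequences grows only linearly in $l$, valid choices exist.

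In the subsequent \emph{forward subphase} I fill in $s_1, \ldots, s_{k-l+1}$ greedily from left to right. At each step the letter chosen must avoid the reducedness-forbidden letter coming from the previous buffer and the at most one letter that would make the newly created edge equal $u$ or $u^{-1}$; since $r \geq 3$ gives $|S^*| \geq 6$, at least $2r - 2 \geq 4$ valid choices remain at each such step. The main obstacle I expect is at the interface $j = k - l + 1$ between the two subphases: here $s_{k-l+1}$ is subject to a reducedness constraint involving the already-fixed $s_{k-l+2}$, together with finitely many additional edge constraints coming from edges that contain both $s_{k-l+1}$ and later fixed buffers. Each such ``live'' edge constraint only forces $s_{k-l+1}$ when the previously chosen buffers happen to match a full $u$- or $u^{-1}$-pattern, something the greedy rule has been actively preventing, so a careful bookkeeping shows that only a bounded number of values are forbidden and at least one valid choice survives. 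The sharpness of the hypothesis $r \geq 3$ is visible in the unblockable-word example preceding the theorem: for $r = 2$ the slack $2r - 2 = 2$ collapses and Part (ii) genuinely fails. The resulting path has length $L = k + l - 1 \geq l$ and by construction avoids both $u$ and $u^{-1}$.
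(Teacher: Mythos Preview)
Your Part (i) is clean and correct: the single buffer letter $s$ with $s \notin \{x_{l-1}^{-1}, y_1^{-1}\}$ gives an explicit oriented path of length exactly $l$ from $x$ to $y$. This is actually tidier than the paper's argument, which quotes strong connectedness from \cite{HartnickTalambutsa} and then concatenates.

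Part (ii), however, has a real gap at the interface. You correctly identify that the final forward step $s_{k-l+1}$ sees additional constraints from edges containing both $s_{k-l+1}$ and already-fixed preparation buffers. But there are $l-1$ such interface edges, namely edges $k-l+2, \ldots, k$; the edge $j'$ in this range is the word $s_{j'-l+1}\cdots s_{j'}$, and it imposes a forbidden value on $s_{k-l+1}$ whenever \emph{all the other $l-1$ letters} (some forward, some prepared) happen to line up with $u$ or $u^{-1}$. Your assertion that ``the greedy rule has been actively preventing'' these alignments is not justified: the greedy rule only forbids the forward-phase edges $1, \ldots, k-l+1$ from equalling $u^{\pm 1}$, which says nothing about whether a \emph{later} window $s_{j'-l+1}\cdots s_{j'}$ (with $j' \geq k-l+2$) matches a $u$-pattern in its non-$s_{k-l+1}$ coordinates. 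In the worst case you therefore face up to $2(l-1)$ interface constraints on $s_{k-l+1}$, in addition to the two reducedness constraints and the two constraints from edge $k-l+1$ itself; once $l$ is large relative to $r$ this exceeds $|S^*| = 2r$. A repair is possible (e.g.\ strengthen the preparation phase so that no prefix of the prepared block coincides with a proper suffix of $u$ or $u^{-1}$), but that requires a separate argument you have not supplied.

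The paper sidesteps this entirely: it observes that by Part (i) one can always reach $y$ from $x$, so it suffices to build, for each of the two forbidden edges, a bypass path around it that avoids both forbidden edges. Such bypasses are constructed explicitly by routing through the vertices $t^{l-1}$ and $(t')^{l-1}$ for two auxiliary letters $t, t' \in S^*$ chosen (using $r \geq 3$) to avoid a fixed finite list of letters. This detour argument is short, uniform in $l$, and has no interface to manage.
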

\begin{proof} (i) Connectedness of the unoriented De Bruijn--Martin graph was proved in \cite[Lem\-ma 2.10]{HartnickTalambutsa}, and the argument actually shows that if $x \neq y$ there always exists an oriented edge path from $x$ to $y$ (and hence from $y$ to $x$) of length $\geq 1$. Concatenating such paths we can construct arbitrarily long edge paths from $x$ to $y$ for any two vertices $x$ and $y$.

(ii) Since $u \overset{r}=u_1 \cdots u_l$ with $u_1, \dots, u_l \in S^*$, the forbidden edges and their labels are given by
\[
u_1 \cdots u_{l-1} \xlongrightarrow{u_l} u_2 \cdots u_l \qand u_l^{-1} \cdots u_2^{-1} \xlongrightarrow{u_1^{-1}} u_{l-1}^{-1} \cdots u_1^{-1}.
\]
In view of (i) it suffices to show that there exist  oriented edge paths from $u_1 \cdots u_{l-1}$ to $u_2 \cdots u_l$ and from $u_l^{-1} \cdots u_2^{-1}$ to $u_{l-1}^{-1} \cdots u_1^{-1}$ which bypass the forbidden edges $u$ and $u^{-1}$. 

Since $r \geq 3$ (and thus $|S^*| \geq 6$), we can find $t, t' \in S^*$ such that
\[
t \not \in \{u_1, u_1^{-1}, u_{l-1}^{-1}, u_l, u_l^{-1}\} \qand t' \not \in \{u_1, u_2^{-1}, u_l^{-1}, t^{-1}\}.
\]
Since $t \neq u_{l-1}^{-1}$, there is an oriented edge path in $\Gamma_{l,r}$ such that
\[
u_1 \cdots u_{l-1} \xlongrightarrow{t} u_2 \cdots u_{l-1}t \xlongrightarrow{t}  \dots \xlongrightarrow{t} t^{l-1}.
\]
All of the edges in this path are labelled by $t$, which is different from $u_l$ and $u_1^{-1}$, and hence avoid the two forbidden edges. Since $t' \neq t^{-1}$, we may also define an oriented edge path in $\Gamma_{l,r}$ by
\[
t^{l-1} \xlongrightarrow{t'} t^{l-2}t'  \xlongrightarrow{t'}  \dots \xlongrightarrow{t'} (t')^{l-1}.
\]
All of these edges start from a vertex with initial letter $t$, and since $t \not \in \{u_1, u_l^{-1}\}$, none of them can be forbidden. Finally, since $t' \neq u_2^{-1}$ we may define an oriented edge path in $\Gamma_{l,r}$ by
\[
(t')^{l-1} \xlongrightarrow{u_2} (t')^{l-2}u_2 \xlongrightarrow{u_3} \dots \xlongrightarrow{u_l} u_2 \cdots u_l.
\]
All of these edges start from a vertex with initial letter $t'$, and since $t
'\not \in \{u_1, u_l^{-1}\}$, none of them can be forbidden. Concatenating the three paths above we obtain an edge path in $\Gamma_{l,r}$ from $u_1 \cdots u_{l-1}$ to $u_2 \cdots u_l$ which avoids the two forbidden edges. The argument for a bypass from  $u_l^{-1} \cdots u_2^{-1}$ to $u_{l-1}^{-1} \cdots u_1^{-1}$ is similar.
\end{proof}
At this point we have established Theorem \ref{BlockersExist}.

\begin{remark}[Cyclic words]\label{CyclicWords} Let $u \overset{r}{=} u_1 \cdots u_l$ and $v \overset{r} = v_1 \cdots v_k$ be elements of $F_r$ with $l,n\in \mathbb N$ and $u_1, \dots, u_l, v_1, \dots, v_k \in S^*$. 

\item We say that $u$ is \emph{cyclically reduced}\index{cyclically reduced word} if $u_l^{-1} \neq u_1$. This implies that $\|u^n\| = n\|u\|$, i.e.\ the powers of $u$ lie on a (unique) geodesic of the Cayley tree of $F_r$ with respect to $S$. If $u$ and $w$ are cyclically reduced, then we say that they are \emph{cyclically equivalent}\index{cyclically equivalent words} if there exists a cyclic permutation $\sigma$ of $\{1, \dots, l\}$ such that $w = u_{\sigma(1)} \cdots u_{\sigma(l)}$. We refer to the equivalence class $[u]$ of $u$ as a \emph{cyclic word}\index{cyclic word}. 

\item If $u$ and $v$ are cyclically reduced then we say that $u$ \emph{cyclically occurs}\index{cyclical occurrence!of a word in another word} in $v$ at position $s$ if $v_s = u_1$, $v_{[s+1]}= u_2$, \dots, $v_{[s+l-1]} = u_l$, where  $[m] \in \{1, \dots, k\}$ denotes the residue of $m$ modulo $k$. The number of such cyclic occurrences depends only on $[v]$, and hence will be denote by $\#^{\mathrm{cyc}}_u([v])$.

If $w \in F_r \setminus\{e\}$ is a word which is not necessarily cyclically reduced then we can write $w$ uniquely as $w\overset{r}= tw_ot^{-1}$ with $t, w_o \in F_r$ and $v_o$ cyclically reduced. We then refer to $w_o$ as the \emph{cyclic reduction}\index{cyclic reduction!of a word} of $w$ and to $[w] \coloneqq  [w_o]$ as the \emph{associated cyclic word}\index{cyclic word!associated}. In particular, this allows us to define $\#^{\mathrm{cyc}}_u([w])$ for a cyclically reduced word $u$ and an arbitrary $w \in F_r \setminus\{e\}$. Finally, we also set $\#^{\mathrm{cyc}}_u([e]) \coloneqq  0$ for every cyclically reduced word $u$.
\end{remark}

\begin{lemma}[Homogeneity]\label{CyclicCountHom}
If $u,w \in F_r \setminus\{e\}$ are cyclically reduced words, then we have $\#^{\mathrm{cyc}}_{u}([w]^n) = n \cdot \#^{\mathrm{cyc}}_{u}([w])$ for all $n \in \mathbb N$.
\end{lemma}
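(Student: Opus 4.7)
The plan is to use the periodic structure of $w^n$ together with the cyclic-reducedness of $w$. First, since $w$ is cyclically reduced with reduced expression $w \overset{r}{=} w_1 \cdots w_k$ (so $w_k^{-1} \neq w_1$), the product $w^n$ is itself reduced and cyclically reduced of length $nk$; in particular $[w^n]$ is represented by the word $w_1 \cdots w_k w_1 \cdots w_k \cdots w_1 \cdots w_k$ ($n$ copies), whose $s$-th letter, for $s \in \{1, \dots, nk\}$, is $w_{[s]_k}$, where $[\,\cdot\,]_k \in \{1,\dots,k\}$ denotes the residue modulo $k$. Write $u \overset{r}{=} u_1 \cdots u_l$ with $u_i \in S^*$.

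Next, I would unpack the definition of cyclic occurrence. By definition, $\#^{\mathrm{cyc}}_u([w^n])$ counts positions $s \in \{1,\dots,nk\}$ such that $(w^n)_{[s+j-1]_{nk}} = u_j$ for every $j \in \{1,\dots,l\}$. The key observation is the compatibility of residues:
\[
[[s+j-1]_{nk}]_k = [s+j-1]_k,
\]
since $k \mid nk$. Combining this with the formula $(w^n)_m = w_{[m]_k}$, the condition defining a cyclic occurrence at position $s$ in $[w^n]$ simplifies to $w_{[s+j-1]_k} = u_j$ for every $j \in \{1,\dots,l\}$, which is exactly the condition that $u$ cyclically occurs in $[w]$ at position $[s]_k$.

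Thus the map $s \mapsto [s]_k$ gives a well-defined surjection from the cyclic occurrence positions of $u$ in $[w^n]$ onto those of $u$ in $[w]$. Since the fibre of $[\,\cdot\,]_k : \{1,\dots,nk\} \to \{1,\dots,k\}$ over any point has cardinality exactly $n$, this map is $n$-to-$1$, and the identity $\#^{\mathrm{cyc}}_u([w^n]) = n \cdot \#^{\mathrm{cyc}}_u([w])$ follows. The only subtlety that requires care is tracking the two moduli ($nk$ versus $k$), and verifying that cyclic-reducedness of $w$ is what makes $w^n$ cyclically reduced of full length $nk$ (so that $[w^n]$ indeed has the claimed periodic representative); both are straightforward, so I do not anticipate any real obstacle beyond this bookkeeping.
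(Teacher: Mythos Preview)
Your proof is correct and follows essentially the same approach as the paper's: both exploit that $w^n$ is the reduced product $w \cdots w$ (using cyclic reducedness of $w$) to set up a bijection between cyclic occurrences of $u$ in $[w^n]$ and $n$-fold copies of cyclic occurrences in $[w]$ via reduction of positions modulo $\|w\|$. The paper states this more tersely (``$u$ cyclically occurs in $w$ at position $s$ if and only if it occurs cyclically in $w^n$ at positions $s, s+\|w\|, \dots, s+(n-1)\|w\|$''), whereas you spell out the index bookkeeping with the residue compatibility $[[s+j-1]_{nk}]_k = [s+j-1]_k$ explicitly.
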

\begin{proof} Since $w^n \overset{r} = w \cdots w$ is a \emph{reduced} product, we deduce that $u$ cyclically occurs in $w$ at position $s$ if and only if it occurs cyclically in $w^n$ at positions $s, s+\|w\|, \dots, s+(n-1)\|w\|$. Conversely, every cyclic occurrence of $u$ in $w^n$ is of this form, and the lemma follows.
\end{proof}

\begin{remark}[Normal forms]\label{uNormalForm} If $u \overset{r}= u_1 \cdots u_l$ with $u_1, \dots, u_l \in S^*$, then we denote by
\[
P(u) \coloneqq  \{u_1 \cdots u_k \mid k \in \{1, \dots, l-1\}\} \qand S(u) \coloneqq  \{u_k \cdots u_l\} \mid k \in \{2, \dots, l\}
\]
the collections of proper prefixes, respectively proper suffixes of $u$. If $v \in F_r \setminus\{e\}$ is another non-trivial word, then we say that $u$ and $v$ \emph{do not overlap}\index{non-overlapping words} and write $u \pitchfork v$ if $P(u) \cap S(v) = \emptyset = P(v) \cap S(u)$ and none of the two words is a proper subword of the other. We say that $u$ is \emph{non-self-overlapping}\index{non-self-overlapping word} if $u \pitchfork u$. We also say that $(u,v)$ is an \emph{independent pair}\index{independent pair of words} if $u \neq v$, $u \pitchfork u$, $v \pitchfork v$ and $u\pitchfork v$. It was established in \cite[Lemma 2.5 and Cor.\ 2.6]{HartnickSchweitzer} that if $u$ is cyclically reduced and non-self-overlapping, then $(u, u^{-1})$ is an independent pair. Assume from now on that this is the case.

Given $w \in F_r \setminus\{e\}$ and $g_1, \dots, g_m \in F_r \setminus\{e\}$, we call $(g_1, \dots, g_m)$ a decomposition of $w$ if $w \overset{r}{=} g_1 \cdots g_m$. If $g_j \overset{r} = a_jb_j$ for some $j \in \{1, \dots, m\}$, then $(g_1, \dots, g_{j-1}, a_j, b_j, \dots, g_m)$ is another decomposition of $w$, called a 
\emph{simple refinement}\index{refinement (word decomposition)!simple refinement} of $(g_1, \dots, g_m)$. We call a decomposition of $w$ a \emph{refinement}\index{refinement (word decomposition)} of $(g_1, \dots, g_m)$ if it can be obtained by a sequence of simple refinements. We say that a decomposition $(g_1, \cdots, g_m)$ is \emph{$u$-maximal} if the number of $g_i$ with $g_i \in \{u, u^{-1}\}$ is maximal. 
By \cite[Lemma 5.13]{HartnickSchweitzer} every $w \in F_r \setminus\{e\}$ admits a unique $u$-maximal decomposition $(g_1, \dots, g_m)$ which is not a refinement of a $u$-maximal decomposition. We call $(g_1, \dots, g_m)$ the \emph{$u$-decomposition} of $w$. Note that in a $u$-maximal decomposition, every $g_j$ is either equal to $u$ or equal to $u^{-1}$ or does not contain any copy of $u$ or $u^{-1}$; moreover, every other element of the decomposition is either a copy of $u$ or $u^{-1}$. For example, if $w = a_1a_2a_1^{-1}a_2^{-1}a_1^{-1}a_2a_1^{-1}$, then its $a_1a_2$-decomposition is
$(a_1a_2, a_1^{-1}, a_2^{-1}a_1^{-1}, a_2a_1^{-1})$.
\end{remark}
The following observation will be needed in the proof of Theorem \ref{QuasikernelsConical}:
\begin{lemma}\label{Quasi1}
Let $u \in F_r$ be cyclically reduced and non-self-overlapping, and let $x \in F_r$ be cyclically reduced with $\#^{\mathrm{cyc}}_u(x) >0$ and $\|x\| \geq \|u\| \geq 2$. Then there exists $x' \in F_r$ which is cyclically equivalent to $x$ such that 
\begin{equation}\label{Quasi11}
\#^{\mathrm{cyc}}_u(x') = \#_u(x') \qand
\#^{\mathrm{cyc}}_{u^{-1}}(x') = \#_{u^{-1}}(x').
\end{equation}
\end{lemma}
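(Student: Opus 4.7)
The approach is to rotate $x$ so that one of the cyclic occurrences of $u$ becomes the initial segment of the rotated word $x'$; if no cyclic occurrence of $u$ or $u^{-1}$ then straddles the point at which we cut the circle to unroll to $x'$, every cyclic occurrence will automatically be a linear one, giving the desired equalities. More precisely, choose $s_0 \in \{1, \ldots, \|x\|\}$ at which $u$ cyclically occurs in $x$ (possible by the hypothesis $\#_u^{\mathrm{cyc}}(x) > 0$) and let $x'$ be the cyclic rotation of $x$ starting at position $s_0$. Then $x'$ is cyclically equivalent to $x$, still cyclically reduced, and by construction $x' \overset{r}{=} u \cdot z$ for some $z \in F_r$ of length $\|x\| - \|u\|$.

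Since every linear occurrence of a word is automatically a cyclic occurrence, one always has $\#_u(x') \leq \#_u^{\mathrm{cyc}}(x')$ and $\#_{u^{-1}}(x') \leq \#_{u^{-1}}^{\mathrm{cyc}}(x')$; the remaining task is therefore to show that no cyclic occurrence of $u$ or $u^{-1}$ in $x'$ wraps around. Suppose, for contradiction, that a wrap-around cyclic occurrence of $u$ exists at some position $t \in \{2, \ldots, \|x'\|\}$ with $t + \|u\| - 1 > \|x'\|$. Forming the doubled word $x'x'$, this cyclic occurrence unfolds to a linear occurrence of $u$ at position $t$ of $x'x'$, whereas the initial occurrence of $u$ at position $1$ of $x'$ gives a second linear occurrence at position $\|x'\| + 1$. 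The inequality $t + \|u\| - 1 > \|x'\|$ forces these two copies to overlap (they cannot coincide because $t \leq \|x'\| < \|x'\| + 1$), contradicting the non-self-overlap of $u$. For a wrap-around occurrence of $u^{-1}$ at some position $t$, the same construction yields a linear occurrence of $u^{-1}$ in $x'x'$ at position $t$ overlapping the copy of $u$ at position $\|x'\| + 1$; by Remark \ref{uNormalForm} the pair $(u, u^{-1})$ is independent (since $u$ is cyclically reduced and non-self-overlapping), which again is impossible.

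The main obstacle is the slightly fiddly bookkeeping that turns a cyclic wrap-around in $x'$ into a linear overlap inside $x'x'$: one has to keep track of the fact that the $u$-occurrence pinned at position $1$ of $x'$ gives rise to \emph{two} linear copies in $x'x'$ (at positions $1$ and $\|x'\| + 1$), and it is the second of these that collides with the putative wrap-around. The hypothesis $\|x\| \geq \|u\|$ is precisely what guarantees that both copies fit cleanly inside the doubled word of length $2\|x'\|$, so that the disjointness principles supplied by non-self-overlappingness of $u$ and independence of $(u, u^{-1})$ can be applied verbatim to yield the contradiction.
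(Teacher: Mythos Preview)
Your proof is correct and takes essentially the same approach as the paper: rotate $x$ so that $u$ becomes a prefix, then use non-self-overlappingness of $u$ and independence of the pair $(u, u^{-1})$ to rule out wrap-around cyclic occurrences. The paper phrases the rotation via the $u$-decomposition of Remark~\ref{uNormalForm} rather than the doubled word $x'x'$, but the underlying argument is the same.
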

\begin{proof} Since $\#^{\mathrm{cyc}}_u(x) >0$ and $\|x\| \geq \|u\|$ we can find a word which is cyclically equivalent to $x$ and contains $u$. We may thus assume without loss of generality that $\#_u(x)>0$, which implies that the word $x$ admits a $u$-decomposition $(g_1, \dots, g_n)$ such that $g_{j} = u$ for some $j \in \{1, \dots, n\}$. Then $x' \coloneqq  g_j g_{j+1}\cdots g_{j-1}$ is cyclically equivalent to $x$ and has $u$ as a prefix. To see that \eqref{Quasi11} holds for $x'$, assume that $u$ (or $u^{-1}$) occurs cyclically in $x'$ at position $s$ in $x'$. Then $s\leq \|x'\|-\|u\|+1$, since otherwise the occurrence of $u$ (or $u^{-1}$) would overlap with $g_j$, contradicting the fact that $(u,u^{-1})$ is an independent pair. But then $u$ (or $u^{-1}$) actually occurs in $x'$ at position $s$, and hence \eqref{Quasi11} holds. 
\end{proof}
We will also need the following consequence of Theorem \ref{BlockersExist}:
\begin{lemma}\label{Quasi2} 
Let $r \geq 3$ and let $u \in F_r$ with $l \coloneqq  \|u\| \geq 2$. Then there exists a constant $C_u$ with the following properties: if $x', y' \in F_r$ with $\min\{\|x'\|, \|y'\|\} \geq \|u\|$, then there exist $w,z \in F_r$ with
\[
\|u\|\leq \|w\| \leq C_u \qand \|u\|\leq \|z\| \leq C_u
\]
such that $x'wy'z$ is reduced and cyclically reduced, and such that every copy of $u$ or $u^{-1}$ in $(x'wy'z)^n$ is contained in one of the $n$ copies of $x'$ or in one of the $n$ copies of $y'$.
\end{lemma}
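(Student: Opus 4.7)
The strategy is to construct $w$ and $z$ as two applications of the blocker existence theorem (Theorem \ref{BlockersExist}). Explicitly, let $C_u$ denote the constant provided by Theorem \ref{BlockersExist} for the word $u$, so that for every pair $(a,b) \in (F_r\setminus\{e\})^2$ there exists a $u$-blocker of length in $[\|u\|, C_u)$. Given $x', y'$ as in the hypothesis, I take $w$ to be a $u$-blocker for the pair $(x', y')$ and $z$ to be a $u$-blocker for the pair $(y', x')$. Both have length in $[\|u\|, C_u)$ by construction.

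The first step is to check that $x'wy'z$ is reduced and cyclically reduced. From $w$ being a $u$-blocker for $(x',y')$, the product $x'wy'$ is reduced, which gives no cancellation at the junctions $x'w$ and $wy'$. From $z$ being a $u$-blocker for $(y',x')$, the product $y'zx'$ is reduced, which gives no cancellation at $y'z$ nor at $zx'$. Combining these, $x'wy'z$ is reduced as a four-fold product, and the absence of cancellation at $zx'$ says precisely that the last letter of $z$ is not the inverse of the first letter of $x'$; this gives both the reducedness of $(x'wy'z)^n$ as a power and the cyclic reducedness of $x'wy'z$.

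Next I analyze occurrences of $u$ (and by symmetry $u^{-1}$) in $(x'wy'z)^n$. The key quantitative observation is that each ``chunk'' $x'wy'z$ has length at least $4\|u\|$ (using $\|x'\|, \|y'\| \geq \|u\|$ and the lower bound from Theorem \ref{BlockersExist} on $\|w\|, \|z\|$), whereas an occurrence of $u$ has length $\|u\|$. Hence any such occurrence spans at most one boundary between consecutive chunks. If the occurrence lies within a single chunk $x'wy'z$, then it is contained either in the subword $x'wy'$ or in the subword $y'z$ (possibly straddling $wy'$); in the first case the blocker property $\#_u(x'wy') = \#_u(x') + \#_u(y')$ forces it into $x'$ or $y'$, and in the second it is contained in $y'zx'$ where the other blocker property $\#_u(y'zx') = \#_u(y') + \#_u(x')$ similarly forces it into $y'$ (since it cannot reach the next chunk's $x'$ in this case). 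If instead the occurrence straddles the boundary between consecutive chunks, it sits inside $y'\, z\, x'$ (the $y'z$ from the first chunk and the $x'$ from the second), and the blocker property of $z$ confines it to the $y'$ on the left or the $x'$ on the right. The same analysis applies to $u^{-1}$ by the definition of blocker.

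The proof is essentially a bookkeeping exercise on top of Theorem \ref{BlockersExist}, and the only genuine obstacle is the length estimate ensuring that a copy of $u$ cannot simultaneously meet parts of three chunks; this is precisely what the uniform lower bound $\|w\|, \|z\| \geq \|u\|$ in Theorem \ref{BlockersExist} (combined with the hypothesis $\|x'\|, \|y'\| \geq \|u\|$) is designed to guarantee.
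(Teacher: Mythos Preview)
Your proof is correct and follows essentially the same approach as the paper's: apply Theorem \ref{BlockersExist} twice to obtain a blocker $w$ for $(x',y')$ and a blocker $z$ for $(y',x')$, then use the length lower bounds $\|x'\|,\|y'\|,\|w\|,\|z\|\geq \|u\|$ to ensure that any occurrence of $u^{\pm 1}$ overlaps at most two consecutive factors, and finally invoke the blocker properties of $w$ and $z$ on the subwords $x'wy'$ and $y'zx'$. The paper's proof is the same argument compressed into a few lines.
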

\begin{proof} Let $C_u$ be the constant from Theorem \ref{BlockersExist}. By the theorem we can find a blocker $w$ for the pair $(x',y')$ and a blocker $z$ for the pair $(y',x')$ of the desired lengths. Then $x'wy'$ and $y'zx'$ are reduced, and hence $x'wy'zx'$ is reduced, i.e.\  $x'wy'z$ is reduced and cyclically reduced. Moreover, since all four factors of $x'wy'z$ have length $\geq \|u\|$, no copy of $u$ or $u^{-1}$ can overlap more than two of the factors, and then the blocker properties of $w$ and $z$ imply that every copy of $u$ or $u^{-1}$ in $(x'wy'z)^n$ is contained in one of the $n$ copies of $x'$ or in one of the $n$ copies of $y'$.
\end{proof}

\section{Real-valued quasimorphisms}\label{SecQuasimorphisms}

Let $\Gamma$ be a group and $H$ be a (possibly discrete) locally compact group. Recall from Definition \ref{DefQM} and Remark \ref{TopQM} that a map $f: \Gamma \to H$ is called a \emph{topological quasimorphism}, if its left-defect set $D(f) = \{f(y)^{-1}f(x)^{-1}f(xy) \mid x,y \in \Gamma\}$ is relatively compact.  According to Example \ref{RQM} a real-valued function is a topological quasimorphism in this sense if and only if its \emph{defect}
\[
d(f) \coloneqq \sup_{g,h \in \Gamma} |f(gh)-f(g)-f(h)|
\]
is finite. We then say that $f$ is a \emph{real-valued quasimorphism}\index{real-valued quasimorphism}. Real-valued quasimorphisms are studied extensively in the geometric group theory literature (see e.g.\ \cite{Calegari_scl}). A real-valued quasimorphism $f: \Gamma \to \R$ is called \emph{homogeneous} if $f(g^n) = n \cdot f(g)$ for all $n \in \bZ$.\index{real-valued quasimorphism!homogeneous}
\begin{remark}[Real-valued quasimorphisms]\label{quasiremark} Let $\Gamma$ be a group. 
\begin{enumerate}[(i)]
\item If $f: \Gamma \to \R$ is a quasimorphism, then $|f(e)| = |f(ee) -f(e) - f(e)| \leq d(f)$, hence $|f(e)| \leq d(f)$. We deduce that for all $g \in G$ we have
\begin{eqnarray*}
    d(f) &\geq& |f(gg^{-1}) + f(g) + f(g^{-1})| \quad \geq \quad  |f(g) + f(g^{-1})| - |f(e)| \\ &\geq& |f(g) + f(g^{-1})| -d(f),
\end{eqnarray*}
and hence $f$ is almost symmetric in the sense that
\begin{equation}
-f(g) - 2d(f) \leq f(g^{-1}) \leq -f(g) + 2d(f).
\end{equation}
\item Every quasimorphism $f: \Gamma \to \R$ can be written as $f = f_{\mathrm{sym}} + b$, where  $f_{\mathrm{sym}}: \Gamma \to \R$, $g \mapsto \frac 1 2 (f(g)-f(g^{-1}))$ is a symmetric quasimorphism and $b$ is bounded. In this sense, every real-valued quasimorphism is at bounded distance from a symmetric one. 
\item In fact, every real-valued quasimorphism $f: \Gamma \to \R$ is at bounded distance from a unique \emph{homogeneous} quasimorphism $\widehat{f}: \Gamma \to \R$. Explicitly, we have
\[
\widehat{f}(g) = \lim_{n \to \infty} \frac{f(g^n)}{n},
\]
where the limit exists by Fekete's lemma. Since $|f(g^n) - nf(g)|\leq (n-1) \cdot d(f)$ for all $n \in \bN$, we have $\|f-\widehat{f}\|\leq d(f)$. Every homogeneous quasimorphism is automatically conjugation-invariant (see e.g. \cite[Sec. 2.2]{Calegari_scl}). 
\item If $f: \Gamma \to \R$ is a quasimorphism with defect set $D(f)$ and defect $d(f)$, then $D(f) \subset [-d(f), d(f)]$, and $d(f)$ is the supremum over the absolute values of elements in $D(f)$.
\item If $\Gamma$ is a finitely-generated group, then every real-valued quasimorphism is a Lipschitz function with respect to some (hence any) word metric on $\Gamma$. Indeed, if $S$ is a finite generating set and $s_1, \dots, s_n \in S \cup S^{-1}$, then
\[
f(s_1 \cdots s_n) \leq n \cdot (d(f) + \sup\{f(s) \mid s \in S\}).
\]
\item If $f: \Gamma \to \R$ is a quasimorphism, then we define $\bar f(g)$ as the nearest integer to $f(g)$. This yields an integer-valued quasimorphism $\bar f: \Gamma \to \Z$, which is at distance at most $\frac 1 2$ from $f$ and hence of defect $d(\bar f) \leq d(f) + \frac 3 2$. Moreover, $\bar f$ is symmetric provided $f$ is symmetric.
\end{enumerate}
\end{remark}
\begin{example}[Perturbed homomorphisms]
If $\Gamma$ is an infinite group, then there are plenty of real-valued quasimorphisms on $\Gamma$. Indeed, if $f_o: \Gamma \to \R$ is a homomorphism (possibly trivial), then $f_o + b$ is a quasimorphism for every bounded function $b: \Gamma \to \R$. Quasimorphisms of this form are called \emph{perturbed homomorphisms}\index{perturbed homomorphism}.
\end{example}
If $\Gamma$ is amenable (as a discrete group), then every real-valued quasimorphism on $\Gamma$ is a perturbed homomorphism, but for many non-amenable groups there are also plenty of exotic examples of real-valued quasimorphisms which are not perturbed homomorphisms. The most basic examples are counting quasimorphisms on free groups: 
\begin{example}[Counting quasimorphisms]\label{CountingQuasimorphism}\label{HomogeneousCounting}
Let $F_r$ be a free group of rank $r$ with free generating set $S \coloneqq  \{a_1, \dots, a_r\}$; we identify elements of $F_r$ with reduced words over $S \cup S^{-1}$. If $u \in F_r \setminus\{e\}$ is a non-trivial reduced word, then the function \[
\phi_u: F_r \to \bZ, \quad v \mapsto \#_u(v) - \#_{u^{-1}}(v)
\]
(with notation as in Definition \ref{Occurrence}) defines a symmetric quasimorphism on $F_r$ (see e.g.\ \cite[p. 22]{Calegari_scl}). It is called the \emph{(big) $u$-counting quasimorphism}.\index{quasimorphism!$u$-counting} We will need two variants of this construction: firstly, the homogenization $f_u: F_r \to \mathbb{R}$ of $\phi_u$ is called the \emph{homogeneous $u$-counting quasimorphism}.\index{quasimorphism!homogeneous $u$-counting} Secondly, if $u$ is cyclically reduced, then the function \[\phi_u^{\mathrm{cyc}}: F_r \to \bZ, \quad v \mapsto \#^\mathrm{cyc}_u([v]) - \#^\mathrm{cyc}_{u^{-1}}([v]) \]
is called the \emph{cyclic $u$-counting quasimorphism}\index{quasimorphism!cyclic $u$-counting}. Here, $[v]$ denotes the cyclic word associated with $v$ and $\#^\mathrm{cyc}_u$ denotes the cyclic counting function as in Remark \ref{CyclicWords}.
\end{example}
Counting quasimorphisms were introduced by Brooks \cite{Brooks}, whereas cyclic counting quasimorphisms were introduced by Fa\u{\i}ziev \cite{Faiziev} and popularized by the paper \cite{Grigorchuk}.
\begin{lemma}\label{FaizievLemma} Assume that $u$ is cyclically reduced. Then 
\begin{equation}\label{GrigorchukFormula1}
f_u = \phi_u^{\mathrm{cyc}}.
\end{equation}
In particular, $f_u$ is a $\bZ$-valued homogeneous quasimorphism and
$f_u(w)$ depends only on $u$ and the cyclic word $[w]$.
\end{lemma}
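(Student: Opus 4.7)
The plan is to prove \eqref{GrigorchukFormula1} by showing that $\phi_u^{\mathrm{cyc}}$ is itself a quasimorphism lying at uniformly bounded distance from $\phi_u$; combined with the homogeneity of $\phi_u^{\mathrm{cyc}}$, the uniqueness of the homogenization then forces $f_u = \phi_u^{\mathrm{cyc}}$, from which all the ``in particular'' statements follow immediately (integer values and cyclic invariance are built into the definition of $\phi_u^{\mathrm{cyc}}$).

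First I would verify homogeneity of $\phi_u^{\mathrm{cyc}}$ on positive powers directly from Lemma \ref{CyclicCountHom}: applying the lemma twice, once to $u$ and once to $u^{-1}$, gives $\phi_u^{\mathrm{cyc}}(w^n) = n\cdot\phi_u^{\mathrm{cyc}}(w)$ for $n \geq 1$. For $n = -1$ I would observe that the cyclic word $[w^{-1}]$ is the reverse of $[w]$, so cyclic occurrences of $u$ in $[w^{-1}]$ are in canonical bijection with cyclic occurrences of $u^{-1}$ in $[w]$; hence $\phi_u^{\mathrm{cyc}}(w^{-1}) = -\phi_u^{\mathrm{cyc}}(w)$. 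Together these give the full homogeneity identity $\phi_u^{\mathrm{cyc}}(w^n) = n\cdot\phi_u^{\mathrm{cyc}}(w)$ for $n \in \bZ$.

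Next I would bound $|\phi_u(w) - \phi_u^{\mathrm{cyc}}([w])|$ in two cases. If $w$ is cyclically reduced, every linear occurrence of $u$ in $w$ at some position $s$ is also a cyclic occurrence in $[w]$ at position $s$; conversely a cyclic occurrence at $s$ fails to correspond to a linear one only when $s > \|w\|-\|u\|+1$, i.e.\ when it wraps around the seam, and there are at most $\|u\|-1$ such positions. Applying this to both $u$ and $u^{-1}$ yields $|\phi_u(w) - \phi_u^{\mathrm{cyc}}([w])| \leq 2(\|u\|-1)$. For general $w$ I would use the unique decomposition $w = t w_o t^{-1}$ with $w_o$ cyclically reduced; since $[w] = [w_o]$, the quasimorphism property of $\phi_u$ applied twice together with the symmetry identity $\phi_u(t^{-1}) = \#_{u}(t^{-1}) - \#_{u^{-1}}(t^{-1}) = \#_{u^{-1}}(t) - \#_{u}(t) = -\phi_u(t)$ gives $|\phi_u(w) - \phi_u(w_o)| \leq 2\,d(\phi_u)$. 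Combining the two steps yields a uniform bound $|\phi_u - \phi_u^{\mathrm{cyc}}| \leq 2(\|u\|-1) + 2\,d(\phi_u)$.

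Finally, since any function at bounded distance from a quasimorphism is itself a quasimorphism, $\phi_u^{\mathrm{cyc}}$ is a quasimorphism; being homogeneous and at bounded distance from $\phi_u$, it coincides with the unique homogenization $f_u$ of $\phi_u$. The bookkeeping is routine and there is no serious obstacle; the one place where I would proceed carefully is keeping the two distinct boundary effects separate in the estimate, namely the $\|u\|-1$ coming from wrap-around of cyclic occurrences at the seam of a cyclically reduced word, and the $2\,d(\phi_u)$ coming from chopping off the conjugating prefix $t$ and suffix $t^{-1}$ in the non-cyclically-reduced case.
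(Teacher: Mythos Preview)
Your argument is correct and complete. The route you take differs from the paper's, however. You invoke the uniqueness of the homogenization: you show that $\phi_u^{\mathrm{cyc}}$ is homogeneous (via Lemma \ref{CyclicCountHom} and the inversion symmetry) and at bounded distance from $\phi_u$ (via the seam/wrap-around count plus the conjugation estimate), and then appeal to the general fact that a homogeneous quasimorphism within bounded distance of $\phi_u$ must equal $f_u$.

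The paper instead works directly with the defining limit of $f_u$. It reduces to cyclically reduced $w$ with $\|w\| \geq \|u\|$, and then sandwiches $\#^{\mathrm{cyc}}_u([w^n]) - \#^{\mathrm{cyc}}_{u^{-1}}([w^n])$ between $\phi_u(w^n)$ and $\phi_u(w^{n+1})$ using the observation that linear occurrences in $w^n$ give cyclic occurrences in $[w^n]$, which in turn give linear occurrences in $w^{n+1}$. Dividing by $n$ and letting $n \to \infty$, the outer terms both tend to $f_u(w)$ while the middle term is constantly $\phi_u^{\mathrm{cyc}}(w)$ by Lemma \ref{CyclicCountHom}. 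This is slightly more hands-on and never needs to name the uniqueness principle or separately verify that $\phi_u^{\mathrm{cyc}}$ is a quasimorphism. Your approach, on the other hand, is more modular and makes the structural reason for the identity (uniqueness of homogenization) explicit; the combinatorics you do is essentially the same wrap-around count, just packaged as a uniform distance bound rather than a sandwich.
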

\begin{proof} We have to show that for every $w\in F_r$ we have
\begin{equation}\label{GrigorchukFormula}
f_u(w) = \#^{\mathrm{cyc}}_u([w]) - \#^{\mathrm{cyc}}_{u^{-1}}([w]).
\end{equation}
For $w=e$ this holds by definition, thus assume that $w \neq e$. We first observe that both sides of \eqref{GrigorchukFormula} remain unchanged if we replace $w$ by its cyclic reduction, and that both sides multiply by $n$ if we replace $w$ by $w^n$ in view of Lemma \ref{CyclicCountHom}. We may thus assume that $w$ is cyclically reduced and that $\|w\|_S\geq \|u\|_S$.

In this situation, every occurrence of $u$ in $w^n$ yields a cyclic occurrence of $u$ in $[w^n]$, and every such cyclic occurrence of $u$ yields an occurrence of $u$ in $w^{n+1}$. Applying the same argument to $u^{-1}$ yields
\begin{small}
\[
\frac{\#_u(w^n)- \#_{u^{-1}}(w^n)}{n} \leq \frac{\#^{\mathrm{cyc}}_u([w^n])-\#^{\mathrm{cyc}}_{u^{-1}}([w^n])}{n} \leq \frac{n+1}{n} \cdot \frac{\#_u(w^{n+1})-\#_{u^{-1}}(w^{n+1})}{n+1}.
\]
\end{small}
Now the expression in the middle equals $\#^{\mathrm{cyc}}_u([w]) - \#^{\mathrm{cyc}}_{u^{-1}}([w])$, and the two outer expressions both converge to $f_u(w)$.
\end{proof}
We will use the following class of counting quasimorphisms to illustrate properties of conical limit sets in Theorem \ref{ConicalQker}
\begin{notation}[Special counting quasimorphisms]\label{SpecialCounting}
Let $S \coloneqq  \{a_1, \dots, a_r\}$ be a set of cardinality $r\geq 2$ and let $F_r$ denote the free group over $S$. We denote by $W_r$ the set of all cyclically reduced and non-self-overlapping words of length $\geq 2$ in $F_r$ and set
\[
\cQ_o(F_r) \coloneqq  \{\phi_u: F_r \to \bZ \mid u \in W_r\} \qand \cQ_o^{\mathrm{cyc}}(F_r) \coloneqq  \{\phi_u^{\mathrm{cyc}}:F_r \to \bZ \mid u \in W_r\}.
\]
\end{notation}
\begin{remark}[Properties of special counting quasimorphisms]\label{FaizievGood}
Let $u \in W_r$ and $w \in F_r \setminus\{e\}$ with cyclic reduction $w_o$.
\begin{enumerate}[(i)]
    \item $\phi_u^{\mathrm{cyc}}$ is the homogenization of $\phi_u$ (by Lemma \ref{FaizievLemma}), in particular it is symmetric and conjugation-invariant.
    \item If $w$ has $u$-decomposition $(g_1, \dots, g_n)$ in the sense of Remark \ref{uNormalForm}, then
    \[
    \phi_u(w) = \{j \in \{1, \dots, n\} \mid g_j = u\} - \{j \in \{1, \dots, n\} \mid g_j = u^{-1}\}.
    \]
   \item It follows from (ii) that $\phi_u(u^n) = n$ for every $n \in \mathbb{Z}$; consequently also $\phi_u^{\mathrm{cyc}}(u^n) = n$ for all $n \in \mathbb Z$. In particular, for $u\in W_r$ the quasimorphisms $\phi_u: F_r \to \mathbb Z$ and $\phi_u^{\mathrm{cyc}}: F_r \to \mathbb Z$ are surjective.
\end{enumerate}
\end{remark}
\begin{remark}[Groups with many non-trivial real-valued quasimorphisms]
It turns out that for every non-abelian free group, the space of finite real linear combinations of counting quasimorphisms modulo perturbed homomorphisms is infinite-dimensional; in fact, the only linear relations between equivalence classes of counting quasimorphisms are ``the obvious ones'' (see \cite{HartnickTalambutsa}). 

In general, if $H$ is a subgroup of a group $G$, then not every quasimorphism $f: H \to \R$ can be extended to a quasimorphism on $G$; in fact, not every group containing a free subgroup admits a non-trivial real-valued quasimorphism. However, if $H$ is ``hyperbolically embedded'' in $G$, then every homogeneous quasimorphism on $H$ extends to $G$ \cite{HullOsin}. It follows that if $G$ is a group which contains a hyperbolically embedded non-abelian free group, then it has plenty of non-trivial quasimorphism. Since groups which admit hyperbolically embedded non-abelian free subgroups are precisely  acylindrically hyperbolic groups \cite{Osin_AcylHyp}, this recovers the fact that acylindrically hyperbolic groups admit an infinite-dimensional space of homogeneous quasimorphisms \cite{BF02}. Conversely, every homogeneous quasimorphism on an acylindrically hyperbolic group arises as an extension of a homogeneous quasimorphism on a hyperbolically embedded free subgroup of rank $2$ \cite{HartnickSisto}, which in turn can be written as a pointwise limit of linear combinations of cyclic counting functions \cite{Grigorchuk}. This illustrates the importance of cyclic counting functions for the general theory.
\end{remark}

\section{Reconstruction theorems for homogeneous quasimorphisms}

In our study of conical limit points we will use a reconstruction theorem for homogeneous quasimorphisms, which is based on work by Ben Simon and the second named author \cite{BSH}; this requires the following terminology.
\begin{notation}[Sandwiched order] Let $f: \Gamma \to \R$ be a non-zero homogeneous quasimorphism on a group $\Gamma$. We say that a partial order $\leq$ on $\Gamma$ is \emph{bi-invariant}\index{bi-invariant partial order}\index{order!bi-invariant partial} if $g\leq h$ implies $gx\leq hx$ and $xg \leq xh$ for all $g,h,x \in \Gamma$; such an order $\leq$ is then determined by the associated \emph{order semigroup}\index{order semigroup} $S_{\leq} \coloneqq  \{g \in \Gamma \mid g \geq e\}$. We say that $\leq$ is \emph{sandwiched}\index{sandwiched order}\index{order!sandwiched} by $f$ if there exist constants $C_2 > C_1>0$ such that
\[
\{g \in \Gamma \mid f(g) > C_2\} \subset S_{\leq} \subset \{g \in \Gamma \mid f(g) > C_1\};
\]
in fact, the lower bound implies the upper bound with $C_1\coloneqq 0$ \cite[Lemma 3.2]{BSH}. We denote by $\mathrm{IPO}(f)$ the set of all invariant partial orders on $\Gamma$ which are sandwiched by $f$.

If $\leq$ is sandwiched by $f$, then there exists $g \in \Gamma$ such that for all $h \in \Gamma$ there exists $n \in \mathbb N$ with $f(g^n) \geq h$. Any such element $g$ is called \emph{dominant element}\index{dominant element}, and following \cite{EliashbergPolterovich} we define the \emph{growth function}\index{growth function} associated with the pair $(g, \leq)$ by \[\gamma_{\leq, g}(h) \coloneqq  \lim_{n \to \infty}\frac{1}{n}\cdot \min \{p \geq 0 \mid g^p \geq h^n\}.\]
\end{notation}
The following is \cite[Prop.\ 1.3]{BSH}
\begin{lemma}[Sandwich lemma] Let $f: \Gamma \to \R$ be a non-zero homogeneous quasimorphism and let $\leq \in \mathrm{IPO}(f)$. Then for every dominant $g$ we have $f(g)>0$ and
\[
\pushQED{\qed}
\gamma_{\leq, g}(h) = \frac{f(h)}{f(g)}.\qedhere \popQED
\]
\end{lemma}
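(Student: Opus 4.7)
My strategy is to convert the order-theoretic statements $x \in S_\leq$ and $x \notin S_\leq$ into two-sided quantitative control on $f(x)$ via the sandwich, and then to use the homogeneity of $f$ together with the defect inequality $|f(xy) - f(x) - f(y)| \leq D$ (where $D := d(f)$) to replace $f(g^p h^{-n})$ by $p f(g) - n f(h)$ up to an additive constant. (The sandwich as printed with $C_2 > C_1 > 0$ and both containments as stated is internally inconsistent, since $e \in S_\leq$ would then force $f(e) = 0 > C_2 > 0$; I read it in the only reasonable way, namely $\{f > C_1\} \subset S_\leq \subset \{f > -C_2\}$ for positive constants $C_1, C_2$.)

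\textbf{Step 1: $f(g) > 0$.} Suppose for contradiction $f(g) \leq 0$. For any $h \in \Gamma$, dominance of $g$ produces $n = n(h) \in \mathbb N$ with $g^n \geq h$, i.e.\ $g^n h^{-1} \in S_\leq$. The upper half of the sandwich then yields $f(g^n h^{-1}) > -C_2$, and two applications of the defect bound together with homogeneity give
\[
n f(g) - f(h) > -C_2 - 2D, \qquad\text{hence}\qquad f(h) < n f(g) + C_2 + 2D \leq C_2 + 2D,
\]
since $n f(g) \leq 0$. But $f$ is non-zero and homogeneous, so choosing $h_0$ with $f(h_0) > 0$ and $h := h_0^k$ makes $f(h) = k f(h_0)$ arbitrarily large, contradicting the bound above. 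Hence $f(g) > 0$.

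\textbf{Step 2: $\gamma_g(h) = f(h)/f(g)$.} Set $N(n) := \min\{p \geq 0 \mid g^p \geq h^n\}$, which is finite by dominance of $g$ applied to $h^n$. From $g^{N(n)} h^{-n} \in S_\leq$ the upper sandwich together with the defect estimate gives
\[
N(n) f(g) - n f(h) > -C_2 - 2D.
\]
For $N(n) \geq 1$, minimality forces $g^{N(n)-1} h^{-n} \notin S_\leq$, so the lower sandwich $\{f > C_1\} \subset S_\leq$ yields $f(g^{N(n)-1} h^{-n}) \leq C_1$, whence
\[
(N(n) - 1) f(g) - n f(h) \leq C_1 + 2D.
\]
Since $f(g) > 0$ by Step 1, dividing by $n$ and letting $n \to \infty$ squeezes $N(n)/n$ between two sequences that both converge to $f(h)/f(g)$. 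In the edge case $N(n) = 0$ (which can happen only if $f(h) \leq 0$, since if $f(h) > 0$ the lower bound above forces $N(n) \geq 1$ eventually), homogeneity plus the lower sandwich applied to $h^{-n}$ force $h^n \leq e$ for $n$ large, so the sequence stabilizes at $0$ and matches $f(h)/f(g)$ in the boundary case $f(h) = 0$.

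The main obstacle is essentially cosmetic: pinning down the correct signs in the sandwich (given the apparent typo in the printed inequalities) and tracking the additive error constants arising from the defect $D$. The underlying geometric picture — that $S_\leq$ behaves like a half-space and $f$ like a linear functional on $\Gamma$ up to bounded error, so the number $N(n)$ of copies of $g$ required to dominate $h^n$ grows linearly at rate $f(h)/f(g)$ — is entirely transparent once Step 1 is secured.
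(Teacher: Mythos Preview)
The paper does not prove this lemma; it is quoted as \cite[Prop.\ 1.3]{BSH} and closed with a \qed. Your argument is therefore more than the paper supplies, and the approach --- using the sandwich to bound $f(g^{N(n)}h^{-n})$ from above and $f(g^{N(n)-1}h^{-n})$ from below, then invoking homogeneity and the defect to convert these into a two-sided estimate on $N(n)f(g) - nf(h)$ and dividing by $n$ --- is the natural one and is correct. You also correctly diagnose the typo in the printed sandwich condition; the containment $S_\leq \subset \{f > C_2\}$ with $C_2 > 0$ is incompatible with $e \in S_\leq$, and the paper's own aside that ``the lower bound implies the upper bound with $C_1 := 0$'' only makes sense under a reading like yours, with the upper containment of the form $S_\leq \subset \{f \geq 0\}$ (or $\{f > -C_2\}$). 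A minor point: a single application of the defect bound suffices in each estimate, so your $2D$ can be sharpened to $D$, but this is cosmetic.

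There is one genuine loose end, though it is a defect of the lemma as stated rather than of your proof. Your treatment of the edge case trails off at ``the boundary case $f(h) = 0$'' without addressing $f(h) < 0$. In that case your own bounds show $N(n)$ is bounded (indeed eventually zero), so $\gamma_g(h) = 0$, which is \emph{not} equal to $f(h)/f(g) < 0$. With the growth function defined via $\min\{p \geq 0 \mid g^p \geq h^n\}$ as in the paper, the formula can only hold as $\gamma_g(h) = \max\{0, f(h)/f(g)\}$; the identity as written requires the minimum to range over $p \in \Z$ (as in some formulations of Eliashberg--Polterovich relative growth). For the downstream reconstruction theorems the formula for $h$ with $f(h) > 0$ is all that is needed, and your argument establishes that cleanly.
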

The sandwich lemma has the following immediate consequence:
\begin{corollary}[First reconstruction theorem] Let $f_1,f_2: \Gamma \to \bR$ be non-zero homogeneous quasimorphisms. If $\mathrm{IPO}(f_1) \cap \mathrm{IPO}(f_2) \neq \emptyset$, then $f_1 = C \cdot f_2$ for some $C>0$.\qed
\end{corollary}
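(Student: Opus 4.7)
The plan is to apply the Sandwich Lemma simultaneously to $f_1$ and $f_2$, exploiting the fact that the growth function $\gamma_{\leq, g}$ is defined purely in terms of the partial order $\leq$ and the dominant element $g$, independently of any quasimorphism. Thus, once we have a single order $\leq$ that is sandwiched by both $f_1$ and $f_2$, both quasimorphisms are forced to compute the same invariant via the formula in the Sandwich Lemma, which pins down their ratio.

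Concretely, I would pick $\leq \in \mathrm{IPO}(f_1) \cap \mathrm{IPO}(f_2)$. The first step is to produce an element $g \in \Gamma$ that is dominant for $\leq$ (so that the Sandwich Lemma is applicable with $g$ for both $f_1$ and $f_2$). Since dominance is a property of $\leq$ only, any dominant element works for both quasimorphisms. Applying the Sandwich Lemma to $(f_1, \leq, g)$ gives $f_1(g) > 0$ and
\[
\gamma_{\leq, g}(h) \;=\; \frac{f_1(h)}{f_1(g)} \qquad \text{for all } h \in \Gamma.
\]
Applying it to $(f_2, \leq, g)$ gives $f_2(g) > 0$ and the analogous identity with $f_2$. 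Equating the two expressions yields
\[
\frac{f_1(h)}{f_1(g)} \;=\; \frac{f_2(h)}{f_2(g)} \qquad \text{for all } h \in \Gamma,
\]
so $f_1 = C \cdot f_2$ with $C := f_1(g)/f_2(g) > 0$.

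There is essentially no obstacle here beyond verifying that a dominant element for $\leq$ exists (which is guaranteed by the sandwich hypothesis, as stated in the definition recalled just before the Sandwich Lemma) and that dominance is a property of the order alone. The entire content of the reconstruction theorem is really a one-line consequence of the Sandwich Lemma once one observes that both sides of $\gamma_{\leq, g}(h) = f_i(h)/f_i(g)$ share a common left-hand side independent of $i$. The positivity $C > 0$ comes for free from the fact that both $f_1(g)$ and $f_2(g)$ are strictly positive by the Sandwich Lemma.
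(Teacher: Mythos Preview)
Your proof is correct and is precisely the argument the paper has in mind: the corollary is stated as an ``immediate consequence'' of the Sandwich Lemma and carries a \qed\ with no written proof, so your derivation via equating $\gamma_{\leq,g}(h)=f_i(h)/f_i(g)$ for $i=1,2$ is exactly what is intended.
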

We can give a more convenient reformulation using the following notion:
\begin{definition} If $f: \Gamma \to \bR$ is a homogeneous quasimorphism, then the \emph{positivity set}\index{positivity set $\mathrm{Pos}(f)$} $\mathrm{Pos}(f)$ is given by $\mathrm{Pos}(f) \coloneqq  \{g \in \Gamma \mid f(g) > 0\}$.
\end{definition}
\begin{corollary}[Second reconstruction theorem]\label{Reconstruction} Let $f_1,f_2: \Gamma \to \bR$ be non-zero homogeneous quasimorphisms. If $\mathrm{Pos}(f_1) \subset \mathrm{Pos}(f_2)$, then $f_1 = C \cdot f_2$ for some $C>0$.\qed
\end{corollary}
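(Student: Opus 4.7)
The plan is to deduce this from the First Reconstruction Theorem, which already does all the hard work: it suffices to exhibit a single bi-invariant partial order on $\Gamma$ that lies in both $\mathrm{IPO}(f_1)$ and $\mathrm{IPO}(f_2)$.

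My candidate order is the ``canonical'' one associated with $f_1$. Let $D_1 := d(f_1)$ be the defect of $f_1$, and set
\[
S := \{g \in \Gamma \mid f_1(g) > D_1\} \cup \{e\}.
\]
Using subadditivity (if $f_1(g), f_1(h) > D_1$ then $f_1(gh) \geq f_1(g) + f_1(h) - D_1 > D_1$) one checks that $S$ is closed under multiplication, and homogeneity of $f_1$ together with $f_1(g^{-1}) = -f_1(g) < -D_1 < 0$ for $g \in S \setminus \{e\}$ gives $S \cap S^{-1} = \{e\}$. Hence $g \leq h :\iff g^{-1}h \in S$ defines a bi-invariant partial order, and by construction $\leq \in \mathrm{IPO}(f_1)$ (the containment $\{g : f_1(g) > D_1\} \subset S$ is tautological, and the paper's remark that the lower bound implies the upper bound with $C_1 = 0$ takes care of the other side).

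The heart of the proof, and the step I expect to be the main obstacle, is to verify that this same $\leq$ also lies in $\mathrm{IPO}(f_2)$. One side is almost immediate: for $g \in S \setminus \{e\}$ we have $f_1(g) > D_1 > 0$, so $g \in \mathrm{Pos}(f_1) \subset \mathrm{Pos}(f_2)$ and $f_2(g) \geq 0$; this handles $S \setminus \{e\} \subset \{f_2 > -\epsilon\}$. The hard side is showing $\{g : f_2(g) > C\} \subset S$ for some $C > 0$. To unlock this, I would first extract the symmetric consequence of the hypothesis: applying $\mathrm{Pos}(f_1) \subset \mathrm{Pos}(f_2)$ to $g^{-1}$ yields $f_1(g) \leq 0 \Rightarrow f_2(g) \leq 0$, equivalently $f_2(g) > 0 \Rightarrow f_1(g) > 0$. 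Combined with homogeneity of both $f_1$ and $f_2$, this should allow me to boost the qualitative implication ``$f_2 > 0 \Rightarrow f_1 > 0$'' into the quantitative statement ``$f_2 > C \Rightarrow f_1 > D_1$'': given $g$ with $f_2(g)$ large, passing to an appropriate power $g^n$ (which stays in $\{f_2 > 0\} \subset \{f_1 > 0\}$ and hence has $f_1(g^n) = n f_1(g) > D_1$ for $n$ large), one recovers $g^n \in S$, from which a dominant/cofinality argument for $\leq$ together with the Sandwich Lemma forces $g \in S$.

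Once $\leq \in \mathrm{IPO}(f_1) \cap \mathrm{IPO}(f_2)$ is in hand, the First Reconstruction Theorem applies and yields $f_1 = C \cdot f_2$ for some scalar $C$; positivity of $C$ follows because both quasimorphisms take positive values on any dominant element of $\leq$ (another application of the Sandwich Lemma, together with $f_1(g_0), f_2(g_0) > 0$ for $g_0$ dominant). The principal delicate point throughout will be making precise how the homogeneity of $f_2$ interacts with the defect-controlled definition of $S$; if the direct argument above turns out to be subtle, an alternative would be to replace $S$ by a more symmetric construction $S' := \{g : f_1(g) > D_1 \text{ and } f_2(g) > D_2\} \cup \{e\}$ and verify the sandwich property from both sides simultaneously, but I expect the single-sided construction to suffice.
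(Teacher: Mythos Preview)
Your overall strategy --- construct a single bi-invariant partial order and show it lies in $\mathrm{IPO}(f_1) \cap \mathrm{IPO}(f_2)$, then invoke the First Reconstruction Theorem --- is exactly right and is what the paper does. The specific order you choose, however, differs from the paper's, and your verification that it lies in $\mathrm{IPO}(f_2)$ has a genuine gap.

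The gap is in the ``boosting'' step. You correctly deduce $f_2(g) > 0 \Rightarrow f_1(g) > 0$, and from this you want $\{f_2 > C\} \subset S = \{f_1 > D_1\}\cup\{e\}$ for some fixed $C > 0$. Your argument says: if $f_2(g)$ is large then $f_1(g) > 0$, so $f_1(g^n) = n f_1(g) > D_1$ for large $n$, hence $g^n \in S$. But knowing $g^n \in S$ does not give $g \in S$; the required $n$ depends on the unknown positive quantity $f_1(g)$, so nothing uniform in $g$ is obtained. The appeal to ``a dominant/cofinality argument together with the Sandwich Lemma'' is circular: the Sandwich Lemma yields information about $f_2$ only once $\leq \in \mathrm{IPO}(f_2)$ is already known, which is precisely what you are trying to establish.

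Your order \emph{does} work, but a different argument is needed for this direction. Fix once and for all an element $h$ with $f_1(h) > 2D_1$. If $0 < f_1(g) \leq D_1$, then $f_1(gh^{-1}) \leq f_1(g) - f_1(h) + D_1 \leq 0$, hence (by the implication $f_1 \leq 0 \Rightarrow f_2 \leq 0$ you already extracted) $f_2(gh^{-1}) \leq 0$, and therefore $f_2(g) \leq f_2(h) + d(f_2)$. The contrapositive gives $\{f_2 > f_2(h) + d(f_2)\} \subset S$, which is the missing inclusion. Your alternative ``symmetric'' order $S'$ runs into the same issue and needs the same fix.

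The paper sidesteps this asymmetric bootstrapping by building the order from the other side: it picks $g_0$ with $f_1(g_0), f_2(g_0) > 0$, sets $P := \mathrm{Pos}(f_2)$, and takes $S_\leq := \{e\} \cup \bigcap_{g \in \Gamma} g(g_0^n P)g^{-1}$ for suitable $n$. The sandwich verification for \emph{both} $f_1$ and $f_2$ then reduces to the single inclusion $\mathrm{Pos}(f_j) \subset P$, which holds by hypothesis for $j=1$ and trivially for $j=2$. This is somewhat cleaner than your route, but once the gap above is repaired both arguments reach the same conclusion.
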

\begin{proof} Let $g_0 \in\mathrm{Pos}(f_1)$ (and hence $g_0\in \mathrm{Pos}(f_2)$) and set $P\coloneqq \mathrm{Pos}(f_2)$. Denote by $d(f_j)$ the supremum of the defect set of $f_j$ and choose $n\in \mathbb N$ with $n > 2\frac{d(f_2)}{f_2(g_0)}$. Then for every $g \in Pg_0^nP$ we have
\[
f_2(g) \geq n \cdot f_2(g_0) - 2 d(f_2) > 2\frac{d(f_2)}{f_2(g_0)} f_2(g_0)- 2d(f_2) \geq 0,
\]
and hence $Pg_0^nP \subset P$. This shows that $g_0^nP$ is a semigroup, and hence 
\[
S = \{e\}\cup\bigcap_{g \in \Gamma} g(g_0^nP)g^{-1}.
\]
is a conjugation-invariant monoid with $S \cap S^{-1} = \{e\}$ (since $f_2(s) > 0$ for all $s\in S \setminus\{e\}$), which implies that $S = S_{\leq}$ for some
conjugation-invariant partial order (see \cite{BSH}).

We claim that $\leq$ is sandwiched by both $f_1$ and $f_2$; this will imply the corollary by the first reconstruction theorem. For the proof of the claim let $j \in \{1,2\}$ and set $C_j \coloneqq  n \cdot f_j(g_0) + d(f_j)$. Now if $x_j \in \Gamma$ with $f(x_j) > C_j$, then for all $g \in \Gamma$ we have
\[
f_j(g_0^{-n}g^{-1}x_jg) \geq f_j(g_0^{-n}) + f(g^{-1}x_j g) - d(f) =-n \cdot f_j(g_0) + f(x_j) - d(f) > 0,
\]
and hence
\[
g_0^{-n}g^{-1}x_jg \in \mathrm{Pos}(f_j) \subset P \implies x_j \in \bigcap_{g \in \Gamma} g(g_0^nP)g^{-1} \subset S_{\leq},
\]
which shows that $f_j$ sandwiches $\leq$ and finishes the proof.
\end{proof}
\section{Trees and their almost automorphism groups}\label{trees}
In this section we fix our terminology concerning trees and their (almost) automorphism. We also provide some background for Example \ref{AAutMain} following \cite{Lederle, AdrienLeBoudec}.
\begin{definition}\label{Def1Tree}
A connected non-oriented graph $\cT$ is called a \emph{tree}\index{tree} if it has no loops, no multiple edges and no cycles. A subgraph of $\cT$ is called a \emph{subforest}\index{subforest} and it is called a \emph{subtree}\index{subtree} if it is connected. We denote by $\mathrm{Aut}(\cT)$ the \emph{automorphism group}\index{tree automorphism} of $\cT$, i.e.\ the group of graph automorphisms of $\cT$. If $\cT$ is a tree and $d_\cT$ is the canonical metric on the geometric realization $|\cT|$
(see Example \ref{CanonicalMetric}), then the metric space $(|\cT|, d_\cT)$ is called a \emph{simplicial tree}\index{tree!simplicial}. 

Given a tree $\cT$ with vertex set $V$ and edge set $E$ we use the following terminology:
Two vertices are called \emph{adjacent} if there is an edge between them. If $v \in V$ then the \emph{valence} $|v|$ is the cardinality of the set of adjacent vertices of $v$, and $v$ is called a \emph{leaf} if $|v| = 1$. We denote by $\cL\cT$ the set of leaves of $\cT$. If $|v|< \infty$ for all $v \in V$, then $\cT$ is called \emph{locally finite}\index{tree!locally finite} and $(|\cT|, d_\cT)$ is called a \emph{locally finite simplicial tree}. 

If $o \in V$, then the pair $(\cT, o)$ is called a \emph{rooted tree}\index{rooted tree} with \emph{root} $o$. In this case, the \emph{level}\index{rooted tree!level} of $v \in V$ is $\ell(v) \coloneqq d_{\cT}(o,v)$, and given $n \in \bN_0$ we set $\cT^{(n)} \coloneqq \{v \in V \mid \ell(v)= n\}$. 
If $\ell(v)=n$ and $w \in V$ is adjacent to $v$ with $\ell(w) = n+1$, then we write $v \to w$ and say that $v$ is the \emph{parent} of $w$ and $w$ is the \emph{child} of $v$. If $v \to v_1 \dots \to v_n \to w$, then we call $w$ a \emph{descendant} of $v$. Given $v \in V$ we denote by $C(v)$ the set of children of $v$ and by $\cT_v$ the subtree of $\cT$ whose vertex set is given by the descendants of $v$ (including $v$).

If $(\cT, o)$ is a rooted tree, then a subtree $\cT'$ of $\cT$ is called a \emph{rooted subtree} if it contains the root $o$; in this case the \emph{rooted diameter} of $\cT'$ is $\mathrm{diam}_o(\cT') \coloneqq \sup\{\ell(v) \mid v \in V(\cT')\}$, where $V(\cT')$ is the vertex set of $\cT'$. The \emph{rooted automorphism group}\index{tree automorphism!rooted} of $(\cT, o)$ is defined as
\[
\mathrm{Aut}(\cT, o) \coloneqq \{\phi \in \Aut(\cT)\mid \phi(o) = o\}.
\]
\end{definition}
\begin{remark}\label{MetricTree} In metric geometry, a  geodesic metric space $(X,d)$ is usually called a \emph{metric tree}\index{tree!metric} if every triangle in a (possibly degenerate) tripod. Every simplicial tree is a metric tree in this sense, and these are the only metric trees which we will encounter in this book.
\end{remark}
For the remainder of this section, $(\cT, o)$ will denote a locally finite rooted tree. Given a subtree $T \subset \cT$ we denote by $V(T)$ the set of vertices of $T$.
\begin{definition}
    A \emph{complete finite subtree}\index{subtree!complete} $T$ of $\mathcal T$ is a finite rooted subtree such that every $v \in V(T)$ we have $C(v) \cap V(T) \in \{\emptyset, C(v)\}$. We denote by $\mathrm{CF}(\cT, o)$ the set of complete finite subtrees of $(\cT, o)$ and for every $m \in \bN_0$ we define
   \[\mathrm{CF}_m(\cT,o) \coloneqq \{T \in \mathrm{CF}(\cT,o) \mid \mathrm{diam}_o(T) \leq m \}.\] 
Given $T \in \mathrm{CF}(\cT, o)$ we set
     \[
    \cT \setminus T \coloneqq \bigsqcup_{v \in \mathcal{L}T} \cT_v.
    \] 
  \end{definition}
Since $\cT$ is locally finite, $\mathrm{CF}_m(\cT, o)$ is finite for every $m \in \bN_0$, and $\mathrm{CF}(\cT,o)$ is closed under finite unions. If $T \in \mathrm{CF}(\cT,o)$, then $\cT \setminus T$ is a subforest of $\cT$ with $|\mathcal{L}T|$ connected components.
\begin{definition}
Let $T_1, T_2, T_1', T_2'\in  \mathrm{CF}(\cT,o)$.
\begin{enumerate}[(i)]
\item A graph isomorphism $\varphi \colon \mathcal{T}\setminus T_1 \to  \mathcal{T}\setminus T_2$ is called an \emph{honest almost automorphism}\index{almost automorphism!honest} of $(\mathcal{T}, o)$ of \emph{depth}\index{almost automorphism!depth of} $\mathrm{depth}(\varphi) \coloneqq \max\{\diam_o(T_1), \diam_o(T_2)\}$.
\item Two honest almost automorphisms $\varphi \colon \mathcal{T}\setminus T_1 \to  \mathcal{T}\setminus T_2$ and $\psi \colon \mathcal{T}\setminus T_1' \to  \mathcal{T}\setminus T_2'$ are called \emph{equivalent} if there exists a finite complete subtree $T \supset T_1 \cup T_1'$ such that $\varphi\vert_{\mathcal{T}\setminus T} = \psi\vert_{\mathcal{T}\setminus T}$.
\item If $\varphi$ is an honest almost automorphism of $\cT$, then its equivalence class $[\varphi]$ is called an \emph{almost automorphism}\index{almost automorphism} of $(\mathcal{T}, o)$, and its \emph{depth} is defined as 
\[
\mathrm{depth}[\varphi] \coloneqq \min\{\mathrm{depth}(\psi) \mid \psi \in [\varphi]\}.
\]
\end{enumerate}
\end{definition}
\begin{construction}\label{CompositionAAut} Let $T_1, \ldots, T_4 \in \mathrm{CF}(\cT, o)$ and let $\varphi \colon \mathcal{T}\setminus T_1 \to  \mathcal{T}\setminus T_2$ and $\psi \colon \mathcal{T}\setminus T_3 \to  \mathcal{T}\setminus T_4$ be honest almost automorphisms. We can then define complete finite subtrees
\[
T \coloneqq T_1 \cup T_4, \quad T_3' \coloneqq T_3 \cup \psi^{-1}(T \setminus T_4) \qand T_2' \coloneqq \varphi(T\setminus T_1) \cup T_2.
\]
and obtain honest almost automorphisms
\[
\varphi': \cT \setminus T \to \cT \setminus T_2' \qand \psi': \cT \setminus T_3' \to \cT \setminus T.
\]
such that $[\varphi'] = [\varphi]$ and $[\psi'] = [\psi]$. Moreover, the composition $\varphi' \circ \psi': \cT \setminus T_3' \to \cT \setminus T_2'$ is a well-defined honest almost automorphisms. If $\varphi''$ and $\psi''$ are any other representatives of  $[\phi]$ and $[\psi]$ respectively, then $\varphi'' \circ \psi''$ is equivalent to $\varphi' \circ \psi'$ and hence we may define
\[
[\varphi] \circ [\psi] \coloneqq [\varphi' \circ \psi'].
\]
One can show that with this multiplication, the set $\mathrm{AAut}(\cT, o)$ of almost automorphisms of $(\cT, o)$ becomes a group (cf. \cite{Lederle}).
\end{construction}
\begin{definition} Given $m \in \bN$ we define a subset
  \[\mathrm{AAut}^m(\mathcal{T}, o) \coloneqq \{ [\varphi] \in \mathrm{AAut}(\mathcal{T}, o) \mid \mathrm{depth}[\varphi] \leq m\}.\]
\end{definition}
Note that every class in $\mathrm{AAut}^0(\mathcal{T}, o)$ can be represented by a unique rooted automorphism of $(\cT, o)$, and hence $\mathrm{AAut}^0(\cT, o)$ is a subgroup isomorphic to $\Aut(\cT, o)$. For $m \geq 1$ the subsets $\mathrm{AAut}^m(\cT, o)$ need not be subgroups; however, we have:
\begin{lemma}\label{AAutFiltration} For all $m, n \in \bN_0$ we have
\[\mathrm{AAut}^m(\mathcal{T}, o) \mathrm{AAut}^n(\mathcal{T}, o) \subset \mathrm{AAut}^{m+n}(\mathcal{T}, o).\]
\end{lemma}
\begin{proof} Let $\alpha \in \mathrm{AAut}^m(\mathcal{T}, o)$ and $\beta \in \mathrm{AAut}^n(\mathcal{T}, o)$. We choose representatives $\varphi \colon \mathcal{T}\setminus T_1 \to  \mathcal{T}\setminus T_2$ and $\psi \colon \mathcal{T}\setminus T_3 \to  \mathcal{T}\setminus T_4$ of $\alpha$ and $\beta$ respectively such that $\mathrm{depth}(\varphi) = m$ and $\mathrm{depth}(\psi) = n$. Accordingly we then have
\[
\mathrm{diam}_o(T_1) \leq m, \; \mathrm{diam}_o(T_2) \leq m, \; \mathrm{diam}_o(T_3) \leq n \; \text{and}\; \mathrm{diam}_o(T_4) \leq n.
\] 
We now define $T$, $T_2'$, $T_3'$, $\varphi'$ and $\psi'$ as in Construction \ref{CompositionAAut}. Since $\varphi' \circ\psi'$ represents $\alpha \circ \beta$, we then have
\[
\mathrm{depth}(\alpha \circ \beta) \leq \max\{\mathrm{diam}_o(T_2'), \mathrm{diam}_o(T_3')\}.
\]
Since $\psi$ is distance preserving and maps the leaves of $T_3$ to the leaves of $T_4$, the rooted diameter of $T_3' = T_3 \cup \psi^{-1}(T_1 \setminus T_4)$ is at most $\mathrm{diam}_o(T_3) + \diam_o(T_1) \leq n+m$. Similarly, since $\phi$ is distance preserving and maps  the leaves of $T_1$ to the leaves of $T_2$, the rooted diameter of $T_2' = \varphi(T_4 \setminus T_1) \cup T_2$ is at most $\mathrm{diam}_o(T_2) + \diam_o(T_4) \leq m+n$.
\end{proof}
Given $d \in \bN$, there is a unique rooted tree $(\cT_d, o)$ such that every vertex has precisely $d$ children. Equivalently, $|o| = d$ and $|v| = d+1$ for every vertex $v \neq o$. We warn the reader that $\cT_d$ is not to be confused with the $d$-regular tree.
\begin{definition}\label{regularrootedtree} $(\cT_d, o)$ is called the \emph{$d$-regular rooted tree}.
\end{definition}
\begin{notation}
We will need two subgroups of $\mathrm{AAut}(\cT_d, o)$. Firstly, we denote by $W$ the pointwise stabilizer of the first level $\cT_d^{(1)}$ of $\cT_d$ in $\Aut(\cT_d, o) = \mathrm{AAut}^0(\cT, o)$. Since for every vertex $v$ of $\cT_d^{(1)}$ we have $(\cT_d)_v \cong \cT_d$, we have $W \cong \prod_{i=1}^d \mathrm{Aut}(\cT_{d},o)$. Another important subgroup of $\mathrm{AAut}(\cT_d, o)$ is the \emph{Higman--Thompson group} $V_d$, which consists of all ``locally order-preserving'' elements of $\mathrm{AAut}(\cT_{d},o)$. We refer to \cite[Sec.\ 2 and 3]{AdrienLeBoudec} for the precise definition\footnote{Note that the tree $\cT_d$ is denoted $\cT_{d,d}$ in \cite{AdrienLeBoudec}.}.
\end{notation}
\begin{proposition}\cite[Prop.\ 4.7]{AdrienLeBoudec}\label{prop:decomp}
    For any $g \in \mathrm{AAut}(\mathcal{T}_{d}, o)$ there exists $u \in W$ and $v \in V_d$ such that $g = uv$.
\end{proposition}
\begin{remark}\label{LeBoudecConvenient} The proof of \cite[Prop.\ 4.7]{AdrienLeBoudec} actually provides a more precise statement: If $\mathrm{depth}(g) = n$, then we can choose $u \in W$ and $v \in V_d$ with the following properties:
\begin{itemize}
\item $g = uv$.
\item There exist a unique representative $\phi: \cT \setminus T \to \cT \setminus T'$ of $v$ such that $T, T' \in \mathrm{CF}_n(\cT,o)$. In particular, $\mathrm{depth}(v)= n$.
\item $u$ is a rooted automorphism of $\cT$ which fixes every vertex of $T'$ and hence maps $\cT \setminus T'$ to itself. 
\end{itemize}
In particular, every class in $V_d^n \coloneqq V_d \cap \mathrm{AAut}^n(\cT, o)$ can be represented by a unique honest almost automorphism $\phi: \cT \setminus T \to \cT \setminus T'$ with $\mathrm{diam}_o(T) = \mathrm{diam}_o(T')=n$. Moreover, since $\phi$ is locally order-preserving it is uniquely determined by the induced bijection $\cL T\to \cL T'$. Since there are only finitely many such bijections, we deduce that $V_d^n$ is in fact finite.
\end{remark}

\chapter{Morse hyperbolic spaces and their Gromov boundaries} \label{AppHyperbolic}

\section{Gromov, Rips and Morse hyperbolic spaces}
In this section we collect some basic notions concerning hyperbolic spaces and their boundaries. We use three definitions of hyperbolicity. The first definition, called the 4-point or Gromov definition of hyperbolicity, is defined for any metric space. 

\begin{definition}[Gromov hyperbolic spaces] \label{def:gromov hyp} Let $(X, d)$ be a metric space. Given three points $x,y,w \in X$, the \emph{Gromov product}\index{Gromov product} of $x$ and $y$ with respect to the basepoint $w$ is defined as 
\[
\gp{x}{y}{w}=\frac{1}{2}\left(d(w,x) + d(w, y)- d(x,y)\right).
\]
Given a $\delta\geq 0$, a metric space $X$ is said to be $\delta$-\emph{Gromov hyperbolic}\index{Gromov hyperbolic}\index{Gromov hyperbolic!metric space}\index{hyperbolic!Gromov} if for all $x,y,z,w \in X$:
\[
\gp{x}{y}{w} \geq \min\set{\gp{x}{z}{w}, \gp{y}{z}{w}}-\delta.
\]
A metric space $X$ is called Gromov hyperbolic if it is $\delta$-Gromov hyperbolic for some $\delta\geq 0$.
\end{definition}

The second definition, credited to Rips, requires that the metric space be geodesic because it characterizes hyperbolicity by asking that all geodesic triangles be ``slim''. 

\begin{definition}[Rips hyperbolic spaces] \label{def:rips hyp}
Let $(X, d)$ be a geodesic metric space. Given a $\delta \geq 0$, a geodesic triangle in $X$ is said to be \emph{$\delta$-slim}\index{slim triangle}\index{triangle!slim} if each of its sides is contained in the $\delta$-neighborhood of the union of the other two sides. 

The space $(X,d)$ is called  \emph{$\delta$-Rips hyperbolic}\index{Rips-hyperbolic}\index{hyperbolic!Rips}  if every geodesic triangle in $X$ is $\delta$-slim. It is called Rips hyperbolic if it is $\delta$-Rips hyperbolic for some $\delta\geq 0$.
\end{definition}

Rips' definition is often easier to handle; in the case of geodesic metric spaces, it coincides with Gromov's definition:

\begin{proposition}[Rips vs.\ Gromov hyperbolicity, {\cite[Prop. III.H.1.22]{bridson-haefliger}}]\label{RipsGromov}
For every $\delta\geq 0$ there exists $\delta'\geq 0$ with the following property: If a geodesic metric space $(X,d)$ is $\delta$-Gromov hyperbolic (respectively $\delta$-Rips hyperbolic), then it is $\delta'$-Rips hyperbolic (respectively $\delta'$-Gromov hyperbolic). In particular, a geodesic metric space is Gromov hyperbolic if and only if it is Rips hyperbolic.\end{proposition}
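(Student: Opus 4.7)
The proposition asserts an equivalence with explicit control on the constants; I would prove the two implications separately, each via a careful application of the relevant hyperbolicity condition to configurations drawn from geodesic triangles.

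For the direction Rips $\Rightarrow$ Gromov, assume every geodesic triangle in $X$ is $\delta$-slim. To verify the four-point condition for arbitrary $x,y,z,w\in X$, I would fix geodesic segments $[w,x]$, $[w,y]$, $[w,z]$ and $[x,y]$, $[x,z]$, $[y,z]$, forming three geodesic triangles $\Delta_{wxy}$, $\Delta_{wxz}$, $\Delta_{wyz}$ and one auxiliary triangle $\Delta_{xyz}$, each $\delta$-slim. The key geometric fact is that $\gp{x}{y}{w}$ equals the length of the initial segment along which the geodesics $[w,x]$ and $[w,y]$ fellow-travel in the tripod approximation of $\Delta_{wxy}$, with error $O(\delta)$ by slimness. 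I would then compare the three Gromov products by tracing a point on $[y,z]$ that lies $\delta$-close to both $[w,y]$ and $[w,z]$, and shuffling it along the slim triangle $\Delta_{xyz}$ to obtain $\gp{x}{y}{w}\geq \min\{\gp{x}{z}{w},\gp{y}{z}{w}\}-c\delta$ for a universal constant $c$ (one can take $c=2$). This establishes $\delta$-Gromov hyperbolicity for some $\delta'$ of order $\delta$.

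For Gromov $\Rightarrow$ Rips, assume $X$ is geodesic and $\delta$-Gromov hyperbolic. Fix a geodesic triangle with vertices $x,y,z$ and let $p\in[x,y]$. I would build the \emph{tripod approximation} of the triangle: define internal points $X_{a}\in[y,z]$, $X_{b}\in[x,z]$, $X_{c}\in[x,y]$ by $d(x,X_{b})=d(x,X_{c})=\gp{y}{z}{x}$ and the cyclic analogues, noting these exist precisely because $X$ is geodesic (so the sides are parametrized by arclength). The first main step is to apply the four-point Gromov condition to well-chosen quadruples among $\{x,y,z,X_{a},X_{b},X_{c}\}$ to conclude that the three internal points are pairwise within $O(\delta)$. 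The second main step is a fellow-travelling lemma: two geodesic segments emanating from a common vertex $v$ with endpoints $u,u'$ stay uniformly $O(\delta)$-close up to parameter $\gp{u}{u'}{v}$; this again follows from the four-point inequality applied to quadruples along the two geodesics. Combining these, if $d(x,p)\leq\gp{y}{z}{x}=d(x,X_{c})$, then fellow-travelling from $x$ places $p$ within $O(\delta)$ of the point on $[x,z]$ at the same parameter; otherwise $d(p,X_{c})\leq O(\delta)$, and fellow-travelling from $y$ between $[y,x]$ and $[y,z]$ places $p$ within $O(\delta)$ of $[y,z]$. This yields $\delta$-slimness of the triangle for some $\delta'$ of order $\delta$.

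The main technical obstacle is the fellow-travelling lemma in the second direction: the inequality one extracts from the 4-point condition gives a bound on Gromov products of paired points on the two geodesics, and translating this into a direct distance estimate requires that the two geodesics be parametrized compatibly — which is exactly where the geodesic hypothesis is used in an essential way. Once both directions are in place, one obtains the asserted equivalence with $\delta'$ a universal affine function of $\delta$, proving the proposition.
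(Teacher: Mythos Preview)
The paper does not supply its own proof of this proposition; it simply cites \cite[Prop.~III.H.1.22]{bridson-haefliger} and moves on. Your sketch is the standard argument one finds in that reference (and in most textbooks): tripod approximation plus fellow-travelling for Gromov $\Rightarrow$ Rips, and comparison of Gromov products along slim triangles for Rips $\Rightarrow$ Gromov. So there is nothing to compare at the level of strategy.

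As a sketch your outline is correct. Two minor points if you want to tighten it into an actual proof: in the Rips $\Rightarrow$ Gromov direction, the claim that ``one can take $c=2$'' is optimistic --- the usual careful estimate (as in Bridson--Haefliger III.H.1.22) gives a somewhat larger universal constant, and getting it down to $2$ would require more care than you indicate. In the Gromov $\Rightarrow$ Rips direction, your fellow-travelling lemma is exactly the statement that for $p\in[v,u]$ with $d(v,p)\leq\gp{u}{u'}{v}$ one has $d(p,[v,u'])\leq O(\delta)$; this is a clean consequence of the four-point condition applied to $(v,p,p',u')$ with $p'$ the point on $[v,u']$ at the same parameter, but you should state explicitly which four points you are feeding into the inequality rather than gesturing at ``quadruples along the two geodesics''.
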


Among geodesic metric spaces, Gromov hyperbolicity (or, equivalently, Rips hyperbolicity) is actually a QI-invariant, see e.g.\ {\cite[Thm. III.H.1.9]{bridson-haefliger}}:
\begin{theorem}[QI-invariance of hyperbolicity among geodesic metric spaces]\label{HyperbolicQI}
If $X$, $X'$ are quasi-isometric geodesic metric spaces, then $X$ is hyperbolic if and only if $X'$ is hyperbolic.\qed
\end{theorem}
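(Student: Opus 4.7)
My plan is to use the intermediate notion of a Morse hyperbolic space as a bridge between the two Rips/Gromov hyperbolic spaces. The key observation, already recorded in Remark \ref{MorseGromov}(i), is that Morse hyperbolicity is \emph{manifestly} QI-invariant: if $\phi\colon X \to X'$ is a $(K,C)$-quasi-isometry and every $(K',C')$-quasi-geodesic triangle in $X$ is $\delta(K',C')$-slim, then any $(K',C')$-quasi-geodesic triangle in $X'$ pulls back via a quasi-inverse $\bar\phi$ to a $(K'',C'')$-quasi-geodesic triangle in $X$ (with constants depending only on $K,C,K',C'$) which is slim by hypothesis, and the slimness is then transported to $X'$ by $\phi$ with a loss controlled only by $(K,C,\delta(K'',C''))$. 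Granted this, it suffices to show that, for \emph{geodesic} metric spaces, Gromov hyperbolicity is equivalent to Morse hyperbolicity.

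The easy direction is Morse hyperbolic $\Rightarrow$ Gromov hyperbolic for geodesic spaces: ordinary geodesic triangles are $(1,0)$-quasi-geodesic triangles, so Morse hyperbolicity with the gauge $N$ applied at $(K,C)=(1,0)$ gives that all geodesic triangles are $N(1,0)$-slim, i.e.\ $X$ is Rips hyperbolic, and hence Gromov hyperbolic by Proposition~\ref{RipsGromov}.

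The non-trivial direction is that a geodesic Gromov hyperbolic space is Morse hyperbolic, i.e.\ that every $(K,C)$-quasi-geodesic triangle is $\delta'(K,C)$-slim for some uniform function $\delta'$ depending only on the hyperbolicity constant $\delta$ and on $(K,C)$. The standard ingredient here is the \emph{Morse lemma} (stability of quasi-geodesics): in a $\delta$-hyperbolic geodesic space, every $(K,C)$-quasi-geodesic segment lies within Hausdorff distance $R=R(\delta,K,C)$ of any geodesic segment with the same endpoints. Given this, for a $(K,C)$-quasi-geodesic triangle with vertices $x_1,x_2,x_3$, I would form the geodesic triangle on the same vertices; it is $\delta$-slim by Proposition~\ref{RipsGromov} (after passing from the Gromov to the Rips constant), and each side of the quasi-geodesic triangle is within $R$ of the corresponding geodesic side by the Morse lemma. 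Combining these two estimates via the triangle inequality shows that each quasi-geodesic side lies in the $(\delta+2R)$-neighborhood of the union of the other two, so the quasi-geodesic triangle is $(\delta+2R)$-slim; this gives the desired Morse gauge $N(K,C):=\delta+2R(\delta,K,C)$.

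The main obstacle, and the only non-elementary input, is the Morse lemma itself. Once that is in hand, both directions of the QI-invariance of hyperbolicity for geodesic spaces reduce to unwinding definitions: Gromov hyperbolic $\Leftrightarrow$ Rips hyperbolic $\Rightarrow$ Morse hyperbolic by the Morse lemma; Morse hyperbolic is QI-invariant; and Morse hyperbolic $\Rightarrow$ Rips hyperbolic $\Leftrightarrow$ Gromov hyperbolic trivially for geodesic spaces. Hence quasi-isometric geodesic spaces are simultaneously (Gromov) hyperbolic or simultaneously not.
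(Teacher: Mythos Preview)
Your proof is correct. The paper does not give its own proof of this theorem (it is stated with a \qed\ and a citation to \cite[Thm.~III.H.1.9]{bridson-haefliger}), but it explicitly remarks that ``the proof of this theorem uses'' the Morse lemma (Lemma~\ref{Morse1}), which is exactly your key input; your packaging via the intermediate notion of Morse hyperbolicity is a mild reorganization of the same argument rather than a different approach.
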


The proof of this theorem uses a particularly nice property of hyperbolic spaces, that says, roughly, that (the image of) a quasi-geodesic segment is close to any geodesic segment joining its endpoints. The following lemma details this property. 
\begin{lemma}[Morse lemma]\label{Morse1} There exists a function $\Theta(K, L, \delta)$ so that the following holds: If $X$ is a $\delta$-Gromov hyperbolic geodesic space, then for every $(K,L)$-quasi-geodesic segment $f \colon [a,b] \to X$, the Hausdorff distance between the image of $f$ and any geodesic segment between $f(a)$ and $f(b)$ is at most $\Theta(K, L, \delta)$.\qed
\end{lemma}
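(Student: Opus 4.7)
My plan is to reduce to Rips hyperbolicity via Proposition \ref{RipsGromov}, so I may assume that every geodesic triangle in $X$ is $\delta'$-slim for some $\delta' = \delta'(\delta)$. I will then carry out the two inclusions of the Hausdorff comparison separately: first, that any geodesic segment $[p,q]$ is contained in a uniform neighborhood of the image of a $(K,L)$-quasi-geodesic $f$ from $p$ to $q$; second, that the image of $f$ is contained in a uniform neighborhood of $[p,q]$. The bound $\Theta(K,L,\delta)$ is obtained at the end by combining the two bounds and keeping track of how they depend on the input constants.

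For the first inclusion, I would use the classical ``farthest point'' argument. Let $x \in [p,q]$ be a point that realizes $D := \max_{z \in [p,q]} d(z, \mathrm{Im}(f))$; if $D$ is small we are done, so assume $D$ is large. Pick points $y, y' \in [p,q]$ on either side of $x$ at distance roughly $2D$ from $x$ along $[p,q]$ (or as close to the endpoints as possible), and choose $y_1, y_1' \in \mathrm{Im}(f)$ with $d(y,y_1), d(y',y_1') \leq D$. The quasi-geodesic subsegment of $f$ between $y_1$ and $y_1'$ has length at most $K(4D + 2D) + L$, and replacing this piecewise by geodesic segments produces a polygon with $p$ controlled number of sides whose total ``length away from $x$'' can be estimated. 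Iteratively applying the slim-triangle condition (subdividing the polygon into at most $p-2$ triangles), the point $x$ must lie within distance $p \cdot \delta'$ of one of the ``short'' sides $[y,y_1]$, $[y',y_1']$, or of the quasi-geodesic arc between $y_1$ and $y_1'$. The first two give $d(x, \mathrm{Im}(f)) \leq D/2 + O(\delta')$ after a pigeonhole refinement of $y, y'$, while the last directly gives the same. Since $x$ realizes the maximum, this forces $D \leq D_0(K,L,\delta)$ for a constant depending only on the data. This is the step I expect to be the main obstacle, as one has to be careful that the number of sides of the approximating polygon stays uniformly bounded and that the divergence estimate survives the subdivision.

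For the second inclusion, I use the bound from Step 1. Suppose $f(t)$ lies at distance $>M$ from $[p,q]$, where $M$ will be chosen depending on $D_0$, $K$, $L$ and $\delta$. Consider the largest maximal interval $[t_1, t_2] \ni t$ on which $f$ stays outside the $D_0$-neighborhood of $[p,q]$. By continuity (or by the large-scale connectedness inherited from being a quasi-geodesic segment on the reals), $f(t_1)$ and $f(t_2)$ lie within $D_0 + O(K,L)$ of $[p,q]$; pick projections $p_1, p_2 \in [p,q]$. The quasi-geodesic piece $f([t_1,t_2])$ must then have length bounded by a function of $d(p_1,p_2)$ via Step 1 applied in reverse (using the triangle with vertices $f(t_1), f(t_2), p_1$, etc.), which in turn bounds $|t_1 - t_2|$ via the lower quasi-isometry constant. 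Since every point of $f([t_1,t_2])$ is within this length of $f(t_1)$, one bounds $d(f(t), [p,q])$ by an explicit function of $K, L, \delta$.

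Finally, I would define $\Theta(K,L,\delta)$ to be the maximum of the two constants obtained above, noting that it depends only on $K$, $L$ and $\delta$ (through $\delta'$), not on the endpoints $p,q$ or the particular quasi-geodesic chosen. The plan is essentially the textbook argument of \cite{bridson-haefliger}, Ch.~III.H, and I would cite it for the routine estimates while emphasizing the two-step structure above.
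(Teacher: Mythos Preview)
The paper does not give a proof of this lemma; it is stated with a \qed{} as a classical result (the Morse lemma for hyperbolic spaces), with the standard reference being \cite[Ch.~III.H]{bridson-haefliger}. Your proposal is essentially the textbook argument from that source and is correct in outline. One point to sharpen in your first step: the number of sides $p$ of the approximating polygon is not a fixed constant independent of $D$, but grows roughly like $\log$ of the length of the quasi-geodesic arc between $y_1$ and $y_1'$ (hence like $\log D$); it is precisely the comparison between this logarithmic term and the linear lower bound $D$ on the distance from $x$ to the polygon that forces $D$ to be bounded. Also, since a $(K,L)$-quasi-geodesic need not be continuous, the standard proof first replaces $f$ by a continuous ``tamed'' quasi-geodesic at bounded Hausdorff distance (cf.\ \cite[Lemma III.H.1.11]{bridson-haefliger}) before running the farthest-point argument; you should mention this step explicitly.
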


Occasionally, in particular in the context of Morse boundaries, we will have to deal with non-geodesic hyperbolic spaces. In this wider context, Gromov hyperbolicity is no longer a quasi-isometry invariant.
\begin{example} Let $X \subset \R^2$ be the graph of the function $x \mapsto |x|$ with the metric induced from the Euclidean metric of $\R^2$. Then $X$ is not hyperbolic, but $\pi_1: X \to \R$, $(x,y) \mapsto x$ is a quasi-isometry (even bi-Lipschitz), and $\R$ is $0$-Gromov hyperbolic. See \cite[Remark 4.1.3]{BuyaloSchroeder} for a more refined example.
\end{example}
The next theorem gives four characterizations of a quasi-isometrically invariant notion of hyperbolicity. Two of these characterization will use the notion of a Morse quasi-geodesic (Definition \ref{defn: Morse geodesic}); compare the definition with the Morse lemma (Lemma \ref{Morse1}). We also use the notation of a $\delta$-slim quasi-geodesic triangle; $\delta$-slimness of quasi-geodesic triangles is defined exactly as for geodesic triangles (see Definition \ref{def:rips hyp}).
\begin{theorem}[Conditions for Morse hyperbolicity] \label{thm:strongly hyperbolic equiv}
Let $(X,d)$ be a proper metric space. Then the following are equivalent:
\begin{enumerate}
\item $X$ is quasi-isometric to a proper geodesic hyperbolic metric space. \label{equiv: SH}
\item There exists a $K\geq 1, C \geq 0$ so that $X$ is $(K,C)$-quasi-geodesic and for all $K'\geq K, C' \geq C$ there exists a Morse gauge $N_{(K',C')}$, depending on $K',C'$, so that every $(K', C')$-quasi-geodesic is $N_{(K',C')}$-Morse. \label{equiv: Morse}
\item\label{equiv: thin triangles} There exists a $K\geq 1, C \geq 0$ so that $X$ is $(K,C)$-quasi-geodesic and for all $K'\geq K, C' \geq C$ there exists $\delta_{(K',C')} \geq 0$, depending on $K',C'$, so that all $(K', C')$-quasi-geodesic triangles are $\delta_{(K',C')}$-slim.
\item $X$ is large scale geodesic and for all $K \geq 1$ and $C \geq 0$ there exists a $\delta \geq 0$ so that every $(K,C)$-quasi-geodesic triangle is $\delta$-slim. \label{equiv: thin triangles 2}
\end{enumerate}
\end{theorem}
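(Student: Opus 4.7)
My plan is to establish the cycle $(1) \Rightarrow (4) \Rightarrow (3) \Rightarrow (2) \Rightarrow (1)$, relying on Gromov's trivial lemma, the Morse lemma for geodesic hyperbolic spaces (Lemma \ref{Morse1}), and the equivalence of Gromov and Rips hyperbolicity for geodesic spaces (Proposition \ref{RipsGromov}). The bookkeeping step $(4) \Rightarrow (3)$ is immediate from Lemma \ref{CharLSG}, since a large-scale geodesic space is $(K,C)$-quasi-geodesic for suitable $K,C$.

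For $(1) \Rightarrow (4)$, fix a $(K_0, C_0, C_0)$-quasi-isometry $\varphi \colon X \to Y$ to a proper geodesic hyperbolic space $Y$ with quasi-inverse $\bar\varphi$. Then $X$ is large-scale geodesic by QI-invariance of that property (Lemma \ref{CharLSG}). For slimness: given a $(K,C)$-quasi-geodesic triangle $T$ in $X$, the pushforward $\varphi(T)$ is a $(K',C')$-quasi-geodesic triangle in $Y$ with $K', C'$ depending only on $K, C, K_0, C_0$. Since $Y$ is Rips hyperbolic (Proposition \ref{RipsGromov}), geodesic triangles are $\delta_Y$-slim; combining this with the Morse lemma applied to each side of $\varphi(T)$ yields a constant $\delta' = \delta'(K,C)$ such that $\varphi(T)$ is $\delta'$-slim. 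Pulling back via $\bar\varphi$ gives slimness of $T$ with a constant $\delta = \delta(K, C, K_0, C_0)$, establishing $(4)$.

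For $(3) \Rightarrow (2)$, let $\gamma \colon I \to X$ be a $(K',C')$-quasi-geodesic and let $\alpha \colon [a,b] \to X$ be any $(K'',C'')$-quasi-geodesic segment with endpoints $\gamma(s)$ and $\gamma(t)$ on $\gamma$. Together with the restriction $\gamma|_{[s,t]}$ (and, if one wishes, a degenerate third side of length zero), these form a quasi-geodesic triangle whose constants depend only on $K', K'', C', C''$. By the slimness hypothesis, $\alpha$ lies in a bounded neighborhood of $\gamma|_{[s,t]}$, and the size of this neighborhood is a function $N(K'', C'')$ depending only on $K'', C''$ (and on $K', C'$ which are fixed). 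This is exactly the Morse condition for $\gamma$.

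For $(2) \Rightarrow (1)$, note that $X$ is quasi-geodesic and proper, hence large-scale geodesic by Lemma \ref{lsgquasigeodesic}, hence quasi-isometric to a locally finite graph $\Gamma$ with its canonical metric (Lemma \ref{CharLSG}), which is proper and geodesic. To show $\Gamma$ is hyperbolic, take any geodesic triangle $T_\Gamma$ in $\Gamma$; under a quasi-isometry $\psi \colon \Gamma \to X$, it becomes a $(K_1, C_1)$-quasi-geodesic triangle $\psi(T_\Gamma)$ in $X$ with constants depending only on $\psi$. By hypothesis $(2)$, each side of $\psi(T_\Gamma)$ is $N$-Morse for a Morse gauge $N = N_{(K_1, C_1)}$ independent of the triangle, so each side lies in a bounded neighborhood of the union of the other two in $X$. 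Pulling this back through a quasi-inverse of $\psi$ produces a uniform slimness constant for $T_\Gamma$ in $\Gamma$. Hence $\Gamma$ is Rips hyperbolic, so Gromov hyperbolic by Proposition \ref{RipsGromov}, and $X$ is quasi-isometric to the proper geodesic hyperbolic space $\Gamma$.

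The main technical obstacle is the uniformity of constants in the two pullback arguments $(1) \Rightarrow (4)$ and $(2) \Rightarrow (1)$: one must ensure that the Morse gauge / slimness constant depends only on the quasi-geodesic parameters of the triangle and on the fixed QI constants, not on the triangle itself; this is where the hypothesis that the Morse gauge in $(2)$ (respectively the slimness constant in $(3)$) depends only on $K', C'$ is essential, since without this uniformity the pullback to $\Gamma$ would not yield a single slimness constant for all geodesic triangles.
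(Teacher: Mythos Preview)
Your cycle $(1) \Rightarrow (4) \Rightarrow (3) \Rightarrow (2) \Rightarrow (1)$ has a genuine gap in the last implication. In your argument for $(2) \Rightarrow (1)$ you write that, since each side of $\psi(T_\Gamma)$ is $N$-Morse, ``each side lies in a bounded neighborhood of the union of the other two.'' This inference is not justified: the Morse property of a quasi-geodesic $\gamma$ says that quasi-geodesic segments with endpoints \emph{on} $\gamma$ stay close to $\gamma$; it says nothing about $\gamma$ itself staying close to other quasi-geodesics. What you are asserting is precisely the implication $(2) \Rightarrow (3)$, which you have not proved --- you only proved the (easier) converse $(3) \Rightarrow (2)$ via your degenerate-triangle trick.

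The paper closes the cycle in the order $(1) \Rightarrow (2) \Rightarrow (3) \Rightarrow (1)$, and the step $(2) \Rightarrow (3)$ is exactly the missing ingredient. Its proof (Lemma \ref{lem: SH implies thin triangles}) uses a closest-point argument due to Masur--Minsky: given a triangle with vertices $x,y,z$, let $z'$ be a point on $[x,y]$ closest to $z$; one shows that the concatenation $[x,z'] \cup [z',z]$ is a $(3K,2C)$-quasi-geodesic, whence by the Morse property of $[x,z]$ the segment $[x,z']$ lies in a bounded neighborhood of $[x,z]$, and symmetrically for $[z',y]$. This is the genuine content you are missing. Once you have it, your $(2) \Rightarrow (1)$ works (or, more cleanly, you can replace it by $(3) \Rightarrow (1)$ as the paper does, which is the straightforward pullback argument). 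Your degenerate-triangle proof of $(3) \Rightarrow (2)$ is correct and simpler than what one might expect, but it does not help close the loop on its own.
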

\begin{definition}
A proper metric space $(X,d)$ is \emph{Morse hyperbolic}\index{Morse hyperbolic}\index{Morse hyperbolic!space}\index{hyperbolic!Morse} if it satisfies one (hence any) of the four equivalent properties in Theorem \ref{thm:strongly hyperbolic equiv}.
\end{definition}
We record the following consequence for later reference:
\begin{corollary}[Quasi-convex subsets]\label{QCMorseHyp}
Every quasi-convex subset of a geodesic Morse hyperbolic space is Morse hyperbolic.
\end{corollary}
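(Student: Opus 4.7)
My proof strategy splits into two independent verifications matching the two clauses in the definition of a Morse hyperbolic metric space: that $Y$ (endowed with the subspace metric from $X$) is large-scale geodesic, and that $(K,C)$-quasi-geodesic triangles in $Y$ are uniformly slim.

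For the first clause, I would simply chain Remark~\ref{QuasiconvexQuasigeodesic} with Lemma~\ref{lsgquasigeodesic}. Quasi-convexity of $Y$ in the geodesic space $X$ produces, for any two points of $Y$, a geodesic of $X$ between them lying in an $R$-neighborhood of $Y$, which can be shadowed by a uniformly close quasi-geodesic in $Y$; this is exactly the content of Remark~\ref{QuasiconvexQuasigeodesic}. Lemma~\ref{lsgquasigeodesic} then upgrades ``quasi-geodesic'' to ``large-scale geodesic''. This step is essentially a citation.

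The main content is the slim-triangle clause. Fix $K \geq 1$ and $C \geq 0$, and let $\Delta$ be a $(K,C)$-quasi-geodesic triangle in $Y$ with sides $\gamma_1, \gamma_2, \gamma_3$. The crucial—and really the only subtle—observation is that each $\gamma_i$ is automatically a $(K,C)$-quasi-geodesic in the ambient space $X$, because $Y$ carries the subspace metric and thus $d_Y(\gamma_i(s), \gamma_i(t)) = d_X(\gamma_i(s), \gamma_i(t))$ for all $s,t$ in the domain. Consequently $\Delta$ is a $(K,C)$-quasi-geodesic triangle in $X$, and since $X$ is Morse hyperbolic, the definition supplies a constant $\delta = \delta(K,C)$, depending only on $K$ and $C$, for which $\Delta$ is $\delta$-slim in $X$.

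It remains to transfer $X$-slimness back to $Y$-slimness, which is again immediate from the subspace metric: if $p$ lies on $\gamma_i$ and $q$ lies on $\gamma_j \cup \gamma_k$ with $d_X(p,q) \leq \delta$, then $p, q \in Y$ force $d_Y(p,q) = d_X(p,q) \leq \delta$. Hence $\Delta$ is $\delta$-slim in $Y$ with the same constant, and $Y$ meets both clauses of the definition. There is no genuine obstacle; I would flag that the proof hinges on using the subspace metric—had one instead tried to equip $Y$ with an intrinsic path metric, the direction ``quasi-geodesics in $Y$ are quasi-geodesics in $X$'' would fail and slimness would no longer transfer for free, which is why quasi-convexity is the natural hypothesis here rather than, say, a weaker coarse-connectedness assumption.
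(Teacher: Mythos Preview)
Your proof is correct and follows essentially the same route as the paper's: both use Remark~\ref{QuasiconvexQuasigeodesic} (with Lemma~\ref{lsgquasigeodesic}) for the large-scale geodesic clause and then verify the slim-triangle characterization from Theorem~\ref{thm:strongly hyperbolic equiv}. Your version is in fact a touch cleaner, since you observe directly that a $(K,C)$-quasi-geodesic in $Y$ \emph{is} a $(K,C)$-quasi-geodesic in $X$ under the subspace metric, whereas the paper phrases this as ``uniformly close to a quasi-geodesic in $X$''.
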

\begin{proof} It is convenient to work with Definition (3) of a Morse hyperbolic space. Thus let $X$ be geodesic and Morse hyperbolic and let $Y \subset X$ be $C$-quasi-convex. It then follows from Remark \ref{QuasiconvexQuasigeodesic} that $Y$ is quasi-geodesic. In fact, by a similar argument as in the remark one can show that every quasi-geodesic in $Y$ is uniformly close to a quasi-geodesic in $X$, and hence that every quasi-geodesic triangle in $Y$ is at uniformly bounded Hausdorff distance from a quasi-geodesic triangle in $X$. The corollary follows.
\end{proof}

The remainder of this section is devoted to the proof of Theorem \ref{thm:strongly hyperbolic equiv}.

Firstly, recall from Lemma \ref{lsgquasigeodesic} that being a quasi-geodesic metric space is equivalent to being a large scale geodesic metric space. Since for $K' \geq K$ and $C' \geq C$ every $(K,C)$-quasi-geodesic is also a $(K', C')$-quasi-geodesic, we deduce that (3)$\iff$(4). We prefer to work with Condition (3) in the sequel, since it is closer in spirit to Conditions (1) and (2). We are going to show that (\ref{equiv: SH}) $\implies$ (\ref{equiv: Morse}) $\implies$  (\ref{equiv: thin triangles}) $\implies$  (\ref{equiv: SH}). The implication (1)$\implies$(2) follows from the following lemma:
\begin{lemma}[Stability of quasi-geodesics] \label{lem:strong hyp have stable quasi-geodesics}
Assume that $X$ is quasi-isometric to a proper geodesic hyperbolic metric space. Then there exist $K\geq 1, C \geq 0$ such that $X$ is $(K,C)$-quasi-geodesic and for all $K'\geq K, C' \geq C$ there exists a Morse gauge $N_{(K',C')}$, depending on $K',C'$, so that every $(K', C')$-quasi-geodesic is $N_{(K',C')}$-Morse.
\end{lemma}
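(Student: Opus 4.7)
The plan is to transfer both the quasi-geodesic property and the Morse property from the proper geodesic hyperbolic model $Y$ to $X$ via the quasi-isometry, using the Morse lemma (Lemma \ref{Morse1}) in $Y$ as the key input.

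First, I would fix a $(K_0, C_0, C_0)$-quasi-isometry $f \colon X \to Y$ with quasi-inverse $g \colon Y \to X$, where $Y$ is a proper geodesic $\delta$-hyperbolic space. To establish that $X$ is quasi-geodesic, given $x_1, x_2 \in X$, I would take a geodesic $\sigma \colon [0, L] \to Y$ from $f(x_1)$ to $f(x_2)$ and consider its pullback $g \circ \sigma$ in $X$. Composing quasi-isometries yields a $(K_0^2, K_0 C_0 + C_0)$-quasi-isometric embedding, and since $g \circ f$ is close to $\id_X$, the endpoints of $g \circ \sigma$ lie uniformly close to $x_1$ and $x_2$; prepending and appending short segments produces a $(K,C)$-quasi-geodesic in $X$ from $x_1$ to $x_2$, where $K,C$ depend only on $K_0, C_0$. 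An application of Lemma \ref{lsgquasigeodesic} (or its proof) then gives that $X$ is large-scale geodesic, hence quasi-geodesic.

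For the Morse property, fix $K' \geq K$, $C' \geq C$, and let $\gamma \colon I \to X$ be a $(K', C')$-quasi-geodesic. Then $f \circ \gamma$ is a $(K_0 K', K_0 C' + C_0)$-quasi-geodesic in $Y$. Now let $q \colon [a,b] \to X$ be an arbitrary $(L, D)$-quasi-geodesic segment with endpoints on $\gamma$. Then $f \circ q$ is a quasi-geodesic segment in $Y$ with endpoints on $f \circ \gamma$, with constants depending only on $L, D, K_0, C_0$. By Lemma \ref{Morse1} applied to a geodesic in $Y$ joining the endpoints of $f \circ q$, and then the same lemma applied to that geodesic viewed relative to $f \circ \gamma$, the image of $f \circ q$ lies in a uniform Hausdorff neighborhood (of size depending on $L, D, K', C', K_0, C_0, \delta$) of $f \circ \gamma$. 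Pushing this back via $g$ and using that $g \circ f$ is uniformly close to $\id_X$ together with the upper control of $g$, the image of $q$ lies in a uniform neighborhood of $\gamma$. The resulting bound depends only on $(K', C', L, D)$ and the fixed data $(K_0, C_0, \delta)$, so defining $N_{(K',C')}(L,D)$ to be this bound produces a Morse gauge with the required properties.

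The main technical obstacle will be bookkeeping of constants: one must ensure that the Morse gauge $N_{(K',C')}$ depends only on $(K', C')$ (and on the fixed background data), not on the particular quasi-geodesic $\gamma$, and that the final neighborhood estimate obtained by composing $f$, the Morse lemma in $Y$, and $g$ remains uniform. No new geometric idea is needed beyond the standard Morse lemma; everything reduces to careful composition of controls, made possible precisely because $Y$ is geodesic (so Lemma \ref{Morse1} applies) and because $g \circ f \sim \id_X$ with uniform bound.
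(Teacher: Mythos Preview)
Your proposal is correct and follows essentially the same approach as the paper: push $\gamma$ and the test quasi-geodesic forward along the quasi-isometry to the geodesic hyperbolic model $Y$, invoke the Morse lemma there, and pull the resulting neighborhood bound back to $X$. The only cosmetic differences are that the paper cites Lemmas \ref{CharLSG} and \ref{lsgquasigeodesic} directly for the quasi-geodesic property (rather than building the quasi-geodesic by hand via $g\circ\sigma$), and it pulls back using the lower QI inequality for $f$ rather than the quasi-inverse $g$; both are equivalent.
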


\begin{proof} Since $X$ is quasi-isometric to a geodesic metric space, it is large-scale geodesic by Lemma \ref{CharLSG}, hence by Lemma \ref{lsgquasigeodesic} there exist $K\geq 1, C \geq 0$ such that $X$ is $(K,C)$-quasi-geodesic.

Now we fix a $(K_1,C_1)$-quasi-isometry $f\colon X \to Y$ into a proper geodesic $\delta$-hyperbolic metric space. Let $x,y \in X$ and let $\gamma$ be a $(K',C')$-quasi-geodesic joining $x$ and $y$, where $K \leq K', C \leq C'$. We know that $f(\gamma)$ is a $(K'', C'')$-quasi-geodesic with endpoints $f(x)$ and $f(y)$, where $K'', C''$ depend on $K_1,K',C_1,C'$. Since $Y$ is geodesic and $\delta$-hyperbolic, by Lemma \ref{Morse1} we know that there exists a geodesic segment $[f(x), f(y)]$, which is moreover $N$-Morse, for some Morse gauge $N$ which depends only on $\delta$. So we know that $f(\gamma)$ is in the $N'(K'',C'')$-neighborhood of $[f(x), f(y)]$. Indeed, since $Y$ is geodesic, $f(\gamma)$ and $[f(x), f(y)]$ have Hausdorff distance bounded by $M=2N'(K'', 2(K''+C''))+K''+C''$ \cite[Lemma 2.1]{cordes:2016ad}.

Let $\beta$ be a $(k,c)$-quasi-geodesic with endpoints on $\gamma$. We know that $f(\beta)$ is a $(k', c')$-quasi-geodesic with endpoints on $f(\gamma)$, where $k', c'$ depend on $K_1,k, C_1, c$. We also note that $f(\beta)$ is a $(k', c'+2N(K'', C''))$-quasi-geodesic with endpoints on $[f(x), f(y)]$, so it is in the $M'=N'(k', c'+2N(K'', C''))$-neighborhood of $[f(x), f(y)]$. Thus we can conclude that $f(\beta)$ is in the $M+M'$ neighborhood of $f(\gamma)$. It follows that $\beta$ lies in the $K_1(M+M'+C_1)$-neighborhood of $\gamma$. Thus $\gamma$ is $N_{(K',C')}$-Morse, where $N_{(K',C')}=K_1(M+M'+C_1)$.
\end{proof}

The implication (2)$\implies$(3) is provided by the following lemma:
\begin{lemma}[Slim quasi-triangles] \label{lem: SH implies thin triangles}
Assume $X$ is $(K,C)$-quasi-geodesic and there exists a Morse gauge $N_{(K,C)}$, depending on $K,C$, so that every $(K, C)$-quasi-geodesic is $N_{(K,C)}$-Morse. Then there exists $\delta_{(K,C)} \geq 0$, depending on $K,C$, so that all $(K, C)$-quasi-geodesic triangles are $\delta_{(K,C)}$-slim as in Definition \ref{def:rips hyp}.
\end{lemma}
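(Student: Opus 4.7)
The goal is to extract uniform slim\-ness of quasi-geodesic triangles from uniform Morse stability of quasi-geodesics. The first step is to note a ``bigon thinness'' consequence of the hypothesis: if $\alpha_1,\alpha_2$ are two $(K,C)$-quasi-geodesics sharing both endpoints, then each lies in the $N_{(K,C)}(K,C)$-neighborhood of the other. Indeed, each is a $(K,C)$-quasi-geodesic whose endpoints lie on the other, so the Morse property of one applied to the other bounds their Hausdorff distance by $M_0:=N_{(K,C)}(K,C)$. Thus $(K,C)$-quasi-geodesic bigons are uniformly thin.

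Now fix a $(K,C)$-quasi-geodesic triangle with vertices $x,y,z$ and sides $\alpha\colon x\to y$, $\beta\colon y\to z$, $\gamma\colon z\to x$, and let $p\in\alpha$. The plan is to exhibit a $(K',C')$-quasi-geodesic $\sigma\colon x\to y$ (with $K',C'$ depending only on $K,C$ and $N_{(K,C)}$) whose image is contained in the $M_0$-neighborhood of $\gamma\cup\beta$; bigon thinness between $\sigma$ and $\alpha$ will then place every point of $\alpha$ — and in particular $p$ — within $M_0+N_{(K,C)}(K',C')$ of $\gamma\cup\beta$, yielding the required $\delta_{(K,C)}$. The natural candidate for $\sigma$ is the concatenation of a $(K,C)$-quasi-geodesic $\eta\colon x\to z$ with a $(K,C)$-quasi-geodesic $\mu\colon z\to y$: by bigon thinness $\eta$ stays within $M_0$ of $\gamma$ and $\mu$ stays within $M_0$ of $\beta$, so it suffices to verify that $\eta\cdot\mu$ is a quasi-geodesic with controlled constants.

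\textbf{The main obstacle} is precisely this last step. Concatenations of two quasi-geodesics meeting at a common endpoint are not quasi-geodesics in general (as already seen in Euclidean space, which is not Morse hyperbolic), so the argument must genuinely use the Morse hypothesis to rule out backtracking. I would proceed by contradiction: suppose no $(K',C')$ works for the concatenation $\eta\cdot\mu$ as $\eta,\mu$ range over $(K,C)$-quasi-geodesics in $X$. Then one can find points $u\in\eta$ and $v\in\mu$ with $d(u,v)$ arbitrarily small compared to the sum of their $\eta$- and $\mu$-distances to $z$. Now choose a $(K,C)$-quasi-geodesic $\nu$ from $u$ to $v$, necessarily short, and compare the two $(K,C)$-quasi-geodesic bigons from $u$ to $z$ (namely $\eta|_{[u,z]}$ versus $\nu$ concatenated with $\mu|_{[v,z]}$) and from $v$ to $z$; an iterative application of bigon thinness forces $u$ and $v$ both to be within $M_0$ of $z$, bounding the alleged backtracking.

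Once one verifies the concatenation is a $(K',C')$-quasi-geodesic with $K',C'$ depending only on $K,C,N_{(K,C)}$, applying the Morse property of $\alpha$ (a $(K,C)$-quasi-geodesic from $x$ to $y$) to the $(K',C')$-quasi-geodesic $\eta\cdot\mu$ — or, dually, the Morse property of $\eta\cdot\mu$ (as a $(K',C')$-quasi-geodesic) to $\alpha$, combined with symmetry — produces a uniform bound on the Hausdorff distance between $\alpha$ and $\eta\cdot\mu\subset N_{M_0}(\gamma\cup\beta)$. Setting $\delta_{(K,C)}:=2M_0+N_{(K,C)}(K',C')$ gives the desired slimness constant, depending only on $K$, $C$, and $N_{(K,C)}$.
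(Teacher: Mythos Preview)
Your plan has a genuine gap: the concatenation $\eta\cdot\mu$ through $z$ is simply \emph{not} a quasi-geodesic from $x$ to $y$ with constants depending only on $K,C,N_{(K,C)}$, and your contradiction sketch cannot rescue it. Take $X=\mathbb{R}$ (which certainly satisfies the hypothesis), $x=0$, $y=1$, $z=R$ with $R$ large. The concatenation $[0,R]\cdot[R,1]$ has length $2R-1$ joining points at distance $1$, so no uniform $(K',C')$ can work as $R\to\infty$. In your contradiction argument, taking $u\in\eta$ near $0$ and $v\in\mu$ near $1$ gives $d(u,v)$ small while both are far from $z$; the ``bigons'' you form are just the two nearly coincident segments from $u$ (or $v$) to $z$, and bigon thinness says nothing useful here.

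The paper's approach (following Masur--Minsky) sidesteps this by concatenating through a \emph{different} point. Let $z'$ be a point on $[x,y]$ closest to $z$, and concatenate the sub-quasi-geodesic $[x,z']\subset[x,y]$ with a $(K,C)$-quasi-geodesic $[z',z]$. This is a path from $x$ to $z$ (not $x$ to $y$), and the closest-point property gives, for $u\in[x,z']$ and $v\in[z',z]$, the crucial inequality $d(z',v)\le d(u,v)$; a direct computation then shows the concatenation is a $(3K,2C)$-quasi-geodesic. Now apply the Morse property of $[x,z]$ to this $(3K,2C)$-quasi-geodesic with endpoints $x,z$: it lies in the $N_{(K,C)}(3K,2C)$-neighborhood of $[x,z]$, so in particular $[x,z']$ does. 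By the symmetric argument $[z',y]$ lies near $[y,z]$, and hence all of $[x,y]$ lies in the $\delta_{(K,C)}:=N_{(K,C)}(3K,2C)$-neighborhood of $[x,z]\cup[y,z]$. The key idea you were missing is the nearest-point trick, which turns an uncontrolled detour through $z$ into a controlled concatenation from $x$ to $z$.
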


\begin{proof}
We follow closely the proof of Lemma 6.2 in \cite{Masur-MinskyI}. By abuse of notation for any two points $x,y \in X$ we will use $[x,y]$ to denote a $(K,C)$-quasi-geodesic joining $x$ and $y$. Let $x,y,z \in X$. We will show that $[x,y]$ lies in a $\delta$-neighborhood of $[x,z]\cup[y,z]$, for $\delta$ depending on $K,C, N$. 

Let $z'$ be a point on $[x,y]$ closest to $z$. We will show that the concatenation $[x,z'] \cup [z',z]$ is a $(3K, 2C)$-quasi-geodesic. Let $u \in [x,z']$ and $v \in [z',z]$.  We note that $d(z',v)\leq d(u,v)$, because $z'$ is a closest point to $v$. Let the difference in parameters along $[x,z']$ between $u$ and $z'$ be $|b-a|$ and the difference in parameters along $[z', z]$ between $z'$ and $v$ be $|c-d|$.

Putting everything together, we get the following two inequalities:

\begin{equation*}
\begin{split}
d(u,v) & \leq d(u,z')+d(z',v)\\
& \leq K|b-a|+C+K|d-c|+C\\
& = K(|b-a|+|d-c|) +2C
\end{split}
\end{equation*}
and
\begin{equation*}
\begin{split}
|b-a|+|d-c| & \leq Kd(u,z')+KC+Kd(z',v)+KC\\
& \leq K(d(u,v)+d(z',v))+KC+Kd(z',v)+KC\\
& \leq K(2d(u,v))+KC+Kd(u,v)+KC\\
& \leq 3Kd(u,v)+2KC.
\end{split}
\end{equation*}

Now since all $(K,C)$-quasi-geodesics are $N_{(K,C)}$-Morse, we have that $[x,z']\cup[z',z]$ is in the $\delta_{(K,C)}=N_{(K,C)}(3K,2C)$-neighborhood of $[x,z]$ and thus, in particular, $[x,z']$ is. We can repeat this argument substituting $y$ for $x$ and we see that all of $[x,y]$ is in the $\delta_{(K,C)}$-neighborhood of $[x,z] \cup [y,z]$.
\end{proof}

Finally, the following lemma establishes the implication (3)$\implies$(1) and finishes the proof:
\begin{lemma} \label{lem: thin triangles implies SH}
Let $X$ be a proper $(K,C)$-quasi-geodesic metric space with the property that for all $K'\geq K, C' \geq C$ there exists $\delta_{(K',C')} \geq 0$, depending on $K',C'$, so that all $(K', C')$-quasi-geodesic triangles are $\delta_{(K',C')}$-slim as in Definition \ref{def:rips hyp}. Then $X$ is quasi-isometric to a hyperbolic proper geodesic metric space.
\end{lemma}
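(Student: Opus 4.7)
The plan is to realize $X$ as being quasi-isometric to a locally finite (hence proper) geodesic graph $\Gamma$, and then to transfer the slim quasi-geodesic triangle hypothesis from $X$ back to $\Gamma$ to conclude that $\Gamma$ is Rips (hence Gromov) hyperbolic.

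First I would invoke Lemma \ref{lsgquasigeodesic} to reinterpret the $(K,C)$-quasi-geodesic hypothesis as large-scale geodesicity of $X$, and then invoke Lemma \ref{CharLSG}(iv) together with the properness assumption to obtain a locally finite graph $\Gamma$ (with its canonical geodesic metric) and a quasi-isometry $f \colon \Gamma \to X$ with quasi-inverse $\bar f \colon X \to \Gamma$. Since $\Gamma$ is a locally finite graph with its canonical metric, it is automatically a proper geodesic metric space, so it remains only to show that $\Gamma$ is hyperbolic.

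Next I would show that $\Gamma$ is Rips hyperbolic by transferring slimness through the quasi-isometry $f$. Fix a geodesic triangle $T$ in $\Gamma$ with sides $\gamma_1, \gamma_2, \gamma_3$. Each $\gamma_i$ is isometric, hence a $(1,0)$-quasi-geodesic; applying $f$ yields curves $f(\gamma_i)$ which are $(K_1, C_1)$-quasi-geodesic segments in $X$ for constants $(K_1, C_1)$ depending only on the QI constants of $f$, and these form a quasi-geodesic triangle $T' = f(T)$ in $X$. By enlarging $K_1$ and $C_1$ if necessary, I may assume $K_1 \geq K$ and $C_1 \geq C$; the hypothesis then gives a constant $\delta_{(K_1,C_1)}$ such that $T'$ is $\delta_{(K_1,C_1)}$-slim. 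Pulling back through $\bar f$ and using that $\bar f \circ f$ is uniformly close to $\mathrm{id}_\Gamma$, a standard computation with the upper control function of $\bar f$ shows that $T$ itself is $\delta'$-slim for a constant $\delta'$ depending only on $\delta_{(K_1,C_1)}$ and the QI constants. Hence $\Gamma$ is Rips $\delta'$-hyperbolic.

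Finally I would apply Proposition \ref{RipsGromov} to upgrade Rips hyperbolicity of the geodesic space $\Gamma$ to Gromov hyperbolicity. This exhibits $\Gamma$ as a proper geodesic Gromov hyperbolic space quasi-isometric to $X$, which is the required conclusion.

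There is no real obstacle here: the only delicate point is constant bookkeeping when transferring slimness from $X$ back to $\Gamma$ — one must ensure that the parameters $(K_1, C_1)$ produced by pushing geodesics through $f$ fall within the range $(K' \geq K,\, C' \geq C)$ where the hypothesis applies, which is handled simply by increasing the constants.
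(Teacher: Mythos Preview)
Your proposal is correct and follows essentially the same approach as the paper's proof: both pass via Lemma \ref{lsgquasigeodesic} and Lemma \ref{CharLSG} to a proper geodesic space quasi-isometric to $X$, push geodesic triangles forward to quasi-geodesic triangles in $X$, apply the slimness hypothesis there, and then pull the slimness back. You are slightly more explicit than the paper in two places---using part (iv) of Lemma \ref{CharLSG} to get a locally finite graph (guaranteeing properness), and noting that the pushed-forward quasi-geodesic constants must be enlarged to fall within the range $K' \geq K$, $C' \geq C$ where the hypothesis applies---but these are refinements of the same argument rather than a different route.
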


\begin{proof}
Since $X$ is a $(K,C)$-quasi-geodesic metric space, it is large scale geodesic (Lemma \ref{lsgquasigeodesic}) and thus there exists a proper geodesic metric space $Y$ and a $(K',C')$-quasi-isometry $f \colon Y \to X$ (Lemma \ref{CharLSG}). 

Let $x,y,z \in Y$ and let $T$ be a geodesic triangle in $Y$ with vertices $x,y,z$. We note that $T'=f(T)$, is a $(K',C')$-quasi-geodesic triangle. By assumption on $X$ there exists a $\delta_{(K',C')}>0$ so that $T'$ is $\delta_{(K',C')}$-slim. It follows from the quasi-isometric inequality that $T$ is $(K\delta_{(K',C')}+C)$-slim, hence $Y$ is Rips (equivalently, Gromov) hyperbolic.
\end{proof}

\section{Boundaries of Gromov and Morse hyperbolic spaces}\label{SecGromovBoundaries}

From now on let $(X,d)$ be a Gromov hyperbolic space (not necessarily geodesic) and let $o \in X$ be a basepoint. A sequence of points $\set{x_i}\subset X$ \emph{converges to infinity}\index{convergence to $\infty$} if 
\[
\lim_{i,j \to \infty} \gp{x_i}{x_j}{o} = \infty.
\]
We say that two sequences $(x_i)$ and $(y_i)$ that converge to infinity are equivalent if 
\[
\lim_{i \to \infty} \gp{x_i}{y_i}{o} = \infty.
\] 
One can check that neither of these notions depend on the choice of basepoint.

\begin{definition} If $(X,d)$ is a Gromov hyperbolic metric space, then its \emph{sequential Gromov boundary}\index{Gromov boundary!sequential}\index{sequential Gromov boundary} $\partial_s X$ is defined as the set of equivalence classes of sequences converging to infinity.
\end{definition}

To topologize the set $\overline{X} \coloneqq  X \cup \partial_s X$, one observes first that the Gromov product can be extended to $\overline{X}$ as follows: If $\xi, \eta \in \partial_s X$, then we set
\[ 
\gp{\xi}{\eta}{o} \coloneqq  \inf \ \liminf_{i,j\to\infty}\set{\gp{x_i}{y_j}{o}},
\]
where the infimum is taken over all sequences $(x_i)$ and $(y_j)$ in $X$ representing $\xi$ and $\eta$ respectively. If $\xi \in \partial_s X$ and $y\in X$, then 
\[
\gp{y}{\xi}{o} = \gp{\xi}{y}{o} = \inf\ \liminf_{i \to \infty} \gp{x_i}{y}{o},
\]
where the infimum is again taken over all sequences $(x_i)$ representing $\xi$. Then the \emph{Gromov topology}\index{Gromov topology} on $\overline{X}$ is the unique topology which restricts to the metric topology on $X$ and such that a neighborhood basis of $\xi \in \partial_s X$ is given by the sets
\[
\mathcal{U}_R(\xi) = \setcon{x\in \overline{X}}{\gp{\xi}{x}{o} > R},
\]
as $R$ ranges through $(0, \infty)$. It turns out that this topology is independent of the choice of basepoint $o$. From now on we will always consider $\overline{X}$ as a topological space with respect to the Gromov topology.
\begin{lemma}[Gromov compactification of a proper hyperbolic space, {\cite[Chapter 7]{GhysdelaHarpe}}] The subset $X \subset \overline{X}$ is open and hence $\partial_s X$ is closed in $\overline{X}$. If $(X,d)$ is proper, then $\overline{X}$ and $\partial_s X$ are compact.
\end{lemma}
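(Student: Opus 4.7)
My plan is to handle the three assertions of the lemma separately: openness of $X$ in $\overline{X}$, closedness of $\partial_s X$ (which is immediate from the first), and, assuming properness, compactness of $\overline{X}$ (from which compactness of $\partial_s X$ follows since a closed subset of a compact space is compact).

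Openness of $X$ is essentially definitional. Unraveling the construction of the Gromov topology described just before the lemma: a set $U \subset \overline{X}$ is open if and only if $U \cap X$ is open in the metric topology of $X$, and for every boundary point $\xi \in U \cap \partial_s X$ there exists $R > 0$ with $\mathcal{U}_R(\xi) \subset U$. Taking $U = X$, the first condition holds trivially and the second vacuously, so $X$ is open.

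For compactness of $\overline{X}$ when $(X,d)$ is proper, I would prove sequential compactness. Given $(z_n) \subset \overline{X}$, the first step is to reduce to a sequence in $X$: set $x_n := z_n$ when $z_n \in X$, and otherwise choose $x_n$ from a representative sequence of $z_n$ with $d(o, x_n) \geq n$ and $\gp{x_n}{z_n}{o} \geq n$. The four-point inequality $\gp{x}{y}{o} \geq \min\{\gp{x}{z}{o},\gp{z}{y}{o}\} - \delta$ ensures that any Gromov-limit of $(x_n)$ is also a limit of $(z_n)$. Either $(x_n)$ has a subsequence bounded in $d$ — in which case properness produces a metric-convergent subsequence with limit in $X$, which is also a limit in the Gromov topology — or, after passing to a subsequence, $d(o, x_n) \to \infty$.

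The main obstacle is the second case: from a sequence $(x_n)$ in a proper Gromov hyperbolic space with $d(o, x_n) \to \infty$, extract a subsequence $(x_{n_k})$ converging to infinity, i.e., with $\gp{x_{n_k}}{x_{n_l}}{o} \to \infty$. The approach combines an infinite Ramsey argument on the Gromov products $\phi(i,j) := \gp{x_i}{x_j}{o}$ with the four-point condition. For each threshold $M \in \mathbb{N}$ one two-colors pairs $\{i,j\}$ by whether $\phi(i,j) \geq M$; Ramsey's theorem yields an infinite monochromatic subset, and a diagonal extraction over $M$ produces either a subsequence with $\phi \to \infty$ (the desired outcome) or reduces to an infinite subset on which $\phi$ is bounded. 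The latter scenario is ruled out by combining the four-point condition with properness: bounded Gromov products among $x_n$ with $d(o,x_n) \to \infty$ can be used, after iterated extraction, to force the emergence of a subsubsequence along which $\phi \to \infty$, since proper hyperbolic balls only admit finitely many coarsely distinct ``escape directions''. Once $(x_{n_k})$ is found, the class $\xi := [(x_{n_k})] \in \partial_s X$ is well-defined and $x_{n_k} \to \xi$; the lower bounds $\gp{x_n}{z_n}{o} \geq n$ together with one more application of the four-point condition give $z_{n_k} \to \xi$, completing the proof.
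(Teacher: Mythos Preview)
The paper states this lemma without proof (the \qed closes the statement itself), so there is nothing to compare against; one can only check whether your argument stands on its own. Your treatment of openness of $X$ and the reduction from an arbitrary sequence in $\overline{X}$ to a sequence $(x_n)$ in $X$ are fine. The gap is in the crucial step: showing that if $(x_n)\subset X$ has $d(o,x_n)\to\infty$, then some subsequence converges to infinity. Your Ramsey dichotomy is correct, but the second branch---an infinite subset on which all Gromov products $\gp{x_i}{x_j}{o}$ stay below some $M$---is dismissed with the phrase ``proper hyperbolic balls only admit finitely many coarsely distinct escape directions.'' This is not an argument, and it cannot be turned into one, because the lemma as literally stated (proper Gromov hyperbolic, \emph{not} assumed geodesic) is false.

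A counterexample: take $X=\mathbb{N}$ with $d(m,n)=m+n$ for $m\neq n$. Closed balls are finite, so $X$ is proper; for four distinct points the three pairwise distance-sums in the four-point inequality are all equal, so $X$ is $0$-hyperbolic; and with basepoint $o=1$ one computes $\gp{m}{n}{o}=1$ for all $m,n\neq 1$, so no sequence converges to infinity, $\partial_s X=\emptyset$, and $\overline{X}=X$ is infinite discrete, hence not compact. The standard result (e.g.\ Bridson--Haefliger III.H.3.7), which is all the paper actually uses, assumes $X$ is \emph{geodesic}: one then takes geodesic segments $[o,x_n]$, extracts a limiting ray by properness and Arzel\`a--Ascoli, and the corresponding subsequence of $(x_n)$ converges to the ray's endpoint. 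Your ``escape directions'' intuition is gesturing at exactly this, but the geodesics are doing the real work.
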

Thus if $X$ is a proper hyperbolic space (not necessarily geodesic), then $\overline{X}$ is a compactification of $X$, i.e.\ a compact space which contains $X$ as a dense open subset.

Now let $X$ be a \emph{proper geodesic} hyperbolic space. In this case, the Gromov compactification $\overline{X}$ admits the following more explicit description (see \cite[Section III.H.3]{bridson-haefliger}: Recall that a (quasi-)isometric embedding $c: [0, \infty) \to X$ is called a \emph{(quasi-)geodesic ray} in $X$ and that two quasi-geodesic rays $c, c'$ in $X$ are \emph{asymptotic}\index{asymptotic quasi-geodesic rays} provided
\[
\sup\{ d(c(t), c'(t))\mid t \geq 0\} < \infty.
\]
This property is actually equivalent to finite Hausdorff distance of their images, hence independent of the chosen parametrization. Denote by $\partial_r X$ the collection of all asymptoticity classes of quasi-geodesic rays in $X$. Given a quasi-geodesic ray $c$, we denote by $c(\infty)$ the associated class in $\partial_r X$. 

By {\cite[Lemma III.H.3.1]{bridson-haefliger}}, every asymptoticity class admits a representative $c_o$ which is a geodesic ray emanating from $o$. Thus,
\[
\partial_r X = \{c(\infty) \mid c: [0, \infty) \to X \text{ geodesic ray}, c(0) = o\}.
\] 
This is sometimes taken as the definition of $\partial_rX$, but we prefer our original definition, which has better functoriality properties. 
\begin{lemma}[{\cite[Lemma III.H.3.13]{bridson-haefliger}}]\label{GromovBoundarieModels} If $c$ is a quasi-geodesic ray in a proper geodesic hyperbolic space $X$, then the sequence $(c(n))_{n \in \mathbb N}$ converges to infinity and there is a well-defined bijection $\partial_r X \to \partial_s X$ given by $c(\infty) \mapsto [(c(n))_{n \in \mathbb N}]$.\qed
\end{lemma}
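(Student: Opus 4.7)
The plan is to reduce everything to the case of geodesic rays emanating from the basepoint $o$, since by the already-cited fact \cite[Lemma III.H.3.1]{bridson-haefliger} (which the excerpt invokes right before the statement) every asymptoticity class $c(\infty) \in \partial_r X$ contains such a representative $c_0$. Because asymptotic quasi-geodesic rays are at finite Hausdorff distance, the Morse lemma (Lemma \ref{Morse1}) ensures that the sequences $(c(n))$ and $(c_0(n))$ differ by uniformly bounded amounts, so every claim about $(c(n))$ reduces to one about $(c_0(n))$.

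First I would verify that $(c_0(n))$ converges to infinity: for a geodesic ray emanating from $o$ one has $d(o,c_0(n)) = n$ and $d(c_0(n),c_0(m)) = |n-m|$, so $\gp{c_0(n)}{c_0(m)}{o} = \min(n,m) \to \infty$. For well-definedness, if $c_0, c_0'$ emanate from $o$ and are asymptotic with $\sup_t d(c_0(t), c_0'(t)) \leq M$, then $\gp{c_0(n)}{c_0'(n)}{o} \geq n - M/2 \to \infty$, so they define the same sequential class. Thus the map $c(\infty) \mapsto [(c(n))]$ is well-defined on $\partial_r X$.

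For injectivity, suppose $c_0, c_0'$ are geodesic rays from $o$ with $[(c_0(n))] = [(c_0'(n))]$. The Gromov product gives $n - \tfrac{1}{2} d(c_0(n), c_0'(n)) \to \infty$, and the standard hyperbolicity argument (thin ideal triangles, or $\delta$-slimness applied to the triangles $[o,c_0(n)] \cup [c_0(n),c_0'(n)] \cup [c_0'(n),o]$) upgrades this to a uniform bound $d(c_0(t), c_0'(t)) \leq 2\delta$ for all $t$, so $c_0$ and $c_0'$ are asymptotic and $c_0(\infty) = c_0'(\infty)$. For surjectivity, given a sequence $(x_n)$ converging to infinity, $d(o, x_n) \to \infty$ because $\gp{x_n}{x_n}{o} = d(o,x_n)$. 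Choose geodesic segments $\gamma_n : [0, d(o,x_n)] \to X$ from $o$ to $x_n$; by the Arzelà--Ascoli Lemma \ref{ArzelaAscoli1}(ii), a subsequence converges uniformly on compact sets to a geodesic ray $c_0 : [0,\infty) \to X$ emanating from $o$. Using the definition of the Gromov product together with $\delta$-hyperbolicity one checks $\liminf_n \gp{x_n}{c_0(n)}{o} = \infty$, so $[(x_n)] = [(c_0(n))]$ in $\partial_s X$.

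The most delicate step is the surjectivity, specifically verifying $(x_n) \sim (c_0(n))$: the subtlety is that the Arzelà--Ascoli limit is obtained only along a subsequence, whereas we need an equivalence of the entire sequences. The clean way is to argue that along the convergent subsequence $(x_{n_k})$ one has $\gp{x_{n_k}}{c_0(m)}{o} \geq \min(m, d(o, x_{n_k})) - O(\delta)$ for $m$ fixed and $k$ large (a standard slim-triangle estimate), which already shows $\liminf_k \gp{x_{n_k}}{c_0(m)}{o} \to \infty$ as $m \to \infty$; then because $(x_n)$ converges to infinity, the full sequence is forced to be equivalent to the subsequence, and equivalence of sequential limits is transitive. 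Once this hyperbolic triangle estimate is set up carefully, everything else is routine bookkeeping.
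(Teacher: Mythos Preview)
The paper gives no proof of this lemma: it is stated with a terminal \qed\ and attributed to \cite[Lemma III.H.3.13]{bridson-haefliger}, so there is nothing to compare against beyond the standard literature argument. Your outline is essentially that standard argument, and the overall structure---reduce to geodesic rays from $o$ via \cite[III.H.3.1]{bridson-haefliger}, compute Gromov products directly for well-definedness, use slim triangles for injectivity, and use Arzel\`a--Ascoli plus a hyperbolicity estimate for surjectivity---is correct.

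Two small points of imprecision. First, the appeal to Lemma~\ref{Morse1} in your opening paragraph is misplaced: that lemma is about quasi-geodesic \emph{segments}, and in any case what you actually need is just the definition of asymptoticity (finite Hausdorff distance of images), not the Morse lemma. The sequences $(c(n))$ and $(c_0(n))$ need not be close at \emph{matching} parameters, but each $c(n)$ lies within a fixed constant of some $c_0(m_n)$ with $m_n\to\infty$, and that suffices. Second, in the injectivity step the bound ``$d(c_0(t),c_0'(t))\leq 2\delta$'' is not what the slim-triangle argument actually yields; the correct statement is that for $t\leq \gp{c_0(n)}{c_0'(n)}{o}$ one gets $d(c_0(t),c_0'(t))\leq C\delta$ for a universal constant $C$ (typically $4\delta$ or $8\delta$ depending on conventions), and then one lets $n\to\infty$. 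Neither point affects the validity of your argument.
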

In view of this lemma we refer to $\partial_r X$ as the \emph{ray model}\index{Gromov boundary!ray model of} of the Gromov boundary of $X$. We will often identify $\partial_r X$ and $\partial_s X$ via the bijection above, and denote either of these spaces simply by $\partial X$. In particular, we will topologize $\partial_r X$ via this identification. Similarly, we will identify $\overline{X}^{(r)}\coloneqq  X \cup \partial_r X$ and $\overline{X}$, and use this identification to topologize $\overline{X}^{(r)}$. The resulting topology admits the following more explicit description:

\begin{lemma}[{\cite[Lemma III.H.3.6]{bridson-haefliger}}]\label{GromovBoundaryNeighborhoods} Assume that $X$ is proper, geodesic and $\delta$-Rips hyperbolic and let $k>2\delta$. Let $c_0:[0, \infty) \to X$ be a geodesic ray with $c_0(0) = o$ and for every $n \in \mathbb N$ let
\begin{equation}\label{GromovBoundaryNeighborhoods1}
V_n(c_0) \coloneqq  \{c(\infty) \mid  c: [0, \infty) \to X \text{ geodesic ray}, c(0) = o, d(c(n), c_0(n))<k\}.
\end{equation}
Then $\{V_n(c_0) \mid n \in \mathbb N\}$ is a fundamental system of (not necessarily open) neighborhoods of $c(\infty)$ in $\overline{X}^{(r)}$.
\end{lemma}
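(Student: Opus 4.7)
The plan is to compare the sets $V_n(c_0)$ with the standard Gromov-topological neighborhoods $\mathcal{U}_R(c_0(\infty)) = \{y \in \overline{X} : (y\mid c_0(\infty))_o > R\}$, which form a neighborhood basis of $c_0(\infty)$ by construction of the topology. Two implications will together establish that $\{V_n(c_0)\}_{n \in \mathbb N}$ is a fundamental system of neighborhoods: given $R$ there must exist $n$ with $V_n(c_0) \subset \mathcal{U}_R(c_0(\infty))$, and given $n$ there must exist $R$ with $\mathcal{U}_R(c_0(\infty)) \cap \overline{X}^{(r)} \subset V_n(c_0)$. (Interior points of $X$ in the latter intersection are witnessed by a geodesic segment from $o$ in place of a ray.) The key computational ingredients are: (i) for a geodesic ray $c$ from $o$, one has $(c(i)\mid c(j))_o = \min(i,j)$ and hence $(c(n)\mid c_0(n))_o = n - \tfrac12 d(c(n), c_0(n))$; (ii) the Gromov four-point inequality $(x\mid z)_o \geq \min((x\mid y)_o, (y\mid z)_o) - \delta'$ (valid since $\delta$-Rips hyperbolicity implies $\delta'$-Gromov hyperbolicity for a constant $\delta'$ depending only on $\delta$, cf. Proposition \ref{RipsGromov}); and (iii) the slim triangle property of geodesic triangles.

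\textbf{Direction 1 ($V_n \subset \mathcal U_R$).} Let $\xi = c(\infty) \in V_n(c_0)$, witnessed by a geodesic ray $c$ from $o$ with $d(c(n), c_0(n)) < k$. Then $(c(n)\mid c_0(n))_o > n - k/2$. For $i, j \geq n$ we have $(c(i)\mid c(n))_o = n$ and $(c_0(n)\mid c_0(j))_o = n$, so iterated application of the four-point inequality yields $(c(i)\mid c_0(j))_o \geq n - k/2 - 2\delta'$. Taking the liminf as $i, j \to \infty$ and then the infimum over sequence representatives gives $(\xi \mid c_0(\infty))_o \geq n - k/2 - 2\delta'$. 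Consequently, choosing $n > R + k/2 + 2\delta'$ forces $\xi \in \mathcal{U}_R(c_0(\infty))$.

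\textbf{Direction 2 ($\mathcal U_R \subset V_n$).} Let $\xi \in \mathcal{U}_R(c_0(\infty)) \cap \partial_r X$. By properness and the Arzel\`a--Ascoli argument of Lemma \ref{ArzelaAscoli1}, choose a geodesic ray $c$ from $o$ with $c(\infty) = \xi$. The hypothesis $(\xi\mid c_0(\infty))_o > R$ forces, along the diagonal sequence, $(c(M)\mid c_0(M))_o > R - 1$ for all sufficiently large $M$, equivalently $d(c(M), c_0(M)) < 2M - 2R + 2$. Now apply the $\delta$-slim triangle property to a geodesic triangle with vertices $o$, $c(M)$, $c_0(M)$: for any $t$ on the side $c([0,M])$ with $M - t \geq \delta + d(c(M), c_0(M))$, the point $c(t)$ lies within $\delta$ of $c_0([0,M])$, say within $\delta$ of $c_0(s)$, and comparing distances from $o$ forces $|s - t| \leq \delta$, hence $d(c(t), c_0(t)) \leq 2\delta$. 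Specializing to $t = n$ requires $M - n \geq \delta + (2M - 2R + 2)$, i.e.\ $R \geq (M + n + \delta + 2)/2$; fix $M$ as the threshold above which the liminf estimate is valid, and then take $R$ larger than this bound. Since $k > 2\delta$, we conclude $d(c(n), c_0(n)) \leq 2\delta < k$, so $\xi \in V_n(c_0)$.

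\textbf{Main obstacle.} The delicate point in Direction 2 is the coupling between the two large parameters $M$ (governing how far out we must go for the liminf estimate to kick in) and $R$ (controlling how close $\xi$ is to $c_0(\infty)$), together with the somewhat awkward $\inf \liminf$ definition of the Gromov product between boundary points when one wants to extract information about specific diagonal sequences along geodesic rays. Once $M$ is chosen first and $R$ calibrated accordingly, the slim-triangle fellow-travelling yields a bound of $2\delta$, and the hypothesis $k > 2\delta$ is precisely what is needed at the final step; with a weaker constant the conclusion would fail. The argument otherwise follows the standard proof of \cite[Lemma III.H.3.6]{bridson-haefliger}.
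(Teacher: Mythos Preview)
The paper does not prove this lemma; it simply cites \cite[Lemma III.H.3.6]{bridson-haefliger} and ends with \qed. So there is no ``paper's proof'' to compare against, and your proposal is an attempt at the standard Bridson--Haefliger argument.

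Direction~1 is correct. Direction~2, however, contains a genuine circularity. You derive the requirement $R > (M + n + \delta + 2)/2$ and then say ``fix $M$ as the threshold above which the liminf estimate is valid, and then take $R$ larger than this bound.'' But $M$ is the threshold \emph{for the particular ray $c$ representing the particular boundary point $\xi$}, whereas $R$ must be chosen uniformly, depending only on $n$, before $\xi$ (and hence $c$, and hence $M$) is given. As stated, different $\xi$'s could require arbitrarily large $M$'s and hence arbitrarily large $R$'s.

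The underlying problem is that your slim-triangle estimate is too crude. You bound ``$c(t)$ is $\delta$-close to the side $[c(M),c_0(M)]$'' by $M - t \leq \delta + d(c(M),c_0(M))$, which forces the bad dependence on $M$. The sharper (and standard) estimate is: if $c(t)$ is $\delta$-close to a point $p$ on $[c(M),c_0(M)]$, then $d(o,p) \geq M - d(c(M),p)$ and $d(o,p) \geq M - d(c_0(M),p)$, so adding gives $d(o,p) \geq M - \tfrac12 d(c(M),c_0(M)) = (c(M)\mid c_0(M))_o$, whence $t \geq (c(M)\mid c_0(M))_o - \delta$. Contrapositively, for $t < (c(M)\mid c_0(M))_o - \delta$ one gets $d(c(t),c_0(t)) \leq 2\delta$ exactly as you argue. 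Since $(c(M)\mid c_0(M))_o$ is non-decreasing in $M$ (as you can check directly from the formula $(c(i)\mid c_0(j))_o = \tfrac12(i+j-d(c(i),c_0(j)))$) and eventually exceeds $R$, it suffices to choose $R > n + \delta$, a bound independent of $M$ and of $\xi$. With this correction the argument goes through and the hypothesis $k > 2\delta$ is used exactly where you indicate.
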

Now let $X$, $X'$ be proper geodesic hyperbolic metric spaces and let $f: X \to X'$ be a quasi-isometry. If $c$ is a quasi-geodesic ray in $X$, then $f \circ c$ is a quasi-geodesic ray in $X'$, and asymptoticity is preserved. Thus $f$ induces a map
\[
\partial_r f: \partial_r X \to \partial_r X', \quad c(\infty) \mapsto (f\circ c)(\infty), 
\]
and hence a map $\overline{f}^{(r)}: \overline{X}^{(r)} \to {\overline{X'}}^{(r)}$. It follows from Lemma \ref{GromovBoundaryNeighborhoods} that $\partial_rf$ is continuous (see {\cite[Thm.\ III.H.3.9]{bridson-haefliger}}), even if $f$ itself is not continuous (and hence $\overline{f}^{(r)}$ is not continuous). Using the identification from Lemma \ref{GromovBoundarieModels} we also obtain maps $\overline{f}: \overline{X} \to \overline{X'}$ and $\partial_s f: \partial_s X \to \partial_s X'$, the latter of which is continuous.
\begin{corollary}[Boundary extensions of quasi-isometries] \label{cor: boundary qi-inv}
If $X$, $X'$ are proper geodesic hyperbolic metric spaces and $f: X \to X'$ is a quasi-isometry, then $\partial_r f: \partial_r X \to \partial_r X'$ is a homeomorphism. In particular, if $X$ and $X'$ are quasi-isometric, then $\partial_r X$ and $\partial_r X'$ (and hence $\partial_s X$ and $\partial_s X'$) are homeomorphic.\qed
\end{corollary}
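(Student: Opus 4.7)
The plan is to exploit the functoriality of the construction $f \mapsto \partial_r f$ together with the existence of a quasi-inverse. Specifically, since $f \colon X \to X'$ is a quasi-isometry, it admits a quasi-inverse $g \colon X' \to X$, so that $g \circ f$ is at bounded distance from $\mathrm{id}_X$ and $f \circ g$ is at bounded distance from $\mathrm{id}_{X'}$. The continuity of $\partial_r f$ has already been recorded in the paragraph preceding the corollary (via Lemma \ref{GromovBoundaryNeighborhoods}), and the same argument applied to $g$ shows that $\partial_r g$ is continuous as well. It therefore suffices to check that $\partial_r g$ is a two-sided inverse to $\partial_r f$.

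For this, I would establish the following elementary fact: if $c, c' \colon [0, \infty) \to X$ are quasi-geodesic rays whose images lie at finite Hausdorff distance, then $c(\infty) = c'(\infty)$ in $\partial_r X$. This is immediate from the definition of the ray model, since asymptoticity of quasi-geodesic rays is precisely equivalence under finite Hausdorff distance of images (using the reparametrization from $[0,\infty)$ to reduce finite Hausdorff distance to uniform distance, or equivalently appealing directly to the definition given in the excerpt). Given this, take any quasi-geodesic ray $c$ in $X$; then $f \circ c$ is a quasi-geodesic ray in $X'$ (quasi-isometries carry quasi-geodesics to quasi-geodesics, with controlled constants), and hence $g \circ f \circ c$ is a quasi-geodesic ray in $X$. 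But $g \circ f$ is uniformly close to $\mathrm{id}_X$, so $g \circ f \circ c$ has bounded distance from $c$, giving
\[
(\partial_r g \circ \partial_r f)(c(\infty)) \;=\; (g \circ f \circ c)(\infty) \;=\; c(\infty).
\]
The symmetric computation with the roles of $X$ and $X'$ (and of $f$ and $g$) reversed yields $\partial_r f \circ \partial_r g = \mathrm{id}_{\partial_r X'}$.

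Combining the three ingredients, $\partial_r f$ is a continuous bijection with continuous inverse $\partial_r g$, hence a homeomorphism. The ``in particular'' clause follows formally: if $X$ and $X'$ are quasi-isometric, pick any quasi-isometry $f$ between them to obtain the homeomorphism $\partial_r X \cong \partial_r X'$, and then the homeomorphism $\partial_s X \cong \partial_s X'$ is obtained by transporting via the canonical bijections $\partial_r X \cong \partial_s X$ and $\partial_r X' \cong \partial_s X'$ supplied by Lemma \ref{GromovBoundarieModels}.

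I do not anticipate a serious obstacle here; the only point that requires a little care is verifying that $g \circ f \circ c$ is genuinely a quasi-geodesic ray (as opposed to merely a coarse map), but this follows at once from the fact that compositions of quasi-isometric embeddings are quasi-isometric embeddings with controlled constants, as recorded in Remark \ref{rem:qi  facts}. Everything else reduces to unraveling the definition of asymptoticity in $\partial_r X$.
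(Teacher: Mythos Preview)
Your proposal is correct and is exactly the standard argument the paper has in mind: the corollary is stated with a \qed and no explicit proof, precisely because the preceding paragraph has already established continuity of $\partial_r f$ for any quasi-isometry $f$, and the homeomorphism conclusion is then immediate from functoriality applied to a quasi-inverse, just as you wrote.
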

\begin{remark} While the extension $\overline{f}$ of a quasi-isometric embedding $f: X \to X'$ does not need to be continuous on $\overline{X}$, it is still continuous at infinity in the following sense: If $x_n$ converges to infinity in $X$, then $(f(x_n))$ converges to infinity in $X'$, and $\lim f(x_n) = \partial f(\lim x_n)$. In particular, if $X$ is discrete, then $\overline{f}$ is continuous.
\end{remark}
We now use quasi-isometry invariance of Gromov boundaries of proper geodesic Gromov hyperbolic spaces to define Gromov boundaries for Morse hyperbolic spaces. Thus for the remainder of this section, $X$ denotes a Morse hyperbolic space. We denote by $[X]$ the quasi-isometry class of $X$ and by $[X]_{\mathrm{geod}} \subset [X]$ the subclass of \emph{proper geodesic} representatives of $[X]$. By Theorem \ref{thm:strongly hyperbolic equiv} this subclass is non-empty and by Remark \ref{MorseGromov} every $X_1 \in [X]_{\mathrm{geod}}$ is a proper geodesic Gromov- (hence Rips-)hyperbolic metric space. 

\begin{definition} \label{def: Morse hyperbolic boundary}
If $X_1 \in [X]_{\mathrm{geod}}$ then the Gromov boundary $\partial X_1$ of $X_1$ is called a \emph{Gromov boundary} of the Morse hyperbolic space $X$.\index{Gromov boundary!of Morse hyperbolic space}
\end{definition}

\begin{remark}[Uniqueness properties of Gromov boundaries] By definition, a Morse hyperbolic space has many different Gromov boundaries. However, if $X_1, X_2 \in [X]_{\mathrm{geod}}$, then there exists a quasi-isometry $f: X_1 \to X_2$ and by Corollary \ref{cor: boundary qi-inv} this map extends to a boundary map $\partial f: \partial X_1 \to \partial X_2$. Any property preserved by such boundary maps is thus an invariant of $X$.
\end{remark}
\begin{example}[Topological boundary invariants]\label{BoundaryInvariantMorse}
By Corollary \ref{cor: boundary qi-inv}, all Gromov boundaries of $X$ have the same (compact, metrizable) homeomorphism type, which we denote by $\partial [X]$. If $\mathcal I$ is any homeomorphism-invariant of compact metrizable spaces, then we may define
\[
\mathcal I(\partial[X]) \coloneqq  \mathcal I(\partial X_1), \quad \text{ where }X_1 \in [X].
\]
We then refer to $\mathcal I(X)$ as a \emph{topological boundary invariant}\index{topological boundary invariant} of $X$. A typical example of such an invariant is topological dimension (cf.\ Definition \ref{Def-dim}): we may thus define the \emph{topological dimension of the boundary}, $\dim(\partial[X])$, for every Morse hyperbolic space $X$. We will sometimes abuse notation and simply denote this invariant by $\dim \partial X$.
\end{example}

\section{Visual metrics}\label{sec:VisMet}
In this section, $(X,d)$ denotes a proper geodesic hyperbolic space. Given a basepoint $o \in X$ and a constant  $a>1$, we define a map
\[
\rho_{o,a}: \partial_s X \times \partial_s X \to [0, \infty), \quad \rho_{o,a}(\xi, \xi') \coloneqq   a^{-\gp{\xi}{\xi'}{o}}.
\]
This map is then a \emph{$K$-quasi-metric}\index{quasi-metric} for some $K \geq 1$ in the sense that it is symmetric, with $\rho_{o,a}(\xi, \xi') = 0$ iff $\xi = \xi'$, and for all $\xi, \xi', \xi'' \in \partial_s X$ we have
\[
\rho_{o,a}(\xi, \xi'') \leq K \max\{\rho_{o,a}(\xi, \xi'), \; \rho_{o,a}(\xi', \xi'')\}.
\] 
The dependence on the parameters is as follows \cite[Rem.\ 2.2.4]{BuyaloSchroeder}: For all $a, a'>1$ and $o, o' \in X$ we have
\begin{equation}\label{ParametersVisualMetrics1}
a^{-d(o,o')} \leq \frac{\rho_{o,a}(\xi, \xi')}{\rho_{o',a}(\xi, \xi')} \leq a^{d(o,o')} \qand  \rho_{o,a'}(\xi, \xi') = \rho_{o,a}(\xi, \xi')^{\frac{\log a'}{\log a}}.
\end{equation}

\begin{definition}\label{DefVisual} A metric $d$ on $\partial_s X$ is called \emph{visual}\index{metric!visual}\index{visual metric} with basepoint $o \in X$ and parameters $a>1$ and $c\geq 1$ if it is bi-Lipschitz equivalent to $\rho_{o,a}$, i.e., for all $\xi, \xi'\in \partial_s X$ we have
\begin{equation}\label{VisualInequalities}
 c^{-1}  a^{-\gp{\xi}{\xi'}{o}} = c^{-1}\rho_{o,a}(\xi, \xi') \leq d(\xi, \xi') \leq c\rho_{o,a}(\xi, \xi')=c a^{-\gp{\xi}{\xi'}{o}}.
\end{equation}
These inequalities are then called the \emph{visual inequalities}\index{visual inequalities} for $d$.
\end{definition}
In the geodesic case, visual metrics on $\partial_r X$ are defined analogously, using the identification $\partial_rX \cong \partial_sX$. By \cite[Thm. 2.2.7]{BuyaloSchroeder}, every hyperbolic space (geodesic or not) admits a visual metric on its Gromov boundary. By \eqref{ParametersVisualMetrics1} any two visual metrics with the same parameter $a$ are bi-Lipschitz. More generally, if $d$ and $d'$ are visual metrics with respective parameters $a$ and $a'$, then $d'$ is bi-Lipschitz to $d^\alpha$, where $\alpha \coloneqq  \frac{\log a'}{\log a}$, i.e. there exists $c\geq 1$ such that
\begin{equation}\label{VisualQuasiconformal}
c^{-1} d(\xi, \xi')^\alpha \leq d'(\xi, \xi') \leq c \, d(\xi, \xi')^\alpha.
\end{equation}
One can show that any visual metric on $\partial_s X$ induces the given topology {\cite[Proposition III.H.3.21]{bridson-haefliger}}. We call $a > 1$ a \emph{visuality parameter}\index{visuality parameter} for $X$ if there exists a visual metric on $\partial_s X$ with parameters $(a, c)$ for some $c\geq 1$. 

\begin{lemma}\label{VisualityConnected} The set of visuality parameters is a non-empty connected set, hence either $(1, \infty)$ or of the form $(1, a_{\max})$ or of the form $(1, a_{\max}]$, for some $a_{\rm max}>1$.
\end{lemma}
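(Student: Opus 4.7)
The plan is to show that the set of visuality parameters is downward closed in $(1,\infty)$ and unbounded below towards $1$, which together with non-emptiness will force it to be an interval with left endpoint $1$ (open), and hence of one of the three stated forms depending on whether the supremum is $\infty$ or an attained/non-attained real number $a_{\max} > 1$.

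Non-emptiness is already noted right before the lemma statement, via the cited result \cite[Thm.\ 2.2.7]{BuyaloSchroeder} that every proper geodesic hyperbolic space admits a visual metric on its Gromov boundary. The real content is therefore the downward closure: if $a$ is a visuality parameter and $1 < a' < a$, then $a'$ is also a visuality parameter. I would prove this via the standard snowflake construction. Given a visual metric $d$ on $\partial_s X$ with parameters $(a,c)$, set $\alpha := \log a'/\log a \in (0,1)$ and consider $d^\alpha$. Since $t \mapsto t^\alpha$ is concave on $[0,\infty)$ and fixes $0$, it is subadditive, and so $d^\alpha$ still satisfies the triangle inequality; symmetry and positivity are immediate, so $d^\alpha$ is a genuine metric on $\partial_s X$. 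Raising the visual inequalities \eqref{VisualInequalities} for $d$ to the $\alpha$-th power, and using $a^\alpha = a'$, gives
\[
c^{-\alpha} (a')^{-\gp{\xi}{\xi'}{o}} \;\leq\; d(\xi,\xi')^\alpha \;\leq\; c^\alpha (a')^{-\gp{\xi}{\xi'}{o}},
\]
so $d^\alpha$ is visual with parameters $(a', c^\alpha)$ (and $c^\alpha \geq 1$ since $c\geq 1$ and $\alpha > 0$). Thus $a'$ is a visuality parameter.

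With downward closure established, the set $V$ of visuality parameters is a non-empty subset of $(1,\infty)$ that, together with each of its elements $a$, contains the entire interval $(1,a)$. Consequently $\inf V = 1$ and $V = (1, s) \cap V$ where $s := \sup V \in (1,\infty]$. If $s = \infty$ then $V = (1,\infty)$. Otherwise $s$ is a real number $a_{\max} > 1$, and $V$ is either $(1, a_{\max})$ or $(1, a_{\max}]$ depending on whether the supremum is attained; in particular $V$ is connected as an interval of $\R$. This yields the three cases in the statement.

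The only step requiring genuine care is the snowflake argument, and the main (very mild) obstacle is the verification that $d^\alpha$ really is a metric; but this is a classical elementary fact using concavity of $t \mapsto t^\alpha$ for $\alpha \in (0,1]$, so no real difficulty arises. Everything else is just unpacking the visual inequalities and rearranging the interval.
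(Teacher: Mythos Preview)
Your proof is correct and follows essentially the same route as the paper: both use the snowflake construction $d \mapsto d^\epsilon$ for $\epsilon \in (0,1)$ to show that the set of visuality parameters is downward closed in $(1,\infty)$, which together with non-emptiness forces it to be an interval of the stated form. You simply spell out in more detail why $d^\epsilon$ is a genuine metric (via concavity of $t\mapsto t^\epsilon$) and why downward closure yields the trichotomy, whereas the paper records only the key observation that $d^\epsilon$ is visual with parameters $(a^\epsilon, c^\epsilon)$.
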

\begin{proof} Non-emptiness follows from the existence of a visual metric. If $d$ is a visual metric with parameters $(a,c)$ and $\epsilon \in (0,1)$, then $d^\epsilon$ is a visual metric with parameters $(a^\epsilon, c^\epsilon)$.
\end{proof}

We recall from \cite[Cor. 4.4.2 and Prop. 4.3.2]{BuyaloSchroeder} that if $X$ is a geodesic hyperbolic metric space with basepoint $o\in X$ and $f: X \to X$ is a $(K_0, C_0)$-quasi-isometry, then there exist constants $K \geq 1$ and $C \geq 0$ depending only on $K_0$, $C_0$, $X$ and $o$ such that $f$ is a $(K, C)$-quasi-isometry and moreover
\begin{equation}\label{KCoQI}
K^{-1}(x \mid x')_o - C \quad \leq  \quad  (f(x)\mid f(x'))_{f(o)} \quad \leq \quad K(x \mid x')_{o} + C, 
\end{equation}
for all $x, x' \in X$. We then say that $f$ is a \emph{$(K,C, o)$-quasi-isometry}\index{quasi-isometry}.  By \cite[Lemma 2.2.2]{BuyaloSchroeder} this carries over to the boundary in the following form:
\begin{lemma}\label{QIGromovInequality}
If $X$ is a geodesic $\delta$-Gromov hyperbolic metric space, then for every $(K,C, o)$-quasi-isometry  $f: X \to X$ the boundary extension $\partial_r f: \partial_r X \to \partial_r X$ satisfies
\[
K^{-1} (\xi'\mid \xi'')_o - C \quad \leq \quad  (\partial_r f(\xi')\mid \partial_r f(\xi''))_{f(o)} \quad  \leq \quad K (\xi'\mid \xi'')_o + C + 2\delta,
\] 
for all $o \in X$ and $\xi', \xi'' \in \partial_r X$.\qed
\end{lemma}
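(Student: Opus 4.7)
The plan is to lift the finite-scale Gromov product inequality \eqref{KCoQI} to the Gromov boundary by representing boundary points as limits of sequences in $X$, applying \eqref{KCoQI} termwise, and then passing to $\liminf$.

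First I would fix boundary points $\xi', \xi'' \in \partial_r X$. Using the identification $\partial_r X \cong \partial_s X$ from Lemma \ref{GromovBoundarieModels}, I would choose sequences $(x_n')$ and $(x_n'')$ in $X$ with $x_n' \to \xi'$ and $x_n'' \to \xi''$ in the sense that $(x_n' \mid x_m')_o \to \infty$ and $(x_n'' \mid x_m'')_o \to \infty$ as $n,m \to \infty$. The critical observation is that because $f$ is a $(K,C,o)$-quasi-isometry, inequality \eqref{KCoQI} applied to pairs $(x_n', x_m')$ shows that $(f(x_n') \mid f(x_m'))_{f(o)} \to \infty$, so $(f(x_n'))$ converges to infinity and, by tracing through the bijection of Lemma \ref{GromovBoundarieModels}, represents the class $\partial_r f(\xi')$ (and analogously for $x''$).

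Next, applying \eqref{KCoQI} to each pair $(x_n', x_m'')$ yields the pointwise bounds
\[
K^{-1}(x_n' \mid x_m'')_o - C \;\leq\; (f(x_n') \mid f(x_m''))_{f(o)} \;\leq\; K(x_n' \mid x_m'')_o + C.
\]
Since $\liminf$ commutes with the monotone affine operations $t \mapsto K^{\pm 1}t \pm C$, taking $\liminf_{n,m \to \infty}$ preserves both inequalities.

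The remaining step is to translate these $\liminf$ bounds into bounds on the boundary Gromov products. Here I would invoke the standard $\delta$-hyperbolic approximation lemma (\cite[Rem.\ III.H.3.17]{bridson-haefliger} or the analogue in \cite[Sec.\ 2.2]{BuyaloSchroeder}): for any representative sequences $(y_i) \to \eta$ and $(z_j) \to \eta'$ of boundary points in a $\delta$-hyperbolic space, one has
\[
(\eta \mid \eta')_o \;\leq\; \liminf_{i,j \to \infty} (y_i \mid z_j)_o \;\leq\; (\eta \mid \eta')_o + 2\delta.
\]
Applied to $(x_n'), (x_m'')$ (where by freedom in the choice of representatives one can make $\liminf (x_n' \mid x_m'')_o$ arbitrarily close to $(\xi' \mid \xi'')_o$) and to the images $(f(x_n')), (f(x_m''))$, this relation combined with the $\liminf$ form of \eqref{KCoQI} yields the two desired inequalities. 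The distribution of the additive $2\delta$ error between the upper and lower bound is dictated by which side of the approximation inequality one saturates, and the asymmetry in the statement reflects the fact that only the image-side $\liminf$-to-Gromov-product comparison leaves a residual $2\delta$ gap.

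The argument is essentially bookkeeping of additive constants; the only delicate point — and the main place to be careful — is matching the two $\liminf$-to-boundary conversions (one in $X$, one in the image) so that the resulting $\delta$-slack lands exactly as stated, rather than being absorbed into a larger $2K\delta$ term. This is precisely why one chooses near-optimal representative sequences on the preimage side while allowing any sequence on the image side.
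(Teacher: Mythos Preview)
The paper does not prove this lemma; it is quoted from \cite[Lemma~2.2.2]{BuyaloSchroeder} and closed with a \texttt{\textbackslash qed}. Your approach --- represent boundary points by sequences, apply \eqref{KCoQI} termwise, take $\liminf$, and invoke the $2\delta$-approximation $(\xi\mid\eta)_o \leq \liminf (x_i\mid y_j)_o \leq (\xi\mid\eta)_o + 2\delta$ --- is exactly the standard argument behind that citation, so in spirit you have the right proof.

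There is, however, a bookkeeping slip in your final paragraph. A careful execution places the $2\delta$ on the \emph{lower} bound, not the upper. For the upper bound you may choose near-optimal preimage sequences so that $\liminf (x_n'\mid x_m'')_o$ is arbitrarily close to $(\xi'\mid\xi'')_o$; since the image-side boundary product is itself an infimum, it is $\leq \liminf (f(x_n')\mid f(x_m''))_{f(o)} \leq K(\xi'\mid\xi'')_o + C$, with no $\delta$ term. For the lower bound, by contrast, the image-side boundary product is an infimum over \emph{all} representing sequences, not just those of the form $(f(x_n'))$; to pass from your single-sequence $\liminf$ to that infimum you must spend the $2\delta$ from the approximation lemma, yielding $\geq K^{-1}(\xi'\mid\xi'')_o - C - 2\delta$. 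Your diagnosis that ``only the image-side comparison leaves a residual $2\delta$'' is right, but that comparison is invoked for the lower bound, not the upper --- so your explanation of the asymmetry is reversed. (The discrepancy with the statement as printed is harmless for the later application in Proposition~\ref{LemmaQuasiSelfSim}, where only the total error budget matters.)
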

Note that by taking exponentials, from Lemma \ref{QIGromovInequality} we can deduce the quantitative estimates concerning the behavior of $\partial_r f$ with respect to any visual metric, which involve the hyperbolicity constant of $X$, quasi-isometry constants of $f$ and the parameter of the visual metric in question.

\chapter{Morse boundaries of proper geodesic metric spaces}\label{AppendixMorse}

In this appendix we discuss a vast generalization of the notion of Gromov boundary known as Morse boundary, which can be defined for arbitrary proper geodesic spaces and is a quasi-isometry invariant for such spaces.

\section{Definition and basic properties}

Since the definition of the Morse boundary is based on the notion of a Morse geodesic, we briefly recall the relevant definitions from Definition \ref{defn: Morse geodesic}:

\begin{remark}[Morse geodesic rays and asymptoticity] \label{rem: C_N close}\label{RayAsympMorse} 
Let $X$ be a proper geodesic metric space and let $N: [1, \infty) \times [0, \infty) \rightarrow [0, \infty)$ be an arbitrary function. Recall that a quasi-geodesic ray $\gamma: I \to X$ is called $N$-\emph{Morse} if any $(K,C)$-quasi-geodesic segment $q$ with endpoints on $\gamma$ is contained in the $N(K,C)$-neighborhood $\mathcal N_{N(K,C)}(\gamma)$ of $\gamma$; we then also call $N$ a \emph{Morse gauge} for $\gamma$. 
We say that two Morse geodesic rays $c$ and $c'$ in $X$ are \emph{asymptotic}\index{asymptotic Morse geodesic rays} if 
\[
\sup\{ d(c(t), c'(t))\mid t \geq 0\} < \infty.
\]
This property is actually equivalent to finite Hausdorff distance of their images, hence independent of the chosen parametrization. This defines an equivalence relation on the space of Morse geodesic rays, and the equivalence class of such a ray $c$ is denoted by $c(\infty)$. The following observation (see \cite[Prop. 2.4]{cordes:2016ad}) will be crucial for us: There exists a constant $C_N$ (depending only on $N$) such that two $N$-Morse geodesic rays $c$ and $c'$ are equivalent if and only if $d(c(t), c'(t))\leq C_N$ for all $t$.
\end{remark}

\begin{definition} Let $X$ be a proper geodesic metric space with basepoint $o \in X$. The \emph{Morse boundary}\index{Morse!boundary} of $X$ based at $o$ is the set
\[
\partial_r^M X_o \coloneqq  \{c(\infty) \mid c: [0, \infty) \to X \text{ Morse geodesic ray}, c(0) = o\}
\] up to asymptotic equivalency.
\end{definition}
\begin{remark}

In general, it may happen that $\partial_r^M X_o = \emptyset$. For example, this happens if every geodesic in $X$ bounds a half-flat, as is the case in a symmetric space (or Bruhat-Tits building) of higher rank. If $X$ is hyperbolic, then $\partial_r^M X_o = \partial_rX$. In general, a point in $\partial_r^MX_o$ may have many different representatives $c$.
\end{remark}
We will describe here two different, but equivalent, approaches to define a quasi-isometry invariant topology on the Morse boundary; for more details see \cite{cordes:2016ad}.

\begin{remark}[Direct limit topology on the Morse boundary]
Let $X$ be a (not necessarily hyperbolic) proper geodesic metric space and let $o \in X$ be a basepoint. Consider all of the geodesic rays emanating from $o$ up to asymptoticity. For each Morse gauge $N$ we choose this subset
\begin{eqnarray*}
 \partial_r^N X_{o} &=& \{c(\infty) \in \partial_r X \mid c: [0, \infty) \to X \small{\text{ is an $N$-Morse geodesic ray with }}c(0)=o\}\\
 &\subset& \partial^M_r X_o
\end{eqnarray*}
and topologize $\partial_r^N X_o$ as in Lemma \ref{GromovBoundaryNeighborhoods}, by replacing $2\delta$ by the $C_N$ from Remark \ref{rem: C_N close}.
 We then define a partial order on the set $\mathcal M$ of all Morse gauges as follows: Given $N, N' \in \mathcal M$ we set $N \leq N'$ if and only if $N(\lambda,\epsilon) \leq N'(\lambda,\epsilon)$ for all $\lambda,\epsilon \in \bN$.  This partial order makes the set of Morse gauges a directed set; for any two Morse gauges $N$ and $N'$, the gauge $N''=\max_{\lambda, \epsilon}\{N(\lambda, \epsilon), N'(\lambda, \epsilon)\}$ has the property that $N \leq N''$ and $N' \leq N''$. One can show that if $N \leq N'$, then the inclusion $ \partial_r^N X_{o} \hookrightarrow  \partial_r^{N'} X_{o}$ is continuous. We can thus topologize  the Morse boundary $\partial_r^M X_o$ as the direct limit
 \begin{equation*} \partial_r^M X_{o}=\varinjlim_{N \in (\mathcal{M}, \leq)} \partial^N_r X_{o}. \end{equation*} 
\end{remark}

From now on we always consider the Morse boundary $\partial^M_r X_{o}$ as a topological space with respect to the above direct limit topology.  If $X$ is hyperbolic, then we simply recover the (ray model of the) Gromov boundary $\partial_r X$ by Lemma \ref{Morse1}. In general, however, the direct limit topology on the Morse boundary may be non-compact. We emphasize that $ \partial_r^N X_{o}$ is defined using $N$-Morse geodesic rays rather than $N$-Morse quasi-geodesic rays. It is possible to define the direct limit topology using Morse quasi-geodesic rays, but for this one has to carefully keep track of the quasi-isometry constants involved.

We now provide an alternative model for the Morse boundary, which is more in line with the sequential model of the Gromov boundary:
\begin{remark}[Sequential description of the Morse boundary] \label{rem:seq morse boundary} If $X$ is a proper geodesic metric space with basepoint $o\in X$, then an alternative description of the Morse boundary $ \partial_r^M X_{o}$ and its direct limit topology can be given as a direct limit of Gromov boundaries of certain hyperbolic subspaces of $X$. More precisely, given a Morse gauge $N$ we denote by $X^{(N)}_{o} \subset X$ the set of all $y\in X$ such that there exists a $N$--Morse geodesic segment from $o$ to $y$ in $X$, which we denote $[o,y]$. Note that there is no reason why this should be geodesic or even connected. However, by \cite[Proposition 3.2]{Cordes:2016aa} it is always Gromov hyperbolic with respect to the restricted metric, and hence we can consider its sequential Gromov boundary $\partial_s X_{o}^{(N)}$ which is a (possibly empty) compact topological space with respect to the topology induced by some (hence any) visual metric. For $N \leq N'$ the isometric inclusions $X_o^{(N)} \hookrightarrow X_o^{(N')}$ induce continuous inclusions $\partial_s X_{o}^{(N)} \hookrightarrow \partial_s X_{o}^{(N')}$, and hence we can define the \emph{sequential Morse boundary}\index{Morse!boundary!sequential} as
\[
\partial_s^M X_{o}\coloneqq  \varinjlim_{N \in (\mathcal{M}, \leq)}\partial_s X_{o}^{(N)}.
\]
We note that every (relatively) \emph{compact} subset of $\partial_s^M X_{o}$ is contained in one of the strata $\partial_s X_{o}^{(N)}$ (see \cite[Lemma 4.1]{cordes:2016ab}). One can show that there are natural homeomorphisms $\partial_s X^{(N)}_o \to\partial_r^N X_o$ which are compatible with inclusions, see \cite{Cordes:2016aa}. This implies in particular, that
\[
\partial_s^M X_{o} \cong \partial_r^M X_{o},
\]
hence we refer to $\partial_s^M X_{o}$ as the \emph{sequential model}\index{Morse!boundary!sequential model} of the Morse boundary. While this model is very much in line with the sequential approach to the Gromov boundary, we still need to assume that $X$ be geodesic in order to obtain a sensible definition of the spaces $X_{o}^{(N)}$.

Note that every element $\xi \in \partial_s^M X_o$ can be represented by a sequence $(x_n)$ of points which are contained in some fixed hyperbolic approximant $X_{o}^{(N)}$. We then write $\xi = \lim x_n$ and say that $\xi$ is an $N$-boundary point.
\end{remark}
\begin{notation}[Representation by geodesic rays and lines] In the sequel we will mostly work with the sequential model $\partial_s^M X_{o}$ of the Morse boundary. Nevertheless, we sometimes want to represent such points by geodesic rays: If $\gamma$ is a geodesic ray in $X$ (not necessarily emanating from $o$), then we say that $\gamma$ is \emph{asymptotic} to $\xi \in \partial_s^M X_o$ if it is asymptotic (in the sense of Remark \ref{RayAsympMorse}) to a geodesic ray $\gamma'$ emerging from $o$ such that $\gamma(\infty)$ corresponds to $\xi$ via the homeomorphism $\partial_s^M X_{o} \cong \partial_r^M X_{o}$. Note that any geodesic which is asymptotic to a point $\xi$ in the Morse boundary is automatically Morse. In fact, if $\xi$ is an $N$-boundary point, then $\gamma$ is $N'$-Morse for a gauge $N'$ depending only on $N$ and $d(\gamma(0), o)$.

If $\gamma: \R \to X$ is a bi-infinite geodesic line in $X$, then we can define two geodesic rays $\gamma^{\pm}: [0, \infty) \to X$ by $\gamma^+(t) \coloneqq  \gamma(t)$ and $\gamma^-(t) \coloneqq  \gamma(-t)$. We then say that $\gamma$ is \emph{bi-asymptotic}\index{bi-asymptotic} to $(\xi^-, \xi^+) \in (\partial_s^M X_o)^2$ if $\gamma^-$ is asymptotic to $\xi^-$ and $\gamma^+$ is asymptotic to $\xi^+$. As in the ray case one observes that if $\xi^\pm$ are both $N$-Morse and $\gamma$ is a geodesic line which is bi-asymptotic to $(\xi^-, \xi^+)$, then $\gamma$ is $N'$-Morse for a Morse gauge $N'$ depending only on $N$.
\end{notation}
It turns out that all geodesic lines which are bi-asymptotic to the same pair of points are at uniformly bounded Hausdorff distance (\cite{cordes:2016ab}):
\begin{proposition}[Bi-asymptoticity classes of geodesic lines]\label{prop:limit geodesics are asymptotic}
For any Morse gauge $N$, there exists a constant $K'>0$ such that if $\gamma, \gamma'$ are geodesic lines bi-asymptotic to $(\xi^-, \xi^+) \in \partial_s X^{(N)}_{o} \times \partial_s X^{(N)}_{o}$, then
\[\pushQED{\qed}d_{\mathrm{Haus}}(\gamma, \gamma')<K'.\qedhere\popQED
\]
\end{proposition}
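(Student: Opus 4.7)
The strategy is to exploit the Morse property of $\gamma$ by constructing, for any given point $p = \gamma(t_0)$, an auxiliary quasi-geodesic with endpoints on $\gamma$ that traverses a long segment of $\gamma'$, and then to invoke stability of Morse geodesics to conclude that $p$ lies uniformly close to $\gamma'$. First, since $\xi^\pm \in \partial_s X_o^{(N)}$, Theorem \ref{IdealMorseTriangle}(ii) (already invoked in the proof of Lemma \ref{lem:invariant boundary invariant wh}) upgrades $\gamma$ and $\gamma'$ to $N^*$-Morse geodesic lines for some Morse gauge $N^*$ depending only on $N$; this is our only use of the hypothesis that $\xi^\pm$ are $N$-Morse.

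Next I claim that there is a constant $D = D(N)$ such that for every sufficiently large $R$ one can find $q_R^\pm \in \gamma'$ with $d(\gamma(t_0 \pm R), q_R^\pm) \leq D$. To see this, fix $N$-Morse reference rays $\alpha^\pm \colon [0,\infty) \to X$ from $o$ representing $\xi^\pm$; these exist because $\xi^\pm \in \partial_s X_o^{(N)}$. Both $\gamma^+$ (suitably based at $\gamma(t_0)$) and $(\gamma')^+$ are $N^*$-Morse rays asymptotic to $\alpha^+$, and by Remark \ref{rem: C_N close} (applied after passing to a common basepoint and absorbing the resulting finite basepoint shifts into a slightly larger Morse gauge) they fellow-travel $\alpha^+$ up to a constant depending only on $N^*$. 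The triangle inequality then produces $q_R^+$; the analogous argument at the $\xi^-$ end produces $q_R^-$.

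With $q_R^\pm$ in hand, let $Q_R$ denote the concatenation of the geodesic segment from $\gamma(t_0-R)$ to $q_R^-$, the sub-arc of $\gamma'$ from $q_R^-$ to $q_R^+$, and the geodesic segment from $q_R^+$ to $\gamma(t_0+R)$. Then $Q_R$ is a $(1, 2D)$-quasi-geodesic whose endpoints lie on $\gamma$. The $N^*$-Morse property of $\gamma$ yields $Q_R \subset N_{N^*(1,2D)}(\gamma)$, and a standard two-sided stability argument (applied to $\gamma|_{[t_0-R,\, t_0+R]}$ as a $(1,0)$-quasi-geodesic sharing its endpoints with $Q_R$, together with the Morse control on $Q_R$) gives the Hausdorff estimate $d_\mathrm{Haus}(\gamma|_{[t_0-R,\, t_0+R]}, Q_R) \leq M$ for some $M$ depending only on $N^*$ and $D$, hence only on $N$. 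Choosing $R$ large enough that the nearest point of $Q_R$ to $p$ lies on the $\gamma'$-portion yields $d(p, \gamma') \leq M + D$, and swapping the roles of $\gamma$ and $\gamma'$ gives the reverse inequality. Setting $K' := M + D$ proves the claim.

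The main obstacle is the intermediate sub-claim providing the uniform constant $D$: the fellow-traveling statement of Remark \ref{rem: C_N close} is formulated for Morse rays emanating from a common basepoint, whereas $\gamma^+$, $(\gamma')^+$ and $\alpha^+$ generally have three distinct basepoints. One must therefore carefully absorb the basepoint shifts into the Morse gauge and verify that the resulting constant depends only on $N$; this is precisely where the bi-asymptoticity hypothesis and the stratified structure of the hyperbolic approximants $X_o^{(N)}$ enter decisively. Once this uniform comparison is established, the remainder is a ``slim ideal quadrilateral'' argument of the kind familiar from Gromov hyperbolic geometry.
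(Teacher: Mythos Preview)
The paper does not prove this proposition; it is quoted (with a terminal \qed) from \cite{cordes:2016ab}. So let me evaluate your argument on its own merits.

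First, a circularity hazard: you invoke Theorem~\ref{IdealMorseTriangle}(ii) to get that $\gamma,\gamma'$ are $N^*$-Morse, but the proof of Theorem~\ref{IdealMorseTriangle} in the paper reduces the general statement to the special case of a \emph{limit} triangle via part~(i), and part~(i) in turn is deduced from the very proposition you are proving. The specific fact you actually need (any geodesic line bi-asymptotic to a pair in $\partial_s X_o^{(N)}\times\partial_s X_o^{(N)}$ is $N'$-Morse with $N'=N'(N)$) is, however, recorded independently in the Notation paragraph immediately preceding the proposition, so this is a citation slip rather than a logical error.

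The substantive gap is exactly where you flag it, and your proposed fix does not close it. Remark~\ref{rem: C_N close} gives the uniform constant $C_N$ only for $N$-Morse rays emanating from the \emph{same} basepoint. ``Absorbing the basepoint shifts into a slightly larger Morse gauge'' cannot yield a uniform $D$: the shifts $d(o,\gamma(t_0))$ and $d(o,\gamma'(0))$ are not bounded in terms of $N$, so the enlarged gauge --- and hence your $D$ --- would depend on $\gamma,\gamma',t_0$. That contamination then propagates: $Q_R$ becomes a $(1,2D)$-quasi-geodesic with non-uniform $D$, and the Morse bound $N^*(1,2D)$ is no longer a function of $N$ alone.

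What you actually need is a basepoint-free strengthening: if $\alpha,\beta$ are $N$-Morse geodesic rays (from possibly different basepoints) with the same endpoint at infinity, then there exist $T_\alpha,T_\beta$ (which may depend on $\alpha,\beta$) with $d_{\mathrm{Haus}}\bigl(\alpha|_{[T_\alpha,\infty)},\beta|_{[T_\beta,\infty)}\bigr)\le D_N$ for some $D_N$ depending only on $N$. One clean way to get this is via nearest-point projection: if $q$ is a nearest point of $\alpha(0)$ on $\beta$, then the concatenation $[\alpha(0),q]\cup\beta|_{[q,\infty)}$ is a $(3,0)$-quasi-geodesic ray from $\alpha(0)$ (this is the projection argument used in the proof of Lemma~\ref{lem: SH implies thin triangles}); it is $N'$-Morse with $N'=N'(N)$ and asymptotic to $\alpha$, so now Remark~\ref{rem: C_N close} (in its quasi-geodesic form) applies with a common basepoint. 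Once this uniform $D_N$ is in hand, your quadrilateral argument in Step~3 goes through verbatim.
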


Note that if $X$ and $X'$ are two proper geodesic metric spaces and $f: X \to X'$ is a quasi-isometric embedding which maps a given basepoint $o \in X$ to $o' \in X'$, then it induces a map
\[
\partial_r f: \partial_r X _o\to \partial_r X'_{o'}, \quad c(\infty) \mapsto (f \circ c)(\infty).
\]
The following example from \cite{cordes:2016ad} shows that this map does not in general map $\partial_r^M X _o$ to $\partial_r^M X' _{o'}$:
\begin{example}[Failure of functoriality of the Morse boundary] 
Let $X$ be the hyperbolic plane and let $\widetilde{c}: \R \to X$ be a bi-infinite geodesic. Then the restriction $c: [0, \infty) \to X$ of $\widetilde{c}$ is a Morse quasi-geodesic ray. Now let $X'$ be the space obtained from $X$ by gluing a Euclidean half-plane along $\widetilde{c}$. Then the inclusion $f: X \to X'$ is a (quasi-)isometric embedding, but $f \circ c$ is no longer Morse. 
\end{example}
Even if $f:X \to X'$ is a quasi-isometric embedding such that $\partial_r f(\partial_r^M X _o) \subset \partial_r^M X' _{o'}$, then the map $\partial_r f: \partial_r^M X _o \to  \partial_r^M X' _{o'}$ need not be continuous with respect to the direct limit topology. The following condition is immediate from the definitions (cf.\ \cite[Proposition 4.2]{cordes:2016ad}):
\begin{proposition}\label{prop:morse preserving}
Let $X,X'$ be proper geodesic metric spaces with basepoints $o, o'$ and let $f \colon X \rightarrow X'$ be a quasi-isometric embedding with $f(o) = o'$. If for every $N \in \mathcal M$ there exists $N' \in \mathcal M$ such that
\[
\partial_r X^{(N)}_o \subset \partial_r {X'}^{(N')}_{o'},
\] 
then $\partial_r f$ restricts to a map
\[
\partial_r^M f \colon \partial_r^M X _o \to  \partial_r^M X' _{o'},
\]
which is continuous and in fact a homeomorphism onto its image.\qed
\end{proposition}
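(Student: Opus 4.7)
The proof will build upward from the stratum-wise behavior of Gromov boundaries of hyperbolic approximants to the direct limit construction of the Morse boundary. First, I would verify well-definedness of $\partial_r^M f$: given $\xi \in \partial_r^M X_o$, choose a Morse gauge $N$ with $\xi \in \partial_r X^{(N)}_o$; the hypothesis then furnishes $N' = N'(N)$ with $\partial_r f(\xi) \in \partial_r {X'}^{(N')}_{o'} \subset \partial_r^M X'_{o'}$, so the image lies inside the Morse boundary of $X'$.

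Next, I would analyze each stratum separately. For fixed $N$ the map $f$ restricts to a quasi-isometric embedding $X^{(N)}_o \to {X'}^{(N')}_{o'}$ between proper geodesic Gromov hyperbolic spaces. The standard theory of Gromov boundaries under quasi-isometric embeddings — using Lemma \ref{QIGromovInequality} to obtain two-sided control of Gromov products, together with the visual inequalities \eqref{VisualInequalities} — shows that the induced boundary map $\partial_r f \colon \partial_r X^{(N)}_o \to \partial_r {X'}^{(N')}_{o'}$ is a continuous injection and a topological embedding. Moreover, since $\partial_r X^{(N)}_o$ is compact and $\partial_r {X'}^{(N')}_{o'}$ is Hausdorff, the image $\partial_r f(\partial_r X^{(N)}_o)$ is automatically closed in $\partial_r {X'}^{(N')}_{o'}$, and the stratum-wise map is a homeomorphism onto this closed image.

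Third, I would assemble these stratum-wise facts. Continuity of $\partial_r^M f \colon \partial_r^M X_o \to \partial_r^M X'_{o'}$ is immediate from the universal property of the direct limit $\partial_r^M X_o = \varinjlim_{N}\partial_r X^{(N)}_o$, applied to the continuous compositions $\partial_r X^{(N)}_o \to \partial_r {X'}^{(N')}_{o'} \hookrightarrow \partial_r^M X'_{o'}$. Injectivity follows by comparing any two points $\xi, \eta \in \partial_r^M X_o$ inside a common stratum $\partial_r X^{(M)}_o$ with $M$ dominating both gauges indexing $\xi$ and $\eta$, and applying the stratum-wise injectivity.

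The hard part will be verifying that $\partial_r^M f$ is in fact a topological embedding, i.e., that it is open onto its image $Y := \partial_r^M f(\partial_r^M X_o) \subset \partial_r^M X'_{o'}$ equipped with the subspace topology. Given an open $U \subset \partial_r^M X_o$, the stratum-wise topological-embedding property yields, for each $N$, an open $W_N \subset \partial_r {X'}^{(N')}_{o'}$ with $\partial_r f(U \cap \partial_r X^{(N)}_o) = W_N \cap \partial_r f(\partial_r X^{(N)}_o)$. The challenge is to combine these $W_N$ into a single open set $W \subset \partial_r^M X'_{o'}$ with $W \cap Y = \partial_r^M f(U)$. Here one crucially exploits the closedness of the images $\partial_r f(\partial_r X^{(N)}_o)$ inside their target strata together with the coherence of the direct limit topology (a set is open iff its intersection with every stratum is open): this permits one to extend the $W_N$ to an open set of the entire direct limit without introducing spurious points of $Y$. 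This is the main technical obstacle, and it is a standard but delicate manipulation of direct limits of compact Hausdorff topological embeddings.
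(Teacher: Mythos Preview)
The paper does not prove this proposition; it is stated as ``immediate from the definitions'' with a citation to \cite[Proposition 4.2]{cordes:2016ad} and a \qed. Your outline is in the right spirit, but two points need correction.

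First, your claim that $f$ restricts to a quasi-isometric embedding between \emph{proper geodesic} Gromov hyperbolic spaces is false: the paper explicitly notes (Remark \ref{rem:seq morse boundary}) that the hyperbolic approximants $X^{(N)}_o$ need be neither connected nor geodesic. Consequently you cannot invoke Lemma \ref{QIGromovInequality} as stated, since that lemma is formulated for geodesic hyperbolic spaces. The underlying Gromov-product estimate does extend to general Gromov hyperbolic spaces, but you should say so rather than cite a lemma whose hypotheses are not met.

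Second, and more substantively, your treatment of the ``hard part'' (openness onto the image) is incomplete. You propose to glue the stratum-wise open sets $W_N$ into a single open set of the target direct limit, but this manipulation does not go through without an additional ingredient: you need that preimages under $\partial_r^M f$ of compact subsets of $\partial_r^M X'_{o'}$ are again contained in a single stratum of $\partial_r^M X_o$. This follows from the fact that a quasi-isometric embedding \emph{reflects} the Morse property with controlled gauge: if $f\circ c$ is at bounded distance from an $M$-Morse ray in $X'$, then any quasi-geodesic in $X$ with endpoints on $c$ maps under $f$ to a quasi-geodesic constrained near that Morse ray, and pulling back via the lower QI bound forces $c$ itself to be Morse with gauge depending only on $M$ and the QI constants. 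You do not mention this reflection property, and without it the direct-limit gluing you sketch does not obviously produce an open set.
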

A quasi-isometric embedding satisfying the condition of the proposition will be called \emph{Morse-preserving}\index{Morse!preserving}. Clearly every quasi-isometry is Morse-preserving, and hence one concludes that the Morse boundary with its direct limit topology is a quasi-isometry invariant among proper geodesic metric spaces:
\begin{corollary}[Quasi-isometry invariance]\label{MorseBoundaryQIInvariant}
Let $X,X'$ be proper geodesic metric spaces with basepoints $o,o'$ and $f \colon X \rightarrow X'$ be a quasi-isometry with $f(o) = o'$. Then $\partial_r^M f \colon \partial_r^M X_o \to \partial_r^M X'_{o'}$ is a homeomorphism.\qed
\end{corollary}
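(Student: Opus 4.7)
The strategy is to reduce to Proposition \ref{prop:morse preserving} applied to both $f$ and a quasi-inverse. The main step is verifying that a quasi-isometry is always Morse-preserving: once this is established, Proposition \ref{prop:morse preserving} immediately yields that $\partial_r^M f$ is a continuous injection which is a homeomorphism onto its image, and applying the same reasoning to a quasi-inverse gives surjectivity.

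The core technical step is to show that if $c \colon [0,\infty) \to X$ is an $N$-Morse geodesic ray with $c(0) = o$, then $f \circ c$ lies at uniformly bounded Hausdorff distance from a geodesic ray $c'$ in $X'$ emanating from $o'$, and that $c'$ is $N'$-Morse for a gauge $N'$ depending only on $N$ and the quasi-isometry constants of $f$. The Morse property is transported by quasi-isometries by a standard pull-back argument: any $(K,C)$-quasi-geodesic segment in $X'$ with endpoints on $f\circ c$ can be pre-composed with a quasi-inverse of $f$ to yield a quasi-geodesic segment in $X$ with endpoints close to $c$, which by the $N$-Morse property of $c$ lies in a bounded neighborhood of $c$; pushing back under $f$ shows the original segment lies in a bounded neighborhood of $f\circ c$. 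In particular $f\circ c$ is itself $N_1$-Morse as a quasi-geodesic, with $N_1$ depending only on $N$ and on the quasi-isometry constants. An Arzel\`a--Ascoli argument (Lemma \ref{ArzelaAscoli1}) applied to geodesic segments $[o', (f\circ c)(n)]$ produces a geodesic ray $c'$ from $o'$ that is at bounded Hausdorff distance from $f\circ c$, and the Morse property is inherited by $c'$ with a (possibly larger) gauge $N'$ still depending only on $N$ and the QI constants. Consequently $\partial_r f$ carries $\partial_r X_o^{(N)}$ into $\partial_r (X')_{o'}^{(N')}$, which is exactly the Morse-preservation hypothesis of Proposition \ref{prop:morse preserving}.

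With that in hand, Proposition \ref{prop:morse preserving} gives that $\partial_r^M f$ is well-defined, continuous, and a homeomorphism onto its image. To upgrade this to a homeomorphism, choose a quasi-inverse $g \colon X' \to X$ of $f$; after modifying $g$ at the single point $o'$ (which changes it by a uniformly bounded amount and preserves the quasi-isometry constants up to an additive change) we may assume $g(o') = o$. Applying the same argument to $g$ yields a continuous map $\partial_r^M g \colon \partial_r^M X'_{o'} \to \partial_r^M X_o$. Finally, $g \circ f$ is at bounded distance from $\mathrm{id}_X$, so for any $N$-Morse geodesic ray $c$ from $o$, the composition $g \circ f \circ c$ is at bounded Hausdorff distance from $c$, and by Remark \ref{RayAsympMorse} is therefore asymptotic to $c$. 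Hence $\partial_r^M g \circ \partial_r^M f = \mathrm{id}_{\partial_r^M X_o}$, and symmetrically $\partial_r^M f \circ \partial_r^M g = \mathrm{id}_{\partial_r^M X'_{o'}}$, completing the proof.

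The main obstacle is bookkeeping the Morse gauges: one must verify that the new gauge $N'$ produced after the pull-back/push-forward procedure depends only on $N$ and on the QI constants (and not, for instance, on the individual ray $c$), since this uniform dependence is exactly what Proposition \ref{prop:morse preserving} requires. Once this uniformity is secured, the rest of the argument is formal.
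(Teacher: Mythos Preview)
Your proposal is correct and follows essentially the same approach as the paper. The paper gives no explicit proof (the corollary is stated with \qed), merely noting beforehand that ``clearly every quasi-isometry is Morse-preserving'' so that Proposition \ref{prop:morse preserving} applies; you have filled in precisely the details the paper omits, namely the pull-back argument for Morse-preservation and the quasi-inverse argument for surjectivity.
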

Also note that if $o$ and $o'$ are two basepoints in $X$, then there exists a self-quasi-isometry of $X$ (at bounded distance from the identity) which maps $o$ to $o'$. We may thus record:
\begin{corollary}[Basepoint independence]
Up to homeomorphism the Morse boundary $\partial_r^M X_o$ of a proper geodesic metric space $X$ with respect to a basepoint $o$ is independent of the basepoint $o$.\qed
\end{corollary}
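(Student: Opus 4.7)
The plan is to reduce the statement immediately to the quasi-isometry invariance already established in Corollary \ref{MorseBoundaryQIInvariant}, following the hint in the sentence preceding the corollary. So the only real content is the construction of a self-quasi-isometry of $X$ sending $o$ to $o'$, which is entirely routine.

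First I would fix two basepoints $o, o' \in X$ and define a map $f\colon X \to X$ by
\[
f(x) := \begin{cases} o' & \text{if } x = o,\\ x & \text{otherwise.}\end{cases}
\]
I would then verify that $f$ is at uniformly bounded distance $D := d(o,o')$ from $\mathrm{Id}_X$: indeed $d(f(x), x) \leq D$ for all $x \in X$, with equality only potentially at $x = o$. Consequently, for all $x, y \in X$ the triangle inequality yields
\[
|d(f(x), f(y)) - d(x,y)| \leq 2D,
\]
so $f$ is a $(1, 2D)$-coarse isometry; in particular it is a $(1, 2D, D)$-quasi-isometry of $X$ onto itself (coarse surjectivity is trivial since $f$ differs from the identity only at one point). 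By construction $f(o) = o'$.

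Finally, I would invoke Corollary \ref{MorseBoundaryQIInvariant}: since $f$ is a quasi-isometry $X \to X$ with $f(o) = o'$, the induced map
\[
\partial_r^M f\colon \partial_r^M X_o \to \partial_r^M X_{o'}
\]
is a homeomorphism. This establishes the claim. There is no main obstacle here; the statement is essentially a packaging of the already-established QI-invariance, and the only non-tautological move is producing the self-quasi-isometry above, which is immediate.
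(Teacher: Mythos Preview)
Your proposal is correct and follows precisely the approach indicated in the paper: the paper's proof consists of the single sentence preceding the corollary (that a self-quasi-isometry at bounded distance from the identity maps $o$ to $o'$) together with an appeal to Corollary \ref{MorseBoundaryQIInvariant}, and you have simply spelled out that sentence in detail.
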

In view of the corollary we will sometimes be sloppy about basepoints and simply write $\partial_r^M X$ instead of $\partial_r^M X_o$ if we only care about its homeomorphism type.
\begin{remark}[Morse boundary for large-scale geodesic spaces] If $X$ is a proper large-scale geodesic metric space, then $X$ is quasi-isometric to a proper geodesic metric space $X'$. If $X''$ is any other proper-geodesic metric space in the same QI type, then $X'$ and $X''$ have homeomorphic Morse boundaries by Corollary \ref{MorseBoundaryQIInvariant}. We may thus define the Morse boundary $\partial^MX$ as their common homeomorphism type.
\end{remark}
\begin{remark}[Boundary maps in the sequential model] A Morse-preserving quasi-isometric embedding $f: (X, o) \to (X', o')$ also induces a map $\partial_s f \colon \partial_s^M X _o \to  \partial_s^M X' _{o'}$ via the identifications of the ray model and the sequential model. Explicitly, this map can be described as follows: Every point $\xi \in \partial_s^M X_o$ can be represented by a sequence $(x_n)$ of points, which are contained in a fixed hyperbolic approximant $X^{(N)}_o$. One can show that if $f$ is Morse-preserving, then $(f(x_n))$ will be contained in some fixed hyperbolic approximant $(X')^{N'}_{o'}$ and $\partial_s f (\xi)$ is represented by this sequence.
\end{remark}

\section{Ideal Morse triangles}
Throughout this section, $X$ denotes a proper geodesic metric space and $o \in X$ is a basepoint.
\begin{definition} Let $\xi^-, \xi^+ \in \partial_s X^{(N)}_{o}$ with $\xi^- \neq \xi^+$. If $
\gamma^\pm$ are geodesic rays emanating from $o$ which are asymptotic to $\xi^\pm$ respectively and $\gamma$ is a geodesic line which is bi-asymptotic to $(\xi^-, \xi^+)$, then $(\gamma^-, \gamma, \gamma^+)$ is called an \emph{ideal $N$-Morse triangle}\index{Morse!ideal $N$-Morse triangle} with vertices $(o, \xi^-, \xi^+)$.
\end{definition}
In this section we are going to show that, even if $X$ is not hyperbolic, ideal $N$-Morse triangles always behave like ideal triangles in hyperbolic spaces:
\begin{theorem}[Ideal Morse triangles are slim]\label{IdealMorseTriangle} If $\xi^-, \xi^+ \in \partial_s X^{(N)}_{o}$ with $\xi^- \neq \xi^+$, then there exists an ideal $N$-Morse triangle $(\gamma^-, \gamma, \gamma^+)$ with vertices $(o, \xi^-, \xi^+)$. Moreover, the following hold:
\begin{enumerate}[(i)]
\item The Hausdorff distance between any two ideal $N$-Morse triangles with vertices $(o, \xi^-, \xi^+)$ is bounded by a constant depending only on $N$.
\item If $(\gamma^-, \gamma, \gamma^+)$ is such a triangle, then the geodesics $\gamma^-, \gamma, \gamma^+$ are $N'$-Morse for some $N'$ depending only on $N$.
\item There exists a constant $\delta$ depending only on $N$ such that every $N$-Morse triangle with vertices $(o, \xi^-, \xi^+)$ is $\delta$-slim.
\end{enumerate}
\end{theorem}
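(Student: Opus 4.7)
The plan is to build the triangle as a limit of finite geodesic triangles and then to derive every claim from properties that are uniform in~$N$. By definition of $\partial_s X^{(N)}_{o}$ (Remark \ref{rem:seq morse boundary}) there exist sequences $(x^\pm_n)\subset X^{(N)}_{o}$ with $x^\pm_n\to\xi^\pm$, each joined to $o$ by an $N$-Morse geodesic segment. Since $X$ is proper, Lemma \ref{ArzelaAscoli1}(ii) extracts subsequential uniform-on-compacta limits $\gamma^\pm:[0,\infty)\to X$ based at $o$. Uniform limits of $N$-Morse geodesics are themselves Morse with a gauge depending only on $N$, so $\gamma^\pm$ is $N_1$-Morse for some $N_1=N_1(N)$ and represents $\xi^\pm$. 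For the transverse side, I form $\sigma_n:=[\gamma^-(n),\gamma^+(n)]$: since $\xi^-\neq\xi^+$ the Gromov products $(\gamma^-(n)\mid\gamma^+(n))_o$ remain bounded, so a point of $\sigma_n$ closest to $o$ stays in a fixed ball around $o$. After reparametrizing $\sigma_n$ to place the parameter $0$ at that nearest point, Lemma \ref{ArzelaAscoli1}(i) yields a bi-infinite geodesic line $\gamma:\R\to X$ which is bi-asymptotic to $(\xi^-,\xi^+)$.

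The same construction gives (ii). The endpoints of $\sigma_n$ lie in the hyperbolic stratum $X^{(N)}_{o}$, and a geodesic segment in $X$ joining two points of $X^{(N)}_{o}$ is $N_2$-Morse with $N_2=N_2(N)$ (a standard fact following from the uniform hyperbolicity of $X^{(N)}_{o}$ with the restricted metric and the Morse lemma, Lemma \ref{Morse1}). Hence the $\sigma_n$ are uniformly $N_2$-Morse, so their limit $\gamma$ is $N_2$-Morse, and together with the already established $N_1$-Morseness of $\gamma^\pm$ this proves (ii) with $N':=\max\{N_1,N_2\}$. Claim (i) is then immediate: two admissible choices of rays $\gamma^\pm$ and $\tilde\gamma^\pm$ are $N'$-Morse geodesic rays from $o$ representing the same point $\xi^\pm$, so Remark \ref{rem: C_N close} forces them to be pointwise $C_{N'}$-close and therefore of Hausdorff distance at most $C_{N'}$; and two admissible transverse geodesics are $N'$-Morse geodesic lines bi-asymptotic to $(\xi^-,\xi^+)$, so Proposition \ref{prop:limit geodesics are asymptotic} bounds their Hausdorff distance by a constant depending only on $N'$, hence only on $N$.

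Finally, for (iii), I fix an ideal $N$-Morse triangle and consider for each $n$ the finite triangle $T_n$ with vertices $o,\gamma^-(n),\gamma^+(n)$, whose third side I take to be $\sigma_n$. By (ii) and the discussion above, all three sides of $T_n$ are $N'$-Morse with $N'=N'(N)$, so $T_n$ sits in a single hyperbolic stratum $X^{(N')}_{o}$ whose Gromov hyperbolicity constant depends only on $N$. Applying the Rips characterization inside $X^{(N')}_{o}$ (Proposition \ref{RipsGromov}) produces a constant $\delta=\delta(N)$ such that $T_n$ is $\delta$-slim in $X$. Letting $n\to\infty$ and using that on every fixed bounded window $\sigma_n$ converges (up to reparametrization) to $\gamma$ uniformly, the slimness inequalities pass to the ideal triangle with the same constant $\delta$. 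The hard part is the quantitative uniform-Morse claim used twice above: that every geodesic segment between two points of $X^{(N)}_{o}$ is $N_2$-Morse with $N_2$ a function of $N$ alone, so that subsequential Arzel\`a--Ascoli limits keep a uniform Morse gauge. Once this uniformity is in hand, the remainder of the argument is a direct Arzel\`a--Ascoli/Rips comparison; without it, the Morse gauges and hence the slimness constant could drift under the limit.
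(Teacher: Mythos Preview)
Your overall strategy coincides with the paper's: build a \emph{limit triangle} via Arzel\`a--Ascoli (the paper's Construction~\ref{LimitGeodesics}), deduce (i) from Remark~\ref{rem: C_N close} and Proposition~\ref{prop:limit geodesics are asymptotic}, and then get (ii) and (iii) from uniform Morseness of the sides. The paper simply cites the two hard inputs as black boxes---\cite[Prop.~3.11]{cordes:2016ad} for the uniform Morseness of the transverse side, and Proposition~\ref{prop:limit triangles are thin} (from \cite{cordes:2016ab}) for slimness---rather than rederiving them.

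Where your argument has a genuine gap is precisely the point you flag as the ``hard part'': the claim that a geodesic in $X$ between two points of $X^{(N)}_o$ is $N_2$-Morse, justified via ``uniform hyperbolicity of $X^{(N)}_o$ and the Morse lemma''. This does not work as written. The stratum $X^{(N)}_o$ is Gromov hyperbolic in the restricted metric but is typically \emph{not geodesic}, so Lemma~\ref{Morse1} does not apply there; and even if it did, the Morse lemma controls quasi-geodesics \emph{inside} a hyperbolic space, whereas what you need is Morseness in the ambient $X$, where the competing $(K,C)$-quasi-geodesics need not lie in $X^{(N)}_o$ at all. The same issue recurs in your proof of (iii): Proposition~\ref{RipsGromov} is stated for geodesic spaces, and to place $T_n$ inside $X^{(N')}_o$ you already need the uniform Morseness you are trying to establish. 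The actual proof of the ``two Morse sides imply the third is Morse'' phenomenon (which is what \cite[Prop.~3.11]{cordes:2016ad} provides) proceeds by a direct argument with the Morse property of the legs, not by passing to a hyperbolic stratum. So your outline is correct, but the justification you offer for its key step is circular; the paper avoids this by citing the result.
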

Note that (i) is immediate from Remark \ref{RayAsympMorse} and Proposition \ref{prop:limit geodesics are asymptotic}. It is thus enough to construct a single ideal $N$-Morse triangle with vertices $(o, \xi^-, \xi^+)$ and to show that this specific triangle satisfies (ii) and (iii). For this we will use the following construction:
\begin{construction}[Limit geodesics and triangles]\label{LimitGeodesics}
Let $(x_n), (y_n) \in X^{(N)}_o$ be sequences asymptotic to $\xi^-, \xi^+ \in \partial_s X^{(N)}_{o}$ respectively, with $\xi^- \neq \xi^+$.  Let $\gamma_{x,n} = [o, x_n]$, $\gamma_{y,n} = [o,y_n]$ be $N$-Morse geodesic segments.  Since $X$ is proper, the Arzel\`a--Ascoli Lemma \ref{ArzelaAscoli1} implies that there exist geodesic rays $\gamma_x, \gamma_y$ such that $\gamma_{x,n}, \gamma_{y,n}$ subsequentially converge uniformly on compact sets to $\gamma_x, \gamma_y$. Let $\gamma_n$ be a geodesic joining $\gamma_x(n)$ and $\gamma_y(n)$ for each $n \in \mathbb{N}$. By Proposition 3.11 in \cite{cordes:2016ad}, $(\gamma_n)$ has a Morse subsequential limit $\gamma$. It is then obvious that $(\gamma_x, \gamma, \gamma_y)$ is an ideal $N$-Morse triangle with vertices $(o, \xi^-, \xi^+)$ and satisfies (ii).
\end{construction}
\begin{definition} In the situation of Construction \ref{LimitGeodesics}, $\gamma_x$ and $\gamma_y$ are \emph{limit legs}\index{limit legs} based at $o$ for $\xi^+$ and $\xi^-$ respectively, and $\gamma$ is called a \emph{limit geodesic line}\index{limit geodesic line} from $\xi^-$ to $\xi^+$. The ideal $N$-Morse triangle  $(\gamma_x, \gamma, \gamma_y)$ is called a \emph{limit triangle}\index{limit triangle} based at $o$.
\end{definition}
Working with concrete limit triangles is often more convenient for practical computation than working with abstract ideal Morse triangles; for example, the following is established in \cite{cordes:2016ab}:
\begin{proposition}[Limit triangles are slim]\label{prop:limit triangles are thin}
For any Morse gauge $N$, if $(x_n), (y_n) \subset X^{(N)}_{o}$ are asymptotic to $\xi^- \neq \xi^+ \in \partial_s X^{(N)}_{o}$, then any limit triangle is $4N(3,0)$-slim.\qed
\end{proposition}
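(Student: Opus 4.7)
The plan is to establish slimness first at each finite approximation and then pass to the limit via Arzel\`a--Ascoli. For each $n$, consider the geodesic triangle $T_n$ with vertices $o$, $\gamma_x(n)$, $\gamma_y(n)$ and sides $\sigma_{x,n} := \gamma_x|_{[0,n]}$, $\sigma_{y,n} := \gamma_y|_{[0,n]}$, and $\gamma_n$. By Construction~\ref{LimitGeodesics} and \cite[Prop.~3.11]{cordes:2016ad}, the rays $\gamma_x$ and $\gamma_y$ are $N$-Morse (Morse-ness being preserved under uniform-on-compacta subsequential limits of $N$-Morse segments with a fixed gauge), so their subsegments $\sigma_{x,n}$, $\sigma_{y,n}$ are $N$-Morse as well. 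It thus suffices to prove that each $T_n$ is $4N(3,0)$-slim and then pass to the limit: a point $p \in \gamma$ is a limit of points $p_n \in \gamma_n$; if $p_n$ lies within $4N(3,0)$ of $\sigma_{x,n} \cup \sigma_{y,n} \subset \gamma_x \cup \gamma_y$, then so does $p$.

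Next, I would prove uniform slimness of $T_n$ by constructing suitable $(3,0)$-quasi-geodesic ``detours'' and invoking the Morse property of the sides lying on $\gamma_x,\gamma_y$. The basic observation is that the three sides of $T_n$ have lengths $n$, $n$, and $d_n := d(\gamma_x(n), \gamma_y(n)) \leq 2n$, so any one side has length at most $3$ times the sum of the other two. Given $p$ on one side of $T_n$, one chooses a concatenation $\tau$ of at most two geodesic subsegments from the other two sides that connects $p$ (or an endpoint close to it) to a vertex of $T_n$ lying on an $N$-Morse side. Using the triangle inequalities available from the fact that both $\gamma_x(n)$ and $\gamma_y(n)$ realize $d(o,\cdot) = n$, one verifies that $\tau$ is a $(3,0)$-quasi-geodesic whose endpoints lie on an $N$-Morse side of $T_n$. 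The Morse property then places $\tau$, and hence $p$, in the $N(3,0)$-neighborhood of that side. Covering the three possible configurations for $p$ (one per side of $T_n$) and combining the Morse estimate with the short reverse comparison along the Morse side accounts for the factor $4$ in the final constant $4N(3,0)$.

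The main obstacle is the verification that the detour $\tau$ is genuinely a $(3,0)$-quasi-geodesic. The upper bound $d(\tau(s),\tau(t)) \leq |t-s|$ is automatic since $\tau$ is a concatenation of geodesics, but the lower bound $d(\tau(s),\tau(t)) \geq |t-s|/3$ is delicate: it is not a formal consequence of the length-to-endpoint-distance ratio being at most $3$, because pairs of points lying on different subsegments can, in principle, be close even when their parameters differ considerably. The trick is to exploit the fact that both $\gamma_x(n)$ and $\gamma_y(n)$ lie on a sphere of radius $n$ about $o$: this forces the projections onto $\sigma_{x,n}$ and $\sigma_{y,n}$ (via the function $d(o,\cdot)$) to control the Gromov products, and one may split $\gamma_n$ at its midpoint to ensure that in each of the two cases only favorable parameter ranges arise, where the required lower bound holds by a direct triangle-inequality calculation.

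Finally, once the slimness of each $T_n$ is established with the uniform constant $4N(3,0)$, the limiting step is routine: for any $p \in \gamma$, pick $p_n \in \gamma_n$ converging to $p$ (this is possible by the Arzel\`a--Ascoli construction of $\gamma$ in Construction~\ref{LimitGeodesics}), use slimness of $T_n$ to produce $q_n \in \sigma_{x,n} \cup \sigma_{y,n}$ with $d(p_n, q_n) \leq 4N(3,0)$, and pass to a subsequential limit $q \in \gamma_x \cup \gamma_y$ (properness of $X$ guarantees a convergent subsequence) with $d(p,q) \leq 4N(3,0)$. The symmetric statements for $p \in \gamma_x$ or $p \in \gamma_y$ follow even more directly, since each such $p$ already lies on a finite-stage $\sigma_{x,n}$ or $\sigma_{y,n}$ for $n$ sufficiently large. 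This completes the desired bound of $4N(3,0)$ on the slimness of the limit triangle.
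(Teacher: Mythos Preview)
The paper does not supply its own argument here: the proposition is quoted from \cite{cordes:2016ab} and marked \qed. The proof there follows the Masur--Minsky closest-point trick, exactly as in the paper's own Lemma~\ref{lem: SH implies thin triangles}. Concretely, for the finite triangle with vertices $o,\ a_n=\gamma_x(n),\ b_n=\gamma_y(n)$, one lets $z'\in\gamma_n$ be the point \emph{closest to the opposite vertex} $o$, and observes that the concatenation $[o,z']\ast[z',a_n]$ (a new geodesic $[o,z']$ followed by a subsegment of $\gamma_n$) is a $(3,0)$-quasi-geodesic with both endpoints on the $N$-Morse side $\sigma_{x,n}$; the key inequality $d(z',v)\le d(u,v)$ for $u\in\sigma_{x,n}$, $v\in[z',o]$ comes precisely from the closest-point property. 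Applying the Morse gauge then places $[z',a_n]$ in the $N(3,0)$-neighbourhood of $\sigma_{x,n}$, and symmetrically for $[z',b_n]$ and $\sigma_{y,n}$. Iterating this with the roles of the vertices permuted (and using the two-sided Hausdorff comparison between an $N$-Morse geodesic and a $(3,0)$-quasi-geodesic sharing its endpoints) accounts for the remaining sides and the factor $4$. Your limiting step via Arzel\`a--Ascoli is fine and matches the intended argument.

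The gap is in your construction of the detour $\tau$. You describe $\tau$ as a concatenation of subsegments \emph{of the other two sides}, but such a path cannot have both endpoints on a single $N$-Morse side (the only $N$-Morse sides are $\sigma_{x,n}$ and $\sigma_{y,n}$; the third side $\gamma_n$ is not known to be Morse), so the Morse property cannot be applied to it. More seriously, splitting $\gamma_n$ at its \emph{midpoint} does not yield the $(3,0)$-quasi-geodesic estimate: the inequality $d(z',v)\le d(u,v)$ fails in general for the midpoint, and the ``sphere of radius $n$'' constraint $d(o,a_n)=d(o,b_n)=n$ only gives $d(o,v)\ge n-\min\{t,d_n-t\}$, which is not enough --- the ratio $(n-s+t)/d(u,v)$ can blow up. What makes the argument work is exactly the nearest-point projection of the opposite vertex onto the side, and this projection does not lie on either of the remaining sides; it is joined to the vertex by a \emph{new} geodesic, not by a subsegment of the triangle.
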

This shows in particular that limit triangles satisfy Property (iii) of Theorem \ref{IdealMorseTriangle} and thereby finishes the proof of the theorem. For later reference we record the following consequence from Property (i) of Theorem \ref{IdealMorseTriangle}:
\begin{corollary} \label{cor: limit geodesic bdd hausdorff}
If $\gamma$ is a geodesic ray which is asymptotic to some $\xi \in \partial_s^M X_o$, then it is of bounded Hausdorff distance to a limit leg for $\xi$. Similarly, if $\gamma$ is a geodesic line which is bi-asymptotic to some $(\xi^-, \xi^+) \in \partial_s^M X_o \times \partial_s^M X_o$, then it is at bounded Hausdorff distance from a limit geodesic. Moreover, the bounds depend only on the respective Morse gauges.\qed
\end{corollary}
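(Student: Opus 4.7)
The plan is to derive both parts directly from the two results cited immediately before the statement: Remark~\ref{RayAsympMorse} (two asymptotic $N$-Morse geodesic rays satisfy $d(c(t),c'(t)) \leq C_N$ for all $t$) and Proposition~\ref{prop:limit geodesics are asymptotic} (two geodesic lines bi-asymptotic to the same pair of $N$-boundary points are at Hausdorff distance bounded purely in terms of $N$). Theorem~\ref{IdealMorseTriangle}(ii) will be invoked to ensure that all the geodesic rays and lines involved enjoy a common Morse gauge $N'$ depending only on $N$, so that the two cited results apply with uniform constants.

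For the ray case, let $\xi \in \partial_s^M X_o$ be an $N$-boundary point and let $\gamma$ be a geodesic ray asymptotic to $\xi$. By the conventions set up after Remark~\ref{rem:seq morse boundary}, $\gamma$ is at bounded Hausdorff distance from a geodesic ray $\gamma'$ emerging from $o$ that represents $\xi$, and $\gamma'$ is $N'$-Morse with $N'$ depending only on $N$ (and on $d(\gamma(0),o)$). A limit leg $\gamma_x$ for $\xi$ supplied by Construction~\ref{LimitGeodesics} is another $N'$-Morse geodesic ray emerging from $o$ and asymptotic to $\xi$. Applying Remark~\ref{RayAsympMorse} to $\gamma'$ and $\gamma_x$ gives $d(\gamma'(t),\gamma_x(t)) \leq C_{N'}$ for all $t \geq 0$, hence $d_{\mathrm{Haus}}(\gamma',\gamma_x) \leq C_{N'}$, and a triangle inequality for Hausdorff distance then yields the finite bound on $d_{\mathrm{Haus}}(\gamma,\gamma_x)$, controlled by $N$.

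For the line case, let $\gamma$ be a geodesic line bi-asymptotic to $(\xi^-,\xi^+) \in \partial_s X_o^{(N)} \times \partial_s X_o^{(N)}$, and let $\gamma_{\mathrm{lim}}$ be a limit geodesic line from $\xi^-$ to $\xi^+$ obtained via Construction~\ref{LimitGeodesics}. Both $\gamma$ and $\gamma_{\mathrm{lim}}$ are bi-asymptotic to the same pair of $N$-boundary points, so Proposition~\ref{prop:limit geodesics are asymptotic} directly supplies a constant $K'=K'(N)$ with $d_{\mathrm{Haus}}(\gamma,\gamma_{\mathrm{lim}}) \leq K'$, which is exactly the uniformity claim in the line case.

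There is no substantial obstacle here; the only bookkeeping is to confirm that all the rays and lines in play can be assigned a common Morse gauge controlled by $N$. For the line case this is built into Theorem~\ref{IdealMorseTriangle}(ii) applied to the limit triangle produced by Construction~\ref{LimitGeodesics}, while for the ray case it follows from the standard fact that any geodesic ray asymptotic to an $N$-boundary point is $N'$-Morse with $N'$ depending only on $N$ (and the displacement of its starting point from $o$). Once this uniformity of Morse gauges is in place, both halves are immediate applications of the two cited results.
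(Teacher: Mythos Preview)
Your proposal is correct and follows essentially the same route as the paper. The paper records this corollary as an immediate consequence of Theorem~\ref{IdealMorseTriangle}(i), which in turn is obtained precisely from Remark~\ref{RayAsympMorse} and Proposition~\ref{prop:limit geodesics are asymptotic}; you simply unpack this derivation explicitly, handling the ray and line cases via these two results directly and invoking Theorem~\ref{IdealMorseTriangle}(ii) to ensure uniform Morse gauges.
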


\section{Limit sets and stable subspaces}
Throughout this section, $X$ denotes a proper geodesic metric space and $o \in X$ denotes a basepoint. We can associate with every subset $Y$ of $X$ a corresponding subset of the Morse boundary $\partial_s^M X_o$:
\begin{definition} \label{defn:limit set}
Let $X$ be a proper geodesic metric space and let $Y \subset X$ be a subset. Then the \emph{Morse limit set}\index{Morse!limit set}\index{limit set!Morse} of $Y$ in $\partial_s^M X$ is
\[\cL(Y)= \setcon{\xi \in \partial_s^M X}{\exists N \text{ and } (y_n)\subset X^{(N)}_{o} \cap Y \text{ such that } \lim{y_n}= \xi}.\]
\end{definition}
Note that if $X$ happens to be Gromov hyperbolic, then $\cL(Y) \subset \partial X$ is just the Gromov limit set as defined in Definition \ref{DefGromovLimitSet}. A number of basic properties of Gromov limit sets carry over to the current, more general setting. In particular, if two subspaces $Y, Y' \subset X$ are at bounded Hausdorff distance, then $\cL(Y) = \cL(Y')$.
\begin{example}[Some limit sets]
If $Y$ is bounded, then the limit set $\cL(Y)$ is empty; the converse is not true, since for example the Morse limit set of every flat of dimension $>2$ is empty. The limit set of a geodesic ray is either empty (if it is not Morse) or a singleton (if it is Morse); the limit set of a geodesic line $\gamma$ has at most $2$ points, and if $\gamma$ is bi-asymptotic to $(\xi^-,\xi^+)$ then its limit set is $\{\xi^-,\xi^+\}$.
\end{example}
To obtain more interesting examples, we are going to need the following definition, which is a variant of a definition of Durham and Taylor \cite{Durham}.
\begin{definition}\label{DefStable}
Let $X$ be a proper geodesic metric space, let $Y\subset X$ be a subset and let $N$ be a Morse gauge.
The subset $Y$ is called  $N$-\emph{stable}\index{stable subset} if it is quasi-convex and if every pair of points in $Y$ can be connected by a geodesic which is $N$-Morse in $X$.  
\end{definition}
\begin{remark}[Properties of stable subsets]\label{StableHyperbolic} Let $X$ be a proper geodesic metric space and $Y\subset X$.
\begin{enumerate}
\item A quasi-convex subset $Y \subset X$ is stable if and only if every pair of points in $Y$ can be connected by a \emph{quasi-}geodesic which is $N$-Morse for some Morse gauge $N$, since every such quasi-geodesic is then uniformly close to an actual geodesic, which is $N'$-Morse for a uniform $N'$.
\item By definition, if $Y \subset X$ is stable and $x,y \in Y$, then there exists a Morse geodesic connecting $x$ and $y$ inside $X$. Since $Y$ is quasi-convex, this geodesic is contained in $N_C(Y)$, and hence we can find a quasi-geodesic in $Y$ connecting $x$ and $y$ of distance at most $C$. Thus any two points in $Y$ can be connected inside $Y$ by a Morse quasi-geodesic with uniform QI constants and uniform Morse gauge.
\item From (2) one deduces that every stable subspace is large-scale geodesic and quasi-geodesic. 
\item Note that a quasi-geodesic proper metric space in which any two points can be joined by a uniform Morse quasi-geodesic is Morse hyperbolic by Theorem \ref{thm:strongly hyperbolic equiv}.  It thus follows from (2) and (3) that every stable subset of a proper geodesic metric space is necessarily Morse hyperbolic.
\item If $X$ is a proper geodesic metric space and $Y$ is a proper metric space, then a quasi-isometric embedding $f: Y \to X$ is called a \emph{stable embedding}\index{stable embedding} if its image is a stable subset in the sense of Definition \ref{DefStable}. This definition is equivalent to the original definition of Durham and Taylor (\cite{Durham}). Note that if $Y$ is a stable subset of $X$, then its inclusion is a quasi-isometric embedding (since $Y$ is quasi-convex), hence a stable embedding.
\item If $X$ is hyperbolic, then every quasi-convex subset of $X$ is stable by definition.
\end{enumerate}
\end{remark}
It is immediate from the definitions that if $X$, $Y$ are proper geodesic metric spaces, then every stable embedding $f: Y \to X$ is Morse-preserving. Let $o \in Y$ be a basepoint and set $o' \coloneqq  f(o)$.  Since $Y$ is hyperbolic by Remark \ref{StableHyperbolic}, we have $\partial_s^M Y = \partial_s Y$, and thus by Proposition \ref{prop:morse preserving}
we obtain an injection\[
\partial_s f \colon \partial_s Y \hookrightarrow \partial_s^M X, \quad [(x_n)] \to [(f(x_n))].
\]
which is a homeomorphism onto its image. Note that the image of $\partial_s f$ is compact, since $\partial_s Y$ is. This argument does not apply directly to the inclusion of an arbitrary stable subspace $\iota: Y \to X$, since $Y$ might not be geodesic.
Nevertheless, we note that $\iota(Y)$ is Morse hyperbolic and thus there exists a hyperbolic geodesic metric space $Y' \in [Y]_{\mathrm{geod}}$ and, thus a quasi-isometry $g \colon Y' \to Y$. So the map $g \circ \iota$ is a stable embedding and we get that $\partial_sY'=\partial Y$ (Definition \ref{def: Morse hyperbolic boundary}) topologically embeds in $\partial_s^M X$ and its image is $\cL(Y)$ \cite[Theorem 3.16, Proposition 3.18]{Cordes:2016aa}. 
This proves the following proposition:
\begin{proposition}\label{BoundaryStableSubset} Let $X$ be a proper geodesic metric space and let $Y \subset X$ be a stable subspace. Then there exists a proper geodesic hyperbolic metric space $Y'\in [Y]_{\mathrm{geod}}$ and a quasi-isometry $g \colon Y' \to Y$, and the following hold:
\begin{enumerate}[(i)]
\item The inclusion $\iota: Y \to X$ induces a map $\partial_s (g\circ \iota): \partial_s Y' \to \partial_s^M X$. 
\item The image of $\partial_s (g\circ \iota)$ is precisely the limit set $\mathcal L(Y)$.
\end{enumerate}
In particular, $Y$ is Morse-hyperbolic and the limit set $\mathcal L(Y)$ is a compact subset of the Morse boundary $\partial_s^M X$ and homeomorphic to the Gromov boundary $\partial Y$ of $Y$ in the sense of Definition \ref{def: Morse hyperbolic boundary}.\qed
\end{proposition}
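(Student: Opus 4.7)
The plan is to build the quasi-isometric model $Y'$ using the fact that stability forces $Y$ itself to be Morse hyperbolic, and then transfer the sequential boundary of $Y'$ to a subset of $\partial_s^M X$ via a stable-embedding argument. First, I would observe by Remark \ref{StableHyperbolic}(2)--(4) that any two points in $Y$ can be connected inside $Y$ by a Morse quasi-geodesic with uniform constants and uniform Morse gauge, so that $Y$, equipped with the restricted metric, is a proper quasi-geodesic space whose quasi-geodesic triangles are uniformly thin. By Theorem \ref{thm:strongly hyperbolic equiv} this means $Y$ is Morse hyperbolic, and hence by Lemma \ref{CharLSG} quasi-isometric to some proper geodesic hyperbolic space $Y' \in [Y]_{\mathrm{geod}}$; fix a quasi-isometry $g\colon Y'\to Y$ once and for all.

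For (i), I would compose $g$ with the inclusion $\iota\colon Y \hookrightarrow X$ to obtain $f := g\circ \iota \colon Y' \to X$. Since $g$ is a quasi-isometry and $\iota(Y)\subset X$ is stable, $f$ is again a stable embedding: any two points in $f(Y')$ lie within bounded distance of $\iota(Y)$, and $\iota(Y)$ is quasi-convex in $X$ and any two of its points are joined by a uniformly Morse geodesic. Hence for every Morse gauge $N_0$ on $Y'$, the image under $f$ of an $N_0$-Morse geodesic ray from a basepoint $o'\in Y'$ is a quasi-geodesic in $X$ at bounded Hausdorff distance from an $N$-Morse geodesic ray from $o := f(o')$, where $N$ depends only on $N_0$ and the quasi-isometry constants. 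Since every geodesic ray in the hyperbolic space $Y'$ is uniformly Morse, this establishes that $f$ is Morse-preserving in the sense of Proposition \ref{prop:morse preserving}. That proposition then yields a continuous injection $\partial_r^M f\colon \partial_r^M Y' \hookrightarrow \partial_r^M X$, which is a homeomorphism onto its image; translating into the sequential model gives the desired $\partial_s(g\circ\iota)\colon \partial_s Y' \to \partial_s^M X$.

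For (ii), I would show the two inclusions separately. For the forward inclusion, a point in the image is represented by a sequence $(f(y'_n))$ where $(y'_n)\subset Y'$ converges to infinity; by the Morse-preserving property just established, this sequence lies in $\iota(Y)\cap X_o^{(N)}$ for some fixed Morse gauge $N$, so its limit lies in $\cL(Y)$ by Definition \ref{defn:limit set}. For the reverse inclusion, take $\xi \in \cL(Y)$ represented by a sequence $(y_n)\subset Y\cap X_o^{(N)}$. Choose a quasi-inverse $\bar g\colon Y \to Y'$ and set $y'_n := \bar g(y_n)$. Using that $\bar g$ distorts Gromov products in $Y'$ by a controlled affine amount (this is where I would appeal to Lemma \ref{QIGromovInequality} applied inside the Morse-hyperbolic space $Y$, whose Gromov boundary coincides with $\partial Y'$), the sequence $(y'_n)$ converges to infinity in $Y'$ and defines a point $\xi'\in \partial_s Y'$. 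Since $f(y'_n) = g(\bar g(y_n))$ is at uniformly bounded distance from $y_n$ inside the hyperbolic approximant $X_o^{(N')}$ for some $N'$ depending only on $N$, the two sequences represent the same point in $\partial_s^M X$, so $\partial_s(g\circ\iota)(\xi') = \xi$.

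The concluding sentences then follow: $Y$ is Morse-hyperbolic by the first step, $\cL(Y)$ is compact as the continuous image of the compact space $\partial_s Y'$, and it is homeomorphic to $\partial Y$ by Definition \ref{def: Morse hyperbolic boundary} together with the homeomorphism onto its image furnished by Proposition \ref{prop:morse preserving}. I expect the main obstacle to be the reverse inclusion in (ii): one must argue carefully that the pulled-back sequence $(\bar g(y_n))$ actually converges in the Gromov boundary sense inside $Y'$, which requires using that $(y_n)$ lies in a fixed hyperbolic approximant $X_o^{(N)}$ and that Gromov products behave well under quasi-isometries between such approximants — essentially a controlled-QI version of Corollary \ref{cor: limit geodesic bdd hausdorff} combined with the fact that $Y$ and $Y'$ have the same (Morse-hyperbolic) Gromov boundary.
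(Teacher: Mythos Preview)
Your approach coincides with the paper's: establish Morse hyperbolicity of $Y$ via Remark \ref{StableHyperbolic} and Theorem \ref{thm:strongly hyperbolic equiv}, pick a geodesic model $Y'$ with $g\colon Y'\to Y$, note that the composite into $X$ is a stable embedding and hence Morse-preserving, and invoke Proposition \ref{prop:morse preserving}. The paper's proof is the paragraph immediately preceding the proposition, and for part (ii) it simply cites \cite[Theorem 3.16, Proposition 3.18]{Cordes:2016aa}, whereas you supply a direct two-inclusion argument; your version is correct and makes the statement self-contained.

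Two small remarks. First, both you and the paper write $g\circ\iota$ for the composite $Y'\to Y\to X$, but of course $\iota\circ g$ is meant. Second, your invocation of Lemma \ref{QIGromovInequality} for the reverse inclusion is slightly off target, since that lemma is stated for geodesic hyperbolic spaces and $Y$ need not be geodesic. The fix is immediate: the Gromov product on $Y$ (with the restricted metric) agrees with that on $X$, so a sequence $(y_n)\subset Y\cap X_o^{(N)}$ converging to infinity in $X_o^{(N)}$ also converges to infinity in $Y$; then apply the Gromov-product control of quasi-isometries directly to $\bar g\colon Y\to Y'$, using that the target $Y'$ is geodesic hyperbolic.
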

For ease of reference we spell out the hyperbolic case:
\begin{corollary}\label{BoundaryStableSubsetHyp} If $Y$ is a quasi-convex subset of a proper geodesic Gromov hyperbolic space $X$, then $Y$ is Morse hyperbolic and its Gromov limit set $\mathcal L(Y) \subset \partial X$ is homeomorphic to the Gromov boundary $\partial Y$ of $Y$ in the sense of Definition \ref{def: Morse hyperbolic boundary}.\qed
\end{corollary}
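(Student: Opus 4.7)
The plan is to deduce this corollary directly from Proposition \ref{BoundaryStableSubset} by checking that the quasi-convexity hypothesis upgrades to stability in the hyperbolic setting. The key observation, already noted in Remark \ref{StableHyperbolic}(6), is that when $X$ is a proper geodesic Gromov hyperbolic space, every geodesic in $X$ is $N$-Morse with a Morse gauge $N$ depending only on the hyperbolicity constant $\delta$ of $X$ (this is essentially the Morse lemma, Lemma \ref{Morse1}). Consequently, any two points in a quasi-convex subset $Y \subset X$ are joined by a geodesic which is automatically $N$-Morse in $X$, so $Y$ is $N$-stable in the sense of Definition \ref{DefStable}.

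Having upgraded quasi-convexity to stability, I would invoke Proposition \ref{BoundaryStableSubset} applied to the inclusion $\iota\colon Y \hookrightarrow X$. This proposition supplies a proper geodesic hyperbolic space $Y' \in [Y]_{\mathrm{geod}}$ together with a quasi-isometry $g\colon Y' \to Y$, asserts that $Y$ is Morse hyperbolic, and yields a homeomorphism $\partial_s(g \circ \iota)\colon \partial_s Y' \to \mathcal{L}(Y)$ where the Morse limit set is taken inside the Morse boundary $\partial_s^M X$. By Definition \ref{def: Morse hyperbolic boundary}, $\partial Y$ is by definition the homeomorphism type $[\partial_s Y']$, so the map above realizes the desired homeomorphism $\partial Y \cong \mathcal{L}(Y)$.

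It remains to reconcile the Morse limit set $\mathcal{L}(Y) \subset \partial_s^M X$ with the Gromov limit set $\mathcal{L}(Y) \subset \partial X$ appearing in the statement. Since $X$ is itself proper, geodesic and Gromov hyperbolic, every geodesic ray in $X$ is $N_0$-Morse for a single fixed Morse gauge $N_0$ (again by Lemma \ref{Morse1}), so the hyperbolic approximants $X_o^{(N)}$ stabilize: we have $\partial_s^M X = \partial_s X = \partial X$ as topological spaces. Under this identification, the sequential definition of the Morse limit set reduces to the sequential definition of the Gromov limit set from Definition \ref{DefGromovLimitSet}, so the two notions of $\mathcal{L}(Y)$ coincide.

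I do not expect any real obstacle here, since the statement is essentially a packaging of Proposition \ref{BoundaryStableSubset}; the only point requiring a little care is the identification $\partial_s^M X = \partial X$, which must be made explicit so that the topology on $\mathcal{L}(Y)$ induced from the Morse boundary matches the subspace topology inherited from the Gromov compactification.
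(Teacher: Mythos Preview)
Your proposal is correct and matches the paper's intended argument: the corollary is stated with a \qed immediately after Proposition \ref{BoundaryStableSubset} as ``the hyperbolic case'', and your reduction via Remark \ref{StableHyperbolic}(6) (quasi-convex implies stable in hyperbolic spaces) together with the identification $\partial_s^M X = \partial X$ is precisely the specialization the authors have in mind.
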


\section{Weak hulls}
Throughout this section, $X$ denotes a proper geodesic metric space and $o \in X$ denotes a basepoint. We have seen in the previous section that limit sets of stable subsets are compact. In this section we are going to establish the converse:
\begin{proposition} \label{prop:compact hyp hull}
Let $X$ be a proper geodesic metric space and let $Z \subset \partial_s^M X_o$ be a subset. Then the following are equivalent:
\begin{enumerate}[(i)]
\item $Z$ is compact.
\item There exists a stable subset $Y \subset X$ such that $\mathcal L(Y) = Z$.
\end{enumerate} 
\end{proposition}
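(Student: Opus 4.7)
The implication (ii)$\Rightarrow$(i) is essentially free: if $Y \subset X$ is stable with $\mathcal{L}(Y) = Z$, then by Proposition \ref{BoundaryStableSubset} the set $Z$ is homeomorphic to the Gromov boundary of a proper geodesic hyperbolic space, which is compact. So the real content is the converse, and the natural strategy is to produce the desired stable subset as the weak hull of $Z$, i.e.\
\[
Y := \mathfrak{H}(Z) = \bigcup_{(\xi^-,\xi^+) \in Z^{(2)}} \gamma_{\xi^-,\xi^+}(\mathbb{R}),
\]
where $\gamma_{\xi^-,\xi^+}$ denotes some choice of bi-infinite geodesic line bi-asymptotic to $(\xi^-,\xi^+)$ (as produced by Theorem \ref{IdealMorseTriangle}). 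The starting point is that, as observed in Remark \ref{rem:seq morse boundary}, compactness of $Z$ in the direct limit topology forces $Z \subset \partial_s X^{(N)}_o$ for some single Morse gauge $N$. This uniformity is what will make everything work.

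The heart of the argument is to establish three properties of $Y$: (a) the defining geodesic lines $\gamma_{\xi^-,\xi^+}$ are $N'$-Morse for a Morse gauge $N'$ depending only on $N$; (b) $Y$ is quasi-convex in $X$; and (c) any two points of $Y$ are connected by an $N''$-Morse geodesic in $X$ for some $N''$ depending only on $N$. Property (a) is immediate from Theorem \ref{IdealMorseTriangle}(ii). For (b) and (c), given two points $p,q \in Y$ lying on lines $\gamma_1 = \gamma_{\xi^-_1,\xi^+_1}$ and $\gamma_2 = \gamma_{\xi^-_2,\xi^+_2}$ respectively, I would run the usual limit-triangle argument: combine the ideal triangles with vertex $o$ on the four boundary points $\xi^{\pm}_i$, all of which are $\delta$-slim with $\delta = \delta(N)$ by Theorem \ref{IdealMorseTriangle}(iii), to produce a Morse geodesic segment from $p$ to $q$ staying in a $D(N)$-neighborhood of $Y$. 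This simultaneously shows quasi-convexity and gives the uniform Morse geodesics connecting pairs of points required for stability in the sense of Definition \ref{DefStable}.

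It remains to verify that $\mathcal{L}(Y) = Z$. The inclusion $Z \subset \mathcal{L}(Y)$ is immediate: each $\xi \in Z$ is represented by a sequence along any geodesic ray in $Y$ asymptotic to $\xi$, and these rays are uniformly $N'$-Morse. For the reverse inclusion $\mathcal{L}(Y) \subset Z$, suppose $\xi \in \mathcal{L}(Y)$ is represented by a sequence $(y_n) \subset Y \cap X^{(M)}_o$ for some Morse gauge $M$; each $y_n$ lies on some geodesic $\gamma_n = \gamma_{\xi^-_n,\xi^+_n}$ with $\xi^{\pm}_n \in Z$. Since $Z$ is compact, pass to a subsequence so that $\xi^{\pm}_n \to \eta^{\pm} \in Z$. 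An Arzel\`a--Ascoli argument (Lemma \ref{ArzelaAscoli1}) together with the uniform Morse bound on $\gamma_n$ yields a subsequential limit geodesic $\gamma_\infty$ bi-asymptotic to $(\eta^-,\eta^+)$, which passes uniformly close to $y_n$; this forces $\xi \in \{\eta^-,\eta^+\} \subset Z$.

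The main obstacle, in my view, is not any single step but the careful bookkeeping of Morse gauges: one must verify that every auxiliary gauge that appears (for the defining lines, for the connecting segments in the slim triangle argument, and for the limit geodesics in the $\mathcal{L}(Y) \subset Z$ step) depends only on the original gauge $N$ and not on the specific points of $Z$ chosen. Without this uniformity one cannot conclude that $Y$ is stable or even that the relevant Arzel\`a--Ascoli limits stay within a single hyperbolic stratum. Once this uniformity is secured, the rest of the argument proceeds in close analogy with the corresponding hyperbolic statements (cf.\ Proposition \ref{WeakHullQuasiconvex} and Corollary \ref{LimitSetInvertsHull}) used in Section \ref{Section:ConvexCocompact}.
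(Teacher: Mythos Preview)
Your approach is exactly the paper's: take $Y$ to be the weak hull $\mathfrak{H}(Z)$, use compactness to get a single Morse gauge $N$ containing $Z$, deduce stability of $\mathfrak{H}(Z)$ (the paper simply quotes Proposition~\ref{WeakHullQuasiconvex} from \cite{cordes:2016ab} for this, whereas you sketch the slim-ideal-triangle argument yourself), and then show $\cL(\mathfrak{H}(Z))=Z$, which the paper isolates as Proposition~\ref{prop: limit set of hull is limit set again}. One minor point: what you wrote down is the variant $\mathfrak{H}_o(Z)$ with one chosen geodesic per pair, not the full $\mathfrak{H}(Z)$; by Remark~\ref{WeakHullGeodesics} these are at bounded Hausdorff distance when $Z$ is compact, so it does not matter.

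There is, however, a genuine gap in your argument for $\cL(Y)\subset Z$. You want to apply Arzel\`a--Ascoli to the bi-infinite geodesics $\gamma_n=\gamma_{\xi_n^-,\xi_n^+}$, but Lemma~\ref{ArzelaAscoli1} requires them to pass within bounded distance of a fixed point, and since $y_n\to\infty$ there is no reason they should; moreover, even if a limit line $\gamma_\infty$ existed, convergence is only uniform on compacta, so the claim that $\gamma_\infty$ ``passes uniformly close to $y_n$'' does not follow (and the degenerate case $\eta^-=\eta^+$ would also need separate treatment). The paper fixes this by first invoking slimness of the ideal $N$-Morse triangle $(o,\xi_n^-,\xi_n^+)$ (Theorem~\ref{IdealMorseTriangle}) to replace $y_n$ by a point within $\delta$ on one of the \emph{legs from $o$}, say on $\gamma_n:=[o,\xi_n^+]$. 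These rays are all based at $o$, so Arzel\`a--Ascoli applies directly and yields a limit ray $\gamma$ from $o$; since $Z$ is closed, $\gamma(\infty)\in Z$, and the sequence $(y_n)$ stays $\delta$-close to the converging rays, forcing $\xi=\gamma(\infty)\in Z$. Swapping your bi-infinite lines for these based rays is the missing step.
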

As mentioned above, the implication (ii)$\implies$(i) was already established in Proposition \ref{BoundaryStableSubset}. For the converse implication we need a way to construct a subset of $X$ from a given subset of $\partial_s^M X_o$. In the case of a CAT(-1) space we could simply take the convex hull; in the general case, some more care has to be taken.
\begin{definition}\label{def:weak hull}\index{weak hull}
Let $Z \subset \partial_s^M X_o$ and set $Z^{(2)} \coloneqq  \{(\xi, \eta) \in Z^2 \mid \xi \neq \eta\}$. The \emph{weak hull} $\mathfrak{H}(Z)$ of $Z$ is then defined as
\[
\mathfrak{H}(Z) \coloneqq  \bigcup_{(\xi, \eta)\in Z^{(2)}}\{\gamma(\R) \mid \gamma: \R \to X \text{ is bi-asymptotic to }(\xi, \eta)\} \subset X.
\]
\end{definition}
By definition, if $|Z|<2$ then the weak hull is empty. 
\begin{remark}[Variants of the weak hull]\label{WeakHullGeodesics}
If $X$ is a CAT(-1) space and $Z \subset \partial X$, then for all $(\xi, \eta) \in Z^{(2)}$ there is a unique bi-infinite geodesic line $\gamma_{\xi, \eta}$ which is bi-asymptotic to $(\xi, \eta)$. In the general case there may be more than one geodesic line which is bi-asymptotic to a given $(\xi, \eta) \in Z^{(2)}$. However, by Theorem \ref{IdealMorseTriangle} for each pair $(\xi, \eta) \in Z^{(2)}$ there exists always at least one such geodesic, and we can pick a preferred geodesic $\gamma_{\xi, \eta}$ (for example, a limit geodesic) for each pair. We can then define
\begin{equation}\label{H0Z}
\mathfrak{H}_0(Z) \coloneqq  \bigcup_{(\xi, \eta)\in Z^{(2)}} \gamma_{\xi, \eta}(\R).
\end{equation}
If $X$ is CAT(-1), then we simply have $\mathfrak{H}(Z) = \mathfrak{H}_0(Z)$, but in general the inclusion $\mathfrak{H}_0(Z) \subset \mathfrak{H}(Z)$ is strict. However, if $Z \subset \partial_s X^{(N)}_{o}$ for a fixed Morse gauge $N$, then it follows from Theorem \ref{IdealMorseTriangle} that $\mathfrak{H}_0(Z)$ and $\mathfrak{H}(Z)$ are at bounded Hausdorff distance. In particular this holds if $X$ is Gromov hyperbolic or  $Z$ is relatively compact (see Remark \ref{rem:seq morse boundary}).
\end{remark}
\begin{example}[Ideal polygons] If $Z$ is a finite subset in the (Morse) boundary of the hyperbolic plane $\bH^2$, then $\mathfrak{H}(Z) = \mathfrak{H}_0(Z) \subset \bH^2$ consists of the boundary of the ideal polygon with vertex set $Z$ together with the ``diagonal'' geodesic lines which connect the vertices. If $|Z| > 2$, then the weak hull is not convex; however, since ideal polygons in hyperbolic space are slim, it is of bounded Hausdorff distance from its convex hull, and thus at least quasi-convex.
\end{example}
In general we cannot expect the weak hull of an arbitrary subset of the Morse boundary to be quasi-convex. However, the situation is better for compact subsets (\cite[Prop.\ 4.2]{cordes:2016ab}):
\begin{proposition}\label{WeakHullQuasiconvex} Let $Z\subset \partial_s^M X_o$ be compact. Then $\mathfrak{H}(Z)$ is a stable subspace of $X$ and in particular Morse hyperbolic and quasi-convex.\qed
\end{proposition}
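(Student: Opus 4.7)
The plan is to leverage compactness of $Z$ to reduce all the metric data to a single Morse gauge, then exploit slimness of ideal Morse triangles (Theorem \ref{IdealMorseTriangle}) to connect any two points of $\mathfrak{H}(Z)$ by a uniform quasi-geodesic that runs inside a uniform neighborhood of $\mathfrak{H}(Z)$, and finally use the Morse property of the bi-infinite geodesic lines comprising $\mathfrak{H}(Z)$ to transfer these good properties to genuine geodesic segments of $X$.

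First, since $Z$ is compact in the direct-limit topology on $\partial_s^M X_o$, Remark \ref{rem:seq morse boundary} gives a single Morse gauge $N$ with $Z \subset \partial_s X^{(N)}_o$. Applying Theorem \ref{IdealMorseTriangle} uniformly to triples drawn from $\{o\} \cup Z$ produces a single Morse gauge $N'$ and a single slimness constant $\delta$, both depending only on $N$, so that every geodesic ray from $o$ asymptotic to any $\xi \in Z$ and every bi-infinite geodesic line bi-asymptotic to any pair in $Z^{(2)}$ is $N'$-Morse, and every ideal Morse triangle with vertices in $\{o\} \cup Z$ is $\delta$-slim.

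Next, I would fix $x, y \in \mathfrak{H}(Z)$, with $x$ on a bi-infinite $N'$-Morse geodesic $\gamma_1$ bi-asymptotic to $(\xi_1^-, \xi_1^+) \in Z^{(2)}$ and $y \in \gamma_2$ bi-asymptotic to $(\xi_2^-, \xi_2^+)$. Applying $\delta$-slimness of the ideal triangle $(o, \xi_1^-, \xi_1^+)$ to $x$ yields a choice of sign $\epsilon \in \{\pm\}$ and a point $x' \in [o, \xi_1^\epsilon]$ with $d(x, x') < \delta$; symmetrically I produce $y' \in [o, \xi_2^{\epsilon'}]$ with $d(y, y') < \delta$. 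In the generic case $\xi_1^\epsilon \neq \xi_2^{\epsilon'}$, the third side of the ideal triangle $(o, \xi_1^\epsilon, \xi_2^{\epsilon'})$ is a bi-infinite $N'$-Morse geodesic $\gamma_3$ which by definition lies in $\mathfrak{H}(Z)$. A second application of $\delta$-slimness pushes $x'$ and $y'$ either into a $\delta$-neighborhood of $\gamma_3$ or into a $\delta$-neighborhood of the basepoint $o$. Concatenating the short bridges at $x, x', y', y$ with subsegments of the involved rays and a subsegment of $\gamma_3$ produces a path $\pi$ from $x$ to $y$ whose total length and backtracking are uniformly controlled by $\delta$, $N'$, and the position of $x'$ and $y'$ on their respective rays — yielding a uniform quasi-geodesic inside a uniform neighborhood of $\mathfrak{H}(Z)$. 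The handful of degenerate configurations (coincident $\xi_1^\epsilon$ and $\xi_2^{\epsilon'}$, or $x'$ and $y'$ both near $o$) are handled by direct inspection of a single slim ideal triangle and give paths of the same uniform quality.

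Finally — and this is the main technical obstacle — I would upgrade the quasi-geodesic $\pi$ to an actual geodesic segment $[x, y]$ in $X$ with controlled Morse behavior. Because outside a bounded initial and terminal piece $\pi$ runs uniformly close to the $N'$-Morse geodesic $\gamma_3 \subset \mathfrak{H}(Z)$, the defining Morse property of $\gamma_3$ forces the geodesic $[x, y]$ itself to lie within a uniform distance of $\gamma_3$, hence of $\mathfrak{H}(Z)$; this supplies the quasi-convexity constant. Applied to arbitrary $(K, C)$-quasi-geodesic segments with endpoints on $[x, y]$, the same reasoning gives a Morse gauge $N''$ for $[x, y]$ depending only on $K, C$ and $N$, which is exactly the data required by Definition \ref{DefStable}; hence $\mathfrak{H}(Z)$ is $N''$-stable. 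Remark \ref{StableHyperbolic}(4) then delivers Morse hyperbolicity, and quasi-convexity is built into the definition of stability. The hard part of the plan is the constant-bookkeeping across the several ideal triangles used to construct $\pi$ and the verification that $\pi$ is genuinely a \emph{uniform} quasi-geodesic rather than merely a bounded-distance path — both hinging on the uniformity of $N$ that compactness of $Z$ provides.
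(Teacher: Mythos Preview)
The paper does not supply a proof; the proposition is quoted from \cite[Prop.\ 4.2]{cordes:2016ab} (hence the bare \qed). Your strategy is the natural one and matches the structure of the cited argument: compactness of $Z$ pins down a single Morse gauge $N$, after which all ideal triangles in play are uniformly slim and all relevant geodesics uniformly Morse, and one manufactures a controlled path between arbitrary $x,y \in \mathfrak{H}(Z)$ running near a single bi-infinite geodesic of $\mathfrak{H}(Z)$.

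Two places where your write-up would need tightening before it becomes a proof. First, slimness of the triangle $(o, \xi_1^\epsilon, \xi_2^{\epsilon'})$ does not directly push $x'$ into a $\delta$-neighborhood of $o$; it pushes $x'$ close to $\gamma_3$ \emph{or} close to the other leg $[o,\xi_2^{\epsilon'}]$, and the further reduction to ``close to $o$'' requires the separate (true, but extra) fact that two $N'$-Morse rays from $o$ with distinct endpoints in $\partial_s^M X_o$ diverge after a uniformly bounded initial segment. Second, in your upgrade step the Morse property of $\gamma_3$ cannot be invoked on $[x,y]$ directly, since the endpoints $x,y$ are not on $\gamma_3$; one should instead append the short bridges $x'' \to x$ and $y \to y''$ to $[x,y]$ to obtain a $(1,4\delta)$-quasi-geodesic with endpoints genuinely on $\gamma_3$, apply the $N'$-Morse property of $\gamma_3$ to \emph{that}, and then conclude that $[x,y]$ is itself uniformly Morse via the standard lemma that a geodesic at bounded Hausdorff distance from a Morse geodesic inherits a controlled Morse gauge. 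With these two fixes your outline goes through.
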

In the case of real hyperbolic space we can do even better. We can find a closed convex set at bounded Hausdorff distance from the weak hull (see Remark \ref{rem:hulls in hyp spaces}):
\begin{proposition}[Weak hull vs.\ convex hull in hyperbolic spaces]\label{WeakHullConvexHull}
Let $L \subset \partial \bH^n$ and set
\[
\cH(L) \coloneqq  \overline{\mathrm{conv}}(\mathfrak{H}(L)),
\] where $\overline{\mathrm{conv}}(\mathfrak{H}(L))$ is the smallest closed convex subset of $\mathbb{H}^n$ which contains all geodesics between points in $L$.
Then $\mathfrak{H}(L)$ is at bounded Hausdorff distance from $\cH(L)$.
\end{proposition}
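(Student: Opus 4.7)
The inclusion $\mathfrak{H}(L) \subset \cH(L)$ is immediate from the definition of $\cH(L)$ as a closed convex superset of $\mathfrak{H}(L)$, so the task is to produce a constant $C = C(n)$ such that $\cH(L) \subset N_C(\mathfrak{H}(L))$. My plan is to reduce this to a classical thinness property of ideal polytopes in $\bH^n$, proceeding in two main steps.

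First, I would reduce from the closed convex hull of $\mathfrak{H}(L)$ to finite ideal polytopes with vertices in $L$. Since $\mathrm{conv}(\mathfrak{H}(L))$ can be built as the union over finite subsets $F = \{p_1, \ldots, p_k\} \subset \mathfrak{H}(L)$ of their geodesic convex hulls $\mathrm{conv}(F)$, it suffices to control each such $\mathrm{conv}(F)$. Each $p_i$ lies on a bi-infinite geodesic $\gamma_{\xi_i,\eta_i}$ with ideal endpoints $\xi_i, \eta_i \in L$, so $p_i \in \mathrm{conv}(\{\xi_i,\eta_i\})$, and consequently $F$ lies in the ideal polytope $\mathrm{conv}(F')$ spanned by the finite set $F' := \{\xi_1, \eta_1, \ldots, \xi_k, \eta_k\} \subset L$. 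Hence $\mathrm{conv}(F) \subset \mathrm{conv}(F')$, and $\mathrm{conv}(\mathfrak{H}(L))$ is contained in the union of ideal polytopes $\mathrm{conv}(F')$ as $F'$ ranges over finite subsets of $L$. Second, I would invoke the classical thinness of ideal polytopes in hyperbolic space: there exists a constant $D_n$, depending only on $n$, such that for every finite $F' \subset \partial \bH^n$ the ideal polytope $\mathrm{conv}(F')$ lies in the $D_n$-neighborhood of its 1-skeleton $\bigcup_{\xi, \eta \in F'} \gamma_{\xi, \eta}$. This follows by inductively triangulating $\mathrm{conv}(F')$ into ideal simplices of decreasing dimension and observing that each ideal $m$-simplex in $\bH^n$ has inradius bounded by a universal constant $C_m$, with $C_2 = \log(1+\sqrt{2})$ being the classical inradius of an ideal triangle in $\bH^2$ (which lies in a totally geodesic copy of $\bH^2 \subset \bH^n$ spanned by its three vertices). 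Telescoping the projections from the $m$-skeleton down to the 1-skeleton yields $D_n \leq \sum_{m=2}^{n} C_m$. Since the 1-skeleton of $\mathrm{conv}(F')$ consists entirely of geodesics belonging to $\mathfrak{H}(L)$, we conclude $\mathrm{conv}(\mathfrak{H}(L)) \subset N_{D_n}(\mathfrak{H}(L))$, and taking closures delivers the required bound on $\cH(L)$.

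The main technical obstacle is the uniform thinness estimate in the second step: one must show that the inradius of an ideal $m$-simplex in $\bH^n$ is bounded independently of the shape of the simplex, i.e., of the cross-ratios of its ideal vertices on $\partial \bH^n$. An ideal simplex can degenerate in many ways — vertices colliding on the boundary sphere, several vertices becoming coplanar — but negative curvature prevents the simplex from becoming ``fat'', and the inradius collapses toward the lower-dimensional limit rather than growing. A clean proof passes to the upper-half-space model with one ideal vertex placed at infinity, reducing the thinness of an ideal $m$-simplex to a Euclidean estimate for its ``finite faces'' together with a horoball computation; this is the technical heart of the argument, and from the resulting bound the universal constant $C$ controlling the Hausdorff distance between $\mathfrak{H}(L)$ and $\cH(L)$ emerges, depending only on $n$.
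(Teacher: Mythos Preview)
Your argument is correct and takes a genuinely different route from the paper. The paper does not reduce to ideal simplices at all: instead it invokes the quasi-convexity of $\mathfrak{H}(L)$ (Proposition~\ref{WeakHullQuasiconvex}) and then builds a closed convex set containing $\mathfrak{H}(L)$ by intersecting half-spaces. Concretely, for each geodesic ray from a fixed basepoint $o \in \mathfrak{H}(L)$ that eventually leaves $\mathfrak{H}(L)$, the paper places a hyperplane perpendicular to the ray just past its last intersection with $\mathfrak{H}(L)$ and keeps the half-space containing $o$; the intersection of all these half-spaces is then shown to lie in a bounded neighborhood of $\mathfrak{H}(L)$, and since the closed convex hull $\cH(L)$ is the \emph{smallest} closed convex set containing $\mathfrak{H}(L)$, the result follows.

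Your approach via thinness of ideal simplices is more elementary and self-contained: it does not need the quasi-convexity proposition as input, and it makes the dimension-dependence of the constant explicit as $D_n \le \sum_{m=2}^n C_m$. The paper's approach, on the other hand, is softer and would adapt more readily to settings where one has quasi-convexity but no clean simplicial decomposition. Both methods are specific to genuine hyperbolic space (yours through the uniform inradius bound on ideal simplices, the paper's through the existence of separating totally geodesic hyperplanes), so neither generalizes for free.
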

\begin{proof} It suffices to construct a closed convex set $\cH$ containing $X \coloneqq  \mathfrak{H}(L)$ which is at bounded Hausdorff distance from $X$. For this, using quasi-convexity of $X$, we fix a constant $K$ such that every geodesic segment containing two points of $X$ is contained in a $K$-neighborhood of $X$.

Let $o\in \mathfrak{H}(L)$. Given a geodesic ray $\gamma\colon [0,\infty)\to \bH^n$ emanating from $o$ we define
\[
t_\gamma \coloneqq  \sup \{t \geq 0 \mid \gamma(t) \in X\} \in [0, \infty].
\]
We then denote by $\mathcal G_l$ the set of all such geodesic rays $\gamma\colon [0,\infty)\to \bH^n$ which leave $X$ in the sense that $t_\gamma < \infty$. If $\gamma \in \cG_l$ then $p_\gamma \coloneqq  \gamma(t_\gamma + 1) \in N_2(X) \setminus\{o\}$. There thus exists a unique hyperplane $H(\gamma, p_\gamma)\subset \bH^n$ through $p_\gamma$ which is perpendicular to $\gamma$ at $p_\gamma$.
This hyperplane separates $\bH^n$ into two half-spaces, and we denote by $H^+(\gamma, p_\gamma)$ the closure of the half-space containing $o$. We now claim that $\mathfrak H(L)$ is at bounded Hausdorff distance from the closed convex set
\[
\cH \coloneqq \bigcap_{\gamma \in \mathcal G_l} H^+(\gamma, p_\gamma).
\]
It is immediate that $X \subset \cH$, for if $\gamma$ is a geodesic ray from $o$ through some $x \in X$ then the segment of $\gamma$ from $o$ to $x$ is contained in $\cH$. Conversely let $x \in \subset \cH$ and let $\gamma$ be a geodesic ray emanating from $o$ through $x$. If $\gamma$ does not leave $X$ or $x = \gamma(t)$ with $t< t_\gamma$, then $x$ lies on a geodesic segment between $o$ and some other point in $X$, and hence $x \in N_K(X)$. On the other hand, if $\gamma \in \cG_l$ and $x = \gamma(t)$ for some $t \in [t_\gamma, t_\gamma+1]$, then $d(x, \gamma(t_\gamma)) \leq 1$ and thus $x \in N_2(X)$. We thus have $\cH \subset N_L(X)$, where $L \coloneqq  \max\{2,K\}$.
\end{proof}
As another application of Proposition \ref{WeakHullQuasiconvex} we can now finish the proof of Proposition \ref{prop:compact hyp hull}. It only remains to show the following:
\begin{proposition}\label{prop: limit set of hull is limit set again}
Let $Z \subset \partial_s^M X_o$ be compact. Then $\cL(\mathfrak{H}(Z))=Z$.
\end{proposition}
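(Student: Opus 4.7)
The idea is to handle the two inclusions $Z \subseteq \cL(\mathfrak{H}(Z))$ and $\cL(\mathfrak{H}(Z)) \subseteq Z$ separately, using as the central tool the fact that, because $Z$ is compact in the direct-limit topology on $\partial_s^M X_o$, there is a single Morse gauge $N$ such that $Z \subseteq \partial_s X^{(N)}_o$ (see Remark \ref{rem:seq morse boundary}). Consequently, by Theorem \ref{IdealMorseTriangle}, there is a gauge $N'$ and a constant $\delta$, both depending only on $N$, so that every geodesic line bi-asymptotic to a pair in $Z^{(2)}$ is $N'$-Morse, and every ideal Morse triangle with vertices $(o,\xi,\eta)$ for $(\xi,\eta) \in Z^{(2)}$ is $\delta$-slim.

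For the inclusion $Z \subseteq \cL(\mathfrak{H}(Z))$, the degenerate cases $|Z| \leq 1$ are trivial (if $|Z|=0$ then $\mathfrak{H}(Z) = \emptyset$ and both sides are empty; if $|Z|=1$, note that the singleton $\{\xi\}$ is already represented by any sequence on a geodesic ray to $\xi$, so one reduces to the general case by enlarging $Z$ slightly — but in the intended applications $|Z| \geq 2$, so I will focus on this case). Given $\xi \in Z$ and any $\eta \in Z \setminus \{\xi\}$, Theorem \ref{IdealMorseTriangle} produces a geodesic line $\gamma$ bi-asymptotic to $(\xi,\eta)$, which is $N'$-Morse and entirely contained in $\mathfrak{H}(Z)$. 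Taking $y_n := \gamma(n)$, the sequence lies in $\mathfrak{H}(Z)$, in a fixed hyperbolic approximant $X^{(N'')}_o$ (with $N''$ depending only on $N$ and $d(o,\gamma(0))$), and converges to $\xi$ in that approximant.

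For the reverse inclusion, suppose $\zeta \in \cL(\mathfrak{H}(Z))$. Then there is a gauge $M$ and a sequence $(y_n) \subseteq \mathfrak{H}(Z) \cap X^{(M)}_o$ with $y_n \to \zeta$ in $\partial_s X^{(M)}_o$; in particular $d(o,y_n) \to \infty$ and $(y_m \mid y_n)_o \to \infty$. Each $y_n$ lies on a bi-infinite geodesic $\gamma_n$ bi-asymptotic to a pair $(\xi_n,\eta_n) \in Z^{(2)}$. By compactness of $Z$, after extracting subsequences we may assume $\xi_n \to \xi^\ast \in Z$ and $\eta_n \to \eta^\ast \in Z$ in $\partial_s X^{(N)}_o$. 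Now complete each $\gamma_n$ to an ideal $N$-Morse triangle with vertices $(o, \xi_n, \eta_n)$ via geodesic rays $\gamma_n^+, \gamma_n^-$ from $o$. By Theorem \ref{IdealMorseTriangle}(iii) these triangles are $\delta$-slim, so $y_n$ lies within $\delta$ of either $\gamma_n^+$ or $\gamma_n^-$; passing once more to a subsequence, say it is always $\gamma_n^+$, and let $z_n \in \gamma_n^+$ with $d(y_n,z_n) \leq \delta$.

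It then remains to show $\zeta = \xi^\ast$ (so $\zeta \in Z$). Since $d(o,y_n) \to \infty$, also $d(o,z_n) \to \infty$. By Arzel\`a–Ascoli (Lemma \ref{ArzelaAscoli1}), the geodesic rays $\gamma_n^+$ subsequentially converge uniformly on compacta to a geodesic ray $\alpha$ from $o$, which is $N'$-Morse (since each $\gamma_n^+$ is). Using that $\xi_n \to \xi^\ast$ in $\partial_s X^{(N)}_o$ together with the standard fact that uniform-on-compacta convergence of geodesic rays implies convergence of their endpoints at infinity in the appropriate hyperbolic approximant, $\alpha$ represents $\xi^\ast$. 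Since $d(o,z_n) \to \infty$ and $z_n$ lies on $\gamma_n^+$, the points $z_n$ converge to $\xi^\ast$ in the corresponding approximant (Gromov products $(z_n \mid \alpha(t))_o$ go to infinity), and since $d(y_n,z_n)$ is uniformly bounded by $\delta$, the same is true of $y_n$. Hence $\zeta = \xi^\ast \in Z$, as desired.

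The main obstacle is the last step: one must choose a single hyperbolic approximant $X^{(M')}_o$ (containing both the $y_n$ and the relevant pieces of $\gamma_n^+$) in which to make sense of the claim ``$z_n \to \xi^\ast$ and $y_n \to \xi^\ast$'', and verify that the uniformity of the Morse gauges provided by Theorem \ref{IdealMorseTriangle} is enough to keep everything in a fixed stratum. This is essentially a bookkeeping of Morse gauges, but it is the only place where the direct-limit nature of $\partial_s^M X_o$ creates real technical friction; once the stratum is fixed, hyperbolicity of $X^{(M')}_o$ reduces the convergence statement to a standard Gromov-product computation.
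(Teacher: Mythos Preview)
Your proof is correct and follows essentially the same route as the paper: both inclusions are handled via the uniform Morse gauge coming from compactness of $Z$, the slimness of ideal Morse triangles (Theorem \ref{IdealMorseTriangle}), and an Arzel\`a--Ascoli subsequence argument for the rays $\gamma_n^+$. The only cosmetic differences are that the paper does not bother extracting convergent subsequences $\xi_n \to \xi^\ast$ of endpoints (it simply notes that the limit ray represents a point of $Z$ because $Z$ is closed), and for the inclusion $Z \subseteq \cL(\mathfrak{H}(Z))$ the paper uses that the rays from $o$ are eventually $\delta$-close to the bi-infinite line rather than reading the sequence directly off the line; your bookkeeping of strata is in fact somewhat more careful than the paper's. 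One small caveat: your attempted reduction for $|Z|=1$ does not work, and indeed the identity $\cL(\mathfrak{H}(Z))=Z$ literally fails in that case since $\mathfrak{H}(Z)=\emptyset$; this edge case is implicit in the paper as well and is irrelevant in the intended applications.
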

\begin{proof}
We first show that $Z \subset \cL(\mathfrak{H}(Z))$. Let $\eta, \xi \in Z$. By definition of the Morse limit set we have $\eta, \xi \in X^{(N)}_o$ for some Morse gauge $N$. Let $\gamma_\eta, \gamma_\xi$ be $N$-Morse geodesic rays based at $o$ representing $\eta$ and $\xi$ respectively. Let $\alpha$ be a bi-infinite geodesic line joining $\eta$ and $\xi$. Since ideal Morse triangles are $\delta$-slim (Theorem \ref{IdealMorseTriangle}), we know that, for some constant $\delta$ depending on $N$, eventually $\gamma_\eta$, respectively $\gamma_\xi$ will be $\delta$-close to $\alpha$. It follows that $\xi, \eta \in \cL(\mathfrak{H}(Z))$.

We now show that $\cL(\mathfrak{H}(Z))\subset Z$. Let $[(x_n)] \in \cL(\mathfrak{H}(Z))$. By definition we know that $(x_n) \subset \mathfrak{H}(Z)$, so  $x_n \in \alpha_n$, where $\alpha_n$ is a bi-infinite geodesic line. By Proposition \ref{WeakHullQuasiconvex}, since $Z$ is compact, we know that the $\alpha_n$ are all $N$-Morse for some Morse gauge $N$. Let $\gamma_n$ and $\gamma_n'$ be two geodesic rays based at $o$ representing the endpoints of $\alpha_n$. Since triangles are $\delta$-thin (Theorem \ref{IdealMorseTriangle}), we know that $x_n$ is either $\delta$ away from some point on $\gamma_n$ or some point on $\gamma_n'$. Up to renaming  $\gamma_n$ and $\gamma_n'$ we may assume that $x_n$ is always $\delta$-close to some point on $\gamma_n$. Note that the $\gamma_n$ represent points in $Z$. Up to replacing by a subsequence, we know by 
Arzel\`a--Ascoli Lemma \ref{ArzelaAscoli1} that $\{\gamma_n\}$ converges uniformly on compact sets to a geodesic ray $\gamma$ based at $o$.  Since $Z$ is closed, $\gamma$ represents a point in $Z$. And since the sequence of $\gamma_n$ converges uniformly, we know that $\{x_n\}$ is $\delta$-close to $\gamma$ and the result follows.  
\end{proof}
As a special case we record:
\begin{corollary}\label{LimitSetInvertsHull}
Let $X$ be a proper, geodesic $\delta$-Rips hyperbolic metric space. Let $Z$ be a non-empty closed subset of $\partial X$. Then $\cL(\mathfrak{H}(Z))=Z$.
\end{corollary}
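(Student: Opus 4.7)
The plan is to deduce Corollary \ref{LimitSetInvertsHull} as an immediate specialization of Proposition \ref{prop: limit set of hull is limit set again}, exploiting the fact that in the proper geodesic Gromov hyperbolic setting the Morse theory collapses onto the classical Gromov theory. Concretely, I would first observe that by Proposition \ref{RipsGromov} a proper geodesic $\delta$-Rips hyperbolic space is Gromov hyperbolic, and hence by Theorem \ref{thm:strongly hyperbolic equiv} it is Morse hyperbolic. By the Morse Lemma \ref{Morse1}, every geodesic in $X$ is $N$-Morse for a single Morse gauge $N$ depending only on $\delta$; consequently, for any choice of basepoint $o$, the sequential Morse boundary $\partial^M_s X_o$ reduces to the single stratum $\partial_s X^{(N)}_o$, which is canonically identified with the Gromov boundary $\partial X$. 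Under this identification the Morse limit set of Definition \ref{defn:limit set} coincides with the Gromov limit set of Definition \ref{DefGromovLimitSet}.

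Next I would invoke compactness of the Gromov boundary: since $X$ is proper and geodesic, $\partial X$ is compact, and hence every closed subset $Z \subseteq \partial X$ is automatically compact. Thus the hypothesis of Proposition \ref{prop: limit set of hull is limit set again} is satisfied, and applying that proposition yields $\cL(\mathfrak{H}(Z)) = Z$ at once.

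The only minor subtlety, and the one point in the write-up that requires a comment, is the degenerate case $|Z|=1$: by the definition of the weak hull one has $Z^{(2)} = \emptyset$, so $\mathfrak{H}(Z) = \emptyset$ and $\cL(\mathfrak{H}(Z)) = \emptyset \neq Z$. This indicates that the statement must be read under the implicit convention $|Z| \geq 2$ (matching the analogous implicit restriction in Proposition \ref{prop: limit set of hull is limit set again}, whose proof picks two, a priori distinct, points $\eta, \xi \in Z$ and joins them by a bi-infinite geodesic $\alpha$). With this convention in place the reduction above is entirely routine and no further obstacle arises.
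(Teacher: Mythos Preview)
Your reduction is exactly what the paper intends: the corollary is stated with a \qed and introduced as ``a special case'' of Proposition~\ref{prop: limit set of hull is limit set again}, so the compactness of $\partial X$ (hence of the closed set $Z$) together with the identification of the Morse and Gromov boundaries in the hyperbolic setting is the whole argument. Your observation about the degenerate case $|Z|=1$ is also correct and worth flagging---the weak hull of a singleton is empty by Definition~\ref{def:weak hull}, so the statement tacitly assumes $|Z|\geq 2$, as does the proof of Proposition~\ref{prop: limit set of hull is limit set again} itself.
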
 

\chapter[Consequences of Breuillard--Green--Tao theory (by S.\ Machado)]{Consequences of Breuillard--Green--Tao theory (by S.\ Machado)} \label{AppendixBGT}

\section{Statement of the result}
The following result was established in \cite[Thm.\ 7.1]{Hrushovski} (see also \cite[Thm.\ 11.1]{BGT}):
\begin{theorem}[Hrushovski]\label{ThmHru}
Let $\Gamma$ be a finitely-generated group. Assume that there exist $k \in \bN$ and finite $k$-approximate subgroups $\Lambda_1 \subset \Lambda_2 \subset \dots \subset \Gamma$ such that $\Gamma = \bigcup \Lambda_n$. Then there exists a finite index subgroup $\Gamma' \subset \Gamma$ which is nilpotent.
\end{theorem}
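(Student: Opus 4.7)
The plan is to apply the Breuillard--Green--Tao structure theorem for approximate groups via an ultraproduct construction, and then to extract a nilpotent subgroup from the resulting Lie model. First I would fix a non-principal ultrafilter $\mathcal{U}$ on $\bN$ and form the ultraproduct $\Gamma^\ast := \prod_\mathcal{U} \Gamma$ together with the ultra approximate subgroup $\Lambda^\ast := \prod_\mathcal{U} \Lambda_n$; by construction, $\Lambda^\ast$ is internally a $k$-approximate subgroup of $\Gamma^\ast$. The key input is then the Breuillard--Green--Tao structure theorem for ultra approximate groups \cite{BGT}: after passing to a sub-ultra-approximate-subgroup $\Lambda^{\ast\ast} \subset (\Lambda^\ast)^4$ of bounded index in $\Lambda^\ast$, one obtains a good model $\pi\colon \langle \Lambda^{\ast\ast}\rangle \to L$ into a connected Lie group $L$ of dimension bounded in terms of $k$ alone.

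The next step is to use the Lie structure of $L$ to cut out a nilpotent piece. Since $L$ is a connected Lie group, it admits an open identity neighborhood whose iterated commutators eventually vanish modulo any fixed chart, by Baker--Campbell--Hausdorff applied in exponential coordinates on the radical. Pulling such a neighborhood back through $\pi$ and intersecting with $\Lambda^{\ast\ast}$ yields, after one more reduction to a bounded index approximate subgroup, an object $\Lambda^{\ast\ast\ast}$ whose enveloping group has nilpotent image under $\pi$. Property (M2) of a good model forces the intersection $\ker(\pi) \cap \langle \Lambda^{\ast\ast\ast}\rangle$ to be trivial by a standard escape argument: any nontrivial kernel element, suitably iterated, would have to leave every bounded piece of $\Lambda^{\ast\ast\ast}$ while its image stays at the identity of $L$, contradicting that $\pi^{-1}$ of a sufficiently small identity neighborhood lies inside $\Lambda^{\ast\ast\ast}$. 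Hence $\langle \Lambda^{\ast\ast\ast}\rangle$ is itself nilpotent of some class $c = c(k)$.

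Finally one descends back to $\Gamma$ by transfer: the assertions ``$\Lambda_n'$ is an approximate subgroup of $\Lambda_n^4$ of index at most $M$'' and ``$\langle \Lambda_n'\rangle$ is nilpotent of class at most $c$'' are internal, so they hold for $\mathcal{U}$-almost every $n$ with uniform constants $M$ and $c$. Since $\Gamma = \bigcup_n \Lambda_n$ and the $\Lambda_n$ are increasing, the resulting finite index subgroups $\langle \Lambda_n'\rangle$ can be organized into a compatible ascending chain (by fixing a base index $n_0$, restricting attention to a single nilpotent coset class of $\langle \Lambda_{n_0}'\rangle$, and intersecting further members of the chain with this class). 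The union of this chain is then a nilpotent subgroup of $\Gamma$ of finite index.

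The main obstacle is the first step: the Breuillard--Green--Tao structure theorem is the technical heart of \cite{BGT} and rests on a deep application of the Gleason--Yamabe machinery from the solution of Hilbert's fifth problem, combined with the Hrushovski--style model-theoretic analysis of ultra approximate groups. Once this black box is available, the remaining steps reduce to Lie-theoretic bookkeeping on $L$ and a routine transfer from the nonstandard to the standard setting; the diagonalization in the last paragraph is the only genuinely combinatorial point, and it is harmless because the index and nilpotency class are uniform.
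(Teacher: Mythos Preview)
The paper does not prove this theorem; it is stated with a \qed and attributed to \cite{Hrushovski} and \cite{BGT}. What the appendix does prove is the approximate-group generalization (Theorem~\ref{AppBGTMain}), and its method is quite different from yours: it takes Theorem~\ref{BGTOriginal} as a black box (so the $\Lambda_n'$ generating virtually nilpotent groups are handed to us already), then defines a \emph{single} subset $\Lambda' := \{\lambda : \lambda \in (\Lambda_m')^2 \text{ for $\omega$-almost all }m\}$ directly inside $\Gamma$, proves it has finite index via Ruzsa covering and a counting argument, and finally uses finite generation to trap a finite generating set of $(\Lambda')^\infty$ inside one specific $(\Lambda_m')^2$, so that $(\Lambda')^\infty \subset (\Lambda_m')^\infty$ is virtually nilpotent.

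Your sketch has two genuine gaps. First, the escape argument in the second paragraph is backwards. If $g \in \ker(\pi)$ then $g^n \in \ker(\pi)$ for all $n$, so $\pi(g^n) = e$ lies in \emph{every} identity neighborhood $U$; property (M2) then says $g^n \in \pi^{-1}(U) \subset \Lambda^{\ast\ast\ast}$, i.e.\ kernel elements \emph{stay} inside the approximate group rather than escaping it. There is no contradiction, and indeed in the BGT framework the kernel of a good Lie model is essentially never trivial---it contains the entire ``infinitesimal'' part of the ultra approximate group. Nilpotency of the generated group comes instead from the nilprogression structure, not from injectivity of $\pi$.

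Second, the descent in your final paragraph does not work as written. The approximate subgroups $\Lambda_n'$ produced by BGT are non-canonical and there is no reason the groups $\langle \Lambda_n'\rangle$ should form an ascending chain, or even be comparable. Your phrase ``restricting to a single nilpotent coset class and intersecting further members'' is not a well-defined operation, and uniform bounds on index and nilpotency class alone do not let you glue the $\langle \Lambda_n'\rangle$ into one finite-index subgroup of $\Gamma$. This is precisely the difficulty the paper's ultrafilter-in-$\Gamma$ construction is designed to circumvent: rather than chaining the $\Lambda_n'$, it builds $\Lambda'$ once and for all and then uses finite generation to land inside a single $\Lambda_m'$.
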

We emphasize that finite generation of $\Gamma$ needs to be assumed; indeed, every locally finite group admits an exhaustion by finite ($1$-approximate) subgroups. In our proof of the polynomial growth theorem we have used the following approximate version of Theorem \ref{ThmHru} (cf. Theorem \ref{BGTConvenient}); as in the group case, the assumption of (geometric) finite generation is crucial.
\begin{theorem}\label{AppBGTMain}
Let $(\Lambda, \Lambda^\infty)$ be a geometrically finitely-generated approximate group. Assume that there exist $k \in \bN$ and finite $k$-approximate subgroups $\Lambda_1 \subset \Lambda_2 \subset \dots \subset \Lambda$ such that $\Lambda = \bigcup \Lambda_n$. Then there exists a finite index approximate subgroup $\Lambda' \subset \Lambda^{16}$ which generates a nilpotent group.
\end{theorem}
We will actually construct a finite index subgroup $\Lambda' \subset \Lambda^8$ such that $\Lambda'$ generates a \emph{virtually} nilpotent group. This will be sufficient to deduce the theorem, since if $\Gamma < (\Lambda')^\infty$ is a finite-index nilpotent subgroup $\Gamma$, then $(\Lambda')^2 \cap \Gamma$ is an approximate subgroup of $\Lambda^{16}$ which generated a nilpotent group and is of finite index in $\Lambda'$ (hence in $\Lambda$). We will explain in this appendix how to derive Theorem \ref{AppBGTMain} from the following result, which is contained in \cite[Thm.\ 1.6]{BGT}:
\begin{theorem}[Breuillard--Green--Tao]\label{BGTOriginal} For every $k \in \mathbb N$ there exists $C(k) \in \mathbb N$ such that if $(\Lambda_n, \Lambda_n^\infty)_{n \in \bN}$ is a sequence of finite $k$-approximate groups, then there exist a constant $C(k)$ and approximate subgroups $\Lambda_n' \subset \Lambda_n^4$ such that each $\Lambda_n$ is covered by $C(k)$ left-translates of $\Lambda_n'$ and each $\Lambda_n'$ generates a virtually nilpotent subgroup of $\Lambda_n^\infty$.
\end{theorem}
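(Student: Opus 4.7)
The plan is to follow the compactness/ultraproduct strategy of Breuillard--Green--Tao. I would argue by contradiction: suppose that for some $k$ there is a sequence $(\Lambda_n)$ of finite $k$-approximate groups witnessing failure, in the sense that no $\Lambda'_n \subset \Lambda_n^4$ satisfying the conclusion can be chosen so that the number of translates covering $\Lambda_n$ is bounded independently of $n$.

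The first step is to pass to a non-standard model. I would fix a non-principal ultrafilter $\cU$ on $\bN$ and form the ultraproduct group $G_\infty := \prod_n \Lambda_n^\infty / \cU$, inside which $\Lambda_\infty := \prod_n \Lambda_n / \cU$ sits as an ``ultra approximate group'': it is symmetric, contains the identity, and inherits a uniform covering $\Lambda_\infty^2 \subset F \Lambda_\infty$ with $|F| \leq k$ since the constant $k$ is independent of $n$. The point of this passage is that the approximate algebraic structure survives the ultralimit even though finiteness does not; moreover each $\Lambda_n$ being finite endows $\Lambda_\infty$ with a pseudofinite counting measure, which will be the key resource later.

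The heart of the argument is the Hrushovski/BGT Lie model theorem: the ultra approximate group $\Lambda_\infty$ admits a \emph{good model} in the sense of the main text, i.e., a group morphism $\pi \colon \langle\Lambda_\infty\rangle \to L$ into a locally compact group $L$ such that $\pi(\Lambda_\infty)$ is relatively compact and such that some identity neighborhood $U \subset L$ satisfies $\pi^{-1}(U) \subset \Lambda_\infty^4$. Constructing such a $\pi$ is the hard step; it combines Hrushovski-type stabilizer arguments against the pseudofinite measure with the Gleason--Yamabe solution to Hilbert's fifth problem to promote the ambient locally compact group to a Lie group. After quotienting by a compact normal subgroup and passing to a neighborhood of the identity component, I may assume $L$ is a connected Lie group whose dimension is bounded purely in terms of $k$.

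The remainder of the argument is then essentially Lie-theoretic. In a connected Lie group, the Baker--Campbell--Hausdorff formula together with the Gleason--Margulis ``escape from subgroups'' lemma produces a small identity neighborhood $V \subset L$ whose iterated commutators vanish past a step depending only on $\dim L$, hence only on $k$; the group generated by $V$ is therefore virtually nilpotent. Pulling $V$ back through $\pi$ yields an approximate subgroup $\Lambda'_\infty \subset \Lambda_\infty^4$ whose enveloping group is virtually nilpotent, and a Haar-measure covering estimate on the compact set $\overline{\pi(\Lambda_\infty)} \subset L$ shows that finitely many left-translates of $\Lambda'_\infty$ cover $\Lambda_\infty$, with the number of translates depending only on $\dim L$ and hence only on $k$. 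Applying {\L}o\'s's theorem then transports this uniform conclusion back to $\cU$-almost every $\Lambda_n$, producing the required $\Lambda'_n \subset \Lambda_n^4$ with uniform constant $C(k)$ and contradicting the assumption. The main obstacle — and by far the deepest component — is the construction of the good model in the third step; once the Lie-group envelope is in hand, the extraction of the virtually nilpotent approximate subgroup is a comparatively standard application of Hilbert's-fifth-problem machinery.
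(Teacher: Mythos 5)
First, a point of context: the paper does not prove this statement at all. It is quoted verbatim as an external result, attributed to Breuillard--Green--Tao \cite[Thm.\ 1.6]{BGT}, and the entire content of Appendix D is the derivation of Theorem \ref{BGTConvenient} \emph{from} it (via the ultrafilter construction of $\Lambda'$ and Proposition \ref{BGTFiniteIndex}). So there is no internal proof to compare your attempt with; what you have written is an outline of the proof of the cited BGT theorem itself, which is a deep result whose proof occupies most of \cite{BGT}. Your outline does correctly identify the broad strategy of that proof (ultraproducts of the counterexample sequence, the pseudofinite counting measure, the Hrushovski/BGT Lie model theorem resting on Gleason--Yamabe, transfer back by \L o\'s), but at this level of detail it defers essentially all of the content — in particular the construction of the good model, which you yourself flag as the hard step — to machinery you do not supply, so it cannot count as a proof.

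More importantly, the one step you do spell out is wrong as stated. You claim that in a connected Lie group $L$ the Baker--Campbell--Hausdorff formula plus Gleason--Margulis escape yields a small identity neighborhood $V$ whose iterated commutators ``vanish past a step depending only on $\dim L$'', so that $\langle V\rangle$ is virtually nilpotent, and you then pull $V$ back through $\pi$. But in a connected Lie group \emph{every} identity neighborhood generates the whole group; for $L=\mathrm{SL}_2(\R)$ the group $\langle V\rangle=L$ is certainly not virtually nilpotent, and iterated commutators of elements near the identity merely become small — they do not vanish. Consequently $\pi^{-1}(V)$ (or $\Lambda_\infty^2\cap\pi^{-1}(V)$) has no reason to generate a virtually nilpotent subgroup, and the final pullback-and-transfer step collapses. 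The actual BGT argument does not extract nilpotency from a nilpotent subgroup of the Lie model: it proceeds by induction on $\dim L$, using escape norms and one-parameter subgroups to find elements of the ultra approximate group with large centralizers (or to pass to a model of strictly smaller dimension), and builds a coset nilprogression inside $A^4$ directly in the ultra approximate group; the virtually nilpotent structure is assembled there, step by step, rather than imported from $L$. If you want to use this theorem in the context of the paper, the honest options are either to cite \cite{BGT} as the paper does, or to reproduce that dimension-induction argument, which is a substantially longer undertaking than your sketch suggests.
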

Note that in Theorem \ref{BGTOriginal} the groups $\Lambda^\infty_n$ are entirely unrelated, i.e.\ we are allowed to consider a family of approximate subgroups in different groups.

One can check that if $C(k)$ is as in Theorem \ref{BGTOriginal}, then our approximate subgroup $\Lambda' \subset \Lambda^8$ (the one generating a virtually nilpotent group) will be of index at most $k^5C(k)$.
Using a more precise version of Theorem \ref{BGTOriginal} one could also deduce that the rank and step of the nilpotent group in question are bounded by a constant depending only on $k$.

\section{The main construction}
We now turn to the setting of Theorem \ref{AppBGTMain}; thus  $(\Lambda, \Lambda^\infty)$ denotes a geometrically finitely-generated approximate group and $\Lambda_1 \subset \Lambda_2 \subset \dots \subset \Lambda$ is an exhaustion of $\Lambda$ by finite $k$-approximate subgroups. We now apply Theorem \ref{BGTOriginal} to the sequence $(\Lambda_n)_{n \in \mathbb N}$. We thus find a constant $C(k)$ and approximate subgroups $\Lambda_n' \subset \Lambda_n^4$  such that  $\Lambda_n$ is covered by $C(k)$ left-translates of $\Lambda_n'$ and $\Lambda_n'$ generates a virtually nilpotent group. We now fix a non-principal ultrafilter $\omega \in \beta \mathbb N \setminus \mathbb N$ and define an approximate subgroup $\Lambda' \subset \Lambda^8$ by
\[
\Lambda' \coloneqq  \{\lambda \in \Lambda^8 \mid \lambda \in (\Lambda_m')^2 \text{ for $\omega$-almost all }m \in \mathbb N\}.
\]
The finite subsets of $\Lambda'$ can be characterized as follows:
\begin{lemma}\label{UltrafilterFiniteSet} Let $F \subset \Lambda^8$ be finite.
\begin{enumerate}[(i)]
\item $F \cap \Lambda' = F \cap (\Lambda_m')^2$ for $\omega$-almost all $m \in \mathbb N$.
\item $F \subset \Lambda'$ if and only if $F \subset (\Lambda_m')^2$ for $\omega$-almost all $m \in \mathbb N$.
\end{enumerate}
\end{lemma}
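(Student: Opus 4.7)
The plan is to exploit the fundamental property of ultrafilters that any finite partition of $\mathbb N$ has exactly one cell in $\omega$. Given a finite set $F = \{f_1, \dots, f_r\} \subset \Lambda^8$, I would associate to each subset $I \subset \{1,\dots,r\}$ the set
\[
B_I := \{m \in \mathbb N \mid \{i \mid f_i \in (\Lambda_m')^2\} = I\}.
\]
The family $\{B_I\}_{I \subset \{1,\dots,r\}}$ is a finite partition of $\mathbb N$, and since $\omega$ is an ultrafilter, exactly one of these, say $B_{I^*}$, lies in $\omega$.

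For (i), I would show that $F \cap \Lambda' = \{f_i \mid i \in I^*\}$. For the forward inclusion, if $i \in I^*$, then for every $m \in B_{I^*}$ we have $f_i \in (\Lambda_m')^2$, so $B_{I^*} \subset \{m \mid f_i \in (\Lambda_m')^2\}$, and since $B_{I^*} \in \omega$ this latter set is also in $\omega$, hence $f_i \in \Lambda'$. Conversely, if $i \notin I^*$, then $B_{I^*} \cap \{m \mid f_i \in (\Lambda_m')^2\} = \emptyset$, so (since $B_{I^*} \in \omega$) the set $\{m \mid f_i \in (\Lambda_m')^2\}$ cannot lie in $\omega$, and thus $f_i \notin \Lambda'$. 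On the other hand, by the very definition of $B_{I^*}$, for every $m \in B_{I^*}$ we have $F \cap (\Lambda_m')^2 = \{f_i \mid i \in I^*\}$, which gives the claimed equality for $\omega$-almost all $m$.

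For (ii), I would simply deduce it from (i): $F \subset \Lambda'$ holds if and only if $F \cap \Lambda' = F$, which by (i) is equivalent to $F \cap (\Lambda_m')^2 = F$ for $\omega$-almost all $m$, i.e.\ $F \subset (\Lambda_m')^2$ for $\omega$-almost all $m$. There is no real obstacle here; the only subtlety is keeping track of the partition argument cleanly, and the finiteness of $F$ is essential, since otherwise the partition into $2^{|F|}$ cells would be infinite and the ultrafilter would not automatically select a single cell.
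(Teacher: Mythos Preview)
Your proof is correct and essentially the same as the paper's. The paper defines $\mathbb N_f := \{m \mid f \in (\Lambda_m')^2\}$ for each $f \in F$ and observes directly that the set of good $m$ equals $\bigcap_{f \in F \cap \Lambda'} \mathbb N_f \cap \bigcap_{f \in F \setminus \Lambda'} (\mathbb N \setminus \mathbb N_f)$, which lies in $\omega$ by closure under finite intersections and complements; your partition cell $B_{I^*}$ is precisely this intersection, so the two arguments are reorganizations of one another.
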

\begin{proof} (i) For $f \in F$ we denote $\mathbb N_f \coloneqq  \{m\in \mathbb N \mid f \in (\Lambda_m')^2\}$. Then $f \in F \iff \bN_f \in \omega$ and since $\omega$ is an ultrafilter we have
\[
f \not \in F \iff \bN_f \not \in \omega \iff \bN \setminus\bN_f \in \omega.
\]
Moreover, since $\omega$ is a filter it is closed under finite intersections. This implies that
\[
\{m \in \bN \mid F \cap \Lambda' = F \cap (\Lambda_m')^2\} = \bigcap_{f \in F \cap \Lambda'} \underset{\in \omega}{\underbrace{\bN_f}} \cap \bigcap_{f \in F \setminus \Lambda'} \underset{\in \omega}{\underbrace{(\bN \setminus\bN_f)}} \in \omega,\]
which is (i), and (ii) follows from (i).
\end{proof}
The main step in our proof of Theorem \ref{AppBGTMain} is the following proposition:
\begin{proposition}\label{BGTFiniteIndex}
The approximate subgroup $\Lambda' \subset \Lambda^8$ is of finite index.
\end{proposition}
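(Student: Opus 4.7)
\medskip

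The plan is to argue by contradiction via a pigeonhole argument on the Breuillard--Green--Tao covering. Suppose that $\Lambda'$ is not of finite index in $\Lambda$, i.e., that there is no finite $F \subset \Lambda^\infty$ with $\Lambda \subset F\Lambda'$. I first note that $\Lambda'$ is symmetric: if $\lambda \in \Lambda'$ then $\lambda^{-1} \in \Lambda^8$ and, since each $(\Lambda_m')^2$ is symmetric, $\lambda \in (\Lambda_m')^2$ for $\omega$-a.a.\ $m$ implies $\lambda^{-1} \in (\Lambda_m')^2$ for $\omega$-a.a.\ $m$. Setting $N := C(k)+1$, I would then inductively choose $\lambda_1, \dots, \lambda_N \in \Lambda$ with $\lambda_i \notin \bigcup_{j<i}\lambda_j \Lambda'$; symmetry of $\Lambda'$ turns this into the two-sided statement $\lambda_i^{-1}\lambda_j \notin \Lambda'$ for all $i \neq j$.

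Next I would transfer these finitely many conditions to a single index $m$ using the ultrafilter. For each pair $i \neq j$ the element $\lambda_i^{-1}\lambda_j$ lies in $\Lambda^2 \subset \Lambda^8$, so the definition of $\Lambda'$ together with the ultrafilter axioms gives
\[
A_{ij} := \{m \in \mathbb N : \lambda_i^{-1}\lambda_j \notin (\Lambda_m')^2\} \in \omega.
\]
Since the exhaustion $\Lambda = \bigcup \Lambda_n$ is increasing, the set $B := \{m : \lambda_1, \dots, \lambda_N \in \Lambda_m\}$ is cofinite, hence also in $\omega$. Intersecting the finitely many $\omega$-large sets $B$ and $(A_{ij})_{i \neq j}$ yields some $m$ for which all $\lambda_i$ sit in $\Lambda_m$ while no $\lambda_i^{-1}\lambda_j$ lies in $(\Lambda_m')^2$.

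For this $m$, Theorem \ref{BGTOriginal} provides a covering $\Lambda_m \subset F_m \Lambda_m'$ with $|F_m| \leq C(k) < N$, so pigeonhole forces two of the $\lambda_i$, say $\lambda_i$ and $\lambda_j$ with $i \neq j$, into the same coset $f\Lambda_m'$. Writing $\lambda_i = f y_i$ and $\lambda_j = f y_j$ with $y_i, y_j \in \Lambda_m'$ and using symmetry of $\Lambda_m'$, I obtain $\lambda_i^{-1}\lambda_j = y_i^{-1}y_j \in (\Lambda_m')^2$, contradicting the choice of $m$. The delicate point -- and the only step that requires care -- is that one must invoke the \emph{original} BGT covering $\Lambda_m \subset F_m \Lambda_m'$ rather than its symmetrized form supplied by Lemma \ref{lemmasymmtrick}. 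The original covering is by translates of $\Lambda_m'$, so pigeonhole produces differences in $(\Lambda_m')^2$, matching the definition of $\Lambda'$ exactly; passing through Lemma \ref{lemmasymmtrick} first would only control differences in $(\Lambda_m')^4$, which is why the claim $\Lambda' \subset \Lambda^8$ (not $\Lambda^{16}$) is achievable with the present definition. No use is made of the geometric finite-generation hypothesis in this step; that hypothesis enters only once we pass from virtual nilpotence of $(\Lambda')^\infty$ to an actual nilpotent finite-index approximate subgroup of $\Lambda^{16}$.
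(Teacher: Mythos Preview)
Your proof is correct and takes a genuinely different route from the paper's.

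The paper argues via a cardinality count (Lemma~\ref{BGTLemma2}): for a finite $F\subset\Lambda$ with the $f\Lambda'$ pairwise disjoint, one bounds $|F(\Lambda_n^2\cap\Lambda')|\leq|\Lambda_n^3|\leq k^2|\Lambda_n|$, then invokes Lemma~\ref{UltrafilterFiniteSet} to realise $\Lambda_n^2\cap\Lambda'$ as $\Lambda_n^2\cap(\Lambda_m')^2$ for some $m\geq n$, and finally uses the symmetrisation trick (Lemma~\ref{lemmasymmtrick}) to bound $|\Lambda_n|\leq C(k)\,|\Lambda_n^2\cap(\Lambda_m')^2|$, yielding $|F|\leq k^2C(k)$; Ruzsa's covering lemma then finishes. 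Your argument bypasses all of this machinery: you pick $C(k)+1$ elements of $\Lambda$ in distinct $\Lambda'$-cosets, transfer the finitely many non-membership conditions $\lambda_i^{-1}\lambda_j\notin\Lambda'$ to a single index $m$ via the ultrafilter, and then pigeonhole directly against the BGT covering $\Lambda_m\subset F_m\Lambda_m'$ with $|F_m|\leq C(k)$. This is more elementary---no cardinality estimates, no Ruzsa, no symmetrisation lemma---and incidentally gives the sharper bound $C(k)$ rather than $k^2C(k)$ for the number of $\Lambda'$-cosets needed to cover $\Lambda$. Your closing remarks are accurate: the pigeonhole must indeed be run against the original covering by $\Lambda_m'$ (so that same-coset differences land in $(\Lambda_m')^2$, matching the definition of $\Lambda'$), and geometric finite generation plays no role at this stage.
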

For the proof of the proposition we use the following lemma:
\begin{lemma}\label{BGTLemma2} Let $F \subset \Lambda$ be finite and assume that the sets $f\Lambda'$ are disjoint as $f$ varies over $F$. Then 
\[
|F|\leq k^2 C(k).
\]
\end{lemma}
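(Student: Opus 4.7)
The plan is to combine the Breuillard--Green--Tao covering of each $\Lambda_m$ by $C(k)$ left-translates of $\Lambda_m'$ with the ultrafilter characterization of $\Lambda'$. The idea is that each $f \in F$ acquires, at every sufficiently large $m$, a ``coset label'' in a set of size $C(k)$, and any two elements that carry the same label on $\omega$-almost every $m$ must lie in the same $\Lambda'$-coset; the disjointness hypothesis then forces them to coincide.

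First, since $F$ is finite and $\Lambda = \bigcup_m \Lambda_m$, the set of $m$ with $F \subset \Lambda_m$ is cofinite, hence lies in $\omega$. For each such $m$, Theorem~\ref{BGTOriginal} provides elements $x_1^{(m)}, \dots, x_{C(k)}^{(m)}$ with $\Lambda_m \subset \bigcup_{i=1}^{C(k)} x_i^{(m)} \Lambda_m'$, so I can choose a map $\phi_m \colon F \to \{1, \dots, C(k)\}$ with $f \in x_{\phi_m(f)}^{(m)} \Lambda_m'$. There are only $C(k)^{|F|}$ such maps, and $\omega$ is an ultrafilter, so exactly one class of the partition of $\mathbb N$ into level sets of $m \mapsto \phi_m$ lies in $\omega$; I call the corresponding map $\phi$. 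For any $f, f' \in F$ with $\phi(f) = \phi(f') = i$, and for $\omega$-almost every $m$, both $f$ and $f'$ lie in $x_i^{(m)} \Lambda_m'$, so
\[
f^{-1} f' \in (x_i^{(m)} \Lambda_m')^{-1} (x_i^{(m)} \Lambda_m') = (\Lambda_m')^2.
\]
Since $f^{-1} f' \in \Lambda^2 \subset \Lambda^8$, the definition of $\Lambda'$ then yields $f^{-1} f' \in \Lambda'$, whence $f' \in f \Lambda'$. The disjointness hypothesis forces $f = f'$, so $\phi$ is injective and $|F| \leq C(k) \leq k^2 C(k)$.

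I do not anticipate any serious obstacle: the argument is essentially a pigeonhole on a finite codomain, dressed up by the ultrafilter to upgrade ``for $\omega$-many $m$'' into a genuine membership statement in $\Lambda'$ via the defining property of the latter. The only delicate point is that the translates $x_i^{(m)} \Lambda_m'$ depend on $m$, so the coset labelling must be stabilised along $\omega$ before it can be transferred to $\Lambda'$; finiteness of both $F$ and the label set $\{1, \dots, C(k)\}$ makes this stabilisation immediate via the pigeonhole property of ultrafilters. The resulting bound $|F| \leq C(k)$ is strictly sharper than the stated $k^2 C(k)$, suggesting that the lemma is phrased with room to spare in view of the application in Proposition~\ref{BGTFiniteIndex}.
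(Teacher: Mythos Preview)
Your proof is correct and in fact yields the sharper bound $|F|\leq C(k)$, as you observe. The argument is genuinely different from the paper's.

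The paper proceeds by a counting argument: it fixes a single $n$ with $F\subset\Lambda_n$, notes that $F(\Lambda_n^2\cap\Lambda')\subset\Lambda_n^3$, and uses the $k$-approximate group inequality $|\Lambda_n^3|\leq k^2|\Lambda_n|$ together with the disjointness hypothesis to obtain $|F|\leq k^2|\Lambda_n|/|\Lambda_n^2\cap\Lambda'|$. It then invokes Lemma~\ref{UltrafilterFiniteSet} to identify $\Lambda_n^2\cap\Lambda'$ with $\Lambda_n^2\cap(\Lambda_m')^2$ for a suitable $m\geq n$, and applies the symmetrization trick (Lemma~\ref{lemmasymmtrick}) to the $C(k)$-translate covering of $\Lambda_n$ by $\Lambda_m'$ to bound $|\Lambda_n|\leq C(k)\,|\Lambda_n^2\cap\Lambda'|$. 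The factor $k^2$ arises precisely from the tripling estimate $|\Lambda_n^3|\leq k^2|\Lambda_n|$.

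Your route bypasses all the counting: you stabilise the $C(k)$-valued coset label along the ultrafilter and read off directly that two elements with the same label satisfy $f^{-1}f'\in(\Lambda_m')^2$ for $\omega$-many $m$, which is exactly the defining condition for $\Lambda'$. This is both shorter and avoids the auxiliary Lemma~\ref{lemmasymmtrick} and the tripling bound entirely; what it buys is the improved constant and a more transparent mechanism. The paper's approach, by contrast, illustrates how the Ruzsa-type machinery interacts with the ultrafilter construction, which may have expository value in the surrounding context even if it is not the optimal path to this particular lemma.
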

\begin{proof} Since $F$ is finite, there exists some $n \in \mathbb N$ such that $F \subset \Lambda_n$. Then $F(\Lambda_n^2 \cap \Lambda') \subset \Lambda_n^3$, and since $\Lambda_n$ is a $k$-approximate group we have
\[
k^2|\Lambda_n| \geq |\Lambda_n^3| \geq |F(\Lambda_n^2 \cap \Lambda')| = \left| \bigsqcup_{f \in F} f(\Lambda_n^2 \cap \Lambda')  \right| = |F| \cdot |\Lambda_n^2 \cap \Lambda'|,
\]
and hence
\begin{equation}\label{FiniteSetEstimate1}
|F| \leq \frac{k^2 |\Lambda_n|}{|\Lambda_n^2 \cap \Lambda'|}.
\end{equation}
Now by Lemma \ref{UltrafilterFiniteSet} we have $\Lambda_n^2 \cap \Lambda' = \Lambda_n^2 \cap (\Lambda_m')^2$ for $\omega$-almost all $m \in \mathbb N$. Since $\omega$ is non-principal, the finite set $\{m \in \mathbb N \mid m < n\}$ is not in $\omega$, and hence we can find $m \geq n$ such that
\[
\Lambda_n^2 \cap \Lambda'  = \Lambda_n^2 \cap (\Lambda_m')^2.
\]
Since $m \geq n$ we have $\Lambda_n \subset \Lambda_m$, and hence $\Lambda_n$ is covered by at most $C(k)$ left-translates of $\Lambda_m'$. Applying Corollary \ref{lemmasymmtrick} with $X \coloneqq  \Lambda_n$ and $Y \coloneqq \Lambda_m'$ we thus find a set $F'$ of size $|F'| \leq C(k)$ such that 
\[
\Lambda_n \subset  F'(\Lambda_n^2 \cap (\Lambda_m')^2).
\]
Thus
\[
|\Lambda_n| \leq |F'| |\Lambda_n^2 \cap (\Lambda_m')^2| \leq C(k)|\Lambda_n^2 \cap \Lambda'|.
\]
Plugging this inequality into \eqref{FiniteSetEstimate1} then yields the lemma.
\end{proof}
\begin{proof}[Proof of Proposition \ref{BGTFiniteIndex}]  Take a maximal $F \subset \Lambda$ such that the sets $f\Lambda'$ are pairwise disjoint as $f$ ranges over $F$. By Lemma \ref{BGTLemma2} we have $|F| \leq C(k)k^2$ and by Rusza's Covering Lemma (Lemma \ref{Rusza}) we have
\[
\Lambda \subset F(\Lambda')^2.
\]
Since $(\Lambda')^2$ is of finite index in $\Lambda'$, the proposition follows.
\end{proof}

    \section{The endgame}
We can now complete the proof of Theorem \ref{AppBGTMain}:
\begin{proof}[Proof of Theorem \ref{AppBGTMain}] Let $\Lambda' \subset \Lambda^8$ be the approximate subgroup constructed in the previous section. By assumption, the approximate group $(\Lambda, \Lambda^\infty)$ is geometrically finitely-generated; since $\Lambda'$ is of finite index in $\Lambda$ we have $[\Lambda]_{\mathrm{int}} = [\Lambda']_{\mathrm{int}}$, and hence $(\Lambda', (\Lambda')^\infty)$ is also geometrically finitely-generated, hence algebraically finitely-generated by Corollary \ref{GFGAFG}. Now let $S_0$ be a finite generating set of $(\Lambda')^\infty$. Then every element of $S_0$ can be written as a finite product of elements of $\Lambda'$, hence $(\Lambda')^\infty$ can also be generated by a finite subset 
$S \subset \Lambda'$. By Lemma \ref{UltrafilterFiniteSet} we then have $S \subset \Lambda'_n$ for some $n \in \mathbb N$. This implies that $(\Lambda')^\infty = (\Lambda_n')^\infty$, which by assumption is virtually nilpotent.
\end{proof}
\begin{remark}
Since Theorem \ref{ThmHru} fails for non-finitely-generated groups, some form of finite generation has to be assumed on $\Lambda$ for Theorem \ref{AppBGTMain} to hold. In our proof we have used the assumption that every finite index approximate subgroup of $\Lambda^8$ is algebraically finitely-generated. This property is a priori not implied by $\Lambda$ being \emph{algebraically} finitely-generated, but (as indicated in the proof) it is implied by $\Lambda$ being \emph{geometrically} finitely-generated. We have thus stated the theorem for geometrically finitely-generated approximate groups. It is possible that a weaker finite generation assumption on $\Lambda$ is sufficient for Theorem \ref{AppBGTMain} to hold, but we are not aware of any argument to that effect.
\end{remark}

\end{appendix}

\printindex


\bibliographystyle{alpha}
\bibliography{literature}
\end{document}